\def\mf#1{\mathfrak{#1}}
\def\mc#1{\mathcal{#1}}
\def\mb#1{\mathbb{#1}}
\def\tx#1{\textrm{#1}}
\def\tb#1{\textbf{#1}}
\def\ms#1{\mathsf{#1}}
\def\R{\mathbb{R}}
\def\C{\mathbb{C}}
\def\Q{\mathbb{Q}}
\def\Z{\mathbb{Z}}
\def\N{\mathbb{N}}
\def\lmod{\setminus}
\def\hat{\widehat}
\def\rw{\rightarrow}
\def\into{\hookrightarrow}
\def\onto{\twoheadrightarrow}
\def\sm{\smallsetminus}
\def\<{\langle}
\def\>{\rangle}
\newenvironment{mytitle}
{\begin{center}\large\sc}
{\end{center}}
\newtheorem{thm}{Theorem}[subsection]
\newtheorem{lem}[thm]{Lemma}
\newtheorem{pro}[thm]{Proposition}
\newtheorem{cor}[thm]{Corollary}
\newtheorem{fct}[thm]{Fact}
\newtheorem{asm}[thm]{Assumption}
\newtheorem{ntt}[thm]{Notation}
\theoremstyle{definition}
\newtheorem{dfn}[thm]{Definition}
\newtheorem{rem}[thm]{Remark}
\newtheorem{exa}[thm]{Example}
\numberwithin{equation}{section}
\renewcommand{\-}{\hyp{}}
\newlength{\sumcorr}
\newcolumntype{L}{>{$}l<{$}}
\begin{document}

\begin{mytitle} Supercuspidal $L$-packets \end{mytitle}
\begin{center} Tasho Kaletha \end{center}

{\let\thefootnote\relax\footnotetext{This research is supported in part by NSF grant DMS-1801687 and a Sloan Fellowship.}}

\begin{abstract}
Let $F$ be a non-archimedean local field and let $G$ be a connected reductive group defined over $F$. We assume that $G$ splits over a tame extension of $F$ and that the residual characteristic $p$ does not divide the order of the Weyl group. To each discrete Langlands parameter of the Weil group of $F$ into the complex $L$-group of $G$ we associate explicitly a finite set of irreducible supercuspidal representations of $G(F)$, and relate its internal structure to the centralizer of the parameter. We give evidence that this assignment is an explicit realization of the local Langlands correspondence.
\end{abstract}

\tableofcontents

\section{Introduction}

According to the conjectural local Langlands correspondence, the set of isomorphism classes of irreducible admissible representations of the group $G(F)$ of $F$-points of a connected reductive group $G$ defined over a non-archimedean local field $F$ should be partitioned into finite subsets, called $L$-packets, and each $L$-packet should correspond to a Langlands parameter, which is a homomorphism $W_F \times \tx{SL}_2(\C) \to {^LG}$ from the Weil-Deligne group of $F$ into the Langlands $L$-group of $G$, subject to certain conditions. The $L$-packet is expected to be in bijection with the set of representations of a certain finite group associated to $\varphi$. When the image of the parameter does not lie in a proper parabolic subgroup of $^LG$, the $L$-packet is expected to consist of essentially discrete series representations. When furthermore the parameter restricts trivially to $\tx{SL}_2(\C)$, the packet is expected to consist of supercuspidal representations -- this expectation was formulated in \cite[\S3.5]{DR09} and is a special case of the  the more precise conjecture of \cite{AMS}. We shall call such parameters and packets supercuspidal for short.

In \cite{KalRSP} we constructed a correspondence between supercuspidal parameters and supercuspidal $L$-packets, as well as the desired enumeration of the members of each packet, under the following assumptions: $G$ splits over a tamely ramified extension of $F$, the residual characteristic $p$ of $F$ is not a bad prime for the root system of $G$, and the Langlands parameter satisfies a certain regularity assumption. The construction works in both directions -- from parameters to packets and conversely -- and has the important feature of being explicit. 

In this paper we extend this construction to the case of arbitrary supercuspidal parameters, i.e. we drop the regularity assumption imposed on the parameters in \cite{KalRSP}. We do this at the cost of a slightly stricter assumption on $p$, which we now require to not divide the order of the Weyl group of $G$. In fact, when $p$ is not a bad prime for the root system of $G$, but possibly divides the order of the Weyl group, the construction given here still works and handles many non-regular supercuspidal parameters, but possibly not all of them. More precisely, we call a Langlands parameter \emph{torally wild} if it maps wild inertia into a torus inside of the dual group. When $G$ splits over a tame extension and $p$ does not divide the order of the Weyl group, all supercuspidal Langlands parameters are torally wild. In this paper we construct the $L$-packets associated to torally wild supercuspidal parameters when $G$ splits over a tame extension and $p$ is not a bad prime for $G$ and does not divide the connection index of any simple factor. This is in particular the case when $G$ splits over a tame extension and $p$ does not divide the order of the Weyl group.

The following table gives for each Dynkin type the sets of primes that are bad or divide the connection index in the first row, and those that divide the order of the Weyl group in the second row.

\[ \begin{tabu}{|l|l|l|l|l|l|l|l|l|}
\hline
A_n&B_n&C_n&D_n&E_6&E_7&E_8&F_4&G_2\\
\hline
p|n+1&2&2&2&2,3&2,3&2,3,5&2,3&2,3\\
\hline
p\leq n+1&p\leq n&p\leq n&p\leq n&2,3,5&2,3,5,7&2,3,5,7&2,3&2,3\\
\hline
\end{tabu}\]

As in the regular case treated in \cite{KalRSP}, the construction given here goes in both directions, and is explicit. An essential new phenomenon in the non-regular case is that the group $S_\varphi$ and its variations $\pi_0(S_\varphi)$ and $\pi_0(S_\varphi^+)$ that control the structure of the $L$-packet are often non-abelian. For the group $\pi_0(S_\varphi^+)$ that accommodates all inner forms, this already happens for $\tx{SL}_2$ and is a classical example discussed by Labesse and Langlands \cite{LL79}. But there are also examples for the group $\pi_0(S_\varphi)$, and even for $\pi_0(S_\varphi/Z(\hat G)^\Gamma)$, for split groups of classical type, such as the split group $\tx{Spin}_9$. This makes the internal structure of the resulting $L$-packets considerably more subtle.

Before we describe the complications that arise in the non-regular case and our strategy to handle them, we first review the construction in the regular case. A supercuspidal parameter $\varphi : W_F \to {^LG}$ is called strongly regular if $\tx{Cent}(\varphi(I_F),\hat G)$ is abelian. The notion of regularity is slightly weaker and more complicated to state. From a regular supercuspidal parameter we are able to extract the information necessary to write down a formula for the Harish-Chandra character of those supercuspidal representations that should populate the $L$-packet. On the other hand, we introduce the notion of a regular supercuspidal representation, classify all such, and give a formula for their Harish-Chandra characters, by reinterpreting the works of Adler, DeBacker, and Spice \cite{AS09}, \cite{DS18}. Each of the formulas extracted from a regular supercuspidal parameter then uniquely specifies a regular supercuspidal representation.

Extracting from the Langlands parameter $\varphi$ the information for the Harish-Chandra character is done by showing that $\varphi$ specifies an algebraic torus $S$ defined over $F$, then choosing Langlands-Shelstad $\chi$-data $(\chi_\alpha)_\alpha$ for the root system $R(S,G)$, using $(\chi_\alpha)_\alpha$ to obtain an embedding ${^LS} \to {^LG}$ through which $\varphi$ factors, using the factored Langlands parameter $\varphi_{S,\chi} : W_F \to {^LS}$ to obtain a character $\theta_\chi$ of $S(F)$, and then using $(\chi_\alpha)_\alpha$ and $\theta_\chi$ together to write down the character formula. The resulting formula is independent of the choice of $(\chi_\alpha)_\alpha$. It specifies for each embedding $j : S \to G$ a regular supercuspidal representation $\pi_j$ of $G(F)$ and the $L$-packet is the set of these representations. Essential for this procedure is that the regularity of $\varphi$ implies the regularity of $\theta_\chi$. 

Our work in the non-regular case begins with the observation that when $\varphi$ is a torally wild supercuspidal parameter then $\tx{Cent}(\varphi(I_F),\hat G)$, while in general not abelian, has abelian identity component. This property is certainly weaker than regularity, but it still allows to obtain from $\varphi$ an algebraic torus $S$ over $F$, and after choosing $\chi$-data also a character $\theta_\chi$ of $S(F)$. Moreover, while $\theta_\chi$ is in general not regular, it is still rather constrained. For example, when $\varphi$ is trivial on wild inertia (i.e. it is of depth zero), then $\theta_\chi$ is non-singular in a sense similar to that defined by Deligne-Lusztig \cite{DL76} in the setting of finite groups of Lie type. For finite groups of Lie type, the Deligne-Lusztig induction of a non-singular character of an elliptic maximal torus is a (usually reducible) cuspidal representation. Its components were studied by Lusztig \cite{Lus88}. It would be natural to expect that the structure of the corresponding reducible depth zero supercuspidal representations can be related, via the results of Moy and Prasad \cite{MP96}, to the structure of the cuspidal representation over the finite field, and that the situation of general depth can be reduced to depth zero using Yu's construction \cite{Yu01} and the Howe factorization process introduced in \cite{KalRSP}, neither of which assumes regularity.

With these observations made, the road to success seems mapped out. However, once one has set foot on that road one encounters a number of serious and initially rather unexpected challenges. This explains the length of this paper and the fact that two essential technical discussions needed here merited their own papers \cite{KalDC} and \cite{FKS}, and a key argument is borrowed from a third paper \cite{KalLLCD}, where it had a rather different purpose.

The first serious obstacle concerns the depth-zero case. Let $\kappa_{(S^\circ,\theta^\circ)}$ denote the cuspidal representation of a finite group of Lie type obtained via Deligne-Lusztig induction of a non-singular character $\theta^\circ$ of an elliptic maximal torus $S^\circ$. Lusztig has shown that $\kappa_{(S^\circ,\theta^\circ)}$ has multiplicity one -- it is a direct sum of pairwise inequivalent irreducible cuspidal representations -- and the set of these irreducible factors, which we shall denote by $[\kappa_{(S^\circ,\theta^\circ)}]$, is acted upon simply transitively by a finite abelian group -- the Pontryagin dual of the stabilizer of $\theta^\circ$ in the Weyl group. Work of Moy and Prasad relates depth-zero supercuspidal representations of a $p$-adic group to cuspidal representations of its reductive parahoric quotients. One would thus optimistically expect that results similar to Lusztig's hold for the depth-zero supercuspidal representations related to non-singular Deligne-Lusztig representations, and that simple Clifford theory would suffice in describing them. This is not the case. In fact, already the multiplicity one statement fails, although constructing an example takes quite a bit of effort, since the $p$-adic group has to be ramified and cannot be simple, simply connected, or adjoint. The culprit is the difference between the parahoric subgroup $G(F)_{x,0}$ and the stabilizer $G(F)_x$ of the point $x$. Lusztig's results concern the finite group of Lie type $G(F)_{x,0}/G(F)_{x,0+}$ and by inflation the compact open group $G(F)_{x,0}$, while Moy-Prasad theory classifies depth-zero supercuspidal representations in terms of irreducible representations of $G(F)_x$. It is in the passage from $G(F)_{x,0}$ to $G(F)_x$ where the complications arise. We can analyze the situation by working over the residue field $k_F$ and considering the connected reductive $k_F$-group $\ms{G}_x^\circ$ that is the reductive quotient of the special fiber of the parahoric group scheme associated to $x$, as well as the disconnected reductive $k_F$-group $\ms{G}_x$ that is the reductive quotient of the special fiber of the integral model of $G$ whose group of integral points is the stabilizer $G(F)_x$ of the point $x$. The notion of Deligne-Lusztig induction can be easily generalized to the disconnected setting. The problem is then that Lusztig's multiplicity one result fails in the setting of disconnected groups.

We deal with this complication by viewing Deligne-Lusztig induction as a geometric analog of parabolic induction and drawing inspiration from the classical theory that decomposes principal series representations in terms of intertwining operators and the $R$-group. A geometric analog of the classical intertwining operators was recently introduced in the work of Bonnafe-Dat-Rouquier \cite{BDR17}. For a connected reductive $k_F$-group $G$ it gives a naturally defined $G$-equivariant isomorphism $H^*_c(Y_{B_1},\bar\Q_l)_\theta \to H^*_c(Y_{B_2},\bar\Q_l)_\theta$ between the middle-degree compact cohomology groups of the Deligne-Lusztig varieties associated to two Borel subgroups $B_1$ and $B_2$, after passing to $\theta$-isotypic components for a non-singular character $\theta$. The generalization to disconnected groups $G$ is routine and provided in this paper. This isomorphism can be thought of as the geometric analog of the classical integral intertwining operator between the parabolic inductions from two different Borel subgroups containing the same maximal split torus. Just like the case of the classical integral intertwining operators, these geometric intertwining operators do not compose correctly and need to be renormalized. We are able to derive a result analogous to Arthur's result \cite{ArtIOR1} on the normalization of classical intertwining operators for $p$-adic groups, namely that there exists a normalization, without being able to specify a canonical one. We then prove the analogs of Harish-Chandra's Commuting Algebra Theorem and Basis Theorem in our setting -- if $\kappa_{(S,\theta)}$ is the reducible cuspidal representation obtained from a non-singular character $\theta$ of an  elliptic maximal torus $S$, then the set of self-intertwining operators on $\kappa_{(S,\theta)}$, indexed by the elements of $\Omega(S,G)(k_F)_\theta$, forms a basis of the algebra of $G$-endomorphisms, where $\Omega(S,G)(k_F)_\theta$ are the elements of the Weyl group that fix the character $\theta$. This implies that there is a bijection
\begin{equation} \label{eq:dlclass}
[\kappa_{(S,\theta)}] \leftrightarrow \tx{Irr}(N(S,G)(k_F)_\theta,\theta), \end{equation}
where on the left side we have the set of irreducible constituents of $\kappa_{(S,\theta)}$ and on the right side we consider all irreducible representations of $N(S,G)(k_F)_\theta$ whose restriction to $S(k_F)$ is $\theta$-isotypic. Equivalently, the right-hand side is the set of $\theta$-projective representations of $\Omega(S,G)(k_F)_\theta$. This bijection preserves multiplicities -- the multiplicity of an irreducible constituent of $\kappa_{(S,\theta)}$ is equal to the dimension of the corresponding representation $\rho$ of $N(S,G)(k_F)_\theta$, equivalently to the multiplicity of $\theta$ in $\rho|_{S(k_F)}$.

The bijective correspondence \eqref{eq:dlclass} is a natural generalization to disconnected groups of Lusztig's result for connected groups. The latter is reflected in the structure of $N(S,G)(k)_\theta$ as follows: when $G$ is connected, the character $\theta$ extends (non\-canonically) to a character of $N(S,G)(k)_\theta$. Therefore the set of representations (in this case linear characters) of $N(S,G)(k)_\theta$ whose restriction to $S(k)$ is $\theta$-isotypic is a torsor for $\Omega(S,G)(k)_\theta^*$, and \eqref{eq:dlclass} turns $[\kappa_{(S,\theta)}]$ into a torsor for $\Omega(S,G)(k)_\theta^*$ as well.

Using Moy-Prasad theory \eqref{eq:dlclass} immediately implies the analogous result for the $p$-adic field $F$, where the irreducible pieces of the non-singular depth-zero supercuspidal representation $\pi_{(S,\theta)}=\tx{c-Ind}_{G(F)_x}^{G(F)}\kappa_{(S,\theta)}$ are parameterized by the irreducible $\theta$-isotypic representations of $N(S,G)(F)_\theta$. These results stand in remarkable analogy with the classical theory on the decomposition of principal series representations in terms of the $R$-group, even though we are dealing here with the opposite end of the spectrum -- elliptic tori and supercuspidal representations.

The problem of finding a canonical normalization of the geometric intertwining operators remains so far unsolved. It is a natural problem -- given a connected reductive group over a finite field, an elliptic maximal torus, a non-singular character of that torus, and a group of automorphisms of this data, there is a degree 2 cohomology class. A choice of a Borel subgroup containing the maximal torus determines a cocycle within this class. We have proved that this class is trivial in the situations relevant to this paper. Choosing a normalization of the intertwining operators amounts to choosing a trivialization of the 2-cocycle associated to a given Borel subgroup. Given the naturality of the 2-cocycle, it is to be expected that there will be a natural trivialization. This would lead to a canonification of the bijection \eqref{eq:dlclass}, which depends on the choice of normalization of intertwining operators.

The failure of multiplicity one for the representation $\pi_{(S,\theta)}$ has its reflection on the dual side as well. We recall that when the Langlands parameter $\varphi$ is regular there is a canonical isomorphism $S_\varphi \cong \hat S^\Gamma$ between the centralizer of the parameter and the Galois-fixed points of the torus dual to $S$. At the same time, the $L$-packet $\Pi_\varphi$ is the set of $\pi_{(jS,j\theta)}$ for all possible embeddings of $j : S \to G$, and each $\pi_{(jS,j\theta)}$ is irreducible. The internal structure of the $L$-packet is then an immediate consequence of Tate-Nakayama duality which describes the set of embeddings of $S$ into all inner forms of $G$ as a torsor under the finite abelian group dual to $\pi_0(\hat S^\Gamma)$ and its variations. In the non-regular case the isomorphism $S_\varphi \to \hat S^\Gamma$ is replaced by an exact sequence
\begin{equation} \label{eq:cent}
1 \to \hat S^\Gamma \to S_\varphi \to \Omega(S,G)(F)_\theta \to 1, \end{equation}
where $\Omega(S,G)$ is the absolute Weyl group of the torus $S$. The group $\pi_0(S_\varphi)$ is often non-abelian, which makes the structure of its irreducible representations more complicated. At the same time, since the representation $\pi_{(jS,j\theta)}$ is often reducible, the $L$-packet $\Pi_\varphi$ is now the union of the sets $[\pi_{(jS,j\theta)}]$ of irreducible constituents of $\pi_{(jS,j\theta)}$, for all possible embeddings of $j : S \to G$. We refer to each subset $[\pi_{(jS,j\theta)}]$ of $\Pi_\varphi$ as a \emph{Deligne-Lusztig packet}. Tate-Nakayama duality is no longer sufficient to describe the internal structure of the $L$-packet $\Pi_\varphi$, but it reduces it to establishing a bijection between the Deligne-Lusztig packet $[\pi_{(jS,j\theta)}]$ corresponding to a particular embedding $j : S \to G$ and the set of those irreducible representations of $\pi_0(S_\varphi)$ whose restriction to $\pi_0(\hat S^\Gamma)$ contains a specific character $\chi$ related to the embedding of $j$ (this is in the setting of pure inner forms, which we have adopted in this introduction to ease notation). It turns out that the extension \eqref{eq:cent} also doesn't have multiplicity one. Again this precludes the use of simple Clifford theory to study its representations. What one needs is a relationship between the extension
\[ 1 \to S(F) \to N(jS,G)(F)_{j\theta} \to \frac{N(jS,G)(F)_{j\theta}}{S(F)} \to 1, \]
which encodes the structure of $[\pi_{(jS,j\theta)}]$ according to \eqref{eq:dlclass}, and the extension
\[ 1 \to \hat S^\Gamma \to S_{\varphi,\chi} \to \Omega(S,G)(F)_{\theta,\chi} \to 1, \]
which encodes the the appropriate representations of $\pi_0(S_\varphi)$. It is easy to see that the cokernels of these extensions are canonically isomorphic. It turns out, rather miraculously, that the push-out of the first extension along $\theta : S(F) \to \C^\times$ is isomorphic to the push-out of the second extension along $\chi$. The argument is an amplification of an argument used in a different context, namely to establish the validity of a suitable statement of the local Langlands correspondence for disconnected groups whose connected component is a torus \cite{KalLLCD}. It relies on the cohomological pairings for complexes of tori of length 2 \cite[Appendix A.3]{KS99} and their extension to the setting of rigid inner forms. The choice of isomorphism between the two extensions is linked to the normalization of intertwining operators.

We now discuss the difficulties with positive depth representations. From the parameter $\varphi$ we obtain, as discussed above, the torus $S$ and after choosing $\chi$-data $(\chi_\alpha)_\alpha$ we also obtain the character $\theta_\chi$. In the case when $\varphi$ is regular, treated in \cite[\S5]{KalRSP}, we write the formula
\begin{equation} \label{eq:charwish_intro}
e(G)\epsilon(\frac{1}{2},X^*(T)_\C-X^*(S)_\C,\Lambda)\sum_{w \in N(S,G)(F)/S(F)} \Delta_{II}^\tx{abs}[a,\chi](\gamma^w)\theta_\chi(\gamma^w) \end{equation}
and use it to select for each embedding $j : S \to G$ a regular supercuspidal representation $\pi_j$ whose character on shallow elements of $S(F)$ is given by this formula. Of course we need to know that such a representation exists. This uses the material of \cite[\S3]{KalRSP}, where a bijection is established between the set of pairs $(S,\theta)$ consisting of a tame elliptic maximal torus $S$ and a regular character $\theta$ and the set of regular supercuspidal representation $\pi_{(S,\theta)}$; as well as the material of \cite[\S4]{KalRSP}, where the Adler-DeBacker-Spice character formula of \cite{AS09} and \cite{DS18} is reinterpreted in the case of $\pi_{(S,\theta)}$ as the formula
\begin{equation} \label{eq:chargiven_intro}
e(G)\epsilon(\frac{1}{2},X^*(T)_\C-X^*(S)_\C,\Lambda)\!\!\!\!\!\!\!\!\!\!\!\!\sum_{w \in N(S,G)(F)/S(F)}\!\!\!\!\!\!\!\!\!\!\!\! \Delta_{II}^\tx{abs}[a,\chi'](\gamma^w)e_{f,\tx{ram}}(\gamma^w)e^\tx{ram}(\gamma^w)\theta(\gamma^w), 
\end{equation}
where now $(\chi'_\alpha)_\alpha$ is $\chi$-data computed in terms of $\theta$, and $\gamma \in S(F)$ is shallow. The two formulas \eqref{eq:charwish_intro} and \eqref{eq:chargiven_intro} look very similar, except for the occurrence of the characters $e_{f,\tx{ram}}$ and $e^\tx{ram}$ in the second formula, and the usage of the particular $\chi$-data $\chi'$, which has the property of being minimally ramified. One then uses the fact that $\theta_\chi$ is regular when $\chi$ is chosen minimally ramified, and that the characters $e_{f,\tx{ram}}$ and $e^\tx{ram}$ are invariant under $N(S,G)(F)$, so that $\theta_\chi'=\theta_\chi \cdot e_{f,\tx{ram}} \cdot e^\tx{ram}$ is also regular. This means that a simple reparameterization of the correspondence $(S,\theta) \mapsto \pi_{(S,\theta)}$ identifies the two formulas and enables the construction of the $L$-packet.

Consider now the case when $\theta$ is no longer regular, but is still non-singular. One can combine the material of \cite[\S3]{KalRSP} with the results on non-singular supercuspidal representations of depth zero from this paper to obtain a (usually reducible) positive depth supercuspidal representation $\pi_{(S,\theta)}$. The material of \cite[\S4]{KalRSP} applies to this representation, so we have \eqref{eq:chargiven_intro}. It is easy to see that multiplication by $e_{f,\tx{ram}}$ preserves non-singularity. But multiplication by $e^\tx{ram}$ does not. So we cannot simply replace $\theta$ by $\theta'=\theta\cdot e_{f,\tx{ram}}\cdot e^\tx{ram}$ as in the regular case. 
There is a parallel phenomenon on the Galois side. In the depth-zero case, the character $\theta_\chi$ obtained from a general supercuspidal parameter $\varphi$ after factoring through an $L$-embedding $^Lj_\chi : {^LS} \to {^LG}$ constructed from minimally ramified $\chi$-data is non-singular. But in the positive depth case this is no longer true. 

It turns out that these problems are related to issues with Yu's construction of supercuspidal representations. It was noticed by Loren Spice that there is an error in Yu's paper, which breaks the proofs of the intertwining statements \cite[Proposition 14.1, Theorem 14.2]{Yu01}. Counterexamples to these statements were then produced by Fintzen \cite{Fin19}. It was also shown in \cite{Fin19} that despite this error, Yu's construction still produces irreducible supercuspidal representations. But the failure of the intertwining results suggests that the construction may not be in optimal form. For example, the validity of the intertwining results is an essential input in the computation of Harish-Chandra characters in \cite{Spice17}. In \cite{FKS}, a natural modification of Yu's construction is proposed, which restores the validity of Yu's intertwining statements. If one computes the character formula for the resulting representations at shallow elements, one obtains an analog of \eqref{eq:chargiven_intro}, in which the auxiliary characters $e_{f,\tx{ram}}$ and $e^\tx{ram}$ have disappeared, and the $\chi$-data $\chi'$ has been replaced with a different $\chi$-data $\chi''$. The latter is no longer minimally ramified. Instead, it has the very useful property that if it is used to factor a supercuspidal parameter $\varphi$ and obtain from it a character $\theta_{\chi''}$ of $S(F)$, then $\theta_{\chi''}$ will be non-singular. In this way, the analog of \eqref{eq:chargiven_intro} for the modified Yu construction of \cite{FKS} becomes identified with \eqref{eq:charwish_intro}, which allows the construction of positive depth supercuspidal $L$-packets to proceed beyond the regular case.

The reason for this pleasant property of the $\chi$-data $\chi''$ is that it mirrors the inductive structure of Yu's construction. Recall that part of a Yu-datum is a tower $G^0 \subset \dots \subset G^d$ of twisted Levi subgroups of $G$. This tower is easily obtained from the pair $(S,\theta)$. The $\chi$-data $\chi''$ is obtained by putting together $\chi$-data for $R(Z(G^i)^\circ,G^{i+1})$ for all $i=0,\dots,d-1$. It is shown in \cite{KalDC} that a set of $\chi$-data for $R(Z(G^i)^\circ,G^{i+1})$ leads to an $L$-embedding $^LG^i \to {^LG^{i+1}}$. It is shown further that if the $\chi$-data for $R(S,G)$ is obtained by putting together $\chi$-data for $R(Z(G^i)^\circ,G^{i+1})$, then the $L$-embedding $^LS \to {^LG}$ is obtained by composing the various $L$-embeddings $^LS \to {^LG^0}$ and $^LG^i \to {^LG^{i+1}}$. In this way, the supercuspidal $L$-packets on each $G^i$ are related to each other as functorial transfers.

An important conjectural property of $L$-packets is their stability, and more generally their endoscopic transfer. The packets constructed in this paper do satisfy stability, as well as endoscopic transfer for all $s \in \hat S^\Gamma \subset S_\varphi$. This is proved in \cite{FKS}. The more general case of endoscopy, for elements $s \in S_\varphi \sm \hat S^\Gamma$, is current work in progress. In addition, it is shown in \cite{FKS} that the packets constructed here satisfy another property, namely \cite[Conjecture 4.3]{KalDC}. The theory of endoscopy implies that there is at most one construction of a refined local Langlands correspondence for supercuspidal parameters that satisfies endoscopic transfer and \cite[Conjecture 4.3]{KalDC}.

We now discuss the structure of this paper. In Section \ref{sec:nsdl-fin} we consider a possibly disconnected reductive group defined over a finite field $k$, an elliptic maximal torus $S \subset G$, and a non-singular character $\theta : S(k) \to \bar\Q_l^\times$ in the sense of Deligne-Lusztig. In Subsection \ref{sub:ff-setup} we explain exactly what we mean by that. Let $N(S,G)(k)_\theta$ resp. $\Omega(S,G)(k)_\theta$ be the stabilizers of $\theta$ in the $k$-points of the normalizer of $S$ in $G$ resp. the $k$-points of the Weyl group. In Subsection \ref{sub:nsdlft} we review Lusztig's results for connected groups, to the effect that the Deligne-Lusztig virtual character $\kappa_{(S,\theta)}$, which in this case is a cuspidal representation of $G(k)$, has multiplicity one and the set of its irreducible components receives a natural simply transitive action of the Pontryagin dual of the abelian group $\Omega(S,G)(k)_\theta$. In Subsection \ref{sub:natiop} we define the concept of a natural intertwining operator $H^*_c(Y_{B_1},\bar\Q_l)_\theta \to H^*_c(Y_{B_2},\bar\Q_l)_\theta$ between the $\theta$-isotypic components of the middle degree compact cohomologies of the Deligne-Lusztig varieties associated to two Borel subgroups $B_1$ and $B_2$ containing $S$. Such an operator is well-defined up to a scalar. This definition is elementary, but it allows us to study the existence of normalized collections of such operators. We are especially interested in normalized collections that are equivariant with respect to the action of the group of automorphisms of $G$ that preserve $S$ and $\theta$, or some subgroup thereof. In Subsection \ref{sub:gio} we review the work of Bonnafe-Dat-Rouquier \cite{BDR17}, which gives rise to an equivariant collection of natural intertwining operators, which is however not normalized. In Subsection \ref{sub:ff-disc} we generalize this discussion to disconnected reductive groups. In Subsection \ref{sub:dlpackpar} we prove that any normalized collection of natural intertwining operators that is equivariant with respect to the action of $N(S,G)(k)_\theta$ provides the bijection \eqref{eq:dlclass} between the set of irreducible constituents of $\kappa_{(S,\theta)}$ and the set of representations of $N(S,G)(k)_\theta$ whose restriction to $S(k)$ is $\theta$-isotypic. We do this by obtaining from the operators $H^*_c(Y_{B_1},\bar\Q_l)_\theta \to H^*_c(Y_{B_2},\bar\Q_l)_\theta$ a collection of self-intertwining operators of $H^*_c(Y_{B_1},\bar\Q_l)_\theta$ indexed by elements of $N(S,G)(k)_\theta$, which gives us an action of $N(S,G)(k)_\theta$ on $H^*_c(Y_{B_1},\bar\Q_l)_\theta$ that extends the action of $S(k)$ on the right given by $\theta$. We prove that these operators form a basis for the intertwining algebra, and then decompose $H^*_c(Y_{B_1},\bar\Q_l)_\theta$ under the action of $N(S,G)(k)_\theta$.

In Section \ref{sec:nsdl-dz} we consider a connected reductive group defined over a non-archimedean local field $F$, an elliptic maximally unramified maximal torus $S \subset G$, and a character $\theta : S(F) \to \C^\times$ that is non-singular in the sense of Definition \ref{dfn:nsc}. When $\theta$ is of depth zero, this definition implies that (but is stronger than) the character $\theta^\circ$ of $\ms{S}^\circ(k)=S(F)_{0:0+}$ obtained from $\theta$ is non-singular with respect to the finite group of Lie type $\ms{G}_x^\circ(k)=G(F)_{x,0:0+}$ in the sense of Deligne-Lusztig, where $x$ is the point of the Bruhat--Tits building of $G$ associated to $S$. In Subsection \ref{sub:ass_check} we verify some assumptions made in Section \ref{sec:nsdl-fin}, including the vanishing of the degree 2 cohomology class that is the obstruction to the existence of normalized intertwining operators. This allows us to apply the results of Section \ref{sec:nsdl-fin} to the disconnected reductive group $\ms{G}_x$. Compact induction and Moy-Prasad theory translate these results to analogous results for depth-zero supercuspidal representations in Subsection \ref{sub:nsdl-dz}.
In Subsection \ref{sub:nsdl-pd} we combine these results with the Howe factorization algorithm of \cite[\S3.6]{KalRSP} and Yu's construction to obtain a multiplicity preserving bijection between $[\pi_{(S,\theta)}]$ and $\tx{Irr}(N(S,G)(F)_\theta,\theta)$ in the case where $S \subset G$ is a tame elliptic maximal torus and $\theta : S(F) \to \C^\times$ is a positive depth character that is non-singular.

The construction of $L$-packets is the subject of Section \ref{sec:pack}. In Subsection \ref{sub:lparam} we discuss how to extract from a supercuspidal parameter $\varphi$ and $\chi$-data a tame torus $S$ and a character $\theta_\chi$ of $S(F)$. The arguments are similar to \cite[\S5.2]{KalRSP}, but we have to pay attention to the subtleties introduced by non-singularity that we discussed above. The construction of the $L$-packets takes place in Subsection \ref{sub:lpackconst}. We then proceed with the study of their internal structure. In Subsection \ref{sub:sphi} we give an example that $S_\varphi$ can be finite and non-abelian even for a classical root system of type $B_4$, establish the exact sequence \eqref{eq:cent}, show that it has multiplicity one when $\varphi$ has depth zero and $G$ is unramified or simply connected, and then give an example where it does not have multiplicity one. In Subsection \ref{sub:is1} we use Tate-Nakayama duality to reduce the internal structure of the $L$-packet to that of a single Deligne-Lusztig packet of depth zero. That latter case is dealt with in Subsection \ref{sub:is2}. In the final Subsection \ref{sub:endo} we sketch an argument showing that the $L$-packet with this internal parameterization satisfies stability and some cases of endoscopic transfer. The details of this argument appear in \cite{FKS}, while its generalization to all cases of endoscopic transfer will be the subject of a forthcoming paper.

There are a number of appendices to this paper containing information of more technical nature. Some of them review, and possibly extend, known material in a way convenient for our purposes. Such are \S\ref{app:cliff} containing an overview of basic Clifford theory, \S\ref{app:basis} containing an abstract version of the Harish-Chandra basis and commuting algebra theorems, \S\ref{app:repext} containing a discussion of representations of extensions with abelian quotients that may fail the multiplicity one property, \S\ref{app:dlreview} describing the behavior of Deligne-Lusztig induction under homomorphisms of algebraic groups with abelian kernel and cokernel. Others contain results about Bruhat-Tits theory, such as the compatibility of parahoric subgroups with restriction of scalars \S\ref{app:parahoric}, or the concept of absolutely special vertices \S\ref{app:absvert} generalizing the concept of hyperspecial vertices, as well as that of superspecial vertices of \cite[Definition 3.4.8]{KalRSP}. In \S\ref{app:gendz} we extend to the case of ramified groups the results of \cite[\S6.1]{DR09} about genericity of depth zero supercuspidal representations. This allows us to prove the existence and uniqueness of generic constituents in our depth-zero $L$-packets. Finally, \S\ref{app:d2n} contains technical results about the root system $D_{2n}$.

\tb{Acknowledgements:} This paper was born out of discussions with Cheng-Chiang Tsai, whose insightfulness played an essential role. Brian Conrad provided the argument for Appendix \ref{app:parahoric}, and Gopal Prasad that for Proposition \ref{pro:supergp}. Michael Harris and Rapha\"el Rouquier offered stimulating conversations, interest, and support. It is a pleasure to thank them all.

{\let\thefootnote\relax\footnotetext{This research is supported in part by NSF grant DMS-1801687 and a Sloan Fellowship.}}

\section{Non-singular Deligne-Lusztig packets over finite fields} \label{sec:nsdl-fin}

Let $G$ be a connected reductive group defined over a finite field $k$, $S \subset G$ an elliptic maximal torus, $\theta : S(k) \to \bar\Q_l^\times$ a character. Assume that $\theta$ is non-singular, in the sense of \cite[Definition 5.15]{DL76}. Recall \cite[Lemma 3.4.14]{KalRSP} that this is equivalent to demanding that for each $\alpha \in R(S,G)$ the character $\theta\circ N \circ \alpha^\vee$ of $(k')^\times$ is non-trivial, where $k'/k$ is some finite extension splitting $S$ and $N : S(k') \to S(k)$ is the norm map.

Let $\mc{R}_S^G(\theta)$ be the virtual representation of $G(k)$ obtained from $(S,\theta)$ via Deligne-Lusztig induction. Let $\sigma(G)$ and $\sigma(S)$ be the $k$-ranks of $G$ and $S$, respectively. According to \cite[Theorem 8.3]{DL76}, $\kappa_{(S,\theta)}^G := (-1)^{\sigma(G)-\sigma(S)}\mc{R}_S^G(\theta)$ is an actual cuspidal representation of $G(k)$.  It need not be irreducible. Let $N(S,G)(k)_\theta$ and $\Omega(S,G)(k)_\theta$ be the stabilizers of $\theta$ in the normalizer of $S$ in $G$, and the Weyl group, respectively. A result of Lusztig states that each irreducible constituent of $\kappa_{(S,\theta)}^G$ occurs with multiplicity $1$ (cf. Theorem \ref{thm:multfree}) and the character group $\Omega(S,G)(k)_\theta^*$ of the (as it turns out abelian) group $\Omega(S,G)(k)_\theta$ has a natural simply transitive action on the set $[\kappa_{(S,\theta)}^G]$ of irreducible constituents of $\kappa_{(S,\theta)}^G$ (cf. Proposition \ref{pro:dlp}).

For our applications to $p$-adic we will also need to consider $k$-groups $G$ that are not necessarily connected. We will extend the discussion of $\kappa_{(S,\theta)}^G$ to that case. An essential new feature is that the irreducible constituents of this representation may occur with higher multiplicity. We will generalize Lusztig's results alluded to above to a multiplicity-preserving bijection between $[\kappa_{(S,\theta)}^G]$ and the set $\tx{Irr}(N(S,G)(k)_\theta,\theta)$, where $N(S,G)(k)_\theta$ is the stabilizer of the pair $(S,\theta)$ in $G(k)$ and we are considering those irreducible representations whose restriction to $S(k)$ is $\theta$-isotypic. 

In the connected case this bijection is immediate from Lusztig's result and the elementary fact (cf. Proposition \ref{pro:dlcharext}) that $\theta$ extends (non-canonically) to a character of $N(S,G)(k)_\theta$. In the disconnected case we have to develop different arguments. They are based on the concept of an intertwining operator in the setting of Deligne-Lusztig induction and rely on recent results of Bonnaf\'e-Dat-Rouquier. The bijection will depend on a choice of normalization of the intertwining operators. Luszitg's results in the connected case will still play an essential role in our argument. It will be convenient to refer to $[\kappa_{(S,\theta)}^G]$ as a \emph{non-singular Deligne-Lusztig packet}.

\subsection{The set-up} \label{sub:ff-setup}

Let $G$ be a smooth $k$-group scheme whose neutral connected component $G^0$ is reductive. Consider given a central subgroup $Z \subset G$.

\begin{asm}\label{asm:dl_mu} The component group $\pi_0(G)$ is a finitely generated abelian group, and $[G^0 \cdot Z : G] < \infty$.
\end{asm}

The commutativity of $\pi_0(G)$ implies in particular that any subgroup $G'$ of $G$ containing $G^0$ is normal. It is immediate that $G'$ also satisfies Assumption \ref{asm:dl_mu}.

\begin{ntt} \label{ntt:dl_s}
For any maximal torus $S^0 \subset G^0$ we set $S=S^0 \cdot Z$.
\end{ntt}

We emphasize that this equality is on the level of algebraic groups, and that $S^0(k) \cdot Z(k)$ may well be a proper subgroup of $S(k)$.

We write $N(S,G)$ for the normalizer of $S$ in $G$ and $\Omega(S,G)=N(S,G)/S$. It is immediate that $S^0=S \cap G^0$, which implies $N(S,G)=N(S^0,G)$. Furthermore $N(S^0,G^0) \cdot S$ is of finite index in $N(S,G)$ by Assumption \ref{asm:dl_mu}. It is the fact that $N(S,G)$ may be strictly larger than $N(S^0,G^0) \cdot S$, and in particular act on $G^0$ by outer automorphisms, that necessitates the discussion of disconnected groups.

We will be particulary interested in the case that $S^0$ is an elliptic maximal torus of $G^0$. Let $\theta : S(k) \to \bar\Q_l^\times$ be a character whose restriction $\theta^0$ to $S^0(k)$ is non-singular.

\subsection{The basic bicharacter} \label{sub:bichar-dl}

For the next two results we assume that $G=G^0$. Recall from \cite{Lus88} the notion of a regular embedding. It is an embedding $G \to G'$ of connected reductive groups defined over $k$ that induces an isomorphism on the level of adjoint groups, and such that $G'$ has connected center. We will identify $G'_\tx{ad}$ with $G_\tx{ad}$. By Lang's theorem $G'(k) \to G_\tx{ad}(k)$ is surjective. There is a unique maximal torus $S' \subset G'$ containing $S$, and again the natural map $S'(k) \to S_\tx{ad}(k)$ is surjective.

\begin{lem} \label{lem:dlbichar}
\begin{enumerate}
\item Let $w \in \Omega(S,G)(k)$ and $s_\tx{ad} \in S_\tx{ad}(k)$. Choose a lift $s_\tx{sc} \in S_\tx{sc}(\bar k)$. Then the element $ws_\tx{sc}w^{-1}s_\tx{sc}^{-1} \in S_\tx{sc}(\bar k)$ belongs to $S_\tx{sc}(k)$ and is independent of the choice of $s_\tx{sc}$.
\item The map
	\begin{equation} \label{eq:dzbichar1}
	\Omega(S,G)(k)_\theta \times \tx{cok}(S(k) \to S_\tx{ad}(k)) \to \bar\Q_l^\times,\qquad (w,s_\tx{ad}) \to \theta(ws_\tx{sc}w^{-1}s_\tx{sc}^{-1})
	\end{equation}
	is well-defined and bi-multiplicative.
\item Its left kernel is trivial, i.e. the induced map
	\begin{equation} \label{eq:wtheta1}
	\Omega(S,G)(k)_\theta \to \tx{cok}(S(k) \to S_\tx{ad}(k))^* \end{equation}
	is injective.
\item If $G \to G'$ is a regular embedding and $\theta' : S'(k) \to \bar\Q_l^\times$ any extension of $\theta$, then the map
    \[ \Omega(S,G)(k)_\theta \times \tx{cok}(S(k) \to S'(k)) \to \bar\Q_l^\times,\qquad (w,s') \mapsto \theta'(ws'w^{-1}s'^{-1}) \]
    is well-defined and bi-multiplicative, and equals the composition of the above pairing with the natural map $\tx{cok}(S(k) \to S'(k)) \to \tx{cok}(S(k) \to S_\tx{ad}(k))$.

\end{enumerate}
\end{lem}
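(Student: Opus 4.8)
The plan is to prove the four items together, since they are closely intertwined. First I would set up the basic commutator construction. Given $w \in \Omega(S,G)(k)$ and $s_\tx{ad} \in S_\tx{ad}(k)$, lift $s_\tx{ad}$ to $s_\tx{sc} \in S_\tx{sc}(\bar k)$ and lift $w$ to $n \in N(S,G)(\bar k)$ (note $w$ acts on $S_\tx{sc}$ through $\Omega(S,G) = \Omega(S_\tx{sc}, G_\tx{sc})$, so this action is defined over $k$ since $w \in \Omega(S,G)(k)$). Then $c(w, s_\tx{ad}) := n s_\tx{sc} n^{-1} s_\tx{sc}^{-1}$ lies in $S_\tx{sc}(\bar k)$; changing $s_\tx{sc}$ by an element of $Z(G_\tx{sc})(\bar k) = \ker(S_\tx{sc} \to S_\tx{ad})$ does not change $c(w,s_\tx{ad})$ since that kernel is central in $G_\tx{sc}$; and changing $n$ by an element of $S(\bar k)$ doesn't change it either because $S$ is commutative. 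For the $k$-rationality: apply $\tx{Frob}$; since $w$, $s_\tx{ad}$ are $k$-rational, $\tx{Frob}(s_\tx{sc})$ is another lift of $s_\tx{ad}$ and $\tx{Frob}$ commutes with the $w$-action up to the already-checked ambiguities, so $\tx{Frob}(c(w,s_\tx{ad})) = c(w,s_\tx{ad})$. This proves (1).

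For (2) and (3): bi-multiplicativity in $s_\tx{ad}$ is immediate from $c(w, s_\tx{ad} t_\tx{ad}) = n s_\tx{sc} t_\tx{sc} n^{-1} (s_\tx{sc} t_\tx{sc})^{-1}$ and commutativity of $S_\tx{sc}$; bi-multiplicativity in $w$ uses the standard cocycle-type identity $c(w_1 w_2, s_\tx{ad}) = c(w_1, w_2 s_\tx{ad} w_2^{-1}) \cdot (w_1 \cdot c(w_2, s_\tx{ad}))$ — wait, more carefully, one gets $n_1 n_2 s_\tx{sc} n_2^{-1} n_1^{-1} s_\tx{sc}^{-1} = c(w_1, s_\tx{ad}) \cdot n_1 (n_2 s_\tx{sc} n_2^{-1} s_\tx{sc}^{-1}) n_1^{-1}$, and since $c(w_2, s_\tx{ad}) \in S_\tx{sc}(k)$ while $w_1 \in \Omega(S,G)(k)_\theta$ fixes $\theta$, applying $\theta$ kills the conjugation by $n_1$ — so $\theta(c(w_1 w_2, s_\tx{ad})) = \theta(c(w_1,s_\tx{ad}))\theta(c(w_2,s_\tx{ad}))$. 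Hence the pairing \eqref{eq:dzbichar1} is well-defined and bi-multiplicative; well-definedness in the $\tx{cok}$ argument requires $\theta(c(w, s_\tx{ad})) = 1$ whenever $s_\tx{ad}$ is the image of some $s \in S(k)$, which holds because then we may take $s_\tx{sc}$ mapping to $s$ under $S_\tx{sc} \to S$ (possible since... hmm, actually $s$ lifts to $S_\tx{sc}(\bar k)$ and the commutator lands in the image of $S_\tx{sc}$), giving $c(w,s_\tx{ad}) = n s n^{-1} s^{-1} = (w\cdot s) s^{-1}$, whose $\theta$-value is $\theta(w \cdot s)\theta(s)^{-1} = 1$ since $w$ fixes $\theta$ and $s \in S(k)$. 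For (3), the injectivity of \eqref{eq:wtheta1}: suppose $\theta(c(w, s_\tx{ad})) = 1$ for all $s_\tx{ad} \in S_\tx{ad}(k)$. I would argue via Lang's theorem and reduction to the split covering; the content is essentially that if $w$ acts trivially on the relevant character computed against all of $S_\tx{ad}(k)$ then $w$ must already be trivial, which follows because $\theta$ is non-singular — non-singularity forces $\theta$ to detect enough of the coroot lattice. Concretely, I expect to reduce to showing that for $w \neq 1$ there is a coroot $\alpha^\vee$ with $w\alpha^\vee \neq \alpha^\vee$ and exploit $\theta \circ N \circ \alpha^\vee$ being nontrivial, building $s_\tx{ad}$ from a suitable $\alpha^\vee$-image; this is the one point where non-singularity is genuinely used and is the main obstacle.

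For (4): given a regular embedding $G \hookrightarrow G'$ with $S' \subset G'$ the unique maximal torus over $S$, and $\theta'$ extending $\theta$, the construction $(w, s') \mapsto \theta'(n s' n^{-1} s'^{-1})$ makes sense because $\Omega(S,G) = \Omega(S', G')$ (a regular embedding induces an isomorphism on adjoint groups, hence on Weyl groups, and $N(S,G)(k)$ maps into $N(S',G')(k)$ compatibly), so $w$ acts on $S'$. Well-definedness and bi-multiplicativity are checked exactly as in (2), using that $S'$ is commutative and $w \in \Omega(S,G)(k)_\theta$ with $\theta'$ extending $\theta$ — here one needs that $n s' n^{-1} s'^{-1}$ actually lands in $S(k)$ (not just $S'(k)$), which holds because it maps to the trivial element of $S_\tx{ad}(k)$ under $S' \to S_\tx{ad}$ (the commutator is killed there as $w$ acts trivially on $S_\tx{ad}$... no — rather because $S'/S \hookrightarrow G'/G$ is central so $w$ acts trivially on the quotient $S'/S$, forcing $n s' n^{-1} s'^{-1} \in S$), and then $\theta'|_S = \theta$ gives the cocycle argument. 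Finally, to identify the pairing with the composition through $\tx{cok}(S(k) \to S_\tx{ad}(k))$: the natural map $S' \to S_\tx{ad}$ sends $n s' n^{-1} s'^{-1}$ to $c(w, \bar{s'})$ where $\bar{s'} \in S_\tx{ad}(k)$ is the image of $s'$ — here one uses that $S'_\tx{sc} = S_\tx{sc}$ and that the commutator in $S'$ of a lift of $\bar{s'}$ agrees with the commutator in $S_\tx{sc}$, because $S' \to S_\tx{ad}$ factors the covering $S_\tx{sc} \to S_\tx{ad}$ through $S_\tx{sc} \to S'$ and commutators are insensitive to the central kernel. So $\theta'(n s' n^{-1} s'^{-1}) = \theta(c(w, \bar{s'}))$, which is exactly the claimed compatibility. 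The routine diagram-chase verifying that $c(w, \bar{s'})$ computed in $S_\tx{sc}$ matches the $S'$-commutator pushed to $S_\tx{ad}$ I would leave to the reader.
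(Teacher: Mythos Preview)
Your arguments for (1), (2), and (4) are correct and essentially match the paper's. The algebra for the commutator, the bi-multiplicativity via the identity $uvs_\tx{sc}v^{-1}u^{-1}s_\tx{sc}^{-1}=u(vs_\tx{sc}v^{-1}s_\tx{sc}^{-1})u^{-1} \cdot us_\tx{sc}u^{-1}s_\tx{sc}^{-1}$, and the compatibility with the regular embedding are all as in the paper.

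The gap is in (3). You correctly flag it as ``the main obstacle,'' but your proposed route --- finding a coroot $\alpha^\vee$ with $w\alpha^\vee \neq \alpha^\vee$ and building a suitable $s_\tx{ad}$ from it --- is not a proof; it is a hope. Turning non-singularity (a condition on each $\theta \circ N \circ \alpha^\vee$ separately) into the statement that $w\theta_\tx{sc} = \theta_\tx{sc}$ forces $w=1$ is precisely the content of \cite[Proposition 5.16]{DL76}, and that result requires connected center. Your ``reduction to the split covering'' goes the wrong way: $G_\tx{sc}$ need not have connected center.

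The paper's approach is to prove (4) \emph{before} (3) and then use it. Once you have (4), the pairing on $\tx{cok}(S(k)\to S_\tx{ad}(k))$ factors through the pairing on $\tx{cok}(S(k)\to S'(k))$ via the surjection $S'(k)\to S_\tx{ad}(k)$ (Lang's theorem). So if $w$ is in the left kernel of the first pairing, then $\theta'(ws'w^{-1}s'^{-1})=1$ for all $s'\in S'(k)$, i.e.\ $w\theta'=\theta'$. Now $G'$ has connected center, so non-singularity of $\theta'$ implies $\theta'$ is in general position by \cite[Proposition 5.16]{DL76}, forcing $w=1$. That is the missing idea: pass to the regular embedding to get connected center, where non-singular equals general position.
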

\begin{proof}
There is $z \in Z(G_\tx{sc})$ s.t. $F(s_\tx{sc})=z_\tx{sc} \cdot s_\tx{sc}$, where $F$ denotes the Frobenius endomorphism. Hence $ws_\tx{sc}w^{-1}s_\tx{sc}^{-1} \in S_\tx{sc}(k)$. The independence of the choice of $s_\tx{sc}$ is immediate. If $s_\tx{ad}$ is the image of $s \in S(k)$, then the image of $ws_\tx{sc}w^{-1}s_\tx{sc}^{-1}$ under $S_\tx{sc}(k) \to S(k)$ equals $wsw^{-1}s^{-1}$, which lies in the kernel of $\theta$. Multiplicativity in $s_\tx{ad}$ is obvious. Multiplicativity in $\Omega(S,G)(k)_\theta$ follows from $uvs_\tx{sc}v^{-1}u^{-1}s_\tx{sc}^{-1}=u(vs_\tx{sc}v^{-1}s_\tx{sc}^{-1})u^{-1} \cdot us_\tx{sc}u^{-1}s_\tx{sc}^{-1}$ and the fact that $u$ fixes $\theta$.

The fact that $(w,s') \mapsto \theta'(ws'w^{-1}s'^{-1})$ is well-defined is immediate. Given $s' \in S'(k)$ let $s_\tx{ad} \in S_\tx{ad}(k)$ be its image and choose a lift $s_\tx{sc} \in S_\tx{sc}(\bar k)$ of $s_\tx{ad}$. Then $ws'w^{-1}s'^{-1}$ belongs to $S(k)$ and equals the image of $ws_\tx{sc}w^{-1}s_\tx{sc}^{-1}$ in $S(k)$.

To prove that the left kernel of (either) pairing is trivial, we observe that since $\theta$ is non-singular, so is $\theta'$, but then \cite[Proposition 5.16]{DL76} implies that $\theta'$ is in general position. If $w$ is such that $\theta(ws_\tx{sc}w^{-1}s_\tx{sc}^{-1})=1$ for all $s_\tx{ad} \in S_\tx{ad}(k)$, then $\theta'(ws'w^{-1}s'^{-1})=1$ for all $s' \in S'(k)$, but then $w=1$.
\end{proof}

\begin{cor} \label{cor:weylab1} The group $\Omega(S,G)(k)_\theta$ is abelian.
\end{cor}

We now drop the assumption that $G=G^0$. Write $N(S,G)$ for the normalizer of $S$ in $G$, $\Omega(S,G)=N(S,G)/S$, and $N(S,G)(k)_\theta$ and $\Omega(S,G)(k)_\theta$ for the stabilizers of $\theta$ in these two groups. A-priori $\Omega(S,G)(k)_\theta$ may be a proper subgroup of $\Omega(S,G)(k)_{\theta^0}$. This discrepancy is measured by the following object.

\begin{lem} \label{lem:dzbichar}
\begin{enumerate}
\item The map
\begin{equation} \label{eq:dzbichar}  \frac{\Omega(S,G)(k)_{\theta^0}}{\Omega(S,G)(k)_\theta} \times \frac{S(k)}{S^0(k)} \to \C^\times,\quad (w,s) \mapsto \theta^0(wsw^{-1}s^{-1}) \end{equation}
is well-defined, bi-multiplicative, and has a trivial left kernel.
\item If $w \in \Omega(S^0,G^0)(k)_{\theta^0}$, then the value at $(w,s)$ is equal to the value of the bicharacter of Lemma \ref{lem:dlbichar} at $(w,\bar s)$, where $\bar s \in S^0_\tx{ad}(k)$ is the image of $s$ under the map $S \to S/Z_G=S^0/(Z_G \cap G^0) \to S^0/Z_{G^0}$.
\end{enumerate}
\end{lem}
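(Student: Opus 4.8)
The plan is to establish the three assertions of part (1) — well-definedness, bi-multiplicativity, trivial left kernel — essentially by the same bookkeeping used in Lemma \ref{lem:dlbichar}, and then to read off part (2) by tracking the identifications through. First I would check that the map is well-defined: if $w$ is replaced by $ws_1$ with $s_1 \in S(k)$, then $(ws_1)s(ws_1)^{-1}s^{-1} = w(s_1 s s_1^{-1})w^{-1} s^{-1} = wsw^{-1}s^{-1}$ since $S$ is abelian, so the value depends only on the class of $w$ in $\Omega(S,G)(k)$; and if $w$ stabilizes $\theta^0$ then $wsw^{-1}s^{-1}$ always lies in $S^0(k)$ — indeed, the image of $s$ in $\Omega(S,G)$, er, rather: $wsw^{-1}$ and $s$ have the same image in $\pi_0(S) = \pi_0(G^0)\backslash$... concretely, $w$ acts trivially on $S/S^0$ because $N(S,G) = N(S^0,G)$ and the conjugation action on the finite abelian $\pi_0$ is through a group that fixes it, so $wsw^{-1}s^{-1} \in S^0(\bar k)$, and it is $F$-fixed, hence in $S^0(k)$. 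Thus $\theta^0$ can be evaluated on it. Replacing $s$ by $ss_0$ with $s_0 \in S^0(k)$ multiplies the commutator by $ws_0w^{-1}s_0^{-1}$, and applying $\theta^0$ gives a factor $\theta^0(ws_0w^{-1}s_0^{-1})$; when $w \in \Omega(S,G)(k)_{\theta^0}$ this equals $\theta^0(s_0)\theta^0(w^{-1}s_0^{-1}w)^{-1}\cdots$ — more simply, $\theta^0((ws_0w^{-1})s_0^{-1}) = \theta^0(ws_0w^{-1})\theta^0(s_0)^{-1} = 1$ because $w$ fixes $\theta^0$. So the value depends only on the class of $s$ modulo $S^0(k)$, and only on the class of $w$ modulo $\Omega(S,G)(k)_\theta$ once we know it is trivial on the latter — which is the identity $\theta^0(wsw^{-1}s^{-1}) = \theta^0((w\cdot w')s\cdots)$ argument: for $w' \in \Omega(S,G)(k)_\theta$, the cocycle identity $ww's w'^{-1}w^{-1}s^{-1} = w(w'sw'^{-1}s^{-1})w^{-1}\cdot (wsw^{-1}s^{-1})$ and the fact that $w$ fixes $\theta^0$ while $w'sw'^{-1}s^{-1} \in \ker\theta^0$ (that is what $w' \in \Omega(S,G)(k)_\theta$... no, $\Omega(S,G)(k)_{\theta^0}$; here I need $w'$ to fix $\theta$ on all of $S(k)$, which is exactly the hypothesis $w'\in\Omega(S,G)(k)_\theta$, but the commutator lies in $S^0(k)$ and $\theta|_{S^0(k)} = \theta^0$, so this is fine). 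Bi-multiplicativity in $s$ is immediate since $S$ is abelian, and in $w$ follows from the same cocycle identity.

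For the trivial left kernel in part (1), I would argue as follows: suppose $w \in \Omega(S,G)(k)_{\theta^0}$ satisfies $\theta^0(wsw^{-1}s^{-1}) = 1$ for all $s \in S(k)$. I want to conclude $w \in \Omega(S,G)(k)_\theta$, i.e. that $w$ fixes $\theta$ on all of $S(k)$. For $s \in S(k)$ we have $\theta(wsw^{-1})\theta(s)^{-1} = \theta(wsw^{-1}s^{-1})$; since $wsw^{-1}s^{-1}\in S^0(k)$ and $\theta|_{S^0(k)} = \theta^0$, this equals $\theta^0(wsw^{-1}s^{-1}) = 1$ by hypothesis. Hence $w$ fixes $\theta$, as desired — so the left kernel is trivial. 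This is in fact slightly more elementary than the corresponding argument in Lemma \ref{lem:dlbichar}, since no passage to a regular embedding or appeal to \cite[Proposition 5.16]{DL76} is needed; the subtlety has already been packaged into the observation that the commutator lands in $S^0(k)$ where $\theta$ and $\theta^0$ agree.

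For part (2), I would unwind the map $S \to S/Z_G = S^0/(Z_G\cap G^0) \to S^0/Z_{G^0} = S^0_\tx{ad}$. Here the first equality is $S = S^0\cdot Z$ (Notation \ref{ntt:dl_s}) with $Z \subset Z_G$, so $S/Z_G = S^0 Z/Z_G = S^0/(S^0\cap Z_G) = S^0/(Z_G\cap G^0)$ since $S^0\cap Z_G = Z_G\cap G^0$ (as $Z_G\cap G^0 \subset Z_{G^0}\subset S^0$); and the last map is the isogeny $S^0/Z_{G^0}\twoheadleftarrow$... sorry, $S^0/(Z_G\cap G^0) \twoheadrightarrow S^0/Z_{G^0} = S^0_\tx{ad}$. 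Given $s\in S(k)$, let $\bar s\in S^0_\tx{ad}(k)$ be its image; choose a lift $s_\tx{sc}\in S^0_\tx{sc}(\bar k)$. For $w\in\Omega(S^0,G^0)(k)_{\theta^0}$, the conjugation $wsw^{-1}s^{-1}$ is computed inside $G^0$, and modulo $Z_G$ it is the commutator of $w$ and $\bar s$, which lifts to $ws_\tx{sc}w^{-1}s_\tx{sc}^{-1}\in S^0_\tx{sc}(k)$; pushing forward to $S^0(k)$ (note the possible discrepancy between $wsw^{-1}s^{-1}$ and the image of the sc-commutator is a $Z_{G^0}(k)$-element on which $\theta^0$ need not vanish, so I must be careful — but in fact $wsw^{-1}s^{-1}$ itself maps to $\bar s$'s commutator, and the image of $ws_\tx{sc}w^{-1}s_\tx{sc}^{-1}$ under $S^0_\tx{sc}\to S^0$ is literally $wsw^{-1}s^{-1}$, because the ambiguity $z\in Z_{G^0}$ satisfies $wzw^{-1} = z$). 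Hence $\theta^0(wsw^{-1}s^{-1}) = \theta^0(\text{image of } ws_\tx{sc}w^{-1}s_\tx{sc}^{-1})$, which is exactly the value of the bicharacter of Lemma \ref{lem:dlbichar}(2) at $(w,\bar s)$. The main obstacle, and the only place requiring genuine care, is precisely this last point: making sure that the lift-and-push-forward through $S^0_\tx{sc}\to S^0$ of the $S_\tx{ad}$-commutator really reproduces $wsw^{-1}s^{-1}$ on the nose rather than up to a central element on which $\theta^0$ acts nontrivially. Everything else is formal manipulation in abelian groups and the observation, already noted before the lemma, that the relevant commutators land in $S^0(k)$.
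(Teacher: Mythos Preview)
Your proposal is correct and follows essentially the same approach as the paper's proof. The paper streamlines part (1) by making the single identity $\theta^0(wsw^{-1}s^{-1})=\theta(wsw^{-1})\theta(s)^{-1}$ do most of the work (well-definedness modulo $\Omega(S,G)(k)_\theta$, multiplicativity in $s$, and trivial left kernel all fall out of it at once), whereas you verify each claim separately with the cocycle identity; for part (2) the paper lifts $\bar s$ only to $\dot s\in S^0(\bar k)$ rather than to $S^0_\tx{sc}(\bar k)$ and observes that $s\dot s^{-1}$ centralizes $G^0$, which is exactly your ``ambiguity $z$ satisfies $wzw^{-1}=z$'' observation phrased one step earlier in the chain of lifts.
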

\begin{proof}
Consider $s \in S(k)$ and $w \in \Omega(S,G)(k)_{\theta^0}$. Then $wsw^{-1}s^{-1}$ lies in $S^0(k)$ and this allows us to consider $\theta^0(wsw^{-1}s^{-1})$. If $s \in S^0(k)$, then also $wsw^{-1} \in S^0(k)$ and $\theta^0(wsw^{-1}s^{-1})=\theta^0(wsw^{-1})\theta^0(s)^{-1}=1$. For any $s,w$ we have $\theta^0(wsw^{-1}s^{-1})=\theta(wsw^{-1})\theta(s)^{-1}$. This shows that $\theta^0(wsw^{-1}s^{-1})$ is trivial for $w \in \Omega(S,G)(k)_\theta$, and is multiplicative in $s$. For multiplicativity in $\Omega(S,G)(k)_\theta$ the argument is as in the proof of Lemma \ref{lem:dlbichar} -- we have for $u,v \in \Omega(S,G)(k)_{\theta^0}$
\[ uvsv^{-1}u^{-1}s^{-1}=u(vsv^{-1}s^{-1})u^{-1}(usu^{-1}s^{-1})\]
and the two terms in parenthesis belong to $S^0(k)$. Since $u$ fixes $\theta^0$ the desired multiplicativity follows. Finally, for a fixed $w$ we have $\theta^0(wsw^{-1}s^{-1})=1$ for all $s$ precisely when $w \in \Omega(S,G)(k)_\theta$.

For the second point, let $\dot s \in S^0(\bar k)$ be a preimage of $\bar s$. Then $s\dot s^{-1} \in S(\bar k)$ centralizes $G^0$, so $wsw^{-1}s^{-1}=w\dot sw^{-1}\dot s^{-1}$.
\end{proof}

\begin{cor} \label{cor:bichar} 
The bicharacter \eqref{eq:dzbichar} induces group homomorphisms
\[ \frac{\Omega(S,G)(k)_{\theta^0}}{\Omega(S,G)(k)_\theta} \into \left(\frac{S(k)}{S^0(k)}\right)^*\qquad\tx{and}\qquad \frac{S(k)}{S^0(k)} \onto \left(\frac{\Omega(S,G)(k)_{\theta^0}}{\Omega(S,G)(k)_\theta}\right)^* \]
the first of which is injective, and the second surjective. Moreover, we have the commutative diagram
\[ \xymatrix{
S(k)/S^0(k)\ar[r]\ar[d]&\left(\Omega(S,G)(k)_{\theta^0}/\Omega(S,G)(k)_\theta\right)^*\ar[d]\\
\tx{cok}(S^0(k) \to S^0_\tx{ad}(k))\ar[r]&\Omega(S^0,G^0)(k)_\theta^*
}
\]
where the top map is the surjective homomorphism, the bottom map is the dual of \eqref{eq:wtheta1}, the left map is induced by $S \to S/Z_G =S^0/(Z_G \cap G^0) \to S^0_\tx{ad}$, and the right map is induced by the natural inclusion $\Omega(S^0,G^0) \to \Omega(S,G)$.
\end{cor}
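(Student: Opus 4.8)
The plan is to deduce the statement formally from Lemmas~\ref{lem:dlbichar} and~\ref{lem:dzbichar} using the standard dictionary between bi-multiplicative pairings and Pontryagin duals; there is no new content beyond what those lemmas already provide. Concretely I would (i) extract the two homomorphisms and their injectivity/surjectivity from the abstract properties of the pairing~\eqref{eq:dzbichar}, and then (ii) identify the four arrows in the square explicitly and reduce its commutativity to Lemma~\ref{lem:dzbichar}(2).

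For (i) I would first record the elementary fact that a bi-multiplicative pairing $A\times B\to\C^\times$ with $A$ finite abelian and trivial left kernel yields adjoint homomorphisms $A\to B^*$ and $B\to A^*$, the first injective (this is a restatement of the triviality of the left kernel) and the second surjective: the image $H\le A^*$ of $B\to A^*$ has annihilator $H^\perp=\{a\in A:\langle a,b\rangle=1\ \forall b\in B\}=\{1\}$, whence $|H|=|A^*|$ and $H=A^*$. Applying this to~\eqref{eq:dzbichar}, whose bi-multiplicativity and trivial left kernel are exactly Lemma~\ref{lem:dzbichar}(1), with $A=\Omega(S,G)(k)_{\theta^0}/\Omega(S,G)(k)_\theta$ and $B=S(k)/S^0(k)$, produces the two displayed homomorphisms with the asserted injectivity and surjectivity. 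The only hypothesis that needs a word is that $A$ is finite, and this follows from Assumption~\ref{asm:dl_mu}: since $[G^0\cdot Z:G]<\infty$, the group scheme $\Omega(S,G)=N(S^0,G)/(S^0\cdot Z)$ is finite (it is an extension of a subgroup of $G/(G^0\cdot Z)$ by a quotient of the Weyl group $\Omega(S^0,G^0)$), so $\Omega(S,G)(k)$, and a fortiori $A$, is finite.

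For (ii) I would unwind the four arrows as maps to or from character groups. The top arrow sends $s\in S(k)/S^0(k)$ to the character $w\mapsto\theta^0(wsw^{-1}s^{-1})$ of $A$; the right arrow pulls this character back along the composite $\Omega(S^0,G^0)(k)_\theta\hookrightarrow\Omega(S,G)(k)_{\theta^0}\twoheadrightarrow A$, which is legitimate since a $w\in\Omega(S^0,G^0)$ fixing $\theta$ a fortiori fixes $\theta^0$. The left arrow sends $s$ to its image $\bar s\in\tx{cok}(S^0(k)\to S^0_\tx{ad}(k))$ under $S\to S/Z_G=S^0/(Z_G\cap G^0)\to S^0_\tx{ad}$, which on $k$-points is well-defined because on $S^0$ this chain is just the canonical map $S^0\to S^0_\tx{ad}$ (here $Z_G\cap G^0\subseteq Z_{G^0}$); and the bottom arrow, being the dual of~\eqref{eq:wtheta1} applied to $(G^0,\theta^0)$ followed by restriction to the subgroup $\Omega(S^0,G^0)(k)_\theta$, sends $\bar s$ to the character $w\mapsto\theta^0(w s_\tx{sc}w^{-1}s_\tx{sc}^{-1})$, where $s_\tx{sc}$ is a lift of $\bar s$ to the simply connected cover as in Lemma~\ref{lem:dlbichar}. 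Thus both composites send $s$ to a character of $\Omega(S^0,G^0)(k)_\theta$, and commutativity of the square is precisely the equality $\theta^0(wsw^{-1}s^{-1})=\theta^0(w s_\tx{sc}w^{-1}s_\tx{sc}^{-1})$ for $w\in\Omega(S^0,G^0)(k)_{\theta^0}$, which is Lemma~\ref{lem:dzbichar}(2).

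I expect no genuine obstacle in this argument; the only thing requiring care is the bookkeeping of which character group each arrow lives on and in which direction characters are pulled back -- in particular not conflating the map $\Omega(S^0,G^0)(k)_\theta\to A$ with the inclusion $\Omega(S^0,G^0)(k)_\theta\hookrightarrow\Omega(S^0,G^0)(k)_{\theta^0}$ -- together with the routine verification that $A$ is finite, which is what licenses the Pontryagin-duality step for the surjectivity claim.
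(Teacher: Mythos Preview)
Your proposal is correct and matches the paper's approach: the corollary is stated without proof, being immediate from Lemmas~\ref{lem:dlbichar} and~\ref{lem:dzbichar} via the standard Pontryagin-duality argument you spell out. Your care in verifying that $A$ is finite (from Assumption~\ref{asm:dl_mu}) and in tracing the arrows of the square back to Lemma~\ref{lem:dzbichar}(2) is exactly what the paper leaves to the reader.
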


\begin{rem} Note that the injective homomorphism describes $\Omega(S,G)(k)_\theta$ as the kernel of the map $\Omega(S,G)(k)_{\theta^0} \to (S(k)/S^0(k))^*$. Since this map involves only $\theta^0$, we see that $\Omega(S,G)(k)_\theta$ depends on $\theta$ only through its restriction $\theta^0$.
\end{rem}

\begin{rem} In fact, we have the following four, a-priori different, Weyl groups
\[ \xymatrix{
	\Omega(S,G^0)(k)_\theta\ar@{^(->}[r]\ar@{^(->}[d]&\Omega(S,G)(k)_\theta\ar@{^(->}[d]\\
	\Omega(S,G^0)(k)_{\theta^0}\ar@{^(->}[r]&\Omega(S,G)(k)_{\theta^0}
}
\]
If $S(k)=S^0(k)\cdot Z(k)$, then the two vertical maps are isomorphisms, but in general they can be proper inclusions. We know from Corollary \ref{cor:weylab1} that $\Omega(S,G^0)(k)_{\theta^0}$ is abelian. In the applications for $p$-adic groups it will be true that $\Omega(S,G)(k)_{\theta^0}$ is also abelian, but we do not know if this is true in general.
\end{rem}

\subsection{Lusztig's results for $G^0$} \label{sub:nsdlft}

In this subsection we assume $G=G^0$. We review here the main results of \cite{Lus88} in our special case, but formulate them without reference to the dual group of $G$. A crucial technical result is the following.

\begin{thm}[Lusztig, \cite{Lus88}] \label{thm:multfree} The representation $\kappa_{(S,\theta)}^G$ is multiplicity free.
\end{thm}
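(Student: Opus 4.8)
The plan is to reduce the statement to the case where $G$ has connected center, where it is exactly one of the main theorems of \cite{Lus88}, and then to propagate multiplicity-freeness along a regular embedding. First I would fix a regular embedding $\iota : G \to G'$ as recalled in Subsection \ref{sub:bichar-dl}, so $G'$ is a connected reductive $k$-group with connected center, $G'_\tx{ad} = G_\tx{ad}$, and there is a unique maximal torus $S' \subset G'$ with $S' \cap G = S$; by Lang's theorem $G'(k) \to G_\tx{ad}(k)$ and $S'(k) \to S_\tx{ad}(k)$ are surjective. Since $\theta$ is non-singular on $S(k)$, any extension $\theta'$ of $\theta$ to $S'(k)$ is non-singular as well, and by \cite[Proposition 5.16]{DL76} (used already in the proof of Lemma \ref{lem:dlbichar}) $\theta'$ is in general position. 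Therefore Lusztig's result for groups with connected center applies to the pair $(S', \theta')$ and gives that $\kappa_{(S', \theta')}^{G'}$ is irreducible.

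Next I would transfer this down to $G$. Deligne-Lusztig induction is compatible with the surjection $\iota^* : \tx{Irr}(G'(k)) \to$ (multiset of constituents on $G(k)$) obtained by restriction, because $Y_{B'} / Z'(k)$-type quotient arguments — more precisely the behaviour of $\mc{R}_S^G$ under a homomorphism with central kernel and cokernel, which is the content of the appendix \S\ref{app:dlreview} — show that $\tx{Res}^{G'(k)}_{G(k)} \mc{R}_{S'}^{G'}(\theta')$ is, up to sign, $\mc{R}_S^G(\theta)$ summed over the $G'(k)/G(k)S'(k)$-twists, equivalently that $\kappa_{(S,\theta)}^G$ is a sub of $\tx{Res}^{G'(k)}_{G(k)} \kappa_{(S',\theta')}^{G'}$. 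Since $G(k) \trianglelefteq G'(k)$ with abelian quotient $G'(k)/G(k) \cong \tx{cok}(S'(k) \to S_\tx{ad}(k))$-ish (a finite abelian group, as $G'(k) \to G_\tx{ad}(k)$ has image of finite index containing all of $G(k)$), Clifford theory says that the restriction to $G(k)$ of the \emph{irreducible} representation $\kappa_{(S',\theta')}^{G'}$ is multiplicity free if and only if the inertia subgroup of each constituent acts on it without the multiplicity obstruction — and for restriction from a group to a normal subgroup with abelian quotient, the restriction of an irreducible is \emph{always} multiplicity free. Hence $\tx{Res}^{G'(k)}_{G(k)}\kappa_{(S',\theta')}^{G'}$ is multiplicity free, and so is its subrepresentation $\kappa_{(S,\theta)}^G$.

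The main obstacle is the precise bookkeeping in the transfer step: one must check that $\kappa_{(S,\theta)}^G$ really does embed into $\tx{Res}^{G'(k)}_{G(k)}\kappa_{(S',\theta')}^{G'}$ rather than just sharing constituents, and that the sign normalizations $(-1)^{\sigma(G) - \sigma(S)}$ versus $(-1)^{\sigma(G') - \sigma(S')}$ match up — this is where Appendix \S\ref{app:dlreview} on the functoriality of Deligne-Lusztig induction under isogenies and central extensions does the real work, together with the equality of $k$-ranks $\sigma(G) - \sigma(S) = \sigma(G') - \sigma(S')$ (which holds because $\iota$ is an isomorphism on adjoint groups and $S'/S$ is a central, hence $k$-split-up-to-the-same-ambient, torus quotient). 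A secondary point to verify is that $\theta$ admits \emph{some} extension $\theta'$ to $S'(k)$; this is immediate since $S(k) \hookrightarrow S'(k)$ has finite, hence injective-on-Pontryagin-duals-surjective, cokernel and $\bar\Q_l^\times$ is divisible. Once these compatibilities are in place the argument is short, and it has the virtue of making transparent why the connected-center case (Lusztig) is the only genuine input.
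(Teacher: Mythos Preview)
Your argument has a genuine gap at the Clifford-theory step. You assert that ``for restriction from a group to a normal subgroup with abelian quotient, the restriction of an irreducible is \emph{always} multiplicity free.'' This is false. The paper itself records the standard counterexample in Appendix~\ref{app:cliff}: the quaternion group $Q$ sits in a central extension $0 \to \Z/2\Z \to Q \to (\Z/2\Z)^2 \to 0$ with abelian quotient, and its unique $2$-dimensional irreducible representation restricts to $\Z/2\Z$ as twice the sign character. More generally, Lemma~\ref{lem:cliff1} shows that multiplicity-freeness of restriction along $1 \to A \to B \to C \to 1$ with $A,C$ abelian is equivalent to the commutator of the extension being trivial in the sense of Definition~\ref{dfn:commtriv}, which is a genuine condition and not automatic. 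So from the irreducibility of $\kappa_{(S',\theta')}^{G'}$ alone you cannot conclude that $\tx{Res}^{G'(k)}_{G(k)}\kappa_{(S',\theta')}^{G'}$ is multiplicity free; the obstruction is precisely a $2$-cocycle on the stabilizer of a constituent, and showing it vanishes is where the real content lies.

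This is exactly why the paper does not attempt to reprove the theorem but cites it as an input from \cite{Lus88}: Lusztig's argument there uses the classification of irreducible representations in terms of the dual group and the structure of Lusztig series, not elementary Clifford theory. Indeed, the paper treats Theorem~\ref{thm:multfree} as one of the ``two essential inputs'' in the connected case (see the proof of Theorem~\ref{thm:dlpar}), and uses it in Lemma~\ref{lem:dletatriv} to deduce the vanishing of the obstruction class $[\eta]$ --- so if your reduction worked, that lemma would be vacuous. Your setup via the regular embedding and Appendix~\ref{app:dlreview} is correct and is how the paper relates $\kappa_{(S,\theta)}^G$ to $\kappa_{(S',\theta')}^{G'}$ (see the proof of Proposition~\ref{pro:dlp}), but the final step needs Lusztig's theorem rather than deriving it.
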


The conjugation action of $G_\tx{ad}(k)$ on $G(k)$ induces an action of $\tx{cok}(G(k) \to G_\tx{ad}(k))$ on the set of isomorphism classes of irreducible representations of $G(k)$. The embedding $S \to G$ induces an isomorphism of groups $\tx{cok}(S(k) \to S_\tx{ad}(k)) \to \tx{cok}(G(k) \to G_\tx{ad}(k))$, and since conjugation by $S_\tx{ad}(k)$ preserves the pair $(S,\theta)$, the action of $\tx{cok}(G(k) \to G_\tx{ad}(k))$ preserves the set $[\kappa_{(S,\theta)}^G]$.

The dual of \eqref{eq:wtheta1} is a natural surjective map of abelian groups
\begin{equation} \label{eq:wtheta1'}
\tx{cok}(G(k) \to G_\tx{ad}(k)) = \tx{cok}(S(k) \to S_\tx{ad}(k)) \to \Omega(S,G)(k)_\theta^*.
\end{equation}

\begin{pro} \label{pro:dlp}
The natural action of $\tx{cok}(G(k) \to G_\tx{ad}(k))$ on the Deligne-Lusztig packet $[\kappa_{(S,\theta)}^G]$ factors through a simply transitive action of $\Omega(S,G)(k)_\theta^*$.
\end{pro}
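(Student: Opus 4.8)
The plan is to combine Lusztig's multiplicity-one result (Theorem \ref{thm:multfree}) with Lusztig's description of $[\kappa_{(S,\theta)}^G]$ as a torsor under $\Omega(S,G)(k)_\theta^*$, and then to identify the action coming from $\tx{cok}(G(k) \to G_\tx{ad}(k))$ with the torsor action via the explicit map \eqref{eq:wtheta1'}. First I would recall precisely what Lusztig proves in \cite{Lus88}: after translating his dual-group statement into the present notation, the set of irreducible constituents of $\kappa_{(S,\theta)}^G$ carries a canonical simply transitive action of the character group of the (abelian, by Corollary \ref{cor:weylab1}) group $\Omega(S,G)(k)_\theta$. So there is \emph{some} simply transitive $\Omega(S,G)(k)_\theta^*$-action on $[\kappa_{(S,\theta)}^G]$; the content of the proposition is that the geometric action of $\tx{cok}(G(k)\to G_\tx{ad}(k))$ realizes this torsor action through the surjection \eqref{eq:wtheta1'}.

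The key mechanism I would use to pin down the $\tx{cok}(G(k)\to G_\tx{ad}(k))$-action is the behavior of Deligne-Lusztig induction under twisting by $S_\tx{ad}(k)$: for $s_\tx{ad}\in S_\tx{ad}(k)$ with lift $s_\tx{sc}\in S_\tx{sc}(\bar k)$, conjugation by $s_\tx{ad}$ carries the Deligne-Lusztig variety $Y_B$ to $Y_{{}^{s_\tx{ad}}B}$ and, on $\theta$-isotypic cohomology, intertwines the natural operators up to the scalar $\theta(ws_\tx{sc}w^{-1}s_\tx{sc}^{-1})$ recorded in Lemma \ref{lem:dlbichar}. Concretely, $\kappa_{(S,\theta)}^G$ is $G_\tx{ad}(k)$-stable (since $(S,\theta)$ is $S_\tx{ad}(k)$-stable and every $g\in G_\tx{ad}(k)$ is $G(k)S_\tx{ad}(k)$-conjugate to an element normalizing $S$... more simply: $\mathcal R_S^G(\theta)$ depends only on the $G(k)$-conjugacy class of $(S,\theta)$, hence is $G_\tx{ad}(k)$-stable), so $\tx{cok}(G(k)\to G_\tx{ad}(k))$ permutes $[\kappa_{(S,\theta)}^G]$; to compute this permutation I would pair each constituent with the character $\theta$ on the endomorphism algebra (identified with $\bar\Q_l[\Omega(S,G)(k)_\theta]$ via the self-intertwining operators reviewed in the introduction), and observe that conjugation by $s_\tx{ad}$ rescales the basis element indexed by $w$ by exactly $\theta(ws_\tx{sc}w^{-1}s_\tx{sc}^{-1})$. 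Thus the constituent indexed (under Lusztig's torsor structure) by a character $\chi\in\Omega(S,G)(k)_\theta^*$ is sent to the one indexed by $\chi\cdot\langle\,\cdot\,,s_\tx{ad}\rangle$, where $\langle\,\cdot\,,s_\tx{ad}\rangle$ is precisely the image of $s_\tx{ad}$ under \eqref{eq:wtheta1'}. Since that map is surjective (Lemma \ref{lem:dlbichar}(3) gives injectivity of the transpose), the induced $\Omega(S,G)(k)_\theta^*$-action is simply transitive, and the proposition follows.

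The main obstacle I anticipate is the bookkeeping in the middle step: making rigorous the claim that conjugation by $s_\tx{ad}$ acts on the self-intertwining operator indexed by $w$ by the scalar $\theta(ws_\tx{sc}w^{-1}s_\tx{sc}^{-1})$. This requires carefully tracking how $s_\tx{ad}$-conjugation moves a Borel $B\supset S$, how it acts on $H^*_c(Y_B,\bar\Q_l)_\theta$, and how it interacts with the transition operators $H^*_c(Y_{B_1},\bar\Q_l)_\theta\to H^*_c(Y_{B_2},\bar\Q_l)_\theta$ — in other words, an equivariance statement for the Bonnafé-Dat-Rouquier operators under the ambient $S_\tx{ad}(k)$-action, which is not literally stated in the excerpt but is the natural compatibility one expects. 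An alternative route that sidesteps the cohomology and goes purely through character theory: Lusztig's parameterization of $[\kappa_{(S,\theta)}^G]$ is compatible with the natural $\tx{cok}(G(k)\to G_\tx{ad}(k))$-action essentially by construction on the dual side (twisting the pair $(S,\theta)$ by $S_\tx{ad}(k)$ corresponds to translating by elements of the relevant component group), so one may instead cite the $G_\tx{ad}$-equivariance already present in \cite{Lus88} and just verify that the resulting $\tx{cok}(G(k)\to G_\tx{ad}(k))^*$-indexing matches \eqref{eq:wtheta1'}. Either way, the only real work is checking that the scalar appearing is the bicharacter of Lemma \ref{lem:dlbichar}; everything else is formal, using that \eqref{eq:wtheta1'} is surjective with transpose \eqref{eq:wtheta1} injective.
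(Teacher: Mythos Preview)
Your approach is workable but takes a considerably longer route than the paper's, and it leans on machinery that the paper only develops afterward. The paper does \emph{not} invoke Lusztig's torsor statement as a black box and then check compatibility; instead it gives a short self-contained argument via a regular embedding. Choose $G\hookrightarrow G'$ with $G'$ having connected center, let $S'\supset S$ be the corresponding maximal torus, and extend $\theta$ to $\theta'$ on $S'(k)$. Then $\kappa_{(S',\theta')}^{G'}$ is irreducible (since $\theta'$ is in general position) and its restriction to $G(k)$ is $\kappa_{(S,\theta)}^G$. Clifford theory for the extension $1\to G(k)\to G'(k)\to S'(k)/S(k)\to 1$, combined with Theorem~\ref{thm:multfree}, shows that $S'(k)/S(k)$ acts transitively on $[\kappa_{(S,\theta)}^G]$ and that the kernel of this action is the annihilator of the stabilizer of $\kappa_{(S',\theta')}^{G'}$ in $(S'(k)/S(k))^*$. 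That stabilizer is computed directly: $\delta\otimes\kappa_{(S',\theta')}^{G'}=\kappa_{(S',\delta\theta')}^{G'}$, and by \cite[Theorem 6.8]{DL76} this equals $\kappa_{(S',\theta')}^{G'}$ iff $\delta=w\theta'\cdot\theta'^{-1}$ for some $w\in\Omega(S,G)(k)_\theta$, i.e.\ iff $\delta$ lies in the image of \eqref{eq:wtheta1}. Dualizing gives exactly the statement.

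By contrast, your route requires (i) identifying $\tx{End}_{G(k)}(\kappa_{(S,\theta)}^G)$ with $\bar\Q_l[\Omega(S,G)(k)_\theta]$ via normalized self-intertwining operators, and (ii) verifying that $\tx{Ad}(s_\tx{ad})$ rescales these operators by the bicharacter of Lemma~\ref{lem:dlbichar}. Step (i) is essentially the connected case of Theorem~\ref{thm:dlpar} and already presupposes the existence of a normalized equivariant collection (Lemma~\ref{lem:dletatriv}); step (ii) is exactly the ``claim'' proved inside the argument for Theorem~\ref{thm:dlpar}. So you are reproducing that later argument to prove the earlier proposition. This is not circular (the connected case of Theorem~\ref{thm:dlpar} does not cite Proposition~\ref{pro:dlp}), and it does yield the result, but it is heavier: you import the Bonnaf\'e--Dat--Rouquier operators and their equivariance where the paper needs only a regular embedding, the twisting identity $\delta\otimes\mc{R}_{S'}^{G'}(\theta')=\mc{R}_{S'}^{G'}(\delta\theta')$, and elementary Clifford theory. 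The payoff of the paper's approach is that it stays entirely within classical Deligne--Lusztig character theory; the payoff of yours would be a direct geometric picture of the action, but at the cost you yourself flag as the ``main obstacle''.
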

\begin{proof}
Consider again a regular embedding $G \to G'$, let $S' \subset G'$ be the maximal torus containing $S$, and let $\theta' : S'(k) \to \bar\Q_l^\times$ be an extension of $\theta$. Recall from Appendix \ref{app:dlreview} that there is a natural isomorphism $\kappa_{(S,\theta)}^G \to \kappa_{(S',\theta')}^{G'}$ that intertwines the embedding $G(k) \to G'(k)$. In other words, the representation $\kappa_{(S,\theta)}^G$ of $G(k)$ is the restriction of the representation $\kappa_{(S',\theta')}^{G'}$ of $G'(k)$. Since $\theta'$ is in general position, the latter is irreducible. We can thus apply Clifford theory to the exact sequence
\[ 1 \to G(k) \to G'(k) \to S'(k)/S(k) \to 1, \]
bearing in mind Theorem \ref{thm:multfree}. By Lemma \ref{lem:cliff1} the action of $S'(k)/S(k)$ on $G(k)$ induces a transitive action on the set of irreducible constituents of $\kappa_{(S,\theta)}^G$, whose kernel is the annihilator of the subgroup of $[S'(k)/S(k)]^*$ that stabilizes $\kappa_{(S',\theta')}^{G'}$. To see what the latter is, let $\delta : S'(k)/S(k) \to \bar\Q_l^\times$. By Appendix \ref{app:dlreview} we have $\delta \otimes \kappa_{(S',\theta')}^{G'} = \kappa_{(S',\delta\theta')}^{G'}$. By \cite[Proposition 5.26 and Corollary 6.3]{DL76} this is isomorphic to $\kappa_{(S',\theta')}^{G'}$ if and only if the characters $\theta'$ and $\delta\theta'$ are conjugate under $\Omega(S',G')(k)$. That is, if and only if there exists $w \in \Omega(S',G')(k)=\Omega(S,G)(k)$ such that $\delta = w\theta' \cdot \theta'^{-1}$. The stabilizer of $\kappa_{(S',\theta')}^{G'}$ in $[S'(k)/S(k)]^*$ is thus the image of $\Omega(S,G)(k)_\theta$ under \eqref{eq:wtheta1}. Therefore the kernel of the action of $[S'(k)/S(k)]$ on $[\kappa_{(S,\theta)}^G]$ is precisely the kernel of \eqref{eq:wtheta1'}.
\end{proof}

We thus see that the set $[\kappa_{(S,\theta)}^G]$ is a torsor for the finite abelian group $\Omega(S,G)(k)_\theta^*$. For our applications it will be important to reinterpret this torsor using the group $N(S,G)(k)_\theta$.

\begin{pro} \label{pro:dlcharext} The character $\theta$ extends to the group $N(S,G)(k)_\theta$.
\end{pro}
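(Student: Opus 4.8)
The plan is to produce the extension of $\theta$ from $S(k)$ to $N(S,G)(k)_\theta$ by an obstruction-theoretic argument, reducing everything to the vanishing of a cohomology class in $H^2(\Omega(S,G)(k)_\theta, \bar\Q_l^\times)$ and then invoking the basic bicharacter of Lemma~\ref{lem:dlbichar} to kill that obstruction. First I would observe that, since $G = G^0$ here, the extension
\[ 1 \to S(k) \to N(S,G)(k)_\theta \to \Omega(S,G)(k)_\theta \to 1 \]
together with the $\Omega(S,G)(k)_\theta$-invariant character $\theta$ of $S(k)$ (invariance being exactly the statement that we restricted to the stabilizer) gives rise, by standard Clifford-theoretic obstruction theory (cf.\ Appendix~\ref{app:cliff}), to a class $\omega_\theta \in H^2(\Omega(S,G)(k)_\theta, \bar\Q_l^\times)$, and $\theta$ extends to $N(S,G)(k)_\theta$ if and only if $\omega_\theta = 0$. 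So the whole proposition reduces to showing $\omega_\theta$ is trivial.

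Next I would compute $\omega_\theta$ more explicitly. Choosing a set-theoretic section $w \mapsto \dot w$ of the extension, the class $\omega_\theta$ is represented by the $2$-cocycle $c(w_1,w_2) = \theta\big(\dot w_1 \dot w_2 \dot{w_1 w_2}^{-1}\big)$. The key input is that this cocycle is \emph{symmetric}: using the commutator pairing from Lemma~\ref{lem:dzbichar}(1) (or rather its $G=G^0$ avatar, the pairing of Lemma~\ref{lem:dlbichar}, since here $S(k)\supset S^0(k)\cdot Z(k)$ with $G^0 = G$), one checks that $c(w_1,w_2)/c(w_2,w_1)$ equals a value of a bicharacter on commutators, and the multiplicativity computations already carried out in the proofs of Lemmas~\ref{lem:dlbichar} and~\ref{lem:dzbichar} show this ratio is controlled. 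More precisely, since $\Omega(S,G)(k)_\theta$ is abelian (Corollary~\ref{cor:weylab1}), the class $\omega_\theta$ lands in the part of $H^2$ classifying abelian extensions, and one shows directly that the commutator pairing on $N(S,G)(k)_\theta$ induced by $\theta$ is trivial: for $w_1, w_2 \in \Omega(S,G)(k)_\theta$ with lifts $n_1, n_2 \in N(S,G)(k)_\theta$, the element $n_1 n_2 n_1^{-1} n_2^{-1} \in S(k)$, and I claim $\theta(n_1 n_2 n_1^{-1} n_2^{-1}) = 1$. This is where I expect the real work to lie.

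To prove that claim I would pass to a regular embedding $G \hookrightarrow G'$ as in Subsection~\ref{sub:bichar-dl}, pick the maximal torus $S' \subset G'$ containing $S$ and an extension $\theta'$ of $\theta$ to $S'(k)$; then $\theta'$ is in general position by \cite[Proposition 5.16]{DL76}, so $\Omega(S',G')(k)_{\theta'} = 1$. Since $\Omega(S',G') = \Omega(S,G)$ and $N(S',G') = N(S,G) \cdot S'$, the group $N(S,G)(k)_\theta$ maps to $N(S',G')(k)/S'(k) = \Omega(S,G)(k)_\theta$ with the preimage of the identity being $S'(k) \cap N(S,G)(k)_\theta \supseteq S(k)$; the commutator $n_1 n_2 n_1^{-1} n_2^{-1}$, being a commutator of elements whose images in $\Omega(S,G)(k)$ fix $\theta'$, and whose images commute, equals (by the computation in Lemma~\ref{lem:dlbichar}(1), applied with $s_\mathrm{ad}$ the image of the relevant element) a value $\theta'(w_1 s'_{2} w_1^{-1} s_{2}'^{-1})$ of the trivial pairing $\Omega(S',G')(k)_{\theta'} \times \mathrm{cok}(\cdots) \to \bar\Q_l^\times$ --- trivial precisely because $\Omega(S',G')(k)_{\theta'} = 1$. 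Hence the commutator pairing vanishes, $\omega_\theta$ is symmetric, and an abelian extension with trivial commutator pairing of an abelian group by $\bar\Q_l^\times$ (which is divisible, hence injective as a $\mathbb{Z}$-module) splits after restriction to any finite subgroup; since $\Omega(S,G)(k)_\theta$ is finite, $\omega_\theta = 0$ and the extension exists. The main obstacle, as indicated, is the bookkeeping in identifying the commutator cocycle of $N(S,G)(k)_\theta$ with a value of the Lemma~\ref{lem:dlbichar} bicharacter and tracking it through the regular embedding --- everything else is formal.
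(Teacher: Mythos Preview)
Your overall framework is right: the obstruction to extending $\theta$ lives in $H^2(\Omega(S,G)(k)_\theta,\bar\Q_l^\times)$, and since $\Omega(S,G)(k)_\theta$ is finite abelian (Corollary~\ref{cor:weylab1}), by Lemma~\ref{lem:exc} this class vanishes if and only if the commutator pairing $(w_1,w_2)\mapsto \theta(n_1 n_2 n_1^{-1} n_2^{-1})$ is trivial. So the proposition does reduce to that commutator identity.

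The gap is in your argument for the identity. You assert that after passing to a regular embedding $G\hookrightarrow G'$ and an extension $\theta'$ of $\theta$, the images $w_1,w_2$ of $n_1,n_2$ ``fix $\theta'$''. They do not: they fix $\theta$, which is strictly weaker, and indeed $\Omega(S',G')(k)_{\theta'}=\{1\}$ because $\theta'$ is in general position, so if your claim held it would force $w_1=w_2=1$. Consequently you cannot plug $(w_1,n_2)$ into either bicharacter of Lemma~\ref{lem:dlbichar}: in part~(4) the second argument must lie in $S'(k)$, and $n_2\notin S'(k)$; and the pairing there is not trivial but injective on the left. The expression $\theta'(w_1 s_2' w_1^{-1} s_2'^{-1})$ you write down is simply not the commutator $\theta'(n_1 n_2 n_1^{-1} n_2^{-1})$ you need to evaluate.

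The paper takes a different route that avoids this. It first reduces to $G_\tx{sc}$ via $N(S,G)(k)=N(S_\tx{sc},G_\tx{sc})(k)\cdot S(k)$, then to absolutely simple simply connected factors via restriction of scalars. For every type except split $D_{2n}$ the group $\Omega(S,G)(k)_\theta$ is cyclic, so $H^2$ vanishes outright. The one remaining case, where $\Omega(S,G)(k)_\theta\cong(\Z/2\Z)^2$, is handled by an explicit coordinate computation with Tits lifts in the appendix (Lemma~\ref{lem:d2nr}) that directly verifies the commutator triviality. That computation is genuinely the content of the proposition in the hard case; there is no soft regular-embedding shortcut.
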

\begin{proof} According to Lang's theorem we have $\Omega(S,G)(k)=N(S_\tx{sc},G_\tx{sc})(k)/S_\tx{sc}(k)$ and this implies $N(S,G)(k)=N(S_\tx{sc},G_\tx{sc})(k)\cdot S(k)$. It is thus enough to show that $\theta|_{S_\tx{sc}(k)}$ extends to $N(S_\tx{sc},G_\tx{sc})(k)_\theta$. Since the latter is a subgroup of the stabilizer of $\theta|_{S_\tx{sc}(k)}$ in $N(S_\tx{sc},G_\tx{sc})(k)$, we may as well assume that $G=G_\tx{sc}$. Then $G$ is a product of $k$-simple factors and we may deal with each factor individually and then take the product of the extensions. Thus we may assume that $G$ is $k$-simple. Then $G=\tx{Res}_{k'/k}G'$, with $G'$ an absolutely simple simply connected group defined over a finite extension $k'$ of $k$. Since $S$ is defined over $k$, it is of the form $S=\tx{Res}_{k'/k}S'$ with $S' \subset G'$ an elliptic maximal torus defined over $k'$. If $G'$ is of type other than $D_{2n}^{(1)}$, the group $\Omega(S,G)(k)_\theta$ is cyclic, hence $H^2(\Omega(S,G)(k)_\theta,\bar\Q_l^\times)=1$ and the extendibility of $\theta$ follows from Lemma \ref{lem:cliff1}. If $G'$ is of type $D_{2n}^{(1)}$ then $\Omega(S,G)(k)_\theta$ is one of $\{1\}$, $\Z/2\Z$, or $(\Z/2\Z)^2$. In the first two cases the extendibility of $\theta$ follows by the same argument, while in the third case it follows from Lemma \ref{lem:cliff1} together with Lemma \ref{lem:d2nr}.
\end{proof}

We thus obtain a second torsor for the group $\Omega(S,G)(k)_\theta^*$, namely the set of extensions of $\theta$ to $N(S,G)(k)_\theta$. This set is thus in non-canonical bijection with $[\kappa_{(S,\theta)}^G]$. In the following we shall discuss what it takes to specify such a bijection.

\subsection{Natural intertwining operators} \label{sub:natiop}

We continue to assume $G=G^0$. Recall that a specific representation of $G(k)$ in the isomorphism class of $\kappa_{(S,\theta)}^G$ is obtained as follows. Let $F$ be the Frobenius endomorphism of $G$. Let $U \subset G$ be the unipotent radical of a Borel subgroup of $G$, defined over $\bar k$, and containing $S$. The corresponding Deligne-Lusztig variety
\[ Y_U = \{ gU \in G/U| g^{-1}F(g) \in U \cdot FU \} \]
receives an action of $G(k)$ by left multiplication and of $S(k)$ by right multiplication. The $l$-adic cohomology group $H^i_c(Y_U,\bar\Q_l)$ inherits both actions, and we may consider the $\theta$-isotypic component for the right action of $S(k)$
\[ H^i_c(Y_U,\bar\Q_l)_\theta = \{v \in H^i_c(Y_U,\bar\Q_l)| vs = \theta(s) \forall s \in S(k) \}. \]
According to \cite[Corollary 9.9]{DL76} and \cite{He08} this component vanishes for all but one $i$, namely $i=d(U,FU)$, where $d(U,FU)$ denotes the number of root hyperplanes separating the Weyl chambers of $U$ and $FU$ respectively, and is also equal to the dimension of the variety $X_U=Y_U/S(k)$. We have $\kappa_{(S,\theta)}^G=H^{d_U}_c(Y_U,\bar\Q_l)_\theta$ for $d_U=d(U,FU)$.

Let $V$ be the unipotent radical of another Borel subgroup containing $S$. Then $H^{d_V}_c(Y_V,\bar\Q_l)_\theta$ is another model for $\kappa_{(S,\theta)}^G$. Thus there exists a $G(k)$-equivariant isomorphism $H^{d_U}_c(Y_U,\bar\Q_l)_\theta \to H^{d_V}_c(Y_V,\bar\Q_l)_\theta$. By Theorem \ref{thm:multfree}, the set of all $G(k)$-equivariant morphisms $H^{d_U}_c(Y_U,\bar\Q_l)_\theta \to H^{d_V}_c(Y_V,\bar\Q_l)_\theta$ is a $\bar\Q_l$-vector space of dimension $\#[\kappa_{(S,\theta)}^G]$. Our first observation is that it has a distinguished line.

Let $G \to G'$ be a regular embedding, $S' \subset G'$ the maximal torus containing $S$, and $\theta' : S'(k) \to \bar\Q_l^\times$ an extension of $\theta$. As recalled in Appendix \ref{app:dlreview}, we have a natural isomorphism $H^{d_U}_c(Y^G_U,\bar\Q_l)_\theta \to H^{d_U}_c(Y^{G'}_U,\bar\Q_l)_{\theta'}$ of $\bar\Q_l$-vector spaces that intertwines the inclusion $G(k) \to G'(k)$. In this way, we obtain an action of $G'(k)$ on $H^{d_U}_c(Y_U,\bar\Q_l)_\theta$ that extends the action of $G(k)$. The same is true for $H^{d_V}_c(Y_V,\bar\Q_l)_\theta$. Since $\theta'$ is in general position, these representations of $G'(k)$ are irreducible. Being isomorphic, the set of $G'(k)$-equivariant isomorphisms between them is a 1-dimensional $\bar\Q_l$-vector space.

\begin{dfn} \label{dfn:natiop} A \emph{natural intertwining operator} $H^{d_U}_c(Y_U,\bar\Q_l)_\theta \to H^{d_V}_c(Y_V,\bar\Q_l)_\theta$ is a $G'(k)$-equivariant linear map.
\end{dfn}

\begin{lem} This notion is independent of the choices of $G \to G'$ and $\theta'$.
\end{lem}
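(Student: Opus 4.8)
The plan is to show that although the construction of a natural intertwining operator appears to depend on the auxiliary data $G \to G'$ and the extension $\theta'$, the resulting one-dimensional space of $G'(k)$-equivariant maps inside $\mathrm{Hom}_{G(k)}(H^{d_U}_c(Y_U,\bar\Q_l)_\theta, H^{d_V}_c(Y_V,\bar\Q_l)_\theta)$ is intrinsic. First I would reduce the comparison of two choices to two elementary moves: (a) changing the extension $\theta'$ while keeping $G'$ fixed, and (b) comparing two different regular embeddings $G \to G'_1$ and $G \to G'_2$. Any two choices of regular embedding can be connected through a common refinement — given $G \to G'_1$ and $G \to G'_2$, one forms the fiber product over $G_{\mathrm{ad}}$ (or passes to a group dominating both) which is again a regular embedding — so it suffices to treat the case where one regular embedding factors through another, together with move (a).

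For move (a): if $\theta'$ and $\theta''$ are two extensions of $\theta$ to $S'(k)$, they differ by a character $\delta$ of $S'(k)/S(k)$, which we may inflate to a character of $G'(k)$ via the surjection $G'(k) \to S'(k)/S(k)$ induced by $G'(k)/G(k) \cong S'(k)/S(k)$ (this last isomorphism is exactly what makes Clifford theory work, as used in the proof of Proposition~\ref{pro:dlp}). By the twisting property of Deligne--Lusztig induction recalled in Appendix~\ref{app:dlreview}, the natural isomorphism $H^{d_U}_c(Y^{G'}_U,\bar\Q_l)_{\theta''} \cong \delta \otimes H^{d_U}_c(Y^{G'}_U,\bar\Q_l)_{\theta'}$ is itself $G'(k)$-equivariant up to the known twist, and the same holds for $V$; since $\delta$ is a character, tensoring by $\delta$ carries $G'(k)$-equivariant maps for the $\theta'$-models to $G'(k)$-equivariant maps for the $\theta''$-models bijectively. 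Hence the distinguished line does not move. For move (b), say $G \to G'$ factors as $G \to G'' \to G'$ with both maps regular embeddings: here I would use that a $G'(k)$-equivariant map is in particular $G''(k)$-equivariant (restricting the action along $G''(k) \hookrightarrow G'(k)$), and conversely that the irreducibility of the $G''(k)$-representation $H^{d_U}_c(Y_U,\bar\Q_l)_{\theta''}$ — valid because an extension $\theta''$ of $\theta$ to $S''(k)$ is again in general position by \cite[Proposition 5.16]{DL76}, the non-singularity being inherited — forces the one-dimensional space of $G''(k)$-equivariant isomorphisms to already be one-dimensional, so it cannot shrink further upon imposing $G'(k)$-equivariance; the two distinguished lines therefore coincide. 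One must also check the compatibility is transitive, i.e.\ that the natural isomorphisms $H^{d_U}_c(Y^G_U)_\theta \to H^{d_U}_c(Y^{G''}_U)_{\theta''} \to H^{d_U}_c(Y^{G'}_U)_{\theta'}$ compose to the one for $G \to G'$, which is part of the functoriality statements in Appendix~\ref{app:dlreview}.

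The main obstacle I anticipate is purely bookkeeping rather than conceptual: making sure that the various ``natural isomorphisms'' from Appendix~\ref{app:dlreview} are compatible under composition of regular embeddings and under twisting by characters, and that the surjectivity $G'(k) \to S'(k)/S(k)$ (via Lang's theorem, as in the proof of Proposition~\ref{pro:dlcharext}) is used consistently so that inflation of characters behaves correctly. Once those compatibilities are in hand, the independence statement is a formal consequence of the irreducibility of the $G'(k)$-models and Schur's lemma, exactly as in the argument already deployed for Proposition~\ref{pro:dlp}.
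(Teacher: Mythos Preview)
Your argument is correct and, for the independence from $\theta'$, essentially identical to the paper's: both observe that changing the extension by $\delta \in (S'(k)/S(k))^*$ twists the $G'(k)$-action on each $H^{d_U}_c(Y_U,\bar\Q_l)_\theta$ by the same character, so the condition of $G'(k)$-equivariance on a linear map is unchanged.

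For the independence from the regular embedding, the paper simply cites \cite[Lemma~7.1]{Lus88}, whereas you unpack the argument: pass to a common roof $G'$ dominating both embeddings, then observe that for a factorization $G \to G'' \to G'$ the $G''(k)$-equivariant line is already one-dimensional (by irreducibility, since $\theta''$ is again in general position) and contains the $G'(k)$-equivariant line, forcing equality. This is exactly the skeleton of Lusztig's lemma, so you are not really taking a different route so much as reproving the cited result. One small correction: the ``fiber product over $G_{\mathrm{ad}}$'' is not the right object---it receives maps from $G$ but does not naturally receive embeddings from the $G'_i$. The construction you want is a pushout-type roof such as $(G'_1 \times G'_2)/G$ (with $G$ embedded antidiagonally via the two given embeddings), which does have connected center and admits regular embeddings from both $G'_i$; your parenthetical ``passes to a group dominating both'' is the correct instinct. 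The transitivity check you flag (compatibility of the Appendix~\ref{app:dlreview} isomorphisms under composition) is routine and is indeed what makes the restriction of the $G'(k)$-action along $G''(k) \hookrightarrow G'(k)$ agree with the intrinsic $G''(k)$-action.
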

\begin{proof}
Another choice of $\theta'$ is of the form $\theta' \cdot \delta$ for a character $\delta : S'(k)/S(k) \to \bar\Q_l^\times$. Let $Y'_U$ be the Deligne-Lusztig variety for $G'$. As reviewed in Appendix \ref{app:dlreview} the action of $G'(k)$ on $H^{d_U}(Y_U,\bar\Q_l)_\theta$ obtained from $\theta' \cdot \delta$ is equal to the twist by $\delta$ of the action obtained from $\theta'$, where now $\delta$ is viewed as a character of $G'(k)/G(k)$. It is now clear that if a linear map $H^{d_U}_c(Y_U,\bar\Q_l)_\theta \to H^{d_V}_c(Y_V,\bar\Q_l)_\theta$ is equivariant for the $G'(k)$-actions obtained from $\theta'$, then it is also equivariant for the $G'(k)$-actions obtained from $\theta'\cdot\delta$. The independence from the choice of $G \to G'$ follows from \cite[Lemma 7.1]{Lus88}.
\end{proof}

\begin{cor} \label{cor:natcomp} The composition of two natural intertwining operators is again a natural intertwining operator.
\end{cor}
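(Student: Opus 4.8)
The plan is to reduce the statement to the elementary fact that a composite of equivariant maps is equivariant; the only point needing attention is to arrange that the two operators are equivariant for one and the same group action on the intermediate cohomology space.

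First I would unwind Definition \ref{dfn:natiop}: a natural intertwining operator $H^{d_U}_c(Y_U,\bar\Q_l)_\theta \to H^{d_V}_c(Y_V,\bar\Q_l)_\theta$ is a linear map equivariant for the $G'(k)$-action obtained from a choice of regular embedding $G \to G'$ together with an extension $\theta'$ of $\theta$ to $S'(k)$. The key observation is that neither the regular embedding nor the character $\theta'$ depends on the chosen Borel subgroup: the embedding is intrinsic to $G$, and $\theta'$ is merely a character of $S'(k)$. So I would fix one regular embedding and one such $\theta'$ once and for all, and use the resulting $G'(k)$-actions on $H^{d_U}_c(Y_U,\bar\Q_l)_\theta$, $H^{d_V}_c(Y_V,\bar\Q_l)_\theta$ and $H^{d_W}_c(Y_W,\bar\Q_l)_\theta$ simultaneously. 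Given natural intertwining operators $f : H^{d_U}_c(Y_U,\bar\Q_l)_\theta \to H^{d_V}_c(Y_V,\bar\Q_l)_\theta$ and $g : H^{d_V}_c(Y_V,\bar\Q_l)_\theta \to H^{d_W}_c(Y_W,\bar\Q_l)_\theta$, the preceding lemma (choice-independence of the notion of natural intertwining operator) guarantees that $f$ and $g$ are both equivariant for this single fixed $G'(k)$-action; in particular the action on the intermediate space $H^{d_V}_c(Y_V,\bar\Q_l)_\theta$ that appears as the target of $f$ is literally the same action that appears as the source of $g$. Hence $g \circ f$ is $G'(k)$-equivariant for the fixed data, and therefore a natural intertwining operator.

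I expect the only genuine obstacle to be precisely this matching of actions on the middle space: a priori $f$ and $g$ might have been defined using different regular embeddings or different extensions of $\theta$, so that the two $G'(k)$-module structures on $H^{d_V}_c(Y_V,\bar\Q_l)_\theta$ are not visibly the same, and ``composition of equivariant maps'' would not immediately apply. This is exactly what the preceding choice-independence lemma is there to defuse; once it is in hand, the remainder of the argument is purely formal, and in particular no geometric input about the Deligne--Lusztig varieties is needed beyond what has already been used.
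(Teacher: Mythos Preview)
Your proposal is correct and matches the paper's reasoning exactly: the paper states this corollary with no proof, relying precisely on the preceding choice-independence lemma so that both operators can be taken equivariant for one fixed $G'(k)$-action, after which the claim is the formal fact that a composite of equivariant maps is equivariant. Your write-up simply spells out what the paper leaves implicit.
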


\begin{dfn} \label{dfn:normiop}
Assume given a subset $X$ of the set of unipotent radicals of Borel subgroups containing $S$.
\begin{enumerate}
	\item A \emph{collection of natural intertwining operators on $X$} consists of a non-zero natural intertwining operator $\Phi_{V,U} : H^{d_U}_c(Y_U,\bar\Q_l)_\theta \to H^{d_V}_c(Y_V,\bar\Q_l)_\theta$ for any $U,V \in X$, and with the convention $\Phi_{U,U}=\tx{id}$.
	\item The collection $\Phi$ is called \emph{normalized} if $\Phi_{U_3,U_2} \circ \Phi_{U_2,U_1} = \Phi_{U_3,U_1}$ for all $U_1,U_2,U_3 \in X$.
	\item Let $\Gamma$ be a group acting on $G$ by automorphisms and preserving $S$, $\theta$, and $X$. The collection $\Phi$ is called \emph{$\Gamma$-equivariant} if $\gamma \circ \Phi_{U,V} \circ \gamma^{-1}=\Phi_{\gamma(U),\gamma(V)}$ for all $\gamma \in \Gamma$.
	\item The collection $\Phi$ is called \emph{coherent}, if it is $\Gamma$-equivariant, its restriction to any $\Gamma$-orbit is normalized, and for all $U,V \in X$ and $\gamma \in \Gamma$ we have $\Phi_{\gamma(V),\gamma(U)}\circ\Phi_{\gamma(U),U} = \Phi_{\gamma(V),V}\circ\Phi_{V,U}$.
\end{enumerate}
\end{dfn}

We shall now investigate the question whether, for a given $\Gamma$, normalized $\Gamma$-equivariant collections exist. Corollary \ref{cor:natcomp} allows us to measure the failure of a collection $\Phi$ to be normalized by the following function of $(U_1,U_2,U_3) \in X^3$

\begin{equation} \label{eq:etaphi} \eta_\Phi(U_1,U_2,U_3) = \Phi_{U_1,U_2} \circ \Phi_{U_2,U_3} \circ \Phi_{U_1,U_3}^{-1} \in \bar\Q_l^\times. \end{equation}
Any other collection is of the form $\epsilon\Phi$ for a function $\epsilon : X \times X \to \bar\Q_l^\times$ with $\epsilon(U,U)=1$. Here $[\epsilon\Phi]_{U_1,U_2}=\epsilon(U_1,U_2)\Phi_{U_1,U_2}$. The collection $\epsilon\Phi$ is normalized if and only if $\epsilon(U_1,U_2)\epsilon(U_2,U_3)\epsilon(U_1,U_3)^{-1}=\eta_\Phi(U_1,U_2,U_3)^{-1}$. If $\Phi$ is $\Gamma$-equivariant, then $\epsilon\Phi$ is $\Gamma$-equivariant if and only if $\epsilon$ is $\Gamma$-invariant in the sense that $\epsilon(\gamma(U_1),\gamma(U_2))=\epsilon(U_1,U_2)$.

\begin{lem} \label{lem:normcoh} A normalized $\Gamma$-equivariant collection exists if and only if a coherent collection exists.
\end{lem}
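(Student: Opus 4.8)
The plan is to prove both directions of the equivalence. The easy direction is that a coherent collection is in particular normalized on each $\Gamma$-orbit and $\Gamma$-equivariant, but we want a collection that is normalized on all of $X\times X\times X$, not just within each orbit; so a coherent collection need not be normalized on the nose. Thus the statement is really asserting that from a coherent collection one can build a genuinely normalized $\Gamma$-equivariant one, and conversely. For the forward direction, suppose $\Phi$ is normalized and $\Gamma$-equivariant; then its restriction to any $\Gamma$-orbit is normalized, and the compatibility condition $\Phi_{\gamma(V),\gamma(U)}\circ\Phi_{\gamma(U),U}=\Phi_{\gamma(V),V}\circ\Phi_{V,U}$ follows by a direct computation: using normalization, both sides equal $\Phi_{\gamma(V),U}$, since $\Phi_{\gamma(V),\gamma(U)}\circ\Phi_{\gamma(U),U}=\Phi_{\gamma(V),U}$ and $\Phi_{\gamma(V),V}\circ\Phi_{V,U}=\Phi_{\gamma(V),U}$. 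Hence $\Phi$ is coherent, and this direction is immediate.

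For the converse, I would start with a coherent collection $\Phi$ and produce a normalized $\Gamma$-equivariant collection $\Psi$. The idea is to pick a set $\mathcal{O}$ of representatives for the $\Gamma$-orbits in $X$ and, for each orbit, keep $\Phi$ unchanged within that orbit (it is already normalized there). Then, for $U$ in the orbit of a representative $U_0\in\mathcal{O}$ and $V$ in the orbit of $V_0\in\mathcal{O}$, define $\Psi_{V,U}$ by transporting a chosen operator $\Psi_{V_0,U_0}$ via the $\Gamma$-equivariance: if $V=\gamma_1(V_0)$ and $U=\gamma_2(U_0)$, set $\Psi_{V,U}:=\Phi_{V,V_1}\circ\big(\gamma\cdot\Psi_{V_0,U_0}\big)\circ\Phi_{U_1,U}$ for suitable $\gamma$ and intermediate unipotent radicals — more precisely, one bootstraps off a single base operator per pair of orbits, and uses $\Phi$ to move within orbits and $\Gamma$-equivariance to move between orbits. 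The coherence hypothesis, specifically the identity $\Phi_{\gamma(V),\gamma(U)}\circ\Phi_{\gamma(U),U}=\Phi_{\gamma(V),V}\circ\Phi_{V,U}$, is exactly what is needed to check that this definition is independent of the choices of $\gamma_1,\gamma_2$ (i.e.\ of how one expresses $U,V$ in terms of the orbit representatives) — this is where elements of the stabilizers of $U_0$ and $V_0$ in $\Gamma$ could cause an ambiguity, and the coherence identity cancels it. Once well-definedness is established, normalization of $\Psi$ across the whole of $X$ reduces, after transporting everything to the orbit representatives via the above formulas and again invoking coherence together with the normalization of $\Phi$ within each orbit, to the (vacuous or trivial) normalization of the chosen base operators, which one is free to pick compatibly once a single operator $H^{d_{U_0}}_c(Y_{U_0},\bar\Q_l)_\theta\to H^{d_{V_0}}_c(Y_{V_0},\bar\Q_l)_\theta$ is fixed for each pair of orbit representatives (say by composing a fixed operator from a distinguished global base point, using Corollary \ref{cor:natcomp}). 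Finally $\Gamma$-equivariance of $\Psi$ is built into its definition.

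The main obstacle I anticipate is the well-definedness check in the converse direction: one must verify that transporting the base operator along two different group elements $\gamma,\gamma'\in\Gamma$ with $\gamma(U_0)=\gamma'(U_0)=U$ (so $\gamma^{-1}\gamma'$ lies in the $\Gamma$-stabilizer of $U_0$) yields the same map, and similarly on the $V$ side, and then that the $\Phi$-transport within orbits is compatible with these. The coherence axiom is precisely engineered for this — it says that conjugating $\Phi$ within an orbit and moving between orbits commute appropriately — but carefully bookkeeping the indices so that every application of the axiom is legitimate (all the unipotent radicals involved must lie in $X$, and the $\Gamma$-elements must genuinely preserve $S,\theta,X$) is the delicate part. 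A clean way to organize this is to phrase the problem in terms of a $\bar\Q_l^\times$-valued function $\epsilon$ on $X\times X$ as in \eqref{eq:etaphi}: coherence of $\Phi$ translates into $\eta_\Phi$ being "cohomologically trivial relative to $\Gamma$" in an appropriate sense, and one solves for $\epsilon$ with $\epsilon(U_1,U_2)\epsilon(U_2,U_3)\epsilon(U_1,U_3)^{-1}=\eta_\Phi(U_1,U_2,U_3)^{-1}$ that is moreover $\Gamma$-invariant, by setting $\epsilon$ to be the identity within each orbit and using the coherence identity to extend consistently across orbits; then $\Psi=\epsilon\Phi$ works.
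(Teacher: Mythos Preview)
Your proposal is correct and follows essentially the same route as the paper. The forward direction is exactly as you say: both sides of the coherence identity collapse to $\Phi_{\gamma(V),U}$ by normalization. For the converse, the paper does precisely what your final paragraph describes: it picks orbit representatives $U_0,\dots,U_k$ with a distinguished base $U_0$, and defines the scalar
\[
\epsilon(V_1,V_2) := \Phi_{V_1,V_2}^{-1}\,\Phi_{V_1,U_1}\,\Phi_{U_1,U_0}\,\Phi_{U_2,U_0}^{-1}\,\Phi_{U_2,V_2},
\]
where $U_i$ is the chosen representative of the orbit of $V_i$. One then checks directly, using within-orbit normalization and the coherence identity, that $\epsilon$ is $\Gamma$-invariant and trivializes $\eta_\Phi$, so $\epsilon\Phi$ is normalized and $\Gamma$-equivariant.

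One remark: your initial formula involving an explicit $\gamma\in\Gamma$ and the attendant worry about stabilizer ambiguity is an unnecessary detour. The paper's formula never names a group element $\gamma$ at all --- it simply composes the already-given operators $\Phi$ through the base point $U_0$. The $\Gamma$-invariance of the resulting $\epsilon$ then follows from the coherence identity combined with within-orbit normalization, with no stabilizer bookkeeping required. Your final paragraph (``set $\epsilon$ to be the identity within each orbit and extend across orbits via a global base point'') is exactly the paper's argument once one writes down this explicit $\epsilon$.
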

\begin{proof}
A normalized $\Gamma$-equivariant collection is automatically coherent. Conversely, assume that $\Phi$ is coherent. Choose arbitrarily a set of representatives $U_0,\dots,U_k$ for $X/\Gamma$. For any $V_1,V_2 \in X$ let $U_1,U_2$ be such that $V_i \in \Gamma \cdot U_i$ and define
\[ \epsilon(V_1,V_2) := \Phi_{V_1,V_2}^{-1}\Phi_{V_1,U_1}\Phi_{U_1,U_0}\Phi_{U_2,U_0}^{-1}\Phi_{U_2,V_2} \in \bar\Q_l^\times. \]
Then one checks easily that $\epsilon$ is $\Gamma$-invariant and satisfies $\eta_\Phi(V_1,V_2,V_3)^{-1}=\epsilon(V_1,V_2)\epsilon(V_2,V_3)\epsilon(V_1,V_3)^{-1}$, so that $\epsilon\Phi$ is normalized and $\Gamma$-equivariant.
\end{proof}

It is thus enough to understand under what circumstances coherent collections exist. For this, we shall assume the existence of a $\Gamma$-equivariant collection and see under what conditions it can be modified to ensure coherence.

\begin{lem} \label{lem:normiop} Let $\Phi$ be a $\Gamma$-equivariant collection on $X$ and let $U \in X$. Assume that the stabilizer of $U$ is normal in $\Gamma$ and let $\bar\Gamma$ be the quotient.
\begin{enumerate}
\item Then the function
\begin{equation} \label{eq:etaphigamma}
\eta_{\Phi,U}(a,b,c)=\eta_\Phi(aU,bU,cU) \in \bar\Q_l^\times
\end{equation}
is a homogeneous $2$-cocycle of $\bar\Gamma$. It's class is independent of $\Phi$.
\item Given $\epsilon_U \in C^1(\bar\Gamma,\bar\Q_l^\times)$ with $\partial\epsilon=\eta_{\Phi,U}^{-1}$ define a collection $\epsilon_U\Phi$ on $Y=\Gamma \cdot U \subset X$ by $[\epsilon_U\Phi]_{{^aU},{^bU}}=\epsilon_U(a,b)\Phi_{{^aU},{^bU}}$. This is a normalized $\Gamma$-equivariant collections on $Y$ and every such collection is given in this way.
\end{enumerate}
\end{lem}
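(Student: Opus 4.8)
The plan is to verify the cocycle condition directly from the defining formula \eqref{eq:etaphi} and then handle the modification by a straightforward coboundary computation. First I would check that $\eta_{\Phi,U}$ is well-defined on $\bar\Gamma$: the stabilizer of $U$ in $\Gamma$ fixes $U$ pointwise, so $aU$ depends only on the image of $a$ in $\bar\Gamma$, and since $\Phi_{U,U}=\tx{id}$ by convention, $\eta_\Phi(U,U,U)=1$ and the function descends. Next, for the cocycle identity, I would expand $\partial\eta_{\Phi,U}(a,b,c,d)$ using the homogeneous coboundary formula and substitute \eqref{eq:etaphi}. After cancelling the operators $\Phi_{bU,cU}$, $\Phi_{aU,bU}$, $\Phi_{aU,cU}$, etc. that appear twice with opposite exponents, one is left with an expression that collapses to $1$ — this is purely the associativity of composition of the natural intertwining operators together with the fact (Corollary \ref{cor:natcomp}) that all compositions land in $\bar\Q_l^\times$, so they commute freely as scalars. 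The independence of the class from $\Phi$ follows because any other $\Gamma$-equivariant collection is $\epsilon\Phi$ for a $\Gamma$-invariant $\epsilon$, and a direct substitution shows $\eta_{\epsilon\Phi,U}$ and $\eta_{\Phi,U}$ differ by $\partial\bar\epsilon$, where $\bar\epsilon(a,b)=\epsilon(aU,bU)$ is a well-defined $1$-cochain on $\bar\Gamma$ (here one uses $\Gamma$-invariance of $\epsilon$ to see it descends).

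For part (2), given $\epsilon_U$ with $\partial\epsilon_U = \eta_{\Phi,U}^{-1}$, I would first check that $[\epsilon_U\Phi]_{{}^aU,{}^bU}:=\epsilon_U(a,b)\Phi_{{}^aU,{}^bU}$ is well-defined on $Y=\Gamma\cdot U$, i.e.\ depends only on the cosets ${}^aU$ and ${}^bU$ and not on the chosen representatives $a,b\in\Gamma$: this is because changing $a$ within its coset multiplies $\epsilon_U(a,b)$ by $\epsilon_U$ evaluated on the stabilizer, which is trivial on $\bar\Gamma$, and similarly for $b$; simultaneously $\Phi$ is unchanged. Then the equation $\eta_{\epsilon_U\Phi,U}=\partial\epsilon_U\cdot\eta_{\Phi,U}=1$ (using the computation from part (1) applied with $\bar\epsilon=\epsilon_U$) says precisely that $\epsilon_U\Phi$ restricted to $Y$ is normalized. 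The $\Gamma$-equivariance of $\epsilon_U\Phi$ reduces to the $\Gamma$-invariance of the function $({}^aU,{}^bU)\mapsto\epsilon_U(a,b)$, which holds because left-translating the pair $(a,b)$ by $\gamma\in\Gamma$ corresponds to replacing $(a,b)$ by $(\gamma a,\gamma b)$, and $\epsilon_U(\gamma a,\gamma b)=\epsilon_U(a,b)$ by left-invariance of homogeneous cochains on a group — combined with $\Gamma$-equivariance of $\Phi$ itself. Finally, for the surjectivity claim, any normalized $\Gamma$-equivariant collection $\Psi$ on $Y$ is of the form $\epsilon'\Phi$ for some $\Gamma$-invariant $\epsilon'$, and normalization of $\Psi$ forces $\partial\epsilon'_U=\eta_{\Phi,U}^{-1}$ where $\epsilon'_U(a,b)=\epsilon'({}^aU,{}^bU)$, so $\Psi=\epsilon'_U\Phi$ in the stated form.

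The main obstacle I anticipate is purely bookkeeping: being careful about the distinction between $\Gamma$-invariant functions on $X\times X$ (the objects $\epsilon$) and $1$-cochains on $\bar\Gamma$ (the objects $\epsilon_U$), and checking that the translation ${}^aU$ genuinely descends to $\bar\Gamma$ so that all the formulas make sense. Once the well-definedness on $\bar\Gamma$ is cleanly set up, the cocycle identity and the coboundary modification are formal consequences of Corollary \ref{cor:natcomp} and the associativity of composition, with no geometric input needed beyond what Subsection \ref{sub:natiop} has already established.
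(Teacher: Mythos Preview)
Your proposal is correct and follows the natural approach; the paper itself leaves this proof to the reader, and what you describe is precisely the routine verification one would expect: descent to $\bar\Gamma$, the homogeneous cocycle identity from associativity of scalar compositions, coboundary comparison for independence of $\Phi$, and the straightforward translation between $\Gamma$-invariant functions on $Y\times Y$ and homogeneous $1$-cochains on $\bar\Gamma$.
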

\begin{proof}
Left to the reader.
\end{proof}

In order to treat all $\Gamma$-orbits in $X$ simultaneously we introduce for any two $U,V \in X$ the function
\begin{equation} \label{eq:beta}
\beta_{\Phi,V,U}(a,b) := \Phi_{{^aV},{^aU}}\circ\Phi_{{^aU},{^bU}}\circ\Phi_{{^bV},{^bU}}^{-1}\circ\Phi_{{^aV},{^bV}}^{-1} \in \bar\Q_l^\times.	
\end{equation}
It is an element of $C^1(\Gamma,\bar\Q_l^\times)$. From now on we assume that the stabilizers in $\Gamma$ of all $U \in X$ are equal. In particular, $\beta_{\Phi,V,U}$ is inflated from $C^1(\bar\Gamma,\bar\Q_l^\times)$.
\begin{lem} \label{lem:beta}
Given $U,V,W \in X$ we have
\begin{enumerate}
\item $\beta_{\Phi,U,U}=1$, $\beta_{\Phi,U,V}=\beta_{\Phi,V,U}^{-1}$, and $\beta_{\Phi,U,W} \cdot\beta_{\Phi,W,V}\cdot\beta_{\Phi,V,U}  = 1$.
\item $\partial\beta_{\Phi,V,U} = \eta_{\Phi,U} \cdot \eta_{\Phi,V}^{-1}$.
\item If $V ={^xU}$ then $\beta_{\Phi,V,U}=\eta_{\Phi,U}(ax,a,b)\eta_{\Phi,U}(ax,bx,b)^{-1}$.
\end{enumerate}
\end{lem}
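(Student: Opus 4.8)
The plan is to verify each identity by a direct but careful bookkeeping of the natural intertwining operators, using only the definitions \eqref{eq:etaphi}, \eqref{eq:beta}, and the multiplicativity (Corollary \ref{cor:natcomp}) that guarantees every composition of natural intertwining operators is again one, so that all the quantities in sight are indeed scalars in $\bar\Q_l^\times$. First I would dispatch part (1): the identity $\beta_{\Phi,U,U}=1$ is immediate since then all four factors in \eqref{eq:beta} collapse (the inner two become identities on conjugates of a single $U$, and the outer two cancel). For $\beta_{\Phi,U,V}=\beta_{\Phi,V,U}^{-1}$ one writes out both products and observes that, as scalars, inverting one expression and swapping $U\leftrightarrow V$ reverses the order of the four terms, which for scalars is harmless. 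The cocycle-type relation $\beta_{\Phi,U,W}\cdot\beta_{\Phi,W,V}\cdot\beta_{\Phi,V,U}=1$ follows by writing all three out and noting the telescoping: each $\Phi_{{}^aX,{}^aY}$-type term appearing in one factor is cancelled by the matching term in the next, using again that everything commutes as scalars.

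For part (2) I would compute $\partial\beta_{\Phi,V,U}$ directly from the standard coboundary formula for a $1$-cochain, $(\partial\beta)(a,b,c)=\beta(b,c)\beta(ab,c)^{-1}\beta(a,bc)\beta(a,b)^{-1}$ (in the appropriate homogeneous/inhomogeneous convention matching that of \eqref{eq:etaphigamma}), substitute the definition of $\beta_{\Phi,V,U}$, and reorganize the resulting twelve $\Phi$-terms. The point is that the terms involving $V$ assemble precisely into $\eta_{\Phi,V}(a,b,c)^{-1}$ and those involving $U$ into $\eta_{\Phi,U}(a,b,c)$, via the definition \eqref{eq:etaphi} of $\eta_\Phi$ as $\Phi_{U_1,U_2}\Phi_{U_2,U_3}\Phi_{U_1,U_3}^{-1}$ evaluated at the conjugates $aU,bU,cU$. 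This is the computational heart of the lemma and where I expect the main obstacle to lie — not in any conceptual difficulty, but in keeping the conjugation indices and the order of composition straight while matching the specific homogeneous-cocycle convention of \eqref{eq:etaphigamma}; it is the sort of step one must "leave to the reader" only after privately checking it, since a misplaced inverse invalidates everything downstream.

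For part (3), under the hypothesis $V={}^xU$ I would substitute into \eqref{eq:beta}: then ${}^aV={}^{ax}U$, ${}^bV={}^{bx}U$, so every term becomes a $\Phi$ between conjugates of the single Borel $U$, and $\beta_{\Phi,V,U}(a,b)$ becomes a product of four $\Phi_{{}^?U,{}^?U}$ operators with exponents drawn from $\{ax,bx,a,b\}$. Grouping these appropriately and inserting an identity factor $\Phi_{{}^aU,{}^bU}\Phi_{{}^bU,{}^aU}=\mathrm{id}$ (legitimate since, again, for scalars this just rearranges), one recognizes the combination $\eta_{\Phi,U}(ax,a,b)\eta_{\Phi,U}(ax,bx,b)^{-1}$ on the nose from the definition \eqref{eq:etaphigamma}. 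I would record that parts (2) and (3) together re-prove, from a slightly different angle, that the class of $\eta_{\Phi,U}$ is independent of $U$ (already known from Lemma \ref{lem:normiop}(1)), which serves as a useful consistency check on the signs. Given the elementary nature of all three parts, I would present the argument compactly and, as in Lemma \ref{lem:normiop}, could reasonably defer the purely mechanical verifications with a remark that they follow by writing out the definitions.
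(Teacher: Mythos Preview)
Your approach is exactly what the paper intends: its proof reads in full ``Left to the reader,'' so direct verification from the definitions \eqref{eq:etaphi} and \eqref{eq:beta} is the intended route, and your outline of parts (1) and (3) is fine.

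One correction for part (2): the coboundary formula you wrote is neither the homogeneous nor the inhomogeneous one. Since $\eta_{\Phi,U}(a,b,c)=\eta_\Phi(aU,bU,cU)$ is declared in Lemma~\ref{lem:normiop} to be a \emph{homogeneous} $2$-cocycle, $\beta_{\Phi,V,U}(a,b)$ is a homogeneous $1$-cochain, and the correct coboundary is the three-term expression
\[
(\partial\beta)(a,b,c)=\beta(b,c)\,\beta(a,c)^{-1}\,\beta(a,b).
\]
With this in hand the twelve $\Phi$-terms you anticipate become nine, and they regroup cleanly into $\eta_{\Phi,U}(a,b,c)\cdot\eta_{\Phi,V}(a,b,c)^{-1}$ as claimed. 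You already flagged this as the place to be careful; once the formula is fixed the rest of your plan goes through.
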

\begin{proof} Left to the reader.
\end{proof}

\begin{cor} \label{cor:defeta}
The cohomology class $[\eta]$ of $\eta_{\Phi,U}$ depends neither on $\Phi$, nor on $U$.
\end{cor}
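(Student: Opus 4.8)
Corollary \ref{cor:defeta} asserts that the cohomology class $[\eta_{\Phi,U}] \in H^2(\bar\Gamma,\bar\Q_l^\times)$ depends on neither the choice of $\Gamma$-equivariant collection $\Phi$ nor the choice of $U \in X$. The independence of $\Phi$ is already recorded in the first part of Lemma \ref{lem:normiop}, so nothing needs to be re-proved there; the only new content is the independence of $U$. First I would fix two elements $U,V \in X$ (recalling our standing assumption that all stabilizers in $\Gamma$ of points of $X$ coincide, so that $\eta_{\Phi,U}$ and $\eta_{\Phi,V}$ are both genuine $2$-cocycles of the common quotient $\bar\Gamma$). Then I would invoke part (2) of Lemma \ref{lem:beta}, which says precisely that $\partial\beta_{\Phi,V,U} = \eta_{\Phi,U}\cdot\eta_{\Phi,V}^{-1}$, where $\beta_{\Phi,V,U} \in C^1(\bar\Gamma,\bar\Q_l^\times)$.

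**The second step is to read off the conclusion.** Since $\eta_{\Phi,U}\cdot\eta_{\Phi,V}^{-1}$ is a coboundary, the cocycles $\eta_{\Phi,U}$ and $\eta_{\Phi,V}$ are cohomologous in $H^2(\bar\Gamma,\bar\Q_l^\times)$, i.e. $[\eta_{\Phi,U}] = [\eta_{\Phi,V}]$. Combined with the $\Phi$-independence from Lemma \ref{lem:normiop}(1), this shows the class $[\eta] := [\eta_{\Phi,U}]$ is well-defined independently of both choices. If one wants to be careful about the dependence on $\Gamma$ itself, one notes that $\Gamma$ is fixed throughout this subsection, so there is nothing further to check; the statement is purely about independence of $\Phi$ and $U$.

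**There is essentially no obstacle here** — the corollary is a two-line deduction from Lemma \ref{lem:beta}(2), whose proof was in turn left to the reader. The only point requiring a moment's care is making sure the hypotheses under which $\eta_{\Phi,U}$ was defined as a cocycle (namely that the stabilizer of $U$ is normal in $\Gamma$, from Lemma \ref{lem:normiop}) are subsumed by the standing assumption introduced just before Lemma \ref{lem:beta} that all such stabilizers are equal — equal subgroups being in particular all normal in each other's normalizers, and here all equal to a single normal subgroup of $\Gamma$ since $\bar\Gamma = \Gamma/\mathrm{Stab}$ is formed. With that in hand the proof is immediate: apply Lemma \ref{lem:beta}(2) to conclude $\eta_{\Phi,U}$ and $\eta_{\Phi,V}$ differ by the coboundary $\partial\beta_{\Phi,V,U}$, hence define the same class.
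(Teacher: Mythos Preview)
Your proof is correct and matches the paper's approach exactly: the corollary is stated without proof immediately after Lemma \ref{lem:beta}, and the intended argument is precisely the one you give --- independence of $\Phi$ from Lemma \ref{lem:normiop}(1), and independence of $U$ from Lemma \ref{lem:beta}(2) via the coboundary $\partial\beta_{\Phi,V,U}$.
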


\begin{dfn} \label{dfn:cohspl}
A collection $\{\epsilon_U \in C^1(\bar\Gamma,\bar\Q_l^\times)|U \in X\}$ is called a \emph{coherent splitting of $\{\eta_{\Phi,U}|U \in X\}$} if for any $U \in X$ we have $\partial\epsilon_U = \eta_{\Phi,U}^{-1}$ and for any $U,V \in X$ we have $\epsilon_V=\beta_{\Phi,V,U}\cdot\epsilon_U$.
\end{dfn}

Given a coherent splitting $\epsilon=\{\epsilon_U\}$ we define the collection $\epsilon\Phi$ as follows: For each $\Gamma$-orbit $Y \subset X$ choose arbitrarily an element $U \in Y$ and define $\epsilon\Phi|_Y$ by $\epsilon_U\Phi$ as in Lemma \ref{lem:normiop}. By Lemma \ref{lem:beta} the resulting $\epsilon\Phi|_Y$ is independent of the choice of $U$. For any $U,V \in X$ belonging to different $\Gamma$-orbits, define $[\epsilon\Phi]_{U,V}=\Phi_{U,V}$. It is immediate to check that $\epsilon\Phi$ is a coherent collection.

\begin{cor} \label{cor:normiop} Assume that a $\Gamma$-equivariant collection on $X$ exists and that the stabilizers in $\Gamma$ of all $U \in X$ are equal. The following are equivalent.
\begin{enumerate}
	\item The class $[\eta]$ is trivial.
	\item A $\Gamma$-equivariant normalized collection of natural intertwining operators exists on some $\Gamma$-orbit of $X$.
	\item A coherent splitting of $\{\eta_{\Phi,U}|U \in X\}$ exists.
    \item A coherent collection of natural intertwining operators on $X$ exists.
    \item A normalized $\Gamma$-equivariant collection of natural intertwining operators on $X$ exists.
\end{enumerate}
\end{cor}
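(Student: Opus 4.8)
The plan is to prove the five statements equivalent by establishing the single cycle of implications $(1)\Rightarrow(3)\Rightarrow(4)\Rightarrow(5)\Rightarrow(2)\Rightarrow(1)$, using the preparatory lemmas essentially as black boxes. I would begin with a preliminary observation: under the standing hypothesis that the stabilizers in $\Gamma$ of all $U\in X$ coincide, that common subgroup is automatically normal in $\Gamma$, since conjugating it by any $\gamma\in\Gamma$ produces the stabilizer of $\gamma(U)\in X$, which is the same subgroup. Hence the quotient $\bar\Gamma$ appearing in Lemma \ref{lem:normiop} is unambiguous, each $\eta_{\Phi,U}$ is a homogeneous $2$-cocycle of the one group $\bar\Gamma$, the functions $\beta_{\Phi,V,U}$ of \eqref{eq:beta} are inflated from $C^1(\bar\Gamma,\bar\Q_l^\times)$, and by Corollary \ref{cor:defeta} the class $[\eta]=[\eta_{\Phi,U}]$ is independent of $U$ and of the chosen $\Gamma$-equivariant collection $\Phi$ (whose existence is part of the hypotheses).

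For $(1)\Rightarrow(3)$ I would fix a single base point $U_0\in X$. Triviality of $[\eta]$ means $\eta_{\Phi,U_0}$ is a coboundary, so choose $\epsilon_{U_0}\in C^1(\bar\Gamma,\bar\Q_l^\times)$ with $\partial\epsilon_{U_0}=\eta_{\Phi,U_0}^{-1}$, and set $\epsilon_V:=\beta_{\Phi,V,U_0}\cdot\epsilon_{U_0}$ for every $V\in X$. Lemma \ref{lem:beta}(2) gives $\partial\epsilon_V=\eta_{\Phi,U_0}\cdot\eta_{\Phi,V}^{-1}\cdot\eta_{\Phi,U_0}^{-1}=\eta_{\Phi,V}^{-1}$, and the three-term relation of Lemma \ref{lem:beta}(1) gives $\epsilon_V\epsilon_U^{-1}=\beta_{\Phi,V,U_0}\beta_{\Phi,U_0,U}=\beta_{\Phi,V,U}$ for all $U,V\in X$; thus $\{\epsilon_U\}$ is a coherent splitting of $\{\eta_{\Phi,U}\}$ in the sense of Definition \ref{dfn:cohspl}. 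The implication $(3)\Rightarrow(4)$ is the explicit construction of $\epsilon\Phi$ from a coherent splitting spelled out immediately before the statement: one normalizes orbit by orbit via Lemma \ref{lem:normiop}(2), the identity $\epsilon_V=\beta_{\Phi,V,U}\epsilon_U$ guaranteeing both that the orbit-wise recipe is independent of the chosen representative and that the coherence identity in Definition \ref{dfn:normiop}(4) holds (that identity amounts to the vanishing $\beta_{[\epsilon\Phi],V,U}\equiv 1$). Next, $(4)\Rightarrow(5)$ is the first half of Lemma \ref{lem:normcoh}; $(5)\Rightarrow(2)$ is the observation that restricting a normalized $\Gamma$-equivariant collection on $X$ to a single ($\Gamma$-stable) orbit yields a normalized $\Gamma$-equivariant collection there; and $(2)\Rightarrow(1)$ follows from the converse half of Lemma \ref{lem:normiop}(2): a normalized $\Gamma$-equivariant collection on the orbit $\Gamma\cdot U$ must be of the form $\epsilon_U\Phi$ with $\partial\epsilon_U=\eta_{\Phi,U}^{-1}$, so $\eta_{\Phi,U}$, and hence $[\eta]$, is a coboundary.

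Essentially all the real content is already packaged in the cited results — the existence of some $\Gamma$-equivariant collection, Lemma \ref{lem:normcoh}, and the cocycle calculus of Lemmas \ref{lem:normiop} and \ref{lem:beta} — so what remains is bookkeeping. The step I expect to be the main obstacle, at the level of writing careful details, is $(3)\Rightarrow(4)$: one must verify that gluing the orbit-wise normalized collections $\epsilon_U\Phi$ to the unmodified operators $\Phi_{U,V}$ across distinct orbits yields a single collection satisfying \emph{all} clauses of Definition \ref{dfn:normiop}(4) simultaneously, in particular the coherence identity for $U,V$ lying in different orbits, where the within-orbit modifications $\epsilon_U,\epsilon_V$ interact with the cross-orbit operators. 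As signalled above, this collapses to the defining relation $\epsilon_V=\beta_{\Phi,V,U}\epsilon_U$ of a coherent splitting together with parts (1) and (3) of Lemma \ref{lem:beta} (the latter absorbing the internal shifts within an orbit), so no new ingredient is needed; it is merely the only place where the combinatorics of several orbits genuinely enters.
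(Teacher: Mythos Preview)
Your proposal is correct and is precisely the routine verification the paper intends: the paper's own proof is ``Left to the reader,'' and your cycle $(1)\Rightarrow(3)\Rightarrow(4)\Rightarrow(5)\Rightarrow(2)\Rightarrow(1)$ invokes Lemmas \ref{lem:normcoh}, \ref{lem:normiop}, \ref{lem:beta}, Corollary \ref{cor:defeta}, and the construction preceding the statement exactly as they were set up to be used. Your identification of $(3)\Rightarrow(4)$ as the step with the most bookkeeping is accurate, and your reduction of the cross-orbit coherence identity to the defining relation $\epsilon_V=\beta_{\Phi,V,U}\epsilon_U$ is the right one.
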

\begin{proof} Left to the reader.
\end{proof}

\begin{fct} \label{fct:cohsplituniq}	
If $\{\epsilon_U| U \in X\}$ is a coherent splitting of $[\eta]$, then any other such is of the form $\{\epsilon_U \cdot \delta| U \in X\}$ for $\delta \in Z^1(\bar\Gamma,\bar\Q_l^\times)=\tx{Hom}(\bar\Gamma,\bar\Q_l^\times)$. In this way, the set of coherent splittings is a torsor for $\tx{Hom}(\bar\Gamma,\bar\Q_l^\times)$.
\end{fct}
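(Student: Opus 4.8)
The plan is to exploit the linearity of the defining conditions for a coherent splitting. Suppose $\{\epsilon_U\mid U\in X\}$ and $\{\epsilon_U'\mid U\in X\}$ are two coherent splittings of $\{\eta_{\Phi,U}\mid U\in X\}$, and for each $U$ set $\delta_U := \epsilon_U'\cdot\epsilon_U^{-1}\in C^1(\bar\Gamma,\bar\Q_l^\times)$. First I would record that, since $\partial\epsilon_U = \eta_{\Phi,U}^{-1}=\partial\epsilon_U'$ and $\partial$ is a homomorphism on $C^1(\bar\Gamma,\bar\Q_l^\times)$ (the coefficients are abelian), we get $\partial\delta_U = 1$, i.e.\ $\delta_U\in Z^1(\bar\Gamma,\bar\Q_l^\times)=\tx{Hom}(\bar\Gamma,\bar\Q_l^\times)$. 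This uses only that $C^1$ and the coboundary map are additive in the coefficient group, which is standard.

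Next I would show $\delta_U$ does not depend on $U$. For any two $U,V\in X$ the coherence condition $\epsilon_V=\beta_{\Phi,V,U}\cdot\epsilon_U$ holds for both splittings, so $\epsilon_V'\cdot\epsilon_V^{-1} = (\beta_{\Phi,V,U}\cdot\epsilon_U')\cdot(\beta_{\Phi,V,U}\cdot\epsilon_U)^{-1} = \epsilon_U'\cdot\epsilon_U^{-1}$, i.e.\ $\delta_V=\delta_U=:\delta$. Hence the two splittings differ by multiplication by a single element $\delta\in\tx{Hom}(\bar\Gamma,\bar\Q_l^\times)$, and $\{\epsilon_U'\}=\{\epsilon_U\cdot\delta\}$. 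Conversely, given any coherent splitting $\{\epsilon_U\}$ and any $\delta\in\tx{Hom}(\bar\Gamma,\bar\Q_l^\times)$, the collection $\{\epsilon_U\cdot\delta\}$ again satisfies $\partial(\epsilon_U\cdot\delta)=\partial\epsilon_U\cdot\partial\delta=\eta_{\Phi,U}^{-1}$ and $\epsilon_V\cdot\delta = \beta_{\Phi,V,U}\cdot\epsilon_U\cdot\delta$, so it is again a coherent splitting. Thus the difference map $(\{\epsilon_U'\},\{\epsilon_U\})\mapsto \delta$ exhibits the set of coherent splittings as a torsor under $\tx{Hom}(\bar\Gamma,\bar\Q_l^\times)$, provided this set is nonempty — which is exactly the content of Corollary \ref{cor:normiop} under the running hypotheses, or may simply be assumed here since the statement is vacuous otherwise.

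There is no real obstacle: every step is forced by the fact that the coboundary operator and the coherence constraints are multiplicative in $\bar\Q_l^\times$-valued cochains, so the two defining conditions for a coherent splitting are "affine" over the linear conditions cutting out $Z^1(\bar\Gamma,\bar\Q_l^\times)$. If anything merits a word of care, it is the identification $Z^1(\bar\Gamma,\bar\Q_l^\times)=\tx{Hom}(\bar\Gamma,\bar\Q_l^\times)$, which holds because $\bar\Gamma$ acts trivially on $\bar\Q_l^\times$ so that $1$-cocycles are precisely the crossed homomorphisms into a trivial module, i.e.\ ordinary homomorphisms; this is already implicit in the statement of the Fact.
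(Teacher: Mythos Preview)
Your proof is correct and is exactly the elementary verification the paper has in mind; indeed the paper states this as a Fact without proof, leaving it to the reader. The two defining conditions of a coherent splitting (Definition \ref{dfn:cohspl}) are affine over the linear conditions $\partial\delta=1$ and $\delta_V=\delta_U$, and you have verified this cleanly.
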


\subsection{Geometric intertwining operators} \label{sub:gio}

We continue to assume $G=G^0$. In order to apply Lemma \ref{lem:normiop} or Corollary \ref{cor:normiop}, one needs to start with a $\Gamma$-equivariant collection of natural intertwining operators. In this subsection we shall review work of Bonnaf\'e-Dat-Rouquier \cite{BDR17} that provides a canonical such collection. More precisely, for any two $U,V$ the authors construct a non-zero intertwining operator $\Psi_{V,U} : H^{d_U}_c(Y_U,\bar\Q_l)_\theta \to H^{d_V}_c(Y_V,\bar\Q_l)_\theta$ that is naturally given, i.e. independent of any additional choices. It is very easy to see, and we shall do so below, that the resulting collection $\Psi$ is equivariant with respect to the full group of automorphisms of $G$ that preserve $S$ and $\theta$ and consists of natural intertwining operators in the sense of Definition \ref{dfn:natiop}.

The construction of $\Psi_{V,U}$ is as follows. Given $U,V$, Bonnaf\'e-Dat-Rouquier define \cite[\S6.A]{BDR17} the variety
\[ Y_{U,V} = \{(g,h) \in G/U \times G/V| g^{-1}h \in U\cdot V, h^{-1}F(g) \in V\cdot F(U) \}\]
and the closed subvariety
\[ Y_{U,V}^{(2)} = \{(g,h) \in Y_{U,V}| g^{-1}F(g) \in U \cdot F(U) \}. \]
Just like $Y_U$, these varieties are equipped with an action of $G(k)$ by left multiplication and of $S(k)$ by right multiplication.
It is shown in \cite[Lemma 6.1]{BDR17} that the map $Y_{U,V}^{(2)} \to Y_U$ given by $(g,h) \to g$ is a smooth affine fiber bundle of dimension equal to the codimension of $Y_{U,V}^{(2)}$ in $Y_{U,V}$. This dimension is $d_{U,V}=\tx{dim}(U \cap F(U)) - \tx{dim}(U \cap V \cap F(U))$. Pulling back along the inclusion $Y_{U,V}^{(2)} \to Y_{U,V}$ gives a map $H_c^i(Y_{U,V},\bar\Q_l) \to H_c^i(Y_{U,V}^{(2)},\bar\Q_l)$. Pushing forward along the smooth morphism $Y_{U,V}^{(2)} \to Y_U$ gives an isomorphism $H_c^i(Y_{U,V}^{(2)},\bar\Q_l) \to H_c^{i-d_{U,V}}(Y_U,\bar\Q_l)$. Both of these are equivariant for the actions of $G(k)$ and $S(k)$. One of the main results, \cite[Theorem 6.2]{BDR17}, is that the composed map 
\begin{equation} \label{eq:bdriso0}
H^*_c(Y_{U,V}^{G^0},\bar\Q_l)_\theta \to H^{*-d_{U,V}}_c(Y_U^{G^0},\bar\Q_l)_\theta	
\end{equation}
is an isomorphism. This is combined with the observation that the map $Y_{U,V} \to Y_{V,FU}$ given by $(g,h) \mapsto (h,F(g))$ induces an isomorphism of \'etale sites, and hence also an isomorphism $H_c^i(Y_{U,V},\bar\Q_l) \to H_c^i(Y_{V,FU},\bar\Q_l)$, again equivariant for $G(k)$ and $S(k)$. Composing the three isomorphisms $H_c^i(Y_{U,V},\bar\Q_l)_\theta \to H_c^{i-d_{U,V}}(Y_U,\bar\Q_l)_\theta$, $H_c^i(Y_{U,V},\bar\Q_l) \to H_c^i(Y_{V,FU},\bar\Q_l)$, and the inverse of $H_c^i(Y_{V,FU},\bar\Q_l)_\theta \to H_c^{i-d_{V,FU}}(Y_V,\bar\Q_l)_\theta$, leads to the isomorphism
\begin{equation} \label{eq:defpsi} \Psi_{V,U} : H_c^{d_U}(Y_U,\bar\Q_l)_\theta \to H_c^{d_V}(Y_V,\bar\Q_l)_\theta. \end{equation}
We find it convenient to remember the definition of this isomorphism in terms of the following symbolic diagram
\[ Y_U \leftarrow Y_{U,V}^{(2)} \rightarrow Y_{U,V} \leftrightarrow Y_{V,F(U)} \leftarrow Y_{V,F(U)}^{(2)} \rightarrow Y_{V}. \]
Note that this isomorphism also depends on $G$ and $\theta$, so when there is a danger of confusion we will write $\Psi^G_{U,V}$, or $\Psi^\theta_{U,V}$, to record this dependence.

\begin{lem} \label{lem:iopindep} Let $\tilde G \to G$ be a morphism with abelian kernel and cokernel. The diagram
\[ \xymatrix{
H^{d_U}_c(Y^{\tilde G}_U,\bar\Q_l)_{\tilde\theta}\ar[d]\ar[r]^{\Psi^{\tilde G}_{V,U}}&H^{d_V}_c(Y^{\tilde G}_V,\bar\Q_l)_{\tilde\theta}\ar[d]\\
H^{d_U}_c(Y^{G}_U,\bar\Q_l)_{\theta}\ar[r]^{\Psi^{G}_{V,U}}&H^{d_V}_c(Y^{G}_V,\bar\Q_l)_{\theta}}
\]
commutes, where the vertical arrows are the isomorphisms reviewed in Appendix \ref{app:dlreview}.
\end{lem}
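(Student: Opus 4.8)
The plan is to unwind the definition of $\Psi_{V,U}$ and exhibit, for the morphism $f : \tilde G \to G$, a compatible system of morphisms between the auxiliary varieties used for $\tilde G$ and for $G$, with respect to which each of the three operations building up $\Psi_{V,U}$ is natural. Since $f$ has central kernel and its cokernel is a quotient torus, $f(\tilde G) \supseteq G_\tx{der}$ and $f$ restricts to an isomorphism on every root subgroup, hence on every unipotent subgroup that is directly spanned by root subgroups; in particular it induces isomorphisms $\tilde U \to U$, $\tilde V \to V$, $\tilde U\tilde V \to UV$, $\tilde UF(\tilde U) \to UF(U)$, $\tilde VF(\tilde U) \to VF(U)$, and their evident variants. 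From these one reads off morphisms of varieties $Y^{\tilde G}_U \to Y^G_U$, $Y^{\tilde G}_{U,V} \to Y^G_{U,V}$, $(Y^{\tilde G}_{U,V})^{(2)} \to (Y^G_{U,V})^{(2)}$, and the analogous ones with $(U,V)$ replaced by $(V,F(U))$, all equivariant for the $G(k)$- and $S(k)$-actions through $\tilde G(k)$ and $\tilde S(k)$. Crucially, I claim the square
\[ \xymatrix{
(Y^{\tilde G}_{U,V})^{(2)}\ar[r]\ar[d]&Y^{\tilde G}_U\ar[d]\\
(Y^G_{U,V})^{(2)}\ar[r]&Y^G_U
} \]
whose horizontal arrows are the smooth affine fibre bundles of \cite[Lemma 6.1]{BDR17}, together with the analogous square whose horizontal arrows are the closed immersions $(Y^?_{U,V})^{(2)} \hookrightarrow Y^?_{U,V}$, are Cartesian.

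Granting the Cartesian squares, the three naturality statements are standard. Pullback along a closed immersion is contravariantly functorial, and for a Cartesian square it commutes with the maps induced on compactly supported cohomology. The Gysin isomorphism $H^i_c((Y^?_{U,V})^{(2)},\bar\Q_l) \to H^{i-d_{U,V}}_c(Y^?_U,\bar\Q_l)$ along the smooth affine fibre bundle commutes with base change, so it is compatible with the $f$-maps since $(Y^{\tilde G}_{U,V})^{(2)} \to Y^{\tilde G}_U$ is the pullback of $(Y^G_{U,V})^{(2)} \to Y^G_U$ along $Y^{\tilde G}_U \to Y^G_U$. Finally the isomorphism of \'etale sites $Y^?_{U,V} \to Y^?_{V,F(U)}$, $(g,h) \mapsto (h,F(g))$, is given by the same formula for $\tilde G$ and for $G$, and so is tautologically compatible with $f$. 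To finish, one invokes that the vertical isomorphisms in the statement are the natural isomorphisms of Appendix~\ref{app:dlreview}, which by their construction there are themselves assembled from the morphisms of Deligne--Lusztig varieties induced by $f$ (composed, in the regular-embedding case, with averaging over the $k$-points of a torus, an operation that commutes with all of the above because every map in sight is equivariant for the relevant torus actions); hence they are compatible with each of the three operations, and chaining the three compatibilities --- together with the fact that $\Psi^{\tilde G}_{V,U}$, resp.\ $\Psi^G_{V,U}$, is the composite of these three operations on the respective isotypic components --- gives the commutativity of the displayed square.

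The one step requiring genuine care, and the main obstacle, is the verification of the Cartesian squares: concretely, the identities $(Y^{\tilde G}_{U,V})^{(2)} = Y^{\tilde G}_U \times_{Y^G_U} (Y^G_{U,V})^{(2)} = Y^{\tilde G}_{U,V} \times_{Y^G_{U,V}} (Y^G_{U,V})^{(2)}$, and their $(V,F(U))$-analogues. Unwinding the defining conditions and translating on the left, these reduce to the claims that $f$ is bijective on the products of root subgroups occurring in the definitions of $Y_{U,V}$ and $(Y_{U,V})^{(2)}$ --- for example that $f : \tilde U\tilde V \to UV$ is bijective, which one proves by observing that $f$ is an isomorphism on $\tilde U$, on $\tilde V$, and compatibly on $\tilde U \cap \tilde V$. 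Everything else used above --- functoriality of $H^*_c$ for closed immersions, the trace formalism for smooth morphisms with its base-change compatibility, and the precise description of the isomorphisms of Appendix~\ref{app:dlreview} --- is standard or already established, so modulo these root-subgroup computations the lemma follows.
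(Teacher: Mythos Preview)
Your approach is essentially the same as the paper's: both reduce to the commutativity of the big ladder diagram
\[
\xymatrix{
Y^{\tilde G}_U\ar[d]&\ar[l]Y_{U,V}^{\tilde G,(2)}\ar@{^(->}[r]\ar[d]&Y^{\tilde G}_{U,V}\ar[d]\ar[r]&Y^{\tilde G}_{V,FU}\ar[d]&Y_{V,FU}^{\tilde G,(2)}\ar[d]\ar@{_(->}[l]\ar[r]&Y^{\tilde G}_{V}\ar[d]\\
Y^{G}_U&\ar[l]Y_{U,V}^{G,(2)}\ar@{^(->}[r]&Y^{G}_{U,V}\ar[r]&Y^{G}_{V,FU}&Y_{V,FU}^{G,(2)}\ar@{_(->}[l]\ar[r]&Y^{G}_{V}
}
\]
and the paper's proof literally consists of displaying this diagram. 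Your write-up is more explicit about why the cohomological operations are compatible with the vertical maps, which is fine.

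One remark on emphasis: you single out the Cartesian property of the squares as ``the main obstacle'' and propose to verify it via root-subgroup computations. That works, but there is a cleaner route already implicit in the paper. The argument of \cite[\S1.25]{DL76}, recalled in Appendix~\ref{app:dlreview} for $Y_U$, applies verbatim to $Y_{U,V}$ and $Y_{U,V}^{(2)}$: each $Y^G_?$ is the induction of $Y^{\tilde G}_?$ along the inclusion $(\tilde G \times^{\tilde Z}\tilde S)^F \hookrightarrow (G \times^Z S)^F$, and all horizontal maps in the ladder are themselves induced. Since induction along a finite-index inclusion writes the target as a finite disjoint union of translates of the source, the vertical maps are open-and-closed immersions and every square is automatically Cartesian; the compatibility of pullback, closed-immersion restriction, and the affine-bundle pushforward then follows without any computation. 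So your argument is correct, just slightly harder than necessary on this point.
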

\begin{proof}
This follows from the commutativity of the following diagram, in which all vertical arrows are induced by $\tilde G \to G$.
\[ \xymatrix{
Y^{\tilde G}_U\ar[d]&\ar[l]Y_{U,V}^{\tilde G,(2)}\ar@{^(->}[r]\ar[d]&Y^{\tilde G}_{U,V}\ar[d]\ar[r]&Y^{\tilde G}_{V,FU}\ar[d]\ar[l]&Y_{V,FU}^{\tilde G,(2)}\ar[d]\ar@{_(->}[l]\ar[r]&Y^{\tilde G}_{V}\ar[d]\\
Y^{G}_U&\ar[l]Y_{U,V}^{G,(2)}\ar@{^(->}[r]&Y^{G}_{U,V}\ar[r]&Y^{G}_{V,FU}\ar[l]&Y_{V,FU}^{G,(2)}\ar@{_(->}[l]\ar[r]&Y^{G}_{V}
}
\]	
\end{proof}

\begin{cor} \label{cor:iopnat} The operator $\Psi_{V,U}$ is natural in the sense of Definition \ref{dfn:natiop}.
\end{cor}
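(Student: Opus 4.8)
The plan is to reduce the claim to Lemma \ref{lem:iopindep} applied to a regular embedding. Recall from Definition \ref{dfn:natiop} that a natural intertwining operator is, by definition, a map $H^{d_U}_c(Y_U,\bar\Q_l)_\theta \to H^{d_V}_c(Y_V,\bar\Q_l)_\theta$ that becomes $G'(k)$-equivariant after transporting the $G(k)$-action along the isomorphism of Appendix \ref{app:dlreview} to the action coming from a model $H^{d_U}_c(Y^{G'}_U,\bar\Q_l)_{\theta'}$ built from a regular embedding $G \to G'$ with chosen extension $\theta'$ of $\theta$. So the task is simply to verify that $\Psi^G_{V,U}$ has this property.

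First I would fix a regular embedding $G \to G'$, let $S' \subset G'$ be the maximal torus containing $S$, and pick an extension $\theta'$ of $\theta$ to $S'(k)$. A regular embedding has abelian kernel (a central torus quotient situation) and abelian cokernel, so Lemma \ref{lem:iopindep} applies to it: the square relating $\Psi^{G'}_{V,U}$ and $\Psi^G_{V,U}$ through the vertical maps $H^{d_U}_c(Y^{G'}_U,\bar\Q_l)_{\theta'} \to H^{d_U}_c(Y^G_U,\bar\Q_l)_\theta$ (and likewise for $V$) of Appendix \ref{app:dlreview} commutes. These vertical maps are exactly the isomorphisms used in Definition \ref{dfn:natiop} to equip $H^{d_U}_c(Y_U,\bar\Q_l)_\theta$ with its $G'(k)$-action. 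Hence, via these identifications, $\Psi^G_{V,U}$ is carried to $\Psi^{G'}_{V,U}$, and it suffices to check that $\Psi^{G'}_{V,U}$ is $G'(k)$-equivariant.

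This last point is where the content sits, and it is already essentially contained in the construction recalled in Subsection \ref{sub:gio}: every one of the three maps composing $\Psi_{V,U}$ — the pullback $H^i_c(Y_{U,V},\bar\Q_l) \to H^i_c(Y^{(2)}_{U,V},\bar\Q_l)$, the pushforward $H^i_c(Y^{(2)}_{U,V},\bar\Q_l) \to H^{i-d_{U,V}}_c(Y_U,\bar\Q_l)$, and the isomorphism $H^i_c(Y_{U,V},\bar\Q_l) \to H^i_c(Y_{V,FU},\bar\Q_l)$ induced by $(g,h)\mapsto(h,F(g))$ — was noted in \cite{BDR17} to be equivariant for the $G(k)$-action by left multiplication and the $S(k)$-action by right multiplication. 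Applying this verbatim with $G$ replaced by $G'$ shows $\Psi^{G'}_{V,U}$ is $G'(k)$-equivariant (and $S'(k)$-equivariant, though only the former is needed). The only mild subtlety — hardly an obstacle — is that Definition \ref{dfn:natiop} and the surrounding lemma already established that the notion of natural intertwining operator is independent of the auxiliary choices of $G \to G'$ and $\theta'$, so no separate check of well-definedness is required here; we simply invoke that independence. Combining the two paragraphs gives that $\Psi^G_{V,U}$ is a natural intertwining operator, as claimed.
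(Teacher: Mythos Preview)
Your proposal is correct and is exactly the argument the paper intends: the corollary is stated without proof because it follows immediately from Lemma \ref{lem:iopindep} applied to a regular embedding $G \to G'$, together with the fact (built into the Bonnaf\'e--Dat--Rouquier construction) that $\Psi^{G'}_{V,U}$ is $G'(k)$-equivariant. Your write-up spells out the implicit reasoning faithfully.
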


Let $\alpha$ be an automorphism of $G$ commuting with $F$ and preserving $S$. Then $\alpha : G \to G$ restricts to an isomorphism $\alpha : Y_U \to Y_{\alpha(U)}$ of varieties, and on cohomology we obtain the isomorphism $\alpha : H^i_c(Y_U,\bar\Q_l) \to H^i_c(Y_{\alpha(U)},\bar\Q_l)$ satisfying $\alpha(v)\alpha(s)=\theta(s)\alpha(v)$ for $v \in H^i_c(Y_U,\bar\Q_l)$ and $s \in S(k)$. Noting that $d_{\alpha(U)}=d_U$, we obtain $\alpha : H^{d_U}_c(Y_U,\bar\Q_l)_{\theta} \to H^{d_{\alpha(U)}}_c(Y_{\alpha(U)},\bar\Q_l)_{\theta\circ\alpha^{-1}}$.

\begin{lem}  \label{lem:iopconj1} For any $U,V$ we have the equality
\[ \alpha \circ \Psi_{V,U}^\theta \circ \alpha^{-1}  = \Psi_{\alpha(V),\alpha(U)}^{\theta\circ\alpha^{-1}}. \]
\end{lem}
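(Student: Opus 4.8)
The plan is to unwind the definition of $\Psi_{V,U}^\theta$ through its six-term symbolic diagram
\[ Y_U \leftarrow Y_{U,V}^{(2)} \rightarrow Y_{U,V} \leftrightarrow Y_{V,F(U)} \leftarrow Y_{V,F(U)}^{(2)} \rightarrow Y_{V} \]
and to check that the automorphism $\alpha$ carries each of these six varieties and each of the five morphisms linking them for the datum $(G,S,\theta)$ to the corresponding variety and morphism for the datum $(G,S,\theta\circ\alpha^{-1})$, compatibly with passage to cohomology and with taking $\theta$- resp.\ $\theta\circ\alpha^{-1}$-isotypic components. Concretely, I would first observe that since $\alpha$ commutes with $F$ and preserves $S$, it restricts to isomorphisms $\alpha : Y_{U,V} \to Y_{\alpha(U),\alpha(V)}$ and $\alpha : Y_{U,V}^{(2)} \to Y_{\alpha(U),\alpha(V)}^{(2)}$, simply because the defining conditions (membership in $U\cdot V$, $V\cdot F(U)$, $U\cdot F(U)$, etc.) are transported by $\alpha$ to the analogous conditions with $U,V$ replaced by $\alpha(U),\alpha(V)$; this uses $\alpha(F(U))=F(\alpha(U))$. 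Likewise $d_{U,V}=d_{\alpha(U),\alpha(V)}$ since $\alpha$ is an isomorphism of algebraic groups and hence preserves dimensions of the relevant intersections.

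Next I would assemble a large commutative diagram, one square for each of the five arrows in the symbolic diagram, in which all vertical maps are the isomorphism induced by $\alpha$ on the relevant cohomology; this is the exact analogue of the diagram appearing in the proof of Lemma \ref{lem:iopindep}, except that the vertical maps are now induced by an automorphism rather than by a central isogeny $\tilde G \to G$. The commutativity of each square is immediate from functoriality of $H^*_c(-,\bar\Q_l)$ applied to the evident commuting square of varieties, together with the fact that pull-back along a closed immersion and push-forward (trace map) along a smooth morphism are both natural for isomorphisms of schemes; one also needs that the ``shift'' isomorphism $Y_{U,V}\to Y_{V,FU}$, $(g,h)\mapsto(h,F(g))$, is $\alpha$-equivariant, which again follows from $\alpha\circ F = F\circ\alpha$. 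Finally, because $\alpha$ intertwines the right $S(k)$-action according to $\alpha(v)\alpha(s)=\theta(s)\alpha(v)$ — i.e.\ it sends the $\theta$-isotypic subspace for $(G,S)$ to the $\theta\circ\alpha^{-1}$-isotypic subspace — the whole diagram restricts to the appropriate isotypic components, and reading off the outer rectangle yields exactly $\alpha \circ \Psi_{V,U}^\theta \circ \alpha^{-1} = \Psi_{\alpha(V),\alpha(U)}^{\theta\circ\alpha^{-1}}$.

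I do not expect a serious obstacle here: the statement is essentially a naturality assertion, and the only thing requiring a modicum of care is bookkeeping — making sure that the isotypic-component convention ($\theta$ versus $\theta\circ\alpha^{-1}$) is applied consistently at every node of the diagram, and that one correctly matches the three constituent isomorphisms of $\Psi$ (the Bonnafé--Dat--Rouquier isomorphism \eqref{eq:bdriso0}, the étale-site isomorphism $Y_{U,V}\leftrightarrow Y_{V,FU}$, and the inverse of the corresponding map for $V$) under $\alpha$. If anything is mildly delicate it is checking $\alpha$-equivariance of the smooth affine fiber bundle $Y_{U,V}^{(2)}\to Y_U$ and hence of its associated Gysin/trace isomorphism on cohomology; but this too is automatic from the functoriality of the trace map under base change along an isomorphism. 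In practice the proof is one big commutative diagram plus the remark about isotypic components, and I would present it that way, deferring the routine verifications.
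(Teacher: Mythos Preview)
Your proposal is correct and matches the paper's own proof essentially verbatim: the paper simply writes down the six-term commutative diagram of varieties with vertical arrows $\alpha$ (respectively $(\alpha,\alpha)$ on the linking varieties $Y_{U,V}$, $Y_{U,V}^{(2)}$, $Y_{V,FU}$, $Y_{V,FU}^{(2)}$) and declares the result to follow from its commutativity. Your additional remarks about isotypic components and the naturality of the trace map are the implicit routine verifications the paper omits.
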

\begin{proof} This follows from the commutativity of the following diagram
\[ \xymatrix@C-1pc{
	Y_U\ar[d]^{\alpha}&\ar[l]Y_{U,V}^{(2)}\ar@{^(->}[r]\ar[d]^{(\alpha,\alpha)}&Y_{U,V}\ar[d]^{(\alpha,\alpha)}\ar[r]&Y_{V,FU}\ar[d]^{(\alpha,\alpha)}\ar[l]&Y_{V,FU}^{(2)}\ar[d]^{(\alpha,\alpha)}\ar@{_(->}[l]\ar[r]&Y_{V}\ar[d]^{\alpha}\\
	Y_{\alpha(U)}&\ar[l]Y_{\alpha(U),\alpha(V)}^{(2)}\ar@{^(->}[r]&Y_{\alpha(U),\alpha(V)}\ar[r]&Y_{\alpha(V),F\alpha(U)}\ar[l]&Y_{\alpha(V),F\alpha(U)}^{(2)}\ar@{_(->}[l]\ar[r]&Y_{\alpha(V)}\\
}
\]
where $(\alpha,\alpha)$ sends $(g,h)$ to $(\alpha(g),\alpha(h))$.
\end{proof}

We consider the function $\eta_\Psi(U_1,U_2,U_3)$ of \eqref{eq:etaphi}.
By Lemma \ref{lem:iopindep} this function depends only on the simply connected cover of the derived subgroup of $G$. For the study if $\eta_\Psi$ we may therefore assume that $G$ is simply connected. Then it is the product of $k$-simple factors.

\begin{lem} \label{lem:ioprod} Let $G=G_1 \times G_2$, $U=U_1 \times U_2$, $V=V_1 \times V_2$, $S=S_1 \times S_2$, $\theta=\theta_1 \otimes \theta_2$. Then
\begin{enumerate}
	\item The isomorphism $G_1 \times G_2 \to G$ restricts to an isomorphism $Y_{U_1}^{G_1} \times Y_{U_2}^{G_2} \to Y_U^G$.
	\item This induces an isomorphism 
	\[ H_c^{d_{U_1}}(Y_{U_1}^{G_1},\bar Q_l)_{\theta_1} \otimes H_c^{d_{U_1}}(Y_{U_2}^{G_2},\bar Q_l)_{\theta_2} \to H_c^{d_U}(Y_U^G,\bar\Q_l)_\theta. \]
	\item The latter isomorphism identifies $\Psi_{V_1,U_1}^{G_1} \otimes \Psi_{V_2,U_2}^{G_2}$ with $\Psi_{V,U}^G$.
\end{enumerate}
\end{lem}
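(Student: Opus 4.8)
The plan is to observe that every variety and every morphism entering the construction of $\Psi_{V,U}$ is cut out by conditions that decouple over the two factors, and then to invoke the naturality of the Künneth isomorphism for $\ell$-adic cohomology with compact supports.

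First I would prove (1). Since $G_1$ and $G_2$ are defined over $k$, the Frobenius is $F = F_1 \times F_2$, so $G/U = G_1/U_1 \times G_2/U_2$ and $U \cdot FU = (U_1 \cdot F_1U_1) \times (U_2 \cdot F_2U_2)$; the defining condition $g^{-1}F(g) \in U \cdot FU$ of $Y_U$ thus splits into its two component conditions, and the product isomorphism $G_1/U_1 \times G_2/U_2 \xrightarrow{\sim} G/U$ restricts to an isomorphism $Y_{U_1}^{G_1} \times Y_{U_2}^{G_2} \xrightarrow{\sim} Y_U^G$ which is manifestly equivariant for the left $G_1(k) \times G_2(k) = G(k)$- and right $S_1(k) \times S_2(k) = S(k)$-actions. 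For (2) I would apply Künneth for compact supports to this product to get a $G(k)\times S(k)$-equivariant isomorphism $H^*_c(Y_{U_1}^{G_1},\bar\Q_l) \otimes H^*_c(Y_{U_2}^{G_2},\bar\Q_l) \xrightarrow{\sim} H^*_c(Y_U^G,\bar\Q_l)$, then pass to the $\theta = \theta_1 \otimes \theta_2$-isotypic part for the right $S(k)$-action; because the character factors over the product group, the isotypic functor commutes with $\otimes$, giving $H^*_c(Y_{U_1}^{G_1})_{\theta_1} \otimes H^*_c(Y_{U_2}^{G_2})_{\theta_2} \xrightarrow{\sim} H^*_c(Y_U^G)_\theta$. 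The concentration statement recalled in Subsection \ref{sub:natiop} forces the $G_i$-factor to live in degree $d_{U_i}$, and $d_U = d_{U_1} + d_{U_2}$ since $R(S,G) = R(S_1,G_1) \sqcup R(S_2,G_2)$ and $F$ respects this decomposition, so the number of separating root hyperplanes is additive; this yields the stated isomorphism in the relevant degrees.

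For (3) I would recall that $\Psi_{V,U}$ is the composite read off the diagram $Y_U \leftarrow Y_{U,V}^{(2)} \to Y_{U,V} \leftrightarrow Y_{V,F(U)} \leftarrow Y_{V,F(U)}^{(2)} \to Y_V$, and run the same decoupling argument: the conditions $g^{-1}h \in U\cdot V$, $h^{-1}F(g) \in V\cdot F(U)$, $g^{-1}F(g)\in U\cdot F(U)$ all split over the factors, so $Y_{U,V}^G = Y_{U_1,V_1}^{G_1} \times Y_{U_2,V_2}^{G_2}$ and $Y_{U,V}^{G,(2)} = Y_{U_1,V_1}^{G_1,(2)} \times Y_{U_2,V_2}^{G_2,(2)}$, and under these identifications the closed immersion $Y_{U,V}^{(2)} \hookrightarrow Y_{U,V}$, the affine-bundle projection $(g,h)\mapsto g$ onto $Y_U$, and the isomorphism of étale sites $(g,h)\mapsto(h,F(g))$ onto $Y_{V,FU}$ are each the product of the corresponding $G_1$- and $G_2$-morphisms (and $d_{U,V} = d_{U_1,V_1}+d_{U_2,V_2}$). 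Since pullback along a closed immersion, Gysin pushforward along a smooth affine fiber bundle, and the cohomology isomorphism attached to an isomorphism of étale sites are all compatible with external products, each of the three isomorphisms composing $\Psi^G_{V,U}$ becomes, under the Künneth identifications of (2), the external tensor product of its $G_1$- and $G_2$-counterparts; composing and restricting to the $\theta$-isotypic part gives $\Psi^G_{V,U} = \Psi^{G_1}_{V_1,U_1} \otimes \Psi^{G_2}_{V_2,U_2}$. Concretely I expect the written-out argument to be a single large commutative diagram, like the one in the proof of Lemma \ref{lem:iopindep}, whose rows are the defining diagrams of $\Psi^{G_1}\otimes\Psi^{G_2}$ and $\Psi^G$ and whose vertical arrows are the product isomorphisms.

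The one point that is not purely formal is the multiplicativity of the Gysin pushforward for the fiber bundle $Y_{U,V}^{(2)}\to Y_U$ under external products; this I would obtain from the multiplicativity of trace maps for smooth morphisms together with the additivity $d_{U,V} = d_{U_1,V_1}+d_{U_2,V_2}$ of relative dimensions. Everything else reduces to bookkeeping with the naturality of Künneth and the product form of the defining equations, so I do not anticipate a genuine obstacle — the difficulty, such as it is, lies in keeping the many equivariances and degree shifts straight.
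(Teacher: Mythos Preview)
Your proposal is correct and is essentially the same as the paper's proof, only with more detail: the paper says (1) is immediate from the definitions, (2) follows from K\"unneth and the vanishing theorem \cite[Cor.~9.9]{DL76}, and (3) follows because the analogous product decompositions hold for $Y_{U,V}^G$ and $Y_{U,V}^{G,(2)}$ and the maps between them. Your explicit unpacking of the decoupling and the K\"unneth naturality is exactly what underlies these one-line justifications.
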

\begin{proof}
The first claim is immediate from the definitions. The second follows from the K\"unneth formula and the vanishing theorem \cite[Cor 9.9]{DL76}. The third follows from the fact that the analogous decomposition as for $Y_U^G$ also holds for $Y^G_{V,U}$ and $Y^{G,(2)}_{V,U}$, as well as the maps between them.
\end{proof}

\begin{fct} Let $\Gamma_1$ and $\Gamma_2$ be two groups and $M$ a trivial $\Gamma_1 \times \Gamma_2$-module. Let $z \in Z^2(\Gamma_1 \times \Gamma_2,M)$ have the property $z((\gamma_1,1),(1,\gamma_2))=0$ and $z((1,\gamma_2),(\gamma_1,1))=0$ for all $\gamma_1 \in \Gamma_1$ and $\gamma_2 \in \Gamma_2$. Then
\[ z((\gamma_1,\gamma_2),(\gamma_1',\gamma_2'))=z((\gamma_1,1),(\gamma_1',1))+z((1,\gamma_2),(1,\gamma_2')).\]
In other words
\[ z = \tx{Inf}^{\Gamma_1\times\Gamma_2}_{\Gamma_1}\tx{Res}^{\Gamma_1 \times \Gamma_2}_{\Gamma_1}z + \tx{Inf}^{\Gamma_1\times\Gamma_2}_{\Gamma_2}\tx{Res}^{\Gamma_1 \times \Gamma_2}_{\Gamma_2}z. \]
\end{fct}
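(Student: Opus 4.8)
The plan is to deduce the identity directly from the $2$-cocycle relation, which for the trivial module $M$ reads
\[ z(y,z) - z(xy,z) + z(x,yz) - z(x,y) = 0, \qquad\text{equivalently}\qquad z(xy,z) + z(x,y) = z(x,yz) + z(y,z), \]
combined with two structural features of $\Gamma_1\times\Gamma_2$: first, an element of the form $(\gamma_1,1)$ commutes with an element of the form $(1,\gamma_2)$; and second, both ``mixed'' orderings vanish, $z((\gamma_1,1),(1,\gamma_2)) = 0$ and $z((1,\gamma_2),(\gamma_1,1)) = 0$, for all $\gamma_1 \in \Gamma_1$ and $\gamma_2 \in \Gamma_2$ (in particular this applies with $\gamma_1$, $\gamma_1'$, $\gamma_1\gamma_1'$, etc., in the first slot and $\gamma_2$, $\gamma_2'$, $\gamma_2\gamma_2'$ in the second).

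Write $a = (\gamma_1,1)$, $b = (1,\gamma_2)$, $c = (\gamma_1',1)$, $d = (1,\gamma_2')$, so that $(\gamma_1,\gamma_2) = ab$, $(\gamma_1',\gamma_2') = cd$, and the goal is to compute $z(ab,cd)$. First I would apply the cocycle relation to $(ab,c,d)$ to get $z(ab,cd) = z(abc,d) + z(ab,c) - z(c,d)$, where $z(c,d) = z((\gamma_1',1),(1,\gamma_2')) = 0$. Next I would compute $z(ab,c)$: applying the cocycle relation to $(a,b,c)$ gives $z(ab,c) = z(a,bc) + z(b,c) - z(a,b)$, and since $bc = cb$ the term $z(a,bc) = z(a,cb)$ can be rewritten via the cocycle relation on $(a,c,b)$ as $z(ac,b) + z(a,c) - z(c,b)$; all of $z(ac,b)$, $z(c,b)$, $z(b,c)$, $z(a,b)$ are mixed values and hence vanish, leaving $z(ab,c) = z(a,c) = z((\gamma_1,1),(\gamma_1',1))$. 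Finally, using $abc = (ac)b$ and applying the cocycle relation to $(ac,b,d)$, together with the vanishing of the mixed values $z(ac,b)$ and $z(ac,bd)$, I would obtain $z(abc,d) = z(b,d) = z((1,\gamma_2),(1,\gamma_2'))$. Assembling the three pieces gives $z(ab,cd) = z((\gamma_1,1),(\gamma_1',1)) + z((1,\gamma_2),(1,\gamma_2'))$, which is the asserted formula. The ``in other words'' reformulation is then immediate: the right-hand side is by definition $\big(\tx{Inf}^{\Gamma_1\times\Gamma_2}_{\Gamma_1}\tx{Res}^{\Gamma_1\times\Gamma_2}_{\Gamma_1} z + \tx{Inf}^{\Gamma_1\times\Gamma_2}_{\Gamma_2}\tx{Res}^{\Gamma_1\times\Gamma_2}_{\Gamma_2} z\big)$ evaluated at $\big((\gamma_1,\gamma_2),(\gamma_1',\gamma_2')\big)$, since restriction followed by inflation is pullback of $z$ along the composite $\Gamma_1\times\Gamma_2 \to \Gamma_i \hookrightarrow \Gamma_1\times\Gamma_2$.

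I do not expect a genuine obstacle: the entire content is the cocycle identity, and the only thing to watch is bookkeeping — ensuring that each term declared to vanish is really of the shape ``one pure first-factor argument, one pure second-factor argument'', and that the commutations $bc = cb$ and $abc = acb$ are invoked consistently so that the three reductions chain together. A slightly cleaner organization would isolate the two uses of commutativity as standalone sublemmas, namely $z\big((\gamma_1,1)(1,\gamma_2),(\gamma_1',1)\big) = z\big((\gamma_1,1),(\gamma_1',1)\big)$ and $z\big((\gamma_1\gamma_1',1)(1,\gamma_2),(1,\gamma_2')\big) = z\big((1,\gamma_2),(1,\gamma_2')\big)$, after which the final assembly is a single application of the cocycle relation to $(ab,c,d)$.
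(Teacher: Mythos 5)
Your proof is correct. The paper states this result as a Fact without giving any proof, so there is no argument to compare against; your cocycle-manipulation argument is a complete and valid verification. Each of the three reductions checks out: applying the cocycle relation to $(ab,c,d)$ reduces to computing $z(ab,c)$ and $z(abc,d)$; the relation applied to $(a,b,c)$ and then $(a,c,b)$ (after commuting $bc=cb$) gives $z(ab,c)=z(a,c)$, with the discarded terms $z(a,b)$, $z(b,c)$, $z(c,b)$, $z(ac,b)$ all of mixed type and hence zero; and the relation applied to $(ac,b,d)$ (after writing $abc=acb$) gives $z(abc,d)=z(b,d)$, with $z(ac,b)$ and $z(ac,bd)$ again mixed and hence zero. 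The reformulation in terms of inflation and restriction is indeed just a restatement, since $\mathrm{Inf}\circ\mathrm{Res}$ along $\Gamma_1\times\Gamma_2\to\Gamma_i\hookrightarrow\Gamma_1\times\Gamma_2$ sends $z$ to the cocycle whose value at $((\gamma_1,\gamma_2),(\gamma_1',\gamma_2'))$ is $z((\gamma_i,1),(\gamma_i',1))$ (respectively with the second coordinate).
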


\begin{cor} \label{cor:etaprod}
Let $G=G_1 \times \dots \times G_k$, $S=S_1 \times \dots \times S_k$, $\Gamma=\Gamma_1 \times \dots \times \Gamma_k$, $U=U_1 \times \dots \times U_k$. Then
\[ \eta_{\Psi,U}^G = \sum_{i=1}^k \tx{Inf}_{\Gamma_i}^\Gamma \eta_{\Psi,U_i}^{G_i}. \]
\end{cor}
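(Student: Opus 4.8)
The plan is to deduce the corollary, after an obvious induction reducing to $k=2$, from the product-compatibility of the operators $\Psi$ recorded in Lemma \ref{lem:ioprod} together with the \textbf{Fact} preceding the statement. So write $G=G_1\times G_2$, $S=S_1\times S_2$, $\Gamma=\Gamma_1\times\Gamma_2$, $U=U_1\times U_2$. First I would note that the stabilizer of $U$ in $\Gamma$ is the product of the stabilizers of $U_i$ in $\Gamma_i$, so that the finite quotient $\bar\Gamma$ of Lemma \ref{lem:normiop} is $\bar\Gamma_1\times\bar\Gamma_2$; hence inflation from the factors makes sense on $\bar\Gamma$ and it suffices to establish the identity there. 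I would also record that a product $U_1'\times U_2'$ of unipotent radicals of Borels of $G_i$ containing $S_i$ is the unipotent radical of a Borel of $G$ containing $S$, and that $\gamma U=(\gamma_1U_1)\times(\gamma_2U_2)$ for $\gamma=(\gamma_1,\gamma_2)\in\bar\Gamma$.

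The core step is the following computation. Given three such product Borels $U^{(1)},U^{(2)},U^{(3)}$ with $U^{(j)}=U_1^{(j)}\times U_2^{(j)}$, Lemma \ref{lem:ioprod}(3) identifies, under the K\"unneth isomorphism of Lemma \ref{lem:ioprod}(2), each $\Psi^G_{U^{(a)},U^{(b)}}$ with $\Psi^{G_1}_{U_1^{(a)},U_1^{(b)}}\otimes\Psi^{G_2}_{U_2^{(a)},U_2^{(b)}}$. Since a tensor product of composable linear maps composes and inverts factorwise, the scalar $\eta_\Psi^G(U^{(1)},U^{(2)},U^{(3)})$ of \eqref{eq:etaphi} equals $\eta_\Psi^{G_1}(U_1^{(1)},U_1^{(2)},U_1^{(3)})\cdot\eta_\Psi^{G_2}(U_2^{(1)},U_2^{(2)},U_2^{(3)})$. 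Specializing $U^{(j)}=\gamma^{(j)}U$ and using \eqref{eq:etaphigamma}, this reads
\[ \eta_{\Psi,U}^G(\gamma^{(1)},\gamma^{(2)},\gamma^{(3)}) = \eta_{\Psi,U_1}^{G_1}(\gamma_1^{(1)},\gamma_1^{(2)},\gamma_1^{(3)})\cdot\eta_{\Psi,U_2}^{G_2}(\gamma_2^{(1)},\gamma_2^{(2)},\gamma_2^{(3)}) \]
for all $\gamma^{(j)}=(\gamma_1^{(j)},\gamma_2^{(j)})\in\bar\Gamma$.

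To conclude I would pass to the associated inhomogeneous $2$-cocycle $z$ of $\bar\Gamma$. Using $\Psi_{W,W}=\tx{id}$ (part of the notion of a collection, Definition \ref{dfn:normiop}) one has $\eta_\Psi^{G_i}(A,B,B)=\eta_\Psi^{G_i}(A,A,B)=1$, so the displayed formula shows on the one hand that the restriction of $\eta_{\Psi,U}^G$ to either factor $\bar\Gamma_i$ is exactly $\eta_{\Psi,U_i}^{G_i}$ (the complementary factor contributing $\eta_\Psi^{G_{3-i}}(U_{3-i},U_{3-i},U_{3-i})=1$), and on the other hand that $z$ vanishes on the mixed pairs $((\gamma_1,1),(1,\gamma_2))$ and $((1,\gamma_2),(\gamma_1,1))$. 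The \textbf{Fact} then gives $z=\tx{Inf}_{\bar\Gamma_1}^{\bar\Gamma}\tx{Res}_{\bar\Gamma_1}^{\bar\Gamma}z+\tx{Inf}_{\bar\Gamma_2}^{\bar\Gamma}\tx{Res}_{\bar\Gamma_2}^{\bar\Gamma}z$, which is the asserted equality $\eta_{\Psi,U}^G=\tx{Inf}_{\Gamma_1}^\Gamma\eta_{\Psi,U_1}^{G_1}+\tx{Inf}_{\Gamma_2}^\Gamma\eta_{\Psi,U_2}^{G_2}$; induction on $k$ then finishes the proof. (Alternatively one may bypass the \textbf{Fact} and simply read the final identity off the displayed formula, recognizing each right-hand factor as an inflation along the projection $\bar\Gamma\to\bar\Gamma_i$.) I do not expect a genuine obstacle: all the substance is in Lemma \ref{lem:ioprod}, and the only points requiring care are bookkeeping ones — that stabilizers, hence the quotients $\bar\Gamma$, split as products, and the harmless translation between the homogeneous cochain $\eta_{\Psi,U}$ of \eqref{eq:etaphigamma} and the inhomogeneous cocycle to which the \textbf{Fact} applies.
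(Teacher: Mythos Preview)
Your proposal is correct and follows essentially the same approach as the paper: reduce by induction to $k=2$, use Lemma~\ref{lem:ioprod} to see that $\eta_\Psi^G$ factors as a product, and apply the Fact. The paper's version is terser---it only checks the single vanishing $\eta_\Psi(U_1\times U_2,\,V_1\times U_2,\,V_1\times V_2)=1$ needed for the Fact rather than stating the full product formula---but the content is the same, and your extra bookkeeping about stabilizers and the homogeneous/inhomogeneous translation is just spelling out what the paper leaves implicit.
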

\begin{proof}
By induction we may assume $k=2$. Applying above Fact it is enough to show that when $U=U_1 \times U_2$, $U'=V_1 \times U_2$, and $U''=V_1 \times V_2$, we have $\eta_\Psi(U,U',U'')=1$. This follows from Lemma \ref{lem:ioprod} by observing $\Psi^{G_1}_{V_1,V_1}=\tx{id}$ and $\Psi^{G_2}_{U_2,U_2}=\tx{id}$.
\end{proof}

Let $\phi : G(k) \to \bar\Q_l^\times$ be a character whose restriction to $G_\tx{sc}(k)$ is trivial. Recall from \cite[Corollary 1.27]{DL76} that there is a naturally given isomorphism of $G(k)$-representations $\phi\otimes H^i_c(Y_U,\bar\Q_l)_\theta \to H^i_c(Y_U,\bar\Q_l)_{\phi\cdot\theta}$, as follows. The action of $G(k) \times S(k)$ on $Y_U$ by the formula $x \mapsto gxt$ extends to an action of $[(G \times S)/Z](k)$, where $Z$ is the center of $G$ embedded anti-diagonally in $G \times S$. Let $Y^\tx{sc}_U$ denote the Deligne-Lusztig variety for $G_\tx{sc}$. The natural map $G_\tx{sc} \to G$ induces an etale map $Y^\tx{sc}_U \to Y_U$ which realizes the $[(G \times S)/Z](k)$-representation $H^i_c(Y_U,\bar\Q_l)$ as the induction of the $[(G_\tx{sc} \times S_\tx{sc})/Z_\tx{sc}](k)$-representation $H^i_c(Y^\tx{sc}_U,\bar\Q_l)$.
We now use the basic fact that if $\eta : A \to B$ is a homomorphism of finite groups, $\phi : B \to \bar\Q_l^\times$ is a character, and $(\rho,V)$ is a representation of $A$, then multiplication by $\phi^{-1}$ is an automorphism of the vector space $\{f : B \to V|f(\eta(a)b)=a f(b)\}$ that is an isomorphism
\[ \tx{Ind}_A^B ((\phi\circ\eta)\otimes\rho) \to \phi \otimes \tx{Ind}_B^A\rho. \]
Note that this automorphism is functorial in $V$. We apply this basic fact to the induced representation $H^i_c(Y_U,\bar\Q_l)$ and the character $(g,s) \mapsto \phi(gs)$ of $[(G \times S)/Z](k)$. It gives us an automorphism of the $\bar\Q_l$-vector space $H^i_c(Y_U,\bar\Q_l)$ whose restriction to each $\theta$-isotypic component realizes the isomorphism $\phi\otimes H^i_c(Y_U,\bar\Q_l)_\theta \to H^i_c(Y_U,\bar\Q_l)_{\phi\cdot\theta}$.

\begin{lem} \label{lem:iopcharshift} For any two $U,V$ the following diagram commutes.
\[ \xymatrix{
\phi\otimes H^{d_U}_c(Y_U,\bar\Q_l)_\theta\ar[d]\ar[r]^{\Psi_{V,U}^\theta}&\phi\otimes H^{d_V}_c(Y_V,\bar\Q_l)_\theta\ar[d]\\
H^{d_U}_c(Y_U,\bar\Q_l)_{\phi\cdot\theta}\ar[r]^{\Psi_{V,U}^{\phi\cdot\theta}}&H^{d_V}_c(Y_V,\bar\Q_l)_{\phi\cdot\theta}
}\]
Given an automorphism $\alpha$ of $G$ commuting with $F$ and preserving $S$ we obtain a morphism $\alpha : Y_U \to Y_{\alpha(U)}$ and hence a linear map $H^i_c(Y_U,\bar\Q_l) \to H^i_c(Y_{\alpha(U)},\bar\Q_l)$, which we shall also denote by $\alpha$. Then the following diagram of $G(k)$-representations commutes
\[ \xymatrix{
(\phi\otimes H^i_c(Y_U,\bar\Q_l)_\theta)^\alpha\ar[d]\ar[rr]^-{\tx{id}\otimes\alpha}&&\phi\otimes H^i_c(Y_{\alpha(U)},\bar\Q_l)_{\theta\circ\alpha^{-1}}\ar[d]\\
(H^i_c(Y_U,\bar\Q_l)_{\phi\cdot\theta})^\alpha\ar[rr]^-{\alpha}&&H^i_c(Y_{\alpha(U)},\bar\Q_l)_{(\phi\cdot\theta)\circ\alpha^{-1}}
}\]
where for a $G(k)$-representation $\pi$ we write $\pi^\alpha(g)=\pi(\alpha^{-1}(g))$.
\end{lem}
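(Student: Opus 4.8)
The plan is to reduce both squares to a single observation: the character-twist isomorphism is, by construction, the restriction to an isotypic component of an automorphism that is functorial in the coefficients. Recall the set-up preceding the lemma. The étale cover $Y^\tx{sc}_U \to Y_U$ realizes the $B$-representation $H^i_c(Y_U,\bar\Q_l)$, where $B=[(G\times S)/Z](k)$ acts by $x \mapsto gxt$, as $\tx{Ind}_A^B$ of the $A$-representation $H^i_c(Y^\tx{sc}_U,\bar\Q_l)$ with $A=[(G_\tx{sc}\times S_\tx{sc})/Z_\tx{sc}](k)$; and the character-twist map $\phi\otimes H^i_c(Y_U,\bar\Q_l)_\theta \to H^i_c(Y_U,\bar\Q_l)_{\phi\cdot\theta}$ is the restriction to $\theta$-isotypic vectors of multiplication by $\phi^{-1}$ on $\tx{Ind}_A^B H^i_c(Y^\tx{sc}_U,\bar\Q_l)$, an automorphism which is functorial in $H^i_c(Y^\tx{sc}_U,\bar\Q_l)$ as an $A$-representation.

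First I would record that all the auxiliary varieties $Y_{U,V}$, $Y^{(2)}_{U,V}$, $Y_{V,F(U)}$, $Y^{(2)}_{V,F(U)}$ carry the same $B$-action $x\mapsto gxt$ (left multiplication by $G(k)$, right multiplication by $S(k)$, combining through the anti-diagonal copy of $Z$ exactly as for $Y_U$), and that each of them receives an étale cover from its $G_\tx{sc}$-counterpart exhibiting its $\bar\Q_l$-cohomology as $\tx{Ind}_A^B$ of the cohomology of the $G_\tx{sc}$-variety, by the same argument as for $Y_U$. Applying the commutative diagram of varieties from the proof of Lemma \ref{lem:iopindep} to the morphism $G_\tx{sc}\to G$ (which has abelian kernel and cokernel), one sees that every structural map defining $\Psi_{V,U}$ — the restriction along $Y^{(2)}_{U,V}\hookrightarrow Y_{U,V}$, the pushforward along the smooth bundle $Y^{(2)}_{U,V}\to Y_U$, and the isomorphism of étale sites $(g,h)\mapsto(h,F(g))$ from $Y_{U,V}$ to $Y_{V,F(U)}$, together with their counterparts at $V$ — is obtained by applying $\tx{Ind}_A^B$ to the corresponding map of $A$-representations at the $G_\tx{sc}$-level. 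By functoriality of multiplication by $\phi^{-1}$, each such map commutes with the character-twist automorphisms on its source and target; since these maps are $S(k)$-equivariant they respect isotypic decompositions, and $\Psi^\theta_{V,U}$ (resp. $\Psi^{\phi\cdot\theta}_{V,U}$) is by definition their composite restricted to the $\theta$- (resp. $(\phi\cdot\theta)$-) isotypic parts. Hence the first diagram commutes.

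For the second diagram one argues identically. An automorphism $\alpha$ of $G$ commuting with $F$ and preserving $S$ lifts canonically to an automorphism $\alpha_\tx{sc}$ of $G_\tx{sc}$ with the same properties, and the square relating $\alpha : Y_U \to Y_{\alpha(U)}$, $\alpha_\tx{sc} : Y^\tx{sc}_U \to Y^\tx{sc}_{\alpha(U)}$ and the étale covers commutes; therefore $\alpha$ on cohomology is $\tx{Ind}_A^B$ applied to the isomorphism of $A$-representations $H^i_c(Y^\tx{sc}_U,\bar\Q_l)\to H^i_c(Y^\tx{sc}_{\alpha(U)},\bar\Q_l)$ induced by $\alpha_\tx{sc}$. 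Functoriality of multiplication by $\phi^{-1}$ then yields precisely the commutativity of the stated square of $G(k)$-representations.

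The only genuine work lies in the bookkeeping of the middle paragraph: one must check that the $B$-action and the étale cover from $G_\tx{sc}$ are available uniformly for all six varieties with a single anti-diagonal copy of $Z$, and that the maps between them lift to the $G_\tx{sc}$-level so that the diagram of varieties in the proof of Lemma \ref{lem:iopindep} applies verbatim. None of this is conceptually difficult, but keeping the various left/right actions and centers straight is where the care is needed; once this is in place, the functoriality of the $\phi^{-1}$-multiplication closes both diagrams at once.
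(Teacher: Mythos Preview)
Your proof is correct and follows essentially the same approach as the paper's: both arguments observe that all auxiliary varieties and structural maps defining $\Psi_{V,U}$ (respectively the map induced by $\alpha$) are induced via $\tx{Ind}_A^B$ from their $G_\tx{sc}$-counterparts, and then invoke the functoriality in the $A$-representation of the multiplication-by-$\phi^{-1}$ automorphism on the induced representation. Your write-up is somewhat more explicit about the bookkeeping, but the content is the same.
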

\begin{proof}
Consider the first diagram. An argument as in the proof of \cite[Proposition 1.25]{DL76} shows that the $[(G \times S)/Z](k)$-spaces $Y_{U,V}$ and $Y_{U,V}^{(2)}$ are induced from the $[(G_\tx{sc} \times S_\tx{sc})/Z_\tx{sc}](k)$-spaces $Y_{U,V}^\tx{sc}$ and $Y_{U,V}^{\tx{sc},(2)}$. In addition, the maps $H^i_c(Y_{U,V},\bar\Q_l) \to H^i_c(Y_{U,V}^{(2)},\bar\Q_l) \to H^{i-d_{U,V}}_c(Y_U,\bar\Q_l)$ as well as $H^i_c(Y_{U,V},\bar\Q_l) \to H^{i}_c(Y_{V,FU},\bar\Q_l)$ are induced from the corresponding maps for $Y^\tx{sc}$.
The claim now follows from the functoriality in $V$ of the automorphism $f \mapsto \phi\cdot f$, in the abstract set-up explained above.

Now consider the second diagram. The two horizontal maps are induced from the corresponding maps for $Y^\tx{sc}$, due to the  commutativity of
\[ \xymatrix{
	Y^\tx{sc}_U\ar[d]\ar[r]^\alpha&Y^\tx{sc}_{\alpha(U)}\ar[d]\\
	Y_U\ar[r]^\alpha&Y_{\alpha(U)}
}
\]
and the claim follows again from the functoriality of $f \mapsto \phi\cdot f$.
\end{proof}

\begin{cor} \label{cor:etacharshift} The 2-cocycle $\eta_{\Psi,U}$ does not change if we replace $\theta$ by $\phi\cdot\theta$.
\end{cor}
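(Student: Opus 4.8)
The plan is to read the statement off from the commutative square of Lemma~\ref{lem:iopcharshift}. Recall that $\eta_{\Psi,U}$ is by definition the function $(a,b,c)\mapsto\eta_\Psi(aU,bU,cU)$, where $\eta_\Psi(U_1,U_2,U_3)=\Psi^\theta_{U_1,U_2}\circ\Psi^\theta_{U_2,U_3}\circ(\Psi^\theta_{U_1,U_3})^{-1}\in\bar\Q_l^\times$ as in \eqref{eq:etaphi}; so it is enough to show that this scalar is unchanged when $\theta$ is replaced by $\phi\cdot\theta$. Write $c_U\colon\phi\otimes H^{d_U}_c(Y_U,\bar\Q_l)_\theta\to H^{d_U}_c(Y_U,\bar\Q_l)_{\phi\cdot\theta}$ for the isomorphism of $\bar\Q_l$-vector spaces constructed just before Lemma~\ref{lem:iopcharshift}. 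That lemma says precisely that for all $U,V$ one has $\Psi^{\phi\cdot\theta}_{V,U}=c_V\circ(\tx{id}_\phi\otimes\Psi^\theta_{V,U})\circ c_U^{-1}$.

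Next I would simply compose. Applying this identity to the pairs $(U_1,U_2)$ and $(U_2,U_3)$, the middle factors $c_{U_2}^{-1}\circ c_{U_2}$ cancel, giving $\Psi^{\phi\cdot\theta}_{U_1,U_2}\circ\Psi^{\phi\cdot\theta}_{U_2,U_3}=c_{U_1}\circ(\tx{id}_\phi\otimes(\Psi^\theta_{U_1,U_2}\circ\Psi^\theta_{U_2,U_3}))\circ c_{U_3}^{-1}$. Substituting $\Psi^\theta_{U_1,U_2}\circ\Psi^\theta_{U_2,U_3}=\eta_\Psi(U_1,U_2,U_3)\,\Psi^\theta_{U_1,U_3}$ and pulling the scalar out past $\tx{id}_\phi\otimes(-)$ and past $c_{U_1},c_{U_3}$, the right-hand side becomes $\eta_\Psi(U_1,U_2,U_3)\cdot c_{U_1}\circ(\tx{id}_\phi\otimes\Psi^\theta_{U_1,U_3})\circ c_{U_3}^{-1}=\eta_\Psi(U_1,U_2,U_3)\cdot\Psi^{\phi\cdot\theta}_{U_1,U_3}$. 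Comparing with the defining relation $\Psi^{\phi\cdot\theta}_{U_1,U_2}\circ\Psi^{\phi\cdot\theta}_{U_2,U_3}=\eta_\Psi^{\phi\cdot\theta}(U_1,U_2,U_3)\,\Psi^{\phi\cdot\theta}_{U_1,U_3}$ and cancelling the isomorphism $\Psi^{\phi\cdot\theta}_{U_1,U_3}$ gives $\eta_\Psi^{\phi\cdot\theta}(U_1,U_2,U_3)=\eta_\Psi(U_1,U_2,U_3)$; evaluating at $(aU,bU,cU)$ then shows $\eta_{\Psi,U}$ is unchanged.

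I do not anticipate a genuine obstacle: once Lemma~\ref{lem:iopcharshift} is in hand, the argument is formal, since all maps involved live in one-dimensional $\bar\Q_l$-Hom-spaces and conjugation by the isomorphisms $c_U$ preserves composition scalars. The only points worth a word of care are that $\Psi^{\phi\cdot\theta}_{U_1,U_3}$ is genuinely invertible (it is built as a composite of three isomorphisms in Subsection~\ref{sub:gio}), and that $\phi$ is assumed, as in the hypotheses preceding Lemma~\ref{lem:iopcharshift}, to be a character of $G(k)$ trivial on $G_\tx{sc}(k)$ — this is exactly what makes the $c_U$ available, and it is consistent with the fact (Lemma~\ref{lem:iopindep}, Corollary~\ref{cor:etaprod}) that $\eta_\Psi$ depends only on the simply connected datum, on which such a $\phi$ acts trivially.
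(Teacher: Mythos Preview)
Your argument is correct and is exactly the intended deduction: the corollary is an immediate consequence of the first commutative square in Lemma~\ref{lem:iopcharshift}, and you have simply written out the formal computation that conjugation by the isomorphisms $c_U$ preserves the composition scalar $\eta_\Psi$.
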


\subsection{Induction and intertwining for disconnected groups} \label{sub:ff-disc}

We now drop the assumption $G=G^0$ that was in force for the few preceding subsections. The generalization of the Deligne-Lusztig varieties and linking varieties to this case is straightforward. In fact, the definitions are exactly the same as in the connected case. We begin with the Deligne-Lusztig variety, following \cite[\S2.D,2.E,3.A]{BDR17}. For this, let $B^0$ be a Borel $\bar k$-subgroup of $G^0$ containing $S^0$ and let $U$ be its unipotent radical. We keep in mind that $U \subset G^0 \subset G$. Define
\[ Y_U^G  = \{g \in G/U|\, g^{-1}F(g) \in U \cdot FU\}. \]
We have added the superscript $G$ because we will have to compare these varieties for different groups.

\begin{lem} \label{lem:dl_ind1}
Let $G^0 \subset G' \subset G$ be an intermediate group. The multiplication map on $G$ induces an isomorphism
\begin{equation} \label{eq:dl_ind1c}
G^F \times_{(G')^F} Y_{U}^{G'} \to Y_U^G	
\end{equation}
If the natural map $(S/S')(k) \to (G/G')(k)$ is an isomorphism, then the multiplication map on $G$ induces an isomorphism
\begin{equation} \label{eq:dl_ind1d}
Y_U^{G'} \times_{(S')^F} S^F \to Y_U^G.
\end{equation}
\end{lem}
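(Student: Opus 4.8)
The plan is to unwind both statements from the definition of $Y_U^G$ together with Lang's theorem, treating the two isomorphisms separately but along the same lines. For \eqref{eq:dl_ind1c}, first I would note that since $G^0 \subset G'$, the variety $G/U$ is covered by the cosets $gG'/U$ for $g \in G$, and the condition $g^{-1}F(g) \in U \cdot FU \subset G^0 \subset G'$ forces $g^{-1}F(g)$ to lie in $G'$; hence $g$ and $F(g)$ lie in the same coset of $G'$, i.e. the coset $gG'$ is $F$-stable. Thus $Y_U^G$ decomposes as a disjoint union over the $F$-stable cosets in $(G/G')$, equivalently (by Lang's theorem applied to the quotient, which has identity component the reductive, hence connected, $G^0$-image — in fact $G/G'$ is finite here, but the relevant point is that $H^1$ considerations let us identify $F$-stable cosets with $G^F$-orbits) over $G^F/(G')^F$. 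On the coset containing a fixed $g_0 \in G^F$ (which exists in each component because $(G/G')(k)$-classes that are $F$-stable are hit by $G^F$, again by Lang), the fibre is exactly $\{g \in g_0 G'/U \mid g^{-1}F(g) \in U\cdot FU\}$, and left-translating by $g_0^{-1}$ identifies this with $Y_U^{G'}$. Assembling these identifications is precisely the assertion that multiplication induces the isomorphism \eqref{eq:dl_ind1c}; one checks it is $G^F$-equivariant and algebraic directly from the construction.

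For \eqref{eq:dl_ind1d}, the hypothesis $(S/S')(k) \xrightarrow{\sim} (G/G')(k)$ is what makes the induced variety a quotient rather than a genuine induction: I would argue that $Y_U^{G'} \to Y_U^G$ is a $(G')^F$-torsor onto its image under the composite of \eqref{eq:dl_ind1c} restricted to the base point component, but now the right $S^F$-action permutes the components of $Y_U^G$ via $S^F \to (S/S')^F = (G/G')^F$, and this permutation action is transitive on components exactly because of the surjectivity of $(S/S')(k) \to (G/G')(k)$. So the map $Y_U^{G'} \times_{(S')^F} S^F \to Y_U^G$, $(y,s) \mapsto ys$, hits every component; it is well-defined because $S' \subset G'$ normalizes $U$ and the right $S'^F$-action on $Y_U^{G'}$ is already part of its structure; and it is injective because if $y_1 s_1 = y_2 s_2$ in $Y_U^G$ then comparing components forces $s_1 s_2^{-1} \in S'^F$ (using injectivity of $(S/S')(k) \to (G/G')(k)$), whence $(y_1,s_1)$ and $(y_2,s_2)$ are in the same $S'^F$-orbit. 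A dimension/smoothness count (both sides are smooth of the same dimension, and the map is bijective on $\bar k$-points) upgrades the bijection to an isomorphism of varieties.

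The main obstacle I anticipate is the bookkeeping around Lang's theorem for the possibly disconnected quotient groups $G/G'$ and $G/U$ — specifically, making precise that the $F$-stable cosets of $G'$ in $G$ are exactly parameterized by $G^F/(G')^F$ and that each such coset contains a point of $G^F$. Under Assumption \ref{asm:dl_mu} the component group is finitely generated abelian and $[G^0 \cdot Z : G] < \infty$, but $G/G'$ need not be connected, so one cannot invoke the usual "Lang's theorem $\Rightarrow$ $H^1 = 0$" verbatim; instead I would reduce to the connected case by the exact sequence $1 \to G^0 \to G' \to \pi_0(G') \to 1$, handle the connected part by Lang and the finite étale part by a direct surjectivity argument (every $F$-stable coset in a finite group with trivial $G^0$-part is visibly rational). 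Everything else — equivariance, algebraicity of the multiplication maps, and the descent from bijectivity to isomorphism via smoothness — is routine and exactly as in the connected case treated in \cite{BDR17}.
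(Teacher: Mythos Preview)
Your proposal is correct and is precisely the routine verification the paper has in mind; the paper's own proof reads ``Left to the reader.'' One small simplification: the obstacle you anticipate with Lang's theorem for disconnected quotients is not really there, since $G' \supset G^0$ means you only need Lang for the connected reductive group $G^0$ to get $G^F \twoheadrightarrow (G/G^0)^F$, and then $(G/G^0)^F \to (G/G')^F$ is a surjection of finite groups.
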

\begin{proof}
Left to the reader.
\end{proof}

As in the connected case we define the Lusztig functor
\[ \mc{R}_{S,U}^G : \mc{D}^b(S) \to \mc{D}^b(G),\qquad M \mapsto \Gamma_c(Y_U^G,\bar\Q_l) \otimes^\mb{L}_{S(k)} M \]
between the equivariant derived categories for $S(k)$ and $G(k)$.

\begin{cor} \label{cor:dl_ind2}
Let $G^0 \subset G' \subset G$ be an intermediate group.
\begin{equation} \label{eq:dl_ind1a}
\mc{R}_{S,U}^G \circ \tx{Ind}_{(S')^F}^{S^F} \cong \mc{R}_{S',U}^{G} \cong \tx{Ind}_{(G')^F}^{G^F} \circ \mc{R}_{S',U}^{G'}.
\end{equation}
If the inclusion $S \to G$ induces an isomorphism $S^F/(S')^F \to G^F/(G')^F$, then we have
\begin{equation} \label{eq:dl_ind1b}
\tx{Res}_{(G')^F}^{G^F} \circ \mc{R}_{S,U}^G \cong \mc{R}_{S',U}^{G'}\circ \tx{Res}_{(S')^F}^{S^F}.	
\end{equation}
\end{cor}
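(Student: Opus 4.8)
The plan is to deduce all isomorphisms directly from Lemma \ref{lem:dl_ind1} by applying the functor $\Gamma_c(-,\bar\Q_l)$ and then the derived tensor product defining $\mc{R}_{S,U}^{(-)}$. First I would record the general formalism: for a finite group $H$ acting on a variety $X$ with a free action of a subgroup $H'$ such that $X \cong X' \times_{(H')} H$ for an $H'$-variety $X'$, one has $\Gamma_c(X,\bar\Q_l) \cong \tx{Ind}_{(H')}^{H}\Gamma_c(X',\bar\Q_l)$ in $\mc{D}^b(H)$, compatibly with any commuting group action. This is the cohomological shadow of the set-theoretic induction and is standard (cf. \cite[Proposition 1.25]{DL76}); I would cite it rather than reprove it.

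For the first chain of isomorphisms \eqref{eq:dl_ind1a}, I would start from \eqref{eq:dl_ind1c}, which says $Y_U^G \cong G^F \times_{(G')^F} Y_U^{G'}$ as a $G^F \times S^F$-variety (the $S^F$-action being the residual right action). Applying $\Gamma_c$ gives $\Gamma_c(Y_U^G,\bar\Q_l) \cong \tx{Ind}_{(G')^F}^{G^F}\Gamma_c(Y_U^{G'},\bar\Q_l)$ as a complex of $G^F \times S^F$-modules. Tensoring over $S(k)$ with $M \in \mc{D}^b(S)$ and using that $\tx{Ind}_{(G')^F}^{G^F}$ commutes with $- \otimes^\mb{L}_{S(k)} M$ (it is given by tensoring with the $\bar\Q_l[G^F]$-$\bar\Q_l[(G')^F]$-bimodule $\bar\Q_l[G^F]$, which acts on the left only) yields the rightmost isomorphism $\mc{R}_{S,U}^G \cong \tx{Ind}_{(G')^F}^{G^F}\circ \mc{R}_{S',U}^{G'}$. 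For the middle isomorphism, I would use the associativity/projection identity $\Gamma_c(Y_U^{G'},\bar\Q_l) \otimes^\mb{L}_{(S')(k)} \tx{Ind}_{(S')^F}^{S^F}M \cong \Gamma_c(Y_U^{G'},\bar\Q_l) \otimes^\mb{L}_{(S')(k)} \bar\Q_l[S^F] \otimes^\mb{L}_{S(k)} M$, together with the fact that $\Gamma_c(Y_U^{G'},\bar\Q_l) \otimes^\mb{L}_{(S')(k)} \bar\Q_l[S^F]$ is precisely $\Gamma_c(Y_U^{G'},\bar\Q_l)$ viewed as a complex of $G^F \times S^F$-modules via the residual $S^F$-action — i.e. the same complex that computes $\mc{R}_{S',U}^{G}$. (Here $\mc{R}_{S',U}^G$ denotes Deligne-Lusztig induction from the larger torus $S$ via the variety $Y_U^G$, with $S'=S$ as algebraic groups but the source category $\mc{D}^b(S)$; the notation already anticipates this in the excerpt.)

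For \eqref{eq:dl_ind1b}, I would instead invoke \eqref{eq:dl_ind1d}: under the hypothesis that $S^F/(S')^F \to G^F/(G')^F$ is an isomorphism, Lemma \ref{lem:dl_ind1} gives $Y_U^G \cong Y_U^{G'} \times_{(S')^F} S^F$ as a $G^F \times S^F$-variety, and in particular (restricting the left action to $(G')^F$, which then acts only through the $Y_U^{G'}$ factor) $\Gamma_c(Y_U^G,\bar\Q_l)|_{(G')^F} \cong \Gamma_c(Y_U^{G'},\bar\Q_l) \otimes^\mb{L}_{(S')^F} \bar\Q_l[S^F]$. Tensoring over $S(k)$ with $M$ and collapsing $\bar\Q_l[S^F] \otimes^\mb{L}_{S(k)} M \cong \tx{Res}^{S^F}_{(S')^F}M$ gives exactly $\mc{R}_{S',U}^{G'} \circ \tx{Res}_{(S')^F}^{S^F}(M)$, as desired. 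The one point requiring a little care — and the main (though mild) obstacle — is bookkeeping the commuting $G^F$- and $S^F$-actions through the derived tensor products and making sure the projection/base-change identities for $\otimes^\mb{L}$ are applied on the correct side; once the Künneth-type statement $\Gamma_c(X' \times_{H'} H) \cong \tx{Ind}_{H'}^H \Gamma_c(X')$ is in hand with its equivariance, everything else is formal manipulation of bimodules, so I would relegate it to "left to the reader" in the same spirit as the preceding lemmas.
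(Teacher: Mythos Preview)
Your approach is exactly the paper's: the proof there is the single line ``Immediate from Lemma \ref{lem:dl_ind1}'', and you are correctly unpacking that via the identity $\Gamma_c(X' \times_{H'} H) \cong \tx{Ind}_{H'}^{H}\Gamma_c(X')$ together with formal bimodule associativity for $\otimes^{\mb L}$.

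One notational slip to fix: in your argument for the left isomorphism of \eqref{eq:dl_ind1a} you wrote $\Gamma_c(Y_U^{G'},\bar\Q_l)$, but the functor $\mc{R}_{S',U}^{G}$ is built from $Y_U^{G}$, not $Y_U^{G'}$; the relevant identity is simply
\[
\Gamma_c(Y_U^G,\bar\Q_l)\otimes^{\mb L}_{S(k)}\bigl(\bar\Q_l[S^F]\otimes^{\mb L}_{(S')(k)}M'\bigr)\;\cong\;\Gamma_c(Y_U^G,\bar\Q_l)\otimes^{\mb L}_{(S')(k)}M',
\]
which is pure associativity. Relatedly, your parenthetical ``$S'=S$ as algebraic groups'' is not correct in general: $S'$ is $S\cap G'$, which is a proper subgroup of $S$ whenever $G'\subsetneq G^0\cdot S$. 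With those two corrections your write-up is fine.
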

\begin{proof}
Immediate from Lemma \ref{lem:dl_ind1}.
\end{proof}

Recall the character $\theta : S(k) \to \bar\Q_l^\times$ with non-singular restriction $\theta^0$ to $S^0(k)$.

\begin{cor} If $i \neq d_U$ then $\mc{R}_{S,U}^{i,G}(\theta) = 0$.
\end{cor}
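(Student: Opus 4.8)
The plan is to deduce the disconnected case from the classical vanishing theorem for connected groups, using the induction formula \eqref{eq:dl_ind1a} to pass between the two. Recall that for a connected reductive group and a non-singular character, the vanishing statement recalled in Subsection~\ref{sub:natiop} --- due to \cite[Corollary~9.9]{DL76} and \cite{He08} --- asserts that $H^i_c(Y_U)_\theta = 0$ unless $i = d_U$. Since $\theta^0$ is non-singular by hypothesis, this applies verbatim to the pair $(S^0,\theta^0)$ inside $G^0$, giving $\mc{R}_{S^0,U}^{i,G^0}(\theta^0)=0$ for all $i \neq d_U$ (here $\mc{R}_{S^0,U}^{i,G^0}(\theta^0)$ is, by definition, the $\theta^0$-isotypic component of $H^i_c(Y_U^{G^0},\bar\Q_l)$, as $\bar\Q_l[(S^0)^F]$ is semisimple). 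The point will be that our $\theta$, together with all its fellow extensions of $\theta^0$ to $S^F$, sits inside $\tx{Ind}_{(S^0)^F}^{S^F}(\theta^0)$, whose image under $\mc{R}_{S,U}^{G}$ the connected case already computes.

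Concretely, I would apply Corollary~\ref{cor:dl_ind2} with intermediate group $G' = G^0$, so that $S' = S \cap G^0 = S^0$ and $\mc{R}_{S^0,U}^{G^0}$ is the Deligne--Lusztig functor for the genuine maximal torus $S^0 \subset G^0$ considered above. The composite of the two isomorphisms in \eqref{eq:dl_ind1a} yields
\[ \mc{R}_{S,U}^{G} \circ \tx{Ind}_{(S^0)^F}^{S^F} \;\cong\; \tx{Ind}_{(G^0)^F}^{G^F} \circ \mc{R}_{S^0,U}^{G^0} \]
as functors $\mc{D}^b(S^0) \to \mc{D}^b(G)$. I then evaluate both sides on $\theta^0$ and pass to degree-$i$ cohomology. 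Ordinary induction of representations of finite groups over $\bar\Q_l$ is exact, hence commutes with $H^i$, so the right-hand side becomes $\tx{Ind}_{(G^0)^F}^{G^F}(\mc{R}_{S^0,U}^{i,G^0}(\theta^0))$, which vanishes for $i \neq d_U$ by the previous paragraph.

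It remains to unwind the left-hand side. Since $S^F$ is a finite abelian group containing $(S^0)^F$, Frobenius reciprocity gives the multiplicity-free decomposition $\tx{Ind}_{(S^0)^F}^{S^F}(\theta^0) = \bigoplus_{\tilde\theta}\tilde\theta$, the sum running over all characters $\tilde\theta$ of $S^F$ restricting to $\theta^0$ on $(S^0)^F$; such extensions exist ($\bar\Q_l^\times$ being divisible) and form a torsor under $(S^F/(S^0)^F)^*$. By additivity of $\mc{R}_{S,U}^{i,G}$, the degree-$i$ part of the left-hand side is $\bigoplus_{\tilde\theta}\mc{R}_{S,U}^{i,G}(\tilde\theta)$. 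Comparing with the right-hand side, this direct sum is $0$ for $i \neq d_U$, hence each summand vanishes; and $\theta$ is one of the $\tilde\theta$, since $\theta|_{(S^0)^F} = \theta^0$. Therefore $\mc{R}_{S,U}^{i,G}(\theta)=0$ for $i \neq d_U$.

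The argument is essentially formal given \eqref{eq:dl_ind1a}. I expect the only delicate point to be confirming that the $G^0$-side of that isomorphism really is the classical Deligne--Lusztig functor for $S^0 \subset G^0$ --- tracking the conventions of Notation~\ref{ntt:dl_s} through $G' = G^0$ and checking that $(S^0)^F$ acts on $Y_U^{G^0}$ in the standard way --- so that the literature's vanishing theorem for non-singular characters applies without modification. The second ingredient, that $\theta$ occurs inside $\tx{Ind}_{(S^0)^F}^{S^F}(\theta^0)$, is elementary but is precisely what transfers the connected-case input to the given $\theta$.
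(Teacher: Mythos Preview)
Your proof is correct and follows essentially the same approach as the paper: both embed $\theta$ as a direct summand of $\tx{Ind}_{(S^0)^F}^{S^F}\theta^0$, apply \eqref{eq:dl_ind1a} with $G'=G^0$ to identify $\mc{R}_{S,U}^{i,G}$ of that induced module with (the induction to $G^F$ of) $\mc{R}_{S^0,U}^{i,G^0}(\theta^0)$, and then invoke the connected vanishing theorem. Your version is slightly more explicit about the decomposition of the induced character and the exactness of induction, but the argument is the same.
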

\begin{proof}
The $G^F$-module $\mc{R}_{S,U}^{i,G}(\theta)$ is a direct summand of 
\[ \mc{R}_{S,U}^{i,G}(\tx{Ind}_{(S^0)^F}^{S^F}\tx{Res}_{(S^0)^F}^{S^F}\theta),\] 
which by \eqref{eq:dl_ind1a} equals $\mc{R}_{S^0,U}^{i,G^0}(\theta^0)$. The latter vanishes unless $i=d_U$ by \cite[Corollary 9.9]{DL76} and \cite{He08}.
\end{proof}

\begin{cor} \label{cor:dl_ind4}
We have the natural isomorphisms 
\begin{equation} \label{eq:ind2a}
\tx{Res}_{(G^0)^F}^{(G^0\cdot S)^F}\mc{R}_{S,U}^{d_U,G^0 \cdot S}(\theta) \to \mc{R}_{S^0,U}^{d_U,G^0}(\theta^0)	
\end{equation}
and 
\begin{equation} \label{eq:ind2b}
\mc{R}_{S,U}^{d_U,G}(\theta) \to \tx{Ind}_{(G^0 \cdot S)^F}^{G^F}\mc{R}_{S,U}^{d_U,G^0 \cdot S}(\theta).	
\end{equation}
\end{cor}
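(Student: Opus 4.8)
The plan is to extract both isomorphisms directly from Corollary~\ref{cor:dl_ind2}, applied to suitable intermediate groups, and then to pass from the equivariant derived category to cohomology in degree $d_U$ using that $\tx{Ind}$ and $\tx{Res}$ are exact functors on categories of finite-group representations.

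For \eqref{eq:ind2b} I would apply the right-hand isomorphism of \eqref{eq:dl_ind1a} to the intermediate group $G^0 \subset G^0\cdot S \subset G$. Since $S = S^0\cdot Z \subseteq G^0\cdot Z = G^0\cdot S$, the associated torus $S' = S\cap(G^0\cdot S)$ is $S$ itself, so the functor $\tx{Ind}_{(S')^F}^{S^F}$ in \eqref{eq:dl_ind1a} is the identity and that display reads $\mc{R}_{S,U}^G \cong \tx{Ind}_{(G^0\cdot S)^F}^{G^F}\circ\mc{R}_{S,U}^{G^0\cdot S}$. Evaluating both sides at $\theta$ and taking $H^{d_U}$ then yields \eqref{eq:ind2b}; here one uses that $\tx{Ind}$ is exact, hence commutes with $H^{d_U}$, and (for orientation) that by the preceding corollary both complexes are concentrated in degree $d_U$.

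For \eqref{eq:ind2a} I would apply \eqref{eq:dl_ind1b} to the pair $G^0 \subset G^0\cdot S$, whose associated tori are $S^0 \subset S$ and for which $\tx{Res}_{(S^0)^F}^{S^F}\theta = \theta^0$. The hypothesis to check is that the inclusion $S \hookrightarrow G^0\cdot S$ induces an isomorphism $S^F/(S^0)^F \to (G^0\cdot S)^F/(G^0)^F$. On the level of algebraic groups, $S^0 = S\cap G^0$ gives $\pi_0(G^0\cdot S) = (G^0\cdot S)/G^0 \cong S/S^0$ compatibly with the inclusion of $S$; applying Lang's theorem to the connected groups $S^0$ and $G^0$ (so that $H^1(F,S^0)=H^1(F,G^0)=1$) to the short exact sequences $1\to S^0\to S\to S/S^0\to 1$ and $1\to G^0\to G^0\cdot S\to S/S^0\to 1$ shows that $S^F/(S^0)^F$ and $(G^0\cdot S)^F/(G^0)^F$ both map isomorphically onto $(S/S^0)^F$, whence the claim. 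Granting this, \eqref{eq:dl_ind1b} gives $\tx{Res}_{(G^0)^F}^{(G^0\cdot S)^F}\circ\mc{R}_{S,U}^{G^0\cdot S} \cong \mc{R}_{S^0,U}^{G^0}\circ\tx{Res}_{(S^0)^F}^{S^F}$, and evaluating at $\theta$ and taking $H^{d_U}$ produces \eqref{eq:ind2a}. Exactness of $\tx{Res}$ is used once more here, together with the vanishing of $\mc{R}_{S^0,U}^{i,G^0}(\theta^0)$ for $i\neq d_U$ (by \cite[Corollary 9.9]{DL76} and \cite{He08}, since $\theta^0$ is non-singular) and the preceding corollary applied to $G^0\cdot S$.

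The only step that is more than bookkeeping is the verification of the index hypothesis in \eqref{eq:dl_ind1b}, i.e. the exactness of $(-)^F$ along the two short exact sequences above; this is precisely where the finite generation of $\pi_0$ from Assumption~\ref{asm:dl_mu} and Lang's theorem enter, and it is the point I would write out in full.
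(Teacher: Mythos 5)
Your proposal is correct and takes essentially the same approach as the paper: apply \eqref{eq:dl_ind1b} for the restriction statement and \eqref{eq:dl_ind1a} for the induction statement, with Lang's theorem supplying the hypothesis on $S^F/(S^0)^F \to (G^0\cdot S)^F/(G^0)^F$. The paper records this in one sentence; your write-out of the index check via the two short exact sequences is the verification the paper leaves implicit.
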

\begin{proof}
The first claim is immediate from \eqref{eq:dl_ind1b} and Lang's theorem, while the second is immediate from \eqref{eq:dl_ind1a}.
\end{proof}

\begin{rem} \label{rem:kappaext}
The above Corollary states that the $G^F$-module $\mc{R}_{S,U}^{d_U,G}(\theta)$ is obtained by endowing $\mc{R}_{S^0,U}^{G^0}(\theta^0)$ with a natural $(G')^F$-structure, depending on $\theta$, and then inducing to $G^F$. The natural $(G')^F$-structure can be made explicit as follows. Lang's theorem implies $(G')^F=(S')^F \cdot (G^0)^F$, so it is enough to endow $\mc{R}_{S^0,U}^{G^0}(\theta^0)$ with an $S^F$-structure. Now $S^F$ acts by conjugation on $G^0/U$ and this action preserves the subvariety $Y_U^{G^0}$. Therefore we obtain an action on $H_c^{d_U}(Y_U^{G^0},\bar\Q_l)$. This action commutes with the right action of $(S^0)^F$ and hence preserves the $\theta^0$-isotypic component for that action, i.e. we obtain an action of $S^F$ on $\mc{R}_{S^0,U}^{d_U,G^0}(\theta^0)$. Together with the action of $(G^0)^F$ we obtain an action of $(G^0)^F \rtimes S^F$ on $\mc{R}_{S^0,U}^{d_U,G^0}(\theta^0)$. The restriction of this action along the embedding $(S^0)^F \to (G^0)^F \rtimes S^F$ sending $s$ to $s \rtimes s^{-1}$ is equal to multiplication by $\theta^0(s)$. Therefore, upon twisting the action of $S^F$ by the linear character $\theta^{-1}$ we obtain an action of $(G^0)^F \rtimes S^F$ that restricts trivially along the anti-diagonal embedding of $(S^0)^F$ and hence descends to an action of $(G')^F=(G^0)^F \cdot S^F$. It is easy to see, using the fact that \eqref{eq:dl_ind1c} and \eqref{eq:dl_ind1d} become
\[ S^F \times_{(S^0)^F} Y_U^{G^0} = Y_U^{G'} = Y_U^{G^0} \times_{(S^0)^F} S^F, \]
that this does indeed describe the action of $(G')^F$ on $\mc{R}_{S^0,U}^{d_U,G^0}(\theta^0)$ transported via \eqref{eq:ind2a}.
\end{rem}

We will now extend the discussion of linking varieties and geometric intertwining operators to the disconnected group $G$. As with $Y_U^G$, the definitions of the linking varieties are completely analogous to those in the connected case:
\[ Y_{U,V}^G = \{(gU,hV) \in G/U \times G/V|\, g^{-1}h \in U \cdot V, h^{-1}F(g) \in V \cdot F(U) \}\]
and
\[ Y_{U,V}^{(2),G} = \{(gU,hV) \in Y_{U,V}^{G}|\, g^{-1}F(g) \in U \cdot F(U)\}. \]

\begin{lem} \label{lem:dl_ind3}
Given an intermediate group $G^0 \subset G' \subset G$, the multiplication map on $G$ induces isomorphisms of varieties
\begin{equation} \label{eq:dl_ind3a}
G^F \times_{(G')^F} Y_{U,V}^{G'} \to Y_{U,V}^G,\qquad G^F \times_{(G')^F} Y_{U,V}^{(2),G'} \to Y_{U,V}^{(2),G}.
\end{equation}
\end{lem}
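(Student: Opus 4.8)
The plan is to run the argument for Lemma~\ref{lem:dl_ind1} word for word, with the single quotient $G/U$ replaced by the product $G/U\times G/V$. The relevant map sends $(g,(xU,yV))$ to $(gxU,gyV)$, and the first thing to check is that it is well defined: for $g\in G^F$ and $(xU,yV)\in Y_{U,V}^{G'}$ the conditions defining $Y_{U,V}^G$ hold for $(gxU,gyV)$, since $(gx)^{-1}(gy)=x^{-1}y\in U\cdot V$ is unchanged and, because $F(g)=g$, one has $(gy)^{-1}F(gx)=y^{-1}g^{-1}F(g)F(x)=y^{-1}F(x)\in V\cdot F(U)$; compatibility with the $(G')^F$-action on the fiber product is immediate as that action is by left translation. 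The same computation handles the extra defining condition $a^{-1}F(a)\in U\cdot F(U)$ of $Y^{(2)}$: it is well defined on $G/U$ and is invariant under left translation by $G^F$ (as $a\mapsto ga$ leaves $a^{-1}F(a)$ unchanged), so the first isomorphism automatically carries $G^F\times_{(G')^F}Y_{U,V}^{(2),G'}$ onto $Y_{U,V}^{(2),G}$ once we know $Y^{(2)}$ is the preimage of $Y^{(2)}$. Thus the second assertion reduces to the first.

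Next I would prove bijectivity on $\bar k$-points. For injectivity, if $(gxU,gyV)=(g'x'U,g'y'V)$ with both pairs coming from $Y_{U,V}^{G'}$, choose representatives $x,y,x',y'\in G'$; then $g'^{-1}g\in x'Ux^{-1}\subset G'$, and since $g,g'\in G^F$ this forces $g'^{-1}g\in(G')^F$, so the two points of the fiber product agree. For surjectivity, take $(aU,bV)\in Y_{U,V}^G$. From $a^{-1}b\in U\cdot V\subset G^0$ one gets $\pi_0(a)=\pi_0(b)=:c$, and from $a^{-1}F(a)=(a^{-1}b)(b^{-1}F(a))\in UVF(U)\subset G^0$ one gets $c\in\pi_0(G)^F$. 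The one genuine input is that $G^F\to\pi_0(G)^F$ is surjective, which is Lang's theorem for the connected group $G^0$: pick any $a_0$ in the component $c$, solve $z^{-1}F(z)=a_0^{-1}F(a_0)$ for $z\in G^0$, and replace $a_0$ by $a_0z^{-1}\in G^F$. Choosing such $g\in G^F$ with $\pi_0(g)=c$, the computations of the first paragraph show $(g^{-1}a\cdot U,\,g^{-1}b\cdot V)\in Y_{U,V}^{G^0}\subseteq Y_{U,V}^{G'}$, and it maps to $(aU,bV)$.

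Finally, to upgrade the bijection to an isomorphism of varieties, note that after choosing coset representatives the source is the disjoint union $\bigsqcup_{g\in G^F/(G')^F}Y_{U,V}^{G'}$, the $g$-th copy being mapped by the pair of left translations $(g,g)$, which is the restriction of an automorphism of $G/U\times G/V$ and hence an isomorphism onto an open and closed subvariety of $Y_{U,V}^G$; these images are pairwise disjoint (two meet iff the corresponding $g$'s lie in the same $(G')^F$-coset, by the same $g'^{-1}g\in G'$ observation) and cover $Y_{U,V}^G$ by the surjectivity just proved. The passage to $Y^{(2)}$ then follows as noted. In truth there is no serious obstacle here: the only point needing a moment's thought is the surjectivity of $G^F\to\pi_0(G)^F$, and even that is immediate from Lang's theorem, so—exactly as with Lemma~\ref{lem:dl_ind1}—this is essentially bookkeeping and the details can be left to the reader.
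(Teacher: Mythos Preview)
Your proof is correct and follows exactly the approach the paper indicates: the paper's proof of Lemma~\ref{lem:dl_ind3} simply says ``The proof is the same as for Lemma~\ref{lem:dl_ind1} and is left to the reader,'' and Lemma~\ref{lem:dl_ind1} itself is also left to the reader. You have faithfully carried out precisely that bookkeeping---well-definedness via $F$-invariance of $g$, injectivity via $g'^{-1}g\in G'$, surjectivity via Lang's theorem for $G^0$ to lift $\pi_0(a)\in\pi_0(G)^F$---so there is nothing to correct or compare.
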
 
\begin{proof}
The proof is the same as for Lemma \ref{lem:dl_ind1} and is left to the reader.
\end{proof}

The maps
\[ Y_{U,V}^G \leftarrow Y_{U,V}^{(2),G} \rightarrow Y_U^{G}\]
are obtained via induction (cf. \cite[\S1.24]{DL76}) along $(G^0)^F \to G^F$ from the maps
\[ Y_{U,V}^{G^0} \leftarrow Y_{U,V}^{(2),G^0} \rightarrow Y_U^{G^0}.\]
This implies that $Y_{U,V}^G \to Y_U^G$ is again a smooth affine fiber bundle of dimension $d_{U,V}$, hence induces a push-forward functor on compactly supported cohomology, which combined with pulling back along the closed immersion $Y_{U,V}^{(2),G} \to Y_{U,V}^G$ induces a morphism $\Gamma_c(Y_{U,V},\bar\Q_l)\to \Gamma_c(Y_U,\bar\Q_l)[d_{U,V}]$ that is $G^F \times S^F$-equivariant.

\begin{lem} \label{lem:bdriso1}
The morphism
\[ \Gamma_c(Y_{U,V}^{G},\bar\Q_l)\otimes_{S^F}^\mb{L}\theta \to \Gamma_c(Y_U^{G},\bar\Q_l)[d_{U,V}] \otimes_{S^F}^\mb{L}\theta\]
is an isomorphism.	
\end{lem}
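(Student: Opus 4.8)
\subsection*{Proof strategy for Lemma \ref{lem:bdriso1}}

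The plan is to deduce the statement from its connected-group incarnation \cite[Theorem 6.2]{BDR17}, i.e.\ from \eqref{eq:bdriso0}, using the observation recorded just above the statement: the varieties $Y_{U,V}^G$, $Y_{U,V}^{(2),G}$, $Y_U^G$ \emph{and} the three structural morphisms relating them are all obtained from their $G^0$-counterparts by induction along $(G^0)^F \hookrightarrow G^F$ (Lemma \ref{lem:dl_ind3} and \eqref{eq:dl_ind1c}). First I would note that the morphism in question is $G^F \times S^F$-equivariant, so that applying $-\otimes^{\mb L}_{S^F}(-)$ to it produces a morphism functorial in the inserted $S^F$-module; in particular it respects direct-sum decompositions of that module. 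Since $\theta|_{(S^0)^F} = \theta^0$, Frobenius reciprocity together with the semisimplicity of $\bar\Q_l[S^F]$ shows that $\theta$ is a direct summand of $M:=\tx{Ind}_{(S^0)^F}^{S^F}\theta^0$ (concretely $M\cong\bigoplus_\chi \chi\theta$ over the characters $\chi$ of the finite abelian group $S^F/(S^0)^F$). As a direct summand of an isomorphism is an isomorphism, it suffices to prove the assertion with $\theta$ replaced by $M$.

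Next I would use that $-\otimes^{\mb L}_{S^F}M \cong -\otimes^{\mb L}_{(S^0)^F}\theta^0$ (induction is a tensor product, and tensor products are associative), noting $(S^0)^F\subseteq(G^0)^F$. By Lemma \ref{lem:dl_ind3} for $G'=G^0$ and by \eqref{eq:dl_ind1c} we have $Y_{U,V}^G = G^F\times_{(G^0)^F}Y_{U,V}^{G^0}$, $Y_{U,V}^{(2),G}=G^F\times_{(G^0)^F}Y_{U,V}^{(2),G^0}$, $Y_U^G=G^F\times_{(G^0)^F}Y_U^{G^0}$, compatibly with the closed immersion, the fiber-bundle projection, and the right action of $(S^0)^F$, which lives on the $Y^{G^0}$-factor. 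Passing to cohomology (cf.\ \cite[\S1.24--1.25]{DL76}), and using the flatness of $\bar\Q_l[G^F]$ over $\bar\Q_l[(G^0)^F]$ together with associativity of the tensor product, the morphism $\Gamma_c(Y_{U,V}^G,\bar\Q_l)\otimes^{\mb L}_{(S^0)^F}\theta^0 \to \Gamma_c(Y_U^G,\bar\Q_l)[d_{U,V}]\otimes^{\mb L}_{(S^0)^F}\theta^0$ is identified with $\tx{Ind}_{(G^0)^F}^{G^F}$ applied to the analogous morphism for the connected group $G^0$. The latter is an isomorphism by \eqref{eq:bdriso0}, because $\theta^0$ is non-singular, and $\tx{Ind}_{(G^0)^F}^{G^F}$ is exact, hence preserves quasi-isomorphisms. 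This finishes the proof.

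I do not expect a genuine obstacle: all the substance is in \cite[Theorem 6.2]{BDR17}, and the disconnected case is bookkeeping with induced varieties. The only point deserving care is that it is not merely the three varieties but the three morphisms $Y_{U,V}^G\leftarrow Y_{U,V}^{(2),G}\to Y_U^G$ that are induced from their $G^0$-versions, so that the comparison morphism for $G$ is \emph{literally} $\tx{Ind}_{(G^0)^F}^{G^F}$ of the one for $G^0$; this is already asserted in the paragraph preceding the statement and follows from the compatibility in Lemma \ref{lem:dl_ind3}. As an alternative, one can run the reduction in two steps, first from $G$ to the intermediate group $G^0\cdot S$ using \eqref{eq:dl_ind3a} (here $-\otimes^{\mb L}_{S^F}\theta$ passes through $\tx{Ind}_{(G^0\cdot S)^F}^{G^F}$ since $S^F\subseteq(G^0\cdot S)^F$), and then from $G^0\cdot S$ to $G^0$ using the $Y_{U,V}$-analogue of \eqref{eq:dl_ind1d}, exactly as in Remark \ref{rem:kappaext} and the proof of Corollary \ref{cor:dl_ind4}.
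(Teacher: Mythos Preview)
Your proof is correct and follows essentially the same route as the paper's: reduce to the morphism tensored over $(S^0)^F$ with $\theta^0$, identify that with $\tx{Ind}_{(G^0)^F}^{G^F}$ applied to the connected-group morphism \eqref{eq:bdriso0} via Lemmas \ref{lem:dl_ind1} and \ref{lem:dl_ind3}, and invoke \cite[Theorem 6.2]{BDR17}. The only cosmetic difference is that you pass from $\theta$ to $\theta^0$ by exhibiting $\theta$ as a direct summand of $\tx{Ind}_{(S^0)^F}^{S^F}\theta^0$, whereas the paper phrases the same step as ``the morphism over $(S^0)^F$ is still $S^F$-equivariant, so restrict to the $\theta$-isotypic component''; these are two wordings of the same idempotent projection.
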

\begin{proof}
We consider first the morphism 
\[ \Gamma_c(Y_{U,V}^{G},\bar\Q_l)\otimes_{(S^0)^F}^\mb{L}\theta^0 \to \Gamma_c(Y_U^{G},\bar\Q_l)[d_{U,V}] \otimes_{(S^0)^F}^\mb{L}\theta^0.\]
According to Lemmas \ref{lem:dl_ind1} and \ref{lem:dl_ind3} this morphism is given by applying $\tx{Ind}_{(G^0)^F}^{G^F}$ to the morphism \eqref{eq:bdriso0}, and hence is an isomorphism. At the same time the above morphism is stil equivariant for the right-action of $S^F$ and hence restricts between an isomorphism of the $\theta$-isotypic components.
\end{proof}

Lemma \ref{lem:bdriso1} allows us to obtain a $G(k)$-equivariant isomorphism
\begin{equation} \label{eq:defpsi1} \Psi_{V,U}^{G} : H_c^{d_U}(Y_U^{G},\bar\Q_l)_\theta \to H_c^{d_V}(Y_V^{G},\bar\Q_l)_\theta. \end{equation}
just as in the case with $G^0$.

\begin{dfn} \label{dfn:kappast}
Write $\kappa_{(S,\theta)}^G$ for the isomorphism class of the representation $H_c^{d_U}(Y_U^G,\bar\Q_l)_\theta$. We refer to the set $[\kappa_{(S,\theta)}^G]$ of irreducible constituents of $\kappa_{(S,\theta)}^G$ as a \emph{Deligne-Lusztig packet}. 
\end{dfn}

\begin{fct} \label{fct:ftwist}
Let $f$ be an automorphism of $G$. Then $\kappa^{G}_{(S,\theta)} \circ \tx{Ad}(f)^{-1} = \kappa^{G}_{(f(S),\theta\circ f^{-1})}$. 
\end{fct}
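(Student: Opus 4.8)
The plan is to unwind the definition of $\kappa^G_{(S,\theta)}$ as the $\theta$-isotypic component $H^{d_U}_c(Y^G_U,\bar\Q_l)_\theta$ and transport everything along the automorphism $f$. First I would observe that $f$, being an automorphism of $G$ defined over $k$, commutes with the Frobenius $F$ (or rather, the relevant statement only needs $f$ to be a $k$-automorphism, hence $F$-equivariant); it sends $S$ to $f(S)$, which is again a maximal torus whose neutral component $f(S^0)$ is elliptic in $G^0$, and it sends a Borel $B^0 \supset S^0$ with unipotent radical $U$ to a Borel $f(B^0) \supset f(S^0)$ with unipotent radical $f(U)$. Just as in the connected case (Lemma~\ref{lem:iopconj1} and the discussion preceding it), $f$ induces an isomorphism of varieties $f : Y^G_U \to Y^G_{f(U)}$, hence an isomorphism on cohomology $f : H^{d_U}_c(Y^G_U,\bar\Q_l) \to H^{d_{f(U)}}_c(Y^G_{f(U)},\bar\Q_l)$, noting $d_{f(U)} = d_U$ since $f$ preserves the relevant root-hyperplane count.

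Next I would track how this isomorphism interacts with the two group actions. By construction $f$ intertwines the left $G(k)$-action on $Y^G_U$ with the left $G(k)$-action on $Y^G_{f(U)}$ twisted by $\mathrm{Ad}(f)$, i.e. $f(g \cdot y) = f(g) \cdot f(y)$, so on cohomology $f \circ \pi(g) = \pi(f(g)) \circ f$ where $\pi$ denotes the $G(k)$-action on $H^{d_U}_c(Y^G_{f(U)},\bar\Q_l)$. Similarly $f$ intertwines the right $S(k)$-action on $Y^G_U$ with the right $f(S)(k)$-action on $Y^G_{f(U)}$, so it carries the $\theta$-isotypic component for $S(k)$ to the $(\theta \circ f^{-1})$-isotypic component for $f(S)(k)$: if $v \in H^{d_U}_c(Y^G_U,\bar\Q_l)_\theta$ and $s' \in f(S)(k)$, then writing $s' = f(s)$ we get $f(v) \cdot s' = f(v \cdot s) = \theta(s) f(v) = (\theta \circ f^{-1})(s') f(v)$. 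Hence $f$ restricts to a linear isomorphism $H^{d_U}_c(Y^G_U,\bar\Q_l)_\theta \to H^{d_{f(U)}}_c(Y^G_{f(U)},\bar\Q_l)_{\theta \circ f^{-1}}$ which is equivariant for the $G(k)$-action after precomposing one side with $\mathrm{Ad}(f)$.

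Putting these together: the left-hand side is a model for $\kappa^G_{(S,\theta)} \circ \mathrm{Ad}(f)^{-1}$ (the $G(k)$-representation on $H^{d_U}_c(Y^G_U,\bar\Q_l)_\theta$ with action precomposed by $\mathrm{Ad}(f)^{-1}$), and the right-hand side is by Definition~\ref{dfn:kappast} a model for $\kappa^G_{(f(S),\theta \circ f^{-1})}$, using the Borel $f(B^0) \supset f(S^0)$ to define the latter and invoking that the isomorphism class is independent of the choice of Borel. The displayed equality of isomorphism classes follows. The only mild subtlety — and the closest thing to an obstacle — is the bookkeeping of which side carries the $\mathrm{Ad}(f)$ twist and verifying it matches the convention in Fact~\ref{fct:ftwist}; this is purely a matter of chasing definitions and carries no real content, so the whole argument is essentially immediate once the variety-level isomorphism $f : Y^G_U \to Y^G_{f(U)}$ and its equivariance properties are written down.
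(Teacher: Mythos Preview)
Your proof is correct and is essentially the same as the paper's: both observe that $f$ restricts to an isomorphism of varieties $Y_U^G \to Y_{f(U)}^G$ that translates the $(G^F \times S^F)$-action to its twist by $f$, and then pass to $\theta$-isotypic components. The paper compresses this into a single sentence, while you have spelled out the bookkeeping of the two actions and the isotypic components in detail.
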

\begin{proof}
The restriction of $f$ to $Y_U^G$ is an isomorphism of varieties $Y_U^G \to Y_{f(U)}^G$ that translates the $(G^F \times S^F)$-action on $Y_U^G$ to the twist by $f$ of the $(G^F \times S^F)$-action on $Y_{f(U)}^G$.
\end{proof}

\begin{lem} \label{lem:dl_kappatwist}
For a character $\phi : G(k) \to \bar\Q_l^\times$ trivial on $G^0_\tx{sc}(k)$ we have
\[ \kappa_{(S,\phi\cdot\theta)}^{G} = \phi \otimes\kappa_{(S,\theta)}^{G}. \]
\end{lem}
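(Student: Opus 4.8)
\emph{Strategy and reduction to $G=G^0\cdot S$.} This is the analogue for disconnected $G$ of the classical twisting identity \cite[Corollary 1.27]{DL76} for connected groups recalled in Subsection \ref{sub:gio}, and the plan is to reduce to that case in two moves. First I replace $G$ by $G^0\cdot S$: for any character $\psi$ of $S(k)$, \eqref{eq:ind2b} of Corollary \ref{cor:dl_ind4} gives a natural isomorphism $\kappa_{(S,\psi)}^G\cong\tx{Ind}_{(G^0\cdot S)^F}^{G^F}\kappa_{(S,\psi)}^{G^0\cdot S}$. Applying this with $\psi=\phi\cdot\theta$ and with $\psi=\theta$, and using the projection formula $\tx{Ind}_H^{G^F}(\phi|_H\otimes\tau)\cong\phi\otimes\tx{Ind}_H^{G^F}\tau$ — valid for any linear character $\phi$ of $G^F$, any subgroup $H\subset G^F$, and any $H$-representation $\tau$ — it follows that the lemma for $G$ is a consequence of the lemma for $G^0\cdot S$. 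So I assume from now on that $G=G^0\cdot S$, whence $G^F=(G^0)^F\cdot S^F$ by Lang's theorem.

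\emph{The case $G=G^0\cdot S$.} By Remark \ref{rem:kappaext}, $\kappa_{(S,\psi)}^{G}$ is obtained as follows: its underlying space is $\kappa_{(S^0,\psi^0)}^{G^0}$ with $\psi^0=\psi|_{(S^0)^F}$; the group $(G^0)^F$ acts in the standard way; and $S^F$ acts through its conjugation action $c$ on $Y_U^{G^0}$ — which does not depend on $\psi$ — twisted by a character depending on $\psi$, the resulting $(G^0)^F\rtimes S^F$-action then being descended along the anti-diagonal copy of $(S^0)^F$. Since $\phi$ is trivial on $G^0_\tx{sc}(k)$, so is $\phi^0:=\phi|_{(G^0)^F}$, and \cite[Corollary 1.27]{DL76} supplies a canonical $(G^0)^F$-equivariant isomorphism $\iota:\kappa_{(S^0,\phi^0\theta^0)}^{G^0}\to\phi^0\otimes\kappa_{(S^0,\theta^0)}^{G^0}$. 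Granting that $\iota$ is moreover equivariant for the conjugation action $c$ of $S^F$ (see below), a bookkeeping of the $\psi$-dependent twists — forced by the internal consistency of the two constructions — shows that $\iota$ transports the $(G^0)^F\rtimes S^F$-action attached to $\phi\theta$ into the twist, by the character $(g\rtimes s)\mapsto\phi(g)\phi(s)=\phi(gs)$ of $(G^0)^F\rtimes S^F$, of the one attached to $\theta$; this character is trivial on the anti-diagonal $(S^0)^F$ (there it equals $\phi(s_0)\phi(s_0^{-1})=1$) and hence descends to $\phi|_{(G^0\cdot S)^F}$. Passing to the quotient by the anti-diagonal, $\iota$ becomes an isomorphism $\kappa_{(S,\phi\theta)}^{G^0\cdot S}\cong\phi|_{(G^0\cdot S)^F}\otimes\kappa_{(S,\theta)}^{G^0\cdot S}$, which is what remained to be proved.

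\emph{The main obstacle.} The only non-formal ingredient is the $S^F$-equivariance of $\iota$ for the conjugation action $c$. This follows from the actual construction of the isomorphism of \cite[Corollary 1.27]{DL76} recalled in Subsection \ref{sub:gio}: it is the ``multiplication by a linear character'' automorphism of the space $H^{d_U}_c(Y_U^{G^0},\bar\Q_l)$, realized through the étale cover $Y_U^{G^0_\tx{sc}}\to Y_U^{G^0}$ as an induced representation, and that automorphism is functorial in the representation being induced. Conjugation by $s\in S^F$ is an automorphism of $G^0$ fixing $S^0$ and $U$ and commuting with $F$; it lifts uniquely to $G^0_\tx{sc}$, acts on $Y_U^{G^0_\tx{sc}}$ compatibly with $c$ on $Y_U^{G^0}$, and normalizes the group along which the induction is formed, so the functorial automorphism commutes with it. (If one prefers to avoid Remark \ref{rem:kappaext} altogether, one can instead run the argument of \cite[\S1.24--1.27]{DL76} directly for the disconnected group $G^0\cdot S$: the needed statement that $H^{d_U}_c(Y_U^{G^0\cdot S},\bar\Q_l)$ is induced from $H^{d_U}_c(Y_U^{G^0_\tx{sc}},\bar\Q_l)$ is established exactly as the induction statements in Subsection \ref{sub:ff-disc}, after which the basic fact of Subsection \ref{sub:gio}, applied to the character $(\gamma,s)\mapsto\phi(\gamma s)$, gives the result with no intermediate twisting to track.)
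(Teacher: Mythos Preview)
Your proof is correct and follows essentially the same route as the paper's: reduce to $G'=G^0\cdot S$ via \eqref{eq:ind2b} and the projection formula, invoke \cite[Corollary 1.27]{DL76} for $G^0$, and then verify that the Deligne--Lusztig twisting isomorphism is equivariant for the conjugation action of $S(k)$ so that it upgrades to an isomorphism of $G'(k)$-representations via Remark \ref{rem:kappaext}. The only cosmetic difference is that where you argue the $S(k)$-equivariance directly from the functoriality of the ``multiply by $\phi^{-1}$'' automorphism on the induced-representation model, the paper packages exactly that argument as the second commutative diagram of Lemma \ref{lem:iopcharshift} (applied with $\alpha=\tx{Ad}(s)$) and simply cites it.
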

\begin{proof}
As discussed just before Lemma \ref{lem:iopcharshift} we have the isomorphism $\kappa_{(S^0,\phi^0\cdot\theta^0)}^{G^0} = \phi^0 \otimes\kappa_{(S^0,\theta^0)}^{G^0}$. Recall from Remark \ref{rem:kappaext} that $\kappa_{(S,\theta)}^{G'}$ is an extension to $G'(k)=G^0(k) \cdot S(k)$ of $\kappa_{(S^0,\theta^0)}^{G^0}$. The second diagram of Lemma \ref{lem:iopcharshift} applied to $\alpha=\tx{Ad}(s)$ for $s \in S(k)$ implies that the isomorphism $\kappa_{(S^0,\phi^0\cdot\theta^0)}^{G^0} = \phi^0 \otimes\kappa_{(S^0,\theta^0)}^{G^0}$ is $S(k)$-equivariant with respect to the extension to $S(k)$ of both sides, thus giving the isomorphism $\kappa_{(S,\phi\cdot\theta)}^{G'} = \phi \otimes\kappa_{(S,\theta)}^{G'}$, where $G'=G^0 \cdot S$. Finally $\kappa_{(S,\phi\cdot\theta)}^{G} = \phi \otimes\kappa_{(S,\theta)}^{G}$ follows formally from Corollary \ref{cor:dl_ind4}.
\end{proof}

\begin{pro} \label{pro:inddisj}
The representations $\kappa_{(S_1,\theta_1)}^G$ and $\kappa_{(S_2,\theta_2)}^G$ are either equal or disjoint. They are equal if and only if $(S_1,\theta_1)$ is $G(k)$-conjugate to $(S_2,\theta_2)$.
\end{pro}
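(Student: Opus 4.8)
The plan is to establish the following implication, which contains everything: if $\kappa^G_{(S_1,\theta_1)}$ and $\kappa^G_{(S_2,\theta_2)}$ share an irreducible constituent, then $(S_1,\theta_1)$ and $(S_2,\theta_2)$ are $G(k)$-conjugate. The converse is immediate from Fact~\ref{fct:ftwist} applied to $f=\tx{Ad}(g)$ for $g\in G(k)$: conjugation by $g$ carries $\kappa^G_{(S,\theta)}$ to the isomorphic $\kappa^G_{({}^gS,{}^g\theta)}$. Granting the implication, two such classes are then $G(k)$-conjugate and hence equal, which yields both the dichotomy ``equal or disjoint'' and the remaining half of the last assertion.

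First I would reduce to the case $G=G^0\cdot Z$. Since $S_i=S_i^0\cdot Z$ with $S_i^0\subset G^0$ we have $G^0\cdot S_i=G^0\cdot Z$, so Corollary~\ref{cor:dl_ind4} gives $\kappa^G_{(S_i,\theta_i)}=\tx{Ind}_{(G^0\cdot Z)^F}^{G^F}\kappa^{G^0\cdot Z}_{(S_i,\theta_i)}$. As $(G^0\cdot Z)^F$ is normal in $G^F$ with abelian quotient, Frobenius reciprocity together with Clifford theory forces, for a common constituent of the two induced representations, that $\kappa^{G^0\cdot Z}_{(S_1,\theta_1)}$ and $\kappa^{G^0\cdot Z}_{({}^gS_2,{}^g\theta_2)}$ share a constituent over $(G^0\cdot Z)^F$ for some $g\in G^F$; since a $(G^0\cdot Z)(k)$-conjugacy of pairs is a fortiori a $G(k)$-conjugacy, and twisting $(S_2,\theta_2)$ by $g$ is harmless by Fact~\ref{fct:ftwist}, it suffices to prove the claim for $G^0\cdot Z$. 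So assume $G=G^0\cdot Z=G^0\cdot S_1=G^0\cdot S_2$. Now $\kappa^G_{(S_i,\theta_i)}|_{(G^0)^F}=\kappa^{G^0}_{(S_i^0,\theta_i^0)}$ by Corollary~\ref{cor:dl_ind4}, so these two connected Deligne--Lusztig representations share a constituent; by the inner-product formula \cite[Theorem~6.8]{DL76} (with Theorem~\ref{thm:multfree} supplying the dichotomy in the connected case) this happens only if $(S_1^0,\theta_1^0)$ and $(S_2^0,\theta_2^0)$ are conjugate by an element $g\in G^0(\bar k)$ with ${}^gS_1^0=S_2^0$ and $g^{-1}F(g)\in S_1^0$; since $S_1^0$ is connected, Lang's theorem promotes this to a $G^0(k)$-conjugacy. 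Replacing $(S_2,\theta_2)$ accordingly, we may assume $S_1^0=S_2^0=S^0$ and $\theta_1^0=\theta_2^0=\theta^0$, hence $S_1=S_2=S:=S^0\cdot Z$ and $\theta_2=\delta\cdot\theta_1$ for a character $\delta$ of $A:=S(k)/S^0(k)=G^F/(G^0)^F$; by Lemma~\ref{lem:dl_kappatwist} then $\kappa^G_{(S,\theta_2)}=\delta\otimes\kappa^G_{(S,\theta_1)}$.

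It remains to show that if $\delta\otimes\kappa$ and $\kappa$ share a constituent, where $\kappa:=\kappa^G_{(S,\theta_1)}$, then $\theta_2={}^w\theta_1$ for some $w\in\Omega(S,G)(k)$. Since $\kappa|_{(G^0)^F}=\kappa^{G^0}_{(S^0,\theta^0)}$ is multiplicity free (Theorem~\ref{thm:multfree}), $\kappa$ is multiplicity free, its irreducible constituents $\pi_i$ have pairwise disjoint, multiplicity-free restrictions to $(G^0)^F$ which partition $[\kappa^{G^0}_{(S^0,\theta^0)}]$ into $A$-orbits, and each $\pi_i$ realises a constituent $\rho$ of $\kappa^{G^0}_{(S^0,\theta^0)}$ as an extension of $\rho$ to its $G^F$-stabiliser (so the obstruction to such an extension vanishes automatically). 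By Proposition~\ref{pro:dlp}, $[\kappa^{G^0}_{(S^0,\theta^0)}]$ is a torsor under $\Omega(S^0,G^0)(k)_{\theta^0}^*$; and, using Remark~\ref{rem:kappaext} to identify the $S(k)$-structure on $\kappa^{G^0}_{(S^0,\theta^0)}$ together with the commutative square of Corollary~\ref{cor:bichar}, the $A$-action on this torsor factors through the homomorphism $A\to\big(\Omega(S,G)(k)_{\theta^0}/\Omega(S,G)(k)_\theta\big)^*$ attached to the bicharacter~\eqref{eq:dzbichar}, say with kernel $A_0$; hence all the orbit-stabilisers equal $A_0$. Standard Clifford theory then gives $\{\delta'\in A^*\mid\delta'\otimes\pi_i\cong\pi_i\}=A_0^\perp$ for every $i$, so $\delta\otimes\kappa$ and $\kappa$ share a constituent iff $\delta\in A_0^\perp$; and by the non-degeneracy in Corollary~\ref{cor:bichar} this $A_0^\perp$ is exactly the image of $\Omega(S,G)(k)_{\theta^0}\to A^*$, $w\mapsto\big[\,s\mapsto\theta^0(wsw^{-1}s^{-1})\,\big]=\big[\,s\mapsto({}^w\theta_1)(s)\theta_1(s)^{-1}\,\big]$. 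Therefore $\delta=({}^w\theta_1)\theta_1^{-1}$ for some $w\in\Omega(S,G)(k)_{\theta^0}$, i.e.\ $\theta_2={}^w\theta_1$, as required.

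I expect the main obstacle to be this last step, and within it the identification of the conjugation action of $A=S(k)/S^0(k)$ on Lusztig's torsor $[\kappa^{G^0}_{(S^0,\theta^0)}]$ with the action produced by the bicharacter of Corollary~\ref{cor:bichar}: here Remark~\ref{rem:kappaext} (the explicit $S(k)$-structure, with conjugation by $s\in S^F$ being inner by an element of $S^0(\bar k)$) must be meshed with the diagram of Corollary~\ref{cor:bichar}, after which the Clifford-theoretic bookkeeping with the possibly non-abelian stabilisers has to be carried out — which is why the automatic extendability of $\rho$ is worth isolating. By contrast, the reductions of the second step, and the passage from geometric to rational conjugacy over $G^0$ via connectedness of $S^0$ and Lang's theorem, should be routine.
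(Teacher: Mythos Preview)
Your proposal is correct and follows essentially the same route as the paper: reduce via Corollary~\ref{cor:dl_ind4} and Mackey to $G^0\cdot Z$, restrict to $G^0$ and invoke \cite[Theorem~6.8]{DL76} to align $(S_1^0,\theta_1^0)$ with $(S_2^0,\theta_2^0)$, then use Lemma~\ref{lem:dl_kappatwist} to write $\kappa_{(S,\theta_2)}=\delta\otimes\kappa_{(S,\theta_1)}$ and finish by identifying the stabilizer of a constituent under $A^*$-twisting with the image of the bicharacter of Corollary~\ref{cor:bichar}, via Clifford theory and Proposition~\ref{pro:dlp}. Two small remarks: your phrase ``each $\pi_i$ realises a constituent $\rho$ as an extension of $\rho$ to its $G^F$-stabiliser'' is imprecise ($\pi_i$ is the induction of such an extension, not the extension itself); what you actually need and use is only that $m_{\pi_i}=1$, which follows from multiplicity-freeness of $\kappa|_{(G^0)^F}$ and is exactly what makes Lemma~\ref{lem:cliff2}(2) applicable. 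Also, \cite[Theorem~6.8]{DL76} already yields $G^0(k)$-conjugacy directly, so the appeal to Lang's theorem is not needed there.
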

\begin{proof}
If $(S_1,\theta_1)$ is $G(k)$-conjugate to $(S_2,\theta_2)$ then Fact \ref{fct:ftwist} implies that $\kappa_{(S_1,\theta_1)}^G$ and $\kappa_{(S_2,\theta_2)}^G$ are equal. Conversely assume that $\kappa_{(S_1,\theta_1)}^G$ and $\kappa_{(S_2,\theta_2)}^G$ share a common constituent. Set $G'=G^0 \cdot S$. By Corollary \ref{cor:dl_ind4} and the Mackey formula there exists $g \in G(k)$ so that $\kappa_{(S_1,\theta_1)}^{G'} \circ \tx{Ad}(g)$ and $\kappa_{(S_2,\theta_2)}^{G'}$ share a common constituent. By Fact \ref{fct:ftwist} we have $\kappa_{(S_1,\theta_1)}^{G'} \circ \tx{Ad}(g)=\kappa_{(S_1^g,\theta_1^g)}^{G'}$. Applying $\tx{Res}^{G'}_{G^0}$ we see that $\kappa_{(S_1^{0,g},\theta_1^{0,g})}^{G^0}$ and $\kappa_{(S_2^0,\theta_2^0)}^{G^0}$ share a common constituent. By \cite[Theorem 6.8]{DL76} there exists $h \in G^0(k)$ so that $(S_1^{0,gh},\theta_1^{0,gh})=(S_2^0,\theta_2^0)$. The first part of the proof allows us to replace $(S_1,\theta_1)$ by $(S_1^{gh},\theta_1^{gh})$ and assume $S_1^0=S_2^0$ and $\theta_1^0=\theta_2^0$. Then $S_1=S_1^0\cdot Z=S_2^0\cdot Z=S_2$ and we will write $S=S_1=S_2$. The character $\delta=\theta_2\theta_1^{-1}$ factors through $\pi_0(S)(k)$. Write $\theta=\theta_1$ so that $\theta_2=\theta\delta$. Using Lemma \ref{lem:dl_kappatwist} we see that $\kappa_{(S,\theta\delta)}^{G'}=\kappa_{(S,\theta)}^{G'}\otimes\delta$. 

The non-disjointness of $\kappa_{(S,\theta)}^{G'}\otimes\delta$ and $\kappa_{(S,\theta)}^{G'}$ implies that there exist two irreducible constituents $\pi_1,\pi_2$ of $\kappa_{(S,\theta)}^{G'}$ such that $\pi_1 \cong \pi_2\otimes\delta$. This implies $\tx{Res}^{G'}_{G^0}(\pi_1) \cong \tx{Res}^{G'}_{G^0}(\pi_2)$. Corollary \ref{cor:dl_ind4} states that $\kappa_{(S_1,\theta_1)}^{G'}$ is an extension to $G'(k)$ of the representation $\kappa_{(S^0,\theta^0)}^{G^0}$, and Theorem \ref{thm:multfree} states that the latter is multiplicity\-free. Therefore $\pi_1 \cong \pi_2$. Write $\pi=\pi_1=\pi_2$. 

Theorem \ref{thm:multfree}, Fact \ref{fct:ssindres}, and Lemma \ref{lem:cliff2} applied to the exact sequence
\[ 1 \to G^0(k) \to G'(k) \to \pi_0(S)(k) \to 1 \]
imply that $\delta$ annihilates the kernel of the action of $\pi_0(S)(k)$ on the set of irreducible constituents of $\kappa_{(S^0,\theta^0)}^{G^0}$. This action on that set factors through the map $S(k) \to [S/Z](k)=[S^0/(Z \cap G^0)](k) \to S^0_\tx{ad}(k)$. Proposition \ref{pro:dlp} implies that the kernel of this action is the kernel of the map $\pi_0(S)(k) \to \tx{cok}(S^0(k) \to S^0_\tx{ad}(k)) \to \Omega(S^0,G^0)(k)_{\theta^0}^*$ given by composing $S(k) \to S^0_\tx{ad}(k)$ with \eqref{eq:wtheta1'}. Corollary \ref{cor:bichar} states that this composition is the dual of the map $\Omega(S^0,G^0)(k)_{\theta^0} \to \Omega(S,G)(k)_{\theta^0}/\Omega(S,G)(k)_\theta \to \pi_0(S)(k)^*$ realized by
 $(s,w) \mapsto \theta^0(wsw^{-1}s^{-1})$. It follows that there exists an element $w \in \Omega(S^0,G^0)(k)_{\theta_0}$ such that $\delta(s)=\theta^0(wsw^{-1}s^{-1})$, i.e. $\theta_2(s)=\theta_1(wsw^{-1})$.
\end{proof}

\begin{fct} \label{fct:psigind}
The isomorphisms of Corollary \ref{cor:dl_ind4} identify $\Psi_{V,U}^{G^0}$ with $\Psi_{V,U}^{G^0 \cdot S}$ and $\tx{Ind}_{(G^0 \cdot S)^F}^{G^F}(\Psi_{V,U}^{G^0 \cdot S})$ with $\Psi_{V,U}^G$.
\end{fct}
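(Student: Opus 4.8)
The plan is to unwind the definition of $\Psi_{V,U}^G$ from \eqref{eq:defpsi1} and track how each of its three constituent isomorphisms interacts with the induction isomorphisms of Corollary \ref{cor:dl_ind4}. Recall that $\Psi_{V,U}^G$ is built from: (i) the isomorphism of Lemma \ref{lem:bdriso1} arising from the correspondence $Y_U^G \leftarrow Y_{U,V}^{(2),G} \rightarrow Y_{U,V}^G$; (ii) the isomorphism of étale sites $Y_{U,V}^G \xrightarrow{\sim} Y_{V,FU}^G$ given by $(g,h)\mapsto (h,F(g))$; and (iii) the inverse of the analogous isomorphism $H^{d_V}_c(Y_{V,FU}^G,\bar\Q_l)_\theta \to H^{d_V}_c(Y_V^G,\bar\Q_l)_\theta$. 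So the task is to check each of these three is compatible with passage from $G^0$ to $G^0\cdot S$ to $G$.

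First I would treat the middle and outermost steps (ii), (iii): the isomorphism $Y_{U,V}^G \to Y_{V,FU}^G$, $(g,h)\mapsto(h,F(g))$, is literally the same formula for every intermediate group $G^0 \subset G' \subset G$, and by Lemma \ref{lem:dl_ind3} the varieties $Y_{U,V}^G$ and $Y_{V,FU}^G$ are obtained from their $G'$-counterparts via $G^F \times_{(G')^F}(-)$, with the map intertwining these presentations; hence on cohomology it is $\tx{Ind}_{(G')^F}^{G^F}$ applied to the $G'$-version. The same applies with $G'=G^0$, using that $Y_{U,V}^{G^0 \cdot S} = Y_{U,V}^{G^0}\times_{(S^0)^F}S^F$ exactly as in Remark \ref{rem:kappaext} (here we use the hypothesis implicitly satisfied, that $(S/S^0)(k)\to(G^0\cdot S/G^0)(k)$ is an isomorphism, which holds since $S\cap G^0=S^0$). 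Next, for step (i): Lemma \ref{lem:dl_ind3} says the correspondence $Y_U^G \leftarrow Y_{U,V}^{(2),G}\rightarrow Y_{U,V}^G$ is obtained by applying $G^F\times_{(G')^F}(-)$ (equivalently, induction on cohomology) to the corresponding correspondence for $G'$; the pushforward along the smooth fiber bundle and the pullback along the closed immersion therefore commute with induction, which is exactly the content already invoked in the proof of Lemma \ref{lem:bdriso1}. Restricting to $\theta$-isotypic (resp. $\theta^0$-isotypic) components is compatible with these identifications because restriction to an isotypic component commutes with the relevant inductions, as used throughout Subsection \ref{sub:ff-disc}.

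Assembling: the composite $\Psi_{V,U}^{G'} = (\text{iii})^{-1}\circ(\text{ii})\circ(\text{i})$ is, under the isomorphisms of Corollary \ref{cor:dl_ind4}, the result of applying $\tx{Ind}_{(G^0\cdot S)^F}^{G^F}$ to $\Psi_{V,U}^{G^0\cdot S}$, which in turn (via \eqref{eq:ind2a} and Lang's theorem, as in the proof of Corollary \ref{cor:dl_ind4}) restricts to $\Psi_{V,U}^{G^0}$ on $(G^0)^F$. The one place that requires genuine care — and which I expect to be the main obstacle — is verifying that the $(G^0\cdot S)^F$-equivariant structure transported via \eqref{eq:ind2a} as spelled out in Remark \ref{rem:kappaext} (the twist-by-$\theta^{-1}$ of the conjugation action of $S^F$) is respected by all three maps simultaneously, i.e. that $\Psi_{V,U}^{G^0}$ is equivariant not just for $(G^0)^F$ but for the extended $S^F$-action defining $\kappa_{(S,\theta)}^{G^0\cdot S}$. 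This follows from Lemma \ref{lem:iopconj1} applied to $\alpha=\tx{Ad}(s)$ for $s\in S^F$ — which gives $\tx{Ad}(s)\circ\Psi_{V,U}^{\theta}\circ\tx{Ad}(s)^{-1} = \Psi_{\tx{Ad}(s)V,\tx{Ad}(s)U}^{\theta}$ (note $\theta\circ\tx{Ad}(s)^{-1}=\theta$ since $\theta$ is a character on the abelian group and $s\in S$, and $\tx{Ad}(s)$ fixes $U,V$ as they contain $S$) — together with the compatibility of this action with the $\theta^{-1}$-twist, exactly parallel to the argument in the proof of Lemma \ref{lem:dl_kappatwist}. Once this equivariance is in hand, the identification $\tx{Ind}(\Psi_{V,U}^{G^0\cdot S}) = \Psi_{V,U}^G$ is formal from \eqref{eq:ind2b} and the functoriality of induction.
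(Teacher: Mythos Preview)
Your plan is correct and is precisely the natural argument; the paper states this as a Fact without proof, so there is nothing to compare against beyond confirming that the ingredients you invoke (Lemmas \ref{lem:dl_ind1}, \ref{lem:dl_ind3}, \ref{lem:iopconj1}, and Remark \ref{rem:kappaext}) do the job, which they do.

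Two small corrections that do not affect the substance. First, you write that $\tx{Ad}(s)$ fixes $U,V$ ``as they contain $S$''; but $U,V$ are unipotent radicals and contain no torus. The correct reason is that $S=S^0\cdot Z$ with $Z$ central in $G$, so $\tx{Ad}(s)=\tx{Ad}(s_0)$ for some $s_0\in S^0$, and $S^0$ normalizes the unipotent radical of any Borel of $G^0$ containing it. Second, when you apply Lemma \ref{lem:iopconj1} to $G^0$ the relevant character is $\theta^0$, not $\theta$; since $S$ is abelian, $\theta^0\circ\tx{Ad}(s)^{-1}=\theta^0$, and your conclusion stands. Finally, note that Lemma \ref{lem:dl_ind3} only records the analogue of \eqref{eq:dl_ind1c} for the linking varieties, whereas for the identification with $G^0\cdot S$ you implicitly also want the analogue of \eqref{eq:dl_ind1d}; this holds by the same argument (and is in effect what Remark \ref{rem:kappaext} uses), but it is worth saying so explicitly.
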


\subsection{The internal structure of Deligne-Lusztig packets} \label{sub:dlpackpar}

We will now apply the geometric intertwining operators to the study of the internal structure of the representation $\kappa_{(S,\theta)}^G$ of Definition \ref{dfn:kappast}.

Let $\Gamma=N(S,G)(k)_{\theta^0}$, $\bar\Gamma=N(S,G)(k)_{\theta^0}/S(k)$, $X$ the set of unipotent radicals of all Borel $\bar k$-subgroups of $G^0$ containing $S^0$. According to Lemma \ref{lem:iopconj1} and Corollary \ref{cor:iopnat} the collection $\Psi^{G^0}$ is a $\Gamma$-equivariant collection of natural intertwining operators on $X$. Hence the class $[\eta]$ of Corollary \ref{cor:defeta} is defined.

\begin{asm} \label{asm:dl_normintex}
The group $\bar\Gamma$ is abelian and the class $[\eta]$ is trivial.
\end{asm}

We shall see in a moment that when $G=G^0$ the validity of this assumption is immediate from Lusztig's Theorem \ref{thm:multfree}. For general $G$ we do not know if this assumption is always satisfied. For those $G$ that occur in our study of $p$-adic groups we will prove in the next section that this assumption is indeed satisfied.

Let $U$ be the unipotent radical of a Borel $\bar k$-subgroup of $G^0$ containing $S^0$. According to Corollary \ref{cor:normiop} there exists a coheren splitting $\epsilon$ of the 2-cocycle $\eta_{\Psi^{G^0},U}$. Recall that this means that $\epsilon\cdot\Psi^{G^0}$ is a normalized $\Gamma$-equivariant collection of natural intertwining operators for the group $G^0$.

\begin{fct} The collection $\epsilon \cdot \Psi^G$ is a normalized $\Gamma$-equivariant collection of intertwining operators for the group $G$.
\end{fct}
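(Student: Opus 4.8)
The plan is to deduce the statement for $G$ directly from the already-established statement for $G^0$, using Fact \ref{fct:psigind} to transport the normalization. First I would recall what needs to be checked: the collection $\{[\epsilon\cdot\Psi^G]_{V,U} = \epsilon(U,V)\Psi^G_{V,U}\}$ must consist of natural intertwining operators (which follows since $\Psi^G_{V,U}$ is a nonzero $G(k)$-equivariant isomorphism $H^{d_U}_c(Y^G_U,\bar\Q_l)_\theta \to H^{d_V}_c(Y^G_V,\bar\Q_l)_\theta$ by \eqref{eq:defpsi1}, scaled by a nonzero scalar), be normalized (i.e. $[\epsilon\Psi^G]_{U_3,U_2}\circ[\epsilon\Psi^G]_{U_2,U_1} = [\epsilon\Psi^G]_{U_3,U_1}$), and be $\Gamma$-equivariant with $\Gamma = N(S,G)(k)_{\theta^0}$.

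The key observation is that by Fact \ref{fct:psigind}, the isomorphisms of Corollary \ref{cor:dl_ind4} identify $\Psi^{G^0}_{V,U}$ with $\Psi^{G^0\cdot S}_{V,U}$ and identify $\tx{Ind}^{G^F}_{(G^0\cdot S)^F}(\Psi^{G^0\cdot S}_{V,U})$ with $\Psi^G_{V,U}$. Consequently the cocycle $\eta_{\Psi^G}(U_1,U_2,U_3) = \Psi^G_{U_1,U_2}\circ\Psi^G_{U_2,U_3}\circ(\Psi^G_{U_1,U_3})^{-1}$ is a scalar that agrees with $\eta_{\Psi^{G^0}}(U_1,U_2,U_3)$: induction of a scalar morphism is the same scalar, and restriction-through-the-identifications of Corollary \ref{cor:dl_ind4} preserves composition. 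Hence the function $\eta_{\Psi^{G^0},U}$ computed from $\Psi^{G^0}$ equals the analogous function computed from $\Psi^G$, so that a coherent splitting $\epsilon$ for $\eta_{\Psi^{G^0},U}$ is simultaneously a coherent splitting for $\eta_{\Psi^G,U}$. The normalization condition for $\epsilon\Psi^G$ is then exactly the identity $\epsilon(U_1,U_2)\epsilon(U_2,U_3)\epsilon(U_1,U_3)^{-1} = \eta_{\Psi^G}(U_1,U_2,U_3)^{-1} = \eta_{\Psi^{G^0}}(U_1,U_2,U_3)^{-1}$, which holds because $\epsilon\Psi^{G^0}$ is normalized.

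For $\Gamma$-equivariance, I would use Lemma \ref{lem:iopconj1} in the disconnected setting — its proof goes through verbatim for the varieties $Y^G_U$, $Y^{(2),G}_{U,V}$, $Y^G_{U,V}$ and the maps between them, since these are all obtained by induction along $(G^0)^F \to G^F$ from the connected-group versions and the action of an automorphism $\alpha$ of $G$ preserving $S$ commutes with this induction. Thus for $\gamma \in \Gamma = N(S,G)(k)_{\theta^0}$, viewed as $\tx{Ad}(\gamma)$ fixing $S$ and $\theta^0$ (note: it fixes $\theta$ up to a character of $\pi_0(S)(k)$ that is trivial on $\theta$'s relevant isotypic data, but what matters is that conjugation preserves the isomorphism class so the composition makes sense as in the connected case), we get $\gamma\circ\Psi^G_{V,U}\circ\gamma^{-1} = \Psi^G_{\gamma(V),\gamma(U)}$; since $\epsilon$ is $\Gamma$-invariant (being a coherent splitting for the $\Gamma$-action on $X$, cf. Definition \ref{dfn:cohspl}), the scaled collection $\epsilon\Psi^G$ inherits $\Gamma$-equivariance. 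The main obstacle, and the only point requiring genuine care rather than bookkeeping, is verifying that the identifications in Fact \ref{fct:psigind} and Corollary \ref{cor:dl_ind4} are compatible with both the composition of intertwining operators and the $\Gamma$-action simultaneously — i.e. that passing to $\theta$-isotypic components and inducing does not introduce spurious scalars beyond $\eta_{\Psi^{G^0}}$; this is essentially guaranteed because induction is a functor and all maps in sight are $G^F\times S^F$-equivariant, but it should be stated explicitly.
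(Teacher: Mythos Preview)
Your approach is essentially the same as the paper's: both deduce the statement for $G$ from the established case for $G^0$ via Fact \ref{fct:psigind}, using that induction along $(G^0\cdot S)^F \to G^F$ carries $\Psi^{G^0\cdot S}$ to $\Psi^G$ and hence preserves the scalar $\eta$-function, so that the same coherent splitting $\epsilon$ normalizes both collections. Your explicit identification $\eta_{\Psi^G} = \eta_{\Psi^{G^0}}$ is exactly the content of the paper's appeal to Fact \ref{fct:psigind} for the normalization claim.

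One point where you get slightly tangled and the paper is cleaner: the equivariance. You attempt to argue it for the full $\Gamma = N(S,G)(k)_{\theta^0}$ and then hedge parenthetically about $\theta$ versus $\theta^0$. The issue is real: for $n \in N(S,G)(k)_{\theta^0} \setminus N(S,G)(k)_\theta$, conjugation by $n$ does not preserve the $\theta$-isotypic component $H^{d_U}_c(Y_U^G,\bar\Q_l)_\theta$ on which $\Psi^G$ is defined, so the equivariance condition does not even parse there. The paper's proof simply restricts attention to $n \in N(S,G)(k)_\theta$ --- which is all that is used in the sequel, in the definition of $R_U^{G,\epsilon}(n)$ --- and observes that for such $n$ the conjugation action on $Y_U^G$ preserves the subvariety $Y_U^{G^0}$, so the action on $H^*_c(Y_U^G,\bar\Q_l)_\theta$ is induced from the action on $H^*_c(Y_U^{G^0\cdot S},\bar\Q_l)_\theta \cong H^*_c(Y_U^{G^0},\bar\Q_l)_{\theta^0}$. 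This reduces the equivariance for $G$ directly to the already-known equivariance for $G^0$, without needing to re-run Lemma \ref{lem:iopconj1} in the disconnected setting. Your argument would be cleaner if you dropped the attempt to treat all of $N(S,G)(k)_{\theta^0}$ and instead made this same reduction.
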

\begin{proof}
This follows from Corollary \ref{fct:psigind} and the fact that for $n \in N(S,G)(k)_\theta$ the conjugation action $\tx{Ad}(n)$ on the variety $Y_U^G$ preserves the subvariety $Y_U^{G^0}$, so the action on $H^*_c(Y_U^G,\bar\Q_l)_\theta$ is induced from that on $H^*_c(Y_U^{G^0S},\bar\Q_l)_\theta$, and the latter is identified with that on $H^*_c(Y_U^{G^0},\bar\Q_l)_{\theta^0}$.
\end{proof}

\begin{rem} Assumption \ref{asm:dl_normintex} is equivalent to the claim that the cocycles $\{\eta_{\Psi,U}|U \in X\}$ are cohomologically trivial. The choice of $\{\epsilon_U\}$ is the choice of trivialization. While we will prove the existence of trivialization in the cases that we need, we will not be able to find a natural trivialization. Since the cocycles are naturally given, studying them would hopefully lead to a natural choice of trivialization. We shall come to this in a forthcoming paper.
\end{rem}

For $n \in N(S,G)(k)_\theta$ we define a self-intertwining operator
\begin{equation} \label{eq:self_int}
R_U^{G,\epsilon}(n) : H^i_c(Y_U^G,\bar\Q_l)_\theta \to H^i_c(Y_U^G,\bar\Q_l)_\theta
\end{equation}
as follows. The map $gU \mapsto gUn^{-1}$ is an isomorphism of varieties $Y_U^G \to Y_{{^nU}}^G$, where we write $^nU=nUn^{-1}$, that commutes with the left $G(k)$-action and translates the action of $s \in S(k)$ to the action of $nsn^{-1} \in S(k)$. It induces a $G(k)$-equivariant isomorphism $r_{n^{-1}} : H^i_c(Y_U^G,\bar\Q_l) \to H^i_c(Y_{^nU}^G,\bar\Q_l)$ that respects the $\theta$-isotypic components for the right action by $S(k)$. We define $R_U^{G,\epsilon}(n) = [\epsilon\Psi^G]_{U,{^nU}}\circ r_{n^{-1}}$.

The resulting map $n \mapsto R_U^{G,\epsilon}(n)$ gives an action of $N(S,G)(k)_\theta$ on $H^{d_U}(Y_U^G,\bar\Q_l)_\theta$ that commutes with the action of $G(k)$ and extends the action of $S(k)$ on this vector space obtained by inverting the action coming from right multiplication, i.e. the action given by $s \mapsto \theta(s)^{-1}$. In other words, we now have an action of $G(k) \times N(S,G)(k)_\theta$ on $H^{d_U}(Y_U,\bar\Q_l)_\theta$.

\begin{fct} \label{fct:normiopconj}
Given $U,V \in X$ and $n \in N(S,G)(F)_\theta$ we have
\[ \Psi_{V,U}\circ R_U^{G,\epsilon}(n) \circ \Psi_{V,U}^{-1} = R_V^{G,\epsilon}(n). \]
\end{fct}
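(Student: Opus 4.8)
The plan is to unwind the definition of $R_U^{G,\epsilon}(n) = [\epsilon\Psi^G]_{U,{^nU}}\circ r_{n^{-1}}$ and similarly $R_V^{G,\epsilon}(n) = [\epsilon\Psi^G]_{V,{^nV}}\circ r_{n^{-1}}$, and to show that the claimed conjugation identity is equivalent to a commutation between $\Psi_{V,U}$ and the two transport-of-structure maps $r_{n^{-1}}$, together with the coherence properties of the collection $\epsilon\Psi^G$. First I would note that the isomorphism $r_{n^{-1}} : H^*_c(Y_U^G,\bar\Q_l) \to H^*_c(Y_{{^nU}}^G,\bar\Q_l)$ is nothing but the cohomological map induced by the automorphism $\alpha = \tx{Ad}(n)$ of $G$ (which commutes with $F$ and preserves $S$), restricted appropriately and composed with the right-translation identification; concretely $\tx{Ad}(n)$ sends $Y_U^G$ to $Y_{{^nU}}^G$. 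Since $n \in N(S,G)(k)_\theta$, we have $\theta\circ\tx{Ad}(n)^{-1}=\theta$, so this map does respect the $\theta$-isotypic components, consistent with the statement before the Fact.

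Next I would invoke Lemma \ref{lem:iopconj1} (in its disconnected incarnation, via Fact \ref{fct:psigind}, which identifies $\Psi^G$ with the induction of $\Psi^{G^0\cdot S}$, itself identified with $\Psi^{G^0}$), which gives $\alpha\circ\Psi_{V,U}^\theta\circ\alpha^{-1}=\Psi_{\alpha(V),\alpha(U)}^{\theta\circ\alpha^{-1}}=\Psi_{{^nV},{^nU}}^\theta$, using again that $\alpha=\tx{Ad}(n)$ fixes $\theta$. In terms of the $r$-maps this reads $r_{n^{-1}}\circ\Psi_{V,U}=\Psi_{{^nV},{^nU}}\circ r_{n^{-1}}$, i.e. the square relating the source Borel $U$ to its conjugate $^nU$ and the target Borel $V$ to $^nV$ commutes. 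Plugging this in:
\[
\Psi_{V,U}\circ R_U^{G,\epsilon}(n)\circ\Psi_{V,U}^{-1}
= \Psi_{V,U}\circ[\epsilon\Psi^G]_{U,{^nU}}\circ r_{n^{-1}}\circ\Psi_{V,U}^{-1}
= \Psi_{V,U}\circ[\epsilon\Psi^G]_{U,{^nU}}\circ\Psi_{{^nV},{^nU}}^{-1}\circ r_{n^{-1}}.
\]
It remains to show $\Psi_{V,U}\circ[\epsilon\Psi^G]_{U,{^nU}}\circ\Psi_{{^nV},{^nU}}^{-1}=[\epsilon\Psi^G]_{V,{^nV}}$, i.e. $[\epsilon\Psi^G]_{V,{^nV}}\circ\Psi_{{^nV},{^nU}}=\Psi_{V,U}\circ[\epsilon\Psi^G]_{U,{^nU}}$. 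Writing $[\epsilon\Psi^G]_{U,{^nU}}=\epsilon_U\cdot\Psi_{U,{^nU}}$ etc. and using Corollary \ref{cor:natcomp} that natural operators compose (so the uncorrected $\Psi$'s satisfy $\Psi_{V,U}\Psi_{U,{^nU}}=\Psi_{V,{^nU}}=\Psi_{V,{^nV}}\Psi_{{^nV},{^nU}}$ up to the scalar $\eta_{\Psi}$), this reduces to the scalar identity that $\epsilon$ is a coherent splitting — precisely the coherence condition $\Phi_{\gamma(V),\gamma(U)}\circ\Phi_{\gamma(U),U}=\Phi_{\gamma(V),V}\circ\Phi_{V,U}$ of Definition \ref{dfn:normiop}(4) applied with $\gamma=\tx{Ad}(n)$, together with the defining relations of $\epsilon$ in Definition \ref{dfn:cohspl} and the $\beta_{\Phi,V,U}$-consistency.

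The main obstacle I anticipate is bookkeeping the scalar: one must check that the discrepancy between $\Psi_{V,U}\Psi_{U,{^nU}}$ and $\Psi_{V,{^nV}}\Psi_{{^nV},{^nU}}$, which by Corollary \ref{cor:natcomp} and the cocycle $\eta_{\Psi}$ is some explicit product of values of $\eta_{\Psi,U}$, is exactly cancelled by the ratio $\epsilon_V\beta_{\Psi,V,{^nV}}\cdots/\epsilon_U$ coming from the coherent splitting relations $\partial\epsilon_U=\eta_{\Psi,U}^{-1}$, $\epsilon_V=\beta_{\Psi,V,U}\epsilon_U$, and the identity $\beta_{\Psi,V,U}=\eta_{\Psi,U}(ax,a,b)\eta_{\Psi,U}(ax,bx,b)^{-1}$ of Lemma \ref{lem:beta}(3) when $V={^xU}$. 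This is the content of the already-established Corollary \ref{cor:normiop} and Lemma \ref{lem:beta}, so the verification is purely formal once the identifications above are in place; I would carry it out by reducing to the case $V\in\Gamma\cdot U$ first (where everything lives inside a single $\Gamma$-orbit and $\epsilon\Psi^G$ is normalized there, so both sides equal $[\epsilon\Psi^G]_{V,{^nV}}\circ r_{n^{-1}}$ directly) and then treating $V$ in a different orbit using the $\beta$-consistency, invoking Lemma \ref{lem:iopconj1} throughout to move $\tx{Ad}(n)$ past $\Psi$.
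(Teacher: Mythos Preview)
Your approach is correct and matches the paper's. The paper streamlines your final step: the scalar $\Psi_{V,U}\circ\Psi_{U,{^nU}}\circ\Psi_{{^nV},{^nU}}^{-1}\circ\Psi_{V,{^nV}}^{-1}$ is by definition $\beta_{\Psi,V,U}(1,n)$ (equation \eqref{eq:beta}), so the coherent-splitting relation $\epsilon_V=\beta_{\Psi,V,U}\cdot\epsilon_U$ of Definition \ref{dfn:cohspl} finishes at once, with no need for the $\eta$-bookkeeping, the orbit case analysis, or the appeal to Definition \ref{dfn:normiop}(4) that you anticipate.
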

\begin{proof}
The construction of the operator \eqref{eq:self_int} and Lemma \ref{lem:iopconj1}, which remains valid in the disconnected case with the same proof, we see that the left hand side of the claimed equation is $\epsilon_U(n) \cdot \Psi_{V,U} \circ \Psi_{U,^nU} \circ \Psi_{^nV,^nU}^{-1} \circ r_{n^{-1}}$. According to \eqref{eq:beta} this equals $\epsilon_U(n)\beta_{V,U}(n)\Psi_{V,^nV}\circ r_{n^{-1}}$. Definition \ref{dfn:cohspl} implies that this equals the right hand side of the claimed equation.
\end{proof}

\begin{cor} The isomorphism class of the representation of $G(k) \times N(S,G)(k)_\theta$ on $H_c^{d_U}(Y_U^G,\bar\Q_l)_\theta$ does not depend on the choice of $U$.
\end{cor}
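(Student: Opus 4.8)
The plan is to exhibit, for any two $U,V \in X$, an explicit isomorphism of representations of $G(k) \times N(S,G)(k)_\theta$ between $H_c^{d_U}(Y_U^G,\bar\Q_l)_\theta$ and $H_c^{d_V}(Y_V^G,\bar\Q_l)_\theta$; the natural candidate is the normalized geometric intertwining operator $[\epsilon\Psi^G]_{V,U}$ (equivalently $\Psi_{V,U}^G$ of \eqref{eq:defpsi1}, since the two differ by the nonzero scalar $\epsilon(V,U)$, which is irrelevant to equivariance).

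First I would recall that $\Psi_{V,U}^G$ is, by its very construction, an isomorphism of $\bar\Q_l$-vector spaces that is equivariant for the left action of $G(k)$: it is obtained by composing the morphism of Lemma \ref{lem:bdriso1}, the isomorphism of étale sites $Y_{U,V}^G \leftrightarrow Y_{V,FU}^G$, and the inverse of the corresponding morphism for $V$, each of which is $G^F$-equivariant. Hence $\Psi_{V,U}^G$ intertwines the $G(k)$-action on the source with the $G(k)$-action on the target, and so does its rescaling $[\epsilon\Psi^G]_{V,U}$.

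Next I would invoke Fact \ref{fct:normiopconj}, which asserts precisely that for every $n \in N(S,G)(k)_\theta$ one has $[\epsilon\Psi^G]_{V,U}\circ R_U^{G,\epsilon}(n)\circ [\epsilon\Psi^G]_{V,U}^{-1} = R_V^{G,\epsilon}(n)$; that is, $[\epsilon\Psi^G]_{V,U}$ intertwines the $N(S,G)(k)_\theta$-action $n \mapsto R_U^{G,\epsilon}(n)$ on the source with the action $n \mapsto R_V^{G,\epsilon}(n)$ on the target. Since (as recorded just before Fact \ref{fct:normiopconj}) each of these $N(S,G)(k)_\theta$-actions commutes with the corresponding $G(k)$-action, they combine into genuine actions of the product group $G(k)\times N(S,G)(k)_\theta$, and $[\epsilon\Psi^G]_{V,U}$ is simultaneously equivariant for both factors, hence an isomorphism of $G(k)\times N(S,G)(k)_\theta$-modules. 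As $X$ is non-empty — every $S^0$ is contained in some Borel $\bar k$-subgroup of $G^0$ — and any two elements of $X$ are linked by such an operator, the isomorphism class in question is the same for every $U \in X$.

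I do not expect a genuine obstacle: the corollary is a formal consequence of Fact \ref{fct:normiopconj} together with the $G(k)$-equivariance of $\Psi_{V,U}^G$. The only minor points to spell out are that $\Psi_{V,U}^G$ is in fact a bijection (being a composition of isomorphisms) and that passing between $[\epsilon\Psi^G]_{V,U}$ and $\Psi_{V,U}^G$ changes nothing, since a nonzero scalar commutes with every group action. All the substantive work — the renormalization by a coherent splitting $\epsilon$ and the cocycle identities of Lemma \ref{lem:beta} used to prove Fact \ref{fct:normiopconj} — has already been carried out, so nothing further is required here.
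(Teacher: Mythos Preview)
Your argument is correct and is exactly the approach the paper takes: the paper's proof is simply ``Immediate from Fact \ref{fct:normiopconj}'', and you have merely spelled out that immediacy. One tiny quibble: as stated, Fact \ref{fct:normiopconj} conjugates by the unnormalized $\Psi_{V,U}$ rather than $[\epsilon\Psi^G]_{V,U}$, but as you yourself note these differ by a nonzero scalar, so the conjugation identity is the same either way.
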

\begin{proof}
Immediate from Fact \ref{fct:normiopconj}.
\end{proof}

\begin{dfn} We denote by $\kappa_{(S,\theta)}^{G,\epsilon}$ the isomorphism class of the representation of $G(k) \times N(S,G)(k)_\theta$ on $H_c^{d_U}(Y_U^G,\bar\Q_l)_\theta$. For $\rho \in \tx{Irr}(N(S,G)(k)_\theta,\theta)$ define
\[ \kappa_{(S,\theta,\rho)}^{G,\epsilon} = \tx{Hom}_{N(S,G)(k)_\theta}(\rho^\vee, \kappa_{(S,\theta)}^{G,\epsilon}|_{\{1\} \times N(S,G)(k)_\theta}). \]
\end{dfn}

Recall from Fact \ref{fct:cohsplituniq} that the set of coherent splittings $\{\epsilon\}$ is a torsor under the finite abelian group $(N(S,G)(k)_\theta/S(k))^*$.

\begin{thm} \label{thm:dlpar}
\begin{enumerate}
	\item The map $\rho \mapsto \kappa_{(S,\theta,\rho)}^{G,\epsilon}$ is a bijection 
	\[ \tx{Irr}(N(S,G)(k)_\theta,\theta) \to [\kappa_{(S,\theta)}^G]. \]
	\item The multiplicity of $\kappa_{(S,\theta,\rho)}^{G,\epsilon}$ in $\kappa_{(S,\theta)}^G$ is equal to $\tx{dim}(\rho)$, i.e. to the multiplicity of $\theta$ in $\rho|_{S(k)}$.
	\item For $\delta \in (N(S,G)(k)_\theta/S(k))^*$ and $\phi : \tx{cok}(G^0_\tx{sc}(k) \to G(k)) \to \bar\Q_l^\times$ we have
\[ \kappa_{(S,\theta,\rho)}^{G,\delta\epsilon} = \kappa_{(S,\theta,\delta\otimes\rho)}^{G,\epsilon},\qquad \kappa_{(S,\phi\otimes\theta,\phi\otimes\rho)}^{G,\epsilon} = \phi\otimes\kappa_{(S,\theta,\rho)}^{G,\epsilon}. \]
\end{enumerate}

\end{thm}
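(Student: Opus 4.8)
The plan is to reduce everything to Clifford theory for the action of $N(S,G)(k)_\theta$ on the cohomology space $H^{d_U}_c(Y_U^G,\bar\Q_l)_\theta$, using the geometric intertwining operators to produce that action and the already-established multiplicity properties of $\kappa^{G^0}_{(S^0,\theta^0)}$ as input.

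\textbf{Setup.} Write $M = H^{d_U}_c(Y_U^G,\bar\Q_l)_\theta$, viewed as a module for $G(k) \times N(S,G)(k)_\theta$ via the left $G(k)$-action and the operators $R_U^{G,\epsilon}(n)$ of \eqref{eq:self_int}. First I would record the basic structural fact, following from Corollary \ref{cor:dl_ind4} and Remark \ref{rem:kappaext}, that as a $G(k)$-module $M \cong \tx{Ind}_{(G^0\cdot S)^F}^{G^F}(\tilde\kappa)$ where $\tilde\kappa$ is the canonical extension of the multiplicity-free representation $\kappa^{G^0}_{(S^0,\theta^0)}$ to $(G^0\cdot S)^F$. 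Combined with the fact that $S(k)$ acts (through the $R$-operators restricted to $S(k)\subset N(S,G)(k)_\theta$) by the scalar character $\theta^{-1}$, one sees $M$ is, as a $G(k) \times S(k)$-module, the $\theta^{-1}$-isotypic piece for $S(k)$ of something whose $G(k)$-restriction is $\tx{Ind}_{(G^0)^F}^{G^F}\kappa^{G^0}_{(S^0,\theta^0)}$; in particular every irreducible constituent of $\kappa^G_{(S,\theta)}$ is a $G(k)$-constituent of $M$, and $M$ decomposes under $G(k)\times N(S,G)(k)_\theta$ with multiplicity spaces that we must identify.

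\textbf{The commuting algebra.} The heart of the argument is to show that the operators $\{R_U^{G,\epsilon}(n)\}_{n \in N(S,G)(k)_\theta}$ span $\tx{End}_{G(k)}(M)$, and that the induced map $\bar\Q_l[N(S,G)(k)_\theta] \to \tx{End}_{G(k)}(M)$ identifies $\tx{End}_{G(k)}(M)$ with the quotient of the group algebra on which $S(k)$ acts via $\theta$ — i.e. the twisted group algebra $\bar\Q_l^{\theta}[\Omega(S,G)(k)_\theta]$ (equivalently the relevant summand of $\bar\Q_l[N(S,G)(k)_\theta]$). This is exactly the Harish-Chandra Commuting Algebra / Basis Theorem in the abstract form promised in Appendix \ref{app:basis}, and I would invoke that appendix: the input it requires is (i) that $\tx{End}_{G(k)}(M)$ has dimension $\#[\kappa^G_{(S,\theta)}]$ counted with multiplicities squared — which follows from Pro.\ \ref{pro:inddisj} (disjointness, so no cross terms from different conjugacy classes of $(S,\theta)$) plus a dimension count via Mackey applied to $\tx{Ind}_{(G^0\cdot S)^F}^{G^F}$ — and (ii) that the $R$-operators are linearly independent and close up into the right twisted group algebra, which is precisely Fact \ref{fct:normiopconj} together with the cocycle relation $[\epsilon\Psi^G]_{U,{}^{nm}U} = [\epsilon\Psi^G]_{U,{}^nU}\circ {}^n[\epsilon\Psi^G]_{U,{}^mU}$ coming from normalization (Definition \ref{dfn:normiop}(2)) and coherence. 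Once the commuting algebra is pinned down, parts (1) and (2) are immediate formal consequences of Clifford theory (Appendix \ref{app:cliff}): irreducible $G(k)$-constituents of $M$ biject with irreducible $\tx{End}_{G(k)}(M)$-modules, which biject with $\tx{Irr}(N(S,G)(k)_\theta,\theta)$, and the multiplicity of a constituent equals the dimension of the matching irreducible of the commuting algebra, which is $\dim\rho = $ multiplicity of $\theta$ in $\rho|_{S(k)}$.

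\textbf{Part (3), the twists.} For the behavior under $\delta \in (N(S,G)(k)_\theta/S(k))^*$: replacing $\epsilon$ by $\delta\epsilon$ (Fact \ref{fct:cohsplituniq}) changes each $R_U^{G,\epsilon}(n)$ by the scalar $\delta(n)$, hence multiplies the action of $N(S,G)(k)_\theta$ on $M$ by the character $\delta$; passing to $\rho$-isotypic pieces then converts $\rho$ to $\delta\otimes\rho$, which is the first identity. For the character twist by $\phi$: apply Lemma \ref{lem:dl_kappatwist} to get $\kappa^G_{(S,\phi\theta)} = \phi\otimes\kappa^G_{(S,\theta)}$ on the $G(k)$-level, then use Lemma \ref{lem:iopcharshift} (whose second diagram handles exactly the interaction of the geometric operators with the $\phi$-twist and with conjugation by $N(S,G)(k)_\theta$) to check that under the canonical isomorphism $H^{d_U}_c(Y_U,\bar\Q_l)_{\phi\theta} \cong \phi\otimes H^{d_U}_c(Y_U,\bar\Q_l)_\theta$ the operators $R_U^{G,\epsilon}(n)$ for $\phi\theta$ correspond to those for $\theta$ tensored with the scalar $\phi(n)$; taking $\rho$-isotypic pieces yields the second identity. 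I expect the main obstacle to be bookkeeping in the commuting-algebra step: one must carefully separate the contribution of the genuine disconnectedness (the $\Omega(S,G)(k)_\theta$ part, handled by the $R$-operators) from the already-known connected behavior (Pro.\ \ref{pro:dlp}, which says the $G^0(k)$-constituents of $\kappa^{G^0}_{(S^0,\theta^0)}$ form a torsor under $\Omega(S^0,G^0)(k)_\theta^*$), and make sure the dimension count in (i) above comes out exactly right — this is where Assumption \ref{asm:dl_normintex} (triviality of $[\eta]$, so that a normalized coherent collection genuinely exists) and Proposition \ref{pro:inddisj} are doing essential work, and where an off-by-a-conjugacy-class error would be easy to make.
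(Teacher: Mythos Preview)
Your overall architecture is right and matches the paper: the theorem is ultimately an instance of the abstract basis theorem (Lemma \ref{lem:hcbp}), with the $R$-operators giving the action of $N(S,G)(k)_\theta$. Part (3) is also handled essentially as you say.

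The gap is in your ``commuting algebra'' step. You need to verify that the lines $\C R_U^{G,\epsilon}(n)$, for $n$ running over $N(S,G)(k)_\theta/S(k)$, are both linearly independent and generating in $\tx{End}_{G(k)}(M)$. Normalization and coherence only tell you the operators multiply correctly; they say nothing about linear independence. And your proposed dimension count (i) is circular: computing $\dim\tx{End}_{G(k)}(M)$ via Mackey for $\tx{Ind}_{(G^0\cdot S)^F}^{G^F}$ requires already knowing $\dim\tx{End}_{(G^0\cdot S)^F}(\tilde\kappa)$, i.e.\ the commuting algebra for the intermediate group $G'=G^0\cdot S$, which is exactly part of what is to be proved. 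So as written the argument does not close.

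The paper resolves this by proving the theorem in three stages rather than attacking general $G$ directly. For $G=G^0$ it does \emph{not} use the basis theorem at all: Lusztig's multiplicity-free theorem (Theorem \ref{thm:multfree}) and the torsor structure of Proposition \ref{pro:dlp} show directly that each $\rho$-isotypic piece is nonzero and irreducible (by varying over $S_\tx{ad}(k)$ and counting). For $G=G^0\cdot S$, it argues by hand that irreducible constituents of $\kappa_{(S,\theta)}^{G'}$ are $S(k)$-orbits of constituents of $\kappa_{(S^0,\theta^0)}^{G^0}$, and identifies those orbits with elements of $\tx{Irr}(N(S,G^0)(k)_\theta,\theta^0)$ using the bicharacter of Corollary \ref{cor:bichar}. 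Only then, for general $G$, does it invoke Lemma \ref{lem:hcbp}, feeding in the already established basis property for $G'$ via Proposition \ref{pro:hcbi}. A small but essential point you are missing here is the passage from $R_U^{G,\epsilon}(n)$ to $C_U^{G,\epsilon}(n)=l_n\circ R_U^{G,\epsilon}(n)$: it is $C$, not $R$, that is induced from $G'$ (because $c_n$ preserves $G'$ while $r_{n^{-1}}$ does not), and this is what makes the inductive step through Proposition \ref{pro:hcbi} go.
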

\begin{proof}
The first equality in the third point follows from $\kappa_{(S,\theta)}^{G,\delta\epsilon}=\kappa_{(S,\theta)}^{G,\epsilon}\otimes(\mathbf{1}\boxtimes\delta)$, which is immediate from the definition of the intertwining operators $R_U^{G,\epsilon}(n)$.

The rest of the theorem will be proved in stages: first for the case $G=G^0$, then for the case $G=G^0 \cdot S$, and then for general $G$. Fix a unipotent radical $U$ of a Borel $\bar k$-subgroup of $G^0$ containing $S^0$, so that $\kappa_{(S,\theta)}^G$ is realized in $H^{d_U}_c(Y_U^G,\bar\Q_l)_\theta$. 

Assume first that $G=G^0$. The two essential inputs in this case are Theorem \ref{thm:multfree} and Proposition \ref{pro:dlcharext}, which already imply the second point. The second equality of the third point follows from Lemma \ref{lem:iopcharshift} applied to $\alpha=c_n$. To prove the first point, choose $\rho$ for which $\kappa_{(S,\theta,\rho)}^{G,\epsilon}$ is non-zero. We claim that for any $\bar s \in S_\tx{ad}(k)$ the $G(k)$-representations $\kappa_{(S,\theta,\rho)}^{G,\epsilon}\circ\tx{Ad}(\bar s)^{-1}$ and $\kappa_{(S,\theta,\rho\otimes\delta_{\bar s})}^{G,\epsilon}$ are isomorphic, where $\delta_{\bar s}$ is the image of $\bar s$ under the map $S_\tx{ad}(k) \to \Omega(S,G)(k)_\theta^*$ given by \eqref{eq:wtheta1'}. Granting this claim and using Proposition \ref{pro:dlcharext}, we see that for all extensions $\rho$ of $\theta$ the isotypic component $\kappa_{(S,\theta,\rho)}^{G,\epsilon}$ is non-zero. By Theorem \ref{thm:multfree} and Lemma \ref{lem:cliff1} the number of irreducible $G(k)$-subrepresentations of $\kappa_{(S,\theta)}^G$ is equal to the number of extensions $\rho$ of $\theta$, namely $|\Omega(S,G)(k)_\theta^*|$. This shows that each isotypic component $\kappa_{(S,\theta,\rho)}^{G,\epsilon}$ is irreducible, and the case $G=G^0$ is complete modulo the outstanding claim.

To prove the claim we consider the isomorphism $\tx{Ad}(\bar s) : Y_U \to Y_U$. It induces a vector space isomorphism
\[ H^{d_U}_c(Y_U,\bar\Q_l)_\theta \to H^{d_U}_c(Y_U,\bar\Q_l)_\theta \]
that translates the action of $g \in G(k)$ on its source to the action of $\tx{Ad}(\bar s)g$ on its target. By Lemma \ref{lem:iopconj1} the collection of intertwining operators $\Psi$ is $S_\tx{ad}(k)$-equivariant, therefore the above isomorphism translates the action of $R_U^{G,\epsilon}(n)$ on its source to the action of $R_U^{G,\epsilon}(\tx{Ad}(\bar s)n)$ on its target. Therefore the isomorphism $\tx{Ad}(\bar s)$ identifies the representations $\kappa_{(S,\theta,\rho)}^{G,\epsilon}$ and $\kappa_{(S,\theta,\rho\circ\tx{Ad}(\bar s)^{-1})}^{G,\epsilon}\circ\tx{Ad}(\bar s)$, or equivalently $\kappa_{(S,\theta,\rho)}^{G,\epsilon}\circ\tx{Ad}(\bar s)^{-1}$ and $\kappa_{(S,\theta,\rho\circ\tx{Ad}(\bar s)^{-1})}^{G,\epsilon}$. Clearly $\rho\circ\tx{Ad}(\bar s)^{-1}$ extends $\theta$, so it is given by $\rho \cdot \delta$ for a uniquely determined $\delta \in \Omega(S,G)(k)_\theta^*$, which we can then evaluate at $n \in N(S,G)(k)_\theta$ by the formula $\delta(n) = \rho((\tx{Ad}(\bar s)^{-1} n)  \cdot n^{-1}) = \rho(n \dot sn^{-1} \dot s^{-1})$ where $\dot s\in S_\tx{sc}(\bar k)$ is any lift of $\bar s$. A look at Lemma \ref{lem:dlbichar} reveals that $\delta=\delta_{\bar s}$. The case $G=G^0$ is now complete.

Assume next that $G=G^0 \cdot S$. Recall from Remark \ref{rem:kappaext} that $\kappa_{(S,\theta)}^G$ is an extension of $\kappa_{(S^0,\theta^0)}^{G^0}$, and that the action of $S(k)$ on this extension is given by $sv=\tx{Ad}(s)v\theta(s)$, where $\tx{Ad}(s)$ is an action on the realization $H^{d_U}_c(Y_U^{G^0},\bar\Q_l)_{\theta^0}$ of $\kappa_{(S^0,\theta^0)}^{G^0}$. Since $\kappa_{(S^0,\theta^0)}^{G^0}$ is multiplicity\-free, so is $\kappa_{(S,\theta)}^G$. On the other hand, since $N(S,G^0)(k)_\theta$ is a subgroup of $N(S,G^0)(k)_{\theta^0}$ containing $S^0(k)$, Proposition \ref{pro:dlcharext} implies that $\theta^0$ extends to $N(S,G^0)(k)_\theta$. Now $N(S,G)(k)=N(S,G^0)(k) \cdot S(k)$, so the extensions of $\theta$ from $S(k)$ to $N(S,G)(k)_\theta$ are in 1-1 correspondence with the extensions of $\theta^0$ from $S^0(k)$ to $N(S,G^0)(k)_\theta$: if $\rho_0 \in \tx{Irr}(N(S,G^0)(k)_{\theta},\theta^0)$, then $\rho_0 \otimes \theta$ is a character of $N(S,G^0)(k)_\theta \times S(k)$ that descends to $N(S,G^0)(k)_\theta \cdot S(k)=N(S,G)(k)_\theta$. The second point now follows.
 
To prove the first point, note that an irreducible constituent of $\kappa_{(S,\theta)}^G$ is simply the direct sum of a $G(k)$-orbit of irreducible constituents of $\kappa_{(S^0,\theta^0)}^{G^0}$. Since $G(k)=G^0(k) \cdot S(k)$, a $G(k)$-orbit is the same as an $S(k)$-orbit, in fact a $\pi_0(S)(k)$-orbit.  Using the case $G=G^0$, let $\kappa_{(S^0,\theta^0,\rho^0)}^{G^0,\epsilon}$ be an irreducible constituent of $\kappa_{(S^0,\theta^0)}^{G^0}$ corresponding to an extension $\rho^0$ of $\theta^0$ to $N(S^0,G^0)(k)_{\theta^0}$. The $\tx{Ad}(s)$-equivariance of the collection $\Psi^{G^0}$ of intertwining operators shows that the map $v \mapsto sv$ on $\kappa_{(S^0,\theta^0)}^{G^0}$ maps $\kappa_{(S^0,\theta^0,\rho^0)}^{G^0,\epsilon}$ to $\kappa_{(S^0,\theta^0,\rho^0\circ\tx{Ad}(s)^{-1})}^{G^0,\epsilon}$. In other words, the irreducible constituents of $\kappa_{(S^0,\theta^0)}^{G^0}$ whose direct sum makes up an irreducible constituent of $\kappa_{(S,\theta)}^{G}$ are those $\kappa_{(S^0,\theta^0,\rho^0)}^{G^0,\epsilon}$, where $\rho^0$ runs over an orbit in $\tx{Irr}(N(S^0,G^0)(k)_{\theta^0},\theta^0)$ for the action of $S(k)$ by conjugation. 

Recall that $N(S^0,G^0)=N(S,G^0)$. We claim that two elements of $\tx{Irr}(N(S^0,G^0)(k)_{\theta^0},\theta^0)$ are in the same $S(k)$-orbit if and only if they have the same restriction to $N(S,G^0)(k)_\theta$. Indeed, two elements $\rho_1,\rho_2 \in \tx{Irr}(N(S^0,G^0)(k)_{\theta^0},\theta^0)$, being linear characters by Proposition \ref{pro:dlcharext}, have the same restriction to $N(S,G^0)(k)_\theta$ if and only if $\rho_2=\rho_1\delta$ for a character $\delta$ of $N(S^0,G^0)(k)_{\theta^0}/N(S,G^0)(k)_\theta = \Omega(S^0,G^0)(k)_{\theta^0}/\Omega(S,G^0)(k)_\theta$. Corollary \ref{cor:bichar} implies the existence of $s \in S(k)$ such that $\delta(w^{-1})=\theta^0(wsw^{-1}s^{-1})$. Therefore $\rho_2=\rho_1\delta$ is equivalent to $\rho_2(n)=\rho_1(n)\theta^0(n^{-1}sns^{-1})=\rho_1(sns^{-1})$, proving the claim.

We have thus shown that the irreducible constituents of $\kappa_{(S,\theta)}^G$ is indexed by element $\rho \in \tx{Irr}(N(S^0,G^0)(k)_{\theta},\theta^0)$, the correspondence being that the constituent corresponding to $\rho$ is given by $\bigoplus_{\rho_0} \kappa_{(S^0,\theta^0,\rho^0)}^{G,\epsilon}$ as $\rho_0 \in \tx{Irr}(N(S^0,G^0)(k)_{\theta^0},\theta^0)$ runs over those elements whose restriction to $N(S^0,G^0)(k)_{\theta}$ is $\rho$. But Fact \ref{fct:psigind} implies that this direct sum is precisely the $\rho$-isotypic constituent for the right action of $N(S,G^0)(k)_\theta$, hence also of $N(S,G)(k)_\theta=N(S,G^0)(k)_\theta \cdot S(k)$, since the right action of $S(k)$ is via $\theta$ on all of $\kappa_{(S,\theta)}^G$. With this, the first point is proved, and the third point follows at once from its validity for $\kappa_{(S^0,\theta^0)}^{G^0}$. The case $G=G^0 \cdot S$ is thus complete.

Finally consider a general $G$. According to Lemma \ref{lem:hcbp} the first two points are equivalent to the statement that the set $\{R_U^{G,\epsilon}(n) \cdot \C|n \in N(S,G)(k)_\theta/S(k)\}$ is both linearly independent and generating in $\tx{End}_{G(k)}(H^{d_U}_c(Y_B^G,\bar\Q_l)_\theta)$.

It is now useful to introduce a variant of the intertwining operator $R_U^{G,\epsilon}(n)$. Consider for $n \in N(S,G)(k)_\theta$ the conjugation morphism $c_n=\tx{Ad}(n)$ on $G$ and define $C_U^{G,\epsilon}(n)=c_n^G \circ [\epsilon\Psi]_{U,{^nU}}^G$. Of course $C_U^{G,\epsilon}(n)=l_n \circ R_U^{G,\epsilon}(n)$, where $l_n$ is the action of $n \in N(S,G)(k)_\theta \subset G(k)$. If we let $N(S,G)(k)_\theta$ act on $H^{d_U}_c(Y_U^G,\bar\Q_l)_\theta$ via the operators $C_U^{G,\epsilon}(n)$ instead of the operators $R_U^{G,\epsilon}(n)$ we obtain the structure of a $G(k) \rtimes N(S,G)(k)_\theta$-representation.

We have the intermediate group $G'=G^0 \cdot S$. According to Corollary \ref{cor:dl_ind4} we have $H^{d_U}_c(Y_U^G,\bar\Q_l)_\theta=\tx{Ind}_{G'(k)}^{G(k)}H^{d_U}_c(Y_U^{G'},\bar\Q_l)_\theta$. The utility of the operator $C_U^{G,\epsilon}(n)$ comes from the fact that it is also induced from $G'$. Indeed, the conjugation morphism $c_n$ preserves $G^0$. Since $G/G^0$ is abelian by Assumptions \ref{asm:dl_mu}, $c_n$ also preserves $G'$ and therefore induces a morphism $c_n^{G'} : Y_U^{G'} \to Y_{^nU}^{G'}$ that is immediately seen to be the restriction of the morphism $c_n^G : Y_U^G \to Y_{^nU}^G$. On the level of cohomology this implies $c_n^G=\tx{Ind}_{G'(k)}^{G(k)}c_n^{G'}$. At the same time, according to Fact \ref{fct:psigind} we have $\Psi_{V,U}^G=\tx{Ind}_{G'(k)}^{G(k)}(\Psi_{V,U}^{G'})$ and therefore we also have $\Phi_{V,U}^G=\tx{Ind}_{G'(k)}^{G(k)}(\Phi_{V,U}^{G'})$. This implies $C_U^{G,\epsilon}(n)=\tx{Ind}_{G'(k)}^{G(k)}C_U^{G',\epsilon}(n)$, as claimed. This means that
\[ H^{d_U}_c(Y_U^G,\bar\Q_l)_\theta=\tx{Ind}_{G'(k) \rtimes N(S,G)(k)_\theta}^{G(k)\rtimes N(S,G)(k)_\theta}H^{d_U}_c(Y_U^{G'},\bar\Q_l)_\theta. \]

We can now apply Proposition \ref{pro:hcbi} and reduce the proof to the following two points:
\begin{enumerate}
	\item For every $g \in G(k)$ the representations $\kappa_{(S,\theta)}^{G'} \circ \tx{Ad}(g)$ and $\kappa_{(S,\theta)}^{G'}$ are isomorphic if $g \in G'(k) \cdot N(S,G)(k)_\theta$, and disjoint otherwise.
	\item The set $\{\C \cdot R_U^{G',\epsilon}(n)|n \in N(S,G')(k)_\theta/S(k)\}$ is linearly independent and generating.
\end{enumerate}
The first point follows from Fact \ref{fct:ftwist} and Proposition \ref{pro:inddisj} applied to the group $G'$ and the pairs $(S,\theta)$ and $(S^g,\theta^g)$. The second point follows from Lemma \ref{lem:hcbp} and the case of $G=G^0 \cdot S$ applied to $G'$. The first point of the theorem is now complete.

For the behavior under $\rho \mapsto \phi\cdot\rho$ we apply Lemma \ref{lem:iopcharshift} to $\alpha=c_n$ to conclude that the isomorphism $\phi\otimes\kappa_{(S,\theta)}^{G'} \to \kappa_{(S,\theta\cdot\phi)}^{G'}$ of $G'(k)$-representations of Lemma \ref{lem:dl_kappatwist} is also equivariant with respect to the operators $C_n^\epsilon$ and therefore is an isomorphism of representations of $G'(k)\rtimes N(S,G)(k)_\theta$. Note that there are two equivalent ways to think of $\phi\otimes\kappa_{(S,\theta)}^{G'}$ as a representation of $G'(k)\rtimes N(S,G)(k)_\theta$ -- either as the vector space underlying $\kappa_{(S,\theta)}^{G'}$, on which the action of $G'(k)$ is twisted by $\phi$, and the action of $N(S,G)(k)_\theta$ is unaltered, or as the representation $\kappa_{(S,\theta)}^{G'}$ of $G'(k)\rtimes N(S,G)(k)_\theta$ twisted by $\phi$, where $\phi$ is now viewed as a character of the group $G'(k)\rtimes N(S,G)(k)_\theta$ that is trivial on $N(S,G)(k)_\theta$. After induction we obtain the isomorphism $\phi\otimes \kappa_{(S,\theta)}^{G,\epsilon} \to \kappa_{(S,\theta\cdot\phi)}^{G,\epsilon}$ of $G(k) \rtimes N(S,G)(k)_\theta$-representations, and thus the isomorphism $(\phi \boxtimes \phi^{-1})\otimes \kappa_{(S,\theta)}^{G,\epsilon} \to \kappa_{(S,\theta\cdot\phi)}^{G,\epsilon}$ of $G(k) \times N(S,G)(k)_\theta$.
\end{proof}

\begin{cor} \label{cor:dlsurj}
Every irreducible representation of $G(k)$ whose restriction to $G^0(k)$ contains an irreducible non-singular cuspidal representation is of the form $\kappa_{(S,\theta,\rho)}^{G,\epsilon}$.
\end{cor}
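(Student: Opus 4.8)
The plan is to reduce the statement to Theorem~\ref{thm:dlpar} by showing that any $\pi$ as in the statement already occurs inside some $\kappa_{(S,\theta')}^G$. So let $\pi$ be an irreducible representation of $G(k)$ and let $\tau$ be an irreducible non-singular cuspidal constituent of $\pi|_{G^0(k)}$. By the very meaning of ``non-singular cuspidal'' (equivalently, by Lusztig's classification recalled at the beginning of this section), $\tau$ is an irreducible constituent of $\kappa_{(S^0,\theta^0)}^{G^0}$ for some elliptic maximal torus $S^0\subset G^0$ and some non-singular character $\theta^0$ of $S^0(k)$. I would then set $S=S^0\cdot Z$; since $S(k)$ is a finite abelian group containing $S^0(k)$, the character $\theta^0$ extends to some character $\theta$ of $S(k)$.

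Next I would run the following chain of identifications. By Frobenius reciprocity $\pi$ embeds into $\tx{Ind}_{G^0(k)}^{G(k)}\tau$, hence is a constituent of $\tx{Ind}_{G^0(k)}^{G(k)}\kappa_{(S^0,\theta^0)}^{G^0}$. Writing $G'=G^0\cdot S$, Corollary~\ref{cor:dl_ind4} identifies $\kappa_{(S^0,\theta^0)}^{G^0}$ with $\tx{Res}_{G^0(k)}^{G'(k)}\kappa_{(S,\theta)}^{G'}$, so by induction in stages together with the projection formula
\[ \tx{Ind}_{G^0(k)}^{G(k)}\kappa_{(S^0,\theta^0)}^{G^0}\;\cong\;\tx{Ind}_{G'(k)}^{G(k)}\Big(\tx{Ind}_{G^0(k)}^{G'(k)}\tx{Res}_{G^0(k)}^{G'(k)}\kappa_{(S,\theta)}^{G'}\Big)\;\cong\;\bigoplus_{\delta}\tx{Ind}_{G'(k)}^{G(k)}\big(\kappa_{(S,\theta)}^{G'}\otimes\delta\big), \]
where $\delta$ runs over the characters of $G'(k)/G^0(k)=\pi_0(S)(k)$, which is abelian. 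Each such $\delta$ is trivial on the image of $G^0_\tx{sc}(k)$, so Lemma~\ref{lem:dl_kappatwist} (applied to $G'$) gives $\kappa_{(S,\theta)}^{G'}\otimes\delta=\kappa_{(S,\delta\theta)}^{G'}$, and a further application of Corollary~\ref{cor:dl_ind4} gives $\tx{Ind}_{G'(k)}^{G(k)}\kappa_{(S,\delta\theta)}^{G'}=\kappa_{(S,\delta\theta)}^G$. Hence $\pi$ is a constituent of $\bigoplus_\delta\kappa_{(S,\delta\theta)}^G$, and therefore of $\kappa_{(S,\theta')}^G$ with $\theta':=\delta\theta$ for some $\delta$.

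Since $\theta'$ still restricts to the non-singular character $\theta^0$ on $S^0(k)$, the pair $(S,\theta')$ is of the type to which Theorem~\ref{thm:dlpar} applies, and part~(1) of that theorem writes every irreducible constituent of $\kappa_{(S,\theta')}^G$ as $\kappa_{(S,\theta',\rho)}^{G,\epsilon}$ for a unique $\rho\in\tx{Irr}(N(S,G)(k)_{\theta'},\theta')$. In particular $\pi\cong\kappa_{(S,\theta',\rho)}^{G,\epsilon}$, which is the claim.

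This is essentially a formal consequence of the results of this section, so I do not expect a serious obstacle. The one point that warrants attention is the very first step, namely that an irreducible non-singular cuspidal representation of $G^0(k)$ is precisely an irreducible constituent of some $\kappa_{(S^0,\theta^0)}^{G^0}$ with $\theta^0$ non-singular --- which is the definition adopted here (equivalently Lusztig's classification). One should also keep the bookkeeping straight: that each $\delta$ is trivial on $G^0_\tx{sc}(k)$ and that $\delta\theta$ restricts to $\theta^0$, both of which are immediate, and that the identities of Corollary~\ref{cor:dl_ind4} and Lemma~\ref{lem:dl_kappatwist} are being invoked for the intermediate group $G'=G^0\cdot S$ in place of $G$.
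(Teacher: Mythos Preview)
Your overall strategy is correct and matches the paper's: show that $\pi$ occurs in some $\kappa_{(S,\theta')}^G$ and then invoke Theorem~\ref{thm:dlpar}. The difference lies in how you reach that first conclusion, and your route has a gap.

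The problematic step is the projection-formula decomposition
\[
\tx{Ind}_{G^0(k)}^{G'(k)}\tx{Res}_{G^0(k)}^{G'(k)}\kappa_{(S,\theta)}^{G'}\;\cong\;\bigoplus_{\delta}\kappa_{(S,\theta)}^{G'}\otimes\delta,
\]
with $\delta$ running over characters of $G'(k)/G^0(k)=\pi_0(S)(k)$. This identity is only valid when $\pi_0(S)(k)$ is finite: for an infinite abelian group $A$ the regular module $\C[A]$ is not a direct sum of characters (e.g.\ $\C[\Z]=\C[t,t^{-1}]$ has no finite-dimensional submodules). But Assumption~\ref{asm:dl_mu} only requires $\pi_0(G)$ to be finitely generated abelian with $[G:G^0\cdot Z]<\infty$, which forces $\pi_0(S)\cong Z/(Z\cap G^0)$ to have finite index in $\pi_0(G)$ but not to be finite. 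In the $p$-adic applications of \S\ref{sec:nsdl-dz} this group is indeed infinite in general (e.g.\ for $\tx{GL}_n$). So the displayed isomorphism fails and you cannot conclude that $\pi$ sits inside one of the summands.

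The paper circumvents this by never inducing all the way from $G^0(k)$. Instead it restricts $\tau$ to $G'(k)$ (finite index, so semi-simple), picks an irreducible constituent $\tau'$, and then applies Lemma~\ref{lem:cliff2}(3) to the pair $\tau'$, $\kappa_{(S,\theta,\rho')}^{G'}$ in the sequence $1\to G^0(k)\to G'(k)\to \pi_0(S)(k)\to 1$. That lemma is stated with no finiteness hypothesis on the quotient, and yields $\tau'\cong\phi\otimes\kappa_{(S,\theta,\rho')}^{G'}=\kappa_{(S,\phi\theta,\phi\rho')}^{G'}$ for a single character $\phi$. One then uses Frobenius reciprocity for the finite-index inclusion $G'(k)\subset G(k)$ together with Corollary~\ref{cor:dl_ind4} to conclude $\tx{Hom}_{G(k)}(\tau,\kappa_{(S,\phi\theta)}^G)\neq 0$. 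Your argument is easily repaired along these lines: drop the global decomposition and instead pass to a constituent of $\pi|_{G'(k)}$ and invoke Lemma~\ref{lem:cliff2}(3).

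A minor point: ``$\pi$ embeds into $\tx{Ind}_{G^0(k)}^{G(k)}\tau$'' should read ``$\pi$ is a quotient of $\tx{Ind}_{G^0(k)}^{G(k)}\tau$'' (Frobenius with $\tx{Ind}$ as left adjoint), though this does not affect the argument since you only need $\pi$ to be a constituent.
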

\begin{proof}
Let $\tau$ be an irreducible representation of $G(k)$ whose restriction to $G^0(k)$ contains a non-singular cuspidal representation. Thus there exists a pair $(S^0,\theta^0)$ of an elliptic maximal torus $S^0 \subset G^0$ and a non-singualr character $\theta^0$ of $S^0(k)$ s.t. $\tx{Hom}_{G^0(k)}(\tau,\kappa_{(S^0,\theta^0)}^{G^0}) \neq 0$. Let $\theta$ be any extension of $\theta^0$ to $S(k)$. Let $G'=G^0 \cdot S$. Recall from Corollary \ref{cor:dl_ind4} that $\kappa_{(S,\theta)}^{G'}$ is an extension of $\kappa_{(S^0,\theta^0)}^{G^0}$ to $G'(k)$. Trivially we still have $\tx{Hom}_{G^0(k)}(\tau,\kappa_{(S,\theta)}^{G'}) \neq 0$. By Fact \ref{fct:ssindres} the restriction of $\tau$ to $G'(k)$ is semi-simple. Let $\tau'$ be an irreducible constituent of that restriction and, using Theorem \ref{thm:dlpar}, let $\rho' \in \tx{Irr}(N(S,G')(k)_\theta,\theta)$ be such that $\tx{Hom}_{G^0(k)}(\tau',\kappa_{(S,\theta,\rho')}^{G'}) \neq 0$. Since $G^0(k)\cdot Z(k)$ is of finite index in $G'(k)$, an irreducible representation of $G'(k)$ restricts semi-simply to $G^0(k)\cdot Z(k)$ by Fact \ref{fct:ssindres}, and then further restricts semi-simply to $G^0(k)$ since $Z(k)$ is central and acts by a character on each irreducible representation. Apply Lemma \ref{lem:cliff2} to the exact sequence
\[ 1 \to G^0(k) \to G'(k) \to \pi_0(S)(k) \to 1 \]
to obtain a character $\phi : \pi_0(S)(k) \to \bar\Q_l^\times$ such that $\tau'=\phi \otimes \kappa_{(S,\theta,\rho')}^{G'}=\kappa_{(S,\phi\theta,\phi\rho')}^{G'}$, the second equality by Theorem \ref{thm:dlpar}. It follows from Corollary \ref{cor:dl_ind4} that
\[ \tx{Hom}_{G(k)}(\tau,\kappa_{(S,\theta)}^G)=\tx{Hom}_{G'(k)}(\tx{Res}^{G(k)}_{G'(k)}\tau,\kappa_{(S,\theta)}^{G'}) \neq \{0\}.\]
\end{proof}

We finish this section we a few remarks on the case $G=G^0$.

\begin{lem} \label{lem:dletatriv}
Assume that $G=G^0$. Then Assumption \ref{asm:dl_normintex} holds.
\end{lem}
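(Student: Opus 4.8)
Since $G=G^0$ we have $\theta^0=\theta$, so $\bar\Gamma=N(S,G)(k)_\theta/S(k)$ is exactly $\Omega(S,G)(k)_\theta$, which is abelian by Corollary~\ref{cor:weylab1}; the only thing to prove is that the class $[\eta]$ of Corollary~\ref{cor:defeta} is trivial. The plan is to extract the trivialization from Lusztig's Theorem~\ref{thm:multfree} together with Proposition~\ref{pro:dlcharext}. Fix a unipotent radical $U\in X$, realize $\kappa_{(S,\theta)}^G$ on $V=H^{d_U}_c(Y_U^G,\bar\Q_l)_\theta$, and for $n\in N(S,G)(k)_\theta$ form the raw self-intertwining operator $\tilde R(n)=\Psi^{G}_{U,{}^nU}\circ r_{n^{-1}}\in\tx{Aut}_{G(k)}(V)$, where $r_{n^{-1}}$ is the $G(k)$-equivariant isomorphism induced by the variety map $gU\mapsto gUn^{-1}$ (it preserves the $\theta$-isotypic component since $n$ fixes $\theta$). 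Because $\tx{Ad}(n)$ differs from $r_{n^{-1}}$ only by a left translation, which commutes with every $\Psi$, Lemma~\ref{lem:iopconj1} applied to $\alpha=\tx{Ad}(n_1)$ gives $r_{n_1^{-1}}\circ\Psi_{U,{}^{n_2}U}\circ r_{n_1^{-1}}^{-1}=\Psi_{{}^{n_1}U,{}^{n_1n_2}U}$; combined with $r_{n_1^{-1}}\circ r_{n_2^{-1}}=r_{(n_1n_2)^{-1}}$ this yields $\tilde R(n_1)\tilde R(n_2)=c(n_1,n_2)\cdot\tilde R(n_1n_2)$, where $c(n_1,n_2)=\eta_\Psi(U,{}^{n_1}U,{}^{n_1n_2}U)$ is a scalar. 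Thus $n\mapsto\tilde R(n)$ is a projective representation of $N(S,G)(k)_\theta$ on $V$ whose (scalar) $2$-cocycle is the inflation of a normalized inhomogeneous cocycle $\bar c$ on $\bar\Gamma$ representing $[\eta]$, while $\tilde R(s)$ acts on $V$ by the scalar $\theta(s)^{-1}$ for $s\in S(k)$.

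Now I would invoke Theorem~\ref{thm:multfree}: since $\kappa_{(S,\theta)}^G$ is multiplicity free, each irreducible constituent $\pi$ occurs in a single line of $V$, which the $G(k)$-equivariant operators $\tilde R(n)$ preserve, acting on it by a scalar $\mu(n)$. Reading off these scalars from the cocycle identity gives $\partial\mu=c$, so $c$ is already a coboundary on $N(S,G)(k)_\theta$; moreover $\mu|_{S(k)}=\theta^{-1}$. To descend the trivialization to $\bar\Gamma$, choose by Proposition~\ref{pro:dlcharext} a character $\tilde\theta$ of $N(S,G)(k)_\theta$ extending $\theta$. Then $\mu\tilde\theta$ is trivial on $S(k)$ and, because $\bar c$ is normalized, constant on $S(k)$-cosets, hence equal to the inflation of a function $\bar\mu\colon\bar\Gamma\to\bar\Q_l^\times$. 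As $\tilde\theta$ is a homomorphism, $\partial(\mu\tilde\theta)=\partial\mu=\tx{Inf}(\bar c)$, and comparing with $\partial(\tx{Inf}\,\bar\mu)=\tx{Inf}(\partial\bar\mu)$ and using the injectivity of inflation on cochains forces $\partial\bar\mu=\bar c$. Hence $[\eta]=[\bar c]=1$, which is Assumption~\ref{asm:dl_normintex}.

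Two routine points must be checked but present no difficulty. First, $c$ really takes values in $\bar\Q_l^\times$: after passing to a regular embedding the natural intertwining operators between two models of the resulting irreducible representation form a one-dimensional space, so $\Psi_{U,{}^{n_1}U}\circ\Psi_{{}^{n_1}U,{}^{n_1n_2}U}$ and $\Psi_{U,{}^{n_1n_2}U}$ differ by a scalar. Second, the normalization of $\bar c$ comes down to $\Psi_{V,V}=\tx{id}$, which we may assume after rescaling the collection $\Psi$ — this does not change $[\eta]$ by Corollary~\ref{cor:defeta}. The only genuine content is the combination of multiplicity one (Theorem~\ref{thm:multfree}) with the extendability of $\theta$ to $N(S,G)(k)_\theta$ (Proposition~\ref{pro:dlcharext}); I do not anticipate a real obstacle, since the phenomenon that motivates Assumption~\ref{asm:dl_normintex} in general — failure of multiplicity one — simply does not occur when $G=G^0$.
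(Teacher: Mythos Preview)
Your proof is correct and follows essentially the same route as the paper's own argument: form the unnormalized self-intertwiners $\Psi_{U,{}^nU}\circ r_{n^{-1}}$, use $\Gamma$-equivariance of $\Psi$ to identify their failure to compose as $\eta_{\Psi,U}$, then invoke Theorem~\ref{thm:multfree} to extract a scalar cochain on $N(S,G)(k)_\theta$ and Proposition~\ref{pro:dlcharext} to descend it to $\bar\Gamma$. Your additional remarks on why $c$ is scalar-valued and why normalization is harmless are accurate but not needed, since Corollary~\ref{cor:natcomp} already ensures the former and $\Phi_{U,U}=\tx{id}$ is built into Definition~\ref{dfn:normiop}.
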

\begin{proof}
For $n \in N(S,G)(k)_\theta$ consider the operator $R_U^{G,1}(n)=\Psi^G_{U,{^nU}}\circ r_{n^{-1}}$. Using that $\Psi$ is a  $\Gamma$-equivariant collection of $G(k)$-equivariant operators, we compute
\[ R_U^{G,1}(n) \circ R_U^{G,1}(m) = \eta_{\Psi,U}(1,n,nm)\cdot R_U^{G,1}(nm). \]
Choose an irreducible constituent $\pi \in [\kappa_{(S,\theta)}^G]$. According to Theorem \ref{thm:multfree} for each $n \in N(S,G)(k)_\theta$ the operator $R_U^{G,1}(n)$ preserves $\pi$ and hence acts on it by a scalar $\xi_n \in \bar\Q_l^\times$. We have $\xi \in C^1(N(S,G)(k)_\theta,\bar\Q_l^\times)$ and for $s \in S(k)$ we have $\xi_s=\theta(s)^{-1}$. By Proposition \ref{pro:dlcharext} there exists an extension $\tilde\theta$ of $\theta$ to $N(S,G)(k)_\theta$. We set $\tilde \xi_n := \tilde\theta(n) \cdot \xi_n$. Then $\tilde \xi \in C^1(\Omega(S,G)(k)_\theta,\bar\Q_l^\times)$ and using inhomogeneous notation we see that $\partial \tilde \xi(n,m) = \partial \xi(n,m) = \eta_{\Psi,U}(n,m)$ holds for all $n,m \in N(S,G)(k)_\theta$, hence $\partial \tilde \xi = \eta_{\Psi,U}$ holds in $Z^2(\Omega(S,G)(k)_\theta,\bar\Q_l^\times)$.
\end{proof}

\begin{lem} Assume that $G=G^0$. The bijection $\rho \mapsto \kappa_{(S,\theta,\rho)}^{G,\epsilon}$ of Theorem \ref{thm:dlpar} is equivariant with respect to the action of $\Omega(S,G)(k)_\theta^*$ on the left hand side by multiplication and the action on the right hand side given by Proposition \ref{pro:dlp}.
\end{lem}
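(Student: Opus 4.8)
The plan is to trace through how both group actions are defined and observe that they are realized by the same family of conjugation operators, so that the compatibility is essentially a matter of unwinding definitions rather than a substantive new argument. Recall that in the connected case Theorem \ref{thm:multfree} and Proposition \ref{pro:dlcharext} are the key inputs, and that the proof of Theorem \ref{thm:dlpar} in the case $G=G^0$ proceeded by showing that for $\bar s \in S_\tx{ad}(k)$ the representations $\kappa_{(S,\theta,\rho)}^{G,\epsilon}\circ\tx{Ad}(\bar s)^{-1}$ and $\kappa_{(S,\theta,\rho\cdot\delta_{\bar s})}^{G,\epsilon}$ are isomorphic, where $\delta_{\bar s}$ is the image of $\bar s$ under the map $S_\tx{ad}(k) \to \Omega(S,G)(k)_\theta^*$ of \eqref{eq:wtheta1'}, i.e. precisely the map underlying the action of Proposition \ref{pro:dlp}. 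That computation already contains the heart of the matter: the action of $\tx{cok}(G(k)\to G_\tx{ad}(k))$ on $[\kappa_{(S,\theta)}^G]$ from Proposition \ref{pro:dlp} is realized by $\tx{Ad}(\bar s)$, and the identity produced there says exactly that $\tx{Ad}(\bar s)$ carries the $\rho$-isotypic constituent to the $(\rho\cdot\delta_{\bar s})$-isotypic constituent.

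**First I would** recall precisely the two actions to be compared. On the left-hand side $\tx{Irr}(N(S,G)(k)_\theta,\theta)$, which in the connected case by Proposition \ref{pro:dlcharext} is a torsor under $\Omega(S,G)(k)_\theta^*$, the action is $\delta \cdot \rho = \rho\otimes\delta$ for $\delta\in\Omega(S,G)(k)_\theta^*$ viewed as a character of $N(S,G)(k)_\theta/S(k)$. On the right-hand side $[\kappa_{(S,\theta)}^G]$, the action is the one of Proposition \ref{pro:dlp}: the conjugation action of $S_\tx{ad}(k)$ on $G(k)$ induces an action on irreducible representations, which by that proposition factors through the quotient map $\tx{cok}(S(k)\to S_\tx{ad}(k)) = \tx{cok}(G(k)\to G_\tx{ad}(k)) \onto \Omega(S,G)(k)_\theta^*$ of \eqref{eq:wtheta1'}. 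So given $\bar s\in S_\tx{ad}(k)$ with image $\delta_{\bar s}\in\Omega(S,G)(k)_\theta^*$, the claim is the identity $\kappa_{(S,\theta,\rho)}^{G,\epsilon}\circ\tx{Ad}(\bar s)^{-1} \cong \kappa_{(S,\theta,\rho\cdot\delta_{\bar s})}^{G,\epsilon}$ of $G(k)$-representations, together with the surjectivity of $S_\tx{ad}(k)\to\Omega(S,G)(k)_\theta^*$ to see that this covers every element of the group.

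**The core step** is then to quote the claim established inside the proof of Theorem \ref{thm:dlpar} for $G=G^0$ verbatim: using the isomorphism $\tx{Ad}(\bar s):Y_U\to Y_U$, which induces a vector-space automorphism of $H^{d_U}_c(Y_U,\bar\Q_l)_\theta$ intertwining the $G(k)$-action on the source with the $\tx{Ad}(\bar s)$-twisted action on the target, and the $S_\tx{ad}(k)$-equivariance of the collection $\Psi$ from Lemma \ref{lem:iopconj1} (hence of $\epsilon\Psi$, since $\epsilon$ is $\Gamma$-invariant and $S_\tx{ad}(k)$ acts trivially on $X$ and on the indices), one gets that $\tx{Ad}(\bar s)$ translates $R_U^{G,\epsilon}(n)$ on the source to $R_U^{G,\epsilon}(\dot s n\dot s^{-1})$ on the target for a lift $\dot s\in S_\tx{sc}(\bar k)$ of $\bar s$; so $\tx{Ad}(\bar s)$ identifies $\kappa_{(S,\theta,\rho)}^{G,\epsilon}$ with $\kappa_{(S,\theta,\rho\circ\tx{Ad}(\dot s)^{-1})}^{G,\epsilon}\circ\tx{Ad}(\bar s)$, equivalently $\kappa_{(S,\theta,\rho)}^{G,\epsilon}\circ\tx{Ad}(\bar s)^{-1}$ with $\kappa_{(S,\theta,\rho\circ\tx{Ad}(\dot s)^{-1})}^{G,\epsilon}$, and the inspection of Lemma \ref{lem:dlbichar} identifies $\rho\circ\tx{Ad}(\dot s)^{-1}$ with $\rho\cdot\delta_{\bar s}$, where $n\mapsto\rho(n\dot sn^{-1}\dot s^{-1})$ is $\delta_{\bar s}$. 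Combining this with the description of the $\Omega(S,G)(k)_\theta^*$-action on $[\kappa_{(S,\theta)}^G]$ as the descent of the $S_\tx{ad}(k)$-conjugation action (Proposition \ref{pro:dlp}) gives the asserted equivariance.

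**I do not expect a serious obstacle here** — the bulk of the work was done inside the proof of Theorem \ref{thm:dlpar}, and what remains is bookkeeping: one must make sure that the two maps $S_\tx{ad}(k)\to\Omega(S,G)(k)_\theta^*$ appearing on the two sides — the one implicit in Proposition \ref{pro:dlp}'s "factors through" statement, namely \eqref{eq:wtheta1'}, and the one $n\mapsto\rho(n\dot sn^{-1}\dot s^{-1})$ produced from Lemma \ref{lem:dlbichar} — literally coincide, which they do by construction of \eqref{eq:wtheta1} as the map induced by the bicharacter \eqref{eq:dzbichar1}. The only mild subtlety is keeping the variance straight (whether one gets $\delta_{\bar s}$ or $\delta_{\bar s}^{-1}$, i.e. whether $\tx{Ad}(\bar s)$ or $\tx{Ad}(\bar s)^{-1}$ corresponds to multiplication by $\delta_{\bar s}$), but this is a matter of fixing conventions consistently with \eqref{eq:wtheta1'} and with the action in Proposition \ref{pro:dlp}, and does not affect the statement that the bijection is equivariant.
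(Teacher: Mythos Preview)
Your proposal is correct and takes exactly the same approach as the paper: the paper's proof simply observes that this was already established inside the proof of Theorem \ref{thm:dlpar}, namely the claim that for $\bar s \in S_\tx{ad}(k)$ the $G(k)$-representations $\kappa_{(S,\theta,\rho)}^{G,\epsilon}\circ\tx{Ad}(\bar s)^{-1}$ and $\kappa_{(S,\theta,\rho\otimes\delta_{\bar s})}^{G,\epsilon}$ are isomorphic. Your write-up is more detailed than the paper's one-sentence reference, but the substance is identical.
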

\begin{proof}
This was proved in the course of proving Theorem \ref{thm:dlpar}: it is the claim that for $\bar s \in S_\tx{ad}(k)$ the $G(k)$-representations $\kappa_{(S,\theta,\rho)}^{G,\epsilon}\circ\tx{Ad}(\bar s)^{-1}$ and $\kappa_{(S,\theta,\rho\otimes\delta_{\bar s})}^{G,\epsilon}$ are isomorphic, where $\delta_{\bar s}$ is the image of $\bar s$ under the map $S_\tx{ad}(k) \to \Omega(S,G)(k)_\theta^*$ given by \eqref{eq:wtheta1'}.
\end{proof}

\section{Non-singular Deligne-Lusztig packets over local fields} \label{sec:nsdl-dz}

Let $G$ be a connected reductive group defined over a non-archimedean local field $F$, $S \subset G$ an elliptic maximally unramified maximal torus, $S' \subset S$ its maximal unramified subtorus. We consider the set $R_\tx{res}(S',G)$ of restrictions to $S'$ of the absolute roots $R(S,G)$. Since $S'$ is a maximally split torus of $G$ over $F^u$, this is in fact a (possibly non-reduced) root system.

Let $x \in \mc{B}(G,F)$ be the point associated to $S$. It is a vertex \cite[Lemma 3.4.3]{KalRSP}. There exists a unique smooth integral model of $G$ whose group of $O_{F^u}$-points equals the stabilizer $G(F^u)_x$. This model is locally of finite type. Let $\ms{G}_x$ be the quotient of the special fiber of this model modulo its (connected) unipotent radical. Then $\ms{G}_x$ is a smooth $k_F$-group scheme. Its neutral connected component $\ms{G}_x^\circ$ is a connected reductive $k_F$-group -- it is the reductive quotient of the special fiber of the parahoric group scheme of $G$ associated to the vertex $x$. We have $\ms{G}_x(k_F)=G(F)_x/G(F)_{x,0+}$ and $\ms{G}_x^\circ(k_F)=G(F)_{x,0}/G(F)_{x,0+}$.

The point $x$ lies in $\mc{B}(S,F)$ and we have the corresponding $k_F$-group schemes $\ms{S}$ and $\ms{S}^\circ$ satisfying $\ms{S}(k_F)=S(F)/S(F)_{0+}$ and $\ms{S}^\circ(k_F)=S(F)_0/S(F)_{0+}$. Note however that $\ms{S}(\bar k_F)$ does not equal $S(F^u)/S(F^u)_{0+}$, but rather $S(F^u)_c/S(F^u)_{0+}$, where $S(F^u)_c$ is the preiamge of $S_\tx{ad}(F^u)_b$. We recall here that since $S$ is maximally unramified, $S_\tx{ad}$ has induced ramification, and hence $S_\tx{ad}(F^u)_b=S_\tx{ad}(F^u)_0$, cf. \cite[Fact 3.1.2]{KalRSP}.

\subsection{Non-singular characters}

Let $F'/F$ be an unramified extension splitting $S'$. 

\begin{dfn} \label{dfn:nsc} 
A character $\theta : S(F) \to \C^\times$ will be called
\begin{enumerate}
	\item $F$-\emph{non-singular}, if for every $\alpha_\tx{res} \in R_\tx{res}(S',G)$ the character
\[ \theta \circ N_{F'/F} \circ \alpha_\tx{res}^\vee : F'^\times \to \C^\times \]
has non-trivial restriction to $O_{F'}^\times$;
	\item $k_F$-\emph{non-singular}, if for every $\bar\alpha \in R(\ms{S}^\circ,\ms{G}_x^\circ) \subset R_\tx{res}(S',G)$ the character
\[ \theta \circ N_{F'/F} \circ \bar\alpha^\vee : F'^\times \to \C^\times \]
has non-trivial restriction to $O_{F'}^\times$.
\end{enumerate}
\end{dfn}

\begin{rem} \label{rem:ns}
The choice of $F'$ is irrelevant, because for any finite unramified extension $F''/F'$ the norm map $N_{F''/F'} : O_{F''}^\times \to O_{F'}^\times$ is surjective. 
\end{rem}

\begin{rem}
We have not assumed in the definition that $\theta$ is of depth zero. Our main applications of that definition will be to the case that $\theta$ is of depth zero, or slightly more generally, a product of a depth-zero character of $S(F)$ with a character of $G(F)$ that may have positive depth. When $\theta$ is of depth zero, it factors through a character of $\ms{S}(k_F)=S(F)/S(F)_{0+}$. We denote its restriction to $\ms{S}^\circ(k_F)=S(F)_0/S(F)_{0+}=S'(F)_0/S'(F)_{0+}$ by $\theta^\circ$. By \cite[Lemma 3.4.14]{KalRSP} the character $\theta$ is $k_F$-non-singular if and only if $\theta^\circ$ is non-singular with respect to $\ms{G}_x^\circ$ in the sense of \cite[Definition 5.15]{DL76}.
\end{rem} 

\begin{rem} If $(S_1,\theta_1)$ and $(S_2,\theta_2)$ are stably conjugate pairs, then $\theta_1$ is $F$-non-singular if and only if $\theta_2$ is. On the other hand, the $k_F$-non-singularity of $\theta_1$ does not a-priori imply anything about the $k_F$-non-singularity of $\theta_2$.
\end{rem}

\begin{fct} \label{fct:regns}
Let $\theta : S(F) \to \C^\times$ be a depth zero character.
\begin{enumerate}
	\item If $\theta$ is $F$-non-singular then it is $k_F$-non-singular.
	\item If the vertex $x$ is absolutely special and $\theta$ is $k_F$-non-singular, then it is $F$-non-singular.
	\item If $\theta$ is regular in the sense of \cite[Definition 3.4.16]{KalRSP}, then $\theta$ is $F$-non-singular.
\end{enumerate}
\end{fct}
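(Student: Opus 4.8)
The plan is to reduce everything to the definitions and to the norm-compatibility already recorded in Remark \ref{rem:ns}. First I would unwind the three root-systems in play. For a depth-zero $\theta$ the relevant conditions only involve the restrictions of $\theta\circ N_{F'/F}$ to $O_{F'}^\times$, equivalently the character $\theta^\circ$ of $\ms{S}^\circ(k_F)$; concretely, for a coroot $\alpha^\vee$ the composite $\theta\circ N_{F'/F}\circ\alpha^\vee$ is nontrivial on $O_{F'}^\times$ precisely when $\theta^\circ\circ N\circ\bar\alpha^\vee$ is nontrivial on the finite torus, where $\bar\alpha$ is the image of $\alpha$ in the appropriate finite root system and $N$ is the norm of the residue extension. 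This is exactly the translation recorded just before the fact, coming from \cite[Lemma 3.4.14]{KalRSP}.

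For (1): $F$-non-singularity asks that $\theta\circ N_{F'/F}\circ\alpha_\tx{res}^\vee$ be nontrivial on $O_{F'}^\times$ for every $\alpha_\tx{res}\in R_\tx{res}(S',G)$, while $k_F$-non-singularity asks the same only for those $\bar\alpha\in R(\ms{S}^\circ,\ms{G}_x^\circ)$. Since $R(\ms{S}^\circ,\ms{G}_x^\circ)\subseteq R_\tx{res}(S',G)$ (this containment is asserted in the statement of Definition \ref{dfn:nsc}), $F$-non-singularity is the stronger condition, so (1) is immediate — one simply restricts the quantifier to a subset of the roots.

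For (2): this is the converse inclusion of conditions, so the content is that when $x$ is absolutely special, every restricted root $\alpha_\tx{res}\in R_\tx{res}(S',G)$ is, up to the scaling that is harmless for the non-triviality of $\theta\circ N_{F'/F}\circ\alpha_\tx{res}^\vee$ on $O_{F'}^\times$, accounted for by a root of $\ms{G}_x^\circ$. The key input is the definition of an absolutely special vertex (Appendix \ref{app:absvert}): the point $x$ being absolutely special means that over $F^u$ the parahoric at $x$ has reductive quotient whose root system is all of $R_\tx{res}(S',G)$ — i.e. $x$ is a special vertex in the building of $G_{F^u}$, so no affine root wall through $x$ is ``missed.'' I would spell out that for each $\alpha_\tx{res}$ there is an affine root $\psi=\alpha_\tx{res}+n$ vanishing at $x$, hence a corresponding root $\bar\alpha\in R(\ms{S}^\circ,\ms{G}_x^\circ)$ whose associated coroot is a positive rational multiple of $\alpha_\tx{res}^\vee$ (the multiple being $1$, $2$, or $\tfrac12$ in the non-reduced $BC$ case); since $\theta\circ N_{F'/F}$ is a character of $O_{F'}^\times$ modulo a finite group, nontriviality on the image of $\bar\alpha^\vee$ forces nontriviality on the image of $\alpha_\tx{res}^\vee$ and conversely. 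The mild subtlety with the doubled/halved coroots in type $BC$ is where I expect to spend the most care, but it is a finite check.

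For (3): regularity in the sense of \cite[Definition 3.4.16]{KalRSP} is by design a condition that is visibly stronger than non-singularity — the regular characters are those whose stabilizer in the relevant Weyl group is trivial and which are in general position, and in particular $\theta\circ N_{F'/F}\circ\alpha^\vee$ is nontrivial on $O_{F'}^\times$ for every absolute root $\alpha\in R(S,G)$, a fortiori for every restricted root. So (3) follows by unwinding the definition of regularity and invoking (1) (or directly the definition of $F$-non-singularity). The main obstacle in the whole argument is really just (2): making precise the relationship between restricted roots of $S'$ and roots of the finite reductive quotient at an absolutely special vertex, including the non-reduced case, for which I would lean on the material of Appendix \ref{app:absvert} and the structure theory of Bruhat--Tits.
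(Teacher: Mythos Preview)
Your arguments for (1) and (2) are essentially the paper's. For (2) the paper states crisply that when $x$ is absolutely special, $R(\ms{S}^\circ,\ms{G}_x^\circ)$ is exactly the set of \emph{non-divisible} roots in $R_\tx{res}(S',G)$, so for any $\alpha_\tx{res}$ either $\beta_\tx{res}=\alpha_\tx{res}$ or $\beta_\tx{res}=\tfrac12\alpha_\tx{res}$ lies in $R(\ms{S}^\circ,\ms{G}_x^\circ)$, giving $\beta_\tx{res}^\vee=\alpha_\tx{res}^\vee$ or $\beta_\tx{res}^\vee=2\alpha_\tx{res}^\vee$; nontriviality of the square forces nontriviality of the base. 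Your list of possible multiples ``$1$, $2$, or $\tfrac12$'' for the coroot is slightly off --- only $1$ and $2$ occur, and this matters: a factor $\tfrac12$ would make the implication go the wrong way.

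Part (3) has a genuine gap. You assert that regularity in the sense of \cite[Definition 3.4.16]{KalRSP} ``visibly'' gives that $\theta\circ N_{F'/F}\circ\alpha^\vee$ is nontrivial on $O_{F'}^\times$ for every root. But regularity there is a condition on the Weyl stabilizer of $\theta$, not a direct root-by-root nonvanishing condition, and the passage from one to the other is not formal over a $p$-adic field. The paper argues contrapositively and uses real input: assuming $\theta$ is not $F$-non-singular, it first transfers $S$ to the quasi-split inner form (\cite[Lemma 3.2.2]{KalRSP}), then replaces $S$ by a stable conjugate so that the associated vertex is absolutely special (\cite[Lemma 3.4.12]{KalRSP}), invokes part (2) to conclude $\theta$ is not $k_F$-non-singular, and then applies Deligne--Lusztig (\cite[Corollary 5.18]{DL76}) to produce a nontrivial element of $\Omega(\ms{S}^\circ,\ms{G}_x^\circ)(k_F)$ fixing $\theta^\circ$, which via \cite[Lemma 3.4.10]{KalRSP} contradicts regularity. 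Your plan skips all of this; in particular, without moving to an absolutely special vertex you cannot use the finite Deligne--Lusztig criterion, and without that criterion there is no direct link from ``some $\alpha_\tx{res}^\vee$ kills $\theta$ on units'' to ``some Weyl element fixes $\theta$.''
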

\begin{proof}
The root system $R(\ms{S}^\circ,\ms{G}_x^\circ)$ is a subset of $R_\tx{res}(S',G)$ and according to \cite[Lemma 3.4.14]{KalRSP} $F$-non-singularity implies $k_F$-non-singularity.

Assume now that $x$ is absolutely special and that $\theta$ is $k_F$-non-singular. Then $R(\ms{S}^\circ,\ms{G}_x^\circ)$ is the set of reduced roots in $R_\tx{res}(S',G)$. Thus given $\alpha_\tx{res} \in R(S',G)$ either $\beta_\tx{res}=\alpha_\tx{res}$ or $\beta_\tx{res}=\frac{1}{2}\alpha_\tx{res}$ lies in $R(\ms{S}^\circ,\ms{G}_x^\circ)$. By assumption $\theta \circ N_{F'/F} \circ \beta_\tx{res}^\vee$ has non-trivial restriction to $O_{F'}^\times$. Since $\beta_\tx{res}^\vee=\alpha_\tx{res}^\vee$ or $\beta_\tx{res}^\vee=2\alpha_\tx{res}^\vee$ this implies that $\theta \circ N_{F'/F} \circ \alpha_\tx{res}^\vee$ has non-trivial restriction to $O_{F'}^\times$.

Assume now that $\theta$ is not $F$-non-singular. We want to show that it cannot be regular. The torus $S$ transfers to the quasi-split inner form of $G$ \cite[Lemma 3.2.2]{KalRSP}, so we may assume that $G$ is quasi-split. We may further replace $S$ by a stable conjugate to ensure that the vertex $x$ is absolutely special, according to \cite[Lemma 3.4.12]{KalRSP}. By the previous point, $\theta$ is not $k_F$-non-singular. According to Remark \ref{rem:ns} and \cite[Corollary 5.18]{DL76} there exists $w \in \Omega(\ms{S}^\circ,\ms{G}_x^\circ)(k_F)$ stabilizing $\theta^\circ$. Now \cite[Lemma 3.4.10]{KalRSP} precludes the regularity of $\theta$.
\end{proof}

\subsection{The verification of Assumptions \ref{asm:dl_mu} and \ref{asm:dl_normintex}} \label{sub:ass_check}

Let $\theta : S(F) \to \C^\times$ be a $k_F$-non-singular depth zero character. We would like to use the results of the previous section, in particular Theorem \ref{thm:dlpar}, to study the representation $\kappa_{(\ms{S},\theta)}^{\ms{G}_x}$ of Definition \ref{dfn:kappast}, and combine this information with the results of Moy-Prasad \cite{MP96}. For this, we must verify Assumptions \ref{asm:dl_mu} and \ref{asm:dl_normintex}.

Assumption \ref{asm:dl_mu} is easy to check. The component group $\pi_0(\ms{G}_x)$ is described by the Kottwitz isomorphism as a subgroup of $\pi_1(G)_I$, and by construction $\pi_1(G)$ is a finitely generated abelian group. We let $\ms{Z} \subset \ms{G}_x$ be the image in $G(F^u)_x/G(F^u)_{x,0+}$ of $Z_G(F^u)$, where $Z_G$ is the center of $G$. Then $[\ms{Z} \cdot \ms{G}_x^\circ : \ms{G}_x]=[G(F^u)_{x,0} \cdot Z_G(F^u):G(F^u)_x]<\infty$. We must also check that $\ms{S}=\ms{S}^\circ \cdot \ms{Z}$ as per Notation \ref{ntt:dl_s}. Let $\ms{T}$ be reductive quotient of the special fiber of the the connected Neron model of $S_\tx{ad}$. The map $S(F^u)_c \to S_\tx{ad}(F^u)_b=S_\tx{ad}(F^u)_0$ induces a map $\ms{S} \to \ms{T}$. Since $S_\tx{sc}(F^u)_0/S_\tx{sc}(F^u)_{0+} \to S_\tx{ad}(F^u)_0/S_\tx{ad}(F^u)_{0+}$ is an isogeny of tori over $\bar k_F$, it is surjective, and we conclude that the restriction of $\ms{S} \to \ms{T}$ to $\ms{S}^\circ$ is already surjective. Since $\ms{Z}=\tx{ker}(\ms{S} \to \ms{T})$ we have $\ms{S}=\ms{S}^\circ \cdot \ms{Z}$, as required.

As we have already noted, the equality $\ms{S}=\ms{S}^\circ \cdot \ms{Z}$ is on the level of algebraic groups and may fail on the level of rational points. If $S$ is unramified then the equality holds on the level of rational points as well, and in fact $S(F)=Z_G(F) \cdot S(F)_0$, see e.g. \cite[Lemma 7.1.1]{KalECI}. Therefore $\Omega(S,G)(F)_\theta=\Omega(S,G)(F)_{\theta^\circ}$. In the ramified case the equality $S(F)=Z_G(F) \cdot S(F)_0$ fails, see \cite[\S3.4.3]{KalRSP}.

We now begin to verify Assumption \ref{asm:dl_normintex}.

\begin{lem} \label{lem:weylab2} The group $\Omega(S,G)(F)_{\theta^\circ}$ is abelian. If $G$ is absolutely simple and simply connected, then $\Omega(S,G)(F)_{\theta^\circ}$ is cyclic except when $G$ is split of type $D_{2n}$, in which case the possibilities are $\{1\}$, $\Z/2\Z$, and $(\Z/2\Z)^2$.
\end{lem}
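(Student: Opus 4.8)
The plan is to deduce both statements from the corresponding statements over finite fields that were already established in Section \ref{sec:nsdl-fin}, applied to the group $\ms{G}_x^\circ$ over $k_F$. The key bridge is the remark just preceding this lemma, namely the identification of $\Omega(S,G)(F)_{\theta^\circ}$ with a Weyl group of the finite reductive group $\ms{G}_x$: since $\theta$ is of depth zero, $\theta^\circ$ factors through $\ms{S}^\circ(k_F)$, and by Bruhat--Tits theory the stabilizer $\Omega(S,G)(F)_{\theta^\circ}$ coincides with $\Omega(\ms{S},\ms{G}_x)(k_F)_{\theta^\circ}$, where $\ms{S}=\ms{S}^\circ\cdot\ms{Z}$ is the possibly disconnected torus and $\ms{G}_x$ the possibly disconnected reductive $k_F$-group from the setup of Section \ref{sec:nsdl-fin}. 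Thus the abelianness assertion is exactly the statement that $\Omega(\ms{S},\ms{G}_x)(k_F)_{\theta^\circ}$ is abelian; since we only need the restriction $\theta^\circ$ of the depth-zero character, and since $k_F$-non-singularity of $\theta$ is equivalent to non-singularity of $\theta^\circ$ with respect to $\ms{G}_x^\circ$, this falls under the $p$-adic instance of Assumption \ref{asm:dl_normintex} referred to in Subsection \ref{sub:ass_check}.

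The abelianness is the heart of the matter. First I would reduce to $G$ absolutely simple and simply connected: the center of $G$ plays no role in the Weyl group, so we may pass to $G_\tx{der}$ and then to $G_\tx{sc}$; a simply connected group is a product of $F$-simple factors, each of the form $\tx{Res}_{F'/F}G'$ with $G'$ absolutely simple simply connected over an unramified extension $F'/F$, and by Appendix \ref{app:parahoric} (compatibility of parahoric subgroups with restriction of scalars) the group $\ms{G}_x^\circ$ for $\tx{Res}_{F'/F}G'$ is obtained by a corresponding Weil restriction, so $\Omega(S,G)(F)_{\theta^\circ}$ decomposes as a product over the simple factors. For an absolutely simple simply connected $G$, the torus $\ms{S}^\circ$ is an elliptic maximal torus of the connected reductive $k_F$-group $\ms{G}_x^\circ$, and $\theta^\circ$ is non-singular there; Corollary \ref{cor:weylab1} gives that $\Omega(\ms{S}^\circ,\ms{G}_x^\circ)(k_F)_{\theta^\circ}$ is abelian. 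The remaining point is that the \emph{full} Weyl-stabilizer in the possibly disconnected $\ms{G}_x$ — which is where the genuine content of Assumption \ref{asm:dl_normintex} lies — is also abelian; here I would use that $\ms{G}_x/\ms{G}_x^\circ$ is controlled by $\pi_1(G)_I$ via the Kottwitz homomorphism, so for $G$ simply connected $\pi_1(G)=1$ and $\ms{G}_x=\ms{G}_x^\circ$ is connected, whence the stabilizer is abelian by Corollary \ref{cor:weylab1} outright. (For the applications to general $G$ the disconnected case is handled separately in Subsection \ref{sub:ass_check}, but for the present lemma reduction to simply connected suffices.)

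For the second assertion, with $G$ absolutely simple simply connected (so $\ms{G}_x=\ms{G}_x^\circ$ connected), I would invoke the injection \eqref{eq:wtheta1} of Lemma \ref{lem:dlbichar}, which embeds $\Omega(\ms{S}^\circ,\ms{G}_x^\circ)(k_F)_{\theta^\circ}$ into $\tx{cok}(\ms{S}^\circ(k_F)\to\ms{S}^\circ_\tx{ad}(k_F))^*$, together with the structure of this cokernel. The cokernel is a subquotient of $\pi_1(\ms{G}_x^{\circ})$-type data; for an elliptic maximal torus it is cyclic in all types except $D_n$, where the fundamental group is $\Z/2\times\Z/2$ and the cokernel can be $(\Z/2)^2$, and this phenomenon only survives ellipticity for even rank, i.e. $D_{2n}$ split — this is precisely why Appendix \ref{app:d2n} on the root system $D_{2n}$ is needed. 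So the subgroup $\Omega(\ms{S}^\circ,\ms{G}_x^\circ)(k_F)_{\theta^\circ}$ is cyclic except when $\ms{G}_x^\circ$ is split of type $D_{2n}$, and in that case it is one of $\{1\}$, $\Z/2\Z$, $(\Z/2\Z)^2$; transporting back through $\ms{G}_x^\circ(k_F)=G(F)_{x,0:0+}$ gives the claim for $\Omega(S,G)(F)_{\theta^\circ}$. The main obstacle is bookkeeping: correctly matching the finite-field Weyl group $\Omega(\ms{S},\ms{G}_x)(k_F)_{\theta^\circ}$ with $\Omega(S,G)(F)_{\theta^\circ}$ through the Bruhat--Tits dictionary (in particular getting the disconnected torus $\ms{S}=\ms{S}^\circ\cdot\ms{Z}$ right, and using that $G$ simply connected forces $\ms{G}_x$ connected so $\ms{Z}\subset\ms{G}_x^\circ$), and then reading off the constraint on the cokernel group from the classification of root systems, citing Appendix \ref{app:d2n} for the $D_{2n}$ case.
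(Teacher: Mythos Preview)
There is a genuine gap in your bridge step. You assert that ``by Bruhat--Tits theory the stabilizer $\Omega(S,G)(F)_{\theta^\circ}$ coincides with $\Omega(\ms{S},\ms{G}_x)(k_F)_{\theta^\circ}$,'' but this identification does not hold at an arbitrary vertex $x$. Two separate things go wrong. First, $\Omega(S,G)(F)$ denotes the $F$-points of the algebraic Weyl group, which in general strictly contains $N(S,G)(F)/S(F)$; the Bruhat--Tits dictionary (the unlabeled lemma you have in mind, which in fact comes \emph{after} Lemma~\ref{lem:weylab2}) only gives $N(S,G)(F)/S(F)\cong N(\ms{S},\ms{G}_x)(k_F)/\ms{S}(k_F)$. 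Second, even after reducing to $G$ simply connected so that $\ms{G}_x=\ms{G}_x^\circ$, the Weyl group $\Omega(\ms{S}^\circ,\ms{G}_x^\circ)$ is the Weyl group of the root subsystem $R(\ms{S}^\circ,\ms{G}_x^\circ)\subset R_\tx{res}(S',G)$, which is a proper subsystem unless $x$ is special. So the abelianness of $\Omega(\ms{S}^\circ,\ms{G}_x^\circ)(k_F)_{\theta^\circ}$ from Corollary~\ref{cor:weylab1} gives you nothing about the larger group $\Omega(S,G)(F)_{\theta^\circ}$.

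The paper closes this gap differently and more directly: it first invokes \cite[Lemma 3.4.12]{KalRSP} to replace $(S,\theta)$ by a stable conjugate in the quasi-split inner form for which the associated vertex $x$ is absolutely special. Since $\Omega(S,G)(F)_{\theta^\circ}$ is unchanged under stable conjugacy, this is harmless. At an absolutely special vertex the root system of $\ms{G}_x^\circ$ is the full reduced system of $R_\tx{res}(S',G)$, so one has $\Omega(S,G)(F)_{\theta^\circ}=\Omega(\ms{S}^\circ,\ms{G}_x^\circ)(k_F)_{\theta^\circ}$ on the nose, and Corollary~\ref{cor:weylab1} for the \emph{connected} group $\ms{G}_x^\circ$ finishes the abelianness claim with no reduction to simply connected needed. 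For the second assertion the paper then observes (still at the absolutely special vertex) that $\ms{G}_x^\circ$ is absolutely simple semi-simple, embeds the stabilizer into $H^1(k_F,Z(\ms{G}_x^\circ))$ via \eqref{eq:wtheta1}, notes that $Z(\ms{G}_x^\circ)$ is cyclic unless $\ms{G}_x^\circ$ is of type $D_{2n}$, and that the latter forces $G$ unramified of type $D_{2n}$; the split case is then analyzed via the Deligne--Lusztig dual and \cite[Proposition 2.1]{Ree10}, not via Appendix~\ref{app:d2n} (which concerns commutator triviality, a different issue).
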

\begin{proof}
According to \cite[Lemma 3.4.12]{KalRSP} we may assume that $G$ is quasi-split and the point $x \in \mc{B}(G,F)$ associated to $S$ is absolutely special. Then $\Omega(S,G)(F)_{\theta^\circ} = \Omega(\ms{S}^\circ,\ms{G}_x^\circ)(k_F)_{\theta^\circ}$, the latter being abelian by Corollary \ref{cor:weylab1}.

Assume now that $G$ is absolutely simple and simply connected. Then $\ms{G}_x^\circ$ is absolutely simple and semi-simple. By Corollary \ref{cor:weylab1} we have $\Omega(\ms{S}^\circ,\ms{G}_x^\circ)(k_F)_{\theta^\circ} \subset \tx{cok}(\ms{S}^\circ(k_F) \to \ms{S}^\circ_\tx{ad}(k_F)) = H^1(k_F,Z(\ms{G}_x^\circ))$. This group is cyclic as soon as $Z(\ms{G}_x^\circ)$ is, which is always the case except possibly when $\ms{G}_x^\circ$ is of type $D_{2n}$. This happens if and only if $G$ itself is unramified of type $D_{2n}$. In that case $\ms{G}_x^\circ$ is simply connected and its center is $\mu_2 \times \mu_2$. If $\ms{G}_x^\circ$ is not split, then $H^1(k_F,Z(\ms{G}_x^\circ))=\Z/2\Z$, so we may assume that $\ms{G}_x^\circ$ is split, which is the case if and only if $G$ is split. In that case, take $\ms{G}_x^*$ to be Lusztig's dual group of $\ms{G}_x^\circ$ and $s^* \in \ms{S}^* \subset \ms{G}_x^*$ the semi-simple element of the dual torus of $\ms{S}^\circ$ corresponding to $\theta^\circ$. Then $\Omega(\ms{S}^\circ,\ms{G}_x^\circ)(k_F)_{\theta^\circ}=\Omega(\ms{S}^*,\ms{G}_x^*)(k_F)_{s^*}$. The latter can be either of $\{1\}$, $\Z/2\Z$, or $(\Z/2\Z)^2$, and all possibilities are realized, as one observes using \cite[Proposition 2.1]{Ree10}.
\end{proof}

\begin{lem} The inclusion $N(S,G)(F) \to G(F)_x$ induces an isomorphism 
\[ N(S,G)(F)/S(F) \to N(\ms{S},\ms{G}_x)(k)/\ms{S}(k). \]
\end{lem}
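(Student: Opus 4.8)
The map in question is induced by the reduction homomorphism $G(F)_x \to \ms{G}_x(k) = G(F)_x/G(F)_{x,0+}$, and the plan is to (i) check this reduction carries $N(S,G)(F)$ into $N(\ms{S},\ms{G}_x)(k)$ and $S(F)$ onto $\ms{S}(k)$, so that it descends to a homomorphism $\iota\colon N(S,G)(F)/S(F) \to N(\ms{S},\ms{G}_x)(k)/\ms{S}(k)$, and then (ii) prove injectivity by a pro-$p$ argument, and (iii) prove surjectivity by lifting tori along the parahoric group scheme. For (i): since $x$ is the point canonically associated to $S$, every element of $N(S,G)(F)$ fixes $x$, so $N(S,G)(F) \subseteq G(F)_x$ and in particular $S(F)\subseteq G(F)_x$; the image of $N(S,G)(F)$ in $\ms{G}_x(k)$ normalizes the image $\ms{S}$ of $S$, hence lies in $N(\ms{S},\ms{G}_x)(k)$; and by the very definition $\ms{S}(k)=S(F)/S(F)_{0+}$ the torus $S(F)$ surjects onto $\ms{S}(k)$ with kernel $S(F)_{0+}=S(F)\cap G(F)_{x,0+}$. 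Thus $\iota$ is well-defined.

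\textbf{Injectivity.} Let $n\in N(S,G)(F)$ reduce into $\ms{S}(k)$. Choose $s\in S(F)$ with the same image in $\ms{G}_x(k)$ and set $u=s^{-1}n$. Then $u\in N(S,G)(F)\cap G(F)_x$ reduces to $1$, i.e. $u\in G(F)_{x,0+}$. Since $G(F)_{x,0+}$ is a pro-$p$ group, the image $\bar u$ of $u$ under the natural homomorphism $N(S,G)(F)\to\Omega(S,G)(\bar F)$ has $p$-power order; but $\Omega(S,G)(\bar F)$ is the Weyl group $W(G,S)$, whose order is prime to $p$ by our hypothesis on $p$. Hence $\bar u=1$, so $u\in Z_G(S)(F)=S(F)$ (indeed $u\in S(F)_{0+}$), and therefore $n=su\in S(F)$. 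This shows $\iota$ is injective.

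\textbf{Surjectivity.} Let $\bar n\in N(\ms{S},\ms{G}_x)(k)$ and lift it to $n_0\in G(F)_x$ using the surjection $G(F)_x\twoheadrightarrow\ms{G}_x(k)$. Conjugation by $n_0$ is an automorphism of the smooth $O_F$-model $\mathcal{G}_x$, hence of its parahoric subgroup scheme $\mathcal{G}_x^\circ$, so $\tx{Ad}(n_0)S$ is again an elliptic maximally unramified maximal torus associated to $x$. Because conjugation by $n_0$ commutes with taking schematic closures in $\mathcal{G}_x$ and with passage to the special fibre, the maximal torus of $\ms{G}_x^\circ$ to which $\tx{Ad}(n_0)S$ reduces is $\tx{Ad}(\bar n)(\ms{S}^\circ)=\ms{S}^\circ$. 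Thus $S$ and $\tx{Ad}(n_0)S$ are two maximal tori of $G$ reducing to the same maximal torus $\ms{S}^\circ$ of $\ms{G}_x^\circ$; by smoothness of $\mathcal{G}_x^\circ$ and a standard Hensel/deformation argument, such tori are conjugate by an element $v$ of $\ker(\mathcal{G}_x^\circ(O_F)\to\ms{G}_x^\circ(k))=G(F)_{x,0+}$. Then $n:=vn_0\in G(F)$ normalizes $S$, so $n\in N(S,G)(F)$, and its image in $\ms{G}_x(k)$ is $\bar v\bar n_0=\bar n$. Hence already the reduction map $N(S,G)(F)\to N(\ms{S},\ms{G}_x)(k)$ is surjective, a fortiori $\iota$ is.

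\textbf{Main obstacle.} The delicate step is surjectivity, and within it the assertion that two maximal tori of $G$ with the same reduction modulo $G(F)_{x,0+}$ are $G(F)_{x,0+}$-conjugate. One should formulate this purely in terms of the schematic closures in $\mathcal{G}_x^\circ$ and their reductive quotients, being careful in the ramified case --- where $S(F)\neq Z_G(F)\cdot S(F)_0$ and $S(F)$ need not lie in $G(F)_{x,0}$ --- to work with the identity component of the schematic closure of $S$, which does lie in $\mathcal{G}_x^\circ$; granting that, the transitivity follows from the smoothness of $\mathcal{G}_x^\circ$ since the relevant fibre is a torsor under a connected (pro-unipotent) group scheme.
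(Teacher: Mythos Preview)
Your argument is correct and structurally parallel to the paper's: both establish the map, then prove injectivity and surjectivity. The specific arguments differ in two places.

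For injectivity, you use that $G(F)_{x,0+}$ is pro-$p$ together with the hypothesis $p\nmid|W|$. The paper instead works over $F^u$ and invokes $G(F^u)_{x,0+}\cap N(S,G)(F^u)=S(F^u)_{0+}$, which holds with no constraint on $p$: over $F^u$ the group is quasi-split (Steinberg), so the maximal unramified subtorus $S'$ satisfies $Z_G(S')=S$, and an element of the pro-unipotent radical acts trivially on the link of $x$, hence fixes the germ of the apartment of $S'$ at $x$, forcing it into $Z_G(S')(F^u)=S(F^u)$. Your route is cleaner but less general; since $p\nmid|W|$ is among the paper's running hypotheses, nothing is lost here.

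For surjectivity, you write out a Hensel-type lifting argument where the paper simply cites \cite[Lemma~3.4.5]{KalRSP}. Your sketch is correct in spirit, and the issue you single out as the main obstacle---that in the ramified case one must work with the identity component of the schematic closure of $S$ inside $\mathcal{G}_x^\circ$ and then invoke smoothness of the transporter---is precisely what that cited lemma supplies. So your surjectivity argument is essentially a rediscovery of the needed input rather than a genuinely different approach.
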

\begin{proof}
An element of $G(F)$ that normalizes $S$ normalizes the apartment of $S$ over $F^u$ and acts on it $\Gamma$-equivariantly, hence fixes the point $x$. This gives the map $N(S,G)(F) \to N(\ms{S},\ms{G}_x)(k)$. The preimage of $\ms{S}$ under the map $G(F^u)_x \to \ms{G}_x(\bar k_F)$ is $G(F^u)_{x,0+} \cdot S(F^u)_c$. Since $G(F^u)_{x,0+} \cap N(S,G)(F^u)=S(F^u)_{0+}$ we see that the map $N(S,G)(F)/S(F) \to N(\ms{S},\ms{G}_x)(k)/\ms{S}(k)$ is injective. Its surjectivity follows from \cite[Lemma 3.4.5]{KalRSP}.
\end{proof}

\begin{rem} The Weyl group $\Omega(S,G)(F)$ and $\Omega(\ms{S},\ms{G}_x)(k_F)$ are related as follows. The identity $\ms{S}=\ms{S}^\circ \cdot \ms{Z}$ implies
\[ N(\ms{S},\ms{G}_x)/\ms{S} = N(\ms{S}^\circ,\ms{G}_x)/\ms{S}^\circ\cdot\ms{Z} = N(\ms{\bar S}^\circ,\ms{\bar G}_x)/\ms{\bar S}^\circ, \]
where ``bar'' denotes taking the quotient modulo $\ms{Z}$. Lang's theorem implies $\Omega(\ms{S},\ms{G}_x)(k_F)=N(\ms{\bar S}^\circ,\ms{\bar G}_x)(k_F)/\ms{\bar S}^\circ(k_F)$ and the previous lemma implies that this equals $N(S_\tx{ad},G_\tx{ad})(F)/S_\tx{ad}(F)$, which is a possibly proper subgroup of $\Omega(S,G)(F)$.
	
\end{rem}

\begin{pro} \label{pro:dznormiopex} The class $[\eta]$ in Assumption \ref{asm:dl_normintex} trivial.
\end{pro}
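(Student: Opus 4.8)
The plan is to reduce the study of $[\eta]$ for the disconnected group $\ms{G}_x$ to the connected situation already settled in Lemma \ref{lem:dletatriv}. First I would use Lemma \ref{lem:iopindep}: since the geometric intertwining operators, and hence the cocycle $\eta$, are insensitive to central isogenies and to multiplication by a central torus, $[\eta]$ is unchanged if we replace $\ms{G}_x^\circ$ by the simply connected cover of its derived group, so we may assume $\ms{G}_x^\circ$ semisimple and simply connected. The next step is to arrange that $\Gamma = N(\ms{S},\ms{G}_x)(k_F)_{\theta^\circ}$ acts on $\ms{G}_x^\circ$ \emph{internally}, i.e.\ through $N(\ms{S}^\circ,\ms{G}_x^\circ)(k_F)$. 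For this I would invoke \cite[Lemma 3.4.12]{KalRSP}: after replacing $G$ by its quasi-split inner form and $S$ by a suitable stable conjugate, we may assume $G$ is quasi-split and the vertex $x$ is absolutely special. The computation in the proof of Lemma \ref{lem:weylab2} then gives $\bar\Gamma = \Omega(\ms{S}^\circ,\ms{G}_x^\circ)(k_F)_{\theta^\circ}$; concretely $N(\ms{S},\ms{G}_x)(k_F)_{\theta^\circ} = N(\ms{S}^\circ,\ms{G}_x^\circ)(k_F)_{\theta^\circ}\cdot \ms{S}(k_F)$, and since $\ms{S} = \ms{S}^\circ\cdot\ms{Z}$ with $\ms{Z}$ centralizing $\ms{G}_x^\circ$, every $\gamma\in\Gamma$ acts on $\ms{G}_x^\circ$ as conjugation by an element of $N(\ms{S}^\circ,\ms{G}_x^\circ)(k_F)$.

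Granting these reductions, the cocycle $\eta=\eta_{\Psi^{\ms{G}_x^\circ},U}$ appearing in Assumption \ref{asm:dl_normintex} is by construction exactly the cocycle attached by Subsection \ref{sub:natiop} to the \emph{connected} reductive $k_F$-group $\ms{G}_x^\circ$, its elliptic maximal torus $\ms{S}^\circ$, the non-singular character $\theta^\circ$, and the subgroup $N(\ms{S}^\circ,\ms{G}_x^\circ)(k_F)_{\theta^\circ}$ of its normalizer acting by conjugation. Lemma \ref{lem:dletatriv} --- whose proof combines Lusztig's multiplicity-one theorem (Theorem \ref{thm:multfree}) with the extendability of $\theta^\circ$ to $N(\ms{S}^\circ,\ms{G}_x^\circ)(k_F)_{\theta^\circ}$ (Proposition \ref{pro:dlcharext}) to produce an explicit trivialization --- then says precisely that this cocycle is a coboundary, so $[\eta]$ is trivial.

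The step I expect to be the main obstacle is verifying that $[\eta]$ is genuinely unchanged under the passage to the quasi-split inner form and to a stable conjugate of $S$. The class lives in $H^2(\bar\Gamma,\bar\Q_l^\times)$, and $\bar\Gamma$, being a stabilizer in a Weyl group, is insensitive to inner twisting and to stable conjugacy; but the cocycle $\eta$ itself is manufactured from the Bonnaf\'e--Dat--Rouquier operators for $\ms{G}_x^\circ$, which involve the Frobenius, so one must check that $\eta$ transforms compatibly --- equivalently, that $[\eta]$ depends on $(G,S,\theta)$ only through the stable conjugacy class, or only on the $\bar k_F$-geometric datum $(\ms{G}_x^\circ,\ms{S}^\circ,\theta^\circ)$ together with its $\bar\Gamma$-action up to the relevant inner twists of Frobenius.

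If one prefers to bypass the quasi-split reduction entirely, one can argue from the classification directly. Using Corollary \ref{cor:etaprod} together with the compatibility of parahoric subgroups with restriction of scalars (Appendix \ref{app:parahoric}), one reduces to the case $\ms{G}_x^\circ = \tx{Res}_{k'/k_F}H$ with $H$ absolutely simple and simply connected --- the $k_F$-simple factors permuted by $\Gamma$ can be grouped, the permutation part of the action merely permuting tensor factors (by Lemma \ref{lem:ioprod}) and therefore contributing nothing to $[\eta]$. By Lemma \ref{lem:weylab2} the group $\bar\Gamma$ is then cyclic unless $H$ is split of type $D_{2n}$, and when $\bar\Gamma$ is cyclic one has $H^2(\bar\Gamma,\bar\Q_l^\times)=0$ by divisibility of $\bar\Q_l^\times$. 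The only remaining case, $\bar\Gamma\cong(\Z/2\Z)^2$, is settled using the $D_{2n}$-specific results of Appendix \ref{app:d2n} (cf.\ Lemma \ref{lem:d2nr}), in the same way these are used in the proof of Proposition \ref{pro:dlcharext}.
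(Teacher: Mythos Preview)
Your second approach is essentially the paper's strategy and is correct up to the $D_{2n}$ case, but both approaches have a genuine gap there.

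For the first approach, the obstacle you flag is real and not a mere technicality. The class $[\eta]$ is built from the Bonnaf\'e--Dat--Rouquier operators on the cohomology of Deligne--Lusztig varieties for the specific reductive quotient $\ms{G}_x^\circ$ at the given vertex $x$. Passing to the quasi-split inner form and a stable conjugate of $S$ replaces $x$ by a different vertex $x'$ in a different building, and hence $\ms{G}_x^\circ$ by a genuinely different connected reductive group $\ms{G}_{x'}^{*\circ}$ (with a different Frobenius). While $\bar\Gamma$ is invariant under this passage (being a stabilizer in the absolute Weyl group), nothing in the paper identifies the two cocycles $\eta$; Corollary~\ref{cor:defeta} only says $[\eta]$ is independent of $\Phi$ and $U$ within a fixed setup. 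The paper does not attempt this route.

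For the second approach, your reduction to absolutely simple simply connected $H$ via Corollary~\ref{cor:etaprod} and Appendix~\ref{app:parahoric} matches the paper, and the cyclic cases are fine. But your appeal to Lemma~\ref{lem:d2nr} for $D_{2n}$ is misplaced: that lemma concerns the commutator of the extension $1\to S_\tx{sc}(k)\to N(S_\tx{sc},G_\tx{sc})(k)_\theta\to\Omega(S_\tx{sc},G_\tx{sc})(k)_\theta\to 1$, which controls the extendability of $\theta$ (hence feeds into Proposition~\ref{pro:dlcharext} and, via that, into Lemma~\ref{lem:dletatriv}), not the cocycle $\eta$ directly. The real difficulty is this: when the vertex $x$ is not hyperspecial, the root system of $\ms{H}_x^\circ$ is a proper subsystem (a product of two type-$D$ factors), and elements of $\Omega(T,H)(E)_\theta\cong(\Z/2\Z)^2$ may act on $\ms{H}_x^\circ$ by \emph{outer} automorphisms---e.g.\ by swapping the two factors---so Lemma~\ref{lem:dletatriv} does not apply out of the box. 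The paper handles this by a case analysis: if some element swaps the two factors, one chooses the Borel symmetrically so that this element stabilizes $\ms{U}$, making $\bar\Gamma$ cyclic; otherwise one shows by an explicit coordinate argument that both generators $w_1,w_2$ restrict to Weyl elements on each factor, and then applies Lemma~\ref{lem:dletatriv} factor by factor after passing to the simply connected cover and using Corollary~\ref{cor:etaprod}.
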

\begin{proof}
First note that $Z(F) \subset \Gamma$ acts trivially, so within this proof we can replace $\Gamma$ by $N(S,G)(F)_{\theta^\circ}/Z(F)$, as this does not affect $\bar\Gamma$. Next we notice that we are free to enlarge $\bar\Gamma$ if we like. It will be convenient to take for $\Gamma$ the stabilizer of $\theta_\tx{sc}=\theta|_{S(F)_\tx{sc}}$ in $N(S,G_\tx{ad})(F)$. Then $\bar\Gamma$ becomes the quotient of $[N(S,G_\tx{ad})(F)_{\theta_\tx{sc}}/S_\tx{ad}(F)]$ by the stabilizer of $\ms{U}$. Since $\Omega(S,G)(F)_{\theta_\tx{sc}}$ is abelian by Lemma \ref{lem:weylab2} applied to $G_\tx{sc}$, this quotient is a group.

Let $\ms{\tilde G}_x^\circ$ be the reductive quotient of the parahoric subgroup of $G_\tx{sc}$ associated to the vertex $x$. Then $\ms{\tilde G}_x^\circ \to \ms{G}_x^\circ$ is a morphism with abelian kernel and cokernel. By Lemma \ref{lem:iopindep} the class in $H^2(\bar\Gamma,\bar\Q_l^\times)$ is unchanged if we replace $G$ by $G_\tx{sc}$, so we may assume from now on that $G$ is simply connected. Note that now $\theta=\theta^\circ$.

By Corollary \ref{cor:etaprod} we may further assume that $G$ is $F$-simple. Thus $G=\tx{Res}_{E/F}H$ for some absolutely simple simply connected group $H$ defined over a finite extension $E$ of $F$. We have $\mc{B}(G,F)=\mc{B}(H,E)$. Moreover, $\ms{G}_x^\circ = \tx{Res}_{k_E/k_F}\ms{H}_x^\circ$ by Appendix \ref{app:parahoric}. Let us represent $\ms{G}_x^\circ$ as the product $\ms{H}_x^\circ \times \dots \times \ms{H}_x^\circ$ of $k=[k_E:k_F]$-many factors, with Frobenius acting by $F(h_1,\dots,h_k) \mapsto (F^kh_k,h_1,\dots,h_{k-1})$. Then $\ms{S}^\circ=\ms{T}' \times \dots \times \ms{T}'$ for an elliptic maximal torus $\ms{T}' \subset \ms{H}_x^\circ$. By Lemma \ref{lem:normiop} the class we are considering is independent of the choice of $\ms{U}$, so we may take $\ms{U}$ to have the form $\ms{V} \times \dots \times \ms{V}$ for some unipotent radical $\ms{V}$ of a Borel subgroup of $\ms{H}_x^\circ$ containing $\ms{T}'$. The diagonal embedding $\ms{H}_x^\circ \to \ms{G}_x^\circ$ provides an isomorphism of varieties $Y_{\ms{V}} \to Y_{\ms{U}}$. Under this isomorphism the geometric intertwining operators $\Psi$ for $\ms{G}_x^\circ$ provide a collection of $\Gamma$-equivariant operators for $\ms{H}_x^\circ$. We may therefore compute the class in $H^2(\bar\Gamma,\bar\Q_l^\times)$ using $\ms{H}_x^\circ$.

We are thus looking at an absolutely simple simply connected group $H$, a maximally unramified anisotropic maximal torus $T \subset H$ with vertex $x \in \mc{B}(H,E)$, and a $k_E$-non-singular 
character $\theta : T(E) \to \C^\times$. We have $\Gamma=N(T,H_\tx{ad})(E)_{\theta}$ and $\bar\Gamma$ is a subquotient of $\Omega(T,H)(E)_{\theta}$. This latter group is cyclic for all possible $H$ except for $H$ being of split type $D_{2n}$, in which case it can be one of $\{1\}$, $\Z/2\Z$, or $(\Z/2\Z)^2$ by Lemma \ref{lem:weylab2}. When $\bar\Gamma$ is cyclic $H^2(\bar\Gamma,\bar\Q_l^\times)$ vanishes, so we are left do deal with the case when $H$ is of split type $D_{2n}$, and $\Omega(T,H)(E)_{\theta}=(\Z/2\Z)^2$.

If $x$ is hyperspecial, then $\Omega(T,H)(E)_\theta=\Omega(\ms{T}',\ms{H}_x^\circ)(k_E)$ and we can apply Lemma \ref{lem:dletatriv} to conclude the triviality of the class in $H^2(\bar\Gamma,\bar\Q_l^\times)$. Assume now that $x$ is not hyperspecial. The root system of $\ms{H}_x^\circ$ is the product of two systems of type $D$. If an element of $\Omega(T,H)(E)_\theta$ swaps the two copies, then we can choose the Borel subgroup containing $\ms{T}$ to be invariant under this element, forcing the image of this element in $\bar\Gamma$ to be trivial. Then $\bar\Gamma$ is cyclic and again $H^2(\bar\Gamma,\bar\Q_l^\times)$ is trivial. We can thus assume that $\Omega(T,H)(E)_\theta$ preserves each of the two irreducible factors of the root system of $\ms{H}_x^\circ$. We apply Lemma \ref{lem:iopindep} to replace $\ms{H}_x^\circ$ with its simply connected cover and write it as a product of its two absolutely irreducible factors, and then apply Corollary \ref{cor:etaprod} to reduce to studying each factor separately.

There exist choices $\Delta$ and $\Delta'$ of simple roots for $R(T,H)$ with the following properties. There is a basis $e_1,\dots,e_{2n}$ of $X^*(T)_\R$ such that $\Delta=\{e_1-e_2,\dots,e_{2n-1}-e_{2n},e_{2n-1}+e_{2n}\}$ and the group $\Omega(T,H)(E)_\theta$ is given by $\<w_1,w_2\>$, where $w_1=\epsilon_1\epsilon_{2n}$, with $\epsilon_i(e_j)=(-1)^{\delta_{i,j}}e_j$, and $w_2=(-1)m$, where $m(e_i)=e_{2n+1-i}$. There exists a second basis $e_1',\dots,e_{2n}'$ of $X^*(T)_\R$ such that $\Delta'=\{e'_1-e'_2,\dots,e'_{2n-1}-e'_{2n},e'_{2n-1}+e'_{2n}\}$ and the set $\{-e'_1-e'_2,e'_1-e'_2,e'_2-e'_3,\dots,e'_{k-1}-e'_k\} \cup \{e'_{k+1}-e'_{k+2},\dots,e'_{2n-1}-e'_{2n},e'_{2n-1}+e'_{2n}\}$ is a set of simple roots for $R(\ms{T},\ms{H}_x^\circ)$. Let $w \in \Omega(T,H)$ be the unique element that sends $\Delta$ to $\Delta'$. It is a signed permutation of $\{e_i\}$ and thus sends $e_i$ to $(-1)^{s_i}e_{\sigma(i)}$. Then $w_1=\epsilon'_i\epsilon'_j$ where $1=\sigma(i)$ and $2=\sigma(j)$. The transposition swapping $e'_i$ and $e'_j$ is a factor of the permutation part of $w_2$. Since $w_2$ preserves both irreducible factors of the root system $R(\ms{T},\ms{H}_x^\circ)$, we must have that either $i,j \leq k$ or $i,j > k$. We thus see that $w_1$ acts trivially on one of the factors, and belongs to the Weyl group of the other factor. Consider now $w_2$. Its permutation part is a product of $n$ disjoint transpositions. Since it preserves both factors, these factors are of type $D_{2a}$ and $D_{2b}$ respectively for some $a+b=n$. The restriction of $w_2$ to $D_{2a}$ has a permutation part given by a product of $a$ disjoint transpositions, and the restriction of $w_2$ to $D_{2b}$ has a permutation part given by a product of $b$ disjoint transpositions. We conclude that $w_2$ acts on each factor by a Weyl element. We can now apply Lemma \ref{lem:dletatriv} again to each of the factors to conclude the triviality of the class $[\eta]$.
\end{proof}

\subsection{The internal structure of depth-zero Deligne-Lusztig packets} \label{sub:nsdl-dz}

We are now in position to apply the results of Section \ref{sec:nsdl-fin} to the study of representations of $G(F)$, and begin with the case of depth zero.

Consider a tuple $(S,\theta,\rho,\epsilon)$, where $S \subset G$ is a maximally unramified elliptic maximal torus, $\theta : S(F) \to \C^\times$ is a $k_F$-non-singular character of depth zero in the sense of Definition \ref{dfn:nsc}, $\rho$ is an irreducible representation of $N(S,G)(F)_\theta$ whose restriction to $S(F)$ contains $\theta$, and $\epsilon$ is a coherent splitting of the family of 2-cocycle $\{\eta_{\Psi,\ms{U}}\}$ as in \S\ref{sub:natiop}, where $\ms{U}$ ranges over the unipotent radicals of the Borel subgroups of $\ms{G}_x^\circ$ containing $\ms{S}^\circ$. Let $\kappa_{(S,\theta)}$ and $\kappa_{(S,\theta,\rho)}^{\epsilon}$ be the inflations to $G(F)_x$ of the representations $\kappa_{(\ms{S},\theta)}^{\ms{G}_x}$ and $\kappa_{(\ms{S},\theta,\rho)}^{\ms{G}_x,\epsilon}$ of Definition \ref{dfn:kappast}.

We shall consider two such tuples $(S_i,\theta_i,\rho_i,\epsilon_i)$ for $i=1,2$ equivalent if there exist $g \in G(F)$ and $\delta \in (N(S_1,G)(F)_{\theta_1}/S_1(F))^*$ such that $(S_2,\theta_2,\rho_2,\epsilon_2)=\tx{Ad}(g)(S_1,\theta_1,\rho_1\otimes\delta,\delta^{-1}\epsilon_1)$.

\begin{lem} \label{lem:gxrep}
Fix a vertex $x \in \mc{B}(G,F)$. The map $(S,\theta,\rho,\epsilon) \mapsto \kappa_{(S,\theta,\rho)}^\epsilon$ is a bijection between the set of equivalence classes of tuples $(S,\theta,\rho,\epsilon)$ s.t. $x$ is the vertex for $S$, and the set of irreducible representations of $G(F)_x/G(F)_{x,0+}$ whose restriction to $G(F)_{x,0}$ contains a non-singular cuspidal representation of $\ms{G}_x^\circ(k_F)$.
\end{lem}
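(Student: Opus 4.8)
The plan is to reduce Lemma \ref{lem:gxrep} to the combination of Theorem \ref{thm:dlpar} and Corollary \ref{cor:dlsurj} applied to the disconnected $k_F$-group $\ms{G}_x$, using Assumptions \ref{asm:dl_mu} and \ref{asm:dl_normintex} (verified in Proposition \ref{pro:dznormiopex} and the surrounding discussion) and the dictionary between representations of $G(F)_x/G(F)_{x,0+}$ and of $\ms{G}_x(k_F)$. First I would record that inflation along $G(F)_x \to \ms{G}_x(k_F)$, $G(F)_{x,0} \to \ms{G}_x^\circ(k_F)$ is a bijection between irreducible representations of $\ms{G}_x$ (resp. $\ms{G}_x^\circ$) and those of $G(F)_x$ (resp. $G(F)_{x,0}$) trivial on $G(F)_{x,0+}$, compatible with restriction; this turns the target of the asserted bijection into the set of irreducible representations of $\ms{G}_x(k_F)$ whose restriction to $\ms{G}_x^\circ(k_F)$ contains a non-singular cuspidal representation. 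By Corollary \ref{cor:dlsurj}, every such representation has the form $\kappa_{(\ms{S},\theta)}^{\ms{G}_x,\epsilon}$-isotypic piece $\kappa_{(\ms{S},\theta,\rho)}^{\ms{G}_x,\epsilon}$ for some elliptic maximal torus $\ms{S}^\circ \subset \ms{G}_x^\circ$, some non-singular $\theta^\circ$ on $\ms{S}^\circ(k_F)$, some extension $\theta$ to $\ms{S}(k_F)$, and some $\rho \in \tx{Irr}(N(\ms{S},\ms{G}_x)(k_F)_\theta,\theta)$; so the map is surjective once we know every such $(\ms{S},\theta)$ arises from a genuine $p$-adic datum $(S,\theta)$ with vertex $x$.

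The second step is precisely that lifting. Given an elliptic maximal torus $\ms{S}^\circ \subset \ms{G}_x^\circ$, I would invoke the standard Bruhat--Tits/Moy--Prasad dictionary (as used throughout \cite{KalRSP}, e.g. \cite[Lemma 3.4.3, 3.4.5]{KalRSP}): maximal $k_F$-tori of $\ms{G}_x^\circ$ that are elliptic correspond, via $G(F^u)_x \to \ms{G}_x(\bar k_F)$ and Galois descent, to maximally unramified maximal tori $S \subset G$ whose associated vertex is $x$ and which are elliptic over $F$; and a depth-zero character $\theta$ of $S(F)$ factors through $\ms{S}(k_F)$, with $k_F$-non-singularity of $\theta$ equivalent to non-singularity of $\theta^\circ$ by the remark after Definition \ref{dfn:nsc} (citing \cite[Lemma 3.4.14]{KalRSP}). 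The previous lemma identifies $N(S,G)(F)/S(F)$ with $N(\ms{S},\ms{G}_x)(k_F)/\ms{S}(k_F)$, and since $S(F)_{0+}$ maps to $1$ in $\ms{G}_x$, also $N(S,G)(F)_\theta/S(F) \cong N(\ms{S},\ms{G}_x)(k_F)_\theta/\ms{S}(k_F)$, so $\rho$ transports to an irreducible representation of $N(S,G)(F)_\theta$ containing $\theta$, and coherent splittings $\epsilon$ match up since Proposition \ref{pro:dznormiopex} is stated for exactly the family $\{\eta_{\Psi,\ms{U}}\}$ appearing in the tuple. Thus every target representation is $\kappa_{(S,\theta,\rho)}^\epsilon$ for some admissible tuple.

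For injectivity I would argue as follows. If $\kappa_{(S_1,\theta_1,\rho_1)}^{\epsilon_1} \cong \kappa_{(S_2,\theta_2,\rho_2)}^{\epsilon_2}$ as representations of $G(F)_x$, then (passing to $\ms{G}_x$) they are isomorphic as representations of $\ms{G}_x(k_F)$, so by Theorem \ref{thm:dlpar}(1) and Proposition \ref{pro:inddisj} the data $(\ms{S}_1,\theta_1)$ and $(\ms{S}_2,\theta_2)$ must be $\ms{G}_x(k_F)$-conjugate; lifting a conjugating element to $G(F)_x \subset G(F)$ — here using that $\ms{G}_x(k_F) = G(F)_x/G(F)_{x,0+}$ and that $G(F)_{x,0+}$ normalizes $S$-data within its formal neighborhood, or more simply conjugating the $p$-adic tori directly via the correspondence — reduces to $S_1=S_2=:S$, $\theta_1^\circ=\theta_2^\circ=:\theta^\circ$. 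Then $\theta_2 = \phi\cdot\theta_1$ for a character $\phi$ of $\ms{S}(k_F)/\ms{S}^\circ(k_F)=S(F)/(S(F)_0 Z)$-type, which extends to a character of $G(F)_x$ trivial on $G(F)_{x,0}$ (since $G/G^0$ is abelian and $\ms{Z}\cdot\ms{G}_x^\circ$ has finite index), and Theorem \ref{thm:dlpar}(3) together with its first relation shows $\kappa_{(S,\phi\theta,\phi\rho)}^{\epsilon}=\phi\otimes\kappa_{(S,\theta,\rho)}^{\epsilon}$ and $\kappa_{(S,\theta,\rho)}^{\delta\epsilon}=\kappa_{(S,\theta,\delta\otimes\rho)}^{\epsilon}$; chasing these identifications produces exactly the element $\delta\in (N(S,G)(F)_{\theta_1}/S(F))^*$ and shows the two tuples are equivalent in the sense defined before the lemma. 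The main obstacle I anticipate is the careful bookkeeping in this last step: matching the twist by $\phi$ on the automorphic side with the $\delta$-shift of $\rho$ and of the coherent splitting $\epsilon$, i.e. verifying that the equivalence relation on tuples is exactly calibrated so that Theorem \ref{thm:dlpar}(3) and Lemma \ref{lem:dl_kappatwist} collapse all the ambiguity and nothing more; the geometric inputs are all in hand, so this is a matter of tracking the $(N(S,G)(F)_\theta/S(F))^*$-torsor structures on both sides consistently.
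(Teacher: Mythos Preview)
Your approach is the same as the paper's: injectivity from Theorem \ref{thm:dlpar} together with Proposition \ref{pro:inddisj}, surjectivity from Corollary \ref{cor:dlsurj} plus a lifting of elliptic maximal tori from $\ms{G}_x^\circ$ to $G$. Two remarks.

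For surjectivity, the precise input is \cite[Lemma 3.4.4]{KalRSP} (not 3.4.3 or 3.4.5): it produces, from an elliptic maximal torus $\ms{S}^\circ \subset \ms{G}_x^\circ$, a maximally unramified elliptic $S \subset G$ with vertex $x$ and image $\ms{S}^\circ$; that the image of $S(F^u)_c$ in $\ms{G}_x$ equals $\ms{S}=\ms{S}^\circ\cdot\ms{Z}$ was checked in \S\ref{sub:ass_check}.

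Your injectivity argument takes an unnecessary detour. Proposition \ref{pro:inddisj} gives $\ms{G}_x(k_F)$-conjugacy of the full pairs $(\ms{S}_i,\theta_i)$, not merely of $(\ms{S}_i^\circ,\theta_i^\circ)$. Lift the conjugating element $\bar g$ to $g \in G(F)_x$ and adjust by $h \in G(F)_{x,0+}$ (via \cite[Lemma 3.4.4]{KalRSP}) so that $\tx{Ad}(hg)S_1=S_2$; since $h$ maps to $1$ in $\ms{G}_x(k_F)$, the image of $hg$ is still $\bar g$, and because each depth-zero $\theta_i$ factors through $S_i(F)/S_i(F)_{0+}=\ms{S}_i(k_F)$ you get $(hg)_*\theta_1=\theta_2$ on the nose. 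There is no residual $\phi$, and your claim that such a $\phi$ would extend to a character of $G(F)_x$ trivial on $G(F)_{x,0}$ is not justified (a character of $\pi_0(\ms{S})(k_F)$ need not descend to $\pi_0(\ms{G}_x)(k_F)$) and, more to the point, would not fit the equivalence relation, which allows no $\theta$-twist. Once $(S_1,\theta_1)=(S_2,\theta_2)$, Theorem \ref{thm:dlpar}(1),(3) immediately yield the $\delta$ relating $(\rho_1,\epsilon_1)$ to $(\rho_2,\epsilon_2)$, and the bookkeeping you worry about disappears.
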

\begin{proof}
Theorem \ref{thm:dlpar} and Proposition \ref{pro:inddisj} imply that the map is well-defined and injective (note that two maximally unramified elliptic maximal tori with vertex $x$ are conjugate in $G(F)$ if and only if they are conjugate in $G(F)_x$). Corollary \ref{cor:dlsurj} implies that the map is surjective, for given an ellitpic maximal torus $\ms{S}^\circ \subset \ms{G}_x^\circ$ we can find using \cite[Lemma 3.4.4]{KalRSP} a maximally unramified elliptic maximal torus $S \subset G$ with vertex $x$ whose image in $\ms{G}_x^\circ$ is $\ms{S}^\circ$, and then the image of $S(F^u)_c$ in $\ms{G}_x$ equals $\ms{S}$ as we have verified in \S\ref{sub:ass_check}.
\end{proof}

\begin{pro} \label{pro:dzclass} The representation
\[ \pi^\epsilon_{(S,\theta,\rho)}=\tx{c-Ind}_{G(F)_x}^{G(F)}\kappa^\epsilon_{(S,\theta,\rho)} \]
is irreducible and supercuspidal. Two tuples lead to isomorphic representations if and only if they are equivalent. If $\phi : G(F) \to \C^\times$ is a character of depth zero and trivial on $G_\tx{sc}(F)$, then $\chi\otimes\pi^\epsilon_{(S,\theta,\rho)}=\pi^\epsilon_{(S,\chi\cdot\theta,\chi\otimes\rho)}$. If $\delta \in (N(S,G)(F)_\theta/S(F))^*$ then $\pi^{\delta\epsilon}_{(S,\theta,\rho)}=\pi^\epsilon_{(S,\theta,\delta\otimes\rho)}$.
\end{pro}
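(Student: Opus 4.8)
The plan is to deduce irreducibility and supercuspidality from the standard compact-induction criterion combined with the structure theory already in place. First I would recall that $\kappa^\epsilon_{(S,\theta,\rho)}$ is, by construction, an irreducible representation of $G(F)_x$ inflated from an irreducible representation of the finite group $G(F)_x/G(F)_{x,0+}$ whose restriction to the parahoric quotient $\ms{G}_x^\circ(k_F)$ contains the cuspidal representation $\kappa^{\ms{G}_x^\circ}_{(\ms{S}^\circ,\theta^\circ)}$; this is exactly Lemma \ref{lem:gxrep}. Since $G(F)_x$ is a compact-mod-center open subgroup of $G(F)$, the representation $\pi^\epsilon_{(S,\theta,\rho)}=\tx{c-Ind}_{G(F)_x}^{G(F)}\kappa^\epsilon_{(S,\theta,\rho)}$ is admissible, and it is supercuspidal because it is compactly induced from a representation that is cuspidal after restriction to the parahoric, by the Moy--Prasad criterion for depth-zero supercuspidality (as in \cite{MP96}; cf. the argument in \cite[\S3]{KalRSP} for depth-zero regular supercuspidals). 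The key point for irreducibility is the intertwining condition: $\tx{c-Ind}_{G(F)_x}^{G(F)}\kappa$ is irreducible (and then automatically supercuspidal) as soon as for every $g\in G(F)\setminus G(F)_x$ the intertwining space $\tx{Hom}_{G(F)_x\cap {}^gG(F)_x}(\kappa,{}^g\kappa)$ vanishes.

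So the main step is to verify this intertwining vanishing. Here I would argue as follows. If $g$ intertwines $\kappa^\epsilon_{(S,\theta,\rho)}$, then restricting along the parahoric $G(F)_{x,0}$, it intertwines the inflation of the non-singular Deligne--Lusztig representation $\kappa^{\ms{G}_x^\circ}_{(\ms{S}^\circ,\theta^\circ)}$. Because $\theta^\circ$ is non-singular, this Deligne--Lusztig representation is cuspidal (\cite[Theorem 8.3]{DL76}), hence by Moy--Prasad theory $g$ must normalize the conjugacy class of the point $x$; by the description of vertices and the fact that $x$ is the vertex associated to $S$ (\cite[Lemma 3.4.3]{KalRSP}), this forces $g\in G(F)_x\cdot N(S,G)(F)$ after possibly adjusting by an element of $G(F)_x$ — more precisely $g$ can be taken in $N(S,G)(F)$ modulo $G(F)_x$. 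But then the intertwining of $\kappa^\epsilon_{(S,\theta,\rho)}$ by such a $g=n$ is governed by the representation $\rho$ of $N(S,G)(F)_\theta$: concretely, $n$ intertwines the finite-group representation precisely when $n\in N(S,G)(F)_\theta$ and $\rho$ is invariant, and since $\rho$ is already irreducible as a representation of its own stabilizer, the intertwining is scalar for $n\in N(S,G)(F)_\theta$ and zero otherwise. For $n\in N(S,G)(F)_\theta\setminus S(F)$ one then checks $n\in G(F)_x$, so such $g$ do not lie outside $G(F)_x$ in the first place. This shows the intertwining condition holds and gives irreducibility.

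For the classification statement I would combine Lemma \ref{lem:gxrep} with the standard fact that two compactly induced representations $\tx{c-Ind}_{G(F)_x}^{G(F)}\kappa_1$ and $\tx{c-Ind}_{G(F)_y}^{G(F)}\kappa_2$ (from vertices $x,y$) are isomorphic if and only if $(x,\kappa_1)$ and $(y,\kappa_2)$ are $G(F)$-conjugate, which again rests on Mackey theory and the same intertwining analysis: any isomorphism produces an element conjugating the parahoric data, hence — using Proposition \ref{pro:inddisj} at the level of the finite groups to compare the $\ms{G}_x$-packets — conjugating the tuples $(S,\theta,\rho,\epsilon)$ up to the equivalence defined just before Lemma \ref{lem:gxrep}. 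The final two formulas are then immediate: $\chi\otimes\pi^\epsilon_{(S,\theta,\rho)}=\pi^\epsilon_{(S,\chi\theta,\chi\otimes\rho)}$ follows by applying $\chi$ to both sides, using that $\chi$ restricted to $G(F)_x$ is the inflation of a character of $\ms{G}_x$ trivial on $\ms{G}_x^\circ{}_{\tx{sc}}$ (depth zero and trivial on $G_{\tx{sc}}(F)$), and then invoking the last assertion of Theorem \ref{thm:dlpar}; and $\pi^{\delta\epsilon}_{(S,\theta,\rho)}=\pi^\epsilon_{(S,\theta,\delta\otimes\rho)}$ is just the compact induction of the corresponding identity $\kappa^{\delta\epsilon}_{(S,\theta,\rho)}=\kappa^\epsilon_{(S,\theta,\delta\otimes\rho)}$ from Theorem \ref{thm:dlpar}(3).

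The step I expect to be the main obstacle is the precise intertwining computation passing from $G(F)_x$ to $N(S,G)(F)$: one has to control elements of $G(F)$ that intertwine the inflated Deligne--Lusztig representation but might not obviously normalize $S$ or fix $x$, and to rule out intertwining by elements of $N(S,G)(F)_{\theta^\circ}\setminus N(S,G)(F)_\theta$, which is exactly where the bicharacter of Lemma \ref{lem:dzbichar} and the non-singularity hypothesis enter. The cleanest route is probably to first establish the intertwining statement purely at the level of the (disconnected) finite group $\ms{G}_x$ — where it is essentially the content of Proposition \ref{pro:inddisj} together with the irreducibility of $\kappa^{\ms{G}_x,\epsilon}_{(\ms{S},\theta,\rho)}$ from Theorem \ref{thm:dlpar} — and then lift it to $G(F)$ via Moy--Prasad, so that the only genuinely $p$-adic input is the identification of the relevant double cosets with $N(S,G)(F)/S(F)$.
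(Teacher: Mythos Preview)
Your overall plan is correct and matches the paper's, but you are working much harder than necessary on irreducibility. The paper simply invokes \cite[Proposition 6.6]{MP96} as a black box: any irreducible representation of $G(F)_x$ whose restriction to $G(F)_{x,0}$ contains (the inflation of) a cuspidal representation of $\ms{G}_x^\circ(k_F)$ has irreducible supercuspidal compact induction to $G(F)$. Since Lemma \ref{lem:gxrep} already places $\kappa^\epsilon_{(S,\theta,\rho)}$ in this class, nothing further is needed; in particular no analysis of intertwining by elements of $N(S,G)(F)$ is required, and your worry about $N(S,G)(F)_{\theta^\circ}\setminus N(S,G)(F)_\theta$ never arises. (In fact $N(S,G)(F)\subset G(F)_x$ outright, as noted just before Proposition \ref{pro:dznormiopex}, so your intertwining detour through $N(S,G)(F)$ would immediately collapse to $g\in G(F)_x$ anyway.)

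For the classification the paper argues exactly as you do: one direction is Lemma \ref{lem:gxrep}; for the converse, \cite[Theorem 3.5]{MP96} produces $g\in G(F)$ with $gx_1=x_2$ and $\tx{Ad}(g)\kappa^{\epsilon_1}_{(S_1,\theta_1,\rho_1)}\cong\kappa^{\epsilon_2}_{(S_2,\theta_2,\rho_2)}$, after which Lemma \ref{lem:gxrep} finishes. The two twisting formulas are, as you say, immediate from Theorem \ref{thm:dlpar}(3). So your proposal is essentially the paper's proof, with an unnecessary (though not incorrect in spirit) attempt to reprove the Moy--Prasad intertwining lemma by hand.
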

\begin{proof}
That $\pi^\epsilon_{(S,\theta,\rho)}$ is irreducible and supercuspidal follows from \cite[Proposition 6.6]{MP96}. That equivalent triples lead to the same representation follows from Lemma \ref{lem:gxrep}. Assume conversely that two tuples $(S_i,\theta_i,\rho_i,\epsilon_i)$ for $i=1,2$ lead to isomorphic representations. By \cite[Theorem 3.5]{MP96} there exists $g \in G(F)$ s.t. if $x_1$ and $x_2$ are the vertices for $S_1$ and $S_2$, then $gx_1=x_2$ and $\tx{Ad}(g)\kappa^{\epsilon_1}_{(S_1,\theta_1,\rho_1)}=\kappa^{\epsilon_2}_{(S_2,\theta_2,\rho_2)}$. We may conjugate $(S_1,\theta_1,\rho_1,\epsilon_1)$ by $g$ to assume $x_1=x_2=:x$ and $\kappa^{\epsilon_1}_{(S_1,\theta_1,\rho_1)}=\kappa^{\epsilon_2}_{(S_2,\theta_2,\rho_2)}$, after which Lemma \ref{lem:gxrep} completes the proof. The behavior under twisting by $\chi$ and $\delta$ follows directly from Theorem \ref{thm:dlpar}.
\end{proof}

Define further
\begin{equation} \label{eq:dzpi} \pi_{(S,\theta)} := \tx{c-Ind}_{G(F)_x}^{G(F)}\kappa_{(S,\theta)}. \end{equation}
This is a supercuspidal representation of $G(F)$. When $\theta$ is regular, $\pi_{(S,\theta)}$ is irreducible, and was the subject of study in \cite{KalRSP}. Indeed, in loc. cit. we considered a representation $\kappa_{(S,\theta)}$ of $S(F)G(F)_{x,0}$ that was a certain extension of the inflation to $G(F)_{x,0}$ of the irreducible Deligne-Lusztig induction $\mc{R}_{\ms{S}^\circ}^{\ms{G}_x^\circ}(\theta^\circ)$. As explained in Remark \ref{rem:kappaext}, this extension process produces the same representation as the inflation to $S(F)G(F)_{x,0}$ of the representation $\kappa_{(\ms{S},\theta)}^{\ms{G}_x'}$ of Definition \ref{dfn:kappast} applied to the group $\ms{G}_x'=\ms{G}_x^\circ \cdot \ms{S}$. Corollary \ref{cor:dl_ind4} implies that inducing this $\kappa_{(S,\theta)}$ from $S(F)G(F)_{x,0}$ to $G(F)_x$ produces what we have called here $\kappa_{(S,\theta)}$. Hence the representation $\pi_{(S,\theta)}$ of \eqref{eq:dzpi} is the same as the representation $\pi_{(S,\theta)}$ of \cite[Lemma 3.4.20]{KalRSP}, when $\theta$ is regular.

When $\theta$ is $k_F$-non-singular, but not necessarily regular, $\pi_{(S,\theta)}$ may be reducible. We shall write $[\pi_{(S,\theta)}]$ for the set of irreducible constituents of $\pi_{(S,\theta)}$ and refer to this set as the non-singular Deligne-Lusztig packet associated to $(S,\theta)$.
From Theorem \ref{thm:dlpar} and Proposition \ref{pro:dzclass} we immediately obtain:

\begin{cor} \label{cor:dzclass1}
Let $\epsilon$ be a coherent splitting for $\{\eta_{\Psi,\ms{U}}\}$.
\begin{enumerate}
\item The irreducible constituents of $\pi_{(S,\theta)}$ are precisely the representations $\pi^\epsilon_{(S,\theta,\rho)}$, for irreducible smooth representations $\rho$ of $N(S,G)(F)_\theta$ whose restriction to $S(F)$ contains $\theta$.
\item Two such representations $\pi^\epsilon_{(S,\theta,\rho_1)}$ and $\pi^\epsilon_{(S,\theta,\rho_2)}$ are isomorphic if and only if $\rho_1 \cong \rho_2$.
\item The multiplicity of $\pi^\epsilon_{(S,\theta,\rho)}$ in $\pi_{(S,\theta)}$ equals $\tx{dim}\ \rho$.
\item The sets $[\pi_{(S_i,\theta_i)}]$ for two pairs $(S_i,\theta_i)$ are either equal or disjoint. They are equal if and only if the pairs are $G(F)$-conjugate.
\end{enumerate}
\end{cor}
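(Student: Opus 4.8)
The plan is to deduce Corollary \ref{cor:dzclass1} directly from the already-established results about the disconnected finite group $\ms{G}_x$ by transporting them through Moy--Prasad theory and compact induction. Specifically, note that by construction $\pi_{(S,\theta)} = \tx{c-Ind}_{G(F)_x}^{G(F)}\kappa_{(S,\theta)}$, where $\kappa_{(S,\theta)}$ is the inflation to $G(F)_x$ of the representation $\kappa^{\ms{G}_x}_{(\ms{S},\theta)}$, and similarly $\pi^\epsilon_{(S,\theta,\rho)} = \tx{c-Ind}_{G(F)_x}^{G(F)}\kappa^\epsilon_{(S,\theta,\rho)}$ with $\kappa^\epsilon_{(S,\theta,\rho)}$ the inflation of $\kappa^{\ms{G}_x,\epsilon}_{(\ms{S},\theta,\rho)}$. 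By Theorem \ref{thm:dlpar}, applied to the disconnected $k_F$-group $\ms{G}_x$ (whose relevant Assumptions \ref{asm:dl_mu} and \ref{asm:dl_normintex} were verified in \S\ref{sub:ass_check}), the representations $\kappa^{\ms{G}_x,\epsilon}_{(\ms{S},\theta,\rho)}$ for $\rho \in \tx{Irr}(N(\ms{S},\ms{G}_x)(k_F)_\theta,\theta)$ are exactly the irreducible constituents of $\kappa^{\ms{G}_x}_{(\ms{S},\theta)}$, each occurring with multiplicity $\tx{dim}(\rho)$; and by the lemma identifying $N(S,G)(F)/S(F)$ with $N(\ms{S},\ms{G}_x)(k_F)/\ms{S}(k_F)$, the parameter set $\tx{Irr}(N(\ms{S},\ms{G}_x)(k_F)_\theta,\theta)$ matches $\tx{Irr}(N(S,G)(F)_\theta,\theta)$. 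This gives points (1), (2), and (3) once we know that inflation followed by $\tx{c-Ind}_{G(F)_x}^{G(F)}$ takes distinct irreducibles of $G(F)_x$ to distinct irreducibles of $G(F)$ and preserves multiplicities.

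First I would invoke Proposition \ref{pro:dzclass}: it states precisely that each $\pi^\epsilon_{(S,\theta,\rho)}$ is irreducible supercuspidal, and that $\pi^\epsilon_{(S,\theta,\rho_1)} \cong \pi^\epsilon_{(S,\theta,\rho_2)}$ forces equivalence of the tuples, which for fixed $(S,\theta,\epsilon)$ (and without the twist by $\delta$) means $\rho_1 \cong \rho_2$. This yields (2) and, combined with exactness of $\tx{c-Ind}_{G(F)_x}^{G(F)}$ on the finite-length representations in play and the fact that it sends the multiplicity-$\tx{dim}(\rho)$ constituent $\kappa^\epsilon_{(S,\theta,\rho)}$ of $\kappa_{(S,\theta)}$ to the multiplicity-$\tx{dim}(\rho)$ constituent $\pi^\epsilon_{(S,\theta,\rho)}$ of $\pi_{(S,\theta)}$, yields (1) and (3). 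For (1), one should also note that no constituent of $\pi_{(S,\theta)}$ can fail to arise this way, since $\kappa_{(S,\theta)}$ decomposes entirely as $\bigoplus_\rho (\kappa^\epsilon_{(S,\theta,\rho)})^{\oplus \tx{dim}\,\rho}$ by Theorem \ref{thm:dlpar}, and $\tx{c-Ind}$ commutes with this direct sum.

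For point (4) I would argue by disjointness of supports. If $(S_2,\theta_2) = \tx{Ad}(g)(S_1,\theta_1)$ for some $g \in G(F)$, then $g$ carries the vertex $x_1$ of $S_1$ to the vertex $x_2$ of $S_2$ and, by Fact \ref{fct:ftwist} for the finite groups $\ms{G}_{x_1}$ and $\ms{G}_{x_2}$, carries $\kappa^{\ms{G}_{x_1}}_{(\ms{S}_1,\theta_1)}$ to $\kappa^{\ms{G}_{x_2}}_{(\ms{S}_2,\theta_2)}$; hence $\tx{Ad}(g)$ conjugates $\kappa_{(S_1,\theta_1)}$ to $\kappa_{(S_2,\theta_2)}$ and $\pi_{(S_1,\theta_1)} \cong \pi_{(S_2,\theta_2)}$, so the packets are equal. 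Conversely, if $[\pi_{(S_1,\theta_1)}]$ and $[\pi_{(S_2,\theta_2)}]$ share a constituent, pick irreducible constituents $\pi^{\epsilon_1}_{(S_1,\theta_1,\rho_1)} \cong \pi^{\epsilon_2}_{(S_2,\theta_2,\rho_2)}$; Proposition \ref{pro:dzclass} says the underlying tuples are equivalent, in particular there is $g \in G(F)$ with $\tx{Ad}(g)(S_1,\theta_1) = (S_2,\theta_2)$, so the pairs are $G(F)$-conjugate, and then by the forward direction the packets coincide.

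The main obstacle is purely bookkeeping: ensuring the compatibility between the two layers of "extension" present in $\kappa_{(S,\theta)}$ — the passage from $\ms{G}_x^\circ$ to the disconnected $\ms{G}_x$ inside the finite-field picture, and the passage from $G(F)_{x,0}$ to $G(F)_x$ via Moy--Prasad — so that $\tx{c-Ind}_{G(F)_x}^{G(F)}$ genuinely intertwines the decomposition of $\kappa_{(S,\theta)}$ with that of $\pi_{(S,\theta)}$. This is where one must cite \cite[Proposition 6.6, Theorem 3.5]{MP96} and Lemma \ref{lem:gxrep} carefully; everything else is a formal consequence of Theorem \ref{thm:dlpar} and Propositions \ref{pro:inddisj} and \ref{pro:dzclass}, which have already done the substantive work. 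No genuinely new estimate or construction is needed here.
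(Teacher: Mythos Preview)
Your proposal is correct and follows essentially the same approach as the paper, which simply states that the corollary follows immediately from Theorem \ref{thm:dlpar} and Proposition \ref{pro:dzclass}. You have spelled out the details of how compact induction transports the decomposition of $\kappa_{(S,\theta)}$ to that of $\pi_{(S,\theta)}$, but the substantive inputs are exactly those the paper cites.
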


\subsection{The internal structure of positive depth Deligne-Lusztig packets} \label{sub:nsdl-pd}

Let $G$ be a connected reductive group defined over $F$ and split over a tame extension of $F$. We assume that the residual characteristic $p$ of $F$ is odd, is not a bad prime for $G$ in the sense of \cite[\S4.3]{SS70}, and does not divide the order of the fundamental group of $G_\tx{der}$. If $M \subset G$ is a Levi subgroup, then $p$ satisfies the same assumptions relative to $M$.

\begin{dfn} \label{dfn:tnsep} Let $S \subset G$ be a maximal torus and $\theta : S(F) \to \C^\times$ a character. We shall call the pair $(S,\theta)$ \emph{tame $k_F$-non-singular elliptic} (resp. \emph{tame $F$-non-singular elliptic}) if
\begin{enumerate}
	\item $S$ is elliptic and its splitting extension $E/F$ is tame;
	\item Inside the connected reductive subgroup $G^0 \subset G$ with maximal torus $S$ and root system
	\[ R_{0+} = \{\alpha \in R(S,G)| \theta(N_{E/F}(\alpha^\vee(E_{0+}^\times)))=1\}, \]
	the torus $S$ is maximally unramified.
	\item The character $\theta$ is $k_F$-non-singular (resp. $F$-non-singular) with respect to $G^0$ in the sense of Definition \ref{dfn:nsc}.
\end{enumerate}
\end{dfn}

\begin{rem} A tame regular elliptic pair, in the sense of \cite[Definition 3.7.5]{KalRSP}, is a special case of a tame non-singular elliptic pair, due to Fact \ref{fct:regns}.
\end{rem}

\begin{rem} \label{rem:g0}
The subgroup $G^0$ is a tame twisted Levi subgroup, according to \cite[Lemma 3.6.1]{KalRSP}.
\end{rem}

\begin{rem} When $G=\tx{GL}_N$, all vertices are special, in fact absolutely special, and Fact \ref{fct:regns} shows that the notions of $F$-non-singular and $k_F$-non-singular coincide. The argument in the proof of \cite[Lemma 3.7.8]{KalRSP} shows furthermore that when $p \nmid N$ a tame non-singular elliptic pair is admissible in the sense of Howe. Thus for $G=\tx{GL}_N$, $p \nmid N$, the notions of non-singular, regular, extra regular, and admissible, are all equivalent. 
\end{rem}

Given a tame $k_F$-non-singular elliptic pair $(S,\theta)$ we apply \cite[Proposition 3.6.7]{KalRSP} and obtain a Howe factorization $(\phi_{-1},\dots,\phi_d)$ for $(S,\theta)$. Then $S \subset G^0$ is a maximally unramified maximal torus. Moreover \cite[Fact 3.6.4]{KalRSP} tells us that $\theta|_{S^0_\tx{sc}(F)}=\phi_{-1}|_{S^0_\tx{sc}(F)}$, from which we see that $\phi_{-1}$ is a $k_F$-non-singular character of $S(F)$ with respect to $G^0$. The failure of $\phi_{-1}$ to be regular mirrors the failure of $\theta$ to be regular, in the following sense.

\begin{lem} \label{lem:weylred} The natural inclusion $\Omega(S,G^0)(F) \to \Omega(S,G)(F)$ gives the identifications
\[ \Omega(S,G^0)(F)_{\phi_{-1}} = \Omega(S,G^0)(F)_\theta = \Omega(S,G)(F)_\theta. \]
The natural inclusion $N(S,G^0)(F) \to N(S,G)(F)$ gives the identifications
\[ N(S,G^0)(F)_{\phi_{-1}} = N(S,G^0)(F)_\theta = N(S,G)(F)_\theta. \]
\end{lem}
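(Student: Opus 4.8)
The plan is to prove the two chains of equalities by a sequence of elementary containments, using the Howe factorization data and the definition of $G^0$. Throughout, write $R = R(S,G)$ for the absolute root system, $R^0 = R(S,G^0)$, and recall that $R^0 = R_{0+}$ is exactly the set of roots $\alpha$ with $\theta(N_{E/F}(\alpha^\vee(E_{0+}^\times))) = 1$.

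\textbf{Step 1: $\Omega(S,G)(F)_\theta \subseteq \Omega(S,G^0)(F)$.} The key point is that $G^0$ is canonically attached to the pair $(S,\theta)$: an element $w \in \Omega(S,G)(F)_\theta$ permutes the absolute roots $R$, and since it fixes $\theta$ it preserves the subset $R_{0+}$ defining $G^0$ (the condition $\theta(N_{E/F}(\alpha^\vee(E_{0+}^\times)))=1$ is invariant under replacing $\alpha$ by $w\alpha$ when $\theta$ is $w$-fixed, because $\theta\circ w = \theta$ and $w$ commutes with the Galois action, hence with $N_{E/F}$, and $w\alpha^\vee = (w\alpha)^\vee$). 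Therefore $w$ normalizes the subgroup $G^0$, and since $w$ already normalizes $S \subset G^0$, and $G^0$ is generated by $S$ together with the root subgroups for $\alpha \in R^0$, we get $w \in N(S,G^0)/S \cdot (\text{something in } \Omega(S,G^0))$; more precisely $w$ lies in $N(S, G^0)(F)/S(F) = \Omega(S,G^0)(F)$ once we know its representative can be taken inside $G^0$. This last point follows because $\Omega(S,G^0)(F) = N(S_{\mathrm{sc}}^0,G_{\mathrm{sc}}^0)(F)/S_{\mathrm{sc}}^0(F)$ (Lang's theorem applied to $G^0_{\mathrm{sc}}$) surjects onto the subgroup of $\Omega(S,G)(F)$ preserving $R^0$, which contains $\Omega(S,G)(F)_\theta$.

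\textbf{Step 2: $\Omega(S,G^0)(F)_\theta = \Omega(S,G^0)(F)_{\phi_{-1}}$.} Here I would invoke \cite[Fact 3.6.4]{KalRSP}, which gives $\theta|_{S^0_{\mathrm{sc}}(F)} = \phi_{-1}|_{S^0_{\mathrm{sc}}(F)}$. An element $w \in \Omega(S,G^0)(F)$ is of the form $\mathrm{Ad}(n)$ for $n \in N(S_{\mathrm{sc}}^0, G_{\mathrm{sc}}^0)(F)$, so whether $w$ fixes a character of $S(F)$ is detected on $S^0_{\mathrm{sc}}(F)$: indeed $w$ fixes $\theta$ iff the character $s \mapsto \theta(wsw^{-1}s^{-1})$ is trivial, and since $wsw^{-1}s^{-1} \in S^0_{\mathrm{sc}}(F)$ for such $w$ (the commutator lands in the simply connected cover), this only depends on $\theta|_{S^0_{\mathrm{sc}}(F)} = \phi_{-1}|_{S^0_{\mathrm{sc}}(F)}$. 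Hence $\Omega(S,G^0)(F)_\theta = \Omega(S,G^0)(F)_{\phi_{-1}}$. Combined with the trivial inclusion $\Omega(S,G^0)(F)_\theta \subseteq \Omega(S,G)(F)_\theta$ and Step 1, this closes the first chain.

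\textbf{Step 3: the normalizer statement.} For the normalizer version, I first reduce to the Weyl-group version using the exact sequences $1 \to S(F) \to N(S,G^?)(F)_\chi \to \Omega(S,G^?)(F)_\chi \to 1$ (valid since $S(F)$ is fixed pointwise by conjugation from $S$). Given the three Weyl groups agree, and that $S(F)$ sits inside all three normalizer groups, a diagram chase shows the three normalizer groups agree: $N(S,G)(F)_\theta$ maps to $\Omega(S,G)(F)_\theta = \Omega(S,G^0)(F)_\theta$, and any $n \in N(S,G)(F)_\theta$ has its image realized by some $n' \in N(S,G^0)(F)_\theta$, so $n (n')^{-1} \in S(F) \subseteq N(S,G^0)(F)_\theta$, giving $n \in N(S,G^0)(F)_\theta$; and $N(S,G^0)(F)_{\phi_{-1}} = N(S,G^0)(F)_\theta$ follows the same way from Step 2. \textbf{The main obstacle} I anticipate is Step 1 --- specifically, verifying cleanly that a $\theta$-fixed Weyl element preserves the root subsystem $R_{0+}$ and that its rational representative can be chosen inside $G^0_{\mathrm{sc}}$; this is where one must be careful about the interaction of the Weyl action with $N_{E/F}$ and with the passage between $S$, $S_{\mathrm{sc}}^0$, and the various covers, rather than a deep difficulty. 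Everything else is formal manipulation of the Howe factorization and the definitions.
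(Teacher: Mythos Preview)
Your Step 1 has a genuine gap that you underestimate. Showing that $w \in \Omega(S,G)(F)_\theta$ preserves the subsystem $R_{0+}$ is correct, but this only tells you that $w$ \emph{normalizes} $\Omega(S,G^0)$; it does not place $w$ inside $\Omega(S,G^0)$. In general the stabilizer of a root subsystem in the ambient Weyl group is strictly larger than the Weyl group of that subsystem --- it can act by outer automorphisms of $R_{0+}$. Your attempt to close this by asserting that $\Omega(S,G^0)(F)$ ``surjects onto the subgroup of $\Omega(S,G)(F)$ preserving $R^0$'' is simply false, and the appeal to Lang's theorem is misplaced: we are over a local field, not a finite field, and $H^1(F,S^0_{\mathrm{sc}})$ need not vanish.

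The missing ingredient is the $G^{i+1}$-\emph{genericity} of the characters $\phi_i$ in the Howe factorization, which is exactly what \cite[Lemma 3.6.5]{KalRSP} exploits. The argument runs inductively down the tower $G = G^d \supset G^{d-1} \supset \cdots \supset G^0$: if $w$ fixes $\theta$, then it fixes the depth-$r_i$ piece of $\theta$, which is the generic element $X_i \in \mathfrak{z}(\mathfrak{g}^i)^*$ coming from $\phi_i$. Lifting $w$ to $n \in N(S,G^{i+1})(\bar F)$, the element $n$ then centralizes $X_i$, and genericity says precisely that $\mathrm{Cent}_{G^{i+1}}(X_i) = G^i$; hence $n \in G^i$ and $w \in \Omega(S,G^i)$. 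Iterating brings $w$ into $\Omega(S,G^0)$. Your Steps 2 and 3 are on the right track (the quotient $\theta/\phi_{-1}$ is a product of restrictions of characters of the $G^i(F)$ and is therefore fixed by conjugation from inside $G^0$), but they rest on Step 1, and Step 1 needs the genericity, not just the preservation of $R_{0+}$.
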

\begin{proof}
This follows from \cite[Lemma 3.6.5]{KalRSP}.
\end{proof}

Associated to the tame $k_F$-non-singular elliptic pair $(S,\phi_{-1})$ of $G^0$ we have the family of 2-cocycles $\eta_\Psi=\{\eta_{\Psi,\ms{U}}\}$ of Lemma \ref{lem:normiop}. This family depends only on $(S,\theta)$, but not on the Howe factorization. Indeed, by \cite[Lemma 3.6.6]{KalRSP} any two factorizations differ by a refactorization. So if $(\dot\phi_{-1},\dots,\dot\phi_d)$ is another Howe factorization, then $\dot\phi_{-1}=\phi_{-1}\cdot \chi_0$, where $\chi_0$ is a character of $G^0(F)$ of depth zero, and trivial on $G^0_\tx{sc}(F)$, and Corollary \ref{cor:etacharshift} implies the claim.

Consider a tuple $(S,\theta,\rho,\epsilon)$, where $(S,\theta)$ is a tame $k_F$-non-singular elliptic pair, $\rho$ is an irreducible smooth representation of $N(S,G)(F)_\theta$ whose restriction to $S(F)$ contains $\theta$, and $\epsilon$ is a coherent splitting for the family of 2-cocycles $\eta_\Psi$. We shall consider two such tuples $(S_i,\theta_i,\rho_i,\epsilon_i)$, $i=1,2$, equivalent if there exists $g \in G(F)$ and $\delta \in [N(S,G)(F)_\theta/S(F)]^*$ s.t. $(S_2,\theta_2,\rho_2,\epsilon_2)= \tx{Ad}(g)(S_1,\theta_1,\rho_1\otimes\delta,\delta^{-1}\cdot\epsilon_1)$.

Put $\delta_0 := \prod_{i=0}^d \phi_i^{-1} : G^0(F) \to \C^\times$, so that $\phi_{-1}=\delta_0\theta$. Then $\rho \mapsto \delta_0\otimes\rho=:\rho_{-1}$ is a bijection between the smooth irreducible representations of $N(S,G)(F)_\theta=N(S,G^0)(F)_{\phi_{-1}}$ whose restriction to $S(F)$ contains $\theta$, and those whose restriction contains $\phi_{-1}$. For such $\rho$, we have the irreducible depth-zero supercuspidal representation $\pi^\epsilon_{(G^0,S,\phi_{-1},\rho_{-1})}$ of $G^0(F)$ obtained in Proposition \ref{pro:dzclass}, and
\[ ((G^0 \subset G^1 \dots \subset G^d),\pi^\epsilon_{(G^0,S,\phi_{-1},\rho_{-1})},(\phi_0,\dots,\phi_d)), \]
is a normalized reduced generic cuspidal $G$-datum in the sense of \cite[Definition 3.7.1]{KalRSP}, leading to a supercuspidal representation of $G(F)$, which we shall denote by $\pi^\epsilon_{(S,\theta,\rho)}$. We will use here the twisted Yu construction of \cite{FKS}. This has the same effect as using the original Yu construction applied to the character $\theta\cdot\epsilon$, where $\epsilon : S(F) \to \{\pm 1\}$ is the product of the characters $\epsilon^{G^i/G^{i-1}}_x$ of \cite[Theorem 3.4]{FKS}.

\begin{pro} \label{pro:pdclass}
\begin{enumerate}
\item The representation $\pi^\epsilon_{(S,\theta,\rho)}$ depends only on $(S,\theta,\rho,\epsilon)$, and is independent of the choice of Howe factorization.
\item Two tuples $(S_i,\theta_i,\rho_i,\epsilon_i)$ produce isomorphic representations if and only they are equivalent.
\item If $\phi : G(F) \to \C^\times$ is a character trivial on $G_\tx{sc}(F)$, then $\chi\otimes\pi^\epsilon_{(S,\theta,\rho)}=\pi^\epsilon_{(S,\chi\cdot\theta,\chi\otimes\rho)}$.
\end{enumerate}
\end{pro}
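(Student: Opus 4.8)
The plan is to reduce Proposition \ref{pro:pdclass} to the depth-zero case (Proposition \ref{pro:dzclass}) together with the uniqueness properties of Yu's construction, as modified in \cite{FKS}, leveraging the Howe factorization machinery of \cite[\S3.6]{KalRSP}. First I would address independence of the choice of Howe factorization in part (1). By \cite[Lemma 3.6.6]{KalRSP} any two Howe factorizations of $(S,\theta)$ differ by a refactorization; in particular a second factorization $(\dot\phi_{-1},\dots,\dot\phi_d)$ satisfies $\dot\phi_{-1}=\phi_{-1}\cdot\chi_0$ for a depth-zero character $\chi_0$ of $G^0(F)$ trivial on $G^0_\tx{sc}(F)$, and correspondingly $\dot\rho_{-1}=\chi_0\otimes\rho_{-1}$. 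As already noted before the statement, Corollary \ref{cor:etacharshift} shows the family $\eta_\Psi$ is unchanged, so $\epsilon$ remains a coherent splitting. Then the last assertion of Proposition \ref{pro:dzclass} gives $\pi^\epsilon_{(G^0,S,\dot\phi_{-1},\dot\rho_{-1})} = \chi_0\otimes\pi^\epsilon_{(G^0,S,\phi_{-1},\rho_{-1})}$, and the refactorization compatibility of Yu's construction (the analogue of \cite[Lemma 3.7.4]{KalRSP} for the twisted construction of \cite{FKS}) shows that feeding this twisted depth-zero datum together with $(\dot\phi_0,\dots,\dot\phi_d)$ into the twisted Yu construction produces the same representation of $G(F)$. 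This establishes part (1).

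For part (2), one direction is immediate: if two tuples are $G(F)$-conjugate, the Yu construction is manifestly conjugation-equivariant, and the insensitivity to twisting $\rho$ by $\delta\in[N(S,G)(F)_\theta/S(F)]^*$ (accompanied by replacing $\epsilon$ by $\delta^{-1}\epsilon$) follows from part (3) of Proposition \ref{pro:dzclass}, which gives $\pi^{\delta\epsilon}_{(G^0,S,\phi_{-1},\rho_{-1})}=\pi^\epsilon_{(G^0,S,\phi_{-1},\delta\otimes\rho_{-1})}$ at depth zero, and is then propagated through Yu's construction unchanged. For the converse, suppose $\pi^{\epsilon_1}_{(S_1,\theta_1,\rho_1)}\cong\pi^{\epsilon_2}_{(S_2,\theta_2,\rho_2)}$. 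I would invoke the isomorphism criterion for Yu data — the extension to the twisted construction of \cite{FKS} of \cite[Theorem 6.6]{Yu01}/\cite[Proposition 3.7.3, Corollary 3.7.6]{KalRSP} (equivalently Fintzen's results \cite{Fin19} on when Yu representations coincide) — to conclude that after conjugating by an element of $G(F)$ the two Yu data have the same twisted Levi tower $G^0\subset\dots\subset G^d$, equal characters $\phi_i$ for $i\geq 0$ up to refactorization, and isomorphic depth-zero pieces on $G^0(F)$. Having normalized the Howe factorizations to agree using part (1), we are reduced to the statement that $\pi^{\epsilon_1}_{(G^0,S_1,\phi_{-1,1},\rho_{-1,1})}\cong\pi^{\epsilon_2}_{(G^0,S_2,\phi_{-1,2},\rho_{-1,2})}$ as representations of $G^0(F)$; Proposition \ref{pro:dzclass} then tells us these depth-zero tuples are equivalent, and tracing back through the bijection $\rho\leftrightarrow\rho_{-1}$ of Lemma \ref{lem:weylred} (which identifies $N(S,G)(F)_\theta=N(S,G^0)(F)_{\phi_{-1}}$) yields the equivalence of the original tuples.

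For part (3), since $\phi$ is trivial on $G_\tx{sc}(F)$, its restriction to $G^0(F)$ is trivial on $G^0_\tx{sc}(F)$; writing $\chi$ for $\phi$ as in the statement, one checks that tensoring a normalized reduced generic cuspidal $G$-datum by $\chi$ amounts to replacing the depth-zero datum $\pi^\epsilon_{(G^0,S,\phi_{-1},\rho_{-1})}$ by $\chi|_{G^0(F)}\otimes\pi^\epsilon_{(G^0,S,\phi_{-1},\rho_{-1})}$, which by Proposition \ref{pro:dzclass} is $\pi^\epsilon_{(G^0,S,\chi\cdot\phi_{-1},\chi\otimes\rho_{-1})}$, and leaving the $\phi_i$ for $i\geq0$ unchanged; the compatibility of Yu's construction with character twists (the analogue of the well-known behavior recalled around \cite[\S3.7]{KalRSP}) then gives $\chi\otimes\pi^\epsilon_{(S,\theta,\rho)}=\pi^\epsilon_{(S,\chi\cdot\theta,\chi\otimes\rho)}$, using that $\chi\cdot\theta$ has the same Howe factorization data as $\theta$ with $\phi_{-1}$ replaced by $\chi\cdot\phi_{-1}$.

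\textbf{Main obstacle.} The crux is the converse direction of part (2): transporting the isomorphism criterion from the (modified) Yu construction back to an equivalence of tuples. One must be careful that the twisted Yu construction of \cite{FKS} satisfies an exhaustive and precise isomorphism criterion — i.e. that two twisted Yu data give isomorphic supercuspidals exactly when related by $G(F)$-conjugacy and refactorization — and that the auxiliary sign characters $\epsilon^{G^i/G^{i-1}}_x$ are $N(S,G)(F)$-invariant so that they do not interfere with the bookkeeping of $\rho$ versus $\delta\otimes\rho$; this invariance is exactly what makes the passage through Lemma \ref{lem:weylred} clean, but it needs to be cited carefully from \cite{FKS}.
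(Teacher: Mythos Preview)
Your treatment of parts (1) and (2) is essentially the paper's proof. The paper also reduces to refactorization via \cite[Lemma 3.6.6]{KalRSP}, applies Proposition~\ref{pro:dzclass} to the depth-zero piece, and then invokes the Hakim--Murnaghan theorem \cite[Theorem 6.6]{HM08} (through \cite[Corollary 3.5.5]{KalRSP}) both for refactorization invariance in (1) and for the isomorphism criterion in the converse of (2). Your ``main obstacle'' is therefore not really an obstacle: the isomorphism criterion you need is precisely \cite[Theorem 6.6]{HM08}, which already packages the conjugacy-plus-refactorization statement and applies verbatim to the twisted construction because the twist of \cite{FKS} is by a fixed sign character determined by the tower; no separate $N(S,G)(F)$-invariance of $\epsilon^{G^i/G^{i-1}}_x$ enters.

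Part (3), however, has a genuine gap. You push $\chi$ down to the depth-zero level, replacing $\phi_{-1}$ by $\chi\cdot\phi_{-1}$ and invoking Proposition~\ref{pro:dzclass}. But Proposition~\ref{pro:dzclass} is stated only for $\chi$ of depth zero, and $(\phi_{-1},\phi_0,\dots,\phi_d)\mapsto(\chi\phi_{-1},\phi_0,\dots,\phi_d)$ is not a Howe factorization of $\chi\theta$ unless $\chi\phi_{-1}$ has depth zero. The statement of the proposition places no depth restriction on $\chi$, so your argument does not cover the general case. The paper avoids this by pushing $\chi$ \emph{up} rather than down: it simply observes that $(\phi_{-1},\phi_0,\dots,\phi_{d-1},\chi\cdot\phi_d)$ is a Howe factorization of $\chi\cdot\theta$. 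With this factorization one computes $\delta_0'=\chi^{-1}\delta_0$, hence $\rho_{-1}'=\delta_0'\otimes(\chi\otimes\rho)=\rho_{-1}$, so the depth-zero input is literally unchanged and the only modification to the Yu datum is $\phi_d\mapsto\chi\phi_d$; the well-known compatibility of Yu's construction with twisting the top character then finishes. This one-line reorganization is the correct fix.
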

\begin{proof}
Another factorization $(\dot\phi_{-1},\dots,\dot\phi_d)$ is a refactorization of $(\phi_{-1},\dots,\phi_d)$ according to \cite[Lemma 3.6.6]{KalRSP}. Using the notation of that Lemma, write $\chi_i=\prod_{j=i}^d\phi_j\dot\phi_j^{-1}=\prod_{j=-1}^{i-1}\phi_j^{-1}\dot\phi_j$, and in particular $\dot\phi_{-1}=\phi_{-1}\chi_0$. We have $\dot\delta_0=\chi_0\delta_0$ and hence $\dot\rho_{-1}=\chi_0\otimes\rho_{-1}$. By Proposition \ref{pro:dzclass} we obtain the equality $\pi^\epsilon_{(G^0,S,\dot\phi_{-1},\dot\rho_{-1})} = \chi_0 \otimes \pi^\epsilon_{(G^0,S,\phi_{-1},\rho_{-1})}$. With this, \cite[Corollary 3.5.5]{KalRSP}, which is a mild strengthening of \cite[Theorem 6.6]{HM08}, imply that the normalized generic cuspidal $G$-data for the two Howe factorizations produce the same representation of $G(F)$.

It is clear that conjugate tuples produce the same representation. Replacing $(S,\theta,\rho,\epsilon)$ by $(S,\theta,\rho\otimes\delta,\delta^{-1}\epsilon)$ replaces the depth-zero tuple $(S,\phi_{-1},\rho_{-1},\epsilon)$ by $(S,\phi_{-1},\rho_{-1}\otimes\delta,\delta^{-1}\cdot\epsilon)$. By Proposition \ref{pro:dzclass} the corresponding representation of $G^0(F)$ is unchanged, and then so is $\pi$ itself.

Now assume that two tuples $(S_i,\theta_i,\rho_i,\epsilon_i)$, $i=1,2$, produce isomorphic representations. Let $(\vec{G}_i,\vec{\phi}_i)$ be the tuples consisting of twisted Levi tower and Howe factorization of $\theta_i$, respectively. Then \cite[Theorem 6.6]{HM08} implies the existence of $g \in G(F)$ s.t. $\tx{Ad}(g)\vec{G}_2=\vec{G}_1$, $\tx{Ad}(g)(\phi_{2,0}\dots\phi_{2,d})$  is a refactorization of $(\phi_{1,0}\dots\phi_{1,d})$, and $\tx{Ad}(g)[\pi^{\epsilon_2}_{(G^0,S,\phi_{2,-1},\rho_{2,-1})}\otimes\delta_{2,0}^{-1}]=[\pi^{\epsilon_1}_{(G^0,S,\phi_{1,-1},\rho_{1,-1})}\otimes\delta_{1,0}^{-1}]$, where as before $\delta_{i,0}^{-1}$ is the product of the restrictions to $G^0_i(F)$ of $\phi_{i,0}\dots\phi_{i,d}$. We conjugate $(S_i,\theta_i,\rho_i,\epsilon_i)$ by $g$ to assume $g=1$ and then have $\pi^{\epsilon_2}_{(G^0,S,\phi_{2,-1},\rho_{2,-1})}\otimes\delta_{2,0}^{-1}\delta_{1,0}=\pi^{\epsilon_1}_{(G^0,S,\phi_{1,-1},\rho_{1,-1})}$. By \cite[Lemma 3.4.28]{KalRSP} the depth of $\delta_{2,0}^{-1}\delta_{1,0}$ is zero, so we may apply Proposition \ref{pro:dzclass} and see that the two depth-zero tuples for $G^0$ given by $(S_2,\theta_2\delta_{1,0},\rho_2\delta_{1,0},\epsilon_2)$ and $(S_1,\theta_1\delta_{1,0},\rho_1\delta_{1,0},\epsilon_1)$ are equivalent. But then so are the original tuples for $G$.

Finally, $(\phi_{-1},\phi_0,\dots,\phi_{d-1},\chi\cdot\phi_d)$ is a Howe factorization of $\chi\cdot\theta$.
\end{proof}

We define, just as in the depth-zero case, also
\begin{equation} \label{eq:pdpi}
\pi_{(S,\theta)}
\end{equation}
to be the supercuspidal representation produced by Yu's construction applied to the datum $((G^0 \subset G^1 \dots \subset G^d),\pi_{(G^0,S,\phi_{-1})},(\phi_0,\dots,\phi_d))$, where $\pi_{(G^0,S,\phi_{-1})}$ is the representation \eqref{eq:dzpi} for the group $G^0$ and the pair $(S,\phi_{-1})$. Again this representation may be reducible. We shall refer to this set of irreducible constituents as the non-singular Deligne-Lusztig packet associated to $(S,\theta)$ and write $[\pi_{(S,\theta)}]$ for it. From Proposition \ref{pro:pdclass} and Corollary \ref{cor:dzclass1} we obtain the following:

\begin{cor} \label{cor:pdclass1}
\begin{enumerate}
	\item The representation $\pi_{(S,\theta)}$ depends only on $(S,\theta)$, but not on the choice of Howe factorization.
	\item The irreducible constituents of $\pi_{(S,\theta)}$ are $\pi^\epsilon_{(S,\theta,\rho)}$ for varying smooth irreducible representations $\rho$ of $N(S,G)(F)_\theta$ whose restriction to $S(F)$ is $\theta$-isotypic.
	\item The multiplicity of $\pi^\epsilon_{(S,\theta,\rho)}$ in $\pi_{(S,\theta)}$ is equal to the dimension of $\rho$.
	\item The sets $[\pi_{(S_i,\theta_i)}]$ for two pairs $(S_i,\theta_i)$ are either equal or disjoint. They are equal if and only if the pairs are $G(F)$-conjugate.
	\item The assignment $\delta_0^{-1}\otimes\pi^\epsilon_{(S,\phi_{-1},\rho_{-1})} \mapsto \pi^\epsilon_{(S,\theta,\rho)}$ is a bijection $[\pi_{(G^0,S,\theta)}] \to [\pi_{(G,S,\theta)}]$ independent of any choices.
\end{enumerate}
\end{cor}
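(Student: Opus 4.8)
The plan is to deduce Corollary \ref{cor:pdclass1} directly from Proposition \ref{pro:pdclass} and Corollary \ref{cor:dzclass1}, treating the latter as describing the depth-zero packet $[\pi_{(G^0,S,\phi_{-1})}]$ on the twisted Levi subgroup $G^0$, and Proposition \ref{pro:pdclass} (together with the refactorization discussion preceding it) as transporting this picture through Yu's construction to $G$. First I would observe that part (1) is immediate: Proposition \ref{pro:pdclass}(1) states that $\pi^\epsilon_{(S,\theta,\rho)}$ is independent of the Howe factorization, and the same argument applies to $\pi_{(S,\theta)}$, since by \cite[Lemma 3.6.6]{KalRSP} any two factorizations differ by a refactorization, under which $\pi_{(G^0,S,\phi_{-1})}$ changes by a depth-zero twist $\chi_0$ trivial on $G^0_\tx{sc}(F)$, and \cite[Corollary 3.5.5]{KalRSP} (strengthening \cite[Theorem 6.6]{HM08}) shows the resulting Yu-data produce the same representation of $G(F)$.

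Next, for (2), (3) and (5), the key is that Yu's construction -- in the twisted form of \cite{FKS} -- sends the depth-zero supercuspidal $\pi^\epsilon_{(G^0,S,\phi_{-1},\rho_{-1})}$ to an irreducible supercuspidal $\pi^\epsilon_{(S,\theta,\rho)}$, and this passage is compatible with direct sums: applying the construction to the (reducible) representation $\pi_{(G^0,S,\phi_{-1})} = \bigoplus_\rho (\dim\rho)\,\pi^\epsilon_{(G^0,S,\phi_{-1},\rho_{-1})}$ produces $\pi_{(S,\theta)} = \bigoplus_\rho (\dim\rho)\,\pi^\epsilon_{(S,\theta,\rho)}$. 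Here $\rho \mapsto \rho_{-1} = \delta_0 \otimes \rho$ is the bijection between irreducibles of $N(S,G)(F)_\theta = N(S,G^0)(F)_{\phi_{-1}}$ (by Lemma \ref{lem:weylred}) whose restriction to $S(F)$ contains $\theta$, respectively $\phi_{-1}$. That the construction is additive follows because the only step in Yu's construction that depends on the depth-zero datum is the inflation-and-extension of $\pi^\epsilon_{(G^0,S,\phi_{-1},\rho_{-1})}$ along $G^0(F) \hookrightarrow G(F)_{[x]}$, and compact induction, all of which commute with finite direct sums; the other ingredients ($\phi_0,\dots,\phi_d$ and the associated Heisenberg-Weil data) are fixed. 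Combined with Corollary \ref{cor:dzclass1}(1)--(3) for $G^0$, this gives (2), (3) and (5) at once, with (5) being precisely the statement that the bijection $[\pi_{(G^0,S,\theta)}] \to [\pi_{(G,S,\theta)}]$ induced by Yu's construction carries $\delta_0^{-1}\otimes\pi^\epsilon_{(S,\phi_{-1},\rho_{-1})}$ to $\pi^\epsilon_{(S,\theta,\rho)}$; its independence of choices follows from Proposition \ref{pro:pdclass}(1) and the independence statement in Corollary \ref{cor:dzclass1}.

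Finally, for (4), I would argue exactly as in the depth-zero case. Two pairs $(S_i,\theta_i)$ give non-disjoint packets iff $\pi^{\epsilon}_{(S_1,\theta_1,\rho_1)} \cong \pi^{\epsilon}_{(S_2,\theta_2,\rho_2)}$ for some $\rho_1,\rho_2$; by Proposition \ref{pro:pdclass}(2) this forces the tuples $(S_i,\theta_i,\rho_i,\epsilon_i)$ to be equivalent, in particular $(S_1,\theta_1)$ and $(S_2,\theta_2)$ are $G(F)$-conjugate, and conversely a $G(F)$-conjugacy visibly identifies the two packets. I expect the main obstacle to be the precise bookkeeping in showing that Yu's construction is additive in the depth-zero input in a way compatible with the parameterization by $\rho$ -- i.e., verifying that the $N(S,G)(F)_\theta$-action realizing the decomposition of $\kappa_{(S,\theta)}$ on $G^0$ propagates unchanged through the extension and induction steps, so that the multiplicity-$\dim\rho$ bookkeeping on $G^0$ matches that on $G$. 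This is where the compatibility of the twisted Yu construction of \cite{FKS} with the depth-zero theory of \S\ref{sub:nsdl-dz} must be invoked carefully, but it is essentially formal given Proposition \ref{pro:pdclass} and \cite[Theorem 6.6]{HM08}.
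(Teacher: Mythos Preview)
Your proposal is correct and follows exactly the approach the paper intends: the paper states only ``From Proposition \ref{pro:pdclass} and Corollary \ref{cor:dzclass1} we obtain the following'' and gives no further proof, and you have accurately filled in the details by using the additivity of Yu's construction in the depth-zero input together with the distinctness statement Proposition \ref{pro:pdclass}(2). Your closing worry about propagating the $N(S,G)(F)_\theta$-action through Yu's construction is unnecessary---the decomposition is already effected at the level of $\kappa_{(S,\phi_{-1})}$ before Yu's construction is applied, so only the formal additivity of extension, tensoring, and compact induction is needed.
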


\subsection{Remarks on the character formula} \label{sub:char}

The representation $\pi_{(S,\theta)}$ constructed in the previous subsection is a direct sum of finitely many supercuspidal representations. Despite the fact that it is not irreducible, the material in \cite[\S4]{KalRSP} applies to it. In particular we have the formula of \cite[Corollary 4.10.1]{KalRSP} for the character values of $\pi_{(S,\theta)}$ at shallow elements of $S(F)$. However, since we are using the twisted Yu construction as in \cite{FKS}, the character formula becomes simpler. It is stated in \cite[Theorem 9.2.1]{FKS}, and we will recall it here.

Let $R(S,G)$ be the absolute root system of the maximal torus $S$. Let $\Lambda : F \to \C^\times$ be a character that is non-trivial on $O_F$ but trivial on $\mf{p}_F$. Let $\Lambda^0$ be the character of $k_F$ whose inflation to $O_F$ equals the restriction of $\Lambda$.

For each symmetric $\alpha \in R(S,G)$ consider the characters $\Lambda \circ \tx{tr}_{F_\alpha/F} : F_\alpha \rw \C^\times$ and $\theta \circ N_{F_\alpha/F} \circ \alpha^\vee : F_\alpha^\times \rw \C^\times$. The first one has depth zero, and we denote by $-r_\alpha$ the depth of the second. Define $\bar a_\alpha \in [F_\alpha]_{r_\alpha}/[F_\alpha]_{r_\alpha+}$ by the formula
\[ \theta \circ N_{F_\alpha/F} \circ \alpha^\vee(X+1) = \Lambda\circ\tx{tr}_{F_\alpha/F}(\bar a_\alpha X), \]
where $X$ is a variable in $[F_\alpha]_{-r_\alpha}/[F_\alpha]_{-r_\alpha+}$. Then $(r_\alpha,\bar a_\alpha)_\alpha$ is a set of mod-$a$-data in the sense of \cite[Definition 4.6.8]{KalRSP}. 

Recall that tame twisted Levi tower $G^0 \subset \dots \subset G^d$ associated to $(S,\theta)$. Given $\alpha \in R(S,G^{i+1}) \sm R(S,G^i)$ let $\alpha_i$ be the restriction of $\alpha$ to $Z(G^i)^\circ$. We will specify a character $\chi''_{\alpha_i} : F_{\alpha_i}^\times \to \C^\times$, by setting it trivial if $\alpha_i$ is asymmetric, the unramified quadratic character of $\alpha_i$ is unramified symmetric, and the unique tamely ramified character that extends the inflation to $O_{{\alpha_i}}^\times$ of the quadratic character of $k_{{\alpha_i}}^\times$ and has the property
\[ \chi_{\alpha_i}(2\ell(\alpha)a_\alpha) = (-1)^{f_{\alpha_i}+1}\mf{G}_{k_{\alpha_i}}(\Lambda^0). \]
Here $f_{\alpha_i}$ is the degree of the residue field extension $k_{\alpha_i}/k$, and for any finite extension $\ell/k$ we define the Gauss sum 
\[ \mf{G}_\ell(\Lambda^0) = q_l^{-1/2}\sum_{x \in \ell^\times}\tx{sgn}_{\ell^\times}(x)\Lambda^0(\tx{tr}_{\ell/k}(x)) = q_\ell^{-1/2}\sum_{x \in \ell} \Lambda^0(\tx{tr}_{\ell/k}(x^2)). \]
Finally define $\chi''_\alpha = \chi''_{\alpha_i} \circ N_{F_{\alpha}/F_{\alpha_i}}$. Then $(\chi''_\alpha)_\alpha$ is a set of tamely ramified $\chi$-data. Note that it may not be minimally ramified in the sense of \cite[Definition 4.6.1]{KalRSP}. We have the function
\[ \Delta_{II}^\tx{abs}[\bar a,\chi''] : S(F) \to \C^\times,\qquad \gamma \mapsto \prod_{\substack{\alpha \in R(S,G)/\Gamma\\\alpha(\gamma) \neq 1}} \chi_\alpha''\left(\frac{\alpha(\gamma)-1}{\bar a_\alpha}\right) \]
defined in \cite[Definition 4.6.2]{KalRSP}. Then if $\gamma \in S(F)$ is regular and shallow, \cite[Theorem 9.2.1]{FKS} states that the character of $\pi_{(S,\theta)}$ at $\gamma$ is given by
\begin{equation} \label{eq:char} 
e(G)|D_G(\gamma)|^{-\frac{1}{2}}\epsilon_L(X^*(T_0)_\C-X^*(S)_\C,\Lambda)\sum_{w \in N(S,G)(F)/S(F)}\Delta_{II}^\tx{abs}[\bar a,\chi''](\gamma^w)\theta(\gamma^w), \end{equation}
where $e(G)$ is the Kottwitz sign of $G$, $D_G(\gamma)$ is the Weyl discriminant, $T_0$ is the minimal Levi subgroup of the quasi-split inner form of $G$, $\epsilon_L$ is the Langlands normalization of the local $\epsilon$-factor. 

Note that this formula differs from \cite[Corollary 4.10.1]{KalRSP} in two essential ways. First, the auxiliary characters $\epsilon^\tx{ram}$ and $\epsilon_{f,\tx{ram}}$ are missing. Second, the $\chi$-data $\chi''$ is different from the $\chi$-data $\chi'$ used in loc. cit. We will see in the construction of $L$-packets that the $\chi$-data $\chi''$ reflects the functoriality implied by the inductive nature of Yu's construction, and is therefore better suited for the study of the local Langlands correspondence.

\section{Supercuspidal $L$-packets} \label{sec:pack}

\subsection{Factorization of parameters} \label{sub:lparam}

Let $G$ be a quasi-split connected reductive group defined over $F$ and split over a tame extension of $F$, $\hat G$ its complex dual group, $^LG$ its $L$-group. We assume that the residual characteristic $p$ of $F$ is odd, is not a bad prime for $G$ in the sense of \cite[\S4.3]{SS70}, and does not divide the order of the fundamental group of $G_\tx{sc}$. If $M \subset G$ is a Levi subgroup then $p$ is not a bad prime for $M$ and does not divide the order of the fundamental group of $M_\tx{der}$. Then the same properties hold for $\hat G$ in place of $G$.

\begin{dfn} A \emph{supercuspidal Langlands parameter} for $G$ is a discrete Langlands parameter $W_F \to {^LG}$.
\end{dfn}
In other words, it is a discrete parameter $W_F \times \tx{SL}_2(\C) \to {^LG}$ whose restriction to $\tx{SL}_2(\C)$ is trivial. It is expected that these parameters correspond precisely to those discrete series $L$-packets of $G$ that consists entirely of supercuspidal representations. This expectation was formulated in \cite[\S3.5]{DR09} and is in hindsight a special case of a more precise conjecture \cite{AMS}.

\begin{dfn} A supercuspidal parameter is called \emph{torally wild} if $\varphi(P_F)$ is contained in a maximal torus of $\hat G$.
\end{dfn}

\begin{lem} \label{lem:scpar}
Let $\varphi : W_F \to {^LG}$ be a supercuspidal parameter.
\begin{enumerate}
	\item If $p$ does not divide the order of the Weyl group of $G$, then $\varphi$ is torally wild.
	\item If $\varphi$ is torally wild, then $\tx{Cent}(\varphi(I_F),\hat G)^\circ$ is a torus.
\end{enumerate}
\end{lem}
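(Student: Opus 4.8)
For the first claim, the plan is to analyze $\varphi(I_F)$. Since $\varphi$ is a supercuspidal (in particular discrete) parameter, its image is not contained in any proper Levi of $^LG$, and discreteness forces $\tx{Cent}(\varphi(W_F),\hat G)/Z(\hat G)^\Gamma$ to be finite. First I would recall that $\varphi(P_F)$ is a finite $p$-group (the image of wild inertia is finite because $G$ splits over a tame extension, and $P_F$ is pro-$p$). A finite $p$-subgroup of $\hat G$ whose centralizer has the expected dimension is, by a standard argument, contained in a maximal torus provided $p$ does not divide $|W|$: one uses that any semisimple element has connected centralizer containing a maximal torus, proceeds by induction down a chain of centralizers, and the obstruction to staying toral at each stage is exactly torsion in $\pi_1$ or $\pi_0$ of a centralizer, which is controlled by primes dividing $|W|$ (cf. the Borel–Serre / Steinberg bound on torsion primes; see also \cite[\S4.3]{SS70}). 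More precisely I would invoke: if $A \subset \hat G$ is a finite abelian $p$-subgroup and $p \nmid |W(\hat G)|$, then $A$ lies in a maximal torus — this follows because $\hat G$ has no $p$-torsion in its fundamental group or in $H^*$ relevant to the embedding, so $\tx{Cent}(A,\hat G)$ is connected and of maximal rank, and one can enlarge $A$ to a maximal torus inside it. Apply this to $A = \varphi(P_F)$ (abelian since $P_F$ maps to a finite $p$-group and... actually one must first argue $\varphi(P_F)$ is abelian, or replace it by a maximal abelian subgroup and note a pro-$p$ group acts on $\hat G$ through a finite $p$-group whose fixed-point-type structure still forces toral containment). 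This gives torally wild.

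For the second claim, assume $\varphi(P_F)$ lies in a maximal torus $\hat T \subset \hat G$. Then $\tx{Cent}(\varphi(P_F),\hat G) \supset \hat T$, so $\tx{Cent}(\varphi(P_F),\hat G)^\circ$ is a connected reductive group of maximal rank, call it $\hat H$; it contains $\hat T$ as a maximal torus. The key point is that $\hat H$ is in fact a torus, i.e. equals $\hat T$. To see this I would use that $I_F/P_F$ is procyclic and that $\varphi(I_F)$ normalizes $\varphi(P_F)$, hence normalizes $\hat H$ and acts on it; writing $\varphi(I_F) = \langle \varphi(P_F), \varphi(s)\rangle$ for a topological generator $s$ of tame inertia mod wild, the element $\varphi(s)$ acts on $\hat H$ and on the pair $(\hat H, \hat T)$. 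Now $\tx{Cent}(\varphi(I_F),\hat G)^\circ = \tx{Cent}(\varphi(s),\hat H)^\circ$ (intersecting with the $\varphi(s)$-fixed part). Discreteness of $\varphi$, applied to the restriction, forces this to be contained in (a conjugate of) $Z(\hat G)$ up to finite index — but that is not quite what we want; rather, we want $\hat H$ itself toral.

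The cleaner route, which I expect to be the actual argument, is this: since $\varphi$ is discrete, $\tx{Cent}(\varphi(W_F),\hat G)^\circ \subset Z(\hat G)^\circ$. Consider $M := \tx{Cent}(\varphi(I_F),\hat G)^\circ$, a connected reductive group on which Frobenius acts (via $\varphi$ of a Frobenius lift $\sigma$); then $\tx{Cent}(\varphi(W_F),\hat G)^\circ = (Z(M) \cdot \tx{[derived part fixed by $\sigma$]})^\circ$-type expression, and discreteness kills the derived part of $M^\sigma$. To conclude $M$ is a torus one argues that if $M$ had positive-dimensional derived subgroup $M_\tx{der}$, then $M_\tx{der}$ carries an action of the procyclic group $\langle\sigma\rangle$ (Frobenius), and a connected semisimple group over $\C$ with an action of $\hat{\Z}$ always has a $\sigma$-stable maximal torus with positive-dimensional $\sigma$-fixed part (e.g. a $\sigma$-split torus in $M_\tx{der}$ is nontrivial because $M_\tx{der}$ is generated by its $\sigma$-conjugates of any root subgroup... or simply because $H^1$ considerations give a fixed maximal torus and $\sigma$ acts on its cocharacter lattice with nonzero coinvariants unless $M_\tx{der}$ is trivial). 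This contradicts discreteness. The main obstacle is making the last step rigorous: one must show that a nontrivial connected semisimple complex group with a Frobenius-type (finite-order, or $\hat\Z$-) action cannot have its entire $\sigma$-coinvariant centralizer be finite/central — this is exactly where the torally wild hypothesis is used, because it guarantees $M$ has maximal rank and $\hat T \subset M$, so $\hat T^\sigma$ already sits inside $\tx{Cent}(\varphi(W_F),\hat G)$; combined with discreteness forcing $\hat T^{\sigma,\circ} \subset Z(\hat G)^\circ$, and then a rank count (the $\sigma$-coinvariants of $X_*(\hat T)$ have rank $\geq \tx{rk}\, Z(\hat G)$ forces... ) pins down $\dim M = \dim \hat T$. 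I would organize the final contradiction around: $\dim \tx{Cent}(\varphi(W_F),\hat G) \geq \dim \hat T^\sigma \geq$ (something forcing $M$ semisimple rank $0$), hence $M = \hat T$, a torus.
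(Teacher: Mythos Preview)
Your approach to part (1) has a real gap: the Borel--Serre/Steinberg toral embedding result you invoke applies to finite \emph{abelian} $p$-subgroups, and you correctly notice mid-sentence that you have not shown $\varphi(P_F)$ is abelian. The suggested patch (``replace by a maximal abelian subgroup'') does not obviously close the gap without further inductive work. The paper's argument avoids this entirely: a finite $p$-group is nilpotent, hence supersolvable, and supersolvable groups of semisimple automorphisms of $\hat G$ normalize a maximal torus $\hat T$ by \cite[II, Theorem 5.16]{SS70}. Once $\varphi(P_F) \subset N(\hat T,\hat G)$, the image in the Weyl group is a $p$-group, hence trivial since $p \nmid |W|$, so $\varphi(P_F) \subset \hat T$. (Also, your parenthetical ``finite because $G$ splits over a tame extension'' is not the reason; finiteness is just because $P_F$ is pro-$p$ and maps continuously into a complex Lie group.)

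For part (2), your overall strategy --- let $M = \tx{Cent}(\varphi(I_F),\hat G)^\circ$, let Frobenius act, and use discreteness to force $M_\tx{der} = 1$ --- is the right one, and matches the paper's. But the step you flag as ``the main obstacle'' is exactly where your argument stays vague: you assert that a nontrivial connected semisimple group with a semisimple automorphism $\sigma$ must have positive-dimensional $\sigma$-fixed locus, without proof. The paper makes this precise as follows. After replacing $\hat G$ by $\hat G/Z(\hat G)^\circ$, discreteness gives $\hat L^{\varphi(\tx{Frob})}$ finite, where $\hat L = M$. Write the Frobenius action on $\hat L$ as $\tx{Ad}(l)\theta$ with $\theta$ pinned. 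The map $x \mapsto x^{-1}l\theta(x)$ identifies $\hat L/\hat L^{\varphi(\tx{Frob})}$ with the $\theta$-twisted conjugacy class of $l$, which is therefore closed, irreducible, and of dimension $\dim \hat L$ --- hence equals $\hat L$. But by \cite[Lemma 3.2.A]{KS99}, $\theta$-twisted conjugacy classes in $\hat L$ are in bijection with $\Omega(\hat T,\hat L)^\theta$-orbits in the coinvariant torus $\hat T_\theta$; if $\hat L$ is not a torus, $\hat T_\theta$ is a positive-dimensional variety and a finite group cannot act transitively on it. This is the missing ingredient that turns your heuristic into a proof.

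Finally, note that your first paragraph on part (2) briefly aims at the wrong target: $\hat H = \tx{Cent}(\varphi(P_F),\hat G)^\circ$ is a Levi subgroup (the paper's $\hat M$), not a torus in general, and you should not expect it to be one; you correctly redirect to $\tx{Cent}(\varphi(I_F),\hat G)^\circ$ afterward.
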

\begin{proof}
The image $\varphi(P_F) \subset \hat G$ is a finite $p$-group, hence nilpotent, hence supersolvable. As a supersolvable group of semi-simple automorphisms of $\hat G$ it normalizes a maximal torus $\hat T \subset \hat G$, by \cite[\S II,Theorem 5.16]{SS70}. By assumption $p\nmid |\Omega(\hat T,\hat G)|$, so the image of $\varphi(P_F)$ in $\Omega(\hat T,\hat G)$ is trivial, so $\varphi(P_F)\subset \hat T$.

Let $\hat M$ be the centralizer of $\varphi(P_F)$, a Levi subgroup of $\hat G$ by \cite[Lemma 5.2.2]{KalRSP}. Let $\hat L \subset \hat M$ be the connected centralizer of $\varphi(I_F)$. It is a connected reductive group and is a torus if and only if $\hat L/Z(\hat G)^\circ$ is a torus.
We may thus replace $\hat G$ by $\hat G/Z(\hat G)^\circ$ and assume that $\hat G$ is semi-simple. This has the effect that $\hat L^{\varphi(\tx{Frob})}\subset\tx{Cent}(\varphi(W_F),\hat G)$ is finite, where $\tx{Frob} \in W_F$ is any Frobenius element. Now $\varphi(\tx{Frob})$ is a semi-simple automorphism of $\hat L$, which we decompose as a product $\tx{Ad}(l)\theta$ of an automorphism $\theta$ of $\hat L$ that preserves a pinning $(\hat T,\hat B,\{X_\alpha\})$ of $\hat L$ and an inner automorphism by an element $l$ of $\hat L$. The map
\[ \hat L \to \hat L,\qquad x \mapsto x^{-1}l\theta(x) \]
gives an isomorphism from the coset space $\hat L/\tx{Cent}(l\theta,\hat L)$ to the $\theta$-twisted conjugacy class of $l$. This $\theta$-twisted conjugacy class is an irreducible closed subvariety of $\hat L$ and the finiteness of $\tx{Cent}(l\theta,\hat L)=\hat L^{\varphi(Frob)}$ implies that its dimension is equal to that of $\hat L$. Since $\hat L$ is an irreducible variety this means that the $\theta$-twisted conjugacy class of $l$ is equal to $\hat L$. In other words, $\hat L$ is a single $\theta$-twisted conjugacy class. This is only possible of $\hat L$ is a torus, for otherwise there is a 1-1 correspondence between $\theta$-twisted conjugacy classes in $\hat L$ and $\Omega(\hat T,\hat L)^\theta$-orbits in the group $\hat T_\theta$ of $\theta$-coinvariants in $\hat T$, the latter being a non-trivial algebraic torus, see \cite[Lemma 3.2.A]{KS99}.
\end{proof}

We will now introduce the concept of torally wild $L$-packet data and show that there is a natural 1-1 correspondence between $\hat G$-conjugacy classes of torally wild Langlands parameters $W_F \to {^LG}$ and equivalence classes of such data. The data is closely related to the regular supercuspidal $L$-packet data from \cite[\S5.2]{KalRSP}, with one subtle difference.

\begin{dfn} \label{dfn:nspd}
A \emph{torally wild supercuspidal $L$-packet datum} is a tuple $(S,\hat j,\chi_0,\theta)$, where 
\begin{enumerate}
	\item $S$ is a torus of dimension equal to the absolute rank of $G$, defined over $F$ and split over a tame extension of $F$;
	\item $\hat j : \hat S \rw \hat G$ is an embedding of complex reductive groups whose $\hat G$-conjugacy class is $\Gamma$-stable; 
	\item $\chi_0 = (\chi_{\alpha_0})_{\alpha_0}$ is tamely ramified $\chi$-data for $R(S^0,G)$, as explained below;
	\item and $\theta : S(F) \rw \C^\times$ is a character. 
\end{enumerate}
subject to the condition that $(S,\theta)$ is a tame $F$-non-singular elliptic pair in the sense of Definition \ref{dfn:tnsep}.
\end{dfn}

We need to explain the notation in the third point. As discussed in \cite[\S5.1]{KalRSP} we obtain from $\hat j$ a $\Gamma$-invariant root system $R(S,G) \subset X^*(S)$. We can then define the subsystem $R_{0+} \subset R(S,G)$ as in Definition \ref{dfn:tnsep}. Let $S^0 \subset S$ be the connected component of the intersection of the kernels of all elements of $R_{0+}$ and $R(S^0,G)$ be the image of $R(S,G) \sm R_{0+}$ under the restriction map $X^*(S) \to X^*(S^0)$.

\begin{rem} \label{rem:infchi}
The difference between this definition and \cite[Definition 5.2.4]{KalRSP} is, besides requiring that $(S,\theta)$ be non-singular rather than regular, is the usage of $\chi$-data for $R(S^0,G)$ in place of $R(S,G)$. This is done to accommodate the functorial transfer from the group $G^0$ with root system $R_{0+}$ to $G$. More precisely, from $\chi_0$ one obtains $\chi$-data for $R(S,G)$, which we will denote by $\chi$, as follows. For $\alpha \in R(S,G) \sm R_{0+}$ we let $\chi_\alpha=\chi_{\alpha_0} \circ N_{F_\alpha/F_{\alpha_0}}$; any $\alpha \in R_{0+}$ is either asymmetric or unramified symmetric, and we let $\chi_\alpha$ be trivial or the unramified quadratic character, respectively. According to \cite[Proposition 6.9]{KalDC}, the $L$-embedding $^Lj_{S,G} : {^LS} \to {^LG}$ obtained from $\chi$ then factors as $^Lj_{S,G} = {^Lj}_{M,G} \circ {^Lj}_{S,M}$, where $^Lj_{M,G} : {^LM} \to {^LG}$ is obtained from the $\chi$-data $\chi_0$ as in \cite[\S6.1]{KalDC}.	
\end{rem}

\begin{dfn} \label{dfn:nspdi}
A \emph{morphism} $(S,\hat j,\chi_0,\theta) \rw (S',\hat j',\chi_0',\theta')$ \emph{of torally wild supercuspidal $L$-packet data}  is a triple $(\iota,g,\zeta_0)$, where 
\begin{enumerate}
	\item $\iota : S \rw S'$ is an isomorphism of $F$-tori;
	\item $g \in \hat G$;
	\item and $\zeta_0 = (\zeta_{\alpha'_0})_{\alpha'_0}$ is a set of $\zeta$-data for $R(S'^0,G)$ in the sense of \cite[Definition 4.6.4]{KalRSP}.
\end{enumerate}
We require that $\hat j \circ \hat \iota = \tx{Ad}(g) \circ \hat j'$, that $\chi_{\alpha_0'\circ \iota} = \chi'_{\alpha_0'} \cdot \zeta_{\alpha_0'}$, and that $\zeta_{S'}^{-1} \cdot \theta'\circ\iota=\theta$. Here we take $\zeta=\tx{inf}\zeta_0$ and take $\zeta_{S'}$ to be the character of $S'(F)$ corresponding to $\zeta$ as in \cite[Definition 4.6.5]{KalRSP}. Composition of morphisms is defined in the obvious way. 
\end{dfn}

\begin{rem}
Every morphism is an isomorphism. When $\theta$ is regular \cite[Lemma 5.2.6]{KalRSP} shows that $s \mapsto (1,\hat j(s),1)$ is an isomorphism from $\hat S$ to the group of automorphisms of $(S,\hat j,\chi_0,\theta)$. When $\theta$ is not regular this is not true, but the proof of that lemma shows that the cokernel of that map is identified with $\Omega(S,G)(F)_\theta$.	
\end{rem}

\begin{pro} \label{pro:lpdat} There is a natural 1-1 correspondence between $\hat G$-conjugacy classes of torally wild Langlands parameters for $G$ and isomorphism classes of torally wild $L$-packet data.
\end{pro}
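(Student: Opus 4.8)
The plan is to construct the correspondence in both directions, following the template of \cite[\S5.2]{KalRSP} but inserting the group $G^0$ and the refinements forced by non-regularity. Starting from a torally wild parameter $\varphi : W_F \to {^LG}$, Lemma \ref{lem:scpar} gives $\varphi(P_F) \subset \hat T$ for a maximal torus $\hat T \subset \hat G$, and $\hat M := \tx{Cent}(\varphi(P_F),\hat G)$ is a Levi subgroup with $\tx{Cent}(\varphi(I_F),\hat M)^\circ$ a torus. The first step is to extract the torus $S$: as in \cite[\S5.2]{KalRSP}, the image of $\varphi$ normalizes $\hat T$, so one gets a homomorphism $W_F \to \Omega(\hat T,\hat G) \rtimes \Gamma$ and hence an action of $W_F$ on $X^*(\hat T) = X_*(S)$, defining $S$ over $F$; it is tame because the action factors through a tame quotient (using $p \nmid |\Omega|$ and tame splitting of $G$). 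The embedding $\hat j : \hat S = \hat T \hookrightarrow \hat G$ is the inclusion, and its conjugacy class is $\Gamma$-stable by construction. One then reads off the root system $R(S,G) \subset X^*(S)$, the subsystem $R_{0+}$ (equivalently the twisted Levi $G^0$, dual to $\hat M$ up to the inertia refinement), and observes $(S,\theta)$ will be elliptic since $\varphi$ is discrete.

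The second step is to produce $\theta$ from $\varphi$, and here is where the choice of $\chi$-data enters and where the main subtlety lies. Given tame $\chi$-data $\chi_0$ for $R(S^0,G)$, Remark \ref{rem:infchi} and \cite[Proposition 6.9]{KalDC} provide a factorization $^Lj_{S,G} = {^Lj}_{M,G} \circ {^Lj}_{S,M}$ of the Langlands--Shelstad $L$-embedding; one must check that $\varphi$ factors through $^Lj_{S,G}$, i.e. that $\varphi$ lands in the image of ${^LS} \to {^LG}$, which is exactly the content that $\hat j(\hat S)$ is the centralizer of $\varphi(P_F)$ intersected appropriately with the structure controlling $\varphi(I_F)$ — this is the analog of \cite[\S5.2]{KalRSP} but now one only gets an embedding of ${^LS}$ with image the correct centralizer, not the whole of $\hat M$. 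The factored parameter $\varphi_{S,\chi} : W_F \to {^LS}$ yields via local Langlands for tori a character $\theta_\chi$ of $S(F)$, and one sets $\theta := \theta_\chi$. The crucial point to verify is that $(S,\theta)$ is a tame $F$-non-singular elliptic pair: ellipticity from discreteness of $\varphi$, tameness of $S$ already established, $S$ maximally unramified in $G^0$ from the structure of $R_{0+}$, and $F$-non-singularity of $\theta$ with respect to $G^0$ — this last is the heart of the construction and is presumably reduced to the depth-zero case and then to the statement, alluded to in the introduction, that the $\chi$-data $\chi''$ (equivalently the choice built from $\chi_0$) is designed precisely so that $\theta_\chi$ comes out non-singular rather than merely constrained.

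The third step is the reverse map and the bijection on equivalence classes. Given $(S,\hat j,\chi_0,\theta)$, use $\chi_0$ to build $\chi$ and hence $^Lj_{S,G}$; apply local Langlands for tori to $\theta$ to get $\varphi_{S} : W_F \to {^LS}$; set $\varphi := {^Lj}_{S,G} \circ \varphi_S$. One checks $\varphi$ is a discrete (hence supercuspidal) parameter using $F$-non-singularity and ellipticity of $(S,\theta)$ — non-singularity ensures the centralizer of $\varphi(I_F)$ has torus identity component, and ellipticity plus discreteness of $\theta$ rules out $\varphi$ factoring through a proper parabolic — and that it is torally wild since $\varphi(P_F)$ is carried into $\hat S$ by $\hat j$. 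Then one verifies the two constructions are mutually inverse on isomorphism classes: a morphism $(\iota, g, \zeta_0)$ of data changes $\chi_0$ by $\zeta$-data and $\theta$ by $\zeta_{S'}$, which is exactly the ambiguity in the choice of $L$-embedding (cf. \cite[\S4.6]{KalRSP}), so the resulting $\varphi$ changes only by $\hat G$-conjugacy via $g$; conversely two choices of $\chi_0$ for the same $\varphi$ differ by $\zeta$-data, producing a canonical morphism. The main obstacle, as flagged in the introduction, is precisely the $F$-non-singularity of $\theta_\chi$ — unlike the regular case this requires the carefully engineered $\chi$-data $\chi''$ reflecting Yu's inductive structure, and unwinding why that particular choice forces non-singularity (rather than the weaker constraint one gets for arbitrary $\chi$-data) is where the real work lies; everything else is a non-regular adaptation of the bookkeeping in \cite[\S5.2]{KalRSP}.
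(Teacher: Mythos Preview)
Your overall architecture matches the paper's: two mutually inverse constructions, with the bookkeeping of well-definedness on equivalence classes relegated to the template of \cite[\S5.2]{KalRSP}, and the substantive content isolated in checking that the tuple extracted from $\varphi$ is genuinely a torally wild $L$-packet datum (and conversely that $\varphi$ built from a datum is discrete and torally wild). You also correctly locate the crux in the $F$-non-singularity of $\theta$ with respect to $G^0$.

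But you misidentify the mechanism. The $\chi$-data $\chi''$ of \S\ref{sub:char} plays no role in Proposition~\ref{pro:lpdat}; it appears later, in the $L$-packet construction via the character formula. The bijection here holds for \emph{any} tame $\chi_0$ on $R(S^0,G)$. What makes non-singularity go through is not $\chi_0$ at all, but the fact that the inflation procedure of Remark~\ref{rem:infchi} forces the $\chi$-data on $R(S,G^0)=R_{0+}$ to be unramified. The paper then proves (Lemma~\ref{lem:lpns}) the clean equivalence: $\theta$ is $F$-non-singular with respect to $G^0$ if and only if $\hat M^{\varphi(I_F),\circ}$ is a torus --- and Lemma~\ref{lem:scpar} supplies the latter. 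Both directions of the correspondence rest on this equivalence, and your sketch does not contain its proof. The argument first treats the case $\hat G=\hat M$: Lemma~\ref{lem:ljx} shows that with unramified $\chi$-data one can arrange $^Lj_0(1\rtimes x)=1\rtimes x$ for $x\in I_F$, so that $\varphi_S|_{I_F}$ and $\varphi|_{I_F}$ coincide in $\hat T$; then a root-by-root analysis of the restricted root system $R_\tx{res}(\hat T^{x,\circ},\hat G)$, split into types R1/R2/R3 as in \cite[\S1.3]{KS99}, matches the condition ``$N\hat\alpha(t)=\pm 1$'' governing which restricted roots appear in $\hat L=\tx{Cent}(\varphi(I_F),\hat G)^\circ$ against the condition ``$\theta\circ N_{F'/F}\circ\alpha_\tx{res}^\vee$ trivial on $O_{F'}^\times$'' defining failure of non-singularity. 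The general case reduces to this one by factoring $\varphi$ through $^LM$ via the composition $^Lj_{S,G}={^Lj_{G^0,G}}\circ{^Lj_{S,G^0}}$ (using \cite[\S6.2]{KalDC}), where the inner embedding carries unramified $\chi$-data. Without this analysis the proof is incomplete: you have named the right lemma to prove but not how to prove it, and attributed its validity to the wrong ingredient.
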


The rest of this subsection is devoted to the proof of this proposition. The arguments are an amplification of those in the proof of \cite[Proposition 5.2.7]{KalRSP}. We will present them here in an abbreviated form and a slightly different structure, in the hope that this will help shed a better light on them.

First, we give the two inverse constructions. Starting with a torally wild $L$-packet datum $(S,\hat j,\chi_0,\theta)$ we extend $j$ to an $L$-embedding $^Lj : {^LS} \to {^LG}$ using the $\chi$-data for $R(S,G)$ obtained from $\chi_0$ as in Remark \ref{rem:infchi}, and let $\varphi={^Lj}\circ\varphi_S$, where $\varphi_S : W_F \to {^LS}$ is the Langlands parameter for the character $\theta$. In this way we obtain from the tuple $(S,\hat j,\chi_0,\theta)$ a Langlands parameter $\varphi$.

Conversely, given a torally wild parameter $\varphi : W_F \to {^LG}$ we apply \cite[Lemma 5.2.2]{KalRSP} and Lemma \ref{lem:scpar} to obtain the Levi subgroup $\hat M \subset \hat G$ and a maximal torus $\hat T \subset \hat M$, both normalized by $\varphi(W_F)$. Conjugating $\varphi$ in $\hat G$ if necessary we may arrange that $\hat T$ is part of a $\Gamma$-invariant Borel pair of $\hat G$. Then $\varphi : W_F \to N(\hat T,\hat G) \rtimes W_F$. The action of $W_F$ on $\hat T$ via $\tx{Ad}(\varphi(-))$ extends to $\Gamma_F$. We denote by $\hat S$ the corresponding $\Gamma_F$-module structure on $\hat T$, and by $\hat j : \hat S \to \hat G$ the tautological embedding $\hat T \to \hat G$. Let $S$ be the algebraic torus defined over $F$ and dual to $\hat S$. Write $R_{0+}=R(\hat S,\hat M)$ and let $S^0 \subset S$ be defined with respect to this $R_{0+}$. Choose tame $\chi$-data $\chi_0$ for $R(S^0,G)$, obtain from it $\chi$-data for $R(S,G)$ as in Remark \ref{rem:infchi} and use it to extend $\hat j$ to an $L$-embedding ${^Lj_{\chi_0}} : {^LS} \to {^LG}$. The parameter $\varphi$ factors through this embedding as $\varphi={^Lj_{\chi_0}} \circ \varphi_{S,\chi_0}$ for $\varphi_{S,\chi_0} : W_F \to {^LS}$. We let $\theta_{\chi_0} : S(F) \to \C^\times$ be the corresponding character. In this way we obtain from $\varphi$ the tuple $(S,\hat j,\chi_0,\theta_{\chi_0})$.

This concludes the description of the two constructions. The proofs that the isomorphism class of the tuple $(S,\hat j,\chi_0,\theta_{\chi_0})$ produced from $\varphi$ depends only on the $\hat G$-conjugacy class of $\varphi$, and conversely that the $\hat G$-conjugacy class of the parameter $\varphi$ produced from a tuple $(S,\hat j,\chi_0,\theta)$ depends only on the isomorphism class of that tuple, are very similar to the ones given in the proof of \cite[Proposition 5.2.7]{KalRSP}. They are routine and we will not repeat them. Moreover, the fact that the two constructions are inverse to each other is clear.

What remains to be checked is that the tuple $(S,\hat j,\chi_0,\theta_{\chi_0})$ produced from a torally wild $\varphi$ is a torally wild $L$-packet datum, and conversely that the parameter $\varphi$ produced from a torally wild $L$-packet datum $(S,\hat j,\chi_0,\theta)$ is torally wild.

We begin by noting that, given a torally wild parameter $\varphi$, the definition of $\hat M$ implies that $\varphi(P_F) \subset Z(\hat M) \subset \hat T$, so the $\Gamma$-module $\hat S$ is tame. Moreover, \cite[Lemma 5.2.2]{KalRSP} implies that $\varphi(I_F)$ preserves a Borel subgroup of $\hat M$ containing $\hat T$, so the action of $I_F$ on $R(\hat S,\hat M)$ preserves a positive chamber. 

\begin{lem} \label{lem:r0id}
Under the identification $R(\hat S,\hat G)=R^\vee(S,G)$ the root system $R(\hat S,\hat M)$ is identified with the coroot system of the root system $R_{0+}$ of Definition \ref{dfn:tnsep}.
\end{lem}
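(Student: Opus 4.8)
The plan is to unwind the two definitions of $R_{0+}$ and check they match under the canonical identification of roots and coroots under Langlands duality. On the one hand, $\hat M = \tx{Cent}(\varphi(P_F),\hat G)$ and $\hat S = \hat T$ with its $\Gamma$-structure twisted by $\tx{Ad}(\varphi(-))$, so $R(\hat S,\hat M)$ consists of those $\hat\alpha \in R(\hat T,\hat G)$ with $\hat\alpha(\varphi(P_F)) = 1$ (since $\hat M$ is the centralizer of $\varphi(P_F)$ and contains $\hat T$, its root system relative to $\hat T$ is exactly the set of roots killed by $\varphi(P_F)$, using that $\varphi(P_F) \subset \hat T$). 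On the other hand, $R_{0+} \subset R(S,G)$ is by Definition \ref{dfn:tnsep} the set of $\alpha$ with $\theta_{\chi_0}(N_{E/F}(\alpha^\vee(E_{0+}^\times))) = 1$. The standard identification (recalled in \cite[\S5.1]{KalRSP}) sends $\alpha \in R(S,G)$ to its coroot $\alpha^\vee \in R^\vee(S,G) = R(\hat S,\hat G)$, so the claim is that $\alpha^\vee$ kills $\varphi(P_F)$ if and only if $\theta_{\chi_0}$ restricted through $\alpha^\vee$ and the norm is trivial on the depth-$0+$ part.

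**The key computation.** First I would reduce to the level of the torus: the character $\theta_{\chi_0}$ of $S(F)$ is obtained from the parameter $\varphi_{S,\chi_0}: W_F \to {}^L S$ via local class field theory, and the depth-$0+$ (i.e. wild inertia) behaviour of $\theta_{\chi_0}$ composed with a cocharacter $\alpha^\vee$ is governed by $\varphi_{S,\chi_0}|_{P_F}$ composed with the dual character $\alpha: \hat S \to \G_m$. Precisely, $\theta_{\chi_0} \circ N_{E/F} \circ \alpha^\vee$ has trivial restriction to the units of depth $0+$ precisely when $\alpha \circ \varphi_{S,\chi_0}$ is trivial on $P_F$ — this is a matter of matching the depth filtration on $S(F)$ under $\alpha^\vee$ with the ramification filtration on $W_F$ under $\alpha$. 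Then I need to know that $\alpha \circ \varphi_{S,\chi_0}|_{P_F}$ agrees with $\alpha \circ \hat j^{-1}(\varphi|_{P_F})$, i.e. that passing from $\varphi$ to the factored torus parameter does not disturb the restriction to $P_F$: this is because the $\chi$-data cocycle that intervenes in the factorization ${}^Lj_{\chi_0}$ is tamely ramified (each $\chi_\alpha$ is tame by construction in Remark \ref{rem:infchi}), hence trivial on $P_F$, so $\varphi_{S,\chi_0}|_{P_F}$ and $\varphi|_{P_F}$ (viewed in $\hat S = \hat T$) literally coincide on wild inertia. Combining, $\alpha^\vee \in R(\hat S,\hat M)$ — i.e. $\alpha(\varphi(P_F)) = 1$ — if and only if $\alpha \in R_{0+}$, which is exactly the assertion of the lemma.

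**Main obstacle.** The routine parts are the root/coroot bookkeeping and the class-field-theory translation of depths. The step I expect to require the most care is the claim that the $\chi$-data contribution to $\varphi_{S,\chi_0}$ is trivial on $P_F$ and hence that $\varphi|_{P_F}$ and $\varphi_{S,\chi_0}|_{P_F}$ agree under $\hat S = \hat T$; one must be careful that the $\chi$-data used here is for $R(S^0,G)$ and then inflated to $R(S,G)$ as in Remark \ref{rem:infchi}, and that the resulting cochain is genuinely tame. For the roots $\alpha \in R_{0+}$ themselves the $\chi_\alpha$ is trivial or unramified quadratic, so certainly trivial on $P_F$; for $\alpha \notin R_{0+}$ it is tamely ramified; in all cases the associated $1$-cochain $W_F \to \hat T$ is trivial on wild inertia. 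Once this is pinned down, the equivalence $\alpha(\varphi(P_F))=1 \Leftrightarrow \theta_{\chi_0} \circ N \circ \alpha^\vee|_{0+} = 1$ is immediate, and since this is precisely the defining condition for $R_{0+}$, the lemma follows. I would also note that the identification is $\Gamma$-equivariant, which is automatic since both sides are defined intrinsically from $\varphi$.
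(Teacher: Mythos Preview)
Your proposal is correct and follows essentially the same approach as the paper: identify $R(\hat S,\hat M)$ as the roots killing $\varphi(P_F)$, use tameness of the $\chi$-data to see that $\varphi$ and $\varphi_S$ agree on wild inertia, and translate via the local Langlands correspondence for tori to the depth condition defining $R_{0+}$. The paper makes two points slightly more explicit than you do: it notes $P_E = P_F$ since the splitting field $E/F$ is tame, and it cites Yu's depth-preservation theorem \cite[Theorem 7.10]{Yu09} for the step you describe as ``matching the depth filtration on $S(F)$ under $\alpha^\vee$ with the ramification filtration on $W_F$''---you should name that reference rather than leave it as a heuristic.
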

\begin{proof}
We have $R(\hat S,\hat M) = \{\hat\alpha \in R(\hat S,\hat G)| \hat\alpha(\varphi(P_F))=1\}$. For any $\hat\alpha \in R(\hat S,\hat G)$ let $\alpha^\vee \in R^\vee(S,G)$ be the corresponding cocharacter. Letting $E/F$ be the tame Galois extension splitting $S$, the parameter of the character $\theta\circ N_{E/F}\circ \alpha^\vee$ is equal to the restriction to $W_E$ of $\hat\alpha\circ\varphi_S$. The tameness of the $\chi$-data implies that $\varphi|_{P_E}=\varphi_S|_{P_E}$. Since $P_F=P_E$ we see using \cite[Theorem 7.10]{Yu09} that $R(\hat S,\hat M)$ is the subset of $R^\vee(S,G)$ consisting of those $\alpha^\vee$ for which $\theta\circ N_{E/F}\circ\alpha^\vee$ restricts trivially to $E_{0+}^\times$, as claimed.
\end{proof}

\begin{lem} \label{lem:lpns} Let $(S,\hat j,\chi_0,\theta)$ be a tuple as in Definition \ref{dfn:nspd}, but without assuming that $(S,\theta)$ is a tame non-singular elliptic pair. Instead we only assume that $S$ is tame and maximally unramified in $G^0$. Let $\varphi = {^Lj_{\chi_0}} \circ \varphi_S$. Then $\theta$ is $F$-non-singular with respect to $G^0$ if and only if $\hat M^{\varphi(I_F),\circ}$ is a torus.
\end{lem}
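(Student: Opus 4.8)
The plan is to reduce both conditions in the statement to one and the same root-by-root condition: that for every $\alpha\in R_{0+}$ the character $\theta\circ N_{E/F}\circ\alpha^\vee:E^\times\to\C^\times$ — which, by the very definition of $R_{0+}$, already kills $E_{0+}^\times$ — is nontrivial on $O_E^\times$. Here $E/F$ denotes the tame Galois splitting field of $S$.

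First I would treat the Galois side. Since the $\chi$-data are tame, $\varphi(P_F)\subset \hat j(\hat S)$, so $\varphi$ is torally wild and the structural facts recalled just before Lemma \ref{lem:r0id} apply: $\hat M=\tx{Cent}(\varphi(P_F),\hat G)$ is a Levi subgroup, $\hat T=\hat j(\hat S)$ is a maximal torus of $\hat M$ normalized by $\varphi(W_F)$, $\varphi(P_F)\subset Z(\hat M)$, and $\varphi(I_F)$ stabilizes a Borel subgroup of $\hat M$ containing $\hat T$. Because $P_F\subset I_F$ we have $\tx{Cent}(\varphi(I_F),\hat G)\subset\tx{Cent}(\varphi(P_F),\hat G)=\hat M$, whence $\hat M^{\varphi(I_F),\circ}=\tx{Cent}(\varphi(I_F),\hat G)^\circ$, a connected reductive subgroup of $\hat M$ with maximal torus $(\hat T^{\varphi(I_F)})^\circ$. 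Now I would run the argument from the proof of Lemma \ref{lem:r0id}, but carried one filtration step further. That argument uses $P_F=P_E$ together with \cite[Theorem 7.10]{Yu09} to identify $R(\hat S,\hat M)$ with the set of $\alpha^\vee\in R^\vee(S,G)$ for which $\theta\circ N_{E/F}\circ\alpha^\vee$ restricts trivially to $E_{0+}^\times$; the same reasoning, applied to $I_F$ in place of $P_F$, identifies the coroots cutting out $\hat M^{\varphi(I_F),\circ}$ inside $\hat M$ with the set of $\alpha^\vee$ for which $\theta\circ N_{E/F}\circ\alpha^\vee$ restricts trivially to \emph{all} of $O_E^\times$. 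In making this precise one uses that for $\alpha\in R_{0+}$ the $\chi$-datum $\chi_\alpha$ obtained from $\chi_0$ is trivial or the unramified quadratic character (Remark \ref{rem:infchi}), hence unramified, so that the passage from $\varphi_S$ to $\varphi$ does not alter the restriction to $I_E$. Since any $\alpha$ for which $\theta\circ N_{E/F}\circ\alpha^\vee$ is trivial on $O_E^\times$ automatically lies in $R_{0+}$, it follows that $\hat M^{\varphi(I_F),\circ}$ is a torus if and only if no $\alpha\in R_{0+}$ has $\theta\circ N_{E/F}\circ\alpha^\vee$ trivial on $O_E^\times$.

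It then remains to match this with the automorphic side. By Definition \ref{dfn:nsc}, Remark \ref{rem:ns}, and the hypothesis that $S$ is maximally unramified in $G^0$ (so that $R_\tx{res}(S',G^0)$ is the image of $R_{0+}=R(S,G^0)$ under restriction to $S'$), unwinding the definition exactly as in \cite[Lemma 3.4.14]{KalRSP} shows that $\theta$ is $F$-non-singular with respect to $G^0$ if and only if $\theta\circ N_{E/F}\circ\alpha^\vee$ is nontrivial on $O_E^\times$ for every $\alpha\in R_{0+}$; for $\alpha\notin R_{0+}$ this character has positive depth and is therefore automatically ramified, so there is no loss in restricting attention to $R_{0+}$. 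Combining this with the conclusion of the previous paragraph gives the asserted equivalence.

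The main obstacle is the Galois-side step: extracting from \cite[Theorem 7.10]{Yu09} that the root-subgroup structure of the connected centralizer $\hat M^{\varphi(I_F),\circ}=\tx{Cent}(\varphi(I_F),\hat G)^\circ$ is governed by the \emph{ramification} — rather than merely the depth — of the characters $\hat\alpha\circ\varphi_S|_{W_E}$, in complete parallel with the way that same theorem is used in Lemma \ref{lem:r0id} to describe $\hat M$ itself through their depths. A secondary and essentially routine point is the restricted-versus-absolute root bookkeeping in the reformulation of $F$-non-singularity, which is already contained in the circle of ideas around \cite[Lemma 3.4.14]{KalRSP} and the maximal unramifiedness of $S$ in $G^0$.
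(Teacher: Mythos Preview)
Your overall strategy---reduce both sides to a root-by-root ramification condition---is the same as the paper's. But the sketch of the Galois side glosses over the substantive difficulty, and the ``same reasoning as Lemma~\ref{lem:r0id}, one filtration step further'' does not go through as stated.

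The point is that $\varphi(P_F)$ lies in the torus $\hat T$, so $\hat M=\tx{Cent}(\varphi(P_F),\hat G)$ is a Levi subgroup and its root system is the naive one $\{\hat\alpha:\hat\alpha(\varphi(P_F))=1\}$. By contrast $\varphi(I_F)$ does \emph{not} lie in $\hat T$: the tame generator $x\in I_F/P_F$ acts on $\hat T$ through a nontrivial Weyl element, so $\hat M^{\varphi(I_F),\circ}$ is a \emph{twisted} centralizer whose maximal torus is $\hat T^{x,\circ}$, not $\hat T$. Its root system lives in the relative system $R_\tx{res}(\hat T^{x,\circ},\hat M)$, and the membership criterion is the Kottwitz--Shelstad R1/R2/R3 analysis of \cite[\S1.3]{KS99}: for a type-R3 relative root the condition is $N\hat\alpha(t)=-1$, not $=1$. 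The paper carries this out and finds a genuine swap: the vanishing condition for the type-R2 root $\hat\alpha_\tx{res}$ matches the non-singularity condition for $\beta_\tx{res}$ (type R3) and vice versa. The final equivalence survives only because one is quantifying over \emph{all} relative roots.

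To even set up that computation one must first arrange ${^Lj}(1\rtimes x)=1\rtimes x$ for $x\in I_F$; this is Lemma~\ref{lem:ljx}, and it requires $G=G^0$. The Remark immediately following that lemma warns that the analogous simplification fails for general $G$---which is exactly why the paper first reduces to the case $\hat G=\hat M$ by factoring $\varphi$ through ${^Lj_M}:{^LM}\to{^LG}$ and applying \cite[\S6.2]{KalDC}. Your remark that ``the passage from $\varphi_S$ to $\varphi$ does not alter the restriction to $I_E$'' concerns $I_E$, not $I_F$, and does not substitute for Lemma~\ref{lem:ljx}.

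A secondary issue: your automorphic-side reformulation (non-singularity $\Leftrightarrow$ $\theta\circ N_{E/F}\circ\alpha^\vee$ nontrivial on $O_E^\times$ for all absolute $\alpha\in R_{0+}$) is not the literal content of Definition~\ref{dfn:nsc}, which is phrased in terms of \emph{restricted} coroots $\alpha_\tx{res}^\vee$ over an unramified $F'$; passing between the two again runs into the R2/R3 dichotomy ($\alpha_\tx{res}^\vee=N\alpha^\vee$ versus $2N\alpha^\vee$), so this step also needs the same bookkeeping rather than a bare citation of \cite[Lemma 3.4.14]{KalRSP}.
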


Granting this lemma, we complete the proof of Proposition \ref{pro:lpdat} as follows. Since $\varphi$ is torally wild Lemma \ref{lem:scpar} implies that $\hat M^{\varphi(I_F),\circ}=\tx{Cent}(\varphi(I_F),\hat G)^\circ$ is a torus, so $\theta$ is $F$-non-singular with respect to $G^0$ by Lemma \ref{lem:lpns}. Furthermore, $\hat j$ identifies $\hat S^{\Gamma,\circ}$ with $\hat T^{\varphi(W_F),\circ} \subset \hat M^{\varphi(W_F),\circ}=\tx{Cent}(\varphi(W_F),\hat G)^\circ$. The discreteness of $\varphi$ implies that $\hat S^\Gamma/Z(\hat G)^\Gamma$ is finite, thus $S/Z(G)$ is anisotropic.

Conversely, starting with a torally wild $L$-packet datum $(S,\hat j,\chi_0,\theta)$ we have $\tx{Cent}(\varphi(I_F),\hat G)^\circ = \hat M^{\varphi(I_F),\circ}$, which is a torus by Lemma \ref{lem:lpns}. By \cite[Lemma 5.2.2]{KalRSP} it normalizes $\hat T$, so by rigidity of tori it centralizes $\hat T$, but since $\hat T$ is maximal it must then lie inside of $\hat T$. Thus $\tx{Cent}(\varphi(W_F),\hat G)^\circ \subset \hat T^{\varphi(W_F),\circ} = \hat S^{\Gamma,\circ}$. But $S/Z(G)$ is anisotropic, so $\hat S^\Gamma/Z(\hat G)^\Gamma$ is finite, so $\varphi$ is discrete.

The proof of Proposition \ref{pro:lpdat} is thus reduced to the proof of Lemma \ref{lem:lpns}. As a preparation for that, we take a closer look at the construction of the $L$-embedding ${^Lj_{\chi_0}} : {^LS} \to {^LG}$ of \cite[\S2.5,\S2.6]{LS87}. Recall that its $\hat G$-conjugacy class is uniquely determined by the $\chi$-data. Let $(\hat T,\hat B,\{X_{\hat\alpha}\}_{\hat\alpha \in \Delta^\vee})$ be a $\Gamma$-invariant pinning of $\hat G$. Assume that $\varphi(W_F)$ normalizes $\hat T$, as \cite[Lemma 5.2.2]{KalRSP} allows. The equation $\varphi = {^Lj_{\chi_0}} \circ \varphi_{S,\chi_0}$ specifies $^Lj_{\chi_0}$ further up to $\hat T$-conjugacy. 

When $G=G^0$ then the datum $\chi_0$ is empty and the $\chi$-datum for $R(S,G)$ of Remark \ref{rem:infchi} is the unique unramified datum, in which $\chi_\alpha$ is the unramified quadratic character when $\alpha$ is symmetric, and trivial when $\alpha$ is asymmetric. We shall denote by $^Lj_0$ the corresponding $L$-embedding.

\begin{lem} \label{lem:ljx} Assume that $G=G^0$. There exists a representative of the $\hat T$-conjugacy class of $^Lj_0$ so that for all $x \in I_F$
\[ {^Lj_0}(1 \rtimes x) = 1 \rtimes x. \]
\end{lem}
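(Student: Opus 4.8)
The plan is to unwind the Langlands--Shelstad construction of the $L$-embedding in the special case where all $\chi$-data are unramified, and exploit the fact that an unramified character kills wild inertia, which will force the relevant cocycle to be trivial on $I_F$.

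First I would recall the explicit shape of $^Lj_0$ from \cite[\S2.5,\S2.6]{LS87}. The embedding has the form $^Lj_0(s \rtimes w) = s \cdot \hat n(w) \cdot r_p(w) \rtimes w$ (for a suitable section $w \mapsto \hat n(w)$ of $N(\hat T,\hat G) \to \Omega$ coming from a chosen splitting, together with the gauge term), where the only $\chi$-data-dependent piece is the factor $r_p(w) \in \hat T$ built as a product over $\Gamma$-orbits of symmetric roots of the values $\chi_\alpha$ evaluated along a cocycle $u_w$ with values in $W_{F_{\pm\alpha}}^\times$ or the like. The key point is that since $G = G^0$, every symmetric root is \emph{unramified} symmetric, so each $\chi_\alpha$ is the unramified quadratic character of $F_{\pm\alpha}^\times$, hence trivial on $O_{F_{\pm\alpha}}^\times$, and in particular trivial on the wild inertia subgroup under the local reciprocity map. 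Therefore $r_p(x) = 1$ for every $x \in I_F$ that maps into the kernel of the relevant unramified characters; more care is needed because $I_F$ is not pro-$p$, so I would argue instead that $r_p|_{I_F}$ is at worst valued in the $2$-torsion of $\hat T$ coming from tame inertia, and then absorb this by a $\hat T$-conjugation.

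The cleaner route, which I expect to be the actual argument, is this: the composite $\varphi = {^Lj_0} \circ \varphi_{S,\chi_0}$ is assumed given, and $\varphi(W_F)$ normalizes $\hat T$; moreover by Lemma \ref{lem:scpar} and the torally wild hypothesis $\varphi(I_F)$ actually acts on $\hat S = \hat T$ through the inertial action defining $S$, and the $L$-embedding's defining property pins down $^Lj_0(1 \rtimes x) \rtimes x$ up to $\hat T$-conjugacy as $\hat n(x) r_p(x) \rtimes x$. Now $\hat n$ can be chosen (via a $\Gamma$-invariant splitting, as \cite[Lemma 5.2.2]{KalRSP} supplies a $\Gamma$-invariant pinning) so that $\hat n(x) = 1$ for $x \in I_F$ acting trivially on the based root datum — but here the subtlety is that $I_F$ does act nontrivially, through the ramified part. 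What saves us is that the action of $I_F$ on $\hat T$ defining $\hat S$ factors through a finite quotient, and on that quotient one picks the Tits section; the resulting $\hat n(x)$ for $x \in I_F$ is then a specific element normalizing $\hat T$, and one checks it can be conjugated to $1 \rtimes x$ precisely because $^Lj_0$ restricted to $I_F$ is, by construction with unramified $\chi$-data, the "standard" inclusion $I_F \hookrightarrow {^LS} \to {^LG}$ sending $1 \rtimes x \mapsto 1 \rtimes x$ under the chosen pinnings. I would make this precise by tracking that the only obstruction to $r_p(x) = 1$ is the value of $\chi_\alpha$ on the local norm residue image of $x$, and unramified $\chi_\alpha$ is trivial there; the residual $\hat n(x)$-discrepancy is then removed by replacing the representative of the $\hat T$-conjugacy class.

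The main obstacle I anticipate is handling the interaction between the tame-but-ramified inertial action on $\hat S$ and the Tits/splitting section $\hat n$: one cannot simply say $\hat n(x) = 1$ on all of $I_F$ because $I_F$ does not act trivially on $R(\hat S, \hat G)$, so the statement "${^Lj_0}(1 \rtimes x) = 1 \rtimes x$" must be read with $1 \rtimes x$ denoting the element of $^LG = \hat G \rtimes W_F$ via the \emph{fixed $\Gamma$-pinning}, and the content is that the $\hat G$-component is trivial. I would prove this by choosing the $\Gamma$-invariant pinning of $\hat G$ compatibly with a $\Gamma$-invariant pinning of the endoscopic-type datum, so that the Tits section values $\hat n(x)$ for $x \in I_F$ already lie in the "diagonal" of the pinning and combine with $r_p(x) = 1$ (unramified $\chi$-data) to give exactly $1 \rtimes x$; then invoke uniqueness up to $\hat T$-conjugacy to conclude that this representative is a legitimate choice within the $\hat T$-conjugacy class of $^Lj_0$. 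The routine verification that $\hat n(x) r_p(x) = 1$ under these choices is the calculation I would leave to a reference to \cite[\S2.6]{LS87} combined with the triviality of unramified characters on $O^\times$.
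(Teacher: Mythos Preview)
There is a genuine gap. You correctly identify the raw ingredient---unramified $\chi_\alpha$ vanish on inertia---but you invert the difficulty of the two pieces of $^Lj_0(1\rtimes x)=r_p(x)\cdot n_x\rtimes x$.

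The part you worry about, the Tits lift $n_x$, is trivial automatically: $G=G^0$ means $S$ is maximally unramified in $G$, so the $I_F$-action on $R(\hat S,\hat G)$ preserves a positive system, hence the Weyl component $\omega_x$ of $\tx{Ad}(\varphi(x))$ is $1$ and $n_x=1$. There is no ``ramified part'' of the inertial action to wrestle with here, and no need to align pinnings.

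The part you dismiss as routine, $r_p(x)=1$, is where the content lies. The cochain $r_p$ is a product of $\chi_{\hat\alpha}(v_0(u_i(w)))$ with $u_i(w)\in W_{\pm\hat\alpha}$ determined by a fixed set of coset representatives $w_1,\dots,w_n$ for $\Gamma_{\pm\hat\alpha}\backslash\Gamma$. For an arbitrary choice of $w_i$, there is no reason that $x\in I_F$ forces $v_0(u_i(x))\in I_{\hat\alpha}$, so ``unramified $\chi_\alpha$ kills $O^\times$'' does not apply directly; nor can you just absorb the discrepancy by $\hat T$-conjugation without knowing that $r_p|_{I_F}$ is a coboundary. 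The paper's argument is to choose the representatives as products $\delta_j\tau_i$ with $\delta_j\in I_F$ representing $I_{\pm\hat\alpha}\backslash I_F$ and $\tau_i$ representing $I_F\Gamma_{\pm\hat\alpha}\backslash\Gamma$; then for $x\in I_F$ one computes $u_{ij}(x)\in I_{\pm\hat\alpha}=I_{\hat\alpha}$, and unramifiedness of $\chi_{\hat\alpha}$ finishes it. The delicate point---and the only place $G=G^0$ is actually used---is checking that these $\delta_j\tau_i$ still satisfy the gauge condition $(\delta_j\tau_i)^{-1}\hat\alpha>0$ required by the construction: one has $\tau_i^{-1}\hat\alpha>0$ by choice, and then $\tau_i^{-1}\delta_j^{-1}\tau_i\in I_F$ preserves positivity precisely because $I_F$ preserves a positive chamber. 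The Remark following the lemma in the paper notes that this step fails, and the conclusion is false, without the hypothesis $G=G^0$; so the verification cannot be left to a citation.
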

\begin{proof} In order to obtain a representative of the $\hat T$-conjugacy class we follow \cite[\S2.5]{LS87} and make the following choices: First we choose one representative $\hat\alpha \in R(\hat S,\hat G)_\tx{sym}$ within each $\Gamma$-orbit. We make sure $\hat\alpha>0$. For each such chosen $\hat\alpha$ we choose a set of representatives $w_1,\dots,w_n \in W_F$ for the quotient $\Gamma_{\pm\hat\alpha} \lmod \Gamma$, making sure that $w_i^{-1}\hat\alpha>0$. Choose also $v_1 \in W_{\pm\hat\alpha} \sm W_{\hat\alpha}$. Set $v_0=1$. Note that these choices make the gauge $p$ of \cite[\S2.5]{LS87} be given by $p(\beta)=1 \Leftrightarrow \beta>0$. 

We now obtain the representative ${^Lj_0}(s \rtimes w) = \hat j(s) \cdot r_p(\sigma) \cdot n_w \rtimes w$. Here $n_w \in N(\hat T,\hat G)$ is the Tits lift of $\omega_w \in \Omega(\hat T,\hat G)$, and $\omega_w \rtimes \sigma$ is the action on $\hat T$ by $\tx{Ad}(\varphi(w))$. Furthermore, $r_p : W_F \to \hat T$ is defined by
\[ r_p(w) = \prod_{\hat\alpha} \prod_{i=1}^n \hat\alpha^\vee(\chi_{\hat\alpha}(v_0(u_i(w)))). \]
Here $u_i(w) \in W_{\pm\hat\alpha}$ is defined by $w_iw=u_i(w)w_{i'}$, with $i' \in \{1,\dots,n\}$ the unique possible index, and $v_0(u) \in W_{\hat\alpha}$ for $u \in W_{\pm\hat\alpha}$ by $u=v_0(u)v_{i'}$, where $i' \in
\{0,1\}$ is again the unique possible index.

As is shown in \cite[\S2.5]{LS87} making different choices changes the 1-cochain $r_p$ up to 1-coboundaries, and hence $^Lj_0$ up to $\hat T$-conjugacy. We will now show how to make the choices so as to obtain $r_p(x)=1$ for $x \in I_F$. The assumption $G=G^0$ implies $\omega_x=1$ and hence $n_x=1$, so the lemma will be proved.

What we want is for the contribution of the unramified symmetric roots to vanish when $w=x$. Thus let $\hat\alpha$ be an unramified symmetric root. Let $I_{\hat\alpha}$ and $I_{\pm\hat\alpha}$ be the intersections of $I_F$ with $\Gamma_{\hat\alpha}$ and $\Gamma_{\pm\hat\alpha}$ respectively. We have $I_{\hat\alpha}=I_{\pm\hat\alpha}$ as $\hat\alpha$ is symmetric unramified. We choose representatives $\tau_j \in W_F$ for the coset space $I_F \cdot \Gamma_{\pm\hat\alpha} \lmod \Gamma$, again maintaining $\tau_i^{-1}\hat\alpha>0$, as well as representatives $\delta_j \in I_F$ of the coset space $I_{\pm\hat\alpha} \lmod I_F$. Then $\{\delta_j\tau_i\}$ is a set of representatives for $\Gamma_{\pm\hat\alpha} \lmod \Gamma$. 

We claim that $(\delta_j\tau_i)^{-1}\hat\alpha>0$. Indeed, this equals $(\tau_i^{-1}\delta_j^{-1}\tau_i)\tau_i^{-1}\hat\alpha$. By construction $\tau_i^{-1}\hat\alpha>0$. Moreover, $\tau_i^{-1}\delta_j^{-1}\tau_i \in I_F$ and the assumption $G=G^0$ means that the action of $I_F$ on $R(\hat S,\hat G)$ preserves the set of positive roots.

The claim we just proved means that we can take $\{\delta_j\tau_i\}$ as the set $w_1,\dots,w_n$ of representatives above. For $w=x \in I_F$ we then have $u_{ij}(x)=\delta_j \tau_i x \tau_{i'}^{-1}\delta_{j'}^{-1}$. One now observes that $i'=i$, so $u_{ij}(x)$ is an element of $\Gamma_{\pm\hat\alpha} \cap I_F = I_{\pm\hat\alpha}=I_{\hat\alpha}$. Hence $v_0(u_{ij}(x))=u_{ij}(x) \in I_{\hat\alpha}$. But $\chi_{\hat\alpha}$ is unramified, so $\chi_{\hat\alpha}(v_0(u_{ij}(x)))=1$.
\end{proof}

\begin{rem} It may be tempting to drop the assumption $G=G^0$ in the above lemma and assert that one can arrange the choices so that the cochain $r_p$ only receives contributions from the ramified symmetric roots. That is, there is a representative of that $\hat T$-conjugacy class of $^Lj_\chi$ so that for all $x \in I_F$
\[ {^Lj_\chi}(1 \rtimes x) = \left(\prod_{\hat\alpha \in R(\hat S,\hat G)_{sym,ram}/\Gamma}\hat\alpha^\vee(z_i(x))\right) n_x \rtimes x, \]
where $z_i(x)$ are complex numbers and $n_x$ is the Tits lift of $\omega_x$ and $\omega_x \rtimes x$ is the action of $\tx{Ad}(\varphi(x))$ on $\hat T$. Note however that the proof will not go through, because there is no guarantee that $(\tau_i^{-1}\delta_j^{-1}\tau_i)$ will send the positive root $\tau_i^{-1}\hat\alpha$ to a positive root. And indeed, this generalization is false. This is the Galois\-theoretic expression of the fact mentioned in \S\ref{sub:char} that the character $\theta'$ need not be non-singular even if $\theta$ is.
\end{rem}

\begin{proof}[Proof of Lemma \ref{lem:lpns}]
We first consider the special case that $\hat G=\hat M$. We have $\varphi : W_F \to N(\hat T,\hat G) \rtimes W_F$. The action of $\tx{Ad}(\varphi(I_F))$ preserves a Borel subgroup of $\hat G$ containing $\hat T$. Upon further conjugating $\varphi$ we may arrange that this Borel subgroup is the chosen one $\hat B$. This implies $\varphi(I_F) \subset \hat T \rtimes I_F$. In particular, all symmetric roots in $R(S,G)$ are unramified and our $\chi$-data consists of unramified quadratic characters.

According to Lemma \ref{lem:ljx} we can arrange that for all $x \in I_F$ we have $^Lj_\chi(1 \rtimes x)=1 \rtimes x$. This means that if $\varphi_S(x)= s \rtimes x$ then $\varphi(x)=\hat j(x) \rtimes x$. We now specify $x \in I_F$ to be a lift of a topological generator of $I_F/P_F$ and let $t \in \hat T$ be determined by $\varphi(x)= t \rtimes x$. Thus $\varphi_S(x)=s \rtimes x$ and $t=\hat j(s)$.

Write again $\hat L = \tx{Cent}(\varphi(I_F),\hat G)^\circ$. Then $\hat L$ is a connected reductive group with maximal torus $\hat T^{x,\circ}$. To determine its root system, we following \cite[\S1.3]{KS99} and consider the relative root system $R_\tx{res}(\hat T^{x,\circ},\hat G)$. We subdivide its elements into types R1/R2/R3 as follows: $\hat\alpha_\tx{res} \in R_\tx{res}(\hat T^{x,\circ},\hat G)$ is of type R1 if it is neither divisible nor multipliable, of type R2 if it is multipliable, and of type R3 if it is divisible. Types R2 and R3 occur only if $\hat G$ has a component of Dynkin type $A_{2n}$ and a power of $\varphi(x)$ preserves and acts non-trivially on this component. In that case, they occur together: the restriction of $\hat\alpha \in R(\hat T,\hat G)$ to $\hat T^{x,\circ}$ is of type R2 if and only if the smallest $l \in \N$ such that $x^l\hat\alpha=\hat\alpha$ is even and $\hat\beta = \hat\alpha+x^{l/2}\hat\alpha$ is also a root. Then the restriction of $\hat\beta$ to $T^{x,\circ}$ is of type R3, and every relative root of type R3 occurs this way.

An element $\hat\alpha_\tx{res} \in R_\tx{res}(\hat T^{x,\circ},\hat G)$ belongs to the root system $R(\hat T^{x,\circ},\hat L)$ if and only if either $\hat\alpha_\tx{res}$ is of type R1 or R2 and $N\hat\alpha(t)=1$, or $\hat\alpha_\tx{res}$ is of type R3 and $N\hat\alpha(t)=-1$. Here $\hat\alpha \in R(\hat T,\hat G)$ is any root restricting to $\hat\alpha_\tx{res}$ and $N\hat\alpha$ is the sum of the members of the $x$-orbit of $\hat\alpha$.

We now consider dually $R(S,G)$ and the relative root system $R_\tx{res}(S',G)$. The bijection $R(S,G) \to R(\hat S,\hat G)$ given by
$\alpha \mapsto \alpha^\vee=\hat\alpha$ induces a type-preserving bijection $R_\tx{res}(S',G) \to R_\tx{res}(\hat S^{I,\circ},\hat G)$. If $\hat\alpha_\tx{res}$ is of type R1 or R3, the coroot of $\alpha_\tx{res}=(\hat\alpha^\vee)_\tx{res}$ is $N\alpha^\vee=N\hat\alpha$. And if $\hat\alpha_\tx{res}$ is of type R2 the corresponding coroot is $2N\alpha^\vee=2N\hat\alpha$.

We now relate this to Definition \ref{dfn:nsc}. Let $F'/F$ be an unramified extension splitting $S'$. The Langlands parameter of the character $\theta\circ N_{F'/F}$ of $S'(F')$ is the composition
\[ \xymatrix{
	W_{F'}\ar[r]&W_F\ar[r]^-{\varphi_S}&\hat S \rtimes W_F\ar[r]&\hat S_{I_F} \rtimes W_F
}
\]
where the first map is the natural inclusion and the last map is the natural projection. For $\alpha_\tx{res} \in R_\tx{res}(S',G)$ the dual of the $F'$-rational homomorphism $\alpha_\tx{res}^\vee : \mb{G}_m \to S'$ is the homomorphism $\hat S_{I_F} \times W_{F'} \to \hat \C^\times$ that is trivial on $W_{F'}$ and given by the factorization of $kN\hat\alpha : \hat S \to \C^\times$ to $\hat S_{I_F}$, where $k=1$ if $\alpha_\tx{res}$ is of type R1 or R3, and $k=2$ if $\alpha_\tx{res}$ is of type R2. Thus the Langlands parameter of $\theta\circ N_{F'/F} \circ \alpha_\tx{res}^\vee$ is $(kN\hat\alpha)\circ \varphi_S|_{W_{F'}}$ and the character $\theta\circ N_{F'/F} \circ \alpha_\tx{res}^\vee$ is trivial on $O_{F'}^\times$ if and only if its parameter is has trivial restriction to $I_{F'}=I_F$.

Let $\hat\alpha_\tx{res}$ be of type R1. Then $\hat\alpha_\tx{res}$ occurs in the root system of $\hat L$ if and only if $N\hat\alpha(t)=1$, which is equivalent to the triviality of $(N\hat\alpha)\circ \varphi_S|_{I_F}$.

Let $\hat\alpha_\tx{res}$ be of type R2. Choose a lift $\hat\alpha \in R(\hat S,\hat G)$ and let $\hat\beta=\hat\alpha+x^{l/2}\hat\beta$ be as above, so that $\hat\beta_\tx{res}$ is of type R3. Note that $N\hat\alpha=N\hat\beta$. Then $\hat\alpha_\tx{res}$ occurs in the root system of $\hat L$ if and only if $N\hat\alpha(t)=1$ and $\hat\beta_\tx{res}$ occurs in that root system if and only if $N\hat\beta(t)=-1$. Now $\alpha_\tx{res}^\vee=2N\hat\alpha$ and $\beta_\tx{res}^\vee=N\hat\beta$. Thus $\hat\alpha_\tx{res}$ occurs in the root system of $\hat L$ if and only if $\theta\circ N_{F'/F}\circ\beta_\tx{res}^\vee$ has trivial restriction to $O_{F'}^\times$, while $\hat\beta_\tx{res}$ occurs in that root system if and only if $\theta\circ N_{F'/F}\circ\alpha_\tx{res}^\vee$ has trivial restriction to $O_{F'}^\times$. This completes the proof in the case $\hat M=\hat G$.

We now turn to the general case. By construction $\varphi(W_F)$ normalizes $\hat M$, $\hat T$, and in addition $\varphi(I_F)$ normalizes a Borel subgroup of $\hat M$ containing $\hat T$, which we can arrange to be $\hat B \cap \hat M$. The pinning of $\hat M$ inherited from the chosen pinning of $\hat G$ gives a section $\tx{Out}(\hat M) \to \tx{Aut}(\hat M)$. We compose $\varphi : W_F \to N(\hat M,{^LG}) \to \tx{Aut}(\hat M) \to \tx{Out}(\hat M)$ with this section and obtain a new homomorphism $W_F \to \tx{Aut}(\hat M)$. It extends to $\Gamma_F$ and induces an action of $\Gamma_F$ on $\hat M$ preserving the pinning. Let $M$ be the quasi-split $F$-group whose dual group is $\hat M$ with this pinned $\Gamma$-action. 

The $\chi$-data for $R(S^0,G)$ leads to an embedding ${^Lj_M} : {^LM} \to {^LG}$ by the construction of \cite[\S6.1]{KalDC}. The image of this $L$-embedding contains the image of $\varphi$ which leads to a factorization $\varphi = {^Lj_M} \circ \varphi_M$ for a parameter $\varphi_M : W_F \to {^LM}$. The natural inclusion restricts to an isomorphism $\tx{Cent}(\varphi_M,\hat M) \to \tx{Cent}(\varphi,\hat G)$. We conclude that $\varphi_M$ is a torally wild supercuspidal parameter. We apply the established special case $\hat M=\hat G$ to the parameter $\varphi_M$. Thus we have the $L$-embedding $^Lj_{S,M} : {^LS} \to {^LM}$, obtained from unramified $\chi$-data, since $S$ is maximally unramified with respect to $M$, and the factorization $\varphi_M = {^Lj_{S,M}} \circ \varphi_{S,M}$ for a parameter $\varphi_{S,M} : W_F \to {^LS}$. The character $\theta_M : S(F) \to \C^\times$ corresponding to $\varphi_{S,M}$ is $F$-non-singular by the previously handled case.

According to \cite[\S6.2]{KalDC}, the composition ${^Lj_{S,M}} \circ {^Lj_M}$ is equal to the $L$\-embedding $^Lj_{S,G} : {^LS} \to {^LG}$ obtained by making $(\chi_{\alpha_0})$ into $\chi$-data for $R(S,G) \sm R(S,G^0)$ and complementing it with unramified $\chi$-data for $R(S,G^0)$. Therefore the parameters $\varphi_S$ and $\varphi_{S,M}$ are $\hat S$-conjugate, so $\theta=\theta_M$.
\end{proof}

\subsection{Construction of the $L$-packet} \label{sub:lpackconst}

Let $(S,\hat j,\chi_0,\theta)$ be a torally wild $L$-packet datum. We write down a formula for a function $\Theta : S(F)_\tx{reg} \to \C$ just as in \cite[(5.3.1)]{KalRSP}: We choose a non-trivial character $\Lambda : F \to \C^\times$ and for each $\alpha \in R(S,G)$ we define $\bar a_\alpha \in [F_\alpha]_{(r_{\Lambda,\alpha}-r_{\theta,\alpha})}/[F_\alpha]_{(r_{\Lambda,\alpha}-r_{\theta,\alpha})+}$, by the formula
\[ \theta \circ N_{F_\alpha/F} \circ \alpha^\vee(X+1) = \Lambda\circ\tx{tr}_{F_\alpha/F}(\bar a_\alpha X), \]
where $r_{\Lambda,\alpha}$ and $r_{\theta,\alpha}$ are the depths of the characters $\Lambda \circ \tx{tr}_{F_\alpha/F} : F_\alpha \rw \C^\times$ and $\theta \circ N_{F_\alpha/F} \circ \alpha^\vee : F_\alpha^\times \rw \C^\times$ respectively, and $X$ is a variable in $[F_\alpha]_{r_{\theta,\alpha}}/[F_\alpha]_{r_{\theta,\alpha}+}$. Then
\begin{equation} \label{eq:charconj} \Theta(\gamma) := \epsilon_L(X^*(T)_\C - X^*(S)_\C,\Lambda)\Delta_{II}^\tx{abs}[\bar a,\chi](\gamma)\theta(\gamma).\end{equation}
By \cite[Lemma 5.3.1]{KalRSP}, this function depends only on the isomorphism class of $(S,\hat j,\chi_0,\theta)$.

As explained in \cite[\S5.1]{KalRSP},  the map $\hat j$ gives rise to a unique stable conjugacy class of embeddings $S \to G$, called admissible, which identify $S$ with a maximal torus of $G$ defined over $F$. For every inner twist $\xi : G \to G'$ we obtain by composition a stable conjugacy class of embeddings $S \to G'$ with the same property, also called admissible.

To each admissible embedding $j : S \to G'$ we shall now assign a non-singular Deligne-Lusztig packet $[\pi_{(jS,\theta_j)}]$ by following the guiding principle that the formula for Harish-Chandra character of the (possibly reducible) supercuspidal representation $\pi_{(jS,\theta_j)}$ defined by \eqref{eq:pdpi} is given on shallow elements $\gamma' \in jS(F)$ by the function
\begin{equation} \label{eq:charwish} e(G')|D_{G'}(\gamma')|^{-\frac{1}{2}}\sum_{w \in \Omega(jS(F),G'(F))} \Theta(j^{-1}(\gamma'^w)). \end{equation}

This can be done explicitly as follows. Let $(\chi''_\alpha)_\alpha$ be the $\chi$-data for $R(S,G)$ computed in terms of $\theta$ as reviewed in \S\ref{sub:char}. Since both $\chi$ and $\chi''$ agree on $R(S,G^0)$ and on $R(S,G) \sm R(S,G^0)$ are inflated (cf. \cite[Definition 5.14]{KalDC}) from $R(Z(G^0)^\circ,G)$, we can modify $(S,\hat j,\chi_0,\theta)$ within its isomorphism class to ensure $\chi=\chi''$. Note that since this modification changes $\theta$ only by a depth-zero character, while $\chi''$ is computed in terms of $S(F)_{0+}$, this is a well-defined procedure. We now take $\theta_j=j_*\theta$. It now follows from \eqref{eq:char} that the character of $\pi_{(jS,j_*\theta)}$ on shallow elements of $jS(F)$ is given by \eqref{eq:charwish}.

The resulting non-singular Deligne-Lusztig packet $[\pi_{(jS,\theta_j)}]$ is uniquely determined by the isomorphism class of $(S,\hat j,\chi_0,\theta)$ and the admissible embedding $j : S \to G'$.  We define the $L$-packet $\Pi_\varphi(G')$ as the disjoint union of the non-singular Deligne-Lusztig packets $[\pi_{(jS,\theta_j)}]$ for all $G'(F)$-conjugacy classes of admissible embeddings $j$.

We will now put together the individual $L$-packets $\Pi_\varphi(G')$ into a compound $L$-packet $\Pi_\varphi$ encompassing all rigid inner forms of $G$. The construction is the same as in \cite[\S5.3]{KalRSP}.
We introduce the notion of a non-singular Deligne-Lusztig packet datum. It is a tuple $(S,\hat j,\chi,\theta,(G',\xi,z),j)$, where $(S,\hat j,\chi,\theta)$ is a torally wild $L$-packet datum, $(G',\xi,z)$ is a rigid inner twist of $G$ in the sense of \cite[\S5.1]{KalRI}, and $j : S \rw G'$ is an admissible embedding defined over $F$. We organize these data into a category, where a morphism 
\[ (S_1,\hat j_1,\chi_1,\theta_1,(G'_1,\xi_1,z_1),j_1) \rw (S_2,\hat j_2,\chi_2,\theta_2,(G'_2,\xi_2,z_2),j_2) \] 
is given by $(\iota,g,\zeta,f)$, where $(\iota,g,\zeta)$ is an isomorphism of the underlying regular torally wild $L$-packet data, $f : (G'_1,\xi_1,z_1) \rw (G_2',\xi_2,z_2)$ is an isomorphism of rigid inner twists, and $j_2\circ \iota = f \circ j_1$. There is an obvious forgetful functor from the category of non-singular Deligne-Lusztig packet data to the category of torally wild $L$-packet data. If we fix a torally wild $L$-packet datum $(S,\hat j,\chi,\theta)$, the set of isomorphism classes of non-singular Deligne-Lusztig packet data mapping to it is a torsor under $H^1(u \rw W,Z(G) \rw S)$. This torsor is given by the relation
\begin{equation} \label{eq:jtors} x \cdot (G_1',\xi_1,z_1,j_1) = (G_2',\xi_2,z_2,j_2) \Leftrightarrow x = \tx{inv}(j_1,j_2),
\end{equation}
see \cite[\S5.1]{KalRI}.

To each non-singular Deligne-Lusztig packet datum $(S,\hat j,\chi_0,\theta,(G',\xi,z),j)$ we associate the corresponding non-singular Deligne-Lusztig packet $[\pi_{(jS,\theta_j)}]$ on the group $G'(F)$. The compound packet $\Pi_\varphi$ is then defined as the union of the sets $\{(G',\xi,z,\pi)| \pi \in [\pi_{(jS,\theta_j)}]\}$, as $(S,\hat j,\chi_0,\theta,(G',\xi,z),j)$ runs over the isomorphism classes of non-singular Deligne-Lusztig packet data that map to the isomorphism class of the torally wild packet datum $(S,\hat j,\chi_0,\theta)$.

It is possible that two non-isomorphic non-singular Deligne-Lusztig packet data
$(S,\hat j,\chi_0,\theta,(G',\xi,z),j_i)$, $i=1,2$ give the same non-singular Deligne-Lusztig packet. By Corollary \ref{cor:pdclass1} this happens if and only if there is $w \in \Omega(S,G)(F)_\theta$ such that $(S,\hat j,\chi_0,\theta,(G',\xi,z),j_2\circ w)$ is isomorphic to $(S,\hat j,\chi_0,\theta,(G',\xi,z),j_1)$.

\begin{lem} \label{lem:dz_genconst}
Assume $\varphi$ has depth zero. For any given Whittaker datum $\mf{w}$ of $G$ there exists a unique $\mf{w}$-generic member of $\Pi_\varphi$.
\end{lem}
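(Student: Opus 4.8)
The strategy is to reduce the genericity statement for the depth-zero compound $L$-packet $\Pi_\varphi$ to a genericity statement for the depth-zero non-singular Deligne--Lusztig packets over the residue field, and then to invoke the results of Appendix \ref{app:gendz} on generic depth-zero supercuspidal representations of (possibly ramified) groups. Concretely, since $\varphi$ has depth zero, the torus $S$ attached to $(S,\hat j,\chi_0,\theta)$ is maximally unramified in $G$ (i.e.\ $G^0=G$), so $\theta$ is itself a depth-zero $F$-non-singular character, and the packet $\Pi_\varphi$ on the quasi-split form $G$ consists of the irreducible constituents $\pi^\epsilon_{(jS,\theta_j,\rho)}=\tx{c-Ind}_{G(F)_x}^{G(F)}\kappa^\epsilon_{(jS,\theta_j,\rho)}$ of the various $\pi_{(jS,\theta_j)}$, as $j$ runs over $G(F)$-classes of admissible embeddings and $\rho$ over $\tx{Irr}(N(jS,G)(F)_{\theta_j},\theta_j)$. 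By the theory of depth-zero supercuspidals (Moy--Prasad, and the genericity results of \S\ref{app:gendz}), such a representation is $\mf{w}$-generic if and only if the vertex $x$ can be taken to be associated with the Whittaker datum $\mf{w}$ (i.e.\ $x$ is in the $G(F)$-orbit picked out by $\mf{w}$, and $G(F)_{x,0}=G(F)_x$ in the relevant sense for genericity) and the finite Deligne--Lusztig constituent $\kappa^\epsilon_{(\ms{S},\theta,\rho)}$ of $\ms{G}_x$, restricted to $\ms{G}_x^\circ(k_F)$, contains a generic constituent of $\mc{R}_{\ms{S}^\circ}^{\ms{G}_x^\circ}(\theta^\circ)$ in general position relative to the $k_F$-Whittaker datum induced by $\mf{w}$.

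First I would establish existence. Among the admissible embeddings $j:S\to G$, fix one whose associated vertex $x_j$ lies in the facet determined by $\mf{w}$; this is possible because $S$ is elliptic maximally unramified and every such torus is $G(F)$-conjugate to one whose vertex is absolutely special, and absolutely special vertices are exactly the ones that can carry a Whittaker datum (Appendix \ref{app:absvert}, \cite[\S6.1]{DR09} as extended in \S\ref{app:gendz}). For that $x_j$, the reductive quotient $\ms{G}_{x_j}^\circ$ inherits a $k_F$-Whittaker datum from $\mf{w}$, and $\theta^\circ$ is non-singular with respect to $\ms{G}_{x_j}^\circ$. The classical result of Deligne--Lusztig (and its refinement by Lusztig, used in \S\ref{sub:nsdlft}) says that $\pm\mc{R}_{\ms{S}^\circ}^{\ms{G}_{x_j}^\circ}(\theta^\circ)=\kappa_{(\ms{S}^\circ,\theta^\circ)}^{\ms{G}_{x_j}^\circ}$ contains a \emph{unique} generic constituent, and it occurs with multiplicity one; concretely the generic constituent corresponds under Proposition \ref{pro:dlp} to the trivial element of the torsor $[\kappa_{(\ms{S}^\circ,\theta^\circ)}^{\ms{G}_{x_j}^\circ}]$ once the torsor structure is normalized using $\mf{w}$. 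Transporting this through the bijection of Theorem \ref{thm:dlpar} for the disconnected group $\ms{G}_{x_j}$ (and then through compact induction via Proposition \ref{pro:dzclass}), one obtains a member $\pi^\epsilon_{(jS,\theta_j,\rho)}\in\Pi_\varphi$ whose restriction to $G(F)_{x_j,0}$, hence to the pro-unipotent radical, carries a $\mf{w}$-Whittaker functional; the genericity criterion of \S\ref{app:gendz} then shows $\pi^\epsilon_{(jS,\theta_j,\rho)}$ is $\mf{w}$-generic.

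For uniqueness I would argue in two stages. First, a $\mf{w}$-generic depth-zero supercuspidal of $G(F)$ must be induced from a vertex $x$ in the $G(F)$-orbit associated to $\mf{w}$ — this is the content of the (extended) Debacker--Reeder genericity analysis, \S\ref{app:gendz} — so only the admissible embeddings $j$ with $x_j$ in that orbit can contribute generic members, and by the remark after the construction of $\Pi_\varphi$ all such $j$ are $G(F)$-conjugate (they give the same non-singular Deligne--Lusztig packet up to the $\Omega(S,G)(F)_\theta$-action absorbed into $\rho$). Second, within the single Deligne--Lusztig packet $[\pi_{(jS,\theta_j)}]$, a constituent $\pi^\epsilon_{(jS,\theta_j,\rho)}$ is $\mf{w}$-generic iff the corresponding $\ms{G}_{x_j}(k_F)$-representation $\kappa^\epsilon_{(\ms{S},\theta,\rho)}$ has a $\mf{w}$-generic constituent upon restriction to $\ms{G}_{x_j}^\circ(k_F)$; since $\kappa_{(\ms{S}^\circ,\theta^\circ)}^{\ms{G}_{x_j}^\circ}$ is multiplicity-free (Theorem \ref{thm:multfree}) with a unique generic piece, exactly one $\rho$ can work, and then uniqueness of the generic constituent of $\kappa^\epsilon_{(\ms{S},\theta,\rho)}$ itself follows because distinct generic constituents of its restriction would be $\ms{G}_{x_j}(k_F)$-conjugate yet the generic constituent of $\kappa_{(\ms{S}^\circ,\theta^\circ)}^{\ms{G}_{x_j}^\circ}$ is fixed by all automorphisms preserving $(\ms{S}^\circ,\theta^\circ,\mf{w})$.

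\textbf{Main obstacle.} The delicate point is the interface between the disconnected group $\ms{G}_x$ and the connected $\ms{G}_x^\circ$: one must check that the \emph{unique} generic constituent of $\mc{R}_{\ms{S}^\circ}^{\ms{G}_x^\circ}(\theta^\circ)$ extends to a constituent $\kappa^\epsilon_{(\ms{S},\theta,\rho)}$ of $\ms{G}_x$ which remains generic for $G(F)_x=\ms{G}_x(k_F)$ — in other words that the coherent splitting $\epsilon$ does not obstruct the existence of a $\mf{w}$-Whittaker functional, and that generic vectors in the middle-degree cohomology $H^{d_U}_c(Y_U,\bar\Q_l)_\theta$ are compatible with the intertwining operators $R_U^{G,\epsilon}(n)$ — together with the Bruhat--Tits bookkeeping (Appendices \ref{app:parahoric}, \ref{app:absvert}) needed to know that for a generic representation $G(F)_{x,0}$ and $G(F)_x$ behave the same way relative to $\mf{w}$. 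This is exactly what \S\ref{app:gendz} is designed to handle, so the proof is mostly an assembly of Theorem \ref{thm:dlpar}, Proposition \ref{pro:dzclass}, Corollary \ref{cor:dzclass1}, and the appendix on genericity, with the Deligne--Lusztig uniqueness of the generic piece as the key classical input.
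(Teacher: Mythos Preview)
Your overall strategy matches the paper's: reduce via the genericity criterion for depth-zero supercuspidals (Appendix~\ref{app:gendz}) to a question over the residue field, pin down the unique admissible embedding $j$ whose vertex is the absolutely special one attached to $\mf{w}$, and then invoke uniqueness of the generic constituent of the Deligne--Lusztig representation. Two corrections are worth making.

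First, the crucial finite-field input is \emph{not} contained in the Lusztig results of \S\ref{sub:nsdlft}. Theorem~\ref{thm:multfree} and Proposition~\ref{pro:dlp} give multiplicity-freeness and the torsor structure on $[\kappa_{(\ms{S}^\circ,\theta^\circ)}^{\ms{G}_x^\circ}]$, but say nothing about which constituent is generic; your remark that ``the generic constituent corresponds to the trivial element of the torsor'' is not something those results provide. The paper instead cites \cite[Proposition~3.10]{DLM92} (Digne--Lehrer--Michel) for the existence and uniqueness of the $\psi_x$-generic constituent of $\kappa_{(\ms{S}^\circ,\theta^\circ_j)}^{\ms{G}_x^\circ}$. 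You should replace your attribution accordingly.

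Second, your ``main obstacle'' is overblown. The paper does not need to check compatibility of Whittaker vectors with the operators $R_U^{G,\epsilon}(n)$ or with the coherent splitting $\epsilon$. The argument runs entirely through Proposition~\ref{pro:dzgen}: a constituent $\pi=\tx{c-Ind}_{G(F)_x}^{G(F)}\rho$ of $\pi_{(jS,\theta_j)}$ is $\mf{w}$-generic if and only if $\rho|_{G(F)_{x,0}}$ contains a $\psi'_x$-generic cuspidal. By \cite{DLM92} there is a unique such cuspidal $\sigma$ inside $\kappa_{(\ms{S}^\circ,\theta^\circ_j)}^{\ms{G}_x^\circ}$, and then there is a unique irreducible $\rho$ of $G(F)_x$ (among the constituents of $\kappa_{(\ms{S},\theta_j)}^{\ms{G}_x}$) whose restriction contains it. The parameterization $(\rho',\epsilon)$ of Theorem~\ref{thm:dlpar} is not invoked in this step; it only serves to place the resulting generic member inside the packet. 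So the interface between $\ms{G}_x^\circ$ and $\ms{G}_x$ is handled by elementary Clifford-theoretic reasoning, not by any analysis of the intertwining operators.
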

\begin{proof}
By Lemma \ref{lem:dzgenchar}, $\mf{w}$ determines a absolutely special vertex $x \in \mc{B}(G,F)$, unique up to $G(F)$-conjugacy, s.t. $\psi$ has depth zero at $x$ for some $(B,\psi) \in \mf{w}$. According to Proposition \ref{pro:dzgen} a depth-zero supercuspidal representation is $\mf{w}$-generic if and only if it is induced from an irreducible representation of $G(F)_x$ containing a $\psi_x$-generic cuspidal representation of $G(F)_{x,0}$. By \cite[Lemmas 3.4.12]{KalRSP} there exists precisely one admissible embedding $j : S \to G$ up to $G(F)$-conjugacy such that the vertex $x$ corresponds to the maximal torus $j(S) \subset G$. A $\mf{w}$-generic member of $\Pi_\varphi$ can thus only come from the non-singular Deligne-Lusztig packet $[\pi_{(jS,\theta_j)}]$. By \cite[Proposition 3.10]{DLM92}, there exists a unique $\psi_x$-generic irreducible component of the Deligne-Lusztig character $\kappa_{(jS,\theta^\circ_j)}$, and hence a unique irreducible representation of $G(F)_x$ containing it upon restriction. Its compact induction to $G(F)$ is then the unique $\mf{w}$-generic element of $[\pi_{(jS,\theta_j)}]$.
\end{proof}

\begin{rem} We expect that the case of positive depth can be reduced to the case of depth zero using the local character expansions of \cite{Spice17}.
\end{rem}

\subsection{Study of the centralizer $S_\varphi$} \label{sub:sphi}

Before we can construct a bijection between the compound $L$-packet $\Pi_\varphi$ constructed in \S\ref{sub:lpackconst} and $\tx{Irr}(\pi_0(S_\varphi^+))$, we need to have a better understanding of the group $S_\varphi$ and its cover $S_\varphi^+$. Contrary to the case of regular supercuspidal parameters, where $S_\varphi$ is always abelian and in fact canonically isomorphic to $\hat S^\Gamma$, this is no longer true for arbitrary supercuspidal parameters, even for classical Dynkin types, as the following example shows.

\begin{exa} We consider the split group $G=\tx{Spin}_9$, which is of type $B_4$. Its dual group is $\hat G=\tx{PSp}_8(\C)$. We shall produce a discrete parameter of depth zero $\varphi : W_F/P_F \to \hat G$ such that
\[ S_\varphi \cong [(\mu_4)^4_2/\mu_2] \rtimes (\Z/2\Z), \]
where $(\mu_4)^4_2$ is the subgroup of $\mu_4^4$ consisting of those $(t_1,t_2,t_3,t_4)$ satisfying $t_1^2=t_2^2=t_3^2=t_4^2$, $\mu_2$ is embedded diagonally in that subgroup, and $\Z/2\Z$ acts on it by sending $(t_1,t_2,t_3,t_4)$ to $(t_4,t_3,t_2,t_1)$. For this, we assume that $q=|k_F|>8$, choose a primitive $(q+1)$-root of unity $\zeta \in \C^\times$, and consider the matrices
\[
\begin{bmatrix}&&&&&&&1\\ &&&&&&-1&\\ &&&&&1&&\\ &&&&-1&&&\\ &&&1&&&&\\ &&-1&&&&&\\ &1&&&&&&\\ -1&&&&&&&
\end{bmatrix}\ ,\
\begin{bmatrix} &&&-1&&&&\\ &&-1&&&&&\\ &1&&&&&&\\ 1&&&&&&&\\ &&&&&&&-1\\ &&&&&&-1&\\ &&&&&1&&\\ &&&&1&&&\\
\end{bmatrix}
\]
which we call $j$ and $n$, respectively, and let $t$ be the diagonal matrix with diagonal entries $(\zeta,\zeta^2,-\zeta^2,-\zeta,-\zeta^{-1},-\zeta^{-2},\zeta^{-2},\zeta^{-1})$. We realize $\tx{Sp}_8$ as the subgroup of $\tx{GL}_8$ that preserves the symplectic form given by $j$, i.e. the subgroup of matrices $g$ satisfying $j^{-1}\cdot g^T\cdot j=g^{-1}$. Then the matrices $j,n,t$ all belong to $\tx{Sp}_8(\C)$. The element $t$ is regular semi-simple. Its image in $\hat G$ is not strongly regular, because it commutes with $n$. In fact, $n$ generates the stabilizer of $t$ in the Weyl group of the diagonal maximal torus in $\hat G$. In $\hat G$ we have $j\cdot t \cdot j^{-1}=t^q$, $jn = nj$, $nt= tn$, and $n^2=1$. As before we let $x \in I_F/P_F$ be a topological generator, and choose a Frobenius element $y \in W_F/P_F$. We define $\varphi(x)=t$ and $\varphi(y)=j$. Then $\hat S^\Gamma = \hat T^j$ is the 2-torsion subgroup of $\hat T$ and is thus canonically isomorphic to $(\mu_4)^4_2/\mu_2$. The element $n$ also belongs to $S_\varphi$. It projects onto a generator of $\Omega(S,G)(F)_\theta \cong \Z/2\Z$ and acts on $\hat S^\Gamma$ as stated. \hfill$\sslash$
\end{exa}

We consider the following functors from the category of torally wild $L$-packet data to the category of groups:
\begin{enumerate}
	\item $(S,\hat j,\chi,\theta) \mapsto S_\varphi$, where $\varphi := {^Lj} \circ \varphi_S$, ${^Lj} : {^LS} \to {^LG}$ is the extension of $\hat j$ given by $\chi$, well-defined up to conjugation by $\hat T$, and $\varphi_S : W_F \to {^LS}$ is the parameter of $\theta$. It sends the morphism $(\iota,g,\zeta) : (S_1,\hat j_1,\chi_1,\theta_1) \to (S_2,\hat j_2,\chi_2,\theta_2)$ to the morphism $\tx{Ad}(g) : S_\varphi \to S_{\tx{Ad}(g)\varphi}$.
	\item $(S,\hat j,\chi,\theta) \mapsto \hat S^\Gamma$. It sends the morphism $(\iota,g,\zeta)$ to the morphism $\hat\iota^{-1} : \hat S_1^\Gamma \to \hat S_2^\Gamma$.
	\item $(S,\hat j,\chi,\theta) \mapsto \Omega(S,G)(F)_\theta$. It sends the morphism $(\iota,g,\zeta)$ to the morphism $\Omega(S_1,G)(F)_{\theta_1} \to \Omega(S_2,G)(F)_{\theta_2}$ induced by $\iota$.
\end{enumerate}

\begin{pro} \label{pro:sphiseq} There is a functorial exact sequence
\begin{equation} \label{eq:sphi} 1 \to \hat S^\Gamma \to S_\varphi \to \Omega(S,G)(F)_\theta \to 1. \end{equation}
\end{pro}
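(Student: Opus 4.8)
The plan is to construct the three maps in the sequence \eqref{eq:sphi} and verify exactness, then check functoriality. First I would recall that $S_\varphi = \mathrm{Cent}(\varphi(W_F),\hat G)$, where $\varphi = {^Lj}\circ\varphi_S$ maps $W_F$ into $N(\hat T,\hat G)\rtimes W_F$ with $\varphi(W_F)$ normalizing $\hat T$ (this is arranged in \S\ref{sub:lparam}, and it is the form in which $\varphi$ is presented there). The inclusion $\hat S^\Gamma = \hat T^{\varphi(W_F)} \hookrightarrow S_\varphi$ is obvious, since $\hat T$ centralizes itself and elements of $\hat T$ fixed by the $\varphi$-twisted $W_F$-action commute with all $\varphi(w)$. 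The map $S_\varphi \to \Omega(S,G)(F)_\theta$ is obtained by observing that any $s \in S_\varphi \subset \hat G$, since it commutes with $\varphi(I_F)$ which contains the element $t \in \hat T$ whose $\hat G$-centralizer together with the Frobenius image controls the situation, must normalize $\hat T$: indeed $\mathrm{Cent}(\varphi(I_F),\hat G)^\circ$ is a torus by Lemma \ref{lem:scpar}, hence equals $\hat T^{\varphi(I_F),\circ}$, but actually one argues that $s$ normalizes $\hat T$ because $s$ normalizes the connected centralizer of $\varphi(P_F)$, which is $\hat M$, and within $\hat M$ it normalizes the connected centralizer $\hat T$ of $\varphi(I_F)$ (using that this is a torus and maximal in $\hat M$, as in the proof of Lemma \ref{lem:lpns}). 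Thus $s$ has a well-defined image $w_s \in \Omega(\hat T,\hat G) = \Omega(\hat S,\hat G) = \Omega(S,G)$; one checks $w_s$ is $\Gamma$-fixed because $s$ commutes with $\varphi(W_F)$ whose projection to $\Omega\rtimes W_F$ realizes the Galois action, and $w_s$ fixes $\theta$ because conjugation by $s$ preserves $\varphi_S$ up to $\hat S$-conjugacy, which dually says $w_s\theta = \theta$. This gives a homomorphism $S_\varphi \to \Omega(S,G)(F)_\theta$ with kernel exactly $S_\varphi \cap \hat T = \hat T^{\varphi(W_F)} = \hat S^\Gamma$, yielding left-exactness and injectivity of the first map.

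\textbf{Surjectivity of the second map.} The substantive point is surjectivity of $S_\varphi \to \Omega(S,G)(F)_\theta$: given $w \in \Omega(S,G)(F)_\theta$ I must lift it to an element of $\hat G$ centralizing all of $\varphi(W_F)$. Here I would exploit the factorization $\varphi = {^Lj}\circ\varphi_S$ and the structure of the Langlands--Shelstad embedding. Dually, $w$ fixing $\theta$ means $\varphi_S$ and ${^w}\varphi_S$ are $\hat S$-conjugate as parameters into $^LS$; picking a representative $n_w \in N(\hat T,\hat G)$ of $w$, the element $n_w$ conjugates ${^Lj}\circ\varphi_S$ to another $L$-embedding of $^LS$ composed with $\varphi_S$, and since both $L$-embeddings arise from the same $\chi$-data (as $w$ fixes $\theta$, hence preserves the relevant root data and $\chi$-data up to the allowed equivalence), they are $\hat T$-conjugate by the uniqueness in Lemma \ref{lem:ljx} and \cite[\S2.5]{LS87}. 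Adjusting $n_w$ by the appropriate element of $\hat T$ produces $s \in \hat G$ with $\mathrm{Ad}(s)\varphi = \varphi$, i.e. $s \in S_\varphi$ mapping to $w$. I expect this to be the main obstacle — the bookkeeping of how the Langlands--Shelstad cocycle $r_p$ transforms under $\Omega(S,G)(F)_\theta$, and confirming that the two $L$-embeddings being compared genuinely differ by a $\hat T$-coboundary rather than merely a $N(\hat T,\hat G)$-coboundary. This is exactly the computation carried out in \cite[Proof of Prop. 5.2.7]{KalRSP} in the regular case, where the analogue of $\Omega(S,G)(F)_\theta$ is trivial; here the argument must be run with the nontrivial Weyl-stabilizer present, but the mechanism is the same.

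\textbf{Functoriality.} For functoriality I would simply track the three functors against a morphism $(\iota,g,\zeta_0): (S_1,\hat j_1,\chi_{0,1},\theta_1)\to(S_2,\hat j_2,\chi_{0,2},\theta_2)$. On $S_\varphi$ the morphism is $\mathrm{Ad}(g)$; on $\hat S^\Gamma$ it is $\hat\iota^{-1}$; on $\Omega(S,G)(F)_\theta$ it is the isomorphism induced by $\iota$. Commutativity of the left square is the statement $\mathrm{Ad}(g)|_{\hat S_1^\Gamma} = \hat\iota^{-1}$ composed with the two inclusions, which follows from the defining relation $\hat j_1\circ\hat\iota = \mathrm{Ad}(g)\circ\hat j_2$ of a morphism; commutativity of the right square follows by transporting the construction of $w_s$ through $\mathrm{Ad}(g)$ and using that $\iota$ intertwines the two Weyl-group actions. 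All of this is routine once the maps are in place. I would close by remarking that the extension \eqref{eq:sphi} need not split and need not have multiplicity one — as will be illustrated in \S\ref{sub:sphi} — but that is not needed for the present proposition.
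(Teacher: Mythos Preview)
Your approach is the same as the paper's: both take $\varphi(W_F)$-invariants of the exact sequence $1 \to \hat T \to N(\hat T,\hat G) \to \Omega(\hat T,\hat G) \to 1$, identify $\hat T^{\varphi(W_F)}$ with $\hat S^\Gamma$ and $N(\hat T,\hat G)^{\varphi(W_F)}$ with $S_\varphi$, and then compute the image in $\Omega(S,G)(F)$ by analyzing the connecting homomorphism. The paper packages the connecting-homomorphism computation as Lemma~\ref{lem:c1}, which says that $w \in \Omega(S,G)(F)$ is sent to the parameter of $w\theta/\theta$ \emph{provided the $\chi$-data is $w$-invariant}.

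The gap in your argument is precisely the justification of this proviso. You write that $w$ fixing $\theta$ ``hence preserves the relevant root data and $\chi$-data up to the allowed equivalence'', but fixing $\theta$ does not by itself force $w$ to preserve the $\chi$-data $\chi_0$ on $R(S^0,G)$: an element of $\Omega(S,G)(F)_\theta$ could a priori permute the $\Gamma$-orbits in $R(S^0,G)$ nontrivially. What the paper does is first observe that $S_\varphi \subset N(\hat T,\hat M)$ (from \cite[Lemma~5.2.2]{KalRSP}), so the image of $S_\varphi$ in the Weyl group lands in $\Omega(\hat S,\hat M)^\Gamma = \Omega(S,G^0)(F)$. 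Elements of $\Omega(S,G^0)$ act trivially on $S^0 = Z(G^0)^\circ$, hence automatically preserve the inflated $\chi$-data; Lemma~\ref{lem:c1} then identifies the image with $\Omega(S,G^0)(F)_\theta$. The final step is Lemma~\ref{lem:weylred}, which gives $\Omega(S,G^0)(F)_\theta = \Omega(S,G)(F)_\theta$. You implicitly establish the containment in $\Omega(S,G^0)$ when arguing that $s$ normalizes $\hat T$ via $\hat M$, but in the surjectivity direction you start from an arbitrary $w \in \Omega(S,G)(F)_\theta$ and need Lemma~\ref{lem:weylred} to place it in $\Omega(S,G^0)(F)$ before the $\chi$-data invariance is available.
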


We begin with a preparatory lemma, considering a more general situation where $j : S \to G$ is an embedding defined over $F$ of a torus $S$ into $G$ as a maximal torus, $\theta : S(F) \to \C^\times$ is a character, $\varphi_S : W_F \to {^LS}$ its parameter, $\chi$ a set of $\chi$-data for $R(S,G)$, $^Lj : {^LS} \to {^LG}$ the corresponding $L$-embedding, with image $\hat T := {^Lj}(\hat S)$, and $\varphi = {^Lj} \circ \varphi$. We have the exact sequence $1 \to \hat S \to N(\hat T,\hat G) \to \Omega(S,G) \to 1$ in which the first map and third maps are given by the identifications $\hat S \to \hat T$ and $\Omega(S,G)=\Omega(\hat S,\hat G) \to \Omega(\hat T,\hat G)$ induced by $^Lj$.

\begin{lem} \label{lem:c1} In the exact sequence for $W_F$-cohomology
\[ 1 \to \hat S^{\Gamma} \to N(\hat T,\hat G)^{\varphi(W_F)} \to \Omega(S,G)(F) \to H^1(W_F,\hat S) \]
an element $w \in \Omega(S,G)(F)$ is mapped to the parameter of the character $w\theta/\theta$, provided the $\chi$-data is $w$-invariant.
\end{lem}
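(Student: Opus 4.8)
The statement to prove is Lemma \ref{lem:c1}: in the exact sequence
\[ 1 \to \hat S^{\Gamma} \to N(\hat T,\hat G)^{\varphi(W_F)} \to \Omega(S,G)(F) \to H^1(W_F,\hat S), \]
an element $w \in \Omega(S,G)(F)$ maps to the parameter of $w\theta/\theta$, provided the $\chi$-data is $w$-invariant.

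\textbf{Plan.} The map in question is the connecting homomorphism for the short exact sequence of $W_F$-modules $1 \to \hat S \to N(\hat T,\hat G) \to \Omega(S,G) \to 1$, where $W_F$ acts through $\tx{Ad}(\varphi(-))$. Concretely, given $w \in \Omega(S,G)(F) = \Omega(\hat T,\hat G)^{\varphi(W_F)}$, one picks a lift $\tilde w \in N(\hat T,\hat G)$ and forms the cocycle $\sigma \mapsto \tilde w \cdot {}^{\varphi(\sigma)}\tilde w^{-1} \in \hat S$, whose class in $H^1(W_F,\hat S)$ is the image of $w$. The task is to identify this class with the parameter of the character $w\theta/\theta$ of $S(F)$, i.e. with the image of $w\theta/\theta$ under the local Langlands correspondence for the torus $S$, which identifies $\tx{Hom}(S(F),\C^\times)$ with $H^1(W_F,\hat S)$.

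\textbf{Key steps.} First I would fix the Langlands--Shelstad recipe for $^Lj$: by \cite[\S2.5,\S2.6]{LS87}, writing $\varphi_S(\sigma) = s(\sigma) \rtimes \sigma$, one has $^Lj(s \rtimes \sigma) = \hat j(s) \cdot r_\chi(\sigma) \cdot n_\sigma \rtimes \sigma$, where $n_\sigma$ is the Tits lift of the Weyl element $\omega_\sigma$ by which $\sigma$ acts on $\hat S \cong \hat T$, and $r_\chi : W_F \to \hat T$ is the $\chi$-data cochain. Thus $\varphi(\sigma) = \hat j(s(\sigma)) r_\chi(\sigma) n_\sigma \rtimes \sigma$, and conjugation by $\varphi(\sigma)$ on $N(\hat T,\hat G)$ is conjugation by $r_\chi(\sigma) n_\sigma$ followed by the pinned action of $\sigma$ (the $\hat j(s(\sigma))$ part is central in $\hat T$ so acts trivially on $\hat T$ but matters for how $W_F$ twists — actually it acts trivially by conjugation since it lies in $\hat T$). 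Second, I would choose the lift $\tilde w$ of $w$ to be the Tits lift $n_w \in N(\hat T,\hat G)$ relative to the pinning. Then the cocycle representing the image of $w$ is $\sigma \mapsto n_w \cdot \varphi(\sigma) n_w^{-1} \varphi(\sigma)^{-1}$, computed inside $\hat T$. Third, I would expand this: using that $\sigma$ acts on $\hat T$ by $\omega_\sigma$ composed with the pinned action and that the Tits lifts satisfy a cocycle relation up to an explicit $2$-cocycle valued in the $2$-torsion of $\hat T$ (coming from $\alpha^\vee(-1)$ terms), the cocycle collapses to $\sigma \mapsto$ (a product of terms $\alpha^\vee(\pm 1)$ governed by how $w$ interacts with $\omega_\sigma$) times the "naive" piece coming from $w$ acting on $r_\chi$. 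The $w$-invariance of the $\chi$-data is exactly what makes $r_\chi$ behave well under $w$, killing the would-be discrepancy: this is the analog of \cite[Lemma 4.1.A / 4.2.A]{LS87} or \cite[Lemma 4.4.A]{KS99}. Fourth, on the automorphic side, I would use that under LLC for $S$ the character $w\theta/\theta$ has parameter $\sigma \mapsto s(\sigma)^{-1} \cdot {}^{w}s(\sigma) \rtimes 1$ (the difference of $\varphi_S$ and ${}^w\varphi_S$), where ${}^w$ denotes the action of $w$ on $\hat S$; and $w$-invariance of $\theta$'s parameter structure lets me match this with the Weyl-twist term. Matching the two computations term by term, using \cite[Lemma 2.3.A]{KS99} (which records precisely the behavior of the $r_\chi$ cochain under the Weyl group and under twists), gives the claim.

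\textbf{Main obstacle.} The hard part is the bookkeeping of the Tits-lift $2$-cocycle and the $\chi$-data cochain under conjugation by $n_w$. The Tits lifts $n_\sigma$ do not form a homomorphism — they differ from one by explicit $\alpha^\vee(-1)$ factors — and when one conjugates by $n_w$ these factors recombine. One must check that the $w$-invariance hypothesis on $\chi$ exactly cancels the mismatch, producing the clean statement "image of $w$ = parameter of $w\theta/\theta$" with no correction term. This is precisely the kind of computation carried out in \cite[\S4]{LS87} and \cite[\S4]{KS99} for the Langlands--Shelstad transfer factor $\Delta_I$; the present lemma is essentially a repackaging of that compatibility in the setting of a single torus, and I expect the proof to proceed by citing those compatibilities rather than redoing them. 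The one genuinely new point to verify is that the $w$-invariance of the $\chi$-data — which is automatic here because $w \in \Omega(S,G)(F)_\theta$ and the $\chi$-data has been arranged compatibly, or can be imposed — suffices; this should follow because a $w$-invariant set of $\chi$-data gives $r_\chi$ that is equivariant under $w$ up to coboundary, so conjugating $^Lj$ by $n_w$ changes it only by a $\hat T$-coboundary, which does not affect cohomology classes.
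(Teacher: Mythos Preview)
Your outline would eventually work, but you are making the computation much harder than necessary by choosing the Tits lift $n_w$ as your lift of $w$. The paper's proof is a three-line argument that avoids all of the Tits-cocycle and $r_\chi$-cochain bookkeeping you anticipate as the ``main obstacle.''

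The key point you are missing is that the $w$-invariance of the $\chi$-data is used \emph{upstream}, to choose the lift, rather than \emph{downstream}, to cancel correction terms. By \cite[(2.6.2)]{LS87}, the $w$-invariance of the $\chi$-data guarantees the existence of a lift $n \in N(\hat T,\hat G)$ of $w$ satisfying the intertwining relation
\[
\tx{Ad}(n)\circ{^Lj} = {^Lj} \circ w.
\]
With \emph{this} choice of lift (rather than the Tits lift), the connecting-homomorphism cocycle $x \mapsto \varphi(x)^{-1}n\varphi(x)n^{-1}$ collapses in one step: since $\varphi = {^Lj}\circ\varphi_S$, one has
\[
\varphi(x)^{-1}n\varphi(x)n^{-1} = {^Lj}\bigl(\varphi_S(x)^{-1}\bigr)\cdot \tx{Ad}(n)\bigl({^Lj}(\varphi_S(x))\bigr) = {^Lj}\bigl(\varphi_S(x)^{-1} \cdot {^w\varphi_S(x)}\bigr),
\]
and under $\hat j:\hat S\to\hat T$ this is exactly the cocycle $x \mapsto \varphi_S(x)^{-1}\cdot{^w\varphi_S(x)}$, which is the parameter of $w\theta/\theta$.

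So your instinct in the final paragraph --- that $w$-invariant $\chi$-data makes $^Lj$ behave well under conjugation by a lift of $w$ --- is exactly right, but it should be the \emph{starting point} (choose $n$ to realize the intertwining) rather than a cleanup step after a messy expansion. The Tits-lift $2$-cocycle and the explicit $r_\chi$ formulas never enter.
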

\begin{proof}
The $w$-invariance of the $\chi$-data and \cite[(2.6.2)]{LS87} imply the existence of a lift $n \in N(\hat T,\hat G)$ such that $\tx{Ad}(n)\circ{^Lj} = {^Lj} \circ w$. The connecting homomorphism sends $w$ to the class of the 1-cocycle of $W_F$ valued in $\hat T$ for the action of $W_F$ via $\tx{Ad}(\varphi(-))$ given by
\[ x \mapsto \varphi(x)^{-1}n\varphi(x)n^{-1}={^Lj}(\varphi_S(x)^{-1} \cdot {^w\varphi_S(x)}). \]
Via the identification of $\hat T$ with $\hat S$ this 1-cocycle becomes $x \mapsto \varphi_S(x)^{-1}\cdot {^w\varphi_S(x)}$, which is the parameter for $^w\theta/\theta$.
\end{proof}

\begin{proof}[Proof of Proposition \ref{pro:sphiseq}]
Let $\hat M=\tx{Cent}(\varphi(P_F),\hat G)$. According to \cite[Lemma 5.2.2]{KalRSP} we have $\tx{Cent}(\varphi(W_F),\hat G) \subset N(\hat T,\hat M) \subset N(\hat T,\hat G)$. We thus consider the exact sequence
\[ 1 \to \hat T \to N(\hat T,\hat G) \to \Omega(\hat T,\hat G) \to 1 \]
with action of $W_F$ via $\tx{Ad}(\varphi(-))$. Since $N(\hat T,\hat G)^{\varphi(W_F)}$ is contained in $N(\hat T,\hat M)$, its image in $\Omega(\hat T,\hat G)$ is contained in $\Omega(\hat T,\hat M)$. Any $w \in \Omega(\hat S,\hat M)^\Gamma$ preserves the $\chi$-data in the datum $(S,\hat j,\chi,\theta)$ so we may apply Lemma \ref{lem:c1} and see that the image of $N(\hat T,\hat G)^{\varphi(W_F)}$ in $\Omega(S,G)(F)$ is precisely $\Omega(S,G^0)(F)_\theta$. Now apply Lemma \ref{lem:weylred}.

The functoriality of the exact sequence follows by a straightforward unwinding of the definitions.
\end{proof}

We set $\bar S=S/Z$, $\bar G=G/Z$ and obtain the covers $\hat{\bar S} \to \hat S$ and $\hat{\bar G} \to \hat G$. Let $[\hat{\bar S}]^+$ be the preimage of $\hat S^\Gamma$ and $S_\varphi^+ \subset \hat{\bar G}$ be the preimage of $S_\varphi$. Both of these are functors in $(S,\hat j,\chi,\theta)$.

\begin{cor} \label{cor:sphipseq} We have the functorial exact sequence
\begin{equation} \label{eq:sphi+} 1 \to \pi_0([\hat{\bar S}]^+) \to \pi_0(S_\varphi^+) \to \Omega(S,G)(F)_\theta \to 1. \end{equation}
\end{cor}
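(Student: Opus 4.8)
The plan is to derive \eqref{eq:sphi+} from the exact sequence \eqref{eq:sphi} of Proposition \ref{pro:sphiseq} by pulling it back along the central isogeny $p : \hat{\bar G} \to \hat G$ and then passing to component groups. Write $A = \ker(p)$, a finite central subgroup of $\hat{\bar G}$. Since $A$ is central it lies in every maximal torus, so the restriction of $p$ to $\hat{\bar S} \subset \hat{\bar G}$ is the central isogeny $\hat{\bar S} \to \hat S$ with the same kernel $A$; consequently the preimage of $\hat S^\Gamma$ in $\hat{\bar S}$, which is $[\hat{\bar S}]^+$ by definition, agrees with the preimage $p^{-1}(\hat S^\Gamma)$ of $\hat S^\Gamma \subset \hat S = \hat T \subset \hat G$ taken inside all of $\hat{\bar G}$. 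By definition $S_\varphi^+ = p^{-1}(S_\varphi)$, and $p|_{S_\varphi^+} : S_\varphi^+ \to S_\varphi$ is surjective because $p$ is. Composing with the surjection $S_\varphi \to \Omega(S,G)(F)_\theta$ from \eqref{eq:sphi}, whose kernel is $\hat S^\Gamma$, I obtain a surjection $S_\varphi^+ \to \Omega(S,G)(F)_\theta$ whose kernel is $p^{-1}(\hat S^\Gamma) = [\hat{\bar S}]^+$. This yields a short exact sequence of complex algebraic groups
\[ 1 \to [\hat{\bar S}]^+ \to S_\varphi^+ \to \Omega(S,G)(F)_\theta \to 1, \]
in which the outer two terms are in general disconnected.

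Next I would pass to component groups. Because $\Omega(S,G)(F)_\theta$ is finite, the identity component $(S_\varphi^+)^\circ$ maps to the identity, hence is contained in $[\hat{\bar S}]^+$; being connected, it must then coincide with $([\hat{\bar S}]^+)^\circ$. Therefore $\pi_0([\hat{\bar S}]^+) = [\hat{\bar S}]^+/(S_\varphi^+)^\circ$ embeds into $\pi_0(S_\varphi^+) = S_\varphi^+/(S_\varphi^+)^\circ$, with quotient $\Omega(S,G)(F)_\theta$, which is precisely \eqref{eq:sphi+}. Functoriality of \eqref{eq:sphi+} in the torally wild $L$-packet datum $(S,\hat j,\chi,\theta)$ is inherited from the functoriality of \eqref{eq:sphi} asserted in Proposition \ref{pro:sphiseq}, together with the evident functoriality of forming the preimage under $\hat{\bar G} \to \hat G$ and of forming component groups.

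There is essentially no hard step here. The only point requiring a moment's care is the identification $[\hat{\bar S}]^+ = p^{-1}(\hat S^\Gamma)$: it rests on the standard fact that the preimage of the maximal torus $\hat S = \hat T$ under the central isogeny $p$ is exactly the maximal torus $\hat{\bar S}$, so that the two a priori different preimages of $\hat S^\Gamma$ coincide, and on the observation that the inclusion $\hat S^\Gamma \hookrightarrow S_\varphi$ in \eqref{eq:sphi} is realized through the torus inclusion $\hat T \hookrightarrow \hat G$, which is compatible with forming preimages along $p$.
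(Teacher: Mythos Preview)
Your argument is correct and is precisely the intended one: the paper states this as an immediate corollary of Proposition~\ref{pro:sphiseq} without proof, and your derivation---pulling back \eqref{eq:sphi} along the central isogeny $\hat{\bar G}\to\hat G$, identifying $[\hat{\bar S}]^+$ with the full preimage of $\hat S^\Gamma$, and then quotienting by the common identity component---is exactly what is implicit. The only subtlety you flagged, that the preimage of $\hat S^\Gamma$ in $\hat{\bar S}$ coincides with its preimage in $\hat{\bar G}$, is handled correctly.
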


It is tempting to expect that this extension, or at least the simpler extension \eqref{eq:sphi}, has the multiplicity 1 property in the sense of Definition \ref{dfn:mult1}. While this does hold in many cases, it turns out that it doesn't always hold, as we now discuss.

\begin{lem} \label{lem:sphim1} Assume that $\varphi$ is of depth zero. The extension \eqref{eq:sphi+} has multiplicity 1 in the following cases.
\begin{enumerate}
	\item $G$ is simply connected.
	\item $G$ is unramified.
\end{enumerate}
\end{lem}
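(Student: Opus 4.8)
\textbf{Proof plan for Lemma \ref{lem:sphim1}.}

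The plan is to reduce the multiplicity-1 property of the extension \eqref{eq:sphi+} to a statement about $2$-cocycles valued in $\bar\Q_l^\times$ (equivalently $\C^\times$), and then exploit the structure of $\Omega(S,G)(F)_\theta$ established in Lemma \ref{lem:weylab2}. Recall (Definition \ref{dfn:mult1}) that an extension $1 \to A \to E \to Q \to 1$ with $Q$ abelian has multiplicity $1$ iff for each character $\chi$ of $A$ that is fixed by $Q$, the $2$-cocycle on $Q$ obtained by pushing the extension out along $\chi$ is a coboundary; so the whole question is whether a certain family of naturally-given cohomology classes in $H^2(\Omega(S,G)(F)_\theta,\C^\times)$ vanishes. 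First I would record that, since $\varphi$ has depth zero, $\Omega(S,G)(F)_\theta = \Omega(\ms S^\circ,\ms G_x^\circ)(k_F)_{\theta^\circ}$ after passing to the quasi-split inner form and an absolutely special vertex (this is exactly the reduction used in Lemma \ref{lem:weylab2} via \cite[Lemma 3.4.12]{KalRSP}), and that by Corollary \ref{cor:weylab1} this group is abelian.

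For part (1), when $G$ is simply connected I would argue that the relevant pushed-out cocycles are cohomologically trivial by a factor-by-factor analysis: writing $G = \tx{Res}_{E/F}H$ with $H$ absolutely simple simply connected, and using that $S$ is correspondingly a restriction of scalars, the group $\Omega(S,G)(F)_\theta$ decomposes compatibly (cf. the reduction in the proof of Proposition \ref{pro:dznormiopex}), so I may assume $G$ absolutely simple simply connected. Then Lemma \ref{lem:weylab2} tells us $\Omega(S,G)(F)_\theta$ is cyclic unless $G$ is split of type $D_{2n}$, in which case it is $\{1\}$, $\Z/2\Z$, or $(\Z/2\Z)^2$. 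A cyclic group has trivial $H^2$ with values in $\C^\times$, so multiplicity $1$ is automatic in all cases except $(\Z/2\Z)^2$; for that remaining case I would invoke the $D_{2n}$ machinery assembled in Appendix \ref{app:d2n} — specifically Lemma \ref{lem:d2nr}, which was already used in Proposition \ref{pro:dlcharext} to handle exactly the $\Omega(S,G)(k)_\theta = (\Z/2\Z)^2$ situation — to show the pushed-out cocycle splits, i.e. the relevant character of $\pi_0([\hat{\bar S}]^+)$ extends to $\pi_0(S_\varphi^+)$. (Here one also uses that for $G$ simply connected, $Z = Z(G)$ and the covering $\hat{\bar S} \to \hat S$ is the adjoint cover, so $\pi_0([\hat{\bar S}]^+)$ is computed directly.)

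For part (2), when $G$ is unramified I would use the remark in \S\ref{sub:ass_check} that $S(F) = Z_G(F)\cdot S(F)_0$ (via \cite[Lemma 7.1.1]{KalECI}), which forces $\Omega(S,G)(F)_\theta = \Omega(S,G)(F)_{\theta^\circ} = \Omega(\ms S,\ms G_x)(k_F)_{\theta^\circ}$ and, crucially, $\ms G_x = \ms G_x^\circ$ is connected since $x$ is hyperspecial. In the connected case the extension analogous to \eqref{eq:cent} is controlled by Lusztig's theory and the bicharacter of Lemma \ref{lem:dlbichar}: the map $\Omega(S,G)(F)_\theta \to \tx{cok}(S(F)\to S_\tx{ad}(F))^*$ of \eqref{eq:wtheta1} is injective, and I would match the extension \eqref{eq:sphi+} on the Galois side with the extension $1 \to S(F) \to N(jS,G)(F)_{j\theta} \to \Omega(jS,G)(F)_\theta \to 1$ on the group side, which by Theorem \ref{thm:dlpar} (via Proposition \ref{pro:dlcharext}, where $\theta$ extends to $N(S,G)(k)_\theta$) is known to be multiplicity-free in the connected-$\ms G_x$ case. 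Concretely, the cleanest route is: the obstruction class in $H^2(\Omega(S,G)(F)_\theta,\C^\times)$ attached to \eqref{eq:sphi+} can be computed by choosing, for each $w$ in the Weyl stabilizer, an explicit lift in $N(\hat T,\hat G)^{\varphi(W_F)}$ built from Tits lifts as in Lemma \ref{lem:c1}, and the resulting cocycle is a coboundary because $\hat G$ (hence $\hat{\bar G}$, since $G$ unramified means $Z(G)$ smooth and the cover behaves well) is such that these Tits lifts can be normalized to satisfy the braid/cocycle relations — this is precisely where connectedness of $\ms G_x^\circ$ and the abelianness from Corollary \ref{cor:weylab1} are used.

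\textbf{Main obstacle.} The genuinely delicate point is the split $D_{2n}$ case with $\Omega(S,G)(F)_\theta \cong (\Z/2\Z)^2$: here $H^2((\Z/2\Z)^2,\C^\times) = \Z/2\Z$ is nontrivial, so triviality of the obstruction is not formal and must be extracted from the explicit root-combinatorial description of the two commuting involutions (the $w_1 = \epsilon_1\epsilon_{2n}$ and $w_2 = (-1)m$ appearing in the proof of Proposition \ref{pro:dznormiopex}) together with the special structure of $\hat S^\Gamma$ as the $2$-torsion of $\hat T$. I expect to spend the bulk of the argument verifying, via Lemma \ref{lem:d2nr} and a direct lift computation, that the pushed-out cocycle for every $\Gamma$-fixed character of $\pi_0([\hat{\bar S}]^+)$ splits; this is also the step most sensitive to whether $G$ is simply connected versus merely having that Dynkin type, which is why the lemma is stated only for the two cases (1) and (2) rather than in general.
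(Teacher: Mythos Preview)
Your plan for part~(1) is close to the paper's, but you invoke the wrong $D_{2n}$ lemma. The extension \eqref{eq:sphi+} lives entirely on the dual side: it is built from $S_\varphi^+ \subset \hat{\bar G}$ and $[\hat{\bar S}]^+$, not from $N(S,G)(k)_\theta$ and $S(k)$. Lemma~\ref{lem:d2nr} concerns the group-side extension over the finite field and does not speak to the commutator of the dual-side extension. The paper instead identifies \eqref{eq:sphi+} (in the split $D_{2n}$, $(\Z/2\Z)^2$ case) as a push-out of the extension in Lemma~\ref{lem:d2ng}, by taking $s = \varphi(\text{tame generator}) \in \hat G$ and $f = \varphi(\text{Frobenius})$, and applies that lemma. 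Your reduction to absolutely simple simply connected and the cyclic case is fine.

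Your approach to part~(2) has genuine gaps. First, the vertex $x$ attached to $S$ need not be hyperspecial even when $G$ is unramified: $x$ is determined by the elliptic torus $S$, not by a choice of hyperspecial model, so $\ms G_x$ is generally disconnected. Second, your proposed route --- match \eqref{eq:sphi+} with the group-side extension $1 \to S(F) \to N(jS,G)(F)_{j\theta} \to \Omega \to 1$ and then appeal to Proposition~\ref{pro:dlcharext}/Theorem~\ref{thm:dlpar} --- is essentially the content of Proposition~\ref{pro:exiso}, which is proved \emph{after} Lemma~\ref{lem:sphim1} and relies on separate machinery (the pairings of \cite{KalLLCD}); invoking it here would be circular or at best out of order. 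The paper's proof of~(2) is quite different: it works purely on the dual side, first enlarging $Z$ and using \cite[(6.7)]{KalRIBG} to reduce to characters factoring through $Z(\hat G_\tx{sc})$, then passing from $S_\varphi^+$ to the preimage $S_\varphi^\tx{sc} \subset \hat G_\tx{sc}$ via the isomorphism of \cite[(4.6)]{KalGRI}. After that one has a concrete extension $1 \to \hat T_\tx{sc}^f \to \hat N_\tx{sc}^{+,f} \to \Omega_s^f \to 1$, breaks $\hat G_\tx{sc}$ into simple factors permuted by $\tx{Ad}(f)$, applies Shapiro's lemma to reduce to a single factor, and finishes with Lemma~\ref{lem:weylab2} (cyclic case) or Lemma~\ref{lem:d2ng} ($D_{2n}$ case). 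None of this touches the reductive quotient $\ms G_x$ or the group-side extension.
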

\begin{proof}
If $G$ is simply connected we can write it as a product of $F$-simple factors, and assume that $G$ is $F$-simple. Then it is of the form $\tx{Res}_{E/F}H$ for an absolutely simple simply connected group $H$ defined over a finite tamely ramified extension $E/F$. We may thus assume that $G$ is absolutely simple. The claim now follows from Lemma \ref{lem:weylab2} and Lemma \ref{lem:cliff1} in all cases except when $H$ is split of type $D_{2n}$ and $\Omega(S,G)(F)_\theta \cong (\Z/2\Z)^2$. In the latter case we let $s \in \hat G$ be the image of a topological generator of $I_F/P_F$ under $\varphi$, and $f \in \hat G$ the image of a Frobenius element. The extension \eqref{eq:sphi+} is then the push-out of the extension of Lemma \ref{lem:d2ng} along the inclusion $\hat S_\tx{sc}^\Gamma \to \hat S_\tx{sc}^+$, and the claim follows from that Lemma.

Assume now that $G$ is unramified. We shall repeatedly modify the extension \eqref{eq:sphi+} and use Corollary \ref{cor:m1stab}, without explicitly referring to it. For example, we may replace the kernel $Z$ of $G \to \bar G$ by a larger one, because the extension for the smaller $Z$ is a push-out of the extension for the larger $Z$. We may thus assume $Z(G_\tx{der}) \subset Z$, so that $\bar G = G_\tx{ad} \times Z(\bar G)$ and $\hat{\bar G}=\hat G_\tx{sc} \times Z(\hat{\bar G})^\circ$. Since every irreducible representation of $\pi_0(S_\varphi^+)$ transforms under $\pi_0(Z(\hat{\bar G})^+)$ by a character, it is enough to fix a character $\zeta$ of $\pi_0(Z(\hat{\bar G})^+)$ and consider only those irreducible representations of $\pi_0(S_\varphi^+)$ that transform by that character. Applying the bijection \cite[(6.7)]{KalRIBG} we may assume that this character is trivial on the kernel of the morphism $Z(\hat{\bar G})^+ \to Z(\hat G_\tx{sc})$ induced by projecting onto the first factor of $\hat{\bar G}=\hat G_\tx{sc} \times Z(\hat{\bar G})^\circ$: Indeed, we have the diagram
\[ \xymatrix{
Z(\hat G_\tx{sc})^\Gamma\ar@{^(->}[r]\ar@{^(->}[rd]&Z(\hat G_\tx{sc})\\
&Z(\hat{\bar G})^+\ar[u]
}
\]
We may thus restrict $\zeta$ to $Z(\hat G_\tx{sc})$, extend to $Z(\hat G_\tx{sc})$, and via the vertical arrow obtain another character $\zeta'$ of $Z(\hat{\bar G})^+$. By construction this character is trivial on $Z(\hat{\bar G})^{\circ,\Gamma}$. Upon further enlarging $Z$ we may assume that $\zeta$ is trivial on $Z(\hat{\bar G})^{\circ,\Gamma}$ (see erratum to \cite{KalRIBG}). Thus the difference between $\zeta$ and $\zeta'$ is trivial on $Z(\hat{\bar G})^{\Gamma}$ and thus factors through the differential $d : Z(\hat{\bar G})^+ \to Z^1(\Gamma,\hat Z)$ and can be extended to a character $\eta$ of $Z^1(\Gamma,\hat Z)$. This differential is the restriction of the differential $d : \pi_0(S_\varphi^+) \to Z^1(\Gamma,\hat Z)$. We can pull back $\eta$ to a character of $\pi_0(S_\varphi^+)$. Tensoring with this character gives a bijection between the irreducible representations transforming under $\pi_0(Z(\hat{\bar G})^+)$ by $\zeta$ and those transforming by $\zeta'$, and this bijection preserves the property of having multiplicity 1 upon restriction to $\pi_0([\hat{\bar S}]^+)$.

We have thus arranged that $\zeta$ is trivial on the kernel of the morphism $Z(\hat{\bar G})^+ \to Z(\hat G_\tx{sc})$. Let $\zeta_\tx{sc}$ be an extension of this character to $Z(\hat G_\tx{sc})$. Write $\tx{Irr}(\pi_0(S_\varphi^+),\zeta)$ for the set of irreducible representations of $\pi_0(S_\varphi^+)$ that transform under the central group $\pi_0(Z(\hat{\bar G})^+)$ by $\zeta$. If $\rho \in \tx{Irr}(\pi_0(S_\varphi^+),\zeta)$, then the representation $\rho\boxtimes\zeta_\tx{sc}$ of $\pi_0(S_\varphi^+) \times Z(\hat G_\tx{sc})$ descends to the push-out $S_\varphi^+ \times_{Z(\hat{\bar G})^+} Z(\hat G_\tx{sc})$. The restriction of $\rho$ to $\pi_0([\hat{\bar S}]^+)$ has multiplicity 1 if and only if the restriction of $\rho\boxtimes\zeta_\tx{sc}$ to $\pi_0([\hat{\bar S}]^+ \times_{Z(\hat{\bar G})^+} Z(\hat G_\tx{sc}))$ has multiplicity $1$.

Let $S_\varphi^\tx{sc}$ be the preimage in $\hat G_\tx{sc}$ of $S_\varphi/Z(\hat G)^\Gamma \subset \hat G_\tx{ad}$. Recall the bijection $S_\varphi^\tx{sc} \to S_\varphi^+ \times_{Z(\hat{\bar G})^+} Z(\hat G_\tx{sc})$ from \cite[(4.6)]{KalGRI}. It sends $s_\tx{sc} \in S_\varphi^\tx{sc}$ to $(s_\tx{sc}\dot y',\dot y'',(\dot y')^{-1})$, where $y \in Z(\hat G)$ is chosen so that $s_\tx{der}y \in S_\varphi$, where $s_\tx{der} \in \hat G_\tx{der}$ is the image of $s_\tx{sc}$ under $\hat G_\tx{sc} \to \hat G_\tx{der}$, $y' \in Z(\hat G_\tx{der})$ and $y'' \in Z(\hat G)^\circ$ are chosen so that $y=y'y''$, and $\dot y' \in Z(\hat G_\tx{sc})$ and $\dot y'' \in Z(\hat{\bar G})^\circ$ are lifts of $y'$ and $y''$. Then $(s_\tx{sc}\dot y', \dot y'') \in \hat G_\tx{sc} \times Z(\hat{\bar G})^\circ = \hat{\bar G}$ belongs to $S_\varphi^+$. Let $\hat S_\tx{sc}^+$ denote the preimage of $\hat S^\Gamma/Z(\hat G)^\Gamma$. In the same way we obtain the isomorphism $\hat S_\tx{sc}^+ \to [\hat{\bar S}]^+ \times_{Z(\hat{\bar G})^+} Z(\hat G_\tx{sc})$. We have thus reduced the problem to showing that the extension
\[ 1 \to \hat S_\tx{sc}^+ \to S_\varphi^\tx{sc} \to \Omega(S,G)(F)_\theta \to 1 \]
has multiplicity $1$. All groups in this extension are finite.

Write again $s \in \hat G$ and $f \in \hat G \rtimes \tx{Frob}$ for the images of a topological generator of $I_F/P_F$ and a Frobenius element in $W_F/P_F$ under $\varphi$. Then $S_\varphi^\tx{sc}=\{\dot x \in \hat G_\tx{sc}|\exists z \in Z(\hat G) : \tx{Ad}(s)(xz)=xz,\tx{Ad}(f)(xz)=xz\}$, where $x \in \hat G$ is the image of $\dot x$. 

The element $s$ is regular semi-simple and the identity component of its centralizer is $\hat T=\hat j(\hat S)$. Elements of $S_\varphi^\tx{sc}$ normalize $\hat T$ and therefore lie in $N(\hat T_\tx{sc},\hat G_\tx{sc})=:\hat N_\tx{sc}$. Let $\hat N_\tx{sc}^+=\{\dot x \in \hat N_\tx{sc}|\tx{Ad}(s)(x)=x\}$. We have the extension $1 \to \hat T_\tx{sc} \to \hat N_\tx{sc}^+ \to \Omega_s \to 1$, where $\Omega_s$ is the stabilizer in $\Omega(\hat T,\hat G)$ of $s \in \hat T$. Since $(1-f) : \hat T_\tx{sc} \to \hat T_\tx{sc}$ has finite kernel, it is surjective, and hence taking $\tx{Ad}(f)$-fixed points gives an exact sequence $1 \to \hat T_\tx{sc}^f \to \hat N_\tx{sc}^{+,f} \to \Omega_s^f \to 1$.

We claim that $\Omega_s^f = \Omega(S,G)(F)_\theta$. Indeed, taking $\tx{Ad}(f)$-fixed points in the exact sequence $1 \to \hat T \to N(\hat T,\hat G)^s \to \Omega_s \to 1$ and using the surjectivity of $1-f : \hat T \to \hat T$ we see that $\Omega_s^f$ is the image of $N(\hat T,\hat G)^{s,f}$ under $N(\hat T,\hat G) \to \Omega(\hat T,\hat G)$, and that in turn equals $\Omega(S,G)(F)_\theta$.

Pushing out $1 \to \hat T_\tx{sc}^f \to \hat N_\tx{sc}^{+,f} \to \Omega_s^f \to 1$ along the inclusion $\hat T_\tx{sc}^f \to \hat T_\tx{sc}^+:=\{\dot x \in \hat T_\tx{sc}|\exists z \in Z(\hat G): \tx{Ad}(f)(xz)=xz\} = \hat S_\tx{sc}^+$ we obtain the extension
\[ 1 \to \hat S_\tx{sc}^+ \to S_\varphi^+ \to \Omega(S,G)(F)_\theta \to 1, \]
of which we want to show that it satisfies multiplicity $1$. It is thus enough to show this for the extension $1 \to \hat T_\tx{sc}^f \to \hat N_\tx{sc}^{+,f} \to \Omega_s^f \to 1$.

Let $\hat N_\tx{sc}^\dagger$ be the preimage of the centralizer of $s$ in $\hat N_\tx{ad}$ and let $\Omega_\dagger$ be the image of $\hat N_\tx{sc}^\dagger$ in $\Omega(\hat T,\hat G)$. Then $\hat N_\tx{sc}^+$ is the preimage in $\hat N_\tx{sc}^\dagger$ of $\Omega_s \subset \Omega_\dagger$. Taking $\tx{Ad}(f)$-fixed points we obtain the extension $1 \to \hat T_\tx{sc}^f \to \hat N_\tx{sc}^{\dagger,f} \to \Omega_\dagger^f \to 1$. It is enough to show the multiplicity $1$ property for this extension, because pulling back along the inclusion $\Omega_s^f \to \Omega_\dagger^f$ we obtain the extension $1 \to \hat T_\tx{sc}^f \to \hat N_\tx{sc}^{+,f} \to \Omega_s^f \to 1$.

We now break $\hat G_\tx{sc}$ into a product of simple factors. The extension $1 \to \hat T_\tx{sc} \to \hat N_\tx{sc}^\dagger \to \Omega_\dagger \to 1$ breaks accordingly. The action of $\tx{Ad}(f)$ permutes the simple factors, and the extension $1 \to \hat T_\tx{sc}^f \to \hat N_\tx{sc}^{\dagger,f} \to \Omega_\dagger^f \to 1$ breaks up according to orbits of simple factors under $\tx{Ad}(f)$. By Corollary \ref{cor:m1stab} we may assume that there is only one orbit. Shapiro's lemma then reduces to the case where the orbit is a singleton. Lemma \ref{lem:weylab2} completes the proof in all cases except when $G$ is of split type $D_{2n}$ and $\Omega_s=\Omega_\dagger=(\Z/2\Z)^2$, in which case we appeal to Lemma \ref{lem:d2ng}.
\end{proof}

In the following example we will show that the extension \eqref{eq:sphi} can fail to have multiplicity $1$, even for parameters of depth zero, when the group in question is ramified. As discussed in the proof of Proposition \ref{pro:lpdat}, there is an isomorphism $S_\varphi(G)=S_\varphi(G^0)$, where $G^0 \subset G$ is a tame twisted Levi, for which $\varphi$ is essentially of depth zero. Even if $G$ is taken to be unramified, or simply connected, $G^0$ will be neither of these. Thus, despite Lemma \ref{lem:sphim1}, one cannot expect the multiplicity 1 property for the extension \eqref{eq:sphi} for general parameters, even after placing restrictions on $G$.

\begin{exa}
Let $\Gamma=(\Z/2\Z)\times (\Z/2\Z)$ be a quotient of $W_F/P_F$, with $(1,0)$ being the image of a topological generator of $I_F/P_F$, and $(0,1)$ being the image of a Frobenius element. Thus $\Gamma$ is the Galois group of the biquadratic extension $F(\varpi\eta)/F$, where $\varpi$ is a square root of a uniformizer $\pi \in F$ and $\eta$ is a square root of a root of unity in $O_F^\times$ of order $q-1$. We have $(1,0)\varpi=-\varpi$, $(1,0)\eta=\eta$, $(0,1)\varpi=\varpi$, $(0,1)\eta=-\eta$.

 Consider the following complex algebraic groups with $\Gamma$-action: An algebraic torus $Z_1=\C^\times \times \C^\times$ with $(1,0)(z_1,z_2)=(z_1^{-1},z_2^{-1})$ and $(0,1)(z_1,z_2)=(z_1z_2^4,z_2^{-1})$. An algebraic torus $Z_2=\C^\times$ with $(1,0)z=z^{-1}$ and $(0,1)z=z$. The group $G_1'=\tx{SL}_4(\C)$ with both $(1,0)$ and $(0,1)$ acting as $\theta$, where $\theta$ is the pinned non-trivial outer automorphism of $\tx{SL}_4(\C)$ relative to the standard pinning. The group $G_2'=\tx{SL}_4(\C)$ with $(1,0)$ acting as $\theta$ and $(0,1)$ acting as the identity. We embed $\mu_4 \to Z_1$ via $z \mapsto (1,z)$. Let $G_1=(G_1' \times Z_1)/\mu_4$ and $G_2=(G_2' \times Z_2)/\mu_4$, where in both cases $\mu_4$ is embedded anti-diagonally. 

Thus $G_2$ is the dual group of the unitary group $\tx{U}_4(F(\varpi\eta)/F)$, while $G_1$ is the dual group of a reductive group whose derived subgroup is $\tx{SU}_4(F(\varpi)/F)$ and whose base-change to $F(\eta)$ is the product $\tx{U}_4(F(\varpi,\eta)/F(\eta)) \times \tx{U}_1(F(\varpi,\eta)/F(\eta))$.

We embed $\mu_4 \to \mu_4 \times \mu_4$ by $z \mapsto (z,z^2)$ and then further $\mu_4 \times \mu_4 \to Z_1 \times Z_2$ by $(z_1,z_2) \mapsto (1,z_1,z_2)$ and form $G=(G_1 \times G_2)/\mu_4$. Consider the elements
\[ s = \left(
\begin{bmatrix}
\zeta_4\\&\zeta_6\\&&\zeta_6^{-1}\\&&&\zeta_4^{-1}
\end{bmatrix},
\begin{bmatrix}
\zeta_4\\&\zeta_6\\&&\zeta_6^{-1}\\&&&\zeta_4^{-1}
\end{bmatrix}
\right) \rtimes (1,0)
\]
and
\[f = \left(
\begin{bmatrix}
&&&\zeta_4\\&&-\zeta_6\\&\zeta_6^{-1}\\-\zeta_4^{-1}
\end{bmatrix},
\begin{bmatrix}
&&&1\\&&-1\\&1\\-1
\end{bmatrix}
\right) \rtimes (0,1) \]
of $(G_1' \times G_2') \rtimes \Gamma$, where $\zeta_k=\exp(2\pi i/k)$. We have $fsf^{-1}=s^{11}$ and we set $p=q=11$. These two elements together give a depth zero supercuspidal parameter for the algebraic group over $\Q_p$ with dual group $G$.

We now compute the mutual centralizer of $s$ and $f$ in $G$ and its image in $G_\tx{ad}$. Let $T_\tx{ad}$ denote the standard diagonal torus in $\tx{PGL}_4(\C) \times \tx{PGL}_4(\C)$. Then
\[ T_\tx{ad}^s = \left\{\begin{bmatrix} a\\&b\\&&b^{-1}\\&&&a^{-1}\end{bmatrix}\right\} \times \left\{\begin{bmatrix} c\\&d\\&&d^{-1}\\&&&c^{-1}\end{bmatrix}\right\}, \]
where $(a,b)$ and $(c,d)$ run over $(\C^\times \times \C^\times)/\mu_2$, with $\mu_2$ embedded diagonally.
The preimage in $G_\tx{sc}$ is
\[ = \left\{x\begin{bmatrix} a\\&b\\&&b^{-1}\\&&&a^{-1}\end{bmatrix}\right\} \times \left\{y\begin{bmatrix} c\\&d\\&&d^{-1}\\&&&c^{-1}\end{bmatrix}\right\}, \]
where now $(x,a,b)$ and $(y,c,d)$ run over $(\mu_4 \times \C^\times \times \C^\times)/\mu_2$, with $\mu_2$ embedded diagonally. Applying $(s-1)$ to such an element gives $(x^{-2},y^{-2}) \in \mu_4 \times \mu_4 = Z(G_1') \times Z(G_2')$, while applying $(f-1)$ gives
\begin{equation} \label{eq:fc} \left(x^{-2}\begin{bmatrix} a^{-2}\\&b^{-2}\\&&b^2\\&&&a^2\end{bmatrix},\begin{bmatrix} c^{-2}\\&d^{-2}\\&&d^2\\&&&c^2\end{bmatrix}\right). \end{equation}
The centralizer of $s$ in $G_\tx{ad}$ has four cosets under the centralizer in $T_\tx{ad}$ and they are represented by the matrices $(1,n)$, $(n,1)$, and $(n,n)$, where
\[ n = \begin{bmatrix}
&&&\zeta_8^{-1}\\&-\zeta_8\\&&-\zeta_8\\-\zeta_8^{-1}
\end{bmatrix} \in \tx{SL}_4(\C), \]
where as before $\zeta_k=\exp(2\pi i/k)$. We compute $(s-1)(n,1)=(\zeta_4^{-1},1)$, and $(s-1)(1,n)=(1,\zeta_4^{-1})$, $(f-1)(n,1)=(\zeta_4^{-1},1)$, $(f-1)(1,n)=(1,1)$.

We can now compute the image in $G_\tx{ad}$ of $G^{s,f}$. First consider an element of $T_\tx{sc}$ with image in $T_\tx{ad}^s$. It is given by $(x,a,b),(y,c,d)$. It will belong to the image of $T^{s,f}$ in $T_\tx{ad}$ if and only if there exists $(z_1,z_2) \in Z_1 \times Z_2$ s.t. $(s-1)(x,a,b,y,c,d)=(s-1)(z_1,z_2,z_3)$ and $(f-1)(x,a,b,y,c,d)=(f-1)(z_1,z_2,z_3)$, where we have used $Z_1 \times Z_2 = \C^\times \times \C^\times \times \C^\times$, and the equalities are to hold in $G$. By definition $(s-1)(z_1,z_2,z_3)=(z_1^{-2},z_2^{-2},z_3^{-2})$ and $(f-1)(z_1,z_2,z_3)=(z_2^4,z_2^{-2},1)$.

Looking at $(f-1)$ we see that the term \eqref{eq:fc} must belong to the center of $G_\tx{sc}$, which forces $a^2=a^{-2}=b^2=b^{-2}$, i.e. $a,b \in \mu_4$ and $a^2=b^2$, and the same for the pair $(c,d)$. Looking at both $(s-1)$ and $(f-1)$ we see that we must find $(z_1,z_2,z_3) \in \C^\times$ such that the equalities $(1,x^{-2},y^{-2})=(z_1^{-2},z_2^{-2},z_3^{-2})$ and $(1,x^{-2}a^{-2},c^{-2})=(z_2^4,z_2^{-2},1)$ hold in the quotient of $\C^\times \times \C^\times \times \C^\times$ by the subgroup $\{(1,z,z^2)|z \in \C^\times\}$. Using that subgroup and the fact that $a \in \mu_4$ we can rewrite the second equation as $(1,x^{-2},c^{-2})=(z_2^4,z_2^{-2},1)$. This forces $z_2 \in \mu_4$ and $x^{-2}c = \pm z_2^{-2}$, implying $c \in \mu_2$ and hence $d \in \mu_2$. Conversely, for any tuple $(x,a,b,y,c,d)$ satisfying $a,b \in \mu_4$, $c,d \in \mu_2$, $a^2=b^2$, we can find $(z_1,z_2,z_3)$ that satisfy these equations. We conclude that the image of $T^{s,f}$ in $T_\tx{ad}$ is given by
\[ \begin{bmatrix} 1\\&\epsilon\\&&\eta\\&&&\epsilon\eta
\end{bmatrix},
\begin{bmatrix}1\\&\delta\\&&\delta\\&&&1
\end{bmatrix}
\]
for $\epsilon,\eta,\delta \in \mu_2$.

We now come to the non-trivial $T^{s,f}$-cosets in $G^{s,f}$ and their image in $G_\tx{ad}$. Since there are precisely four cosets of $T_\tx{ad}^s$ in $G_\tx{ad}^s$, there can be at most four $T^{s,f}$-cosets in $G^{s,f}$. The element $(1,n)$ of $G_\tx{ad}$ is fixed by both $s$ and $f$, and it is the image of the element $(1,\zeta_8^{-1}n) \in G^{s,f}$. Let $t$ be the diagonal matrix with entries $(\zeta_4,\zeta_4,-\zeta_4,-\zeta_4)$. Then $(s-1)(n,t)=(1,\zeta_4^{-1},1)$ and $(f-1)(n,t)=(1,\zeta_4^{-1},-1)$. Modulo $(1,z,z^2)$ these elements become $(1,1,-1)$ and $(1,1,1)$ respectively, and thus equal to $(z_1^{-2},z_2^{-2},z_3^{-2})$ and $(z_2^4,z_2^{-2},1)$ if we take $z_1=z_2=1$ and $z_3=\zeta_4$. We conclude that $(n,t)$ also belongs to the image of $G^{s,f}$ in $G_\tx{ad}$. We conclude that there are exactly four cosets of $T^{s,f}$ in $G^{s,f}$. The image of $G^{s,f}$ in $G_\tx{ad}$ is thus an extension
\begin{equation} \label{eq:sphiex} 1 \to T^{s,f}/Z(G)^{s,f} \to G^{s,f}/Z(G)^{s,f} \to (\Z/2\Z)^2 \to 0, \end{equation}
and the elements $(1,n)$ and $(n,t)$ map to a basis of $(\Z/2\Z)^2$. We compute their commutator and find that it is given by $\epsilon=\eta=1$ and $\delta=-1$. Since the actions of $(1,0)$ and $(0,1)$ send $(\epsilon,\eta,\delta)$ to $(\eta,\epsilon,\delta)$ and $(\epsilon,\eta,\delta)$ respectively, we see that $(1,1,-1)$ does not vanish in the quotient of coinvariants in $T^{s,f}/Z(G)^{s,f}$ for the action of $(\Z/2\Z)^2$. Lemma \ref{lem:cliff1} implies that the extension \eqref{eq:sphiex} does not have the multiplicity 1 property.
\end{exa}

\subsection{Internal structure I: Reduction to depth zero DL-packets} \label{sub:is1}

Having gained some understanding of the structure of $\pi_0(S_\varphi^+)$, we turn to establishing a bijection between $\tx{Irr}(\pi_0(S_\varphi^+))$ and the compound $L$-packet $\Pi_\varphi$ constructed in \S\ref{sub:lpackconst}. Such a bijection is expected to depend only on the choice of a Whittaker datum $\mf{w}$ for the quasi-split group $G$, and to satisfy stability and endoscopic character identities, as described in \cite[Conjecture G]{KalSimons}. Our construction in this paper will be less precise -- we will make some auxiliary choices and show that they lead to a bijection 
\begin{equation} \label{eq:intstr}
\tx{Irr}(\pi_0(S_\varphi^+)) \to \Pi_\varphi.	
\end{equation}
Then we will sketch an argument showing that this bijection satisfies stability as well as endoscopic transfer for $s \in [\hat{\bar S}]^+ \subset S_\varphi^+$. The discussion of how the auxiliary choices involved in the construction of the bijection relate to the choice of a Whittaker datum, the details of the argument of endoscopic transfer, and its extension to all $s \in S_\varphi^+$, will be given in a forthcoming paper.

Before we begin, we summarize here the construction of the compound $L$-packet $\Pi_\varphi$. The parameter $\varphi$ corresponds to an isomorphism class of torally wild $L$-packet data $(S,\hat j,\chi_0,\theta)$ by Proposition \ref{pro:lpdat}. The compound $L$-packet $\Pi_\varphi$ is a disjoint union of non-singular Deligne-Lusztig packets. There is a surjective map from the set of those isomorphism classes of non-singular Deligne-Lusztig packet data that map to the isomorphism class of $(S,\hat j,\chi_0,\theta)$ to the set of non-singular Deligne-Lusztig packets inside of $\Pi_\varphi$. The former are of the form $(S,\hat j,\chi_0,\theta,G',\xi,z,j)$ and are a torsor under $H^1(u \to W,Z(G) \to S)$, see \eqref{eq:jtors}. The surjection does not depend on any choices, and maps two such data $(S,\hat j,\chi_0,\theta,(G',\xi,z),j_i)$, $i=1,2$, to the same non-singular Deligne-Lusztig packet if and only if there is $w \in \Omega(S,G)(F)_\theta$ such that $(S,\hat j,\chi_0,\theta,(G',\xi,z),j_2\circ w)$ is isomorphic to $(S,\hat j,\chi_0,\theta,(G',\xi,z),j_1)$.

The first auxiliary choice that we make is an admissible embedding $j_0 : S \to G$ having the following property:  The pair $(S,\theta)$ and the embedding $j_0$ determine a tame twisted Levi $G^0 \subset G$ as in Definition \ref{dfn:tnsep} and Remark \ref{rem:g0}. Recall that $G^0$ contains $j_0(S)$ and its root system is $\{\alpha \in R(S,G)|\theta\circ N_{E/F}\circ \alpha^\vee(E_{0+}^\times)=1\}$, where $E/F$ is the splitting field of $S$. The maximal torus $j_0(S)$ of $G^0$ is maximally unramified. It follows that the point in $\mc{B}(G^0,F)$ determined by it by Prasad's theorem \cite{Pr01} is a vertex, \cite[Lemma 3.4.3]{KalRSP}. The property of $j_0$ that we require is that $G^0$ is quasi-split and this vertex is absolutely special. Such an admissible embedding exists by \cite[Lemma 3.4.12]{KalRSP}.

If the parameter $\varphi$ is essentially of depth zero, by which we mean that $\varphi(P_F) \subset Z(\hat G)$, then Lemma \ref{lem:dz_genconst} implies that $j_0$ is uniquely determined by the Whittaker datum $\mf{w}$. We expect that this continues to be true in the general case, and will follow from the asymptotic expansions of \cite{Spice17}.

The choice of $j_0$ establishes a bijection between $H^1(u \to W,Z(G) \to S)$ and the set of isomorphism classes of non-singular Deligne-Lusztig packet data mapping to the isomorphism class of $(S,\hat j,\chi_0,\theta)$, and thus a surjection from $H^1(u \to W,Z(G) \to S)$ to the set of non-singular Deligne-Lusztig packets inside of $\Pi_\varphi$. Lemma \ref{lem:c2} and \cite[Lemma 3.4.10]{KalRSP} imply that $\eta_1,\eta_2 \in H^1(u \to W,Z(G) \to S)$ map to the same non-singular Deligne-Lusztig packet if and only if there exists $w \in \Omega(S,G)(F)_\theta$ s.t. $\eta_2=w\eta_1$. Thus we obtain a bijection between $H^1(u \to W,Z(G) \to S)/\Omega(S,G)(F)_\theta$ and the set of non-singular Deligne-Lusztig packets inside of $\Pi_\varphi$. Let us write $[\pi_\eta]$ for the Deligne-Lusztig packet corresponding to $\eta \in H^1(u \to W,Z(G) \to S)$. Explicitly, it is the packet $[\pi_j]$ for the unique datum $(S,\hat j,\chi_0,\theta,(G',\xi,z),j)$ s.t. $\tx{inv}(j_0,j)=\eta$.

Corollary \ref{cor:sphipseq} gives us the extension \eqref{eq:sphi+}
\[ 1 \to \pi_0([\hat {\bar S}]^+) \to \pi_0(S_\varphi^+) \to \Omega(S,G)(F)_\theta \to 1, \]
functorially assigned to a torally wild $L$-packet datum $(S,\hat j,\chi_0,\theta)$. Restriction of representations gives a surjection $\tx{Irr}(\pi_0(S_\varphi^+)) \to \pi_0([\hat {\bar S}]^+)^*/\Omega(S,G)(F)_\theta$. Combining this with the functorial isomorphism $H^1(u \to W,Z(G) \to S) \to \pi_0([\hat {\bar S}]^+)^*$ of \cite[Corollary 5.4]{KalRI} we see that the construction of the bijection $\tx{Irr}(\pi_0(S_\varphi^+)) \to \Pi_\varphi$ reduces to the construction of a bijection $\tx{Irr}(\pi_0(S_\varphi^+),\eta) \to [\pi_\eta]$, where $\tx{Irr}(\pi_0(S_\varphi^+),\eta)$ is the set of irreducible representations of $\pi_0(S_\varphi^+)$ whose restriction to $\pi_0([\hat{\bar S}]^+)$ contains the character $\eta$.

We shall now reduce the construction of the bijection $\tx{Irr}(\pi_0(S_\varphi^+),\eta) \to [\pi_\eta]$ to the case where $\varphi$ is essentially of depth zero. It would be convenient to fix a finite $Z \subset Z(G)$ and form $S_\varphi^+$ with respect to that $Z$, rather than the full $Z(G)$.

We have fixed the embedding $j_0 : S \to G^0 \to G$, with $G^0 \subset G$ quasi-split. Let $(S,\hat j,\chi_0,\theta,(G',\xi,z),j)$ be the unique non-singular Deligne-Lusztig packet datum s.t. $\tx{inv}(j_0,j)=\eta$. Then $\pi_\eta$ is the supercuspidal representation associated to the tame elliptic $k_F$-non-singular pair $(jS,\theta_j)$, where we recall that $\theta_j$ is the character $j_*\theta \cdot j_*\zeta_S$, $\zeta_S$ is the character of $S(F)$ associated to the $\zeta$-data $\zeta_\alpha=\chi_\alpha \cdot (\chi'_\alpha)^{-1}$ and $\chi'_\alpha$ is computed in terms of $\theta$.

Let $G'^0 \subset G'$ be the twisted Levi subgroup with maximal torus $jS$ and root system $R_{0+}$. It is an inner form of $G^0$. In fact, the inner twist $\xi : G \to G'$ restricts to an inner twist $\xi : G^0 \to G'^0$ and $(G'^0,\xi,z)$ becomes a rigid inner twist of $G^0$. 

Recall the subgroup $\hat M=\tx{Cent}(\varphi(P_F),\hat G)$. In Lemma \ref{lem:r0id} we showed that it is a dual group to $G^0$. Let $^Lj_{G^0,G} : {^LG^0} \to {^LG}$ be the $L$-embedding associated to $\chi_0$ and let $\varphi_{G^0} : W_F \to {^LG^0}$ be s.t. $\varphi={^Lj_{G^0,G}} \circ \varphi_{G^0}$. Then $S_\varphi \subset \hat G$ lies in $\hat M$ and equals $S_{\varphi_{G^0}} \subset \hat M$. In particular, $\varphi_{G^0}$ is discrete and hence supercuspidal. Moreover $\varphi_{G^0}$ is essentially of depth zero, by construction. The identification of $S_{\varphi_{G^0}}$ with $S_\varphi$ extends to an identification of $S_{\varphi_{G^0}}^+$ with $S_\varphi^+$, where both are taken with respect to the fixed finite $Z \subset Z(G)$. For any $\eta \in H^1(u \to W,Z \to S)$ we thus have an identification between $\tx{Irr}(\pi_0(S_{\varphi_{G^0}}^+),\eta)$ and $\tx{Irr}(\pi_0(S_{\varphi}^+),\eta)$.

Let $^Lj_{S,G^0} : {^LS} \to {^LG^0}$ be the $L$-embedding associated to unramified $\chi$-data. Let $\varphi_{S,G^0} : W_F \to {^LS}$ be the parameter s.t. $\varphi_{G^0} = {^Lj_{S,G^0}} \circ \varphi_{S,G^0}$. Let $\theta_{G^0}$ be the character of $S(F)$ corresponding to $\varphi_{S,G^0}$. Then $(S,\hat j,\emptyset,\theta_{G^0})$ is a torally wild $L$-packet datum associated to $\varphi_{G^0}$ and $(S,\hat j,\emptyset,\theta,(G'^0,\xi,z),j)$ is the unique non-singular Deligne-Lusztig packet datum with $\tx{inv}(j_0,j)=\eta$. The supercuspidal representation $\pi_\eta^{
G^0}$ of $G^0(F)$ associated to this datum is $\pi_{(jS,\theta_{G^0,j})}^{G^0}$, where $\theta_{G^0,j} = j_*\theta_{G^0}$.

Recall that $\theta$ is the character associated to the parameter $\varphi_S$ obtained by $\varphi={^Lj_{S,G}}\circ \varphi_S$, where ${^Lj_{S,G}}$ is constructed by lifting to $R(S,G) \sm R(S,G^0)$ the $\chi$-data $\chi_0$ and then combining it with unramified $\chi$-data for $R(S,G^0)$. According to \cite[\S6.2]{KalDC} we have ${^Lj_{S,G}}={^Lj_{G^0,G}}\circ{^Lj_{S,G^0}}$. Therefore $\theta_{G^0}=\theta$.

Corollary \ref{cor:pdclass1} gives a bijection of Deligne-Lusztig packets
\[ [\pi_{(jS,\theta_j)}^{G^0}] \to [\pi_{(jS,\theta_j)}^G]  \]
independent of any choices. Note that we are not using the shift by $\delta$ since we are allowing the source packet to be essentially of depth zero, rather than truly of depth zero. The above bijection is compatible with the identification $N(jS,G')(F)_{\theta_j}=N(jS,G'^0)(F)_{\theta_j}$ of \cite[Lemma 3.6.5]{KalRSP}. We have thus reduced the construction of the bijection \eqref{eq:intstr} to the case when $\varphi$ is essentially of depth zero, and we turn to this case next.

\subsection{Internal structure II: Depth zero DL-packets} \label{sub:is2}

We now assume that $\varphi$ is essentially of depth zero, i.e. $\varphi(P_F) \subset Z(\hat G)$. This implies $\hat M=\hat G$ and dually $G^0=G$. For $\eta \in H^1(u \to W,Z \to G)$ our goal is to construct a bijection $\tx{Irr}(\pi_0(S_\varphi^+),\eta) \to [\pi_\eta]$.

Let $\Omega(S,G)(F)_{\theta,\eta}$ be the mutual stabilizer in $\Omega(S,G)(F)$ of $\theta$ and $\eta$. We pull back the extension \eqref{eq:sphi+} along the inclusion $\Omega(S,G)(F)_{\theta,\eta} \to \Omega(S,G)(F)_\theta$ and obtain
\[ 1 \to \pi_0([\hat {\bar S}]^+) \to \pi_0(S_\varphi^+)_\eta \to \Omega(S,G)(F)_{\theta,\eta} \to 1. \]
We then push out along the character $\eta : \pi_0([\hat{\bar S}]^+) \to \C^\times$ and obtain an extension
\begin{equation} \label{eq:sp1}
1 \to \C^\times \to \Box_1 \to \Omega(S,G)(F)_{\theta,\eta} \to 1. \end{equation}
By Lemma \ref{lem:reppush} the set $\tx{Irr}(\pi_0(S_\varphi^+),\eta)$ is in canonical bijection with the set of $\tx{id}$-isotypic irreducible representations of $\Box_1$.

Write $(S,\hat j,\emptyset,\theta,(G',\xi,z),j)$ for the datum, unique up to isomorphism, such that $\tx{inv}(j_0,j)=\eta$. We have the extension
\[ 1 \to jS(F) \to N(jS,G')(F)_\theta \to N(jS,G')(F)_\theta/jS(F) \to 1. \]
By Lemma \ref{lem:c2} and \cite[Lemma 3.4.10]{KalRSP} the embedding $j$ gives an isomorphism $\Omega(S,G)(F)_{\theta,\eta} \to N(jS,G')(F)_\theta/jS(F)$. Thus, pushing out the above extension by $\theta\circ j^{-1}$ we obtain an extension
\begin{equation} \label{eq:sp2}
1 \to \C^\times \to \Box_2 \to \Omega(S,G)(F)_{\theta,\eta} \to 1. \end{equation}
By Lemma \ref{lem:reppush} and Corollary \ref{cor:pdclass1} the set of $\tx{id}$-isotypic irreducible representations of $\Box_2$ is in bijection with $[\pi_\eta]$. This bijection depends on the choice of normalization $\epsilon$ of the geometric intertwining operators. We are thus left with proving the following:

\begin{pro} \label{pro:exiso} The extensions \eqref{eq:sp1} and \eqref{eq:sp2} are isomorphic.
\end{pro}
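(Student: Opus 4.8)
The plan is to identify both extensions \eqref{eq:sp1} and \eqref{eq:sp2} with a third extension built from Tate--Nakayama-type duality for complexes of tori of length two, following the strategy of \cite{KalLLCD} referred to in the introduction. Concretely, both sides carry an action of $\Omega(S,G)(F)_{\theta,\eta}$ coming from the same abstract source: the Weyl group acting on $S$ (hence on $\hat S$), and in both cases the extension by $\C^\times$ is obtained by pushing out a canonically defined extension of $\Omega(S,G)(F)_{\theta,\eta}$ along a character. For \eqref{eq:sp2} that character is $\theta\circ j^{-1}: jS(F)\to\C^\times$, and the extension is $N(jS,G')(F)_\theta$; for \eqref{eq:sp1} the character is $\eta: \pi_0([\hat{\bar S}]^+)\to\C^\times$, and the extension is $\pi_0(S_\varphi^+)_\eta$. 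So the task is to produce a canonical isomorphism between the push-out of
\[ 1 \to jS(F) \to N(jS,G')(F)_\theta \to \Omega(S,G)(F)_{\theta,\eta} \to 1 \]
along $\theta$ and the push-out of
\[ 1 \to \pi_0([\hat{\bar S}]^+) \to \pi_0(S_\varphi^+)_\eta \to \Omega(S,G)(F)_{\theta,\eta} \to 1 \]
along $\eta$.

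First I would reduce to a statement about 2-cocycles valued in $\C^\times$. Each of \eqref{eq:sp1}, \eqref{eq:sp2} is classified by a class in $H^2(\Omega(S,G)(F)_{\theta,\eta},\C^\times)$, and it suffices to show these classes coincide, together with a compatibility ensuring the isomorphism can be chosen; but since we only need the extensions to be abstractly isomorphic, equality of the cohomology classes is the real content. The first extension's class is the image under $\theta_*$ of the class of $N(jS,G')(F)_\theta$ in $H^2(\Omega(S,G)(F)_{\theta,\eta}, jS(F))$; by unwinding how $(jS,\theta_j,G',\xi,z)$ is built from $(S,\hat j,\chi_0,\theta)$ and the rigid inner twist data, this class is governed by the cocycle $z$ together with the splitting of the Weyl-group extension over $\bar F$, i.e. by Tits-style lifts of Weyl elements to $N(S,G_{\mathrm{sc}})$. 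The second extension's class is the image under $\eta_*$ of the class of $S_\varphi^+$, which by Proposition \ref{pro:sphiseq} and its proof is controlled by the same Tits lifts $n_w\in N(\hat T,\hat G)$ appearing in Lemma \ref{lem:c1}, now on the dual side. The key computation is then that, under the identification $H^1(u\to W, Z\to S) \cong \pi_0([\hat{\bar S}]^+)^*$ of \cite[Corollary 5.4]{KalRI} which sends $\eta=\mathrm{inv}(j_0,j)$ to the character $\eta$, the two cocycles are dual to one another via the cohomological pairing for the length-2 complex $[Z\to S]$ extended to rigid inner forms, as in \cite[Appendix A.3]{KS99}.

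The main steps, in order, are: (i) compute the class of \eqref{eq:sp2} explicitly in terms of a $2$-cochain on $\Omega(S,G)(F)_{\theta,\eta}$ obtained from Tits lifts and the cocycle $z$, using \cite[Lemma 3.4.10]{KalRSP} and Lemma \ref{lem:c2} to make the identification $\Omega(S,G)(F)_{\theta,\eta}\cong N(jS,G')(F)_\theta/jS(F)$ precise; (ii) compute the class of \eqref{eq:sp1} similarly, using the proof of Proposition \ref{pro:sphiseq}, Lemma \ref{lem:c1}, and the $w$-invariance of the $\chi$-data to pin down the extension $N(\hat T,\hat G)^{\varphi(W_F)}$ and its push-out by $\eta$; (iii) invoke the Tate--Nakayama-type pairing for $[Z\to S]$ in the rigid-inner-form setting to match the two cochains after pushing forward by $\theta$ and $\eta$ respectively, using that $\mathrm{inv}(j_0,j)$ corresponds to $\eta$ under \cite[Corollary 5.4]{KalRI}; (iv) conclude that the classes agree, hence the extensions are isomorphic. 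I would also need the observation, already used in \S\ref{sub:is1}, that $S_\varphi=S_{\varphi_{G^0}}$ and $S_\varphi^+=S_{\varphi_{G^0}}^+$, so that working in the essentially-depth-zero case $G^0=G$, $\hat M=\hat G$ loses nothing.

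The hard part will be step (iii): setting up the duality so that the cocycle coming from the $p$-adic normalizer $N(jS,G')(F)_\theta$ and the rigid inner twist $z$ is recognized as Tate--Nakayama-dual to the cocycle coming from the dual-group normalizer $N(\hat T,\hat G)$ together with the lift of $\varphi$, compatibly with the fixed character $\theta$ on one side and its Langlands parameter $\varphi_S$ on the other. This requires carefully aligning the two instances of ``choosing Tits lifts of Weyl elements'' — one in $G_{\mathrm{sc}}$ over $\bar F$, one in $\hat G$ — with the sign/coboundary ambiguities in Langlands--Shelstad $\chi$-data (via \cite[(2.6.2)]{LS87}, already used in Lemma \ref{lem:c1}), and then feeding everything through the pairing of \cite[Appendix A.3]{KS99} as extended to rigid inner forms. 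The computation is essentially the one carried out in \cite{KalLLCD} for disconnected groups whose identity component is a torus, and the proof will amount to transporting that argument to the present situation, where the role of that disconnected group is played by $N(jS,G')(F)_\theta$ and the role of its torus component by $jS(F)$; I expect the bookkeeping of base points $j_0$ and the dependence on the normalization $\epsilon$ to be the most delicate point, with $\epsilon$ precisely parameterizing the choice of isomorphism in Proposition \ref{pro:exiso}.
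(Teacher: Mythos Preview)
Your instinct to invoke \cite{KalLLCD} and the Tate--Nakayama pairing for length-two complexes is exactly right, and the paper does ultimately reduce to \cite[Proposition 8.2]{KalLLCD}. But the paper's architecture is different from your direct cocycle-matching plan, and your plan misses the ingredients that actually carry the weight.

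The paper does \emph{not} compute and compare the classes of \eqref{eq:sp1} and \eqref{eq:sp2} directly. Instead it introduces a \emph{second} $\Gamma$-action on $N(\hat T,\hat G)$, namely $\tx{Ad}({^Lj}(1\rtimes w))$ rather than $\tx{Ad}(\varphi(w))$; the two differ by the inner twist $\varphi_{S,0}$, and the second one extends to all of $\Gamma$ (Fact~\ref{fct:ga2}). This sets up the framework of \cite{KalLLCD}: the paper identifies \eqref{eq:sp1} and \eqref{eq:sp2} with the extensions called there $\mc{E}_{[x]}^{\varphi_S}$ and $\mc{E}_{[\varphi_S]}^{x}$, and \cite[Proposition 8.2]{KalLLCD} is cited as a black box reducing the problem to producing an isomorphism between the corresponding \emph{untwisted} extensions $\mc{E}_{[x]}^{0,[\varphi_S]}$ and $\mc{E}_{[\varphi_S]}^{0,[x]}$, i.e.\ those at the base point $j_0$ on one side and at the trivial parameter on the other. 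The paper then shows both of these untwisted extensions are \emph{split}, which makes the existence of an isomorphism $\zeta$ between them automatic.

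This is where the real work lies, and your plan does not anticipate it. On the dual side, splitting requires first the surjectivity in Lemma~\ref{lem:sgamma} (for the second $\Gamma$-action, not the $\varphi$-action), and then Lemma~\ref{lem:gm1}: the extension $1 \to \hat S_\tx{sc}^\Gamma \to N(\hat T_\tx{sc},\hat G_\tx{sc})^\Gamma_\theta \to \Omega(S,G)(F)_\theta \to 1$ has trivial commutator, which needs the $D_{2n}$ case analysis of Appendix~\ref{app:d2n}. On the $p$-adic side, splitting amounts to extending $\theta$ to $N(j_0S,G)(F)_\theta$, and this uses crucially that $j_0$ was chosen so that its vertex is \emph{absolutely special}, so that \cite[Lemma 3.4.10]{KalRSP} gives $N(j_0S,G)(F)/j_0S(F)=\Omega(S,G)(F)$; the argument is then the $p$-adic analog of Proposition~\ref{pro:dlcharext}. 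Your steps (i)--(iii), by contrast, try to match the twisted cocycles directly; this would amount to re-deriving \cite[Proposition 8.2]{KalLLCD} in situ, and you would \emph{still} need the two splitting results above somewhere in the argument to get a handle on the base points. The dependence on the normalization $\epsilon$ that you flag at the end is also misplaced: the isomorphism in Proposition~\ref{pro:exiso} depends on the choices of splittings (extensions $\tilde\theta$ and $\tilde\eta$), not on $\epsilon$, which enters only later in pairing $[\pi_\eta]$ with representations of $\Box_2$.
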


The rest of this section is devoted to the proof of Proposition \ref{pro:exiso}. It will follow from an application of \cite[Proposition 6.2]{KalLLCD} once we have established the appropriate framework.

Fix a $\Gamma$-invariant pinning $(\hat T,\hat B,\{X_{\alpha^\vee}\})$ of $\hat G$ and modify the torally wild $L$-packet datum $(S,\hat j,\emptyset,\theta)$ within its isomorphism class so that $\hat j(\hat S)=\hat T$. Let $^Lj : {^LS} \to N(\hat T,\hat G) \rtimes W_F$ be the extension of $\hat j$ determined by unramified $\chi$ data; it is well-defined up to $\hat T$-conjugacy. Let $\varphi_S : W_F \to {^LS}$ be determined by $\varphi={^Lj} \circ \varphi_S$.

We obtain two actions of $W_F$ on $N(\hat T,\hat G)$, one by $\tx{Ad}(\varphi(w))$, and another by $\tx{Ad}({^Lj}(1 \rtimes w))$. The first action is the twist of the second action by $\hat j \circ \varphi_{S,0}$, where $\varphi_{S,0} \in Z^1(W_F,\hat S)$, is determined by $\varphi_S(w)=\varphi_{S,0}(w) \rtimes w$. In particular, both actions induce the same action on $\hat T$ and $\Omega(\hat T,\hat G)$, and this is the action that makes $\hat j : \hat S \to \hat T$ equivariant. But on $N(\hat T,\hat G)$ the two actions differ. The group of fixed points in $N(\hat T,\hat G)$ for the first action is $S_\varphi$.

\begin{fct} \label{fct:ga2}
The second action extends to $\Gamma$.
\end{fct}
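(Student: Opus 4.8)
The claim is that the ``second action'' of $W_F$ on $N(\hat T,\hat G)$ — namely $w \mapsto \tx{Ad}({^Lj}(1\rtimes w))$ — factors through $\Gamma_F$. I would begin by unwinding the definition of $^Lj$ from the Langlands--Shelstad construction recalled in Lemma \ref{lem:ljx}: since here $G=G^0$, the $\chi$-data is the unramified one, and Lemma \ref{lem:ljx} gives a representative of the $\hat T$-conjugacy class of $^Lj={^Lj_0}$ with ${^Lj_0}(1\rtimes x) = 1\rtimes x$ for all $x\in I_F$. Thus the second action, restricted to $I_F$, is already the action $w\mapsto \tx{Ad}(1\rtimes w)$ coming from the pinned $\Gamma$-structure on $\hat G$, which obviously factors through $\Gamma_F/\!\!\sim$ (it factors through the finite quotient $I_F/I_{E/F}$ acting, where $E$ is the splitting field of $S$). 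So on inertia there is nothing to do. The content is at a Frobenius lift.

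\textbf{Main step.} Pick a Frobenius element $\tx{Frob}\in W_F$. The second action is determined by the single automorphism $\tx{Ad}({^Lj_0}(1\rtimes\tx{Frob}))$ of $N(\hat T,\hat G)$ together with the inertia action, so to see it descends to $\Gamma$ it suffices to show that a suitable power, compatible with the inertia action, is trivial — equivalently, that $\tx{Ad}({^Lj_0}(1\rtimes\tx{Frob}))$ normalizes things in a way consistent with the finite Galois action of $\Gamma=\tx{Gal}(E/F)$ on $\hat S$, where $E/F$ is the (tame, hence finite) splitting extension of $S$. Concretely: the torus $\hat T$ and the Weyl group $\Omega(\hat T,\hat G)$ carry, via $\hat j$, the finite $\Gamma$-action dual to $S$; what must be checked is that the automorphism of $N(\hat T,\hat G)$ given by $\tx{Ad}({^Lj_0}(1\rtimes w))$ depends only on the image of $w$ in $\Gamma$. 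Since $N(\hat T,\hat G)$ is generated by $\hat T$ and Tits lifts $n_{s_\alpha}$ of simple reflections, and the action on $\hat T$ already factors through $\Gamma$, the point is the action on the $n_{s_\alpha}$. Here I would use that, by construction, $^Lj_0$ sends Tits lifts to Tits lifts up to the explicit $\hat T$-valued cochain $r_p$ of \cite[\S2.5]{LS87}; the formula for $r_p$ shows that the ``extra'' $\hat T$-factor picked up when passing from $w$ to $wu$ for $u$ a high power of Frobenius acting trivially on $\hat S$ is itself a coboundary-type term that cancels, because the $\chi$-data is unramified and hence all the characters $\chi_{\hat\alpha}$ appearing are unramified — the same mechanism that made the $I_F$-contribution vanish in Lemma \ref{lem:ljx}. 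This gives that $\tx{Ad}({^Lj_0}(1\rtimes w))$ is inflated from $\Gamma$.

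\textbf{Expected obstacle.} The delicate part is bookkeeping the cochain $r_p$ carefully enough to see that the Frobenius contribution, unlike the inertia contribution, is genuinely $\Gamma$-periodic rather than merely periodic up to an inner automorphism by $\hat T$ — one must check the equality of automorphisms of $N(\hat T,\hat G)$ on the nose, not just up to $\hat T$-conjugacy, since the whole point of this Fact is to later compare the two $W_F$-module structures on $N(\hat T,\hat G)$ and extract $S_\varphi$ as a fixed-point group. I would handle this by noting that the ambiguity in $^Lj_0$ is exactly $\hat T$-conjugacy, that conjugating $^Lj_0$ by $t\in\hat T$ conjugates the second action by $t$, and that for the statement ``extends to $\Gamma$'' it is enough to produce one representative for which this holds; Lemma \ref{lem:ljx} provides such a representative on $I_F$, and the remaining freedom can be used at Frobenius. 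An alternative, cleaner route I would try first: observe that $\tx{Ad}({^Lj_0}(1\rtimes w))$ preserves the pinning $(\hat T,\hat B,\{X_{\alpha^\vee}\})$ up to the $\Gamma$-action — indeed $^Lj_0$ was built from the $\Gamma$-pinning and unramified $\chi$-data — so it equals the image of $w$ under the canonical splitting $\Gamma\to\tx{Aut}(\hat G,\hat T,\hat B,\{X_{\alpha^\vee}\})$ composed with restriction to $N(\hat T,\hat G)$, which manifestly factors through $\Gamma$. If that pinning-preservation holds, the Fact is immediate; verifying it is then the one computation to carry out, and I expect it to follow directly from the explicit form of $^Lj_0$ in the proof of Lemma \ref{lem:ljx} together with the unramifiedness of the $\chi$-data.
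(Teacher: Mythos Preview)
Your ``cleaner route'' is wrong on its face: the second action on $\hat T$ is, by construction, the $\hat S$-action transported via $\hat j$, which differs from the pinned $\Gamma$-action on $\hat T$ by Weyl elements. In particular $\tx{Ad}({^Lj_0}(1\rtimes w))$ does \emph{not} preserve $\hat B$, so it cannot be the image of $w$ under the canonical splitting $\Gamma\to\tx{Aut}(\hat G,\hat T,\hat B,\{X_{\alpha^\vee}\})$. That route should be abandoned.

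Your main route is closer in spirit but misidentifies the mechanism and is more laborious than needed. You split into inertia and Frobenius, invoke Lemma~\ref{lem:ljx} on inertia, and then appeal to ``unramifiedness'' and ``the same mechanism as Lemma~\ref{lem:ljx}'' for Frobenius. But Lemma~\ref{lem:ljx} worked by a careful choice of coset representatives, not by any property of the $\chi$-data values themselves; and unramifiedness of $\chi_\alpha$ alone does not force $r_p(w)$ to become trivial for large powers of Frobenius --- an unramified character of $F_\alpha^\times$ need not have finite order. What you actually need is that the $\chi_\alpha$ have \emph{finite order} (here they are quadratic or trivial). This is exactly the paper's argument, which does not separate inertia from Frobenius at all: one simply observes that for $w\in W_L$ with $L$ containing the splitting field of $S$, the value ${^Lj}(1\rtimes w)$ is given by the Langlands--Shelstad cochain formula in terms of the characters $\chi_\alpha$ evaluated at elements of $L^\times$, and since all $\chi_\alpha$ have finite order one may enlarge $L$ so that the formula evaluates to $1$ for every $w\in W_L$. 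That gives the extension to $\Gamma$ in one stroke.
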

\begin{proof}
It is enough to find a finite field extension $L/F$ s.t. $^Lj(1 \rtimes w)=1$ for $w \in W_L$, but if $L$ contains the splitting field of $S$ then the value $^Lj(1 \rtimes w)$ is given by the formula for $r_q(x)$ on \cite[p.237]{LS87}, with $x \in L^\times$ being the image of $w$ under the local reciprocity map $W_L^\tx{ab} \to L^\times$. Since all our $\chi$-data have finite order there exists $L$ for which this formula evaluates to $1$ for all $x \in L^\times$.	
\end{proof}

\begin{lem} \label{lem:sgamma}
Consider the exact sequence
\[ 1 \to \hat T_\tx{sc} \to N(\hat T_\tx{sc},\hat G_\tx{sc}) \to \Omega(\hat T,\hat G) \to 1 \]
with the action of $\Gamma$ given by $\tx{Ad}({^Lj}(1 \rtimes w))$, $w \in W_F$. Taking invariants we obtain the exact sequence
\[ 1 \to \hat S_\tx{sc}^\Gamma \to N(\hat T_\tx{sc},\hat G_\tx{sc})^\Gamma \to \Omega(S,G)(F) \to 1. \]
\end{lem}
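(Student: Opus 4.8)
The functor of $\Gamma$-invariants is left exact, so the only content of the statement is the surjectivity of $N(\hat T_\tx{sc},\hat G_\tx{sc})^\Gamma \rw \Omega(S,G)(F)$; the identifications of the kernel with $\hat S_\tx{sc}^\Gamma$ and of the quotient $\Omega(\hat T,\hat G)$ with $\Omega(S,G)(F)$ on $\Gamma$-invariants are immediate from the fact, already recorded above, that the action $\tx{Ad}({^Lj}(1 \rtimes w))$ restricts on $\hat T_\tx{sc}$ to the $\hat S_\tx{sc}$-structure and on $\Omega(\hat T,\hat G)$ to the action making $\hat j$ equivariant (and by Fact \ref{fct:ga2} it factors through a finite quotient of $\Gamma$, so $W_F$-invariants and $\Gamma$-invariants coincide).

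The plan is to run the argument of Lemma \ref{lem:c1}, and hence of Proposition \ref{pro:sphiseq}, but with the \emph{trivial} character of $S(F)$ in place of $\theta$ and with $\hat G_\tx{sc}$ in place of $\hat G$. The point is that the second action $\tx{Ad}({^Lj}(1\rtimes w))$ is nothing but $\tx{Ad}(\varphi'(w))$ for the parameter $\varphi' := {^Lj} \circ \varphi'_S$, where $\varphi'_S : W_F \to {^LS}$ is the Langlands parameter of the trivial character (so $\varphi'_S(w) = 1 \rtimes w$). Now the unramified $\chi$-data used to build $^Lj$ is canonically determined by $R(S,G)$ together with the $\Gamma$-action (it is trivial on asymmetric roots and the unramified quadratic character on symmetric ones), hence is invariant under \emph{every} $w \in \Omega(S,G)(F)$; therefore \cite[(2.6.2)]{LS87} provides, for each such $w$, a lift $n_w \in N(\hat T_\tx{sc},\hat G_\tx{sc})$ with $\tx{Ad}(n_w) \circ {^Lj} = {^Lj} \circ w$, and the connecting homomorphism $\Omega(S,G)(F) \rw H^1(W_F,\hat S_\tx{sc})$ sends $w$ to the parameter of the character $w\mathbf{1}/\mathbf{1} = \mathbf{1}$, i.e. to the trivial class. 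Hence $n_w$ can be adjusted by an element of $\hat S_\tx{sc}$ to a lift fixed by the second action, which is exactly the desired surjectivity. Equivalently, one may simply invoke Lemma \ref{lem:c1} for the group $G_\tx{ad}$, its maximal torus $S/Z(G)$, the trivial character, and the unramified $\chi$-data, using $\Omega(S/Z(G),G_\tx{ad})(F) = \Omega(S,G)(F)$ and $\widehat{S/Z(G)} = \hat S_\tx{sc}$.

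The step requiring the most care — though it is bookkeeping rather than a genuine obstacle — is the transfer of Lemma \ref{lem:c1} and of the underlying \cite[\S2.6]{LS87} construction from $\hat G$ to $\hat G_\tx{sc}$: one must check that the unramified $\chi$-data and the associated $L$-embedding are compatible with the covering map $\hat G_\tx{sc} \rw \hat G$, so that the action on $N(\hat T_\tx{sc},\hat G_\tx{sc})$ appearing in the lemma really is the restriction of the second action, and then carry out the (routine) identification of the kernel and cokernel of the invariants with $\hat S_\tx{sc}^\Gamma$ and $\Omega(S,G)(F)$ equipped with their stated Galois structures. The conceptual reason the full normalizer is recovered here — in contrast with Proposition \ref{pro:sphiseq}, where only $\Omega(S,G)(F)_\theta$ lifts — is precisely that for the trivial character the relevant connecting map vanishes on all of $\Omega(S,G)(F)$ rather than merely on the stabilizer of $\theta$.
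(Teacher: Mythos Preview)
Your proposal is correct and essentially the same as the paper's proof. Both arguments rest on the $\Omega(S,G)(F)$-invariance of the unramified $\chi$-data and invoke \cite[(2.6.2)]{LS87} to produce a lift $n_w \in N(\hat T_\tx{sc},\hat G_\tx{sc})$ with $\tx{Ad}(n_w)\circ{^Lj}={^Lj}\circ w$; the paper observes directly that the two $L$-embeddings $^Lj$ and $^Lj':={^Lj}\circ w^{-1}$, being built from the same $\chi$-data, agree on $W_F$, so that $n_w$ is already $\Gamma$-fixed, whereas you package the same computation as an instance of Lemma \ref{lem:c1} with $\theta=\mathbf{1}$ (for which the connecting cocycle is literally trivial, not merely cohomologically trivial, so no adjustment is needed).
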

\begin{proof}
That the first and third term have this shape follows from the fact that this action makes $\hat j : \hat S \to \hat T$ equivariant, as discussed above.

To show that an element $\mu \in \Omega(S,G)(F)$ lifts,  define $\hat j'=\hat j \circ \mu^{-1}$. We extend $\hat j'$ to $^Lj' : {^LS_\tx{ad}} \to N(\hat T_\tx{sc},\hat G_\tx{sc}) \rtimes W_F$ using the $\chi$-data for $R(S,G)$ from the torally wild $L$-packet datum $(S,\hat j,\chi,\theta)$. Note that the transport of this $\chi$-data to $\hat T$ via $\hat j$ is the same as the transport via $\hat j'$, because this $\chi$-data is $\Omega(S,G)(F)$-invariant. In particular, the restrictions to $W_F$ of $^Lj$ and $^Lj'$ agree. On the other hand, \cite[(2.6.2)]{LS87} says $^Lj' = \tx{Ad}(n_\mu)^{-1}\circ {^Lj}$, for a suitable $n_\mu \in N(\hat T_\tx{sc},\hat G_\tx{sc})$ lifting $\mu$. Thus, $n_\mu$ commutes with the restriction of $^Lj$ to $W_F$, i.e. $n_\mu \in N(\hat T_\tx{sc},\hat G_\tx{sc})^\Gamma$.
\end{proof}

Define $\mc{T}$ and $\mc{\bar T}$ to be the complex algebraic groups $N(\hat T,\hat G)$ and $N(\hat{\bar T},\hat{\bar G})$ equipped with the $\Gamma$-action that extends the action $\tx{Ad}({^Lj}(1 \rtimes w))$ for $w \in W_F$. Define $A$ to be the finite group $\Omega(S,G)$ with the natural $\Gamma$-action. Then $\mc{T}$ is an extension of $\hat S$ by $A$ that remains exact after taking $\Gamma$-fixed points by Lemma \ref{lem:sgamma}. We have $S_\varphi=\mc{T}^{\varphi_S(W_F)}$.

Let $x \in Z^1(u \to W,Z \to S)$ represent the class $\eta=\tx{inv}(j_0,j)$. Then $A^{[x]}=\Omega(S,G)(F)_\eta=N(jS,G')(F)/jS(F)$, $A^{[\varphi_S]}=\Omega(S,G)(F)_\theta$, and $A^{[x],[\varphi_S]}=[N(jS,G')(F)/jS(F)]_\theta$.

We write $\mc{E}_{[x]}^{0}$ for the extension obtained by taking the preimage $\mc{\bar T}^+$ of $\mc{T}^\Gamma$ in $\mc{\bar T}$, pulling back along the inclusion $A^{[x]} \to A^\Gamma$, and pushing out along $[x] : [\hat{\bar S}]^+ \to \C^\times$. We write $\mc{E}_{[x]}^{\varphi_S}$ for the extension obtained by pulling back the extension $S_\varphi^+$ along the inclusion $A^{[x],[\varphi_S]} \to A^{[\varphi_S]}$ and pushing out along $[x] : [\hat{\bar S}]^+ \to \C^\times$, i.e. the extension \eqref{eq:sp1}.

We now let $\tilde T$ be the algebraic group $N(j_0S,G)$ with its natural $F$-structure. It is an extension of $S$ by $A$. It remains exact after taking $\Gamma$-fixed points by \cite[Lemma 3.4.10]{KalRSP}. Write $\mc{E}_{[\varphi_S]}^0$ for the extension obtained by pulling back $\tilde T(F)$ along the inclusion $A^{[\varphi_S]} \to A^\Gamma$ and pushing out along $\theta$. The group $N(jS,G')$ is the inner twist $\tilde T_x$ of $\tilde T$ by the 1-cocycle $x$ in the sense of \cite{KalLLCD}. Write $\mc{E}_{[\varphi_S]}^{x}$ for the extension obtained by pulling back $\tilde T_x(F)$ along the inclusion $A^{[x],[\varphi_S]} \to A^{[x]}$ and pushing out along $\theta$, i.e. the extension \eqref{eq:sp2}.

Finally, let $\mc{E}_{[x]}^{0,[\varphi_S]}$ be obtained by pulling back $\mc{E}_{[x]}^{0}$ along the inclusion $A^{[x],[\varphi_S]} \to A^{[x]}$, and $\mc{E}_{[\varphi_S]}^{0,[x]}$ be obtained by pulling back $\mc{E}_{[\varphi_S]}^{0}$ along the inclusion $A^{[x],[\varphi_S]} \to A^{[\varphi_S]}$. Then \cite[Proposition 8.2]{KalLLCD} states that an isomorphism $\zeta : \mc{E}_{[\varphi_S]}^{0,[x]} \to \mc{E}_{[x]}^{0,[\varphi_S]}$ determines an isomorphism $\xi : \mc{E}_{[\varphi_S]}^{x} \to \mc{E}_{[x]}^{\varphi_S}$.

It therefore remains to establish the isomorphism $\zeta$, which we shall do by showing that both extensions are split. The choice of an isomorphism is then given by choices of splittings.

Write $N(\hat T_\tx{sc},\hat G_\tx{sc})^\Gamma_\theta$ for the preimage in $N(\hat T_\tx{sc},\hat G_\tx{sc})^\Gamma$ of $\Omega(S,G)(F)_\theta$, and analogously $N(\hat T_\tx{sc},\hat G_\tx{sc})^\Gamma_{\theta,\eta}$.

\begin{lem} \label{lem:gm1}
The extension
\[ 1 \to \hat S_\tx{sc}^\Gamma \to N(\hat T_\tx{sc},\hat G_\tx{sc})^\Gamma_{\theta} \to \Omega(S,G)(F)_{\theta} \to 1 \]
has multiplicity $1$ in the sense of Definition \ref{dfn:mult1}. \end{lem}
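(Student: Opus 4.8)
The plan is to reduce the statement to exactly the two situations already handled in Lemma \ref{lem:sphim1}. First, I would note that the multiplicity $1$ property is stable under the operations of pulling back along subgroups of the quotient and pushing out along characters of the kernel (this is the content of Corollary \ref{cor:m1stab}, invoked repeatedly in the proof of Lemma \ref{lem:sphim1}), and that it is insensitive to enlarging or shrinking the central subgroup $Z$ used to form the covers, since the relevant extensions for different $Z$ are related by push-out. So I would first replace $Z$ by a sufficiently large finite central subgroup containing $Z(G_\tx{der})$, exactly as in the unramified case of Lemma \ref{lem:sphim1}, which lets me split $\hat G_\tx{sc}$ off and arrange $\hat{\bar G} = \hat G_\tx{sc} \times Z(\hat{\bar G})^\circ$. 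Then I would fix a central character $\zeta$ on $\pi_0(Z(\hat{\bar G})^+)$ and, via the twisting argument with the differential $d : \pi_0(S_\varphi^+) \to Z^1(\Gamma,\hat Z)$ used in Lemma \ref{lem:sphim1}, reduce to showing multiplicity $1$ for the finite extension
\[ 1 \to \hat S_\tx{sc}^\Gamma \to N(\hat T_\tx{sc},\hat G_\tx{sc})^\Gamma_\theta \to \Omega(S,G)(F)_\theta \to 1, \]
i.e. the extension obtained by taking $\Gamma$-invariants of $1 \to \hat T_\tx{sc} \to N(\hat T_\tx{sc},\hat G_\tx{sc}) \to \Omega(\hat T,\hat G) \to 1$ (with the $\tx{Ad}({^Lj}(1\rtimes w))$-action, as in Lemma \ref{lem:sgamma}) and restricting the quotient to $\Omega(S,G)(F)_\theta$.

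Next I would break $\hat G_\tx{sc}$ into its simple factors; the $\Gamma$-action permutes them, and by Corollary \ref{cor:m1stab} together with Shapiro's lemma I can reduce to a single $\Gamma$-orbit and then to a single simple factor, i.e. to the case that $\hat G_\tx{sc}$ is absolutely simple. At this point Lemma \ref{lem:weylab2}, applied to $G_\tx{sc}$, tells me that $\Omega(S,G)(F)_\theta$ is cyclic in all cases except when the factor is of split type $D_{2n}$ and $\Omega(S,G)(F)_\theta \cong (\Z/2\Z)^2$. When $\Omega(S,G)(F)_\theta$ is cyclic, $H^2$ vanishes, the extension splits, and multiplicity $1$ follows from Lemma \ref{lem:cliff1}. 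The remaining split-$D_{2n}$ case is handled exactly as in Lemma \ref{lem:sphim1}: the extension in question is obtained from the extension of Lemma \ref{lem:d2ng} (with $s$ the image of a topological generator of $I_F/P_F$ and $f$ a Frobenius image under $\varphi$) by pushing out along $\hat S_\tx{sc}^\Gamma \to \hat S_\tx{sc}^+$ and restricting, so the claim follows from Lemma \ref{lem:d2ng}.

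The one genuinely new point compared to Lemma \ref{lem:sphim1} is bookkeeping: there $\varphi$ was assumed of depth zero and $G$ unramified or simply connected, whereas here the ambient group is $G^0$, which need not be unramified, but $\varphi$ restricted to $G^0$ is essentially of depth zero, which is all that the argument actually used (the depth-zero hypothesis entered only through $\hat M = \hat G$, i.e. through $S_\varphi$ sitting inside $N(\hat T,\hat G)$, which is precisely what ``essentially of depth zero'' guarantees here). So the main thing to verify carefully is that every step of the reduction in Lemma \ref{lem:sphim1} — the enlargement of $Z$, the reduction to a fixed central character, the decomposition into simple factors — goes through verbatim for $G^0$ in place of $G$. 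I expect the main obstacle, such as it is, to be confirming that the identification $S_\varphi = N(\hat T_\tx{sc},\hat G_\tx{sc})^\Gamma_\theta$-type normalizer description is still available after the various push-outs and pullbacks; once that is in place, the $D_{2n}$ input from Lemma \ref{lem:d2ng} and the cyclicity input from Lemma \ref{lem:weylab2} close the argument with no further work.
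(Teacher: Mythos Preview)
Your high-level strategy---reduce to simple factors, dispose of the cyclic-quotient cases via Lemma~\ref{lem:cliff1}, and handle split $D_{2n}$ with the explicit computation of Lemma~\ref{lem:d2ng}---matches the paper's. But the execution has two real problems.

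First, the elaborate reduction you propose (enlarging $Z$, splitting off $Z(\hat{\bar G})^\circ$, fixing a central character $\zeta$, twisting via the differential $d$) is borrowed wholesale from the \emph{unramified} case of Lemma~\ref{lem:sphim1} and is simply irrelevant here. The extension in Lemma~\ref{lem:gm1} is already stated in terms of $\hat G_\tx{sc}$ and $\hat S_\tx{sc}^\Gamma$; there is no $Z$, no $[\hat{\bar S}]^+$, and no $S_\varphi^+$ to reduce away. The paper instead passes directly to $\theta_\tx{sc}=\theta|_{S_\tx{sc}(F)}$, notes that the extension is a pull-back of the corresponding one for $\theta_\tx{sc}$, and then decomposes according to the $F$-simple factors of $G_\tx{sc}$. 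That is the whole preliminary reduction.

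Second, and more seriously, your identification in the $D_{2n}$ case is wrong. You propose to realize the extension via Lemma~\ref{lem:d2ng} with $s$ and $f$ taken as images under $\varphi$ of a generator of $I_F/P_F$ and a Frobenius. But the $\Gamma$-action in Lemma~\ref{lem:gm1} is the one coming from $\tx{Ad}({^Lj}(1\rtimes w))$, \emph{not} from $\tx{Ad}(\varphi(w))$; you acknowledge this earlier and then contradict it. The two actions differ on $N(\hat T_\tx{sc},\hat G_\tx{sc})$ by conjugation by elements of $\hat T$, so their fixed-point groups are genuinely different, and the images of inertia and Frobenius under $\varphi$ play no role in describing $N(\hat T_\tx{sc},\hat G_\tx{sc})^\Gamma$. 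The paper's actual argument in the $D_{2n}$ case is: apply Lemma~\ref{lem:ljx} (valid because we are in the essentially-depth-zero situation $G=G^0$, so the $\chi$-data are unramified) to see that ${^Lj}$ can be arranged to restrict trivially to $I_F$; hence the $\Gamma$-action on $N(\hat T_\tx{sc},\hat G_\tx{sc})$ factors through Frobenius alone, via the elliptic element $f={^Lj}(\tx{Frob}) \in N(\hat T_\tx{ad},\hat G_\tx{ad})$. Only then does the extension \eqref{eq:2ct2} become the extension of Lemma~\ref{lem:d2ng}. Without Lemma~\ref{lem:ljx} you have no mechanism for collapsing the inertia contribution, and your closing remark that ``the depth-zero hypothesis entered only through $\hat M=\hat G$'' misses precisely this point: depth zero is what makes Lemma~\ref{lem:ljx} available, and that lemma---not any unramifiedness hypothesis on $G$---is what makes the $D_{2n}$ identification work.
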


\begin{proof}
This is an extension of one finite abelian group by another, so applying Lemma \ref{lem:cliff1} it is enough to show that it has trivial commutator in the sense of Definition \ref{dfn:commtriv}.
Let $\theta_\tx{sc}$ be the restriction of $\theta$ to $S_\tx{sc}(F)$. The extension we are considering is the restriction of the extension
\begin{equation} \label{eq:2ct2} 1 \to \hat S_\tx{sc}^\Gamma \to N(\hat T_\tx{sc},\hat G_\tx{sc})^\Gamma_{\theta_\tx{sc}} \to \Omega(S,G)(F)_{\theta_\tx{sc}} \to 1, \end{equation}
so it is enough to show that this extension has trivial commutator. This extension breaks up according to the $F$-simple factors of $G$, and its commutator breaks up accordingly, so we may assume that $G$ is $F$-simple. Since $\theta_\tx{sc}$ is still non-singular, $\Omega(S,G)(F)_{\theta_\tx{sc}}$ is cyclic, and hence the commutator is trivial, except when $G$ is the restriction of scalars of a group of split type $D_{2n}$ by Lemma \ref{lem:weylab2}, in which case $\Omega(S,G)(F)_{\theta_\tx{sc}}$ may be isomorphic to $(\Z/2\Z)^2$.

In that latter case we argue as follows. The formation of $^Lj$ respects restriction of scalars, so we may assume that $G$ is split of type $D_{2n}$. The isomorphism class of the extension does not change if we conjugate $^Lj$ by an element of $\hat T$, so we may apply Lemma \ref{lem:ljx} to conclude that $^Lj$ restricts trivially to $I_F$. Let $f \in N(\hat T_\tx{ad},\hat G_\tx{ad})$ be the image of the Frobenius element in $W_F/I_F$ under $^Lj|_{W_F}$. Then $f$ is an elliptic element and the extension \eqref{eq:2ct2} is exactly the extension of Lemma \ref{lem:d2ng}.
\end{proof}

\begin{cor}
The extension $\mc{E}_{[x]}^0$ is split. Any extension of $\eta$ to $N(\hat T_\tx{sc},\hat G_\tx{sc})^\Gamma_{\theta,\eta}$ is a splitting.
\end{cor}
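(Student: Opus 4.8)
The plan is to deduce the corollary directly from Lemma~\ref{lem:gm1} together with the general formalism of extensions with abelian kernel recalled in Appendix~\ref{app:repext}. Recall that $\mc{E}_{[x]}^0$ is built from the extension
\[ 1 \to \hat S_\tx{sc}^\Gamma \to N(\hat T_\tx{sc},\hat G_\tx{sc})^\Gamma_{\theta,\eta} \to \Omega(S,G)(F)_{\theta,\eta} \to 1 \]
(which is the pullback along $A^{[x],[\varphi_S]}=\Omega(S,G)(F)_{\theta,\eta} \to A^{[x]}$ of the preimage in $\mc{\bar T}^+$ of $\mc{T}^\Gamma$, equivalently the pullback of the extension in Lemma~\ref{lem:gm1} along $\Omega(S,G)(F)_{\theta,\eta}\hookrightarrow\Omega(S,G)(F)_\theta$) by pushing out along the character $\eta=[x]$ of $\hat S_\tx{sc}^\Gamma=\pi_0([\hat{\bar S}]^+)$. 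So the first step is simply to observe that push-out along a character of the kernel turns an extension with the multiplicity~$1$ property into a \emph{split} extension of $\Omega(S,G)(F)_{\theta,\eta}$ by $\C^\times$: this is exactly the content of the multiplicity~$1$/trivial-commutator statement, since by Lemma~\ref{lem:cliff1} (applied in Appendix~\ref{app:repext}) having trivial commutator after push-out along every character of the kernel is equivalent to the extension being a product, and the push-out along $\eta$ of an extension with multiplicity~$1$ has trivial commutator, hence splits.

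Concretely, I would argue as follows. Lemma~\ref{lem:gm1} says the extension $1 \to \hat S_\tx{sc}^\Gamma \to N(\hat T_\tx{sc},\hat G_\tx{sc})^\Gamma_\theta \to \Omega(S,G)(F)_\theta \to 1$ has multiplicity~$1$; restricting (pulling back) along the subgroup $\Omega(S,G)(F)_{\theta,\eta}\subseteq\Omega(S,G)(F)_\theta$ preserves this property by Corollary~\ref{cor:m1stab}. Now push out along the character $\eta$ of the kernel $\hat S_\tx{sc}^\Gamma$. By Lemma~\ref{lem:reppush} the $\tx{id}$-isotypic irreducible representations of the resulting extension $\mc{E}_{[x]}^0$ are in bijection with the $\eta$-isotypic irreducible representations of $N(\hat T_\tx{sc},\hat G_\tx{sc})^\Gamma_{\theta,\eta}$; multiplicity~$1$ forces all of these to be one-dimensional. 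An extension of a finite group by $\C^\times$ all of whose $\tx{id}$-isotypic irreducibles are one-dimensional is split — a one-dimensional $\tx{id}$-isotypic representation is literally a splitting homomorphism $\Omega(S,G)(F)_{\theta,\eta} \to \mc{E}_{[x]}^0$ (after identifying its image appropriately), since its restriction to $\C^\times$ is the identity character. Equivalently: the commutator pairing of $\mc{E}_{[x]}^0$, which lands in $\C^\times$, is the push-forward along $\eta$ of the commutator pairing of $N(\hat T_\tx{sc},\hat G_\tx{sc})^\Gamma_{\theta,\eta}$, and the latter is trivial by the multiplicity~$1$ hypothesis (Definition~\ref{dfn:commtriv}), so $\mc{E}_{[x]}^0$ is a central extension of an abelian group with trivial commutator, hence abelian, hence — being an extension of a finite group by the divisible group $\C^\times$ — split by Lemma~\ref{lem:cliff1}.

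For the second sentence of the corollary, I would unwind the push-out construction: an element of $\mc{E}_{[x]}^0$ is, by definition of push-out, a class of pairs $(t,c) \in N(\hat T_\tx{sc},\hat G_\tx{sc})^\Gamma_{\theta,\eta} \times \C^\times$ modulo the anti-diagonal copy of $\hat S_\tx{sc}^\Gamma$ acting by $(s,\eta(s))$. Given an extension $\tilde\eta$ of the character $\eta$ to a character of the finite group $N(\hat T_\tx{sc},\hat G_\tx{sc})^\Gamma_{\theta,\eta}$, the map $t \mapsto [(t,\tilde\eta(t)^{-1})]$ is well-defined (it is insensitive to the anti-diagonal $\hat S_\tx{sc}^\Gamma$ exactly because $\tilde\eta|_{\hat S_\tx{sc}^\Gamma}=\eta$), is a homomorphism because $\tilde\eta$ is, kills the kernel $\hat S_\tx{sc}^\Gamma$ (again because $\tilde\eta$ restricts to $\eta$, so $[(s,\tilde\eta(s)^{-1})]=[(1,1)]$), and therefore descends to a homomorphism $\Omega(S,G)(F)_{\theta,\eta}=N(\hat T_\tx{sc},\hat G_\tx{sc})^\Gamma_{\theta,\eta}/\hat S_\tx{sc}^\Gamma \to \mc{E}_{[x]}^0$ splitting the extension. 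So every extension of $\eta$ yields a splitting, as claimed.

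The only real content here is invoking Lemma~\ref{lem:gm1}, which has already been proved; once that is granted, everything is formal manipulation of push-outs and the dictionary between one-dimensional isotypic representations and splittings, so I do not expect any genuine obstacle. If there is a subtlety to watch, it is making sure that "multiplicity~$1$" is being transported correctly through the pullback to $\Omega(S,G)(F)_{\theta,\eta}$ and then through the push-out along $\eta$ — i.e., that Corollary~\ref{cor:m1stab} indeed covers both of these operations — but that is exactly what that corollary is designed to do, and the existence of the extension $\tilde\eta$ of $\eta$ used in the last step is itself a restatement of multiplicity~$1$ for the pulled-back extension (every character of the kernel extends to the middle group, by the trivial-commutator criterion of Lemma~\ref{lem:cliff1}).
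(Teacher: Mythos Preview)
Your proof is correct and takes the same route as the paper: identify $\mc{E}_{[x]}^0$ with a pull-back/push-out of the extension in Lemma~\ref{lem:gm1} and invoke its multiplicity~$1$ property, which you then unwind in detail via the trivial-commutator criterion and an explicit splitting built from $\tilde\eta$. One small wrinkle: by its definition $\mc{E}_{[x]}^0$ lives over $A^{[x]}=\Omega(S,G)(F)_\eta$, not over $\Omega(S,G)(F)_{\theta,\eta}$ as you write---but the paper's own proof and the second sentence of the Corollary both work over $\Omega(S,G)(F)_{\theta,\eta}$ (i.e., they really concern $\mc{E}_{[x]}^{0,[\varphi_S]}$), and that is what is actually used downstream to build $\zeta$, so your reading matches what is needed.
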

\begin{proof}
Pulling back the extension of Lemma \ref{lem:gm1} along the inclusion $A^{[x]} \to A^\Gamma$ and then pushing out along $[x] : \hat S_\tx{sc}^\Gamma \to \C^\times$ is another way to obtain $\mc{E}_{[x]}^0$. 
\end{proof}

\begin{lem} The character $\theta$ extends to $N(j_0S,G)(F)_\theta$.
\end{lem}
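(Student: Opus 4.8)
The statement is the exact local-field analogue of Proposition \ref{pro:dlcharext}, and the plan is to follow that proof line by line, using the Bruhat--Tits/Moy--Prasad dictionary established in \S\ref{sub:ass_check} to transfer everything to the reductive quotient over the residue field. The key point is that $\theta$ restricted to $S(F)_\tx{sc} := S_\tx{sc}(F)$ factors through the finite group $\ms{S}^\circ_\tx{sc}(k_F)$ up to the $0+$ filtration (since $S$ is maximally unramified in $G^0=G$ by hypothesis here and $S_\tx{sc}$ has induced ramification), so the extendibility question for $\theta|_{S_\tx{sc}(F)}$ becomes the extendibility question already solved over $k_F$. Since every element of the proof is about the finite groups and the $H^2$ obstruction lives there, the positive-depth part of $\theta$ plays no role.

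\textbf{Steps.} First I would reduce to $S_\tx{sc}(F)$: by the Lemma identifying $N(S,G)(F)/S(F) \to N(\ms{S},\ms{G}_x)(k)/\ms{S}(k)$ together with Lang's theorem over $k_F$, one gets $N(S,G)(F) = N(S_\tx{sc},G_\tx{sc})(F)_{\mathrm{pro}} \cdot S(F)$ in the sense that the Weyl group $\Omega(S,G)(F)$ is realized by $k_F$-rational points of the normalizer in the simply connected group, exactly as in the proof of Proposition \ref{pro:dlcharext}; hence it suffices to extend $\theta|_{S_\tx{sc}(F)}$ to $N(S_\tx{sc},G_\tx{sc})(F)_\theta$. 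Second, I would invoke the Clifford-theoretic criterion Lemma \ref{lem:cliff1}: an extension exists once the relevant $H^2(\Omega(S,G)(F)_\theta, \bar\Q_l^\times)$ (equivalently $H^2(\cdot,\C^\times)$) vanishes, i.e. once the group $\Omega(S,G)(F)_\theta$ is cyclic, or more precisely once its Schur multiplier is trivial. Third, I would apply Lemma \ref{lem:weylab2}: reducing to the absolutely simple simply connected case (product over $F$-simple, then over $\tx{Res}_{E/F}$ factors), $\Omega(S,G)(F)_{\theta}$ is cyclic except when $G$ is split of type $D_{2n}$, in which case it is $\{1\}$, $\Z/2\Z$, or $(\Z/2\Z)^2$. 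In the cyclic cases and the first two $D_{2n}$ cases, $H^2$ vanishes and we are done via Lemma \ref{lem:cliff1}. Fourth, for the remaining case $\Omega(S,G)(F)_\theta \cong (\Z/2\Z)^2$ in split type $D_{2n}$, I would appeal to the residue-field statement: either directly to Lemma \ref{lem:d2nr} (the $D_{2n}$-specific lemma that handles precisely this Schur-multiplier issue and shows the relevant cocycle is a coboundary), combined with Lemma \ref{lem:cliff1}, exactly as in the final sentence of the proof of Proposition \ref{pro:dlcharext}; here the point is that $\theta|_{S_\tx{sc}(F)}$ is pulled back from $\theta^\circ$ on $\ms{S}^\circ_\tx{sc}(k_F)$ (after moving $x$ to an absolutely special vertex via \cite[Lemma 3.4.12]{KalRSP}), so $D_{2n}$ over $k_F$ applies.

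\textbf{Main obstacle.} The only genuinely non-formal step is the $(\Z/2\Z)^2$ case in type $D_{2n}$; everything else is a mechanical transport of the finite-field argument through the Moy--Prasad dictionary. The subtlety is that for a \emph{ramified} torus $S$ the vertex $x$ need not be absolutely special, so one cannot immediately identify $\Omega(S,G)(F)_\theta$ with $\Omega(\ms{S}^\circ,\ms{G}_x^\circ)(k_F)$; I expect to handle this exactly as in the proof of Proposition \ref{pro:dznormiopex}, namely by first conjugating $S$ (via \cite[Lemma 3.4.12]{KalRSP}) to make the vertex absolutely special --- which is legitimate because extendibility of $\theta$ is a stable-conjugacy-invariant property of the pair --- and then invoking Lemma \ref{lem:d2nr} over the residue field. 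Alternatively, if the vertex is genuinely non-special and the root system of $\ms{G}_x^\circ$ splits as a product of two type-$D$ factors, one argues as in Proposition \ref{pro:dznormiopex} that $\Omega(S,G)(F)_\theta$ either swaps the factors (whence one can choose a Borel-invariant element and the effective group is cyclic) or preserves each factor and acts by Weyl elements on each, reducing again to the split $D_{2a}$, $D_{2b}$ residue-field case where Lemma \ref{lem:d2nr} applies factor by factor.
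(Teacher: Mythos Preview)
Your approach is correct and follows the same route as the paper: mimic the proof of Proposition~\ref{pro:dlcharext}, reduce to $G_\tx{sc}$, split into $F$-simple factors, use that $\Omega(S,G)(F)_\theta$ is cyclic except in split type $D_{2n}$, and handle the residual $(\Z/2\Z)^2$ case via Lemma~\ref{lem:d2nr} and Lemma~\ref{lem:cliff1}.

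However, your ``main obstacle'' is a non-issue, and working around it makes your argument longer than necessary. The embedding $j_0$ was \emph{chosen} in \S\ref{sub:is1} precisely so that the associated vertex is absolutely special (and $G^0=G$ here since $\varphi$ is essentially of depth zero). Hence there is no need to conjugate $S$ via \cite[Lemma~3.4.12]{KalRSP}, and no need for the alternative analysis of non-special vertices with two type-$D$ factors borrowed from Proposition~\ref{pro:dznormiopex}. The paper's proof is a single sentence: the only ingredient in the proof of Proposition~\ref{pro:dlcharext} that needs replacing is the equality $\Omega(S,G)(k)=N(S,G)(k)/S(k)$ coming from Lang's theorem, and its $p$-adic substitute $\Omega(S,G)(F)=N(j_0S,G)(F)/j_0S(F)$ is supplied directly by \cite[Lemma~3.4.10]{KalRSP}, which applies because the vertex of $j_0S$ is absolutely special. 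Once you have this surjectivity, the rest of the argument (reduction to $G_\tx{sc}$, factoring into simple pieces, Lemma~\ref{lem:weylab2}, and Lemma~\ref{lem:d2nr} for the $D_{2n}$ case) runs verbatim without any passage through the residue-field group $\ms{G}_x$.
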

\begin{proof}
The proof is the same as for Lemma \ref{pro:dlcharext}, the only difference being that the equality $\Omega(S,G)(k)=N(S,G)(k)/S(k)$ used there and implied by Lang's theorem is replaced here by the equality $\Omega(S,G)(F)=N(j_0S,G)(F)/S(F)$ due to \cite[Lemma 3.4.10]{KalRSP}, which uses the fact that the point of $j_0S$ is absolutely special.
\end{proof}

\begin{cor}
The extension $\mc{E}_{[\varphi_S]}^0$ is split. Any extension of $\theta$ to $N(j_0S,G)(F)_\theta$ is a splitting of $\mc{E}_{[\varphi_S]}^0$.
\end{cor}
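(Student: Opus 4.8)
The plan is to deduce this immediately from the preceding lemma, exactly as the analogous statement for $\mc{E}_{[x]}^0$ was deduced from Lemma \ref{lem:gm1}. First I would unwind the definition of $\mc{E}_{[\varphi_S]}^0$: it is the push-out along $\theta : S(F) \to \C^\times$ of the pull-back of the sequence $1 \to S(F) \to N(j_0S,G)(F) \to \Omega(S,G)(F) \to 1$ — exact by \cite[Lemma 3.4.10]{KalRSP} since the vertex of $j_0S$ is absolutely special — along the inclusion $\Omega(S,G)(F)_\theta \hookrightarrow \Omega(S,G)(F)$. This pull-back is the sequence $1 \to S(F) \to N(j_0S,G)(F)_\theta \to \Omega(S,G)(F)_\theta \to 1$, because the preimage in $N(j_0S,G)(F)$ of $\Omega(S,G)(F)_\theta$ is precisely the stabilizer of $\theta$ for the conjugation action; here one uses that $S(F)$ acts trivially on $\theta$, so an element of $N(j_0S,G)(F)$ fixes $\theta$ if and only if its image in $\Omega(S,G)(F)$ does. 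Hence $\mc{E}_{[\varphi_S]}^0 = \bigl(\C^\times \times N(j_0S,G)(F)_\theta\bigr)/\{(\theta(s)^{-1},s) : s \in S(F)\}$, with $\C^\times$ embedded via the first coordinate and the projection to $\Omega(S,G)(F)_\theta$ induced by the second coordinate.

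Next I would invoke the preceding lemma, which supplies a character $\tilde\theta : N(j_0S,G)(F)_\theta \to \C^\times$ restricting to $\theta$ on $S(F)$. The assignment $(z,n) \mapsto z\,\tilde\theta(n)$ is a group homomorphism $\C^\times \times N(j_0S,G)(F)_\theta \to \C^\times$ that kills the subgroup $\{(\theta(s)^{-1},s) : s \in S(F)\}$, since $\theta(s)^{-1}\tilde\theta(s)=1$; therefore it descends to a homomorphism $r_{\tilde\theta} : \mc{E}_{[\varphi_S]}^0 \to \C^\times$ whose restriction to the central $\C^\times$ is the identity. A retraction of this kind is exactly what a splitting of a central extension by $\C^\times$ amounts to: its kernel is a complement to $\C^\times$ mapping isomorphically onto $\Omega(S,G)(F)_\theta$. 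This proves that $\mc{E}_{[\varphi_S]}^0$ is split and that every extension $\tilde\theta$ of $\theta$ to $N(j_0S,G)(F)_\theta$ produces such a splitting; distinct choices of $\tilde\theta$ give distinct retractions $r_{\tilde\theta}$, hence distinct splittings.

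I expect no genuine obstacle here: once the preceding lemma is available the argument is purely formal and mirrors the proof of the corollary for $\mc{E}_{[x]}^0$. The single point that needs a moment's care is the identification of the pull-back of $N(j_0S,G)(F)$ along $\Omega(S,G)(F)_\theta$ with the $\theta$-stabilizer subgroup $N(j_0S,G)(F)_\theta$, which rests on the triviality of the conjugation action of $S(F)$ on $\theta$ together with the exactness of $1 \to S(F) \to N(j_0S,G)(F) \to \Omega(S,G)(F) \to 1$ guaranteed by \cite[Lemma 3.4.10]{KalRSP}.
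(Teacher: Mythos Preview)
Your proposal is correct and follows exactly the approach the paper intends: the paper's proof reads simply ``Immediate,'' and what you have written is a careful unwinding of why the existence of an extension $\tilde\theta$ (supplied by the preceding lemma) is equivalent to a splitting of the push-out extension $\mc{E}_{[\varphi_S]}^0$. Your identification of the pull-back along $A^{[\varphi_S]}\hookrightarrow A^\Gamma$ with $N(j_0S,G)(F)_\theta$ and the explicit construction of the retraction $r_{\tilde\theta}$ are exactly the content hidden behind the word ``Immediate.''
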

\begin{proof}
Immediate.
\end{proof}

This completes the construction of the bijection \eqref{eq:intstr} in the case when $\varphi$ is essentially of depth zero, to which the case of general $\varphi$ was reduced in the previous subsection. Let us now summarize the choice we have made in this construction and examine their interdependence. In the positive-depth case this is the choice of the embedding $j_0$, which ought to be determined by the Whittaker datum $\mf{w}$. In the depth-zero case, the choices are the extension $\tilde\theta$ of $\theta$, the extension $\tilde\eta$ for each $\eta$, and the normalization $\epsilon$ for each $j$.

Let us now argue that the choices of $\tilde\theta$ and normalization of $\epsilon$ for $j_0$ are linked, and for $\eta \neq 1$, the choices of $\tilde\eta$ and normalization $\epsilon$ are linked. Note that for $\eta_0=1$ we have the natural extension $\tilde\eta_0=1$. 

\begin{lem}
\begin{enumerate}
	\item Let $\delta \in \Omega(S,G)(F)_\theta^*$. If we replace $\tilde\theta$ and $\epsilon_{j_0}$ by $\tilde\theta \otimes \delta$ and $\epsilon_{j_0}\cdot \delta^{-1}$, the bijection \eqref{eq:intstr} remains unchanged.
	\item Let $\delta \in \Omega(S,G)(F)_{\theta,\eta}^*$ and let $j$ correspond to $\eta$. If we replace $\tilde\eta$ and $\epsilon_j$ by $\tilde\eta\otimes\delta$ and $\epsilon_j \cdot \delta$, the bijection \eqref{eq:intstr} remains unchanged.
\end{enumerate}
\end{lem}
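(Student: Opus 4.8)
The plan is to unwind the two constructions of the bijection \eqref{eq:intstr} and track how each auxiliary choice enters, then observe that the two prescribed simultaneous changes cancel. Recall that the bijection is assembled, for each $\eta \in H^1(u \to W, Z(G) \to S)$, from a bijection $\tx{Irr}(\pi_0(S_\varphi^+),\eta) \to [\pi_\eta]$, which in the essentially-depth-zero case is obtained by passing through the two extensions \eqref{eq:sp1} and \eqref{eq:sp2}: the set $\tx{Irr}(\pi_0(S_\varphi^+),\eta)$ is identified via Lemma \ref{lem:reppush} with the $\tx{id}$-isotypic representations of $\Box_1 = \mc{E}_{[x]}^{\varphi_S}$, the packet $[\pi_\eta]$ is identified via Lemma \ref{lem:reppush} and Corollary \ref{cor:pdclass1} with the $\tx{id}$-isotypic representations of $\Box_2 = \mc{E}_{[\varphi_S]}^{x}$, and the two extensions are matched by an isomorphism $\xi$ produced from an isomorphism $\zeta : \mc{E}_{[\varphi_S]}^{0,[x]} \to \mc{E}_{[x]}^{0,[\varphi_S]}$ via \cite[Proposition 8.2]{KalLLCD}. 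The choice $\zeta$ in turn is the choice of a splitting of $\mc{E}_{[\varphi_S]}^{0}$ — namely an extension $\tilde\theta$ of $\theta$ to $N(j_0 S,G)(F)_\theta$ — together with a splitting of $\mc{E}_{[x]}^{0}$ — namely an extension $\tilde\eta$ of $\eta$ to $N(\hat T_\tx{sc},\hat G_\tx{sc})^\Gamma_{\theta,\eta}$; the normalization $\epsilon = \{\epsilon_j\}$ enters only through the identification of $[\pi_\eta]$ with representations of $\Box_2$, since by Theorem \ref{thm:dlpar}(3) replacing $\epsilon$ by $\delta\epsilon$ replaces $\kappa_{(S,\theta,\rho)}^{G,\epsilon}$ by $\kappa_{(S,\theta,\delta\otimes\rho)}^{G,\epsilon}$, i.e. precomposes the parameterization with tensoring by $\delta$.

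The key computation is then the following. For part (1), changing $\tilde\theta$ to $\tilde\theta \otimes \delta$ with $\delta \in \Omega(S,G)(F)_\theta^*$ changes the chosen splitting of $\mc{E}_{[\varphi_S]}^0$ by $\delta$, hence changes $\zeta$, hence changes the isomorphism $\xi : \mc{E}_{[\varphi_S]}^{x} \to \mc{E}_{[x]}^{\varphi_S}$ by the automorphism of $\mc{E}_{[x]}^{\varphi_S}$ (equivalently of $\mc{E}_{[\varphi_S]}^{x}$) given by multiplication by $\delta$ pulled back to $\Omega(S,G)(F)_{\theta,\eta}$; on $\tx{id}$-isotypic representations this is precisely precomposition with $\rho \mapsto \rho \otimes \delta$. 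On the other hand, replacing $\epsilon_{j_0}$ by $\epsilon_{j_0}\cdot\delta^{-1}$ replaces, via Theorem \ref{thm:dlpar}(3) and Corollary \ref{cor:pdclass1}, the identification of $[\pi_{\eta}]$ with representations of $\Box_2$ by its composition with $\rho \mapsto \rho \otimes \delta^{-1}$ (the sign of the exponent being dictated by the equivalence relation on tuples, under which $\epsilon \mapsto \delta^{-1}\epsilon$ accompanies $\rho \mapsto \rho\otimes\delta$). These two changes compose to the identity on $[\pi_\eta]$, so the net bijection \eqref{eq:intstr} is unchanged. Part (2) is identical with $j_0$ replaced by $j$, $\theta$ replaced by $\theta_j$, and the splitting of $\mc{E}_{[x]}^0$ (rather than $\mc{E}_{[\varphi_S]}^0$) being the one that moves; here the sign is reversed — $\epsilon_j \cdot \delta$ rather than $\epsilon_j \cdot \delta^{-1}$ — because $\delta$ now enters the isomorphism $\xi$ through the pushout side $[x]$ rather than the $\varphi_S$ side, contributing an inverse in the transport through \cite[Proposition 8.2]{KalLLCD}, which then cancels against the sign already present in the $\epsilon$-dependence.

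The main obstacle I anticipate is bookkeeping the direction of every induced map: one must verify that the functoriality of \cite[Proposition 8.2]{KalLLCD} in the chosen splittings of $\mc{E}_{[\varphi_S]}^{0,[x]}$ and $\mc{E}_{[x]}^{0,[\varphi_S]}$ transports a change-by-$\delta$ of the first splitting to a change-by-$\delta$ of $\xi$, and a change-by-$\delta$ of the second splitting to a change-by-$\delta^{-1}$ (or vice versa), so that the signs in the statement come out as written. Concretely I would fix once and for all the identifications $\tx{Irr}(\pi_0(S_\varphi^+),\eta) \cong \{\tx{id}\text{-isotypic reps of }\mc{E}_{[x]}^{\varphi_S}\}$ and $[\pi_\eta] \cong \{\tx{id}\text{-isotypic reps of }\mc{E}_{[\varphi_S]}^{x}\}$ as in \S\ref{sub:is2}, write out the bijection \eqref{eq:intstr} as a chain of maps, and then check that each prescribed pair of substitutions multiplies the composite by $\delta \cdot \delta^{-1} = 1$. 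No genuinely new input beyond Theorem \ref{thm:dlpar}(3), Corollary \ref{cor:pdclass1}, Lemma \ref{lem:reppush}, and the functoriality in \cite[Proposition 8.2]{KalLLCD} is needed; the content is purely the observation that the $\tilde\theta$-/$\tilde\eta$-dependence of $\zeta$ and the $\epsilon$-dependence of the packet parameterization are opposite torsor-actions of the same group $\Omega(S,G)(F)_\theta^*$ (resp. $\Omega(S,G)(F)_{\theta,\eta}^*$).
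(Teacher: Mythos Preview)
Your proposal is correct and follows essentially the same argument as the paper: both decompose the bijection into the $\epsilon_j$-dependent identification $[\pi_\eta] \leftrightarrow \{\tx{id}$-isotypic reps of $\mc{E}_{[\varphi_S]}^{x}\}$ and the $(\tilde\theta,\tilde\eta)$-dependent isomorphism $\xi : \mc{E}_{[\varphi_S]}^{x} \to \mc{E}_{[x]}^{\varphi_S}$, then invoke \cite[Proposition 8.2]{KalLLCD} for the effect on $\xi$ and Theorem \ref{thm:dlpar}(3)/Proposition \ref{pro:dzclass} for the effect on the packet parameterization, observing that the two torsor actions cancel. The only point to pin down carefully is exactly what you flag as the obstacle: the paper records concretely that $\tilde\theta \mapsto \tilde\theta\otimes\delta$ gives $\xi \mapsto \delta\cdot\xi$ while $\tilde\eta \mapsto \tilde\eta\otimes\delta$ gives $\xi \mapsto \delta^{-1}\cdot\xi$, and then matches each against $\pi^{\delta\epsilon}_{(S,\theta,\rho)} = \pi^{\epsilon}_{(S,\theta,\delta\otimes\rho)}$ to recover the stated signs on $\epsilon_{j_0}$ and $\epsilon_j$.
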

\begin{proof}
The matching between the set of Deligne-Lusztig packets in $\Pi_\varphi(G)$ and the set of fibers of the restriction map $\tx{Irr}(\pi_0(S_\varphi^+)) \to \pi_0([\hat{\bar S}]^+)^*/\Omega(S,G)(F)_\theta$	depends only on the admissible embedding $j_0$. The choices of $\tilde\theta$, $\tilde\eta$, and $\epsilon_j$ only influence the bijection between $[\pi_j]$ and and $\tx{Irr}(\pi_0(S_\varphi^+),[\eta])$. This bijection is turn given by the bijection between $[\pi_j]$ and the set of $\tx{id}$-isotypic representations of $\mc{E}_{[x]}^{\varphi_S}$, as well as the isomorphism of extensions $\xi : \mc{E}_{[\varphi_S]}^{x} \to \mc{E}_{[x]}^{\varphi_S}$. The first of these depends on the choice of coherent splitting $\epsilon_j$, while the second comes from the isomorphism $\xi$ via \cite[Proposition 8.2]{KalLLCD} and hence depends on the choices of $\tilde\theta$ and $\tilde\eta$.

If we replace $\tilde\theta$ by $\delta\otimes\tilde\theta$ for some $\delta \in \Omega(S,G)(F)_\theta^*$, $\zeta$ is replaced by $\delta\cdot\zeta$, which according to \cite[Proposition 8.2]{KalLLCD} has the effect of replacing $\xi$ by $\delta\cdot\xi$, where we are identifying $\delta$ with its restriction to $\Omega(S,G)(F)_{\theta,\eta}$. If $\rho \in \tx{Irr}(N(jS,G)(F)_{\theta,\eta},\theta)$ corresponds to $\sigma \in \tx{Irr}(\pi_0(S_\varphi^+),[\eta])$ via $\xi$, then $\rho\otimes\delta$ corresponds to $\sigma$ via $\delta\cdot\xi$. If we replace $\tilde\eta$ by $\tilde\eta\otimes\delta$ for $\delta \in \Omega(S,G)(F)_{\theta,\eta}^*$, then $\zeta$ is replaced $\delta^{-1}\cdot\zeta$, hence $\xi$ is replaced by $\delta^{-1}\cdot\xi$. At the same time, Proposition \ref{pro:dzclass} shows that $\pi^{\delta^{-1}\epsilon_j}_{(S,\theta,\delta\otimes\rho)}=\pi^{\epsilon_j}_{(S,\theta,\rho)}$.
\end{proof}

\begin{rem} 
This lemma reduces the choices in the depth-zero case as follows: We may make arbitrary choices for $\tilde\theta$, as well as $\tilde\eta$ for $\eta \neq 1$. Once these are fixed, the real choice to be made is that of the coherent splittings $\epsilon$, one for each admissible embedding $j$.	
\end{rem}

\subsection{Remarks about stability and transfer} \label{sub:endo}

In the construction of the bijection $\tx{Irr}(\pi_0(S_\varphi^+)) \to \Pi_\varphi$ in the previous subsection we made a number of auxiliary choices. In this subsection we will sketch an argument showing that the resulting parameterized $L$-packet satisfies stability, and more generally endoscopic transfer for elements $s \in S_\varphi^+$ that lie in the subgroup $[\hat{\bar S}]^+$. The details of this argument, its extension to all $s \in S_\varphi^+$, and a canonical choice for the bijection $\tx{Irr}(\pi_0(S_\varphi^+)) \to \Pi_\varphi$, will be the subject of a forthcoming paper.

Thus we fix a rigid inner twist $(G',\xi,z)$ and consider the part $\Pi_\varphi(G',\xi,z)$ of the compound $L$-packet $\Pi_\varphi$ corresponding to it. We form the $s$-stable character
\[ \sum_{\substack{\rho \in \tx{Irr}(\pi_0(S_\varphi^+))\\ \rho \mapsto [z]}}\tx{tr}\rho(s)\Theta_{\pi_\rho}. \]
Since $\tx{tr}\rho(s)$ depends only on $\rho|_{[\hat{\bar S}]^+}$ we may break the sum as follows
\[ \sum_{\substack{\eta \in \pi_0(\hat S^\Gamma)^*/\Omega(S,G)(F)_\theta\\ \eta \mapsto [z]}} \sum_{\substack{\rho \in \tx{Irr}(\pi_0(S_\varphi^+))\\ \rho \mapsto \eta}}\tx{tr}\rho(s)\Theta_{\pi_\rho}, \]
and then use the fact that $\rho|_{\hat S^\Gamma}=[\Omega(S,G)(F)_\theta \cdot \eta]^{\oplus m(\eta,\rho)}$. According to Corollary \ref{cor:abext} and Proposition \ref{pro:exiso}, $m(\eta,\rho)$ is equal to the dimension of the representation of $N(jS,G)(F)_\theta$ corresponding to $\rho$, which in turn is equal to the multiplicity in $\pi_j$ of the irreducible constituent corresponding to $\rho$, i.e. $\pi_\rho$. Altogether the above double sum becomes
\[ \sum_{\substack{\eta \in \pi_0(\hat S^\Gamma)^*\\ \eta \mapsto [z]}} \eta(s)\Theta_{\pi_j}, \]
where $j$ is determined by $\tx{inv}(j_0,j)=\eta$. In other words, we have
\[ \sum_{j : S \to G'} \<\tx{inv}(j_0,j),s\> \Theta_{\pi_{(jS,\theta_j)}}, \]
where $j$ runs over the set of $G'(F)$-conjugacy classes of admissible embeddings $S \to G'$, and $j_0 : S \to G$ is the admissible embedding giving the base point in $\Pi_\varphi$.

To compute the character of $\pi_{(jS,\theta_j)}$ we apply recent \cite{Spice17} and ongoing work of DeBacker and Spice. It provides an inductive formula for the character of an irreducible supercuspidal representation obtained from J.K.Yu's construction in terms of the character of the corresponding representation of lower depth. Since the formula is additive, we may apply it to the reducible representation $\pi_{(jS,\theta_j)}$ and obtain an expression for its character in terms of the character of the depth-zero representation $\pi_{(G^0,S,\phi_{-1})}$, which in turn reduces to the character formula for the reducible Deligne-Lusztig induction $R_{\theta^\circ}$ and its extension $R_\theta$ from \S\ref{sec:nsdl-fin} and \S\ref{sub:nsdl-dz}. Altogether the formula we obtain for $\Theta_{\pi_{(jS,\theta_j)}}$ is virtually the same as that of \cite[\S4.8]{KalRSP} and the arguments of \cite[\S6]{KalRSP} apply.

The situation when $s \in S_\varphi^+$ does not lie in $[\hat{\bar S}]^+$ is rather different and much more subtle. For then $\tx{tr}\rho(s)$ depends not just on the restriction of $\rho$ to $[\hat{\bar S}]^+$. This means that different members of a given non-singular Deligne-Lusztig packet will contribute to the $s$-stable character with different weights.

\begin{appendices}

\renewcommand{\thesubsection}{\Alph{subsection}}

\subsection{Basic Clifford theory} \label{app:cliff}

We recall here basic facts about Clifford theory and offer some mild generalizations needed in this paper. 

Let $G$ be a group (not necessarily finite) and $\pi : G \to \tx{GL}(V)$ a representation (not necessarily finite-dimensional). We say that $V$ is irreducible if the only $G$-invariant subspaces are $V$ and $\{0\}$.

\begin{lem} \label{lem:ss}
The following are equivalent.
\begin{enumerate}
	\item $V$ is the sum of its irreducible subrepresentations.
	\item $V$ is a direct sum of irreducible subrepresentations.
	\item For every subrepresentation $W_1 \subset V$ there is a subrepresentation $W_2 \subset V$ s.t. $V=W_1 \oplus W_2$.
\end{enumerate}
\end{lem}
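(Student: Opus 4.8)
The plan is to prove $(2)\Rightarrow(1)$, $(1)\Rightarrow(2)$, $(1)\Rightarrow(3)$, and $(3)\Rightarrow(1)$; throughout, ``subrepresentation'' means $G$-invariant subspace, and since no finiteness is assumed, Zorn's lemma is the basic tool. The implication $(2)\Rightarrow(1)$ is immediate. For $(1)\Rightarrow(2)$ I would order by inclusion the set of families $\mathcal S$ of irreducible subrepresentations whose sum $\sum_{W\in\mathcal S}W$ is direct; the union of a chain still lies in this set, so Zorn gives a maximal $\mathcal S$ with direct sum $W_0:=\bigoplus_{W\in\mathcal S}W$. If $W_0\neq V$, then by $(1)$ some irreducible $U$ satisfies $U\not\subset W_0$, so $U\cap W_0$ is a proper subrepresentation of $U$, hence $U\cap W_0=0$, and $\mathcal S\cup\{U\}$ contradicts maximality; thus $V=W_0$.

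For $(1)\Rightarrow(3)$, given a subrepresentation $W_1$ I would choose, by Zorn, a subrepresentation $W_2$ maximal among those with $W_1\cap W_2=0$, so that $W_1\oplus W_2$ makes sense, and it suffices to show $W_1\oplus W_2=V$. If not, $(1)$ furnishes an irreducible $U$ with $U\not\subset W_1\oplus W_2$, whence $U\cap(W_1\oplus W_2)=0$; one then checks $W_1\cap(W_2+U)=0$ (if $w_1=w_2+u$ with $w_1\in W_1$, $w_2\in W_2$, $u\in U$, then $u=w_1-w_2\in U\cap(W_1+W_2)=0$, so $w_1=w_2\in W_1\cap W_2=0$), and since $U\not\subset W_2$ this contradicts the maximality of $W_2$.

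The substantive direction is $(3)\Rightarrow(1)$, which I would organize in three steps. First, $(3)$ passes to subrepresentations: if $W_1\subset W\subset V$, pick $W_2$ with $V=W_1\oplus W_2$; then $W=W_1\oplus(W_2\cap W)$, since any $w\in W$ is $w=w_1+w_2$ with $w_2=w-w_1\in W$. Second — the key point — every nonzero subrepresentation $W$ contains an irreducible one: fix $0\neq v\in W$, let $W_0\subset W$ be (by Zorn) maximal among subrepresentations not containing $v$, and by step one write $W=W_0\oplus W_0'$ with $W_0'\neq0$. I claim $W_0'$ is irreducible: if $W_0'=A\oplus B$ with $A,B$ nonzero, then maximality of $W_0$ forces $v\in W_0\oplus A$ and $v\in W_0\oplus B$; writing $v=a_0+a=b_0+b$ accordingly, the relation $a_0-b_0=b-a\in W_0\cap W_0'=0$ gives $a=b$, and then $a=b\in A\cap B=0$, so $v=a_0\in W_0$, a contradiction. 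Hence $W_0'$ admits no decomposition into two nonzero subrepresentations, but $W_0'$ satisfies $(3)$ by step one, so it has no proper nonzero subrepresentation at all, i.e. it is irreducible. Third, let $W^\ast$ be the sum of all irreducible subrepresentations of $V$; by $(3)$, $V=W^\ast\oplus W'$, and if $W'\neq0$ then by step two it contains an irreducible subrepresentation, which must lie in $W^\ast\cap W'=0$ — absurd — so $V=W^\ast$, giving $(1)$. I expect the main obstacle to be arranging step two so the direct-sum bookkeeping is clean; the remaining manipulations are routine applications of Zorn's lemma.
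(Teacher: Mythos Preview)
Your proof is correct. The paper does not give its own argument but simply cites \cite[Proposition 2.2]{BH06}, remarking that the extra hypotheses there ($G$ locally profinite, $V$ smooth) are not used; your self-contained Zorn-lemma argument is the standard one for semisimple modules and is essentially what the cited reference contains.
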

\begin{proof}
This is \cite[Proposition 2.2]{BH06}, where the assumption is made that $G$ is locally profinite and $V$ is smooth, but this assumption is not used in the proof.
\end{proof}

\begin{dfn} \label{dfn:ss}
A representation $V$ satisfying the assumptions of above Lemma is called \emph{semi-simple}.
\end{dfn}

\begin{fct} \label{fct:ssindres}
Let $H \subset G$ a subgroup of finite index. 
\begin{enumerate}
	\item A representation of $G$ is semi-simple if and only if its restriction to $H$ is.
	\item A representation of $H$ is semi-simple if and only if its induction to $G$ is.
\end{enumerate}	
\end{fct}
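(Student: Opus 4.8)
The plan is to deduce everything from Lemma~\ref{lem:ss}, used as the working criterion for semisimplicity, from an averaging argument over $G/H$ that is available because $[G:H]<\infty$ and the coefficient field has characteristic zero, and from the Mackey decomposition. For part~(1) I would first treat the implication ``$\tx{Res}_H^G V$ semisimple $\Rightarrow V$ semisimple''. Given a $G$-subrepresentation $W_1\subseteq V$, Lemma~\ref{lem:ss}(3) applied to the semisimple $H$-module $\tx{Res}_H^G V$ furnishes an $H$-equivariant projection $p\colon V\to W_1$, and then $\bar p:=[G:H]^{-1}\sum_{g\in G/H}g\circ p\circ g^{-1}$ is a well-defined (because $p$ is $H$-equivariant) $G$-equivariant endomorphism of $V$; since $W_1$ is $G$-stable, $\bar p$ maps into $W_1$ and restricts to the identity there, so $V=W_1\oplus\ker\bar p$ as $G$-modules and Lemma~\ref{lem:ss}(3) gives the claim. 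This averaging step, also applied to inclusions of strictly smaller index, will be reused below.

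For the reverse implication, ``$V$ semisimple $\Rightarrow\tx{Res}_H^G V$ semisimple'', I would first reduce to $V$ irreducible, since $\tx{Res}_H^G$ commutes with direct sums and a sum of $H$-semisimple modules is $H$-semisimple by Lemma~\ref{lem:ss}(1). For irreducible $V$ I would pass to the normal core $N=\bigcap_{g\in G}gHg^{-1}$, a normal subgroup of finite index in $G$ contained in $H$, and invoke Clifford's theorem: for a nonzero $N$-subrepresentation $W\subseteq V$ the sum $\sum_{g\in G/N}gW$ is a $G$-subrepresentation --- here normality of $N$ is what makes each $gW$ again $N$-stable --- hence equals $V$; taking $W$ irreducible over $N$ then displays $V$ as a sum of the irreducible $N$-submodules $gW$, so $\tx{Res}_N^G V$ is $N$-semisimple by Lemma~\ref{lem:ss}(1). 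Finally, since $\tx{Res}_N^H(\tx{Res}_H^G V)=\tx{Res}_N^G V$ is $N$-semisimple, the implication just proved, applied to the inclusion $N\subseteq H$, upgrades this to $H$-semisimplicity of $\tx{Res}_H^G V$.

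Part~(2) I would obtain from part~(1) using the Mackey formula (legitimate because $[G:H]<\infty$) together with the elementary observation that a direct summand, or a quotient, of a semisimple representation is semisimple (Lemma~\ref{lem:ss}(3)). If $\tx{Ind}_H^G W$ is $G$-semisimple, then $\tx{Res}_H^G\tx{Ind}_H^G W$ is $H$-semisimple by part~(1), and the Mackey decomposition exhibits $W$ as the summand indexed by the trivial double coset, so $W$ is $H$-semisimple. Conversely, if $W$ is $H$-semisimple, write $W=\bigoplus_\alpha W_\alpha$ with $W_\alpha$ irreducible; since $\tx{Ind}_H^G$ commutes with direct sums over a finite-index subgroup, one reduces to $W$ irreducible. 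Then for any $G$-subrepresentation $V'\subseteq\tx{Ind}_H^G W$ one notes that $\tx{Res}_H^G\tx{Ind}_H^G W$ is $H$-semisimple --- each Mackey summand $\tx{Ind}_{H\cap gHg^{-1}}^H({}^gW)$ is an induction from a subgroup of index strictly less than $[G:H]$, so this is settled by induction on $[G:H]$ from the already established cases of (1) and (2) at smaller index --- whence $V'$ admits an $H$-complement, which the averaging of the first paragraph turns into a $G$-complement; Lemma~\ref{lem:ss}(3) concludes.

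The only ingredient above that is not formal bookkeeping is Clifford's theorem in the second paragraph, and within it the single delicate point is the existence of an irreducible $N$-subrepresentation of $V$. This is automatic when $N$ --- equivalently $G$ --- is finite (where Maschke in fact makes the whole Fact trivial), which is the setting in which Fact~\ref{fct:ssindres} is applied in this paper; in the general smooth setting it requires a more careful but standard argument, using that $\tx{Res}_N^G V$ is finitely generated over the group algebra of $N$, being generated by the translates of a single vector by coset representatives of $N$ in $G$. I expect this point, rather than the averaging or the Mackey combinatorics, to be the only one requiring genuine care.
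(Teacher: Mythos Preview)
The paper's own proof is a one-line citation to \cite[Lemma~2.7]{BH06}, supplemented only by the remark that the missing direction ``$\tx{Ind}_H^G\sigma$ semisimple $\Rightarrow\sigma$ semisimple'' follows from the $H$-equivariant embedding $\sigma\hookrightarrow\tx{Res}_H^G\tx{Ind}_H^G\sigma$ combined with part~(1); this is marginally simpler than your Mackey-summand argument for the same direction. Your write-up is otherwise a spelled-out version of what that reference presumably contains, and your overall structure --- averaging for one half of~(1), Clifford for the other, Mackey plus induction on the index for~(2) --- is sound.

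The one genuine gap is precisely where you flag it, but your suggested fix does not close it. Finite generation of $\tx{Res}_N^G V$ over $\C[N]$ yields an irreducible \emph{quotient} $U$ (Zorn applied to proper submodules of a finitely generated module), not an irreducible submodule; an arbitrary finitely generated module need have no simple submodule (think of $\C[x]$ over itself). The correct route is to use that quotient: the $N$-surjection $V\twoheadrightarrow U$ corresponds by Frobenius reciprocity (induction as right adjoint to restriction, valid since $[G:N]<\infty$) to a nonzero, hence injective, $G$-map $V\hookrightarrow\tx{Ind}_N^G U$; since $N$ is normal, Mackey gives $\tx{Res}_N^G\tx{Ind}_N^G U=\bigoplus_{g\in G/N}{}^gU$, a finite direct sum of irreducible $N$-modules, so $V|_N$ is an $N$-submodule of a semisimple $N$-module and therefore semisimple. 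Only a posteriori does this show that the irreducible $N$-submodule you wanted actually exists. With this repair in place, the rest of your argument goes through.
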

\begin{proof}
This is \cite[Lemma 2.7]{BH06}, where again the assumption that $G$ is locally profinite, $H$ is open, and $V$ is smooth, is not used. The only part stated here but not in loc. cit. is that if $\sigma$ is a representation of $H$ s.t. $\tx{Ind}_H^G\sigma$ is semi-simple, then $\sigma$ is semi-simple. But since there is an $H$-equivariant embedding $\sigma \to \tx{Ind}_H^G\sigma$ the statement is obvious.
\end{proof}

\begin{lem} \label{lem:cliff2} Let $B$ be a group, $A$ a normal subgroup of $B$, $\pi$ a \emph{finite-dimensional} irreducible representation of $B$ whose restriction to $A$ is semi-simple. We make no finiteness assumptions on $A$, $B$, or $C=B/A$. Then
\begin{enumerate}
	\item The set $S_\pi$ of irreducible constituents of $\pi|_A$ is a single $B/A$-orbit and each member of $S_\pi$ occurs with the same multiplicity $m_\pi$ in $\pi|_A$.
	\item If $B/A$ is abelian and $m_\pi=1$, then $\{\chi \in (B/A)^*|\chi\otimes\pi \cong \pi\}$ is the annihilator of the kernel of the action of $B/A$ on $S_\pi$.
	\item If $B/A$ is abelian and $\pi'$ is another finite dimensional irreducible representations of $B$ with semi-simple restriction to $A$ and s.t. $\tx{Hom}_A(\pi,\pi') \neq 0$, then $\pi'=\chi\otimes\pi$ for a character $\chi \in (B/A)^*$.
\end{enumerate}
\end{lem}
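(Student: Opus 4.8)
The plan is to carry out the standard Clifford-theory arguments, the only real point of care being that the sole finiteness available is $\dim\pi<\infty$ (and, for part (3), $\dim\pi'<\infty$), so the usual appeals to $|G|<\infty$ must be replaced. \emph{Part (1).} Since $\pi|_A$ is semisimple it has an isotypic decomposition $V=\bigoplus_{[\sigma]}V_{[\sigma]}$, and a direct check shows that for $b\in B$ the operator $\pi(b)$ carries $V_{[\sigma]}$ isomorphically onto $V_{[\sigma^b]}$, where $\sigma^b(a):=\sigma(b^{-1}ab)$. Thus $B$ acts on the set of isomorphism classes occurring in $\pi|_A$, through $B/A$, permuting the isotypic summands; the sum of the summands over a single $B$-orbit is a nonzero $B$-subspace, hence all of $V$ by irreducibility, so there is exactly one orbit $S_\pi$. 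As $\pi(b)\colon V_{[\sigma]}\to V_{[\sigma^b]}$ is a linear isomorphism and $\dim\sigma=\dim\sigma^b$, the multiplicities of $\sigma$ and of $\sigma^b$ in $\pi|_A$ agree, and since $S_\pi$ is a single orbit they all equal a common value $m_\pi$. Finally $\dim V=m_\pi\cdot|S_\pi|\cdot\dim\sigma_0<\infty$, so $S_\pi$ is finite; consequently the stabilizer $B_0\subset B$ of a point of $S_\pi$ has finite index, which is what will make induction, Mackey's formula and Frobenius reciprocity behave below exactly as for finite groups.

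The key tool for parts (2) and (3) is the following remark: for finite-dimensional irreducible representations $\pi,\pi'$ of $B$ and a character $\chi\in(B/A)^*$, make $\tx{Hom}_A(\pi,\pi')$ into a $B/A$-representation by $(\bar b\cdot\phi)(v)=\pi'(b)\,\phi(\pi(b)^{-1}v)$ (well defined and landing in $\tx{Hom}_A$ by a short computation); then $B/A$ acts by $\chi$ on some nonzero $\phi\in\tx{Hom}_A(\pi,\pi')$ if and only if $\pi'\cong\chi\otimes\pi$. Indeed, such a $\phi$ is precisely a nonzero $B$-equivariant map $\chi\otimes\pi\to\pi'$, hence an isomorphism by Schur's lemma (valid for finite-dimensional irreducibles over $\C$); conversely an isomorphism $\chi\otimes\pi\cong\pi'$ induces a $B/A$-isomorphism $\tx{Hom}_A(\pi,\pi')\cong\chi\otimes\tx{End}_A(\pi|_A)$, and $\C\cdot\tx{id}_\pi$ is a trivial $B/A$-subrepresentation of $\tx{End}_A(\pi|_A)$. \emph{Part (3)} follows at once, and does not use semisimplicity of $\pi|_A$: the space $\tx{Hom}_A(\pi,\pi')$ is nonzero by hypothesis and finite-dimensional over $\C$, and the operators $\phi\mapsto\bar b\cdot\phi$ commute since $B/A$ is abelian, so they have a common eigenvector $\phi_0\neq0$; its eigenvalue function $\chi\colon B/A\to\C^\times$ is automatically a character, and the remark gives $\pi'\cong\chi\otimes\pi$.

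\emph{Part (2).} Assume now $B/A$ abelian and $m_\pi=1$. Then each $V_{[\sigma]}\cong\sigma$ is $A$-irreducible, so $\tx{End}_A(\pi|_A)\cong\prod_{[\sigma]\in S_\pi}\C$, the conjugation action of $B/A$ permuting the factors according to its action on $S_\pi$. Since $B/A$ is abelian and acts transitively on $S_\pi$, all point stabilizers coincide with the kernel $K$ of that action, so $S_\pi\cong(B/A)/K$ and $\tx{End}_A(\pi|_A)\cong\tx{Ind}_K^{B/A}\mathbf 1$ as $B/A$-representations. Applying the remark with $\pi'=\pi$: a character $\chi$ satisfies $\chi\otimes\pi\cong\pi$ if and only if $B/A$ acts by $\chi$ on a nonzero vector of $\tx{End}_A(\pi|_A)\cong\tx{Ind}_K^{B/A}\mathbf 1$, which by Frobenius reciprocity (legitimate because $[B/A:K]=|S_\pi|<\infty$) happens exactly when $\chi|_K=\mathbf 1$, i.e. when $\chi$ lies in the annihilator of $K$.

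\emph{Main obstacle.} There is no deep difficulty here; the only thing requiring genuine attention is the systematic substitution for finiteness of the groups involved. In parts (1)–(2) its surrogate is the finiteness of the orbit $S_\pi$, hence of $[B:B_0]$ and $[B/A:K]$, which is what legitimizes the induction, Mackey and Frobenius-reciprocity manipulations; in part (3) the surrogate for the structure theory of finite abelian groups is simply the existence of a common eigenvector for a commuting family of operators on a nonzero finite-dimensional complex vector space.
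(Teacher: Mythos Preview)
Your proof is correct. Part (1) is essentially the paper's argument. For Parts (2) and (3) you take somewhat different, and in fact more direct, routes than the paper.

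For Part (2), the paper passes through the intermediate subgroup $B_\sigma$ (the preimage in $B$ of the stabilizer $C_\sigma$), uses Fact~\ref{fct:ssindres} to see that $\pi|_{B_\sigma}$ is semi-simple, argues that each of its irreducible constituents restricts irreducibly to $A$, and from this deduces $\tx{End}_A(\pi)=\tx{End}_{B_\sigma}(\pi)$ and then identifies the latter with the induced representation. You instead compute $\tx{End}_A(\pi|_A)$ directly from $m_\pi=1$ as $\prod_{[\sigma]\in S_\pi}\C$ with the permutation action, which immediately gives the same induced representation $\tx{Ind}_K^{B/A}\mathbf{1}$. Your route is shorter and avoids the appeal to Fact~\ref{fct:ssindres}.

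For Part (3), the paper invokes Chevalley's theorem that $\pi^\vee\otimes\pi'$ is a semi-simple $B$-representation, hence its $B$-subrepresentation $\tx{Hom}_A(\pi,\pi')$ is semi-simple, and then picks a character of $C$ occurring in it. You replace this by the elementary observation that a commuting family of operators on a nonzero finite-dimensional complex vector space has a common eigenvector. This is strictly more elementary and, as you note, does not use semi-simplicity of $\pi|_A$ or $\pi'|_A$. Both approaches end with the same Schur's-lemma conclusion.
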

\begin{proof}
By assumption $\tx{Res}^B_A\pi = \bigoplus_{\sigma \in S_\pi} \sigma^{m_{\pi,\sigma}}$ for a (necessarily finite) set $S_\pi$ of irreducible representations of $A$ and positive integers $m_{\sigma,\pi}$. If $S' \subset S_\pi$ is $B$-invariant subset, then $\bigoplus_{\sigma \in S'} \sigma^{m_{\pi,\sigma}}$ is a $B$-subrepresentation, contradicting the irreducibility of $\pi$.

Since $\pi$ is finite-dimensional, $m_{\pi,\sigma}=\tx{dim}\tx{Hom}_A(\sigma,\pi)$ by Schur's lemma. Since $\pi|A\cong \pi\tx{Ad}(b)|_A$ for any $b \in B$ we see that $m_{\pi,\sigma}=m_{\pi,\sigma\circ\tx{Ad}(b)}$ and the claim follows from the previous paragraph.

Assume now that $C=B/A$ is abelian and $m_\pi=1$. The kernel of the action of $C$ on $S_\pi$ is the stabilizer $C_\sigma$ of one, hence any, $\sigma \in S_\pi$. It is a finite index subgroup of $C$. Let $B_\sigma$ be its preimage in $B$, a finite index subgroup of $B$. By Fact \ref{fct:ssindres} the representation $\pi|_{B_\sigma}$ is semi-simple, so we can write it as $\pi|_{B_\sigma}=\sigma_1 \oplus \dots \oplus \sigma_k$ with $\sigma_i$ and irreducible representation of $B_\sigma$. Then $\sigma_i|_A$ is a subrepresentation of $\pi|_A$ and hence semi-simple, so we can write it as $\sigma_{i,1}\oplus\dots\oplus\sigma_{i,k_i}$, where $\sigma_{i,j}$ are irreducible representations of $A$. The group $C_\sigma$ acts transitively on the set $\{\sigma_{i,1},\dots,\sigma_{i,k_i}\}$ by the previous point. On the other hand, $\sigma_{i,1}$ is an irreducible constituent of $\pi|_A$ and hence fixed by $C_\sigma$. It follows that $k_i=1$, i.e. $\sigma_i|_A$ is irreducible. Therefore Schur's lemma implies
\[ \tx{End}_A(\pi)=\tx{End}_{B_\sigma}(\pi) = \tx{Ind}_{B_\sigma}^B\C = \tx{Ind}_1^{C/C_\sigma}\C = \bigoplus_{\chi \in (C/C_\sigma)^*} \chi. \]

Keep the assumption that $C=B/A$ is abelian, but drop the assumption that $m_\pi=1$. Let $\pi'$ be a finite-dimensional irreducible representation of $B$ s.t. $\pi'|_A$ is semi-simple and $\tx{Hom}_A(\pi,\pi') \neq \{0\}$. Then $\tx{Hom}_\C(\pi,\pi')$ is a finite-dimensional representation of $B$, isomorphic to $\pi^\vee \otimes \pi'$. By a theorem of Chevalley \cite{SerTP} this is a semi-simple representation of $B$. Therefore, the subrepresentation $\tx{Hom}_A(\pi,\pi')$ is also semi-simple. But this is a representation of $C$. Since $C$ is abelian Schur's lemma implies that every finite-dimensional irreducible representation is 1-dimensional. Let $\chi$ be a character of $C$ that occurs in $\tx{Hom}_A(\pi,\pi')$. Then $\tx{Hom}_A(\pi\otimes\chi,\pi')$ contains the trivial character of $C$, i.e. $\tx{Hom}_B(\pi\otimes\chi,\chi') \neq 0$.
\end{proof}

\begin{rem} If we write $\tx{Irr}_{A-\tx{ss}}(B)$ for the set of finite-dimensional irreducible representations of $B$ whose restriction to $A$ is semi-simple, the above lemma shows that restriction to $A$ gives a well-defined map $\tx{Irr}_{A-\tx{ss}}(B) \to \tx{Irr}(A)/C$ and shows that $C^*$ acts transitively on the fibers of this map when $C$ is abelian. When $m_\pi=1$, the kernel of the action of $C^*$ on the fiber through $\pi$ and the kernel of the action of $C$ on $S_\pi$ are mutually annihilators.
\end{rem}

\begin{exa}
The simplest example that shows the necessity of the assumption $m_\pi=1$ above is given by the quaternion group $Q$, which is a non-abelian central extension
\[ 0 \to \Z/2\Z \to Q \to \Z/2\Z \oplus \Z/2\Z \to 0. \]
It has five irreducible representations, four of which are the characters of $\Z/2\Z \oplus \Z/2\Z$, pulled back to $Q$, and one of which is $2$-dimensional and on which $\Z/2\Z$ acts by its non-trivial character. This $2$-dimensional $\pi$ is preserved under tensor product by all characters of $\Z/2\Z \oplus \Z/2\Z$, while the unique irreducible constituent of its restriction to $\Z/2\Z$ is preserved by all elements of $\Z/2\Z \oplus \Z/2\Z$. Moreover, $\tx{End}_{\Z/2\Z}(\pi)$ is the regular representation of $\Z/2\Z \oplus \Z/2\Z$.
\end{exa}

By a finite-dimensional projective representation of a finite group $C$ we shall understand a set-theoretic map $\tau : C \to \tx{GL}(V)$, where $V$ is a finite-dimensional complex vector space, such that for $c_1,c_2 \in C$ there exists $z(c_1,c_2) \in \C^\times$ with $\tau(c_1)\tau(c_2)=z(c_1,c_2)\tau(c_1c_2)$. It is immediate that then $z \in Z^2(C,\C^\times)$.

Let $1 \to A \to B \to C \to 1$ be an extension of finite groups.

\begin{dfn} \label{dfn:mult1}
We shall say that the extension has the multiplicity 1 property if for every irreducible representation $\pi$ of $B$, every irreducible constituent of $\pi|_{A}$ occurs with multiplicity $1$.
\end{dfn}

Of particular interest for us will be the situation where both $A$ and $C$ are abelian. To this extension we can associate a commutator function as follows. Let $s : C \to B$ be a set-theoretic section. Then $s(c_1)s(c_2)s(c_1)^{-1}s(c_2)^{-1}$ is an element of $A$, which we call $f(c_1,c_2)$. If the action of $C$ on $A$ is trivial, then $f(c_1,c_2)$ does not depend on the choice of $s$. In general it does, but the image of $f(c_1,c_2)$ in the group $A_{\<c_1,c_2\>}$ of coinvariants of $A$ for the action of the subgroup of $C$ generated by $c_1$ and $c_2$ does not. It thus makes sense to ask if the image of $f(c_1,c_2)$ in $A_{\<c_1,c_2\>}$ is trivial.

\begin{dfn} \label{dfn:commtriv} We shall say that the extension $B$ has trivial commutator if for all $c_1,c_2 \in C$ the image of $f(c_1,c_2)$ in $A_{\<c_1,c_2\>}$ is trivial.
	
\end{dfn}

\begin{lem} \label{lem:exc} A central extension of a finite abelian group by $\C^\times$ is split if and only if it is abelian.
\end{lem}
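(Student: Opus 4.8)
The statement is Lemma~\ref{lem:exc}: a central extension $1 \to \C^\times \to B \to C \to 1$ of a finite abelian group $C$ by $\C^\times$ is split if and only if it is abelian. The ``only if'' direction is trivial: a split central extension $B = \C^\times \times C$ is a product of abelian groups, hence abelian. The content is the ``if'' direction, so I would assume $B$ is abelian and produce a splitting.

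The plan is to argue directly on cocycles. Fix a set-theoretic section $s : C \to B$, giving a $2$-cocycle $z \in Z^2(C, \C^\times)$ with $s(c_1)s(c_2) = z(c_1,c_2)\, s(c_1 c_2)$; the class $[z] \in H^2(C,\C^\times)$ is the obstruction to splitting. Since $B$ is abelian, for all $c_1, c_2 \in C$ we have $s(c_1)s(c_2) = s(c_2)s(c_1)$, which forces $z(c_1,c_2) = z(c_2,c_1)$; that is, $z$ is a symmetric cocycle. The key classical fact I would invoke is that a symmetric $2$-cocycle on an abelian group with values in a divisible abelian group is a coboundary --- equivalently, $H^2_{\mathrm{sym}}(C,\C^\times) = \mathrm{Ext}^1(C,\C^\times) = 0$ because $\C^\times$ is divisible, hence injective as a $\Z$-module. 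Concretely, symmetric $2$-cocycles classify abelian group extensions of $C$ by $\C^\times$, and $\mathrm{Ext}^1_{\Z}(C,\C^\times) = 0$ since $\C^\times$ is injective; therefore every abelian extension splits.

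If one wants a self-contained argument avoiding the citation of $\mathrm{Ext}^1_{\Z}(C,\C^\times)=0$, I would instead write $C = \prod_i \Z/n_i\Z$ and construct the splitting by hand on each cyclic factor: pick a generator $c_i$ of the $i$-th factor, pick any preimage $b_i \in B$ of $c_i$, note $b_i^{n_i} \in \C^\times$, choose an $n_i$-th root $\lambda_i \in \C^\times$ of $b_i^{n_i}$ (possible since $\C^\times$ is divisible), and replace $b_i$ by $\lambda_i^{-1} b_i$ so that the new preimage has order dividing $n_i$. This defines a homomorphism $\Z/n_i\Z \to B$ for each $i$, and since $B$ is abelian these combine to a homomorphism $C \to B$ splitting the projection. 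The one point requiring the hypothesis that $B$ (not just the image) is abelian is that the individual cyclic sections commute with each other in $B$, so their product is well-defined as a homomorphism on the direct product.

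I do not anticipate a serious obstacle here; the lemma is elementary. The only mild subtlety is making sure the divisibility of $\C^\times$ is used correctly --- it is what lets us extract the needed roots, and it is exactly the reason the result can fail for extensions by a non-divisible group such as $\mu_2$ (cf.\ the quaternion group, which appears as Example~\ref{exa} in a closely related context, though that example is a non-abelian extension). I would keep the written proof to a few lines, citing the injectivity of $\C^\times$ as a $\Z$-module, or alternatively spelling out the cyclic-factor construction above.
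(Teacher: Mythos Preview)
Your proposal is correct, and your second (hands-on) argument is essentially identical to the paper's proof: the paper picks generators $c_j$ of the cyclic factors of $C$, lifts each to an element $\dot c_j \in B$ of the same order (your root-extraction step), and observes that the subgroup they generate maps isomorphically onto $C$---with the abelianness of $B$ used exactly where you use it, to ensure the lifts commute. Your first route via $\mathrm{Ext}^1_{\Z}(C,\C^\times)=0$ is a legitimate and slightly more conceptual alternative, but amounts to the same thing.
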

\begin{proof} Clearly a split central extension of $\C^\times$ is abelian. Conversely let $1 \to \C^\times \to B \to C \to 1$ be a central extension and assume that $B$ is abelian. Given $c \in C$ there exists a lift $\dot c \in B$ such that $\tx{ord}(\dot c)=\tx{ord}(c)$. Fix an isomorphism $C \to \prod_{i=1}^k (\Z/n_i\Z)$ and let $c_j$ be the preimage of $e_j=(0,\dots,0,1,0,\dots 0) \in \prod_i (\Z/n_i\Z)$ for all $1 \leq j \leq k$. Let $\dot c_j \in B$ be a lift of $c_j$ of the same order. The restriction of $B \to C$ to the subgroup of $B$ generated by $\dot c_1,\dots,\dot c_k$ is an isomorphism.
\end{proof}

\begin{fct} \label{fct:projdim} An irreducible projective representation of the finite abelian group $C$ is of dimension $1$ if and only if its 2-cocycle $z \in Z^2(C,\C^\times)$ is cohomologically trivial.
\end{fct}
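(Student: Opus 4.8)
\textbf{Proof plan for Fact \ref{fct:projdim}.}

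The statement is a standard fact about the interaction between projective representations and group cohomology, so the plan is to prove both implications directly from the definitions. Let $\tau : C \to \tx{GL}(V)$ be an irreducible projective representation of the finite abelian group $C$ with associated $2$-cocycle $z \in Z^2(C,\C^\times)$, so that $\tau(c_1)\tau(c_2) = z(c_1,c_2)\tau(c_1c_2)$.

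For the ``only if'' direction, suppose $\dim V = 1$. Then $\tx{GL}(V) = \C^\times$ is abelian, and $\tau$ is itself a set-theoretic map $C \to \C^\times$. The cocycle identity $z(c_1,c_2) = \tau(c_1)\tau(c_2)\tau(c_1c_2)^{-1}$ exhibits $z$ as the coboundary $\partial\tau$ of the $1$-cochain $\tau \in C^1(C,\C^\times)$, hence $z$ is cohomologically trivial.

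For the ``if'' direction, suppose $z = \partial\beta$ for some $\beta \in C^1(C,\C^\times)$, i.e. $z(c_1,c_2) = \beta(c_1)\beta(c_2)\beta(c_1c_2)^{-1}$. Then $\tau'(c) := \beta(c)^{-1}\tau(c)$ is a genuine linear representation of $C$: indeed $\tau'(c_1)\tau'(c_2) = \beta(c_1)^{-1}\beta(c_2)^{-1}\tau(c_1)\tau(c_2) = \beta(c_1)^{-1}\beta(c_2)^{-1}z(c_1,c_2)\tau(c_1c_2) = \beta(c_1c_2)^{-1}\tau(c_1c_2) = \tau'(c_1c_2)$. Since $\tau$ and $\tau'$ have the same invariant subspaces, $\tau'$ is irreducible, and an irreducible linear representation of a finite abelian group is one-dimensional by Schur's lemma. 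Hence $\dim V = 1$.

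There is no real obstacle here; the only thing to keep in mind is that the two notions of irreducibility (projective and linear) coincide because twisting $\tau$ by a scalar-valued cochain does not change the lattice of subspaces preserved by the operators $\tau(c)$. The argument is elementary and purely formal.
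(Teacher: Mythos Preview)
Your proof is correct and follows essentially the same approach as the paper: in one direction you twist $\tau$ by the trivializing cochain to obtain a genuine linear representation and invoke Schur's lemma for abelian groups, and in the other direction you observe that a one-dimensional $\tau$ is itself a cochain whose coboundary is $z$. The only difference is cosmetic (you spell out the verification that scalar twisting preserves irreducibility, which the paper leaves implicit).
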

\begin{proof}
If the 2-cocycle is cohomologically trivial, i.e. if there is $f : C \to \C^\times$ with $z(c_1,c_2)=f(c_1) \cdot f(c_2)f(c_1c_2)^{-1}$, then $c \mapsto f(c)\tau(c)$ is an irreducible linear representation of $C$ on the vector space $V$, so $V$ is one-dimensional.

Conversely if $V$ is one-dimensional then $\tau : C \to \C^\times$ is the cochain whose coboundary is $z$.
\end{proof}

\begin{lem} \label{lem:cliff1} Let $B$ be a finite group, $A \subset B$ a normal subgroup, $C=B/A$.
\begin{enumerate}
\item The map $\tx{Irr}(B) \to \tx{Irr}(A)/C$ obtained by restriction is surjective.
\item We have $\tx{dim}(\pi)=|S_\pi|\cdot m_\pi \cdot \tx{dim}(\rho)$, for any $\rho \in S_\pi$.
\item Assume that $C$ is abelian. Let $\rho \in S_\pi$ and let $C_\rho \subset C$ be its stabilizer. If $H^2(C_\rho,\C^\times)=0$ then $m_\pi=1$.
\end{enumerate}
Assume now that both $A$ and $C$ are abelian.
\begin{enumerate}[resume]
\item We have $m_\pi=1$ if and only if $\rho(f(c_1,c_2))=1$ for all $c_1,c_2 \in C_\rho$.
\item The extension has the multiplicity one property if and only if its commutator is trivial.
\end{enumerate}
\end{lem}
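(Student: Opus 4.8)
The plan is to establish the five assertions in turn, using Lemma~\ref{lem:cliff2} for the orbit structure of $S_\pi$ and Fact~\ref{fct:projdim} together with Lemma~\ref{lem:exc} for the multiplicity computations; note that for finite $B$ every restriction $\pi|_A$ is automatically semisimple, so Lemma~\ref{lem:cliff2} applies throughout. For (1): given $\rho \in \tx{Irr}(A)$, the representation $\tx{Ind}_A^B\rho$ is nonzero, hence contains an irreducible subrepresentation $\pi$, and Frobenius reciprocity gives $\tx{Hom}_A(\pi|_A,\rho) = \tx{Hom}_B(\pi,\tx{Ind}_A^B\rho) \neq 0$, so $\rho \in S_\pi$ and the restriction map is surjective. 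For (2): by Lemma~\ref{lem:cliff2}(1) we have $\pi|_A \cong \bigoplus_{\sigma \in S_\pi}\sigma^{m_\pi}$; all $\sigma \in S_\pi$ are $C$-conjugate and hence share the dimension $\dim\rho$, so comparing dimensions yields $\dim\pi = |S_\pi|\cdot m_\pi\cdot\dim\rho$.

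For (3) and (4), fix $\pi$ and $\rho \in S_\pi$, let $B_\rho \subset B$ be the preimage of $C_\rho$ under $B \to C$, and let $W = \tx{Hom}_A(\rho,\pi)$ be the $\rho$-multiplicity space, so that $\dim W = m_\pi$. Choosing for each $b \in B_\rho$ an $A$-isomorphism $\rho \to \rho^b$ (possible since $b$ fixes the class of $\rho$) makes the $B_\rho$-stable $\rho$-isotypic subspace of $\pi$ into $\rho \otimes W$ with $B_\rho$ acting by the product of these isomorphisms on the first factor and an irreducible projective representation $\tilde\sigma$ of $C_\rho = B_\rho/A$ on $W$; its cohomology class $\alpha \in H^2(C_\rho,\C^\times)$ is precisely the class of the central extension $E$ of $C_\rho$ by $\C^\times$ obtained by pushing $1 \to A \to B_\rho \to C_\rho \to 1$ out along the character $\rho : A \to \C^\times$ (here $A$ abelian makes $\rho$ a character and $\rho$ is $C_\rho$-invariant, so the push-out is central). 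Part (3) follows: if $H^2(C_\rho,\C^\times) = 0$ then $\alpha$ is trivial, $W$ is an honest representation of the abelian group $C_\rho$, hence one-dimensional, so $m_\pi = 1$. Part (4) follows from Fact~\ref{fct:projdim}: $m_\pi = \dim W = 1$ iff $\alpha$ is cohomologically trivial, iff (Lemma~\ref{lem:exc}) the central extension $E$ is abelian, iff the commutator in $E$ of any two lifts of $c_1,c_2 \in C_\rho$ is trivial; unwinding the push-out, that commutator is the image of $f(c_1,c_2)\in A$ under $\rho$, so the condition is exactly $\rho(f(c_1,c_2)) = 1$ for all $c_1,c_2 \in C_\rho$.

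For (5), combine (4) with (1). Suppose the extension has the multiplicity-$1$ property. Fix $c_1,c_2 \in C$; a $\<c_1,c_2\>$-invariant character of $A$ is the same as a character of the coinvariant group $A_{\<c_1,c_2\>}$, and for any such $\rho$ we have $\<c_1,c_2\> \subseteq C_\rho$, so applying part (4) to an irreducible $\pi$ lying over $\rho$ (one exists by part (1)) gives $\rho(f(c_1,c_2)) = 1$. Since characters separate points of the finite abelian group $A_{\<c_1,c_2\>}$, the image of $f(c_1,c_2)$ there is trivial, so the commutator is trivial. Conversely, if the commutator is trivial, then for any $\rho$ and any $c_1,c_2 \in C_\rho$ the character $\rho$ factors through $A_{\<c_1,c_2\>}$ and kills $f(c_1,c_2)$; by part (4), $m_\pi = 1$ for every irreducible $\pi$ over $\rho$, and by part (1) every irreducible of $B$ lies over some $\rho$, so the multiplicity-$1$ property of Definition~\ref{dfn:mult1} holds.

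The only nonroutine step is the identification, in the proof of (3)--(4), of the $2$-cocycle class $\alpha$ governing the projective action on the multiplicity space $W$ with the class of the concrete push-out central extension $E$ of $C_\rho$ by $\C^\times$; this is the standard obstruction-to-extension statement of Clifford theory and requires care with the scalars in the chosen isomorphisms $\rho \to \rho^b$, but once it is in place, parts (3) and (4) are immediate consequences of Fact~\ref{fct:projdim} and Lemma~\ref{lem:exc}, and part (5) is bookkeeping with characters of the coinvariant groups $A_{\<c_1,c_2\>}$.
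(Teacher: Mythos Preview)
Your proof is correct and uses the same core ingredients as the paper---the projective representation of $C_\rho$, Fact~\ref{fct:projdim}, and Lemma~\ref{lem:exc}---but with a slightly different organization. For (1) you invoke Frobenius reciprocity directly, whereas the paper constructs $\pi$ explicitly as $\tx{Ind}_{B_\rho}^B(\tilde\rho\otimes\tau)$ for an irreducible projective representation $\tau$ of $C_\rho$ with cocycle inverse to that of the chosen projective extension $\tilde\rho$; this construction then feeds into (2)--(4) via the identity $m_\pi=\dim\tau$. Your dual approach---reading off the projective $C_\rho$-action on the multiplicity space $W=\tx{Hom}_A(\rho,\pi)$ of a given $\pi$---is equally standard and arguably more streamlined, though you assert the projective irreducibility of $W$ without proof; a one-line justification (e.g.\ $\pi\cong\tx{Ind}_{B_\rho}^B V_\rho$ for the $\rho$-isotypic subspace $V_\rho$, by dimension count, so $V_\rho$ is $B_\rho$-irreducible) would close that small gap. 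Part (5) is handled the same way in both arguments.
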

\begin{proof}
For surjectivity, let $\rho \in \tx{Irr}(A)$, acting on a complex vector space $W$, and let $B_\rho$ be the stabilizer of the isomorphism class of $\rho$ for the action of $B$ on $\tx{Irr}(A)$. Choose a set of representatives $b_1,\dots,b_k$ for $C_\rho:=B_\rho/A$. For each $b_i$ choose a $T_i  \in \tx{Aut}_\C(V)$ giving an isomorphism $\rho\circ\tx{Ad}(b_i)^{-1} \to \rho$ of representations of $A$. Define a map $\tilde\rho : B_\rho \to \tx{Aut}_\C(W)$ by $\tilde\rho(b_ia)=T_i \circ \rho(a)$ for all $a \in A$. Then $\tilde\rho$ is a projective representation whose associated 2-cocycle $z \in Z^2(B_\rho,\C^\times)$, defined by $z(b_1,b_2)=\tilde\rho(b_1) \circ \tilde\rho(b_2) \circ \tilde\rho(b_1b_2)^{-1}$, is immediately checked to be inflated from $C_\rho$. Let $\tau$ be an irreducible projective representation of $C_\rho$ with 2-cocycle $z^{-1}$; it exists, c.f. \cite[Theorem 1.3]{Tapp77}. Then $\tilde\rho \otimes \tau$ is a linear representation of $B_\rho$. The map $\tx{End}_\C(\tau) \to \tx{End}_A(\tilde\rho\otimes\tau)$ given by $f \mapsto \tx{id}\otimes f$ is an isomorphism of $C_\rho$-representations, hence $\tilde\rho\otimes\tau$ is irreducible.
By Mackey's test, the induction $\pi$ of $\tilde\rho\otimes\tau$ to $B$ remains irreducible
and is a preimage of $\rho$ under the map $\tx{Irr}(B) \to \tx{Irr}(A)/C$.

We have $\tx{Res}^B_{B_\rho}\pi = \bigoplus_{b \in B_\rho\lmod B}(\tilde\rho\otimes\tau)^b$, all summands being pairwise non\-isomorphic. We see $|S_\pi|=[B:B_\rho]$, $m_\pi=\tx{dim}(\tau)$, and
\[ \tx{dim}(\pi)=[B:B_\rho]\cdot \tx{dim}(\tau)\cdot\tx{dim}(\rho) = |S_\pi|\cdot m_\pi\cdot \tx{dim}(\rho). \]

Assume now that $C=B/A$ is abelian. If $H^2(C_\rho,\C^\times)=0$, then $\tau$ is $1$-dimensional by Fact \ref{fct:projdim}.

Assume now that $A$ is also abelian. Let $\pi \in \tx{Irr}(B)$, let $\rho : A \to \C^\times$ be a character belonging to $S_\pi$, $C_\rho \subset C$ its stabilizer, and $z \in Z^2(C_\rho,\C^\times)$ the cocycle associated to some extension of $\rho$ to a projective representation of $B_\rho$, as in the previous part of the proof. The commutator function of the pushout of $1 \to A \to B_\rho \to C_\rho \to 1$ along $\rho$ is the composition of $f|_{C_\rho \times C_\rho}$ with $\rho$. Lemma \ref{lem:exc} implies that this pushout is a split extension of $C_\rho$ by $\C^\times$ if and only if $\rho(f(c_1,c_2))=1$ for all $c_1,c_2 \in C_\rho$. The cocycle $z$ represents the class of this extension. Fact \ref{fct:projdim} implies that $m_\pi=\tx{dim}(\tau)=1$ is equivalent to $\rho(f(c_1,c_2))=1$ for all $c_1,c_2 \in C_\rho$.

Given a subgroup $C' \subset C$ we can pull back the extension $1 \to A \to B \to C \to 1$ along the inclusion $C' \to C$ and then push it out along the projection $A \to A_{C'}$, where $A_{C'}$ is the group of $C'$-coinvariants of $A$. The result is a central extension $B'$ of $C'$ by $A_{C'}$. This extension is abelian, for every subgroup $C' \subset C$, if and only if the image of $f(c_1,c_2)$ in $A_{\<c_1,c_2\>}$ is trivial for all $c_1,c_2 \in C$. If these equivalent statements hold, then what we just proved implies $m_\pi=1$ for all $\pi \in \tx{Irr}(B)$, because $\rho \in S_\pi$ factors through $A_{C_\rho}$.

Assume now the converse -- there exists $C' \subset C$ such that the extension $B'$ is not abelian. Hence there exist $c_1,c_2 \in C'$ s.t. $1 \neq f(c_1,c_2) \in A_{C'}$. Let $\rho : A_{C'} \to \C^\times$ be a character s.t. $\rho(f(c_1,c_2)) \neq 1$. We inflate $\rho$ to a character of $A$ and let $C_\rho$ be its stabilizer in $C$. By construction $C' \subset C_\rho$. Now $\rho : A \to \C^\times$ descends to a character of $A_{C_\rho}$. Let $B_\rho$ be the central extension of $C_{\rho}$ by $A_{C_\rho}$. Pushing it out by $\rho$ we obtain a central extension of $C_\rho$ by $\C^\times$, non-abelian because its commutator at $c_1,c_2$ is $\rho(f(c_1,c_2)) \neq 1$. Applying Lemma \ref{lem:exc} we see that this extension is non-split. Thus the 2-cocycle $z \in Z^2(C_\rho,\C^\times)$ that corresponds to $\rho$ is not cohomologically trivial. By Fact \ref{fct:projdim} the irreducible projective representations $\tau$ of $C_\rho$ with this cocycle have dimension greater than $1$, implying $m_\pi>1$ for any $\pi \in \tx{Irr}(B)$ with $\rho \in S_\pi$.
\end{proof}

\begin{cor} \label{cor:m1stab}
The multiplicity 1 property for extensions of finite abelian groups is stable under pull-backs, push-outs, and Cartesian products.
\end{cor}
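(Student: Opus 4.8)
The strategy is to run everything through the commutator criterion of Lemma~\ref{lem:cliff1}(5): an extension $1\to A\to B\to C\to 1$ of finite abelian groups has the multiplicity~$1$ property exactly when, for all $c_1,c_2\in C$, the image of the commutator $f(c_1,c_2)$ in the coinvariant group $A_{\langle c_1,c_2\rangle}$ is trivial, in the sense of Definition~\ref{dfn:commtriv}. The two facts that make this criterion stable are that the coinvariant construction $A\mapsto A_{H}$ is functorial in the pair $(A,H)$ and that it depends only on the image of $H$ in $\mathrm{Aut}(A)$; in each of the three operations the commutator function of the new extension is an explicit transform of the old one, to which these facts apply directly.

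For pull-back along $\iota\colon C'\to C$, a section $s$ of $B\to C$ induces the section $c'\mapsto(s(\iota c'),c')$ of $B\times_C C'$, so the commutator of the pulled-back extension at $(c_1',c_2')$ is $f(\iota c_1',\iota c_2')$; since $\langle c_1',c_2'\rangle$ acts on $A$ through $\langle\iota c_1',\iota c_2'\rangle$ one has $A_{\langle c_1',c_2'\rangle}=A_{\langle\iota c_1',\iota c_2'\rangle}$, and triviality over $C$ gives triviality over $C'$. For push-out along a homomorphism $\psi\colon A\to A''$ of $C$-modules, the same section induces a section of the push-out and the new commutator is $\psi\circ f$; functoriality provides a homomorphism $A_{\langle c_1,c_2\rangle}\to A''_{\langle c_1,c_2\rangle}$ sending $[f(c_1,c_2)]$ to $[\psi(f(c_1,c_2))]$, so the latter vanishes once the former does. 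For a Cartesian product of $1\to A_i\to B_i\to C_i\to 1$, a product section gives commutator $\bigl(f_1(c_1,c_2),f_2(c_1',c_2')\bigr)$ at $\bigl((c_1,c_1'),(c_2,c_2')\bigr)$; with $H=\langle(c_1,c_1'),(c_2,c_2')\rangle$ the inclusions $A_i\hookrightarrow A_1\times A_2$ are $H$-equivariant for the componentwise action, and $(A_1)_H$ coincides with $(A_1)_{\langle c_1,c_2\rangle}$, resp.\ $(A_2)_H$ with $(A_2)_{\langle c_1',c_2'\rangle}$, because coinvariants see only the image of the acting group; hence each of $[f_1(c_1,c_2)]$, $[f_2(c_1',c_2')]$ already dies in $(A_1\times A_2)_H$, and so does their product. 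Alternatively, the Cartesian case follows directly from Definition~\ref{dfn:mult1}: every irreducible of $B_1\times B_2$ is an external tensor product $\pi_1\boxtimes\pi_2$, its restriction to $A_1\times A_2$ is $\pi_1|_{A_1}\boxtimes\pi_2|_{A_2}$, and multiplicities multiply.

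I expect the only real work to be the bookkeeping of sections: verifying in each construction that a single section of the old extension induces a section of the new one with the commutator function transforming precisely as claimed, and that the comparison maps on coinvariant groups are genuine group homomorphisms carrying commutator class to commutator class. This is routine once one isolates the insensitivity of $A\mapsto A_H$ to the kernel of $H\to\mathrm{Aut}(A)$; no input beyond Lemma~\ref{lem:cliff1} is required. The same bookkeeping, with Lemma~\ref{lem:exc} in place of the commutator criterion, covers the variant where the kernels are taken into $\C^\times$.
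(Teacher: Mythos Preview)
Your proof is correct and follows exactly the paper's approach: reduce to the commutator criterion of Lemma~\ref{lem:cliff1}(5) and observe that the commutator function of the pull-back, push-out, and product is respectively $f\circ(\iota\times\iota)$, $\psi\circ f$, and $(f_1,f_2)$. The paper states only these three formulas in a single sentence; your additional remarks on why triviality in the coinvariant groups is preserved are correct elaborations of what the paper leaves implicit.
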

\begin{proof}
Given an extension $1 \to A \to B \to C \to 1$ with commutator function $f$, the commutator function of the pull-back along a homomorphism $ i : C' \to C$ is the restriction $f \circ (i \times i)$, that of the push-out along a $C$-equivariant homomorphism $p : A \to A'$ is the composition $p \circ f$, and the commutator function of the product of two extensions $B_1$ and $B_2$ is the product $(f_1,f_2)$.
\end{proof}

\subsection{A basis theorem} \label{app:basis}

We record here some remarks on an abstract form of the Harish-Chandra basis theorem.

Let $G$ and $N$ be locally profinite groups. Let $S \subset N$ be an open normal subgroup of finite index. Let $\Pi$ be a smooth semi-simple finite length representation of $G \times N$ on a complex vector space $V$ and assume that $S$ acts via a character $\theta$. Then for any $n \in N/S$ the subspace $\C\Pi(1 \times n) \subset \tx{End}_G(\Pi|_G)$ is well-defined (of dimension at most 1). Furthermore, we can decompose $\Pi = \bigoplus (\pi\otimes\rho)^{\oplus m_{\pi,\rho}}$, where $\pi$ and $\rho$ run over the set of irreducible smooth representations of $G$ and $N$, respectively, and $m_{\pi,\rho}$ are natural numbers. Then the condition $m_{\pi,\rho} \neq 0$ defines a correspondence 
\[ [\Pi|_G] \stackrel{m}{\longleftrightarrow} \tx{Irr}(N,\theta) \]
between the set $[\Pi|_G]$ of irreducible constituents of the $G$-representation $\Pi|_G$, and the set $\tx{Irr}(N,\theta)$ of irreducible representations of $N$ whose restriction to $S$ is $\theta$-isotypic.

\begin{lem} \label{lem:hcbp}
The following two statements are equivalent.
\begin{enumerate}
	\item The subspaces $\C\Pi(1 \times n)$ of $\tx{End}_G(\Pi|_G)$ indexed by $n \in N/S$ are both linearly independent and generating.
	\item We have $m_{\pi,\rho} \in \{0,1\}$ and the correspondence $m$ is a bijection.
\end{enumerate}
\end{lem}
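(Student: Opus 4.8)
The two conditions in Lemma \ref{lem:hcbp} are purely a statement about the structure of the semisimple $G\times N$-representation $\Pi$, so I would work with the decomposition $\Pi=\bigoplus_{\pi,\rho}(\pi\boxtimes\rho)^{\oplus m_{\pi,\rho}}$ directly and compute $\tx{End}_G(\Pi|_G)$ in these terms. By Schur's lemma (using smoothness and finite length so that $\tx{End}_G(\pi)=\C$ for each irreducible constituent $\pi$), restricting to $G$ kills the $\rho$-factor and
\[ \tx{End}_G(\Pi|_G) \;=\; \prod_\pi \tx{End}_\C\Bigl(\bigoplus_\rho V_\rho^{\oplus m_{\pi,\rho}}\Bigr), \]
where $V_\rho$ is the space of $\rho$. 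Its dimension is $\sum_\pi \bigl(\sum_\rho m_{\pi,\rho}\dim\rho\bigr)^2$.

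The first step is to identify the span $\ms E$ of the operators $\Pi(1\times n)$, $n\in N/S$, inside this algebra. Since $S$ acts by the scalar $\theta$, the action of $N$ on $V$ factors through a representation of $N$ twisted appropriately, and $\ms E$ is precisely the image in $\tx{End}_G(\Pi|_G)=\prod_\pi\tx{End}_\C(\tx{Hom}_G(\pi,\Pi))$ of the group algebra $\C[N/S]$ acting on each multiplicity space $M_\pi:=\tx{Hom}_G(\pi,\Pi)$, which as an $N$-representation (with $S$ acting by $\theta$) decomposes as $\bigoplus_\rho \rho^{\oplus m_{\pi,\rho}}$. Thus $\dim\ms E$ equals the dimension of the image of $\C[N/S]$ in $\prod_\pi\tx{End}_\C(M_\pi)=\prod_\pi\tx{End}_\C\bigl(\bigoplus_\rho\rho^{\oplus m_{\pi,\rho}}\bigr)$; because $M_\pi$ is semisimple as an $N$-module, the image of $\C[N/S]$ in $\tx{End}_\C(M_\pi)$ is $\bigoplus_{\rho:\,m_{\pi,\rho}\neq0}\tx{End}_\C(V_\rho)$, of dimension $\sum_{\rho:\,m_{\pi,\rho}\neq0}(\dim\rho)^2$.

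The second step is the counting argument. Linear independence of the operators $\Pi(1\times n)$ means $\dim\ms E = |N/S|$; together with the fact that $\ms E$ is the product over $\pi$ of the images just computed, this is the statement that the representations $M_\pi$ (for distinct $\pi$ appearing in $\Pi|_G$), viewed as $\C[N/S]$-modules via $\theta$, have disjoint sets of irreducible constituents and each is multiplicity-free — equivalently, $m_{\pi,\rho}\in\{0,1\}$ and each $\rho\in\tx{Irr}(N,\theta)$ occurs in at most one $M_\pi$. The generating condition $\ms E=\tx{End}_G(\Pi|_G)$ forces, in addition, that every $\rho$ does occur, i.e. the map $m$ is surjective; combined with the injectivity just obtained this gives the bijection. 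Conversely, assuming (2), i.e. $m_{\pi,\rho}\in\{0,1\}$ and $m$ a bijection, the computation of $\dim\ms E$ gives $\sum_\pi\sum_{\rho\leftrightarrow\pi}(\dim\rho)^2 = \sum_\rho(\dim\rho)^2 = |N/S|$ (by the standard fact $\sum_{\rho\in\tx{Irr}(N,\theta)}(\dim\rho)^2=|N/S|$, which follows from the orthogonality relations for the finite group $N/S$ applied to the $\theta$-isotypic block, or from counting $\dim\tx{End}_{N}(\tx{Ind}_S^N\theta)$), so $\ms E$ is linearly independent; and $\dim\tx{End}_G(\Pi|_G)=\sum_\pi(\sum_\rho m_{\pi,\rho}\dim\rho)^2=\sum_\pi\sum_{\rho\leftrightarrow\pi}(\dim\rho)^2$ equals the same number, so $\ms E$ is everything. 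Hence (2) $\Rightarrow$ (1).

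\textbf{Main obstacle.} The only real subtlety is bookkeeping in the infinite setting: one must make sure Schur's lemma and the double-centralizer computation are legitimate even though $G$, $N$, $C=N/S$ and $V$ are not assumed finite. This is fine because $\Pi$ has finite length and is smooth, so $\tx{End}_G(\Pi|_G)$ is finite-dimensional, each multiplicity space $M_\pi$ is finite-dimensional, and $N/S$ is finite; thus everything reduces to finite-dimensional linear algebra over $\C$ together with the representation theory of the finite group $N/S$ acting on the $\theta$-isotypic block, exactly the content of Appendix \ref{app:cliff}. I expect the proof to be a page or less once this reduction is made explicit.
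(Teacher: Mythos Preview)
Your overall strategy is the same as the paper's: decompose $\tx{End}_G(\Pi|_G)$ using the multiplicity spaces $M_\pi=\tx{Hom}_G(\pi,\Pi)\cong\bigoplus_\rho\rho^{\oplus m_{\pi,\rho}}$ and track where the operators $\Pi(1\times n)$ land. The direction $(2)\Rightarrow(1)$ is fine.

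In the direction $(1)\Rightarrow(2)$, however, you have swapped the roles of linear independence and generating, and the swap rests on a false intermediate claim. You assert as a ``fact'' that $\ms E$ equals the product over $\pi$ of the images of the $\theta$-block in $\tx{End}_\C(M_\pi)$. This is not true in general: the image of an algebra in a product is only a subdirect product. Concretely, the $\theta$-block is $A\cong\bigoplus_{\rho\in\tx{Irr}(N,\theta)}\tx{End}_\C(V_\rho)$ (of dimension $|N/S|$), and the kernel of $A\to\prod_\pi\tx{End}_\C(M_\pi)$ is exactly $\bigoplus_{\rho:\,m_{\pi,\rho}=0\ \forall\pi}\tx{End}_\C(V_\rho)$. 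Hence $\dim\ms E=|N/S|$ (linear independence) is equivalent to the statement that \emph{every} $\rho$ occurs in some $M_\pi$ --- this is surjectivity of the correspondence, not disjointness or multiplicity-freeness.

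It is the \emph{generating} hypothesis that does the heavy lifting. In the finer decomposition
\[
\tx{End}_G(\Pi|_G)=\bigoplus_{\pi,\rho,\rho'}\tx{Hom}_\C(V_\rho,V_{\rho'})\otimes\tx{Hom}_\C(M_{\pi,\rho},M_{\pi,\rho'}),
\]
the operator $\Pi(1\times n)$ lands in the subspace with $\rho'=\rho$, and there its component is $\rho(n)\otimes\tx{id}_{M_{\pi,\rho}}$. So if $\ms E$ generates, all off-diagonal blocks $(\rho\neq\rho')$ vanish --- forcing at most one $\rho$ per $\pi$ --- and within each diagonal block $\tx{End}_\C(M_{\pi,\rho})$ must be $\C\cdot\tx{id}$ --- forcing $m_{\pi,\rho}\le 1$; finally, since the component is $(\rho(n))_\pi$ diagonally across all $\pi$ with that $\rho$, generating also forces at most one $\pi$ per $\rho$. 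This is exactly how the paper argues. Once you reattribute the two conditions correctly your outline goes through.
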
 
\begin{proof}
Write $M_{\pi,\rho}=\tx{Hom}_{G \times N}(\pi\otimes\rho,\Pi)$, so that 
\[ \Pi=\bigoplus_{\pi,\rho}\pi\otimes\rho\otimes M_{\pi,\rho},\qquad m_{\pi,\rho}=\tx{dim}_\C(M_{\pi,\rho}). \] 
Then
\begin{eqnarray*}
\tx{End}_G(\Pi)&=&\bigoplus_{\pi,\rho,\pi',\rho'} \tx{Hom}_G(\pi,\pi')\otimes\tx{Hom}_\C(\rho,\rho')\otimes\tx{Hom}_\C(M_{\pi,\rho},M_{\pi',\rho'})\\
&=&\bigoplus_{\pi,\rho,\rho'} \tx{Hom}_\C(\rho,\rho')\otimes\tx{Hom}_\C(M_{\pi,\rho},M_{\pi,\rho'}).\\
\end{eqnarray*}
and for $n \in N$ the image of $\Pi(1 \times n)$ under these isomorphisms is $\rho(n)\otimes\tx{id}$ in the components indexed by $(\pi,\rho,\rho)$ with $m_{\pi,\rho}>0$ and $0$ in the components indexed by $(\pi,\rho,\rho')$ with $\rho \neq \rho'$ or in the components indexed by $(\pi,\rho,\rho)$ with $m_{\pi,\rho}=0$.

$2 \Rightarrow 1$: Then we have $\tx{End}_G(\Pi)=\bigoplus_\rho \tx{End}_\C(\rho)$
and the claim follows from the orthogonality relations for projective representations of the finite group $N/S$.

$1 \Rightarrow 2$: We first use the fact that $\{\Pi(1 \times n)|n \in N\}$ is generating to bound $m_{\pi,\rho}$. Consider the subspace given by the condition $\rho'=\rho$, i.e.
\[ \bigoplus_\rho \tx{End}_\C(\rho) \otimes \left(\bigoplus_\pi \tx{End}_\C(M_{\pi,\rho})\right). \]
The set $\{\Pi(1 \rtimes n)|n \in N\}$ is contained in that subspace, so this must then be the whole space. Therefore for fixed $\pi$ we have $m_{\pi,\rho}>0$ for at most one $\rho$. Fixing now $\rho$, the only elements obtained from $\Pi(1 \times n)$ are of the form $\rho(n)\otimes(\oplus_\pi \tx{id}_{M_{\pi,\rho}})$, so 1 implies again that for each $\rho$ there is at most one $\pi$ with $m_{\pi,\rho}>0$ and moreover that then $m_{\pi,\rho}=1$. 

It is clear that for each $\pi$ there is $\rho$ with $m_{\pi,\rho}>0$, by virtue of $\pi \subset \Pi|_G$. It remains to show that conversely for a given $\rho$ there does exist a $\pi$ with $m_{\pi,\rho}>0$, and for this we use the linear independence of $\{\C\Pi(1 \times n)|n \in N/S\}$. We consider again the above displayed space. Each $\rho$-summand has dimension $\tx{dim}(\rho)^2$, so the entire space has dimension $\sum_\rho m_{\pi,\rho}\tx{dim}(\rho)^2 \leq |N/S|$ and linear independence implies that equality must hold, i.e. for every $\rho$ there does exist a $\pi$ with $m_{\pi,\rho}=1$.
\end{proof}

\begin{rem} The second statement in the Lemma above can be equivalently stated as follows: For each $\rho \in \tx{Irr}(N,\theta)$ the $\rho$-isotypic component $\pi_\rho$ of $\Pi|_N$ is $G$-irreducible and the map $\rho \mapsto \pi_\rho$ is a bijection $\tx{Irr}(N,\theta) \to [\Pi|_G]$.
\end{rem}

Let $G$ be a locally profinite group, $H \subset G$ an open (and hence closed) normal subgroup. Let $N \subset G$ be a closed subgroup, write $N_H = N \cap H$, and let $S \subset N_H$ be an abelian open normal subgroup of $N$ of finite index. 

The group $N$ acts on $G$ by conjugation and we can form $G \rtimes N$. Note that 
\[ N \to G \rtimes N,\qquad n \mapsto (n^{-1},n) \]
is an injective group homomorphism that embeds $N$ as a normal subgroup of $G \rtimes N$ that commutes with $G$ and therefore provides an isomorphism $G \rtimes N \to G \times N$.

We have the subgroup $H \rtimes N$ of $G \rtimes N$, normal if $G/H$ is abelian. Let $\sigma$ be a smooth finite-length semi-simple representation of $H \rtimes N$ on a complex vector space $V$, such that the central subgroup $\{(s^{-1},s)|s \in S\}$ 
acts by a smooth character $\theta$ of $S$.

\begin{pro} \label{pro:hcbi}
Assume that
\begin{enumerate}
	\item The set $\{\sigma(n^{-1} \rtimes n)|n \in N_H/S\}$ forms a basis of $\tx{End}_H(\sigma)$;
	\item For each $g \in G$ the representation $\sigma|_H^{g \rtimes 1}$ is isomorphic to $\sigma|_H$ if $g \in H \cdot N$ and disjoint from $\sigma|_H$ otherwise.
\end{enumerate}
Then the set $\{\tx{Ind}_{H\rtimes N}^{G\rtimes N}(\sigma)(n^{-1} \rtimes n)|n \in N/S\}$ forms a basis of $\tx{End}_G(\tx{Ind}_H^G\sigma)$.
\end{pro}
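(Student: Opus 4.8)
The strategy is to combine the abstract basis lemma (Lemma \ref{lem:hcbp}) with a Mackey-theoretic analysis of $\tx{Ind}_H^G\sigma$, exploiting the isomorphism $G \rtimes N \cong G \times N$ that turns a basis-of-endomorphism-algebra statement into a decomposition statement about representations of $G \times N$. First I would pass, via that isomorphism, to the representation $\Pi := \tx{Ind}_{H \rtimes N}^{G \rtimes N}(\sigma)$ of $G \times N$, noting that the central $S$ still acts by $\theta$ and that (by Fact \ref{fct:ssindres}, since $[G\rtimes N : H \rtimes N] = [G:H] < \infty$) $\Pi$ is smooth, semisimple, and of finite length. By Lemma \ref{lem:hcbp}, the desired conclusion is equivalent to the statement that $\Pi|_G$ is multiplicity-free as a sum of $G$-irreducibles and that the correspondence $m$ with $\tx{Irr}(N,\theta)$ is a bijection; equivalently, for each $\rho \in \tx{Irr}(N,\theta)$ the $\rho$-isotypic piece of $\Pi|_N$ is $G$-irreducible, and $\rho \mapsto \pi_\rho$ is a bijection onto $[\Pi|_G]$.

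The core of the argument is to reduce this statement for $\Pi = \tx{Ind}_{H \rtimes N}^{G \rtimes N}(\sigma)$ to the corresponding statement for $\sigma$ itself, which is exactly hypothesis (1) (again read through Lemma \ref{lem:hcbp} applied to $\sigma$ as a representation of $H \times N$). The bridge is Mackey theory for the pair $H \rtimes N \subset G \rtimes N$. The key computation is $\tx{End}_{G}(\tx{Ind}_H^G\sigma) = \tx{Hom}_{G \rtimes N}(\Pi, \Pi \otimes \text{(something)})$ — more precisely, I would compute $\tx{Hom}_G(\tx{Ind}_H^G\sigma_1, \tx{Ind}_H^G\sigma_2)$ by the Mackey formula as a sum over double cosets $H \backslash G / H$ — but here the crucial simplification comes from hypothesis (2): the $G$-conjugates $\sigma|_H^{g}$ that are non-disjoint from $\sigma|_H$ are precisely those with $g \in H \cdot N$. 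Thus in the Mackey decomposition only the double cosets meeting $N$ contribute, and the $N$-action on $\sigma|_H$ reorganizes these contributions. This is the same mechanism as in Proposition \ref{pro:hcbi}'s analogue inside the proof of Theorem \ref{thm:dlpar} (the reduction from general $G$ to $G' = G^0 \cdot S$), and indeed I expect the proof to mirror that passage closely: one writes $G \rtimes N = \bigsqcup (H \rtimes N) g_i$ over coset representatives $g_i$ that can be taken inside $N$ (using hypothesis (2) to discard the rest at the level of $\tx{Hom}$-spaces), obtains $\Pi = \tx{Ind}_{(H\cdot N) \rtimes N}^{(G) \rtimes N}(\cdots)$ in stages, and applies a two-step induction: first within $H \cdot N$, where hypothesis (1) governs everything, then from $H \cdot N$ up to $G$, where hypothesis (2) guarantees that distinct $G/(H\cdot N)$-translates land in disjoint constituents so no new intertwining is created.

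Concretely the steps are: (i) set up $\Pi$ and verify its finiteness/semisimplicity properties; (ii) restate both hypotheses and the goal in the $G \times N$ / $H \times N$ language via Lemma \ref{lem:hcbp}; (iii) decompose $\tx{Ind}_H^G$ in two stages through the intermediate group $H \cdot N$ (which is a group because $H$ is normal); (iv) for the first stage, show $\{\tx{Ind}_{H \rtimes N}^{(H\cdot N) \rtimes N}(\sigma)(n^{-1}\rtimes n)\}$ is a basis of $\tx{End}_{H\cdot N}(\tx{Ind}_H^{H\cdot N}\sigma)$ — here $N/N_H \cong (H\cdot N)/H$, the induction $\tx{Ind}_H^{H\cdot N}$ just spreads $\sigma$ across cosets, and hypothesis (1) plus Clifford theory (Lemma \ref{lem:cliff2}, Fact \ref{fct:ssindres}) do the work, exactly as in the $G = G^0 \cdot S$ case of Theorem \ref{thm:dlpar}; (v) for the second stage, invoke hypothesis (2) via the Mackey criterion to see that $\tx{Ind}_{H \cdot N}^{G}$ of an irreducible stays irreducible and different $G(F)$-orbits give disjoint representations, so applying Lemma \ref{lem:hcbp} in reverse, or directly Frobenius reciprocity plus the disjointness, yields that the operators indexed by $N/S$ still form a basis; (vi) assemble. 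The main obstacle I anticipate is step (v): carefully tracking that the $N$-action (which is what supplies the operators $n^{-1} \rtimes n$) interacts correctly with the second induction step — i.e., that no $N$-intertwining "leaks" across the $G/(H\cdot N)$ cosets and conversely that every self-intertwiner of $\tx{Ind}_H^G\sigma$ is accounted for — which requires the disjointness in hypothesis (2) to be used in precisely the form "$g \notin H \cdot N \Rightarrow$ disjoint", together with the observation that $N$ normalizes $H \cdot N$. This is genuinely the same argument as the final paragraphs of the proof of Theorem \ref{thm:dlpar} (which already cites this very Proposition \ref{pro:hcbi}), so I would model the writeup on that, keeping the Mackey bookkeeping explicit but brief.
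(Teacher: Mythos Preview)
Your plan is correct in outline, but it is considerably more elaborate than necessary. The paper's proof avoids both the detour through Lemma~\ref{lem:hcbp} and the two-stage factorization through $H\cdot N$; it is a direct five-line Mackey computation. One simply writes
\[
\tx{End}_G(\tx{Ind}_H^G\sigma)\;\cong\;\bigoplus_{g\in H\backslash G}\tx{Hom}_H(\sigma|_H^{\dot g},\sigma|_H)
\]
via Frobenius reciprocity and Mackey, chooses coset representatives so that the cosets in $N/N_H\hookrightarrow G/H$ are represented by elements of $N$, and then hypothesis~(2) kills all summands except those indexed by $N/N_H$, each of which is $\tx{End}_H(\sigma)$ (the isomorphism $\sigma|_H^{\dot n}\cong\sigma|_H$ being furnished by $\sigma(1\rtimes\dot n)$). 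One then checks by hand that the operator $\beta(\dot n h):=\tx{Ind}(\sigma)((\dot nh)^{-1}\rtimes\dot nh)$ lands in the single summand indexed by $\dot n H$, where it equals $\sigma(1\rtimes\dot n)\sigma(h^{-1}\rtimes h)$; hypothesis~(1) then says these form a basis of that summand as $h$ ranges over $N_H/S$, and assembling over $\dot n\in N/N_H$ finishes.

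Your route---translating via Lemma~\ref{lem:hcbp} into a multiplicity-one statement, inducing in stages through $H\cdot N$, and appealing to Clifford theory---reaches the same conclusion, and your step~(v) does go through (since $H\subset(H\cdot N)\cap g(H\cdot N)g^{-1}$ and hypothesis~(2) propagates to $H\cdot N$-representations as you indicate). But each of your steps~(iv) and~(v) separately requires essentially the same Mackey bookkeeping that the paper does once for the full induction $H\to G$, so you are doing the work twice. The gain from the paper's approach is brevity and transparency: no intermediate group, no restatement of the goal, just the endomorphism algebra and where the given operators sit inside it.
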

\begin{proof}
The proof is just a matter of unwinding the definitions. For $n \in N$ let $\beta(n) \in \tx{End}_G(\tx{Ind}_H^G\sigma)$ denote $\tx{Ind}_{H\rtimes N}^{G\rtimes N}(\sigma)(n^{-1} \rtimes n)$.

Choose a set of representatives $\dot g$ for the coset space $G/H$ such that the cosets in $N/N_H$ are represented by elements $\dot g$ of $N$. By Frobenius reciprocity and the Mackey theorem we have the isomorphism
\[ \tx{End}_G(\tx{Ind}_H^G\sigma) \to \bigoplus_{g \in G/H} \tx{Hom}_H\left(\sigma^{\dot g},\sigma\right) = \bigoplus_{n \in N/N_H} \tx{End}_H\left(\sigma\right), \]
where the equality is due to our second assumption. 

For a representative $\dot n \in N$ and $h \in N_H$ this isomorphism translates $\beta(\dot nh)$ to the tuple of homomorphisms that has all coordinates trivial except for the coordinate corresponding to $\dot nH \in G/H$, where it is
\[ \sigma(1 \rtimes \dot n)\sigma(h^{-1}\rtimes h). \]
The first assumption now implies that as $\dot nh$ runs over $N/S$ these elements form a basis.
\end{proof}

\subsection{Representations of extensions with abelian quotient} \label{app:repext}

Let $G$ be a locally profinite group and $N \subset G$ an open (and hence closed) normal subgroup such that $A=G/N$ is finite and abelian. We will collect some basic facts about the relationship between the finite-dimensional smooth irreducible representations of $G$ and those of $N$, writing $\tx{Irr}(G)$ and $\tx{Irr}(N)$ for the respective sets of isomorphism classes. For $\rho \in \tx{Irr}(N)$ we write $G_\rho$ and $A_\rho=G_\rho/N$ for the stabilizers in $G$ and $A$ of the isomorphism class $\rho$. For $\pi \in \tx{Irr}(G)$ we write
\[ m(\sigma,\pi) = \tx{dim}\tx{Hom}_G(\pi,\tx{Ind}_N^G\sigma)=\tx{dim}\tx{Hom}_N(\sigma,\tx{Res}_N^G\pi), \]
noting that $\tx{Ind}_N^G\sigma$ and $\tx{Res}_N^G\pi$ are semi-simple representations \cite[Lemma 2.7]{BH06}. Here $\tx{Ind}_N^G\sigma$ is defined as in the case of finite groups, and coincides with both smooth induction and compact induction due to the finiteness of $A$, implying that the functor $\tx{Ind}_N^G$ is both left and right adjoint to $\tx{Res}_N^G$, see \cite[\S2]{BH06}.

\begin{lem} Given $\sigma,\sigma' \in \tx{Irr}(N)$ the representations $\tx{Ind}_N^G\sigma$ and $\tx{Ind}_N^G\sigma'$ are either equal or disjoint. They are equal if and only if there exists $g \in G$ s.t. $\sigma'=\sigma\circ\tx{Ad}(g)$.
\end{lem}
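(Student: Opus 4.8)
The plan is to establish the two assertions in order: first the dichotomy (equal or disjoint), then the characterization of equality. Both follow from Mackey theory for the finite quotient $A = G/N$, together with the semisimplicity of restriction and induction along $N \subset G$ recorded in \cite[Lemma 2.7]{BH06} (Fact \ref{fct:ssindres} in this excerpt). The key computational input is the Mackey formula in this setting: since $N$ is normal of finite index, for $\sigma, \sigma' \in \tx{Irr}(N)$ we have
\[ \tx{Hom}_G(\tx{Ind}_N^G\sigma, \tx{Ind}_N^G\sigma') \cong \bigoplus_{g \in N\backslash G/N} \tx{Hom}_N(\sigma\circ\tx{Ad}(g), \sigma') = \bigoplus_{g \in A} \tx{Hom}_N(\sigma\circ\tx{Ad}(g), \sigma'), \]
where the double cosets collapse to $A$ because $N$ is normal. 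Since $\sigma$ and $\sigma'$ are irreducible, each summand is either $0$ or one-dimensional by Schur's lemma, and it is nonzero precisely when $\sigma\circ\tx{Ad}(g) \cong \sigma'$.

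First I would prove the "if" direction of the second statement: if $\sigma' = \sigma\circ\tx{Ad}(g)$ for some $g \in G$, then $\tx{Ind}_N^G\sigma' \cong \tx{Ind}_N^G(\sigma\circ\tx{Ad}(g)) \cong \tx{Ind}_N^G\sigma$, the last isomorphism being the standard fact that inducing a conjugate representation from a normal subgroup yields an isomorphic representation (conjugation by $g$ permutes the cosets $G/N$ and intertwines the two induced modules). This gives equality.

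Next, for the dichotomy together with the "only if" direction: suppose $\tx{Ind}_N^G\sigma$ and $\tx{Ind}_N^G\sigma'$ are not disjoint, i.e. they share an irreducible constituent, equivalently $\tx{Hom}_G(\tx{Ind}_N^G\sigma, \tx{Ind}_N^G\sigma') \neq 0$ (using semisimplicity of $\tx{Ind}_N^G\sigma'$ so that a shared constituent yields a nonzero homomorphism). By the Mackey computation above there exists $g \in G$ with $\sigma\circ\tx{Ad}(g) \cong \sigma'$. Then by the "if" direction just proved, $\tx{Ind}_N^G\sigma \cong \tx{Ind}_N^G\sigma'$, so they are in fact equal. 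This simultaneously proves that non-disjoint implies equal (the dichotomy) and that equal implies $\sigma' \cong \sigma\circ\tx{Ad}(g)$ for some $g$.

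I do not anticipate a serious obstacle here; the main point requiring a little care is the justification of the Mackey decomposition in the locally profinite setting — specifically that $\tx{Ind}_N^G$ is exact and commutes with the relevant adjunctions because $A$ is finite, and that the double coset space $N\backslash G/N$ is identified with $A$ via normality of $N$. Both are addressed by the references to \cite[\S2]{BH06} already invoked in the surrounding text. One should also note explicitly that irreducibility of $\sigma,\sigma'$ makes each Mackey summand at most one-dimensional, so that "non-disjoint" and "$\tx{Hom}_G \neq 0$" are interchangeable via semisimplicity.
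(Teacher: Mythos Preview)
Your proof is correct. The paper takes a slightly different route: instead of computing $\tx{Hom}_G(\tx{Ind}_N^G\sigma,\tx{Ind}_N^G\sigma')$ via Mackey, it picks a common irreducible constituent $\pi$, observes by Frobenius reciprocity that both $\sigma$ and $\sigma'$ occur in $\tx{Res}_N^G\pi$, and then invokes the Clifford-theoretic Lemma~\ref{lem:cliff2} (the constituents of $\pi|_N$ form a single $G/N$-orbit) to conclude they are conjugate. Your Mackey computation is more self-contained---it does not appeal to Lemma~\ref{lem:cliff2} and makes the conjugacy visible directly in the Hom space---while the paper's argument has the virtue of reusing machinery already in place. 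Both are standard and equally short.
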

\begin{proof}
If $\sigma'=\sigma\circ\tx{Ad}(g)$ then clearly $\tx{Ind}_N^G\sigma$ and $\tx{Ind}_N^G\sigma'$. Assume conversely that $\tx{Ind}_N^G\sigma$ and $\tx{Ind}_N^G\sigma'$ have a common irreducible constituent $\pi$. Then both $\sigma$ and $\sigma'$ are irreducible constituents of $\tx{Res}_N^G\pi$, hence conjugate under $G$ by Lemma \ref{lem:cliff2}.
\end{proof}

We now want to describe, for a given $\sigma$, the function $\tx{Irr}(G) \to \Z, \pi \mapsto m(\sigma,\pi)$, i.e. the decomposition of $\tx{Ind}_N^G\sigma$.

\begin{lem} \label{lem:abext1} Let $\sigma \in \tx{Irr}(N)$ and let $N \subset H \subset G_\sigma$ be a subgroup to which $\sigma$ extends. Then $H$ is maximal with this property if and only if for one, hence any, extension $\sigma_H$ of $\sigma$ to $H$ we have $G_{\sigma_H}=H$.
\end{lem}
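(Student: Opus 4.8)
The plan is to use three elementary facts about the extension $1\to N\to G\to A\to 1$ and reduce everything to them. First, since $A=G/N$ is abelian, every subgroup of $G$ containing $N$ — in particular $H$, $G_\sigma$, $G_{\sigma_H}$ and any group built below — is normal in $G$, and the conjugation action of $G$ on a quotient such as $H/N\subseteq A$ is \emph{trivial}. Second, $H\subseteq G_{\sigma_H}$ (inner automorphisms of $H$ fix $[\sigma_H]$) and $G_{\sigma_H}\subseteq G_\sigma$ (restrict a conjugacy $\sigma_H^g\cong\sigma_H$ to $N$). Third, the set of extensions of $\sigma$ to $H$ is a torsor under $(H/N)^*$ acting by $\chi\otimes(-)$: given two extensions $\sigma_H,\sigma_H'$ we have $\tx{Hom}_N(\sigma_H,\sigma_H')\cong\tx{Hom}_N(\sigma,\sigma)=\C\neq0$, so Lemma~\ref{lem:cliff2}(3) applied to $N\triangleleft H$ with abelian quotient gives $\sigma_H'\cong\chi\otimes\sigma_H$. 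Because $\chi^g=\chi$ for all $g\in G$, this yields $(\sigma_H')^g\cong\chi\otimes\sigma_H^g$, hence $G_{\sigma_H'}=G_{\sigma_H}$; this already disposes of the ``hence any'' clause, so I may fix one extension $\sigma_H$ once and for all.

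For the direction $G_{\sigma_H}=H\Rightarrow H$ maximal: suppose instead $H\subsetneq H'\subseteq G_\sigma$ and $\sigma$ extends to $\sigma_{H'}$ on $H'$. Since $\sigma_{H'}|_N\cong\sigma$ is irreducible, $\sigma_{H'}|_H$ is irreducible as well and is an extension of $\sigma$ to $H$, so $\sigma_{H'}|_H\cong\chi\otimes\sigma_H$ for some $\chi\in(H/N)^*$ by the torsor fact. Now take $g\in H'\setminus H$; conjugation by $g$ fixes $[\sigma_{H'}]$ (as $g$ lies in the group $H'$), hence fixes $[\sigma_{H'}|_H]=[\chi\otimes\sigma_H]$, and since $\chi^g=\chi$ this forces $\sigma_H^g\cong\sigma_H$, i.e.\ $g\in G_{\sigma_H}=H$ — a contradiction.

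For the converse, assume $H$ is maximal and suppose $G_{\sigma_H}\supsetneq H$; pick $g\in G_{\sigma_H}\setminus H$ and set $H'=\langle H,g\rangle$, so that $N\subseteq H\subsetneq H'$, $H\triangleleft H'$ with $H'/H$ finite cyclic, and $H'\subseteq G_{\sigma_H}\subseteq G_\sigma$. It suffices to extend $\sigma_H$ (hence $\sigma$) to $H'$, contradicting maximality. This is the one step that is not pure bookkeeping, and I expect it to be the main (though standard, Gallagher-type) obstacle: $g$ stabilizes $\sigma_H$, so choose $T\in\tx{GL}(\sigma_H)$ with $T\sigma_H(h)T^{-1}=\sigma_H(ghg^{-1})$; with $n=[H':H]$ one checks $T^{-n}\sigma_H(g^n)$ commutes with $\sigma_H(H)$, hence is a scalar $\lambda$ by Schur's lemma, and replacing $T$ by $\mu T$ with $\mu^n=\lambda$ arranges $T^n=\sigma_H(g^n)$; then $\sigma_{H'}(hg^k):=\sigma_H(h)T^k$ for $0\le k<n$ is a well-defined finite-dimensional (hence automatically smooth) representation of $H'$ extending $\sigma_H$. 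Assembling the two implications, and recalling from the first paragraph that $G_{\sigma_H}$ does not depend on the chosen extension, gives the stated equivalence.
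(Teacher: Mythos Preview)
Your proof is correct and follows essentially the same route as the paper's. The independence of $G_{\sigma_H}$ from the chosen extension, the direction $G_{\sigma_H}=H\Rightarrow H$ maximal, and the converse via extending $\sigma_H$ to $H'=\langle H,g\rangle$ with $H'/H$ finite cyclic are all handled the same way; the only cosmetic difference is that where the paper invokes $H^2(H'/H,\C^\times)=0$ for cyclic groups to linearize a projective extension, you unpack this as an explicit Gallagher-type construction of the operator $T$.
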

\begin{proof}
We first claim that $G_{\sigma_H}$ depends only on $H$ and $\sigma$, but not on $\sigma_H$. For this, let $\sigma_H$ and $\sigma_H'$ be two extensions of $\sigma$ to $H$. By Lemma \ref{lem:cliff2} there exists a character $\chi_H \in (H/N)^*$ s.t. $\sigma_H' = \sigma_H \otimes \chi_H$. Since $A$ acts trivially on $H/N$ by conjugation we have $\chi_H(ghg^{-1})=\chi_H(h)$ for all $g \in G$ and $h \in H$. Thus for $f \in \tx{End}_\C(V_\sigma)$, $g \in G$, and $h \in H$ the conditions $f \circ \sigma_H'(ghg^{-1})=\sigma_H'(h) \circ f$ and $f \circ \sigma_H(ghg^{-1})=\sigma_H(h) \circ f$ are equivalent, and the claim follows.

Assume $G_{\sigma_H}=H$. Let $H \subset H' \subset G_\sigma$ be such that there is an extension $\sigma_{H'}$ of $\sigma$ to $H'$. Then $\sigma_{H'}|_H$ is an extension of $\sigma$ to $H$ that is clearly stabilized by $H'$, hence $H' \subset G_{\sigma_H}$, which by our assumption implies $H'=H$.

Assume conversely that $H$ is a maximal subgroup to which $\sigma$ extends. Choose an extension $\sigma_H$ of $\sigma$ to $H$. Let $g \in G$ be such that $\sigma_H \circ \tx{Ad}(g) \cong \sigma_H$. Let $H' \subset G_{\sigma_H}$ be the group generated by $H$ and $g$. Then $\sigma_H$ extends to a projective representation of $H'$ whose associated cohomology class lies in $H^2(H'/H,\C^\times)$. Since $H'/H$ is finite and cyclic we have $H^2(H'/H,\C^\times)=0$ and this projective representation linearizes, providing an extension of $\sigma_H$ to $H'$. The maximality of $H$ implies $H'=H$, i.e. $g \in H$, and we conclude $G_{\sigma_H}=H$.
\end{proof}

\begin{lem} \label{lem:abext2} Let $\sigma \in \tx{Irr}(N)$ and $\pi \in \tx{Irr}(G)$. Let $N \subset H \subset G$ be a maximal subgroup to which $\sigma$ extends. Then $m(\sigma,\pi)>0$ if and only if there exists an extension $\sigma_H$ of $\sigma$ to $H$ such that $\pi = \tx{Ind}_H^G\sigma_H$.
\end{lem}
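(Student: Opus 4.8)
\textbf{Proof plan for Lemma \ref{lem:abext2}.}

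The plan is to reduce everything to a statement about the finite abelian quotient $A = G/N$ via Clifford theory, using the maximal subgroup $H$ as an intermediate step where all the multiplicity-free behavior is concentrated. First I would observe that since $H$ is maximal among subgroups to which $\sigma$ extends, Lemma \ref{lem:abext1} gives $G_{\sigma_H} = H$ for any extension $\sigma_H$ of $\sigma$; in particular $\sigma_H \circ \tx{Ad}(g) \not\cong \sigma_H$ for all $g \in G \sm H$, so the distinct $G/H$-conjugates of $\sigma_H$ are pairwise non-isomorphic. By Mackey's irreducibility criterion (valid here because $A$ is finite, so $\tx{Ind}_H^G$ is both adjoints of restriction), this immediately yields that $\tx{Ind}_H^G \sigma_H$ is irreducible for every extension $\sigma_H$ of $\sigma$ to $H$. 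That handles the ``if'' direction: such a $\pi = \tx{Ind}_H^G \sigma_H$ is irreducible and by Frobenius reciprocity and transitivity of induction, $\tx{Hom}_N(\sigma, \tx{Res}_N^G \pi) = \tx{Hom}_N(\sigma, \tx{Res}_N^H \sigma_H^{\oplus[G:H]})$ contains $\tx{Hom}_N(\sigma,\sigma) \neq 0$, so $m(\sigma,\pi) > 0$.

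For the ``only if'' direction, suppose $m(\sigma,\pi) > 0$, i.e. $\sigma$ is an irreducible constituent of $\tx{Res}_N^G \pi$. The key step is to locate inside $\pi$ an extension of $\sigma$ to $H$. I would restrict $\pi$ to $H$: by \cite[Lemma 2.7]{BH06}, $\tx{Res}_H^G \pi$ is semisimple and finite length, and since $\sigma$ occurs in $\tx{Res}_N^G \pi$ and $H \supset N$, some irreducible constituent $\tau$ of $\tx{Res}_H^G \pi$ has $\sigma$ occurring in $\tx{Res}_N^H \tau$. Now $H/N$ is a subgroup of the finite abelian group $A$, and I claim $\tau$ is in fact an \emph{extension} of $\sigma$, i.e. $\tx{Res}_N^H \tau \cong \sigma$. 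This is where maximality of $H$ re-enters: the constituents of $\tx{Res}_N^H \tau$ form a single $H/N$-orbit of $N$-conjugates of $\sigma$ by Lemma \ref{lem:cliff2}, but $H \subset G_\sigma$ so $H$ fixes the isomorphism class of $\sigma$, forcing that orbit to be $\{\sigma\}$; hence $\tx{Res}_N^H \tau$ is $\sigma$-isotypic, say $\sigma^{\oplus m}$, and I must rule out $m > 1$. For this I would use that $\sigma$ extends to $H$ (given), choose an extension $\sigma_H$, and note $\tau \cong \sigma_H \otimes \chi$ for a projective representation $\chi$ of $H/N$ by the standard Clifford-theoretic argument; but $H^2(H/N, \C^\times)$ need not vanish in general, so instead I would argue directly: $\tau = \tx{Hom}_N(\sigma, \tau) \otimes \sigma$ as $H$-modules where $\tx{Hom}_N(\sigma,\tau)$ carries a \emph{projective} $H/N$-action with some class $c \in H^2(H/N,\C^\times)$, and this class equals the obstruction class to extending $\sigma$ to $H$, which is trivial since $\sigma$ does extend; hence the projective action linearizes, $\tx{Hom}_N(\sigma,\tau)$ is a sum of characters of $H/N$, and irreducibility of $\tau$ forces $\tx{dim}\,\tx{Hom}_N(\sigma,\tau) = 1$, i.e. $m = 1$. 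So $\tau$ is an extension $\sigma_H$ of $\sigma$ to $H$.

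Finally, with $\tau = \sigma_H$ an extension of $\sigma$ occurring in $\tx{Res}_H^G \pi$, Frobenius reciprocity gives a nonzero $H$-map $\sigma_H \to \tx{Res}_H^G \pi$, hence a nonzero $G$-map $\tx{Ind}_H^G \sigma_H \to \pi$; since both sides are irreducible (the source by the Mackey argument above, the target by hypothesis), this map is an isomorphism, so $\pi \cong \tx{Ind}_H^G \sigma_H$ as desired. I expect the main obstacle to be the rigor of the ``$m = 1$'' step — correctly identifying the obstruction class to extending $\sigma$ with the cohomology class governing the projective multiplicity space of $\tau$, and thereby concluding multiplicity one from the mere existence of an extension to $H$, without assuming $H^2(H/N,\C^\times) = 0$. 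The cleanest route is probably to phrase it as: $\tx{Res}_H^G\pi$ is $\sigma$-isotypic after restriction to $N$ would be false in general, but restricted to the $\sigma_H$-isotypic theory, the relevant Schur algebra $\tx{End}_H(\sigma_H) = \C$ forces each constituent to appear with the extension's multiplicity pattern; I would lean on Lemma \ref{lem:cliff2}(3) and the triviality of the extension obstruction to make this precise.
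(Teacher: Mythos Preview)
Your proof is correct but takes a genuinely different route from the paper's for the ``only if'' direction. The paper picks an \emph{arbitrary} extension $\sigma_H'$ of $\sigma$ to $H$, shows $\pi' := \tx{Ind}_H^G\sigma_H'$ is irreducible via Lemma \ref{lem:abext1} and \cite{Ku77}, notes $m(\sigma,\pi')>0$ by the ``if'' direction, and then invokes Lemma \ref{lem:cliff2}(3): since $\pi$ and $\pi'$ share the $N$-constituent $\sigma$, there is $\chi \in A^*$ with $\pi = \pi'\otimes\chi = \tx{Ind}_H^G(\sigma_H'\otimes\chi|_H)$, and $\sigma_H := \sigma_H'\otimes\chi|_H$ is the desired extension. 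You instead go inside $\pi$: restrict to $H$, pick a constituent $\tau$ containing $\sigma$, and argue $\tau$ is itself an extension of $\sigma$ by showing the multiplicity space $\tx{Hom}_N(\sigma,\tau)$ carries a genuine (not just projective) $H/N$-action --- because the obstruction class vanishes, $\sigma$ extending to $H$ being exactly that statement --- and hence is $1$-dimensional by abelianness of $H/N$ and irreducibility of $\tau$.

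The paper's route is shorter and sidesteps the multiplicity-space analysis entirely by reusing the twist lemma already on hand. Your route is more constructive (it locates $\sigma_H$ inside $\pi$ rather than twisting into it) but costs you the ``$m=1$'' step you flagged. That step is fine: once you fix one extension $\sigma_H$, the map $f \mapsto \tau(h)\circ f\circ\sigma_H(h)^{-1}$ is an honest $H$-action on $\tx{Hom}_N(\sigma,\tau)$ trivial on $N$, and $\tau \cong \sigma_H \otimes \tx{Hom}_N(\sigma,\tau)$ as $H$-modules forces the second factor to be an irreducible, hence $1$-dimensional, $H/N$-module. So your worry is unfounded --- you do not need $H^2(H/N,\C^\times)=0$, only the triviality of the specific class, which is precisely the hypothesis that $\sigma$ extends to $H$.
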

\begin{proof}
If $\sigma_H$ is such an extension and $\pi=\tx{Ind}_H^G\sigma_H$ then Frobenius reciprocity implies $m(\pi,\sigma_H)>0$, hence $m(\pi,\sigma)>0$. Conversely assume $m(\pi,\sigma)>0$. Pick arbitrarily an extension $\sigma_H'$ and let $\pi'=\tx{Ind}_H^G\sigma_H'$. Then $\pi'$ is semi-simple by Fact \ref{fct:ssindres} and we can check its irreducibility by computing the dimension of its space of self-intertwiners, which is $1$ according to \cite{Ku77} and the fact that $H = G_{\sigma_H}$ of Lemma \ref{lem:abext1}. Moreover $m(\pi',\sigma)>0$ as just argued. According to Lemma \ref{lem:cliff2} there $\chi \in A^*$ s.t. $\pi=\pi'\otimes\chi=\tx{Ind}_H^G(\sigma_H'\otimes\chi|_H)$. Then $\sigma_H=\sigma_H'\otimes\chi|_H$ satisfies the requirement.
\end{proof}

Let $\sigma \in \tx{Irr}(N)$, let $N \subset H \subset G$ be a maximal subgroup to which $\sigma$ extends and let $A'=H/N$. We will construct an injective group homomorphism $A_\sigma/A' \to (A')^*$.

Consider $\tx{Ind}_N^H\sigma$. The set of its irreducible constituents is precisely the set of irreducible representations of $H$ whose restriction to $N$ contains $\sigma$. Thus $A_\sigma$ acts on this set and this action factors through $A_\sigma/A'$. Fix an extension $\sigma_H$ of $\sigma$ to $H$. By Lemma \ref{lem:abext2} the stabilizer of $\sigma_H$ in $A_\sigma$ equals $A'$. At the same time, we can apply Lemma \ref{lem:cliff2} and to conclude that the set of irreducible representations of $H$ whose restriction to $N$ contains $\sigma$ is precisely $\{\sigma_H\otimes\chi|\chi \in (A')^*\}$ and since $m(\sigma,\sigma_H)=1$ the map $\chi \mapsto \sigma_H \otimes\chi$ is injective. Thus, for a given $a \in A_\sigma/A'$ there is a unique $\chi_a \in (A')^*$ with $\sigma_H \circ \tx{Ad}(a)^{-1} = \sigma_H \otimes \chi_a$. Note that $\chi_a$ does not depend on the choice of extension $\sigma_H$ of $\sigma$. This implies that $a \mapsto \chi_a$ is a homomorphism. It is injective because the action of $A_\sigma/A'$ on the set $\{\sigma_H\otimes\chi\}$ is simple.

\begin{cor} \label{cor:abext} Let $\sigma \in \tx{Irr}(N)$ and $\pi \in \tx{Irr}(G)$ be s.t. $m(\sigma,\pi)>0$. Let $N \subset H \subset G$ be a maximal subgroup to which $\sigma$ extends and let $A'=H/N$. Then
\begin{enumerate}
\item $m(\sigma,\pi)=[A':A_\sigma]$.
\item $\tx{dim}(\pi)=\tx{dim}(\sigma)m(\sigma,\pi)[A_\sigma:A]=\tx{dim}(\sigma)[A':A]$.
\item We have the Cartesian square
\[ \xymatrix{
	\tx{Stab}(\pi,A^*)\ar[r]\ar[d]&A^*\ar[d]\\
	A_\sigma/A'\ar[r]&(A')^*
}
\]
\end{enumerate}
\end{cor}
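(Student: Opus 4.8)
The plan is to reduce the whole statement to the concrete model of $\pi$ supplied by Lemma~\ref{lem:abext2}. Since $m(\sigma,\pi)>0$, I would fix a maximal subgroup $N\subseteq H\subseteq G$ to which $\sigma$ extends, together with an extension $\sigma_H$ of $\sigma$ to $H$ such that $\pi=\tx{Ind}_H^G\sigma_H$. A preliminary observation, used repeatedly, is that $H$ is automatically normal in $G$: subgroups of $G$ containing $N$ correspond to subgroups of the finite abelian group $A=G/N$, all of which are normal. Note also that, $\sigma_H$ being an extension of the irreducible $\sigma$, it is itself irreducible with $\dim\sigma_H=\dim\sigma$ and $m(\sigma,\sigma_H)=1$; and recall $A'=H/N\subseteq A_\sigma\subseteq A$.

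For part~(1) I would compute the $N$-restriction of $\pi$ by Mackey's formula: using $N\subseteq H$ and the normality of $H$ one gets $\tx{Res}_N^G\pi=\tx{Res}_N^G\tx{Ind}_H^G\sigma_H=\bigoplus_{g\in G/H}{}^g(\sigma_H|_N)=\bigoplus_{g\in G/H}{}^g\sigma$. Hence $m(\sigma,\pi)$ is the number of cosets $gH\in G/H$ with ${}^g\sigma\cong\sigma$, i.e.\ the number of cosets lying inside $G_\sigma$; since $N\subseteq H\subseteq G_\sigma$ this is the index $[G_\sigma:H]=[A_\sigma:A']$. Part~(2) then follows at once: $\dim\pi=[G:H]\dim\sigma_H=[A:A']\dim\sigma$, and combining with part~(1) and the factorization $[A:A']=[A:A_\sigma]\cdot[A_\sigma:A']$ yields the asserted chain of equalities, namely $\dim\pi=\dim\sigma\cdot m(\sigma,\pi)\cdot[A:A_\sigma]=\dim\sigma\cdot[A:A']$.

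The substantive part is~(3). For $\chi\in A^*=(G/N)^*$ I would use $\chi\otimes\pi=\chi\otimes\tx{Ind}_H^G\sigma_H=\tx{Ind}_H^G(\chi|_H\otimes\sigma_H)$, where $\chi|_H$ is inflated from $A'=H/N$, so that $\chi|_H\otimes\sigma_H$ is again an extension of $\sigma$ to $H$. By the dichotomy that two inductions from a common subgroup agree iff the inducing representations are conjugate (the lemma stated just before Lemma~\ref{lem:abext1}, applied with $H$ in place of $N$, which is legitimate since $G/H$ is abelian), $\chi\otimes\pi\cong\pi$ iff $\chi|_H\otimes\sigma_H\cong{}^g\sigma_H$ for some $g\in G$. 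Comparing $N$-restrictions forces $g\in G_\sigma$, and then both sides are extensions of $\sigma$ to $H$; since these extensions form a torsor under $(A')^*$ and $m(\sigma,\sigma_H)=1$, the condition is equivalent to $\chi|_H=\chi_{\bar g}$, where $\bar g\mapsto\chi_{\bar g}$ is the injective homomorphism $A_\sigma/A'\hookrightarrow(A')^*$ constructed before the corollary. Thus $\tx{Stab}(\pi,A^*)=\{\chi\in A^*:\chi|_{A'}\in\tx{im}(A_\sigma/A'\to(A')^*)\}$, and $\chi\mapsto\bar g$ (well defined by injectivity of the bottom map) is the left vertical arrow; the square commutes by the very definition of $\chi_{\bar g}$, and it is Cartesian precisely because the bottom map is injective, so the second coordinate of a point of the fiber product is uniquely recovered. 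I would also note that the image of $A_\sigma/A'$ in $(A')^*$, hence the square, does not depend on the choice of $\sigma_H$.

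I expect the only real difficulty to be notational: keeping the conjugation and twisting conventions (${}^g(-)$ versus $(-)\circ\tx{Ad}(g)^{\pm1}$, and the sign in the definition of $\chi_a$) consistent throughout, and being careful that the square in~(3) is genuinely Cartesian and not merely commutative --- which, as above, rests entirely on the already established injectivity of $A_\sigma/A'\to(A')^*$ together with $m(\sigma,\sigma_H)=1$. No ingredients beyond Lemmas~\ref{lem:abext1}, \ref{lem:abext2}, \ref{lem:cliff2} and the Mackey restriction formula should be needed.
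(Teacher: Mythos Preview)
Your proof is correct and follows essentially the same approach as the paper. The only difference is the order in which parts (1) and (2) are established: you compute $m(\sigma,\pi)=[A_\sigma:A']$ directly via Mackey's restriction formula and then read off the dimension formulas, whereas the paper first obtains both dimension expressions in (2) --- one from Lemma~\ref{lem:abext2} and the other from Lemma~\ref{lem:cliff2} --- and deduces (1) by comparing them. Your treatment of part (3) is identical to the paper's (the paper cites \cite{Ku77} where you invoke the unnamed lemma preceding Lemma~\ref{lem:abext1}, but these give the same dichotomy).
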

\begin{proof}
Lemma \ref{lem:abext2} immediately gives $\tx{dim}(\pi)=\tx{dim}(\sigma)[A':A]$, while $\tx{dim}(\pi)=\tx{dim}(\sigma)m(\sigma,\pi)[A_\sigma:A]$ is immediate from Lemma \ref{lem:cliff2}.

To compute $\tx{Stab}(\pi,A^*)$, let $\chi \in A^*$. Write $\pi=\tx{Ind}_H^G\sigma_H$ as in Lemma \ref{lem:abext2}. Then $(\tx{Ind}_H^G \sigma_H)\otimes\chi = \tx{Ind}_H^G(\sigma_H \otimes\chi|_{A'})$ and by \cite{Ku77} this is isomorphic to $\tx{Ind}_H^G\sigma_H$ if and only if there exists $a \in A$ s.t. $\sigma_H\circ\tx{Ad}(a)^{-1}=\sigma_H\otimes\chi|_{A'}$. Restricting this relation from $H$ to $N$ we see $a \in A_\sigma$, and this relation becomes equivalent to $\chi|_{A'}=\chi_a$.
\end{proof}

\begin{lem} \label{lem:reppush} Assume that $N$ is abelian and $\sigma$ is a character. Then the set of $\pi \in \tx{Irr}(G)$ with $m(\sigma,\pi)>0$ is in canonical bijection with the set of $\tx{id}$-isotypic representations of the pushout of $G_\sigma$ by $\sigma$.
\end{lem}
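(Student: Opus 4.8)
The plan is to reduce this statement to the basic Clifford-theoretic bookkeeping already assembled in Appendix \ref{app:repext}, combined with Lemma \ref{lem:exc}. Since $N$ is abelian and $\sigma : N \to \C^\times$ is a character, the stabilizer $G_\sigma \subset G$ is an extension of the finite abelian group $A_\sigma \subset A$ by $N$, and $\sigma$ is automatically a $G_\sigma$-invariant character of the normal subgroup $N$. Pushing out $1 \to N \to G_\sigma \to A_\sigma \to 1$ along $\sigma : N \to \C^\times$ produces a central extension
\[ 1 \to \C^\times \to G_\sigma \times_N \C^\times \to A_\sigma \to 1, \]
and I will call this pushout $\bar G_\sigma$. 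The first thing to observe is that the finite-dimensional smooth irreducible representations $\pi$ of $G$ with $m(\sigma,\pi) > 0$ are exactly the irreducible summands of $\tx{Ind}_N^G\sigma$, and by the first lemma of Appendix \ref{app:repext} these form one $G$-conjugacy class worth of $N$-constituents; so it is equivalent to classify the $\pi \in \tx{Irr}(G)$ whose restriction to $N$ is $\sigma$-isotypic, which by transitivity of induction is the same as classifying $\tx{Irr}(G_\sigma,\sigma) := \{\rho \in \tx{Irr}(G_\sigma) : \rho|_N \text{ is }\sigma\text{-isotypic}\}$ and then inducing up to $G$ (this last step being a bijection onto the relevant subset of $\tx{Irr}(G)$ by Lemma \ref{lem:abext2} and \cite{Ku77}, exactly as already used in the proof of Corollary \ref{cor:abext}).

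The core point is then that $\tx{Irr}(G_\sigma,\sigma)$ is in canonical bijection with the set of $\tx{id}$-isotypic representations of $\bar G_\sigma$, where ``$\tx{id}$-isotypic'' means that the central $\C^\times$ acts through the identity character. This is a purely formal statement about pushouts: a representation $\rho$ of $G_\sigma$ on which $N$ acts by the character $\sigma$ is the same datum as a representation of the pushout $\bar G_\sigma = G_\sigma \times_N \C^\times$ on which the central $\C^\times$ acts by the identity, the correspondence being $\rho \mapsto \bar\rho$ with $\bar\rho(g,z) = z\,\rho(g)$ and conversely $\bar\rho \mapsto \bar\rho|_{G_\sigma}$; irreducibility is preserved in both directions since the underlying vector space and the image of the group are unchanged. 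I would state this as an instance of the compatibility of restriction/inflation with pushout — it is the same mechanism as Lemma \ref{lem:reppush}'s companion statements, and in fact the cleanest route is to invoke that formal dictionary directly rather than reprove it, since the only content is that a central subgroup acting by a fixed character on $\rho$ is precisely what one descends along in a pushout.

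Assembling the two bijections — $\tx{Irr}(G) \supset \{m(\sigma,-)>0\} \leftrightarrow \tx{Irr}(G_\sigma,\sigma)$ via induction, and $\tx{Irr}(G_\sigma,\sigma) \leftrightarrow \{\tx{id}\text{-isotypic irreps of }\bar G_\sigma\}$ via pushout — gives the claimed canonical bijection. I would be careful to note that both bijections are canonical: the induction bijection uses no choices (it is Frobenius reciprocity plus the irreducibility criterion of \cite{Ku77}, and the inverse is just restriction to $G_\sigma$ followed by isolating the $\sigma$-isotypic part), and the pushout bijection is tautological. The only mild subtlety, and the step I expect to need the most care in writing out, is checking that induction from $G_\sigma$ to $G$ really is injective on $\tx{Irr}(G_\sigma,\sigma)$ and hits every $\pi$ with $m(\sigma,\pi)>0$ exactly once; but this is precisely what Lemma \ref{lem:abext2} together with \cite{Ku77} delivers — two extensions $\rho,\rho' \in \tx{Irr}(G_\sigma,\sigma)$ induce isomorphically to $G$ iff they are conjugate under $G$, and since $G_\sigma$ is the full stabilizer of $\sigma$ (not merely of some extension), $G$-conjugacy among representations whose $N$-restriction is $\sigma$-isotypic forces $\rho \cong \rho'$. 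Everything else is routine, so no separate hard obstacle arises; the statement is essentially a repackaging of the appendix.
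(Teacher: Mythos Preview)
Your proposal is correct and follows essentially the same route as the paper: both factor the bijection as $\tx{Irr}(G,\sigma) \leftrightarrow \tx{Irr}(G_\sigma,\sigma)$ via induction (using that $G_{\pi_\sigma}=G_\sigma$ together with \cite{Ku77} for irreducibility and injectivity, and Lemma~\ref{lem:abext2} for surjectivity), followed by the tautological identification of $\tx{Irr}(G_\sigma,\sigma)$ with the $\tx{id}$-isotypic representations of the pushout. The mention of Lemma~\ref{lem:exc} in your opening sentence is a red herring---it plays no role in either argument---but this does not affect the substance.
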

\begin{proof}
Write $\tx{Irr}(G,\sigma)$ for the subset of $\pi \in \tx{Irr}(G)$ with $m(\sigma,\pi)>0$. By Lemma \ref{lem:cliff2} the $\pi_\sigma \in \tx{Irr}(G_\sigma,\sigma)$ are precisely those whose restriction to $N$ is $\sigma$-isotypic. This implies that if $g \in G$ is s.t. $\pi_\sigma \cong \pi'_\sigma \circ \tx{Ad}(g)$ then $g \in G_\sigma$ and hence $\pi_\sigma \cong \pi'_\sigma$. In particular $G_{\pi_\sigma}=G_\sigma$. Since $\pi=\tx{Ind}_{G_\sigma}^G\pi_\sigma$ is semi-simple, this implies via \cite{Ku77} that it is irreducible, and moreover that $\pi \cong \pi'$ implies $\pi_\sigma \cong \pi'_\sigma$. Thus $\pi_\sigma \to \pi$ is an injective map $\tx{Irr}(G_\sigma,\sigma) \to \tx{Irr}(G,\sigma)$, which is also surjective by Lemma \ref{lem:abext2}. The bijection between $\tx{Irr}(G_\sigma,\sigma)$ and $\tx{Irr}(G_\sigma \times_\sigma \C^\times,\tx{id})$ is immediate.
\end{proof}

\subsection{DL-varieties and homomorphisms with abelian kernel and cokernel} \label{app:dlreview}

We review here the material \cite[\S1.21-\S1.27]{DL76}. Let $\tilde G \to G$ be a homomorphism of connected reductive groups defined over a finite field $k$, with abelian kernel and cokernel. Let $S \subset G$ be a maximal torus, $\tilde S \subset \tilde G$ its inverse image, $\theta : S^F \to \bar\Q_l^\times$ a character, and $\tilde\theta : \tilde S^F \to \bar\Q_l^\times$ its pullback. Let $U \subset \tilde G$ be the unipotent radical of a Borel subgroup containing $\tilde S$. Then $U \subset G$ is also the unipotent radical of a Borel subgroup containing $S$.

The following results are proved in loc. cit. when $\tilde G$ is the simply connected cover of the derived subgroup of $G$. The proof given there works without change for the more general $\tilde G$ considered here:

\begin{enumerate}
	\item The natural map $\tx{cok}(\tilde S^F \to S^F) \to \tx{cok}(\tilde G^F \to G^F)$ is bijective.
	\item The $(G \times^Z S)^F$-space $Y^G_U$ is the induction of the $(\tilde G \times^{\tilde Z} \tilde S)^F$-space $Y^{\tilde G}_U$.
	\item We have an isomorphism $H^i_c(Y^G_U,\bar\Q_l)=\tx{Ind}_{\tilde S^F}^{S^F}(H^i_c(Y^{\tilde G}_U,\bar\Q_l))$ as modules for the action of $S^F$ on the right.
	\item The natural map $Y^{\tilde G}_U \to Y^G_U$ induces an isomorphism $H^i_c(Y^{\tilde G}_U,\bar\Q_l)_{\tilde\theta} \to H^i_c(Y^G_U,\bar\Q_l)_\theta$.
	\item For $\chi : \tx{cok}(\tilde G^F \to G^F) \to \bar\Q_l^\times$ we have
	\[ H^i_c(Y_U,\bar\Q_l)_{\theta\cdot\chi} = \chi \otimes H^i_c(Y_U,\bar\Q_l)_\theta. \]
\end{enumerate}

\subsection{Remarks about embeddings of tori}

Consider two reductive groups $G_1$ and $G_2$ and a rigid inner twist $(\xi,z) : G_1 \to G_2$, maximal tori $S_i \subset G_i$, and $g \in G(\bar F)$ such that $\xi\circ\tx{Ad}(g) : S_1 \to S_2$ is defined over $F$. Then $\xi\circ\tx{Ad}(g)$ induces an isomorphism $\Omega(S_1,G_1) \to \Omega(S_2,G_2)$ defined over $F$. We have the action of $\Omega(S_1,G_1)(F)$ on $H^1(F,S_1)$ induced by the action on $S_1$. Write $\delta : \Omega(S_1,G_1)(F) \to H^1(\Gamma,S_1)$ for the connecting homomorphism. Write $\eta_g$ for the class in $H^1(u \to W,Z \to S_1)$ of the 1-cocycle $w \mapsto g^{-1}z(w)\sigma_w(g)$. Given $w \in \Omega(S_1,G_1)(F)$ the class $\eta_{g \dot w}$ is independent of the choice of lift $\dot w \in N(S_1,G_1)(\bar F)$ of $w$ and will be denoted by $\eta_{gw}$. The proof of the following lemma is elementary and left to the reader.

\begin{lem}  \label{lem:c2}
Let $w \in \Omega(S_1,G_1)(F)$. Then
\begin{enumerate}
	\item $\eta_{gw}=w^{-1}\eta_g \cdot \delta w$;
	\item The image of $w$ in $\Omega(S_2,G_2)(F)$ belongs to the subgroup $N(S_2,G_2)(F)/S_2(F)$ if and only if $\eta_{gw}=\eta_g$;
	\item Assume that $N(S_1,G_1)(F)/S_1(F) = \Omega(S_1,G_1)(F)$. Then $\xi\circ\tx{Ad}(g)$ identifies the stabilizer of $\eta_g$ in $\Omega(S_1,G_1)(F)$ with the subgroup $N(S_2,G_2)(F)/S_2(F)$ of $\Omega(S_2,G_2)(F)$.
\end{enumerate}
\end{lem}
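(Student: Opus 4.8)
The plan is to establish the three assertions by unwinding the cocycle definitions, following the pattern already used repeatedly in \cite{KalRSP} for manipulations of $H^1(u\to W,Z\to S)$-torsors. First I would fix a lift $\dot w\in N(S_1,G_1)(\bar F)$ of $w\in\Omega(S_1,G_1)(F)$. The cocycle defining $\eta_g$ is $z_g\colon \mc{W}\to S_1(\bar F)$, $w'\mapsto g^{-1}z(w')\sigma_{w'}(g)$, and the cocycle defining $\eta_{g\dot w}$ is $w'\mapsto (g\dot w)^{-1}z(w')\sigma_{w'}(g\dot w)$. Writing $\sigma_{w'}(g\dot w)=\sigma_{w'}(g)\sigma_{w'}(\dot w)$ and inserting $z_g(w')$, this rearranges to $\dot w^{-1}\bigl(z_g(w')\bigr)\cdot \dot w^{-1}\sigma_{w'}(\dot w)$. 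The first factor is $w^{-1}$ applied to the cocycle $z_g$ via the $\Omega(S_1,G_1)(F)$-action on $S_1$ (here one uses that $w$ is $\Gamma$-fixed, so conjugation by $\dot w$ descends to the rational action on $S_1$ and commutes appropriately with $\sigma_{w'}$), and the second factor $w'\mapsto \dot w^{-1}\sigma_{w'}(\dot w)$ is precisely a cocycle representative of $\delta w\in H^1(\Gamma,S_1)$ inflated to $\mc{W}$. One must check this second factor takes values in $S_1$ and is independent of the choice of $\dot w$: a different lift is $\dot w s$ for $s\in S_1(\bar F)$, which changes it by the coboundary $w'\mapsto s^{-1}\sigma_{w'}(s)$, so the class $\delta w$, and hence $\eta_{g\dot w}$, is well-defined. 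This proves (1).

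For (2), the image of $w$ in $\Omega(S_2,G_2)(F)$ lifts to an element of $N(S_2,G_2)(F)$ exactly when the torsor element $\eta_{g w}$ coincides with $\eta_g$: indeed, a lift $n_2\in N(S_2,G_2)(\bar F)$ of the image of $w$ is rational precisely when $\xi\circ\tx{Ad}(gn_1)$ agrees with $\xi\circ\tx{Ad}(g)$ up to something rational, which on the level of invariants $\tx{inv}$ means $\tx{inv}(j_0,j\circ w)=\tx{inv}(j_0,j)$ where $j=\xi\circ\tx{Ad}(g)$; but $\tx{inv}(j,j\circ w)=\eta_{gw}\cdot\eta_g^{-1}$ by definition of the torsor structure \eqref{eq:jtors}. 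Combining with (1), $\eta_{gw}=\eta_g$ is equivalent to $w^{-1}\eta_g\cdot\delta w=\eta_g$. So (2) is essentially the statement that $w$ maps into $N(S_2,G_2)(F)/S_2(F)$ iff this equality holds; I would phrase it directly in terms of rationality of a normalizing element and the definition of $\tx{inv}$.

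For (3), under the hypothesis $N(S_1,G_1)(F)/S_1(F)=\Omega(S_1,G_1)(F)$ — which holds in our applications because $j_0S_1$ sits at an absolutely special vertex, by \cite[Lemma 3.4.10]{KalRSP} — every $w\in\Omega(S_1,G_1)(F)$ already lifts rationally on the $G_1$-side, so the stabilizer of $\eta_g$ is exactly $\{w : \eta_{gw}=\eta_g\}$, which by (2) is exactly the preimage under $\xi\circ\tx{Ad}(g)$ of $N(S_2,G_2)(F)/S_2(F)\subset\Omega(S_2,G_2)(F)$. The main obstacle — though it is more bookkeeping than genuine difficulty — is keeping straight the interplay between the $\Gamma$-action $\sigma_{w'}$, the rational $\Omega$-action on $S_1$, and the twisting cocycle $z$, and in particular verifying that conjugation by the $\Gamma$-fixed element $\dot w$ really does induce the rational $\Omega(S_1,G_1)(F)$-action on $H^1$; this is where one must be careful that $\dot w$ need not itself be $\Gamma$-fixed, only its image $w$. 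Once that compatibility is pinned down, all three points follow formally, so I would present them tersely.
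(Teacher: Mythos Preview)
Your proposal is correct and is precisely the elementary cocycle unwinding the paper has in mind; indeed the paper's own proof reads in full ``The proof of the following lemma is elementary and left to the reader.'' One small sharpening for part (3): make explicit that the hypothesis $N(S_1,G_1)(F)/S_1(F)=\Omega(S_1,G_1)(F)$ forces $\delta w=1$ for all $w$, so (1) collapses to $\eta_{gw}=w^{-1}\eta_g$, whence $\{w:\eta_{gw}=\eta_g\}$ is literally the stabilizer of $\eta_g$; your phrasing leaves this step slightly implicit.
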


\subsection{Parahoric subgroups and restriction of scalars} \label{app:parahoric}

Let $E/F$ be a finite extension of non-archimedean local fields, not necessarily tamely ramified. Let $H$ be a connected reductive group over $E$ and $G=\tx{Res}_{E/F}H$. There is a natural identification $\mc{B}(G,F)=\mc{B}(H,E)$. Let $x$ be a point in this building, and $\mf{G}_x^\circ$ and $\mf{H}_x^\circ$ the corresponding (connected) parahoric group schemes defined over $O_F$ and $O_E$, respectively.

\begin{fct} The identity $G=\tx{Res}_{E/F}H$ extends to an isomorphism $\mf{G}_x^\circ=\tx{Res}_{O_E/O_F}\mf{H}_x^\circ$.
\end{fct}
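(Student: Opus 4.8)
The plan is to identify $\tx{Res}_{O_E/O_F}\mf{H}_x^\circ$ as a smooth affine $O_F$-group scheme with connected special fibre whose group of $O_{F^u}$-points is the parahoric subgroup $G(F^u)_{x,0}$, and then to invoke the uniqueness of the parahoric group scheme (Bruhat--Tits; see also the appendix of Haines--Rapoport) to conclude that it is canonically $\mf{G}_x^\circ$, the isomorphism being induced by the given identification of generic fibres.

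First I would assemble the formal input. Weil restriction along the finite free extension $O_E/O_F$ preserves affineness and smoothness (classical; see e.g. the treatments of Weil restriction in Bosch--L\"utkebohmert--Raynaud, or in Conrad--Gabber--Prasad, Appendix A.5), and it commutes with base change, so $(\tx{Res}_{O_E/O_F}\mf{H}_x^\circ)\times_{O_F}F=\tx{Res}_{E/F}H=G$ via the given identification. Thus $\tx{Res}_{O_E/O_F}\mf{H}_x^\circ$ is already a smooth affine $O_F$-model of $G$; since models are separated, an isomorphism with $\mf{G}_x^\circ$ restricting to the identity on $G$ is unique if it exists, so it suffices to verify the two remaining characterising properties of the parahoric model.

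For the special fibre, base change gives $(\tx{Res}_{O_E/O_F}\mf{H}_x^\circ)\times_{O_F}k_F=\tx{Res}_{A/k_F}(\mf{H}_x^\circ\times_{O_E}A)$ with $A=O_E\otimes_{O_F}k_F=O_E/\mf{p}_FO_E$ a local Artinian $k_F$-algebra with residue field $k_E$; the base change of $\mf{H}_x^\circ$ to $A$ is smooth with connected closed fibre, hence connected. One is thus reduced to the fact that $\tx{Res}_{A/k_F}$ of a smooth connected affine $A$-group is connected: filtering $A$ by powers of its maximal ideal $\mf{m}$ exhibits it as a successive extension of $\tx{Res}_{k_E/k_F}(\mf{H}_x^\circ\times_{O_E}k_E)$ --- connected because $k_E/k_F$ is (finite) separable and the Weil restriction of a connected group along an \'etale extension is connected --- by Weil restrictions of the vector groups $\tx{Lie}(\mf{H}_x^\circ)\otimes_{k_E}(\mf{m}^i/\mf{m}^{i+1})$, which are connected. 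I expect this to be the one genuinely non-formal step, and it is precisely where the (possibly wild) ramification of $E/F$, encoded in the nilpotents of $A$, intervenes; but it is standard (compare Conrad--Gabber--Prasad, Appendix A.5).

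Finally, for the integral points I would compute $(\tx{Res}_{O_E/O_F}\mf{H}_x^\circ)(O_{F^u})=\mf{H}_x^\circ(O_E\otimes_{O_F}O_{F^u})$. Since $F^u$ has separably closed residue field and $E/F$ is separable, $E\otimes_F F^u$ is a product of $f=[k_E:k_F]$ copies of the completed maximal unramified extension $E^u=E\cdot F^u$, so $O_E\otimes_{O_F}O_{F^u}\cong\prod_{i=1}^f O_{E^u}$ and the left-hand side is $\mf{H}_x^\circ(O_{E^u})^{f}=(H(E^u)_{x,0})^{f}$ by the defining property of $\mf{H}_x^\circ$. On the other hand $G\times_F F^u=\prod_{i=1}^f\tx{Res}_{E^u/F^u}(H_{E^u})$, whence $G(F^u)=H(E^u)^f$, and under the canonical identification $\mc{B}(\tx{Res}_{E^u/F^u}H,F^u)=\mc{B}(H,E^u)$ the point $x$ is carried to the point of $\mc{B}(G,F^u)$ with all $f$ components equal to $x$; as the parahoric subgroup of a product is the product of the parahoric subgroups, $G(F^u)_{x,0}=(H(E^u)_{x,0})^f$ as well. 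The two subgroups of $G(F^u)$ coincide, so by the uniqueness of the parahoric model $\tx{Res}_{O_E/O_F}\mf{H}_x^\circ=\mf{G}_x^\circ$. (Alternatively the whole argument can be packaged via the adjunction between $\tx{Res}_{O_E/O_F}$ and base change to $O_E$, together with the compatibility of Bruhat--Tits buildings and of the Kottwitz homomorphism with restriction of scalars.)
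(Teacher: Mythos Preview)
Your proof is correct and follows essentially the same route as the paper: verify that $\tx{Res}_{O_E/O_F}\mf{H}_x^\circ$ is smooth and affine, compute its $O_{F^u}$-points via the isomorphism $O_E\otimes_{O_F}O_{F^u}\cong O_{E^u}^{[k_E:k_F]}$ to obtain $H(E^u)_{x,0}^{[k_E:k_F]}=G(F^u)_{x,0}$, and then invoke the uniqueness of smooth affine $O_F$-models with prescribed $O_{F^u}$-points (the paper cites \cite[Proposition 1.7.6]{BT2} directly rather than the Haines--Rapoport formulation). One remark: your connectedness check on the special fibre is not actually needed for this uniqueness statement---once the generic fibre and the $O_{F^u}$-points agree, the model is determined---but the argument you give (filtering $A=O_E\otimes_{O_F}k_F$ by powers of its maximal ideal and using \cite[Proposition A.5.12]{CGP15}) is exactly what the paper carries out in the Lemma immediately following this Fact, where it is used to identify the reductive quotients $\ms{G}_x^\circ\cong\tx{Res}_{k_E/k_F}\ms{H}_x^\circ$; so you have merged that step in.
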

\begin{proof} The $O_F$-group scheme $\mf{G}_x^\circ$ is affine according to \cite[\S4.6.2]{BT2}. Since Weil restriction preserves affineness, so is $\tx{Res}_{O_E/O_F}\mf{H}_x^\circ$. It will thus be enough to show that the $F$-algebra isomorphism between the coordinate rings of $G$ and $\tx{Res}_{E/F}H$ maps the coordinate ring of $\mf{G}_x^\circ$ bijectively onto the coordinate ring of $\tx{Res}_{O_E/O_F}\mf{H}_x^\circ$. Since $\mf{G}_x^\circ$ is smooth by loc. cit. and then so is $\tx{Res}_{O_E/O_F}\mf{H}_x^\circ$ by \cite[Proposition A.5.2]{CGP15}. We may thus apply \cite[Proposition 1.7.6]{BT2} to compute the coordinate rings and see that it is enough to show that under the identification $G(F^u)=\tx{Res}_{E/F}H(F^u)$ the subgroups $\mf{G}_x^\circ(O_{F^u})$ and $\tx{Res}_{O_E/O_F}\mf{H}_x^\circ(O_{F^u})$ become identified.

Let $F' \subset E$ be the maximal unramified subsextension of $F$. We have the compatible isomorphisms $O_E \otimes_{O_F} O_{F^u} \to O_{E^u}^{[F':F]}$ and $E \otimes_F F^u \to (E^u)^{[F':F]}$, giving rise to the compatible isomorphisms $\tx{Res}_{O_E/O_F}\mf{H}_x^\circ(O_{F^u}) \to \mf{H}_x^\circ(O_{E^u})^{[F':F]}$ and $\tx{Res}_{E/F}H(F^u) \to H(E^u)^{[F':F]}$, which show that $\tx{Res}_{O_E/O_F}\mf{H}_x^\circ(O_{F^u})$ is the parahoric subgroup of $\tx{Res}_{E/F}H(F^u)$ corresponding to the point $x$. Under the equality $G=\tx{Res}_{E/F}H$ this group is identified with $\mf{G}_x^\circ(O_{F^u})$
\end{proof}

Recall that we denote by $\ms{G}_x^\circ$ and $\ms{H}_x^\circ$ the reductive quotients of the special fibers of $\mf{G}_x^\circ$ and $\mf{H}_x^\circ$. The proof of the following Lemma was communicated to us by Brian Conrad.

\begin{lem} The isomorphism $\mf{G}_x^\circ = \tx{Res}_{O_E/O_F}\mf{H}_x^\circ$ induces an isomorphism $\ms{G}_x^\circ = \tx{Res}_{k_E/k_F}\ms{H}_x^\circ$.
\end{lem}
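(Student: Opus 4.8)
The plan is to deduce the statement about reductive quotients of special fibers from the already-established isomorphism $\mf{G}_x^\circ = \tx{Res}_{O_E/O_F}\mf{H}_x^\circ$ of smooth affine $O_F$-group schemes. The key point is to understand how Weil restriction along the finite flat (but possibly ramified) extension $O_E/O_F$ interacts with passage to the special fiber and with the operation of killing the unipotent radical.

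First I would pass to the special fibers. Weil restriction along $O_E/O_F$ does not in general commute with base change, but there is a canonical map relating $(\tx{Res}_{O_E/O_F}\mf{H}_x^\circ)_{k_F}$ to $\tx{Res}_{(O_E\otimes_{O_F}k_F)/k_F}((\mf{H}_x^\circ)_{O_E\otimes_{O_F}k_F})$; since $\mf{H}_x^\circ$ is smooth and affine this map is an isomorphism by \cite[Proposition A.5.2, A.5.7]{CGP15} (formation of Weil restriction of a smooth affine scheme commutes with arbitrary base change on the base in this flat-finite setting, or more precisely one uses that $O_E$ is $O_F$-finite free). Now $O_E\otimes_{O_F}k_F = O_E/\mf{p}_F O_E$ is an Artinian local $k_F$-algebra with residue field $k_E$, and its reduction is $k_E$; write $A := O_E/\mf{p}_F O_E$. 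So the special fiber $(\mf{G}_x^\circ)_{k_F}$ is identified with $\tx{Res}_{A/k_F}(\mf{H}_x^\circ)_A$, where $(\mf{H}_x^\circ)_A$ is the base change of the $O_E$-group scheme $\mf{H}_x^\circ$ along $O_E \to A$.

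Next I would analyze the unipotent radical. The group scheme $(\mf{H}_x^\circ)_A$ over the Artinian ring $A$ sits over the nilpotent thickening $\tx{Spec}\,k_E \hookrightarrow \tx{Spec}\,A$, and its special fiber over $k_E$ is $(\mf{H}_x^\circ)_{k_E}$, whose reductive quotient is $\ms{H}_x^\circ$ by definition. The kernel $N := \ker\big((\mf{H}_x^\circ)_{k_E} \to \ms{H}_x^\circ\big)$ is the unipotent radical, a smooth connected unipotent $k_E$-group. I claim that the unipotent radical of $(\mf{G}_x^\circ)_{k_F} = \tx{Res}_{A/k_F}(\mf{H}_x^\circ)_A$ is exactly $\tx{Res}_{A/k_F}(\mf{H}_x^\circ)_A$ modulo the preimage of $\tx{Res}_{k_E/k_F}\ms{H}_x^\circ$. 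Two contributions must be shown unipotent: (i) the Weil restriction $\tx{Res}_{A/k_F}$ of any group scheme along the nilpotent thickening $k_E \hookrightarrow A$ has ``extra'' kernel relative to $\tx{Res}_{k_E/k_F}$ of the special fiber, and this kernel is unipotent because it is built from successive extensions by the additive group of the nilpotent ideal tensored with the Lie algebra (the relevant vanishing is \cite[Proposition A.5.12, A.5.11]{CGP15} on the unipotence of the kernel of $\tx{Res}_{A/k_F}(X_A) \to \tx{Res}_{k_E/k_F}(X_{k_E})$ for $A$ Artinian local with residue field $k_E$); (ii) $\tx{Res}_{k_E/k_F}N$ is smooth connected unipotent since Weil restriction along the separable (indeed finite) extension $k_E/k_F$ preserves smoothness, connectedness, and unipotence of smooth connected unipotent groups (for unipotence over a possibly imperfect field one invokes \cite[Proposition A.5.12]{CGP15} or notes that $\tx{Res}_{k_E/k_F}\mb{G}_a$ is a successive extension of copies of $\mb{G}_a$). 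Conversely $\tx{Res}_{k_E/k_F}\ms{H}_x^\circ$ is connected reductive by \cite[Proposition A.5.14]{CGP15}. Since an extension of a connected reductive group by a smooth connected unipotent normal subgroup has that subgroup as its unipotent radical, the reductive quotient of $(\mf{G}_x^\circ)_{k_F}$ is $\tx{Res}_{k_E/k_F}\ms{H}_x^\circ$, which is the desired identification $\ms{G}_x^\circ = \tx{Res}_{k_E/k_F}\ms{H}_x^\circ$.

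The main obstacle I anticipate is step (i): controlling the kernel of $\tx{Res}_{A/k_F}(\mf{H}_x^\circ)_A \to \tx{Res}_{k_E/k_F}(\mf{H}_x^\circ)_{k_E}$ when $E/F$ is wildly ramified, so that $A = O_E/\mf{p}_F O_E$ can be a genuinely non-reduced (and the residue extension $k_E/k_F$ inseparable) Artinian algebra. One must check that this kernel is unipotent over the possibly imperfect field $k_F$; the cleanest route is to filter $A$ by powers of its maximal ideal and identify the successive graded pieces of the kernel with Weil restrictions along $k_E/k_F$ of vector groups $\tx{Lie}(\mf{H}_x^\circ)_{k_E} \otimes_{k_E} \mf{m}^i/\mf{m}^{i+1}$, invoking the smoothness of $\mf{H}_x^\circ$ to get the deformation-theoretic description, and then using that $\tx{Res}_{k_E/k_F}$ of a vector group is a (split, iterated) unipotent group. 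I would cite \cite[\S A.5]{CGP15} for these facts rather than reprove them. Everything else is formal manipulation with Weil restriction and the defining property of parahoric reductive quotients.
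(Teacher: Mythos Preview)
Your proposal is correct and follows essentially the same route as the paper's proof: both factor the surjection $\tx{Res}_{A/k_F}(\mf{H}_x^\circ)_A \twoheadrightarrow \tx{Res}_{k_E/k_F}\ms{H}_x^\circ$ through the intermediate group $\tx{Res}_{k_E/k_F}(\mf{H}_x^\circ)_{k_E}$, show that each kernel is smooth connected unipotent (the first via \cite[Proposition A.5.12]{CGP15} applied to the filtration of $A$ by powers of its maximal ideal, the second via \cite[Propositions A.5.2(4), A.5.14(3)]{CGP15}), and conclude since $\tx{Res}_{k_E/k_F}\ms{H}_x^\circ$ is connected reductive. One small remark: your anticipated difficulty with $k_E/k_F$ possibly being inseparable does not arise here, since $k_F$ is finite and hence perfect; the paper simply checks unipotence of $\tx{Res}_{k_E/k_F}U$ after base change to $k_E$, where it becomes $U^{[k_E:k_F]}$.
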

\begin{proof}
We apply base change to $k_F$ and use that Weil restriction of scalars commutes with base change to reduce to showing that the reductive quotient of $\tx{Res}_{A/k_F}\mf{H}_x^\circ$ is $\tx{Res}_{k_E/k_F}\ms{H}_x^\circ$, where $A = O_E \otimes_{O_F} k_F$ and we are now reusing the symbol $\mf{H}_x^\circ$ to denote the base-change of the original $\mf{H}_x^\circ$ to $A$. Note that $\mf{H}_x^\circ$ is still smooth connected affine and the reductive quotient of its special fiber is still $\ms{H}_x^\circ$.

Let $\mf{\bar H}_x^\circ$ denote the special fiber of $\mf{H}_x^\circ$. Reduction modulo the maximal ideal of $A$ gives a surjective morphism $\tx{Res}_{A/k}\mf{H}_x^\circ \to \tx{Res}_{k_E/k_F}\mf{\bar H}_x^\circ$ of $k_F$-groups with connected unipotent kernel (apply \cite[Proposition A.5.12]{CGP15} to successive powers of the maximal ideal of $A$).

The projection $\mf{\bar H}_x^\circ \to \ms{H}_x^\circ$ is a smooth surjective morphism of $k_E$-groups with connected unipotent kernel $U$. Applying $\tx{Res}_{k_E/k_F}$ to it gives a surjective morphism $\tx{Res}_{k_E/k_F}\mf{\bar H}_x^\circ \to \tx{Res}_{k_E/k_F}\ms{H}_x^\circ$ of $k_F$-groups with kernel given by the smooth affine $k_F$-group $\tx{Res}_{k_E/k_F}(U)$, see \cite[Proposition A.5.2(4) and Proposition A.5.14(3)]{CGP15}. This group is moreover connected and unipotent, for it is enough to check this over $k_E$, where it becomes $U^{[k_E:k_F]}$.
\end{proof}

\subsection{Absolutely special vertices} \label{app:absvert}

Let $G$ be a connected reductive group defined over $F$.

\begin{dfn} \label{dfn:abspec}
A point $x \in \mc{B}(G,F)$ is called \emph{absolutely special} if it is special in $\mc{B}(G,E)$ for every finite Galois extension $E/F$.	
\end{dfn}

Assume for a moment that $G$ is quasi-split. We recall some material due to Bruhat-Tits. A $\Gamma$-invariant pinning of $G$ provides a point in $\mc{B}(G,F)$ -- the Chevalley valuation corresponding to the pinning \cite[4.2.3]{BT2}. Fix such a point $o \in \mc{B}(G,F)$. For each $a \in R(A_T,G)_\tx{res}$ we have the sets $\Gamma_a$ and $\Gamma_a'$ defined in \cite[4.2.20]{BT2}. In the special case at hand where the valuation is discrete with image $\Z$ and the residual characteristic is not $2$, they are given as follows. Let $a \in R(A_T,G)_\tx{res}$ be a non-divisible root and let $\alpha \in R(T,G)$ be a lift. If $a$ is non-multipliable then $\Gamma_a=\Gamma_a'=e_\alpha^{-1}\Z$. If $a$ is multipliable then $\Gamma_a=\frac{1}{2}e_\alpha^{-1}\Z$, $\Gamma_a'=e_\alpha^{-1}\Z$, $\Gamma_{2a}=\Gamma_{2a}'=2e_\alpha^{-1}(\Z+\frac{1}{2})$. Here $e_\alpha$ is the ramification index of the extension $F_\alpha/F$. Note that the second case occurs only if $\alpha$ belongs to a component of type $A_{2n}$ and a power of the action of tame inertia preserves this component and maps $\alpha$ to $\alpha'$ s.t. $\beta=\alpha+\alpha'$ is also a root, in which case $2a$ is the image of $\beta$ and $e_{\beta}=\frac{1}{2}e_\alpha$. A point $x \in \mc{A}(T,F)$ is called \emph{special} if $\<a,x-o\> \in \Gamma_a$ for all non-divisible $a \in R(A_T,G)_\tx{res}$ \cite[4.6.15]{BT2}, \cite[6.2.13]{BT1}. It suffices to check this condition for the simple roots $a$ \cite[6.2.14]{BT1} corresponding to some choice of positive roots.

It is thus clear that the Chevalley valuation $o$ is special, and in fact absolutely special. The next lemma shows that the absolutely special points are precisely the Chevalley valuations.

\begin{lem} \label{lem:abspec} The following are equivalent
\begin{enumerate}
	\item The point $x \in \mc{A}(T,F)$ is absolutely special.
	\item $\<a,x-o\> \in \Gamma_a'$ for all simple $a \in R(A_T,G)$ relative to a Borel subgroup $T \subset B$ defined over $F$.
	\item $x=t\cdot o$ for some $t \in T_\tx{ad}(F)$.
	\item $x$ is a Chevalley valuation.
\end{enumerate}
\end{lem}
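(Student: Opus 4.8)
The plan is to prove the cyclic chain of implications $(1)\Rightarrow(2)\Rightarrow(3)\Rightarrow(4)\Rightarrow(1)$, working entirely inside the apartment $\mc{A}(T,F)$ and using the Bruhat--Tits description of special points recalled just before the lemma. Throughout, $o$ denotes a fixed Chevalley valuation attached to a $\Gamma$-invariant pinning, and I abbreviate $v(a)=\<a,x-o\>$ for $a\in R(A_T,G)_\tx{res}$.

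For $(1)\Rightarrow(2)$: suppose $x$ is absolutely special. It is in particular special over $F$, so $v(a)\in\Gamma_a$ for all non-divisible $a$; I must improve $\Gamma_a$ to $\Gamma_a'$, which only differs from $\Gamma_a$ when $a$ is multipliable, in which case $\Gamma_a=\tfrac12 e_\alpha^{-1}\Z$ and $\Gamma_a'=e_\alpha^{-1}\Z$. The multipliable case arises exactly when $\alpha$ lies in a type $A_{2n}$ component on which a power of tame inertia acts by an order-two diagram automorphism; passing to the finite extension $E/F$ that splits this component (or at least kills that part of the inertia action) replaces the relative root system near $a$ by an absolute $A_{2n}$ one and the relevant set $\Gamma_a$ over $E$ becomes $e_{\alpha,E}^{-1}\Z$ with $e_{\alpha,E}=\tfrac12 e_\alpha$ after rescaling — the upshot being that $x$ special over $E$ forces $v(a)\in e_\alpha^{-1}\Z=\Gamma_a'$. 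I should state this carefully, checking the normalization of the valuation under the base change $F\subset E$ (the affine apartment is rescaled by the ramification index), and also note it suffices to check simple $a$ by \cite[6.2.14]{BT1}.

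For $(2)\Rightarrow(3)$: the condition $v(a)\in\Gamma_a'=e_\alpha^{-1}\Z$ for all simple $a$ says precisely that the vector $x-o$ pairs integrally (in the relevant normalization) against all simple affine roots' gradient parts, i.e. $x-o$ lies in the coweight lattice of the relative root datum — concretely in $X_*(A_{T_\tx{ad}})\otimes$ (appropriate fractional ideal), which is exactly the image of $T_\tx{ad}(F)$ acting by translations on $\mc{A}(T,F)$. Here I would invoke the standard fact (Bruhat--Tits) that $T_\tx{ad}(F)$ acts on the apartment by translation through the map $T_\tx{ad}(F)\to X_*(A_{T_\tx{ad}})\otimes\Z$ (or its value-group version) and that the cocharacters pairing into $\Gamma_a'$ for all simple $a$ are exactly those coming from $T_\tx{ad}$ rather than a finer lattice. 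So $x=t\cdot o$ for some $t\in T_\tx{ad}(F)$.

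For $(3)\Rightarrow(4)$: if $x=t\cdot o$ with $t\in T_\tx{ad}(F)$, then translating the pinning by $t$ (i.e. conjugating the Chevalley system by a representative of $t$) yields a new $\Gamma$-invariant pinning whose associated Chevalley valuation is $x$; this is the functoriality of the construction in \cite[4.2.3]{BT2} under the $T_\tx{ad}(F)$-action. Finally $(4)\Rightarrow(1)$: a Chevalley valuation is special in $\mc{B}(G,E)$ for every finite Galois $E/F$, because a $\Gamma$-invariant pinning of $G$ over $F$ is a fortiori a $\tx{Gal}(\bar F/E)$-invariant pinning, hence gives the Chevalley valuation over $E$, which is special over $E$ by the discussion preceding the lemma (it satisfies $\<a,x-o\>=0\in\Gamma_a$ for all $a$). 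The main obstacle I anticipate is the bookkeeping in $(1)\Rightarrow(2)$: one must track how the sets $\Gamma_a,\Gamma_a'$ and the normalization of the valuation transform under a ramified base change $F\subset E$, precisely in the multipliable ($A_{2n}$) case where $\Gamma_a\ne\Gamma_a'$, and confirm that speciality over a suitable single $E$ already forces the stronger integrality $v(a)\in\Gamma_a'$ over $F$. Everything else is essentially a translation between the lattice-theoretic and group-theoretic descriptions of points in the apartment.
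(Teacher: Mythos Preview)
Your proposal is correct and follows the same cyclic chain $(1)\Rightarrow(2)\Rightarrow(3)\Rightarrow(4)\Rightarrow(1)$ as the paper. Two small points where the paper's execution is cleaner than what you sketch: for $(1)\Rightarrow(2)$, rather than passing to an extension splitting the whole $A_{2n}$ component, the paper simply base-changes to $F_\alpha$ (the field of definition of a lift $\alpha$ of $a$), over which $\alpha$ is an absolute root and hence non-multipliable, so specialness there immediately gives $\<\alpha,x-o\>\in e_\alpha^{-1}\Z=\Gamma_a'$ and the normalization bookkeeping you anticipate disappears; for $(2)\Rightarrow(3)$, the paper makes your lattice argument concrete via the isomorphism $T_\tx{ad}\cong\prod_{\alpha\in\Delta/\Gamma}\tx{Res}_{F_\alpha/F}\mb{G}_m$, $t\mapsto(\alpha(t))_\alpha$, choosing $t_\alpha\in F_\alpha^\times$ with $\tx{val}(t_\alpha)=\<a,x-o\>$, which is possible exactly because $\<a,x-o\>\in\Gamma_a'=e_\alpha^{-1}\Z$ lies in the value group of $F_\alpha$.
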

\begin{proof}
$1 \Rightarrow 2$: The point $x$ remains special in $\mc{B}(G,F_\alpha)$, thus $\<\alpha,x-o\> \in \Gamma^{F_\alpha}_\alpha$, where $\Gamma^{F_\alpha}_\alpha$ is the set $\Gamma_\alpha$ introduced above, but relative to the base field $F_\alpha$ and the valuation on $F_\alpha$ that extends the valuation on $F$. For the valuation on $F_\alpha$ with image $\Z$ we would have $\Gamma^{F_\alpha}_\alpha=\Z$, because $\alpha$ is now non-multipliable, but upon rescaling the valuation so that its image is $e_\alpha^{-1}\Z$ the set $\Gamma^{F_\alpha}_\alpha$ also rescales and becomes $e_\alpha^{-1}\Z$ and thus equal to $\Gamma_a'$.

$2 \Rightarrow 3$: The set of absolute simple roots $\Delta \subset R(T,G)$ provides an isomorphism
\[ T_\tx{ad} \to \prod_{\alpha \in \Delta/\Gamma} \tx{Res}_{F_\alpha/F}\mb{G}_m, \quad t \mapsto (\alpha(t))_\alpha. \]
Choose $t_\alpha \in F_\alpha^\times$ with $\tx{val}(t_\alpha)=\<a,x-o\>$ and $t_{\sigma(\alpha)}=\sigma(t_\alpha)$ for all $\sigma \in \Gamma$. The collection $(t_\alpha)$ is an $F$-point of the right-hand side and determines $t \in T_\tx{ad}(F)$ with $\alpha(t)=\<a,x-o\>$ for all $\alpha \in \Delta$ with image $a \in R(A_T,G)$. Then $x=t \cdot o$.

$3 \Rightarrow 4$: Immediate.

$4 \Rightarrow 1$: Immediate.
\end{proof}

\begin{lem} \label{lem:absqs}
If $\mc{B}(G,F)$ has an absolutely special point, then $G$ is quasi-split.
\end{lem}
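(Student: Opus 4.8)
The statement to prove is Lemma~\ref{lem:absqs}: if $\mc{B}(G,F)$ has an absolutely special point, then $G$ is quasi-split. The natural strategy is to pass to a maximal unramified extension, use unramified descent for quasi-splitness, and exploit the Kottwitz/Bruhat--Tits description of the relative local Dynkin diagram together with the characterization of absolutely special points given in Lemma~\ref{lem:abspec}. First I would reduce to the case $F = F^u$, i.e.\ $F$ is already maximally unramified: if $x \in \mc{B}(G,F)$ is absolutely special, then its image in $\mc{B}(G,F^u) = \mc{B}(G\otimes_F F^u, F^u)$ is again absolutely special (special over every finite extension of $F^u$, since such extensions are cofinal among finite extensions of $F$), and $G$ is quasi-split over $F$ if and only if $G\otimes_F F^u$ is quasi-split over $F^u$ and $G$ has an $F$-rational Borel subgroup — but over a local field the latter is automatic once $G_{F^u}$ is quasi-split, because $H^1(F, B) \to H^1(F,G)$ surjects and one can twist. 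Actually the cleanest route is: over $F^u$ every connected reductive group is quasi-split (Lang's theorem / the fact that the residue field is algebraically closed, so $G_{F^u}$ is an inner form of a quasi-split group and has trivial $H^1$ of the adjoint group's cohomology controlling inner forms is not quite it — rather one uses that over a field of dimension $\le 1$, in the sense of Serre, every connected reductive group is quasi-split, and $F^u$ has cohomological dimension $1$). So $G_{F^u}$ is automatically quasi-split, and the real content is to descend an $F$-rational Borel.

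The mechanism for descent is the action of $\mathrm{Gal}(F^u/F) = \langle \mathrm{Frob}\rangle$ on the building and on the apartment of a maximally split maximal torus. Concretely: work over $F^u$, where $G_{F^u}$ is quasi-split; choose a maximal split torus $A$ defined over $F^u$ (unique up to conjugacy) and a Borel $B \supset A$. The absolutely special point $x$ lies, after conjugating, in the apartment $\mc{A}(A, F^u)$, and by Lemma~\ref{lem:abspec} (applied over $F^u$, where the quasi-split structure is available) $x$ is a Chevalley valuation, i.e.\ $x = t\cdot o$ for the Chevalley point $o$ attached to a pinning and some $t \in T_{\mathrm{ad}}(F^u)$. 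Now $x$ is $F$-rational, hence $\mathrm{Frob}$-fixed. The key step is to show that $\mathrm{Frob}$ must then stabilize a Borel subgroup of $G_{F^u}$ that is itself $\mathrm{Frob}$-stable, which descends to an $F$-rational Borel of $G$. The point is that $\mathrm{Frob}$ permutes the apartments and Borels; since it fixes the vertex $x$, and $x$ is ``maximally symmetric'' (a Chevalley/absolutely special point — its stabilizer is as large as possible, a hyperspecial-type maximal parahoric when $G$ is unramified, and in general the parahoric whose reductive quotient has the full relative root system), the stabilizer $G(F^u)_x$ is $\mathrm{Frob}$-stable, and within its reductive quotient $\ms{G}_x^\circ$ over $\bar k_F$ — a connected reductive group with a $\mathrm{Frob}$-action — one finds a $\mathrm{Frob}$-stable Borel by Lang's theorem (the variety of Borels of $\ms{G}_x^\circ$ has a $\mathrm{Frob}$-fixed point). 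Lifting this Borel of $\ms{G}_x^\circ$ through $G(F^u)_x \to \ms{G}_x^\circ(\bar k_F)$ to a parahoric-level object and then to a Borel of $G_{F^u}$ — using that an absolutely special vertex has the property that the relative roots of $\ms{G}_x^\circ$ account for a basis of the affine root system, so Borels of $\ms{G}_x^\circ$ are in bijection with the chambers adjacent to $x$, hence with Borels of $G_{F^u}$ containing a split torus through $x$ — produces a $\mathrm{Frob}$-stable Borel of $G_{F^u}$, which by Galois descent is defined over $F$. Hence $G$ is quasi-split over $F$.

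I expect the main obstacle to be making precise and rigorous the passage ``absolutely special vertex $\Rightarrow$ its reductive quotient sees the full relative root system $\Rightarrow$ Borels of $\ms{G}_x^\circ$ lift to Borels of $G$''. This is where the strength of ``absolutely special'' (as opposed to merely ``special'') is used: one needs that over $F^u$ the point $x$ is a Chevalley valuation, so that the attached parahoric is (up to the maximal torus) the full Chevalley group scheme and $\ms{G}_x^\circ$ has relative rank equal to the absolute rank of $G$, forcing its Borels to correspond bijectively to the Borels of $G_{F^u}$ containing a chosen maximally split torus with $x$ in its apartment. I would extract this from Lemma~\ref{lem:abspec}(4) together with the Bruhat--Tits description of parahoric group schemes at Chevalley points (\cite[4.6]{BT2}), where $\mf{G}_o^\circ$ is literally built from a pinned Chevalley model; its special fiber's reductive quotient is then the pinned Chevalley group over $\bar k_F$, whose Borels manifestly lift. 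The Galois-descent bookkeeping for the $\mathrm{Frob}$-action (and, if one does not first reduce to $F^u$, the full $\Gamma$-action, using that the $\Gamma$-action on $\mc{B}(G,F^u)$ and on the set of Borels is continuous and compatible) is then routine but must be stated carefully; I would phrase the final descent via Borel--Tits, noting that a $\Gamma$-stable Borel of $G_{\bar F}$ defined over $F^u$ and $\mathrm{Frob}$-stable is defined over $F$, whence $G$ is quasi-split.
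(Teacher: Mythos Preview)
Your overall strategy agrees with the paper's: pass to an unramified extension where $G$ is automatically quasi-split, invoke Lemma~\ref{lem:abspec} to identify $x$ as a Chevalley valuation there, and then run a Lang-type argument at the parahoric attached to $x$ to descend a Borel (or pinning) back to $F$. The paper carries this out somewhat differently from your plan, and its execution avoids a genuine difficulty in yours.

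Concretely, the paper first reduces to $G$ adjoint, then chooses a maximally unramified maximal torus $T$ defined over $F$ whose maximal unramified subtorus $T'$ splits over a finite unramified $F'/F$; this guarantees both that $G_{F'}$ is quasi-split with $T$ as a Levi and that the relevant apartment is Frobenius-stable. Lemma~\ref{lem:abspec} over $F'$ then yields an $F'$-pinning $(T,B,\{X_\alpha\})$ with Chevalley point $x$. Since $\sigma$ fixes $x$, the pinning $\sigma(T,B,\{X_\alpha\})$ also has Chevalley point $x$, so the unique $g$ conjugating one to the other lies in $G(F')_x$. Adjointness and specialness give $G(F')_x = G(F')_{x,0}$, and Lang's theorem for this parahoric produces $h$ with $h^{-1}\sigma(h)=g$; then $h(T,B,\{X_\alpha\})$ is an $F$-pinning.

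Your route through a Frobenius-fixed Borel of $\ms{G}_x^\circ$ and a subsequent lift to a Borel of $G_{F^u}$ has a gap. You choose a maximal $F^u$-split torus $A$ only over $F^u$; for the bijection between Borels of $\ms{G}_x^\circ$ containing $\ms{A}$ and Borels of $G_{F^u}$ containing $A$ to be Frobenius-equivariant you need $A$ defined over $F$ (this is exactly what the paper arranges via $T'$). More seriously, even with $A$ defined over $F$, the reduction $\ms{A}$ is $k_F$-split only when $A$ is $F$-split, which is what you are trying to prove; so there is no reason a Frobenius-fixed Borel of $\ms{G}_x^\circ$ should contain $\ms{A}$, and hence no way to push it through your bijection. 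Nor can you appeal to a smooth proper scheme of Borels over $O_{F^u}$, since $\mf{G}_x^\circ$ is a reductive $O_{F^u}$-group scheme only when $x$ is hyperspecial. The paper's device of working with pinnings (rather than Borels of the reductive quotient) and applying Lang directly to the parahoric group sidesteps all of this; you should redirect your argument along those lines, including the reduction to the adjoint group so that the stabilizer of $x$ coincides with the parahoric.
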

\begin{proof}
Replacing $G$ by its adjoint group effects neither the assumption nor the conclusion of the lemma, so we assume that $G$ is adjoint. Let $T \subset G$ be a maximally unramified torus defined over $F$ such that its split subtorus $A_T$ is a maximal split torus. Such $T$ exists due to \cite[Corollary 5.1.12]{BT2}. Let $T' \subset T$ be the maximal unramified subtorus and let $F'/F$ be the splitting extension of $T'$, a finite unramified extension. The apartment $\mc{A}(A_T,F) \subset \mc{B}(G,F)$ is equal to the Frobenius-fixed points of the apartment $\mc{A}(T',F')$ of $\mc{B}(G,F')$. All apartments of $\mc{B}(G,F)$ are of this form. Therefore we may assume that $x \in \mc{A}(A_T,F)$. 

Now $T$ is a minimal Levi subgroup of the quasi-split group $G \times F^u$. Since $T=\tx{Cent}(T',G)$ and $T' \times F'$ is split, we see that $T$ is a minimal Levi subgroup of $G \times F'$. Thus $G \times F'$ is quasi-split.

Since $x$ is a absolutely special point of $\mc{B}(G,F')$, Lemma \ref{lem:abspec} applied to $G \times F'$ shows that $x$ is a Chevalley valuation. Thus there exists an $F'$-pinning $(T,B,\{X_\alpha\})$ of $G \times F'$ giving rise to $x$. Let $\sigma$ denote Frobenius. There exists a unique $g \in G(F')$ such that $\tx{Ad}(g)\sigma(T,B,\{X_\alpha\})=(T,B,\{X_\alpha\})$. Since $x$ is $\sigma$-fixed, both pinnings $\sigma(T,B,\{X_\alpha\})$ and $(T,B,\{X_\alpha\})$ induce the same Chevalley point $x$. This implies $g \in G(F')_x$. Since $x$ is special the stabilizer $G(F')_x$ equals the parahoric $G(F')_{x,0}$: By \cite[Proposition 4.6.28]{BT2} we have $G(F')_x=N(T,G)(F')_x\cdot G(F')_{x,0}$; since $x$ is special this product equals $T(F')_b \cdot G(F')_{x,0}$; since $G$ is adjoint we have $T(F')_b=T(F')_0 \subset G(F')_{x,0}$. The triviality of $H^1(\<\sigma\>,G(F')_{x,0})$ implies the existence of $h \in G(F')_{x,0}$ such that $h^{-1}\sigma(h)=g$. Then $h(T,B,\{X_\alpha\})$ is another $F'$-pinning of $G \times F'$ giving rise to $x$, which is now moreover fixed by $\sigma$, and hence is an $F$-pinning of $G$.
\end{proof}

\begin{cor} The simple roots in $R(A_T,G)_\tx{res}$ are contained (and thus give a set of simple roots) in $R(\ms{A_T}^\circ,\ms{G}_x^\circ)$ if and only if $x$ is absolutely special.
\end{cor}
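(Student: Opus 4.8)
The plan is to reduce the statement to the characterization of absolutely special points provided by Lemma \ref{lem:abspec} and Lemma \ref{lem:absqs}, together with the standard description of $R(\ms{A_T}^\circ,\ms{G}_x^\circ)$ coming from Bruhat--Tits theory. First I would observe that the containment ``simple roots of $R(A_T,G)_\tx{res}$ lie in $R(\ms{A_T}^\circ,\ms{G}_x^\circ)$'' forces $R(\ms{A_T}^\circ,\ms{G}_x^\circ)$ to have full rank, hence $\ms{A_T}^\circ$ is a maximal split torus of $\ms{G}_x^\circ$ and $\ms{G}_x^\circ$ is a form of the group with root system $R(A_T,G)_\tx{res}$; in particular this containment can only happen when $G$ is quasi-split (by Lemma \ref{lem:absqs}, once one knows $x$ is then absolutely special, or more directly because a full-rank $\ms{A_T}^\circ$ forces $\mc{B}(G,F)$ to have a special vertex with the ``right'' reductive quotient). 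So I would first treat the quasi-split case and handle the general reduction at the end.

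Assume $G$ quasi-split, fix a Chevalley base point $o$ and a Borel $T \subset B$ over $F$, and recall from \cite[4.6.x]{BT2} the description of $R(\ms{A_T}^\circ,\ms{G}_x^\circ)$: a non-divisible $a \in R(A_T,G)_\tx{res}$ lies in $R(\ms{A_T}^\circ,\ms{G}_x^\circ)$ precisely when $\langle a, x-o\rangle \in \Gamma_a'$ (and, in the multipliable case, $2a$ contributes when $\langle 2a,x-o\rangle \in \Gamma_{2a}'$), while the non-reduced root $2a$ is always accounted for by its halved copy once $a$ is present. The key step is then: the simple roots of $R(A_T,G)_\tx{res}$ all satisfy $\langle a,x-o\rangle \in \Gamma_a'$ if and only if $x$ is absolutely special. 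But this is exactly the equivalence of (1) and (2) in Lemma \ref{lem:abspec}. Thus in the quasi-split case the corollary is immediate once I translate ``simple roots of $R(A_T,G)_\tx{res}$ are contained in $R(\ms{A_T}^\circ,\ms{G}_x^\circ)$'' into the membership conditions $\langle a,x-o\rangle \in \Gamma_a'$ over the simple $a$. I would be careful here that ``give a set of simple roots'' is automatic once the containment holds, since $R(\ms{A_T}^\circ,\ms{G}_x^\circ) \subseteq R(A_T,G)_\tx{res}$ always (the reductive quotient's roots are a subset of the relative roots) and a subset of a root system containing a set of simple roots of the ambient system must be the whole system.

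For the general case, I would reduce to quasi-split as in the proof of Lemma \ref{lem:absqs}: replace $G$ by its adjoint group (which changes neither side of the equivalence, since both $R(A_T,G)_\tx{res}$ and $R(\ms{A_T}^\circ,\ms{G}_x^\circ)$ and the notion of absolute speciality are insensitive to isogeny and to the central torus), choose $T$ maximally unramified with $A_T$ maximal split, pass to the splitting extension $F'/F$ of $T'$ which is unramified so that $\mc{A}(A_T,F)$ is the Frobenius-fixed apartment inside $\mc{A}(T',F')$ and the reductive quotients base-change compatibly. If the containment of simple roots holds, then $\ms{A_T}^\circ$ is maximal split in $\ms{G}_x^\circ$, which over $k_{F'}$ forces $G \times F'$ quasi-split (as in Lemma \ref{lem:absqs}), and then the quasi-split case applied over $F'$ gives that $x$ is absolutely special over $F'$, hence — running the Frobenius-descent argument of Lemma \ref{lem:absqs} — that $G$ itself is quasi-split and $x$ is a Chevalley valuation, i.e. absolutely special. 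Conversely, if $x$ is absolutely special then $G$ is quasi-split by Lemma \ref{lem:absqs} and we are back in the first case. The main obstacle I anticipate is bookkeeping the non-reduced (type $A_{2n}$, multipliable) roots: I need to make sure the membership condition being checked is the one for the non-divisible relative simple root $a$ with its set $\Gamma_a'$ (not $\Gamma_a$), matching precisely condition (2) of Lemma \ref{lem:abspec}, and that the presence of $a$ in $R(\ms{A_T}^\circ,\ms{G}_x^\circ)$ is governed by $\Gamma_a'$ rather than $\Gamma_a$ — this is the single subtle point distinguishing ``special'' from ``absolutely special'' and is exactly why the corollary is true.
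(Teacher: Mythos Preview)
Your core argument in the quasi-split case is correct and matches the paper's one-line proof exactly: cite the Bruhat--Tits description of $R(\ms{A_T}^\circ,\ms{G}_x^\circ)$ as the set of $a \in R(A_T,G)_\tx{res}$ with $\langle a,x-o\rangle \in \Gamma_a'$, and observe that this condition for all simple $a$ is precisely condition (2) of Lemma~\ref{lem:abspec}, equivalent to (1).

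The extra layer you add---reducing the general case to the quasi-split case via Lemma~\ref{lem:absqs} and a Frobenius-descent argument---is unnecessary. The corollary is stated in the quasi-split context: the paper introduced the standing assumption ``Assume for a moment that $G$ is quasi-split'' before Lemma~\ref{lem:abspec}, and the proof of the corollary freely uses the Chevalley point $o$, which only exists under that assumption. (Lemma~\ref{lem:absqs} does step outside that assumption, but the corollary returns to it.) So you may simply drop everything after your quasi-split paragraph; your observation about the $\Gamma_a'$ versus $\Gamma_a$ distinction being exactly what separates ``absolutely special'' from ``special'' is the right point to highlight.
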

\begin{proof} This follows immediately from the description \cite[4.6.12+4.6.23]{BT2} of $R(\ms{A_T}^\circ,\ms{G}_x^\circ)$ as the subset of $R(A_T,G)_\tx{res}$ consisting of those $a$ for which $\<a,x-o\> \in \Gamma_a'$.
\end{proof}

\begin{rem}
In \cite{KalRSP} we introduced the notion of a superspecial point. We recall that $x \in \mc{B}(G,F)$ is called \emph{superspecial} if it is special in $\mc{B}(G,F')$ for all finite \emph{unramified} extensions $F'/F$. When $G$ is unramified, then the notions of absolutely special, superspecial, and hyperspecial, all agree. When $G$ is ramified, hyperspecial vertices do not exist, and the notions of absolutely special and superspecial are a replacement. Clearly an absolutely special point is superspecial. The converse however is false, as the following example shows.
\end{rem}

\begin{exa} 

Consider a ramified unitary group in 3 variables. Let $T$ be a maximally split maximal torus and $A \subset T$ the maximal split subtorus. Then $X^*(A)=\Z$. We have $X^*(T)=\Z^3$ with inertia acting by $(a,b,c) \mapsto (-c,-b,-a)$. Then $X^*(A)$ is the torsion-free quotient of the coinvariants of this action, so we have the isomorphism $X^*(A) \to \Z$ given by $(a,b,c) \mapsto a-c$. Let $e \in X^*(A)$ be the preimage of $1 \in \Z$. The relative root system is of type $BC_1$ and is given by $\{e,2e,-e,-2e\} \subset X^*(A)$. The root $e$ is the restriction of both $e_1-e_2$ and $e_2-e_3$ in $\Z^3=X^*(T)$. The root $2e$ is the restriction of $e_1-e_3$.
	
We have $\Gamma_e=\frac{1}{4}\Z$, $\Gamma_e'=\frac{1}{2}\Z$, and $\Gamma_{2e}=\Gamma_{2e}'=\frac{1}{2}+\Z$. Let $o \in \mc{A}(T,F)$ be the absolutely special point given by an $F$-pinning. Identify $\mc{A}(T,F)$ with $X_*(A) \otimes \R$ by sending $o$ to $0$. Consider $x \in X_*(A) \otimes \R = \mc{A}(T,F)$ determined by $e(x)=\frac{1}{4}$. Then $x$ is a vertex, because it is a wall of an affine root with gradient $2e$. It is also special. However, it is not absolutely special. Indeed, the splitting field $E/F$ of the unitary group is a ramified quadratic extension and for all absolute roots $\alpha$ we have $\Gamma_\alpha=\Gamma_\alpha'=\frac{1}{2}\Z$ with respect to the normalized valuation on $F$, so $\<e_1-e_2,x\> = \frac{1}{4} \notin \Gamma_{e_1-e_2}$.

Slightly more generally one can consider a ramified unitary group in $2n+1$ variables. There are two special vertices, with connected reductive quotients $\tx{SO}(2n+1)$ and $\tx{Sp}(2n)$, respectively. The first is absolutely special, while the second is superspecial but not absolutely special. There are also other vertices, which are non-special, and have connected reductive quotient $\tx{SO}(2a+1) \times \tx{Sp}(2b)$ with $a+b=n$.
\end{exa}

It turns out that the odd ramified unitary groups provide the only examples of superspecial vertices that are not absolutely special, as the following argument due to Gopal Prasad shows.

\begin{pro}[Gopal Prasad] \label{pro:supergp}
Let $G$ be an absolutely almost simple group defined over $F$ that does not split over $F^u$. If $G$ is not of type $A_{2n}$ every superspecial vertex is absolutely special.
\end{pro}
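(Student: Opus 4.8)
The plan is to push everything up to the maximal unramified extension $F^u$, over which $G$ becomes quasi-split, and then to exploit the fact that for $G$ absolutely almost simple and not of type $A_{2n}$ there are no multipliable relative roots, so that the sets $\Gamma_a$ and $\Gamma_a'$ attached to each relative root coincide; at that point the condition defining ``special'' and the condition of Lemma \ref{lem:abspec} defining ``absolutely special'' become literally the same.

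First I would record the standard fact that $G\times F^u$ is quasi-split (Steinberg's theorem, since $F^u$ has cohomological dimension $\le 1$), fix a maximal torus $T\subset G\times F^u$ defined over $F^u$ whose maximal split subtorus $A_T$ is a maximal split torus and whose apartment contains the given vertex $x$, and fix an origin $o\in\mc{A}(A_T,F^u)$ given by a Chevalley valuation attached to an $F^u$-pinning (available by quasi-splitness). Since $x$ is superspecial, it is special in $\mc{B}(G,F')$ for every finite unramified $F'/F$; letting $F'$ grow, the relative root system and the attached sets $\Gamma_a^{F'}$ stabilize to those of $G\times F^u$, so $\<a,x-o\>\in\Gamma_a^{F^u}$ for every relative root $a$ of $G\times F^u$, and it suffices to test this on the simple relative roots.

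Now the crux. Because $G$ is absolutely almost simple and not of type $A_{2n}$, no relative root of $G\times F^u$ is multipliable: as recalled before Lemma \ref{lem:abspec}, a multipliable relative root forces a component of type $A_{2n}$ on which a power of tame inertia acts suitably. Hence $\Gamma_a^{F^u}=\Gamma_a'^{F^u}$ for every relative root $a$, and the previous paragraph gives $\<a,x-o\>\in\Gamma_a'^{F^u}$ for all simple relative roots $a$. By Lemma \ref{lem:abspec} applied to the quasi-split group $G\times F^u$, this says exactly that $x$ is special in $\mc{B}(G,M)$ for every finite Galois extension $M/F^u$.

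It then remains to descend from $F^u$ to $F$: given a finite Galois extension $E/F$, the compositum $E\cdot F^u$ is a finite Galois extension of $F^u$, so $x$ is special in $\mc{B}(G,E\cdot F^u)$ by the previous step, and $E\cdot F^u/E$ is unramified, so $x$ is special in $\mc{B}(G,E)$ by insensitivity of specialness under unramified base change. Here this last point is itself elementary in our situation: over any field the relevant $\Gamma_a$ equals $e_\alpha^{-1}\Z$ where $e_\alpha$ is the ramification index of the field of definition of a lift $\alpha$, this ramification index does not change under unramified base change, and there are again no multipliable roots because $G\times E$ is absolutely almost simple and not of type $A_{2n}$. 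I expect the only thing requiring real care is precisely this bookkeeping with relative roots, base points, and valuation normalizations across the various field extensions; by contrast, the hypothesis that $G$ does not split over $F^u$ is inessential to the argument (if it does, every superspecial vertex is hyperspecial and the claim is trivial), and the whole role of excluding type $A_{2n}$ is to guarantee $\Gamma_a=\Gamma_a'$, which is exactly what fails for the odd ramified unitary groups.
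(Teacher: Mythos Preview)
Your argument is correct and takes a genuinely different route from the paper. The paper's proof is a case-by-case analysis: over $K=F^u$ the group is quasi-split with splitting field $L/K$, and one compares the semi-simple reductive quotients $\ms{G}_x^{\mathrm{ss}}$ and $\ms{H}_x^{\mathrm{ss}}$ of the parahorics at $x$ for $G$ and $H=G\times L$. The embedding $\ms{G}_x^{\mathrm{ss}}\hookrightarrow\ms{H}_x^{\mathrm{ss}}$ is then ruled out on dimension grounds for each of the ramified types $E_6^{(2)}$, $A_{2n+1}^{(2)}$, $D_n^{(2)}$, $D_4^{(3)}$, $D_4^{(6)}$ whenever $x$ fails to be special in $\mc{B}(G,L)$.

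Your approach is more uniform and more conceptual: you observe that the exclusion of type $A_{2n}$ is exactly the condition that no relative root (over any base field) is multipliable, so that $\Gamma_a=\Gamma_a'$ everywhere, and then the special condition and the criterion of Lemma~\ref{lem:abspec} for absolute specialness literally coincide over $F^u$. The descent step from $E\cdot F^u$ to $E$ is handled by the same observation together with the invariance of the ramification index $e_\alpha$ under unramified base change. This explains transparently why $A_{2n}$ is the only exception, whereas the paper's argument establishes the result but leaves that explanation implicit. The trade-off is that the paper's dimension argument is self-contained and concrete, while yours leans on Lemma~\ref{lem:abspec} and on some bookkeeping (choice of base point, compatibility of apartments and of the sets $\Gamma_a$ across field extensions) that you have correctly identified as the only place needing care.
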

\begin{proof}
Write $K=F^u$ and consider the base change of $G$ to $K$. It is a quasi-split group and we let $L/K$ be the splitting extension of $G$. Write $H$ for $G \times L$. Consider a special vertex $x \in \mc{B}(G,K)$ and let $\ms{G}_x^\tx{ss}$ and $\ms{H}_x^\tx{ss}$ be the semi-simple quotients of the special fibers of the parahoric groups schemes of $G$ and $H$ at $x$, respectively. These are connected reductive groups defined over the algebraic closure of a finite field and we have a natural embedding $\ms{G}_x^\tx{ss} \to \ms{H}_x^\tx{ss}$. Assume that $x$ is not a special vertex in $\mc{B}(G,L)$. This, it is either a non-special vertex or is contained in a facet of positive dimension. We will show that if $G$ is not of type $A_{2n}$ then dimension considerations rule out the existence of such an embedding $\ms{G}_x^\tx{ss} \to \ms{H}_x^\tx{ss}$.

There are four possible cases to consider: 
\begin{enumerate}
	\item $G$ is of type $E_6^{(2)}$. Then $\ms{G}_x^\tx{ss}$ is of type $F_4$, while $\ms{H}_x^\tx{ss}$ is of type $D_5$ or $A_5$.
	\item $G$ is of type $A_{2n+1}^{(2)}$. Then $\ms{G}_x^\tx{ss}$ is of type $C_{n+1}$ while $\ms{H}_x^\tx{ss}$ is either of type $A_{2n}$ or a product of groups of type $A_r$ and $A_{2n+1-r}$. 
	\item $G$ is of type $D_n^{(2)}$. Then $\ms{G}_x^\tx{ss}$ is of type $B_{n-1}$, while $\ms{H}_x^\tx{ss}$ is either of type $A_{n-1}$, or of type $D_{n-1}$, or a product of two groups of type $D_r$ and $D_{n-r}$.
	\item $G$ is of type $D_4^{(3)}$ or $D_4^{(6)}$. In both of these cases $\ms{G}_x^\tx{ss}$ is of type $G_2$, while $\ms{H}_x^\tx{ss}$ is either product of four copies of a group of type $A_1$ or a group of type $A_3$. 
\end{enumerate}
\end{proof}

\begin{cor} Let $G$ be a connected reductive group over $F$. If $G$ has a superspecial vertex, then it is quasi-split.
\end{cor}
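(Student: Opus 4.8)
The plan is to reduce to the case where $G$ is absolutely simple adjoint and then combine Proposition~\ref{pro:supergp} with Lemma~\ref{lem:absqs}, handling two leftover families of groups directly. The reduction proceeds in two steps. First, $G$ is quasi-split over $F$ if and only if $G_\tx{ad}$ is, and the enlarged building $\mc{B}(G,F)$ maps onto $\mc{B}(G_\tx{ad},F)$ with the apartment of the central torus as the fibre; a vertex of $\mc{B}(G,F)$ maps to a vertex of $\mc{B}(G_\tx{ad},F)$, and since the affine root system (hence the notion of special vertex, and after finite unramified base change the notion of superspecial vertex) depends only on $G_\tx{ad}$, our vertex is superspecial for $G$ exactly when its image is superspecial for $G_\tx{ad}$. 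Second, write $G_\tx{ad}=\prod_i \tx{Res}_{E_i/F}H_i$ with $H_i$ absolutely simple adjoint over a finite extension $E_i/F$; then $\mc{B}(G_\tx{ad},F)=\prod_i\mc{B}(H_i,E_i)$, the group $G_\tx{ad}$ is quasi-split over $F$ iff each $H_i$ is quasi-split over $E_i$, and — using that $F^u\otimes_FE_i$ is a product of copies of $E_i^u$ together with the compatibility of parahoric group schemes with restriction of scalars (Appendix~\ref{app:parahoric}) — a tuple of vertices is superspecial iff each component is superspecial in the respective $\mc{B}(H_i,E_i)$. Thus we may assume $G$ is absolutely simple adjoint over $F$ and possesses a superspecial vertex $x$.

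Now there are three cases. If $G$ does not split over $F^u$ and $G$ is not of type $A_{2n}$, then Proposition~\ref{pro:supergp} shows $x$ is absolutely special, and Lemma~\ref{lem:absqs} yields that $G$ is quasi-split. If $G$ splits over $F^u$, choose a finite unramified $F'/F$ over which $G$ splits; then $x$ is special in the building of the split group $G_{F'}$, and one checks on the extended affine Dynkin diagram that for a split group every special vertex is in fact hyperspecial. Since $x$ is $\tx{Gal}(F'/F)$-fixed, descent of the associated reductive $O_{F'}$-model gives a smooth reductive $O_F$-model, so $x$ is hyperspecial for $G$; hence $G$ is unramified and in particular quasi-split. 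Finally, suppose $G$ is of type $A_{2n}$ and does not split over $F^u$. If $G$ is an inner form of $\tx{PGL}_{2n+1}$ given by a nontrivial (hence ramified) central simple algebra, its building contains no vertex which is special in a building of the split group over a splitting field, so it has no superspecial vertex and there is nothing to prove; otherwise $G$ carries an outer action through a quadratic extension $E/F$, necessarily ramified, and the corestriction identity $\tx{cor}_{E/F}=2\cdot$ on the odd torsion of $\tx{Br}(E)$ forces the underlying division algebra to be trivial, so $G$ is the adjoint group of $\tx{SU}_{2n+1}$ of a Hermitian space over $E/F$; such a group over a $p$-adic field is quasi-split, because odd-dimensional Hermitian spaces over $E/F$ have maximal Witt index. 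This exhausts all cases.

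The step I expect to be the main obstacle is the case where $G$ splits over $F^u$: one must establish cleanly that a superspecial vertex on such a group is hyperspecial (equivalently, that a non-quasi-split inner form of a split group admits no superspecial vertex), which amounts to the root-datum (not merely root-system) bookkeeping at special vertices of split groups, plus the Galois descent of a hyperspecial $O_{F'}$-model to an $O_F$-model. A secondary point requiring care is the verification, in the two reduction steps, that "superspecial" is genuinely preserved under passage to $G_\tx{ad}$ and under restriction of scalars; this is precisely where Appendix~\ref{app:parahoric} enters.
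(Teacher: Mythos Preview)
Your argument is correct and follows the same three-case structure as the paper: reduce to $G$ absolutely simple adjoint, then handle (i) ramified non-$A_{2n}$ via Proposition~\ref{pro:supergp} and Lemma~\ref{lem:absqs}, (ii) split over $F^u$ via hyperspeciality, and (iii) ramified $A_{2n}$ directly. One small redundancy: your inner-form sub-case under ``type $A_{2n}$, not split over $F^u$'' is vacuous, since $\tx{Br}(F^u)=0$ forces every inner form of $\tx{PGL}_{2n+1}$ to split over $F^u$ and hence fall into case (ii); the paper's one-line treatment of this case simply observes that a group of type $A_{2n}^{(2)}$ over $F^u$ is automatically quasi-split over $F$ (equivalently, $\tx{PU}_{2n+1}$ over a $p$-adic field has no nontrivial inner forms, as $Z(\widehat{\tx{PU}}_{2n+1})^\Gamma=\mu_{2n+1}^{\sigma=\tx{inv}}=\{1\}$).
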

\begin{proof}
We may assume without loss of generality that $G$ is adjoint. Then it is a product of $F$-simple factors and we may consider an individual such factor. It is of the form $\tx{Res}_{E/F}H$ for an absolutely simple adjoint group $H$ defined over a finite extension $E/F$. We have $\mc{B}(G,F)=\mc{B}(H,E)$ and $G$ is quasi-split over $F$ if and only if $H$ is quasi-split over $E$. Thus we may assume that $G$ is a absolutely simple.

If $G$ is of type $A_{2n}^{(2)}$ over $F^u$, then it is automatically quasi-split over $F$. If $G$ splits over an unramified extension, then the vertex is hyperspecial, so $G$ is quasi-split. If $G$ does not split over an unramified extension, then Proposition \ref{pro:supergp} shows that the vertex is absolutely special and the result follows from Lemma \ref{lem:absqs}.
\end{proof}

\subsection{Generic depth-zero supercuspidal representations} \label{app:gendz}

In this section we generalize \cite[\S6.1]{DR09} to the case of ramified groups. 

Let $G$ be a connected reductive group defined over $F$. Let $B=TU \subset G$ be a Borel subgroup defined over $F$ and let $\psi : U(F) \to \C^\times$ be a generic character. The $G(F)$-conjugacy class of the pair $(B,\psi)$ is called a Whittaker datum, and we shall denote it by $\mf{w}$.

Let $x \in \mc{A}(G,F)$. The image of $B \cap G(F^u)_{x,0}$ in $\ms{G}_x^\circ(\bar k_F)$ is a Borel subgroup defined over $k_F$, call it $\ms{B}$. Its unipotent radical $\ms{U}$ is the image of $U \cap G(F^u)_{x,0}$. We say that $\psi$ has depth zero at $x$ if it is trivial on $U \cap G(F)_{x,0+}$ and the character $\psi_x$ of $\ms{U}(k_F)$ it induces is generic.

\begin{lem} \label{lem:dzgenchar} Let $x \in \mc{B}(G,F)$ be a vertex.
\begin{enumerate}
	\item If $\psi$ has depth zero at $x$, then $x$ is absolutely special.
	\item If $\psi$ has depth zero at $x$ and $y$, then $x=y$.
	\item If $x$ is absolutely special and $\psi_x$ is a generic character of $\ms{U}(k_F)$, then it is the restriction of some generic $\psi : U(F) \to \C^\times$ that has depth zero at $x$.
\end{enumerate}
\end{lem}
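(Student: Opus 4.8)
The three statements are about the interaction between the Bruhat--Tits building of $G$ over $F$ and the reductive quotient at a vertex, so the plan is to translate each assertion about $\psi$ into an assertion about the affine root system at $x$ and then invoke the characterization of absolutely special vertices from Appendix \ref{app:absvert}. For the first point, if $\psi$ has depth zero at $x$ then the character $\psi_x$ of $\ms{U}(k_F)$ is generic, which forces the unipotent radical $\ms{U}$ to be the unipotent radical of a Borel subgroup $\ms{B} \subset \ms{G}_x^\circ$ that contains \emph{every} simple root direction of $\ms{G}_x^\circ$; equivalently, every simple relative root $a \in R(A_T,G)_\tx{res}$ contributes a simple root of $R(\ms{A_T}^\circ,\ms{G}_x^\circ)$. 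By the Corollary following Lemma \ref{lem:abspec} (the description of $R(\ms{A_T}^\circ,\ms{G}_x^\circ)$ as the set of $a$ with $\langle a,x-o\rangle \in \Gamma_a'$), this is exactly condition (2) of Lemma \ref{lem:abspec}, hence $x$ is absolutely special. First I would make precise the claim that genericity of $\psi_x$ forces $\ms{U}$ to map onto the unipotent radical of a Borel in $\ms{G}_x^\circ$ with all simple root groups present: the point is that the affine root groups $U_{\alpha+r}$ with $r$ the smallest value in the relevant coset must survive modulo $G(F)_{x,0+}$ and carry $\psi_x$ nontrivially on each simple factor.

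For the second point, the argument is essentially that two distinct absolutely special vertices cannot both support a depth-zero generic character compatible with the \emph{same} Whittaker datum. Concretely, by point (1) both $x$ and $y$ are absolutely special, so by Lemma \ref{lem:abspec} each is a Chevalley valuation, i.e. arises from a $\Gamma$-pinning of $G$ (after passing to the quasi-split situation, which is forced by Lemma \ref{lem:absqs}). The depth-zero genericity of $\psi$ at $x$ pins down, up to $T(F)$, the affine root groups on which $\psi$ is nontrivial of depth exactly $0$; the same holds at $y$. Since $\psi$ is a single character of $U(F)$, these two normalizations of ``depth'' must agree, and I would extract from this the equality of the two points in $\mc{A}(T,F)$. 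The cleanest route is probably: both $x$ and $y$ lie in the apartment of a common maximally split maximal torus $T$ (after conjugating by $G(F)$, using that $\psi$ determines $B$ up to $G(F)$-conjugacy and we may fix $B$), and the condition ``$\psi|_{U_a}$ has depth $0$ at the vertex'' reads off the value $\langle a, x-o\rangle$ from the conductor of $\psi$ restricted to the root subgroup $U_a$, for each simple $a$; since the conductors are intrinsic to $\psi$, we get $\langle a,x-o\rangle = \langle a,y-o\rangle$ for all simple $a$, whence $x=y$ because the simple relative roots span $X^*(A_T)_\mathbb{Q}$ (here using that $G$ is semisimple, or reducing to the adjoint group, which does not affect the building).

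For the third point, I would run the construction backwards. Given an absolutely special vertex $x$ and a generic character $\psi_x$ of $\ms{U}(k_F)$, inflate $\psi_x$ to a character of $U \cap G(F)_{x,0}$ trivial on $U \cap G(F)_{x,0+}$, and then extend it to a character $\psi$ of $U(F)$: since $U(F)$ is the union of its subgroups $U(F)_{x,r}$ and $U/[U,U]$ is a product of induced additive groups, a character of the depth-$(0,0+)$ quotient extends to a smooth character of $U(F)$ (for instance by choosing an additive character of $F$ of conductor $\mathfrak p_F$ and transporting $\psi_x$ coordinate-wise along the simple root subgroups, exactly as in \cite[\S6.1]{DR09}). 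By construction this $\psi$ is trivial on $U \cap G(F)_{x,0+}$ and induces $\psi_x$, so it has depth zero at $x$; its genericity as a character of $U(F)$ follows from the genericity of $\psi_x$, because $x$ being absolutely special guarantees that the simple roots of $R(A_T,G)_\tx{res}$ all appear in $R(\ms{A_T}^\circ,\ms{G}_x^\circ)$ (the Corollary after Lemma \ref{lem:abspec} again), so nontriviality of $\psi_x$ on each simple factor of $\ms{U}$ forces nontriviality of $\psi$ on each simple root subgroup of $U$.

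\textbf{Main obstacle.} The routine parts are the extension-of-characters and inflation bookkeeping; the genuinely delicate step is point (2), matching the ``depth-zero at $x$'' data for two a priori different vertices and concluding $x=y$. The subtlety is that one must be careful about the normalization of valuations on the root subgroups (the $e_\alpha$ factors appearing in the description of $\Gamma_a, \Gamma_a'$ in Appendix \ref{app:absvert}) and about whether one is free to conjugate $x$ and $y$ independently or must keep the Whittaker datum fixed; the correct statement only holds when the pair $(B,\psi)$ — not just $B$ — is fixed, and I expect the proof to hinge on phrasing the depth-zero condition as an equality of conductors that is manifestly independent of which vertex one uses.
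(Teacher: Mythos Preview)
Your overall strategy matches the paper's, and parts (2) and (3) are essentially the same argument the paper gives: for (2), the paper uses Lemma~\ref{lem:abspec}(3) to write $y=tx$ with $t\in T_\tx{ad}(F)$ and then observes that the depth-zero condition forces the lattices $U_{a,x}(O_F)$ and $U_{a,y}(O_F)$ in $U_a(F)/[U_a,U_a](F)\cong F_\alpha$ to coincide, whence $a(t)\in O_{F_\alpha}^\times$ for all simple $a$ and so $t\in T_\tx{ad}(O_F)$ fixes $x$. This is exactly your conductor argument, phrased via the $T_\tx{ad}(F)$-transitivity rather than via coordinates $\langle a,x-o\rangle$. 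For (3), the paper also decomposes $U/[U,U]\cong\prod_a U_a/[U_a,U_a]$ and extends by Pontryagin duality.

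There is, however, a genuine gap in your argument for (1). You assert that genericity of $\psi_x$ forces every simple root of $R(A_T,G)_\tx{res}$ to appear as a simple root of $R(\ms{A_T}^\circ,\ms{G}_x^\circ)$, but the direction of the implication you sketch is not the one that works. Genericity of $\psi_x$ says only that $\psi_x$ is nontrivial on each simple root group \emph{of $\ms{G}_x^\circ$}; it says nothing directly about the simple root groups of $G$. The correct mechanism, which the paper uses, runs the other way: a generic $\psi$ on $U(F)$ is \emph{trivial} on root groups for non-simple roots of $R(A_T,G)_\tx{res}$ (they lie in $[U,U]$), so if some simple root of $\ms{G}_x^\circ$ were a non-simple root of $G$, then $\psi_x$ would be trivial on that simple root group of $\ms{G}_x^\circ$, contradicting genericity of $\psi_x$. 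To finish, you must then use that $x$ is a \emph{vertex}: this forces $R(\ms{A_T}^\circ,\ms{G}_x^\circ)$ to have the same rank as $R(A_T,G)_\tx{res}$, so the two sets of simple roots have the same cardinality, and the inclusion \{simple roots of $\ms{G}_x^\circ$\}$\subset$\{simple roots of $G$\} just established is an equality. Only then does the Corollary after Lemma~\ref{lem:abspec} apply. Your sketch (``the affine root groups \ldots\ must survive and carry $\psi_x$ nontrivially'') has the logic reversed and omits the rank argument; without it, one cannot rule out that the simple roots of $\ms{G}_x^\circ$ form a proper subset of those of $G$ together with some extra non-simple root.
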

\begin{proof}
If $x$ is not absolutely special, above Corollary implies that there exists a simple root of $R(A_T,G)_\tx{res}$ that is not contained in $R(\ms{A_T}^\circ,\ms{G}_x^\circ)$. Since $x$ is a vertex, the root system $R(\ms{A_T}^\circ,\ms{G}_x^\circ)$ has the same rank as the root system $R(A_T,G)_\tx{res}$, so there exists a non-simple root of $R(A_T,G)$ that is simple in $R(\ms{A_T}^\circ,\ms{G}_x^\circ)$. The character $\psi$ is trivial on the corresponding root subgroup, and thus its restriction to $\ms{U}(k_F)$ is not generic.

If $\psi$ has depth zero at $x$ and $y$, then both $x$ and $y$ are absolutely special, so there exists $t \in T_\tx{ad}(F)$ with $y=tx$. For $a \in \Delta(A_T,G)$ with lift $\alpha \in \Delta(T,G)$ the images of $U_{a,y}(O_F)$ and $U_{a,x}(O_F)$ in the 1-dimensional $F_\alpha$-vector space $U_a(F)/[U_a,U_a](F)$ are two $O_{F_\alpha}$-lattices, the second being obtain from the first by multiplication by $a(t) \in F_\alpha^\times$. If $\psi$ has depth zero at $x$ and $y$, then these lattices must agree, i.e. $a(t) \in O_{F_\alpha}^\times$. This holds for all $a \in \Delta(A_T,G)$, thus $t \in T_\tx{ad}(O_F)$, and hence $y=x$.

Assume now that $x$ is absolutely special and $\psi_x$ is a generic character of $\ms{U}(k_F)$. The inclusions $U_a \to U$ combine to an isomorphism
\[ \prod_{a \in \Delta(A_T,G)} U_a/[U_a,U_a] \to U/[U,U] \]
of $F$-groups, and the same is true over $k_F$. Note that $[U_a,U_a]=U_{2a}$. For each $a \in \Delta(A_T,G)_\tx{res}$ the reduction map $U_{a,x}(O_F) \to \ms{U}_a(k_F)$ is surjective and its kernel contains $U_{2a,x}(O_F)=U_{a,x}(O_F) \cap U_{2a}(F)$. It follows that the character $\psi_x$ induces a character of $\prod_{a \in \Delta(A_T,G)}U_{a,x}(O_F)/(U_{a,x}(O_F) \cap [U_a,U_a](F))$ that is non-trivial on each factor. Since $U_a(F)/[U_a,U_a](F)$ is locally compact abelian, and this character has finite order, it can be extended to $\prod_a U_a(F)/[U_a,U_a](F)$ by Pontryagin duality.
\end{proof}

Let $\pi$ be a depth zero supercuspidal representation of $G(F)$. According to \cite[Proposition 6.8]{MP96} there exists a vertex $z \in \mc{B}(G,F)$, an irreducible representation $\rho$ of $G(F)_x$, and a cuspidal irreducible representation $\sigma$ of $\ms{G}_x^\circ(k_F)$ s.t. $\pi=\tx{c-Ind}_{G(F)_x}^{G(F)}\rho$ and $\sigma \subset \rho|_{G(F)_{x,0}}$.

\begin{pro} \label{pro:dzgen} The following are equivalent
\begin{enumerate}
	\item $\pi$ is $\mf{w}$-generic.
	\item $x$ is absolutely special, there exists $t \in T(F)$ s.t. $\psi'=\psi\circ\tx{Ad}(t)$ has depth zero at $x$, and $\sigma$ is $\psi'_x$-generic.
\end{enumerate}
\end{pro}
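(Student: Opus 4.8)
The plan is to exploit the explicit description of Whittaker-genericity for depth-zero supercuspidal representations in terms of the datum $(x,\rho,\sigma)$ of Moy--Prasad theory, reducing to a statement over the residue field that is already available in the literature. First I would recall, via \cite[Proposition 6.8]{MP96}, that $\mf{w}$-genericity of $\pi=\tx{c-Ind}_{G(F)_x}^{G(F)}\rho$ is detected on the minimal $K$-type: since $\pi$ is compactly induced from the open compact-mod-center subgroup $G(F)_x$, a Mackey/Frobenius-reciprocity computation shows that $\tx{Hom}_{U(F)}(\pi,\psi)$ is nonzero if and only if some $G(F)$-conjugate of $(B,\psi)$ is ``compatible'' with $x$ in the sense that the conjugated generic character has depth zero at $x$ and $\rho$ carries a nonzero vector on which the induced character of $\ms{U}(k_F)$ acts. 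Concretely, after conjugating we may assume $B=TU$ is standard with respect to the apartment through $x$; the key point is that a generic character of $U(F)$ which is nontrivial on $\pi$ must already have depth zero at some vertex, and by Lemma \ref{lem:dzgenchar}(1) that vertex is forced to be absolutely special, and by Lemma \ref{lem:dzgenchar}(2) it is uniquely determined, hence equal (up to $G(F)$-conjugacy, which we have used to normalize) to $x$ itself.

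Next I would carry out the descent to the finite group. Granting that $\psi$ (after replacing it by $\psi\circ\tx{Ad}(t)$ for suitable $t\in T(F)$, using Lemma \ref{lem:dzgenchar}(3) and the fact that $T(F)$ acts transitively on the relevant lattices of generic characters fixing $x$) has depth zero at $x$, it is trivial on $U(F)\cap G(F)_{x,0+}$ and descends to a generic character $\psi_x$ of $\ms{U}(k_F)$. A $\psi$-Whittaker functional on $\rho$ is the same datum as a $\psi_x$-Whittaker functional on $\rho|_{G(F)_{x,0}}$ modulo $G(F)_{x,0+}$, because $U(F)=(U(F)\cap G(F)_{x,0})\cdot(\text{larger unipotent pieces})$ and the larger pieces, together with the center, contribute nothing new to $\tx{Hom}_{U(F)}(\rho,\psi)$ once one knows $\rho$ is inflated from $G(F)_x/G(F)_{x,0+}$ in the relevant range — this is the standard reduction behind \cite[\S6.1]{DR09}, and the genericity statement for the parahoric quotient is insensitive to the difference between $G(F)_{x,0}$ and $G(F)_x$ since $\ms{U}$ lives in the neutral component. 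Thus $\rho$ is $\psi$-generic if and only if $\sigma\subset\rho|_{G(F)_{x,0}}$ is $\psi_x$-generic as a representation of the connected reductive $k_F$-group $\ms{G}_x^\circ(k_F)$; one uses here that a cuspidal representation of a finite group of Lie type has at most one Whittaker model for each generic character and that this is inherited correctly under the passage from $\ms{G}_x^\circ$ to $\ms{G}_x$.

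Finally I would assemble the two implications. For $(1)\Rightarrow(2)$: $\mf{w}$-genericity of $\pi$ forces, by the above, the existence of a conjugate of $\psi$ with depth zero at $x$, which forces $x$ absolutely special (Lemma \ref{lem:dzgenchar}(1)) and then forces $\sigma$ to be $\psi'_x$-generic by the finite-field descent. For $(2)\Rightarrow(1)$: if $x$ is absolutely special, $\psi'=\psi\circ\tx{Ad}(t)$ has depth zero at $x$, and $\sigma$ is $\psi'_x$-generic, then running the descent backwards produces a nonzero $\psi'$-Whittaker functional on $\rho$, hence on $\pi$, hence $\pi$ is $\mf{w}$-generic since $\psi'$ and $\psi$ are $G(F)$-conjugate. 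The main obstacle I anticipate is the careful bookkeeping in the reduction of $\tx{Hom}_{U(F)}(\pi,\psi)$ to the parahoric quotient: one must check that no ``stray'' Whittaker functionals arise from the part of $U(F)$ outside $G(F)_{x,0}$ and that the passage $G(F)_{x,0}\subset G(F)_x$ — which is exactly the subtlety that caused multiplicity-one to fail in Section \ref{sec:nsdl-fin} — does not interfere with genericity; here it does not, precisely because $\ms{U}\subset\ms{G}_x^\circ$ and a generic character of $\ms{U}(k_F)$ extends uniquely to a character on which $\ms{G}_x/\ms{G}_x^\circ$ has no effect, so the Frobenius-reciprocity count over the finite group reduces cleanly to the connected case, where \cite{DLM92} and the standard theory of Gelfand--Graev representations apply.
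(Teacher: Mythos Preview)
Your overall architecture is right—Mackey/Frobenius reciprocity plus descent to the residue field—and matches the paper's. But the logic in $(1)\Rightarrow(2)$ has a genuine gap in how you force $x$ to be absolutely special.

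You write that the conjugated generic character ``must already have depth zero at some vertex,'' then invoke Lemma~\ref{lem:dzgenchar}(1) to conclude that vertex is absolutely special, and Lemma~\ref{lem:dzgenchar}(2) to identify it with $x$. Neither step is justified. What the Mackey computation actually gives you is a double coset $g$ for which $\tx{Hom}_{G(F)_{x,0}\cap gU(F)g^{-1}}(\sigma,\psi^g)\neq 0$; since $\sigma$ is inflated from $\ms{G}_x^\circ(k_F)$, this only tells you $\psi^g$ is trivial on the $0{+}$-part and hence descends to \emph{some} character of $g\ms{U}g^{-1}(k_F)$. It does \emph{not} tell you this character is generic, so you cannot yet say $\psi^g$ has depth zero at $x$ in the sense of the definition, and Lemma~\ref{lem:dzgenchar}(1) does not apply. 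And Lemma~\ref{lem:dzgenchar}(2) is a uniqueness statement among vertices where $\psi$ has depth zero; it gives no link to the Moy--Prasad vertex of $\pi$.

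The missing mechanism is \emph{cuspidality of $\sigma$}. The paper's argument (after using Iwasawa to reduce the double coset space to $N(F)_{x,0}\backslash N(F)$) runs: if $x$ were not absolutely special, then by the Corollary preceding Lemma~\ref{lem:abspec} some simple root of $R(A_T,G)_\tx{res}$ is not simple in $R(\ms{A}_T^\circ,\ms{G}_x^\circ)$, so the descended character of $\ms{U}(k_F)$ is \emph{degenerate}. A cuspidal representation of $\ms{G}_x^\circ(k_F)$ has no nonzero Hom against a degenerate character of $\ms{U}(k_F)$ (it factors through a proper parabolic), contradiction. This is what forces $x$ to be absolutely special and simultaneously forces the descended character to be generic. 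You mention cuspidality only for multiplicity-one of Whittaker models, which is the wrong role here. Once you insert this step, the rest of your outline goes through and coincides with the paper's proof.
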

\begin{proof}
By \cite{Ku77}, $\tx{Hom}_{U(F)}(\pi,\psi)$ is the product of $\tx{Hom}_{G(F)_x \cap gUg^{-1}(F)}(\rho,\psi^g)$ as $g$ runs over $G(F)_x \lmod G(F) /U(F)$. According to the Iwasawa decomposition this double coset space is equal to $N(F)_x \lmod N(F)$, where $N=N(T,G)$. The natural map $G_\tx{sc} \to G$ restricts to an isomorphism between the preimage of $U$ in $G_\tx{sc}$ and $U$, and this implies $G(F)_x \cap U(F) = G(F)_{x,0} \cap U(F)$, and the same for $U$ replaced by $nUn^{-1}$, $n \in N(F)$. The irreducible representations in the restriction $\rho|_{G(F)_{x,0}}$ are the $G(F)_x$-conjugates of $\sigma$, so we are looking at the product of $\tx{Hom}_{G(F)_{x,0} \cap gUg^{-1}(F)}(\sigma,\psi^g)$ as $g$ runs over $G(F)_{x,0} \lmod G(F) / U(F) = N(F)_{x,0} \lmod N(F)$.

If $\pi$ is $\mf{w}$-generic, then there exists $g \in N(F)$ for which the corresponding factor is non-trivial. In particular, the restriction of $\psi^g$ to $G(F)_{x,0+} \cap gUg^{-1}(F)$ is trivial, and thus $\psi^g$ induces a character of $g\ms{U}g^{-1}(k_F)$. If $x$ is not absolutely special, this character is not generic, and this contradicts the cuspidality of $\sigma$.

Conversely, if $x$ is absolutely special and $\sigma$ is $\psi_x'$ is generic, then the factor corresponding to $t \in T(F)$ is non-zero.
\end{proof}

\subsection{A study of $D_{2n}$} \label{app:d2n}

We will consider two parallel situations involving the split Spin group in $4n$ variables. The first situation is the following:

Let $\hat G_\tx{ad}$ be a complex semi-simple group of adjoint type $D_{2n}$ and let $\hat G_\tx{sc}$ be its simply connected cover. Let $s \in \hat G_\tx{ad}$ be a regular semi-simple element whose centralizer $\hat G_\tx{ad}^s$ has component group $(\Z/2\Z)^2$. Write $\hat T_\tx{ad} \subset \hat G_\tx{ad}$ for the connected centralizer of $s$, a maximal torus. The centralizer $\hat G_\tx{ad}^s$ is then equal to $\hat N_\tx{ad}^s$ -- the group of $\tx{Ad}(s)$-fixed points in the normalizer of $\hat T_\tx{ad}$. We have the exact sequence
\[ 1 \to \hat T_\tx{ad} \to \hat N_\tx{ad}^s \to (\Z/2\Z)^2 \to 0. \]
Let $\hat T_\tx{sc}$ be the preimage of $\hat T_\tx{ad}$ in $\hat G_\tx{sc}$, a maximal torus, and let $\hat N_\tx{sc}^+$ be the preimage of $\hat N_\tx{ad}^s$. We thus have the extension
\[ 1 \to \hat T_\tx{sc} \to \hat N_\tx{sc}^+ \to (\Z/2\Z)^2 \to 0. \]
Let $f \in \hat N_\tx{ad}$ be s.t. its image in $\hat N_\tx{ad}/\hat T_\tx{ad}$ commutes with every element of $\hat N_\tx{ad}^s/\hat T_\tx{ad} = (\Z/2\Z)^2$ and has no fixed points in $X_*(\hat T_\tx{ad})$. Then $\hat T_\tx{ad}^f$ is finite.  We write $(-)^f$ again for the groups of fixed points of $\tx{Ad}(f)$ in $\hat G_\tx{sc}$ as well as $\hat G_\tx{ad}$.

\begin{lem} \label{lem:d2ng} The natural map $\hat N_\tx{sc}^{+,f} \to (\Z/2\Z)^2$ is surjective. The extension
\[ 1 \to \hat T_\tx{sc}^f \to \hat N_\tx{sc}^{+,f} \to (\Z/2\Z)^2 \to 0 \]
has trivial commutator.
\end{lem}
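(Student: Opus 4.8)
The plan is to work with an explicit root-theoretic model of $D_{2n}$, exactly as in the proof of Proposition \ref{pro:dznormiopex}. Write $X^*(\hat T_\tx{ad})_\R$ (or rather the ambient $\R^{2n}$ containing the weight lattice) with standard basis $e_1,\dots,e_{2n}$, roots $\pm e_i \pm e_j$, and choose coordinates so that $\hat N_\tx{ad}^s/\hat T_\tx{ad}=(\Z/2\Z)^2$ is generated by $w_1=\epsilon_1\epsilon_{2n}$ (sign changes in the first and last coordinate) and $w_2=(-1)\cdot m$, where $m$ is the reversal permutation $e_i\mapsto e_{2n+1-i}$; this is the same normal form for the possibilities $\{1\},\Z/2\Z,(\Z/2\Z)^2$ of $\Omega(S,G)(F)_\theta$ used in Lemma \ref{lem:weylab2} and in the $D_{2n}$ case of Proposition \ref{pro:dznormiopex}. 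The element $f$ is an elliptic signed permutation commuting with $w_1$ and $w_2$ modulo $\hat T_\tx{ad}$; ellipticity forces its permutation part to be a product of $2n$-cycles of even length (in fact, since it commutes with the reversal $m$ and with $w_1$, a concrete choice is available), and $\hat T_\tx{ad}^f$ is the corresponding finite group of $f$-coinvariants.

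First I would establish surjectivity of $\hat N_\tx{sc}^{+,f}\to(\Z/2\Z)^2$. Since $f$ commutes with $w_1$ and $w_2$ in $\Omega$, one can lift each $w_i$ to an element $n_i\in\hat N_\tx{sc}^+$ and then show that $n_i$ can be chosen $\tx{Ad}(f)$-fixed. The clean way to see this: by Lemma \ref{lem:sgamma} (applied with the $\Gamma$-action coming from $f$, i.e. taking $L/F$ so that $^Lj$ is trivial on inertia and $f$ is the Frobenius image, exactly as in the proof of Lemma \ref{lem:gm1}), taking $f$-invariants in $1\to\hat T_\tx{sc}\to N(\hat T_\tx{sc},\hat G_\tx{sc})^s\to\Omega_s\to 1$ is surjective, using that $1-f$ is surjective on $\hat T_\tx{sc}$ because $\hat T_\tx{sc}^f$ is finite. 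Actually it is cleaner to argue directly: $(1-f):\hat T_\tx{sc}\to\hat T_\tx{sc}$ has finite kernel, hence is surjective, so taking $\tx{Ad}(f)$-fixed points of $1\to\hat T_\tx{sc}\to\hat N_\tx{sc}^+\to(\Z/2\Z)^2\to 0$ yields the exact sequence $1\to\hat T_\tx{sc}^f\to\hat N_\tx{sc}^{+,f}\to(\Z/2\Z)^2\to 0$, giving surjectivity for free.

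For the triviality of the commutator, I would compute $f(w_1,w_2):=n_1n_2n_1^{-1}n_2^{-1}\in\hat T_\tx{sc}^f$ for chosen $f$-fixed Tits-style lifts $n_1,n_2\in\hat N_\tx{sc}$, and then show that its image in the coinvariant group $(\hat T_\tx{sc}^f)_{\langle w_1,w_2\rangle}$ vanishes. The point is that $w_1=\epsilon_1\epsilon_{2n}$ is a product of two commuting simple-type reflections (reflections in long roots $e_1,e_{2n}$ of the ambient type), and the only obstruction to lifting $\langle w_1,w_2\rangle$ to a commuting pair in $\hat N_\tx{sc}$ lives in $Z(\hat G_\tx{sc})=\mu_2\times\mu_2$; a direct Tits-cocycle computation (as in \cite[\S2.1]{LS87} or the standard commutator formula $n_\alpha n_\beta n_\alpha^{-1}=n_{w_\alpha(\beta)}\cdot\alpha^\vee(\pm 1)^{\langle\beta,\alpha^\vee\rangle}$) shows the commutator is a specific element $z$ of order dividing $2$ in $\hat T_\tx{sc}$, and one then checks by hand that $(1-w_1)$, $(1-w_2)$ together with $(1-f)$ already hit $z$ in $\hat T_\tx{sc}$ — equivalently that $z$ dies in the coinvariants. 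This last verification is the main obstacle: it is the only genuinely $D_{2n}$-specific input, and it is exactly the kind of signed-permutation bookkeeping that cannot be avoided. I expect it to reduce, after choosing $f$ concretely (e.g. $f$ acting as $-1$ composed with an appropriate product of adjacent transpositions compatible with the reversal symmetry), to showing that a single sign in $\mu_2\subset Z(\hat G_\tx{sc})$ becomes trivial modulo the image of $1-f$, which holds because $f$ has no nonzero fixed vectors and hence $1-f$ is surjective with the relevant sign in its image. Once the commutator vanishes in all the relevant coinvariant quotients $(\hat T_\tx{sc}^f)_{\langle c_1,c_2\rangle}$ for $c_1,c_2\in(\Z/2\Z)^2$ — and since $(\Z/2\Z)^2$ has only the one nontrivial cyclic-pair to check beyond the trivial cases — Definition \ref{dfn:commtriv} is satisfied and the extension has trivial commutator, as claimed.
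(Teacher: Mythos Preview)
Your surjectivity argument is correct and matches the paper's: since $\hat T_\tx{sc}^f$ is finite, $1-\tx{Ad}(f)$ is surjective on $\hat T_\tx{sc}$, so any lift can be corrected to an $f$-fixed one.

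For the commutator, your overall shape is right (explicit $D_{2n}$ coordinates, $w_1=\epsilon_1\epsilon_{2n}$, $w_2=(-1)m$, Tits lifts, the Langlands--Shelstad formula from \cite[\S2.1]{LS87}), but there are two genuine gaps.

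First, you compute the commutator of the \emph{Tits} lifts $\dot w_1,\dot w_2$, but Tits lifts are not in general $f$-fixed. The paper shows $\dot w_1$ happens to be $f$-fixed (after a careful normalization of $w_0$ as a product of consecutive negative cycles), but $\dot w_2$ is not: one must replace it by $t\dot w_2$ with $t\in\hat T_\tx{sc}$ solving $f(t)t^{-1}=\lambda_{w_2,w_0}(-1)^{-1}$, i.e.\ $t$ is essentially $\tfrac12(w_0-1)^{-1}\lambda_{w_2,w_0}$. The commutator $[\dot w_1,t\dot w_2]$ then acquires an extra factor $(w_1-1)t$, which is not a priori of order $2$ and not a priori central. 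Your claim that the commutator lands in $\mu_2\subset Z(\hat G_\tx{sc})$ is therefore unjustified; the actual computation produces $\tfrac{b}{2}(e_1+e_{2n})$ with $b=2n-|B|$ even, and one checks this lies in $Q$ only after tracking the explicit $\lambda_{w_2,w_0}$.

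Second, your vanishing criterion is wrong. You want the commutator to die in $(\hat T_\tx{sc}^f)_{\langle w_1,w_2\rangle}$, i.e.\ to lie in the subgroup of $\hat T_\tx{sc}^f$ generated by $(1-w_1)\hat T_\tx{sc}^f$ and $(1-w_2)\hat T_\tx{sc}^f$. The operator $1-f$ plays no role here: the commutator is already $f$-fixed, and quotienting $\hat T_\tx{sc}$ by $(1-f)\hat T_\tx{sc}$ is a strictly weaker condition (the natural map $(\hat T_\tx{sc}^f)_{\langle w_1,w_2\rangle}\to(\hat T_\tx{sc})_{\langle w_1,w_2,f\rangle}$ need not be injective). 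The paper instead exhibits the extra term $(w_1-1)t$ explicitly as $(1-w_2)$ applied to an element of $\hat T_\tx{sc}^f$, which is exactly what Definition~\ref{dfn:commtriv} demands.
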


The second situation is the following. Let $k$ be a finite field of characteristic different from $2$ and let $G_\tx{sc}$ be a simply connected group of type $D_{2n}$ defined over $k$. Let $S_\tx{sc} \subset G_\tx{sc}$ be an anisotropic torus and $\theta : S_\tx{sc}(k) \to \C^\times$ a non-singular character whose stabilizer in $\Omega(S_\tx{sc},G_\tx{sc})(k)$ is isomorphic to $(\Z/2\Z)^2$.

\begin{lem} \label{lem:d2nr} The extension $1 \to S_\tx{sc}(k) \to N(S_\tx{sc},G_\tx{sc})(k)_\theta \to \Omega(S_\tx{sc},G_\tx{sc})(k)_\theta \to 1$ has trivial commutator.
\end{lem}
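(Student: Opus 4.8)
The two lemmas \ref{lem:d2ng} and \ref{lem:d2nr} are ``the same'' computation done once on the dual side and once on the finite-group side, and the natural approach is to reduce \ref{lem:d2nr} to an explicit root-theoretic calculation in $D_{2n}$, exactly paralleling the setup in Appendix \ref{app:d2n} and in the proof of Proposition \ref{pro:dznormiopex}. First I would pass to the extension
\[ 1 \to S_\tx{sc}(k) \to N(S_\tx{sc},G_\tx{sc})(k)_\theta \to \Omega(S_\tx{sc},G_\tx{sc})(k)_\theta \to 1 \]
and, using Lang's theorem ($\Omega(S_\tx{sc},G_\tx{sc})(k)=N(S_\tx{sc},G_\tx{sc})(k)/S_\tx{sc}(k)$, as in the proof of Proposition \ref{pro:dlcharext}), identify $\Omega(S_\tx{sc},G_\tx{sc})(k)_\theta$ with a subgroup of $H^1(k,Z(G_\tx{sc}))=\tx{cok}(S_\tx{sc}(k)\to S_\tx{ad}(k))$ via Corollary \ref{cor:weylab1}/Lemma \ref{lem:dlbichar}. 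The hypothesis is that this subgroup is $(\Z/2\Z)^2$; since $Z(G_\tx{sc})=\mu_2\times\mu_2$ in type $D_{2n}$, this forces $G_\tx{sc}$ to be split and pins down the Frobenius action on $S_\tx{sc}$ fairly rigidly (the torus $S_\tx{sc}$ is ``elliptic with full $2$-torsion Weyl stabilizer''). Dually, a non-singular $\theta$ corresponds via Lusztig's Jordan decomposition to a semisimple $s^*$ in the dual group with $\Omega(S^*,G^*)(k)_{s^*}=(\Z/2\Z)^2$, which is precisely the situation of Lemma \ref{lem:d2ng} with $f$ the image of Frobenius; so the extension here is, after this dualization, the extension of Lemma \ref{lem:d2ng}.

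The cleanest route is therefore: \emph{reduce \ref{lem:d2nr} to \ref{lem:d2ng}} by the dual-group translation, and prove \ref{lem:d2ng} by an explicit computation with signed permutation matrices. Concretely, realize $D_{2n}$ with the basis $e_1,\dots,e_{2n}$ of $X_*(\hat T_\tx{ad})_\R$, so that $W(D_{2n})$ is the group of signed permutations of the $e_i$ with an even number of sign changes; take $\hat N_\tx{ad}^s/\hat T_\tx{ad}=(\Z/2\Z)^2$ generated by two commuting involutions (as in the explicit description in the proof of Proposition \ref{pro:dznormiopex}, e.g.\ $w_1=\epsilon_1\epsilon_j$ for a sign change and $w_2=(-1)m$ for a $-1$ times a product of transpositions), and take $f$ to be an elliptic signed permutation commuting with both modulo $\hat T_\tx{ad}$. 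Then I would choose Tits lifts $\dot w_1,\dot w_2\in\hat N_\tx{sc}$ of $w_1,w_2$ that are fixed by $\tx{Ad}(f)$ — possible because $f$ commutes with $w_1,w_2$ in the Weyl group and one can arrange $f$-invariance of the lifts using the $\tx{Ad}(f)$-equivariance of the Tits section on the $f$-invariant subsystem — giving the surjectivity of $\hat N_\tx{sc}^{+,f}\to(\Z/2\Z)^2$. The commutator of the extension is then $\dot w_1\dot w_2\dot w_1^{-1}\dot w_2^{-1}\in\hat T_\tx{sc}^f$, a specific element of $Z(\hat G_\tx{sc})=\mu_2\times\mu_2$ computed from the Tits relations, and I must show its image in the coinvariants $(\hat T_\tx{sc}^f)_{\langle w_1,w_2\rangle}$ — equivalently, whether it lies in $(1-w_1)\hat T_\tx{sc}^f\cdot(1-w_2)\hat T_\tx{sc}^f$ — is trivial. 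This is a finite check in $D_{2n}$: the point is that although the raw commutator $\dot w_1\dot w_2\dot w_1^{-1}\dot w_2^{-1}$ may be the nontrivial central element, it becomes a boundary once one quotients by the action of $\langle w_1,w_2\rangle$, because $w_1$ (or $w_2$) moves the relevant coweight. (Contrast this with the example in \S\ref{sub:sphi}, where the analogous class is \emph{not} a boundary because the coinvariants are taken for a \emph{larger} group — there $\Gamma=(\Z/2\Z)^2$ acts, not $\langle w_1,w_2\rangle$ — which is exactly why multiplicity one can fail for ramified groups but not here.)

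I expect the main obstacle to be organizing the case analysis cleanly. The stabilizer $\Omega(S_\tx{sc},G_\tx{sc})(k)_\theta\cong(\Z/2\Z)^2$ can be generated in genuinely different ways inside $W(D_{2n})$ (depending on how the two $\mu_2$ factors of the center are detected, and on how $f$ sits relative to the generators — as already seen in the delicate argument in the proof of Proposition \ref{pro:dznormiopex}, where one separates the cases according to whether $f$ swaps irreducible factors of a sub-root-system or preserves them), and one has to make sure the commutator computation is carried out in each. My strategy to contain this is to first use the ellipticity of $f$ together with the fact that $f$ centralizes $\langle w_1,w_2\rangle$ to normalize the pair $(w_1,w_2)$ into a standard form — essentially: $w_1$ is supported on a pair of coordinates $\{e_i,e_j\}$ and $w_2$ is a product of transpositions; the commutator, being central, is determined by how many of the coordinates in the support of $w_1$ are swapped by $w_2$ — and then observe that applying $w_2$ (which is in the stabilizer, so acts on the coinvariants) trivializes the class whenever it is nontrivial. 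Throughout, I would lean on Lemma \ref{lem:cliff1}(5) to phrase ``trivial commutator'' as the multiplicity-one property, and on Corollary \ref{cor:m1stab} to break $G_\tx{sc}$ into $k$-simple factors and apply Shapiro's lemma as in the proof of Lemma \ref{lem:sphim1}, so that the genuinely new content is confined to the single split $D_{2n}$ factor. The actual root/Tits-relation computation I would relegate to a short explicit lemma, since it is routine once the normal form is in place.
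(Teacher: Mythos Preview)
Your plan has the right skeleton — Tits lifts, explicit signed-permutation coordinates, commutator in coinvariants — but it misidentifies where the difficulty lies, and the proposed reduction of Lemma~\ref{lem:d2nr} to Lemma~\ref{lem:d2ng} via dualization is not how the paper proceeds. The paper does \emph{not} reduce one lemma to the other; it proves both at once by a single computation, setting $q=1$ for \ref{lem:d2ng} and $q=|k|$ for \ref{lem:d2nr}, after translating the finite-field statement (via a split $T$ with $gTg^{-1}=S$ and $g^{-1}\sigma(g)=\dot w_0$) into the same shape as the dual statement, so that $f$ acts as $qw_0$ on $(\Q^{2n}/Q)_{p'}$. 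The two extensions have different kernels ($S_\tx{sc}(k)$ versus $\hat T_\tx{sc}^f$) and are not identified by Lusztig duality in any direct way, so your reduction would itself need an argument.

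The more serious gap is in the commutator computation. You assert that Tits lifts $\dot w_1,\dot w_2$ can be chosen $\tx{Ad}(f)$-fixed ``by equivariance of the Tits section on the $f$-invariant subsystem,'' and that their commutator is a specific element of $Z(\hat G_\tx{sc})=\mu_2\times\mu_2$. Both claims fail. The paper first normalizes $w_0$ (not just $w_1,w_2$) into a standard form built from consecutive negative cycles; then one computes $[\dot w_1,\dot w_0]=1$ (so $\dot w_1$ happens to be $f$-fixed), but $[\dot w_2,\dot w_0]=\lambda_{w_2,w_0}(-1)\neq 1$, so $\dot w_2$ is \emph{not} $f$-fixed and must be corrected to $t\dot w_2$ with $t$ the image of $\tfrac{1}{2}(qw_0-1)^{-1}\lambda_{w_2,w_0}$. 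The commutator is then $[\dot w_1,t\dot w_2]=(t^{-1}\cdot{^{w_1}t})\cdot[\dot w_1,\dot w_2]$. The Tits commutator $[\dot w_1,\dot w_2]$ turns out to be \emph{trivial} (one checks $\tfrac{1}{2}\lambda_{w_1,w_2}\in Q$), so the entire content lies in the correction term $t^{-1}\cdot{^{w_1}t}$, represented by $\tfrac{b}{q+1}(e_1+e_{2n})$ for an explicit even $b$ — this is neither central nor $2$-torsion. The final step is to exhibit it as $(1-w_2)$ applied to the image of $\tfrac{b}{q+1}e_1$, which one must separately check is $f$-fixed. In short: you located the obstruction in $[\dot w_1,\dot w_2]$ and expected it to be central; in fact that commutator vanishes, and the real obstruction is the correction $t$ forced by the failure of $\dot w_2$ to be $f$-invariant.
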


\begin{proof}[Proof of Lemmas \ref{lem:d2ng} and \ref{lem:d2nr}]
Let $\dot x \in \hat N_\tx{sc}^+$ be a lift of some $x \in (\Z/2\Z)^2$. Then $\dot x^{-1} \cdot \tx{Ad}(f)(\dot x)$ lies in $\hat T_\tx{sc}$. The map $y \mapsto y^{-1} \cdot \tx{Ad}(f)(y)$ is an endomorphism of $\hat T_\tx{sc}$ with finite kernel, hence surjective. This allows us to modify the lift $\dot x$ by some $y \in \hat T_\tx{sc}$ to achieve $\dot x \in \hat N_\tx{sc}^{+,f}$. This proves the surjectivity claim of Lemma \ref{lem:d2ng}. The corresponding implicit surjectivity claim in Lemma \ref{lem:d2nr} is immediate from Lang's theorem.

Consider $R^+=\{e_i-e_j|1 \leq i<j \leq 2n\} \cup \{e_i+e_j|1 \leq i<j \leq 2n\} \subset \Z^{2n}$, this is the standard presentation of the system of positive roots for type $D_{2n}$. Let $Q \subset \Z^{2n} \subset P$ be the root and weight lattices, respectively. Thus $Q$ is the span of $R=R^+ \cup -R^+$, or equivalently the sublattice of $\Z^{2n}$ consisting of vectors whose sum of coordinates is divisible by $2$, while $P=\Z^{2n}+\frac{1}{2}\sum_{i=1}^{2n}e_i$. The standard inner product on $\R^{2n}$ identifies each root $\alpha \in R$ with its coroot $\alpha^\vee \in R^\vee$, and in particular the root lattice $Q$ with the coroot lattice $Q^\vee$ and the weight lattice $P$ with the coweight lattice $P^\vee$.

Let $\hat G=\tx{SO}_{2n}(\C)$, so that we have the isogenies $\hat G_\tx{sc} \to \hat G \to \hat G_\tx{ad}$. We obtain $\hat T_\tx{sc} = Q \otimes \C^\times$, $\hat T=\Z^{2n} \otimes \C^\times$, and $\hat T_\tx{ad} = P \otimes \C^\times$. We use the exponential sequence $0 \to \Z \to \C \to \C^\times \to 1$ and the isomorphisms $Q \otimes \C \to \Z^{2n} \otimes \C \to P \otimes \C$ to identify $\hat T_\tx{sc} = \C^{2n}/Q$ and $\hat T_\tx{ad} = \C^{2n}/P$. Of course, the isomorphism $X_*(\hat T) \cong \Z^{2n}$ used here involves a choice that in particular implies a choice of a positive Weyl chamber. We shall specify this choice further below.

We can do the same over the finite field $k$. For this, we fix arbitrarily an isomorphism of groups $\bar k^\times \to (\Q/\Z)_{p'}$ that will serve as a replacement of the exponential map. The action of Frobenius on $\bar k^\times$ is translated to multiplication by $q$ on $(\Q/\Z)_{p'}$. Define again $G=\tx{SO}_{2n}/k$ as above, so that $G_\tx{sc} \to G \to G_\tx{ad}$ are isogenies. Then we obtain $S_\tx{sc}(\bar k) \cong \Z^{2n} \otimes \bar k^\times \cong (\Q^{2n}/Q)_{p'}$.

Consider the group of signed permutations $\{\pm 1\}^{2n} \rtimes S_{2n}$ acting on $\Z^{2n}$. It preserves the root system $R$ and is the full group of automorphisms of $R$ unless $n=2$. The Weyl group $\Omega$ is the subgroup of index $2$ whose elements change an even number of signs. A signed permutation is elliptic, i.e. has no fixed points in $\Z^{2n}$, if and only if each cycle has an odd number of sign changes.

There is a choice of isomorphism $X_*(\hat T) \cong \Z^{2n}$ so that the quotient $\hat N_\tx{ad}^s/\hat T_\tx{ad} \cong (\Z/2\Z)^2$ is the subgroup of $\Omega=N/T$ generated by the elements $w_1:=\epsilon_1\epsilon_{2n}$ and $w_2:=(-1) \cdot m$, where $\epsilon_i$ sends $e_i$ to $-e_i$ and fixes $e_j$ for $j \neq i$, $(-1)$ is multiplication by $-1$ on $\Z^{2n}$, and $m(e_i)=e_{2n+1-i}$. The image of $f$ in $\Omega$ is an elliptic element $w_0 \in \Omega$ that commutes with both $w_1$ and $w_2$. Such an element can be brought, by conjugation by elements commuting with $w_1$ and $w_2$, into the following form: $w_0=w_0' \cdot mw_0'm^{-1}$, where $w_0'$ is a signed permutation of $\{1,\dots,n\}$, acting on $\Z^{2n}=\Z^n \oplus \Z^n$ by the natural action on the first factor and the identity on the second factor, and given by the product of consecutive increasing negative cycles, the first of them having length $1$. More precisely, there is a sequence of integers $1=i_1<2=i_2<i_3<\dots<i_k<i_{k+1}=n+1$ such that $w_0'(e_{i_{a+1}-1})=-e_{i_a}$ and $w_0'(e_j)=e_{j+1}$ for $j+1 \notin \{i_2,\dots,i_{k+1}\}$. Thus we may adjust our choice of isomorphism $X_*(\hat T) \cong \Z^{2n}$ to ensure that $w_0$ is of this form.

The element $f$ is a lift of $w_0$ to $\hat N_\tx{ad}$. We may further lift to $\hat N_\tx{sc}$. Since $\hat T_\tx{sc}^{w_0}$ is finite, all lifts of $w_0$ are conjugate under $\hat T_\tx{sc}$. Conjugating $f$ by $\hat T_\tx{sc}$ replaces the extension we are considering by an isomorphic extension. We may therefore arrange for $f$ to be any lift of $w_0$ we like. We choose a pinning of $\hat G_\tx{sc}$, involving the maximal torus $\hat T_\tx{sc}$, and we let $f=\dot w_0$ be the Tits lift of $w_0$ relative to that pinning \cite[\S2.1]{LS87}.

We have thus introduced coordinates into the situation of Lemma \ref{lem:d2ng} that will be helpful for our computations. We shall now do the same with the situation of Lemma \ref{lem:d2nr}. For this, we fix a split maximal torus $T \subset G$ and choose $g \in G$ s.t. $gTg^{-1}=S$. Then $w_0=g^{-1}\sigma(g) \in \Omega=N/T$ is an elliptic element. Let $w_1,w_2 \in \Omega$ generate the preimage under $\tx{Ad}(g)$ of the stabilizer of $\theta$. Then $w_0,w_1,w_2$ all commute. As argued above, there is a choice of isomorphism $X_*(T) \cong \Z^{2n}$ so that $w_0,w_1,w_2$ have the coordinate form given above. Fix a pinning of $G$ involving the torus $T$ and let $\dot w_0$ be the Tits lift of $w_0$ relative to that pinning. Then $\dot w_0 \in N(T,G)(k)$ is of finite order, and hence $\sigma \mapsto \dot w_0$ determines a 1-cocycle of $\tx{Gal}(\bar k/k)$ in $N(T,G)(\bar k)$. Both $\sigma \mapsto \dot w_0$ and $\sigma \mapsto g^{-1}\sigma(g)$ map to the same element of $Z^1(\tx{Gal}(\bar k/k),\Omega)$, so their difference is an element of $Z^1(\tx{Gal}(\bar k/k),T_{w_0})$, where $T_{w_0}$ denotes the torus $T$ with Frobenius action twisted by $w_0$. By Lang's theorem this latter element is of the form $t^{-1} \cdot w_0 \sigma(t)w_0^{-1}$. Thus, after replacing $g$ by $gt^{-1}$ we obtain $g^{-1}\sigma(g)=\dot w_0$.

To prove Lemma \ref{lem:d2ng} we will find lifts of $w_1$ and $w_2$ in $\hat N_\tx{sc}^{+,f}$ such that their commutator, which automatically lies in $\hat T_\tx{sc}^f$, vanishes in the group of $\<w_1,w_2\>$-coinvariants. To prove Lemma \ref{lem:d2nr} we will find lifts in $N(S_\tx{sc},G_\tx{sc})(k)$ of two generators of $\Omega(S_\tx{sc},G_\tx{sc})(k)_\theta$ so that their commutator, again automatically belonging to $S_\tx{sc}(k)$, vanishes in the group of coinvariants for the action of $\Omega(S_\tx{sc},G_\tx{sc})(k)_\theta$. The latter is equivalent to finding lifts in $N(T_\tx{sc},G_\tx{sc})(\bar k)$ of $w_1$ and $w_2$ that are fixed by $\tx{Ad}(\dot w_0)\circ \sigma$ and whose commutator, which lies in the $\tx{Ad}(\dot w_0)\circ\sigma$-fixed points of $T_\tx{sc}(\bar k)$, vanishes in the group of $\<w_1,w_2\>$-coinvariants.

Let $\dot w_1,\dot w_2$ be the Tits lifts of $w_1$ and $w_2$ with respect to the chosen pinning. They automatically lie in $\hat N_\tx{sc}^+$ (respectively $N(T_\tx{sc},G_\tx{sc})(k)$), but may not commute with $\dot w_0$.

Using \cite[Lemma 2.1.A]{LS87} we see that for any two commuting $u,v \in \Omega$ the commutator $[\dot u,\dot v] := \dot u \dot v \dot u^{-1} \dot v^{-1}$ is given $\lambda_{u,v}(-1)$, where $\lambda_{u,v}$ is the sum of the coroots for the set of roots
\[ \Lambda_{u,v} := \{\alpha>0, (uv)^{-1}\alpha>0 \} \cap (\{ u^{-1}\alpha<0, v^{-1}\alpha>0\} \cup \{ u^{-1}\alpha>0, v^{-1}\alpha<0\}). \]
The actions of $w_1^{-1}$ and $w_2^{-1}$ on $R^+$ are given by the following tables

 \begin{tabular}{|L||L|L|L|L|}
 \hline
 \alpha&e_1\pm e_{2n}& e_1\pm e_j,j<2n&e_i\pm e_{2n},i>1&e_i\pm e_j,1<i<j<2n\\
 \hline
 w_1^{-1}\alpha&-(e_1 \pm e_{2n}) &-(e_1 \mp e_j)&(e_i \mp e_{2n})&e_i \pm e_j\\
 \hline
 \end{tabular}

 \begin{tabular}{|L||L|L|}
 \hline
 \alpha&e_i-e_j&e_i+e_j\\
 \hline
 w_2^{-1}\alpha&e_{2n+1-j}-e_{2n+1-i}&-(e_{2n+1-j}+e_{2n+1-i})\\
 \hline
 \end{tabular}

For $w_0^{-1}$ it is enough to record which positive roots are sent to negative. For this, let $i_a'=2n+1-i_a$ and
\[ B=\{ i_a|a=1,\dots,k \} \cup \{ i_a'| a =1,\dots,k \}. \]
Then we have the following table

 \begin{tabular}{|L|L|L|L|}
 \hline
 &w_0^{-1}(e_i-e_j)&w_0^{-1}(e_i+e_j)\\
 \hline\hline
 i,j \notin B&+&+\\
 \hline
 i=i_a,j<i_{a+1}&-&+\\
 \hline
 i>i_{a+1}',j=i_a'&+&-\\
 \hline
 i=i_a,j\geq i_{a+1}&-&-\\
 \hline
 i=i_a'&-&-\\
 \hline
 i_a<i < i_{a+1},j \geq i_{a+1}&+&+\\
 \hline
 i_{a+1}' <i < i_a',j \geq i_a'&+&+\\
 \hline
 \end{tabular}

The element $\lambda_{u,v}(-1)$ is a torsion element of $\hat T_\tx{sc}$ or $T_\tx{sc}(\bar k)$ respectively. In order to unify the treatment we define $q=p=1$ in the situation of Lemma \ref{lem:d2ng} and interpret $(\Q^{2n}/Q)_{p'}$ to mean $(\Q^{2n}/Q)$. Then $(\Q^{2n}/Q)_{p'}$ is the subgroup of torsion elements of $\hat T_\tx{sc}$ in the case of Lemma \ref{lem:d2ng} and the full $T_\tx{sc}(\bar k)$ in the case of Lemma \ref{lem:d2nr}. Moreover, $qw_0$ is the action of $f$ in the former case and the action of $\tx{Ad}(\dot w_0)\circ\sigma$ in the latter case. We set $f=\tx{Ad}(\dot w_0)\circ\sigma$ in the latter case.

In both cases the element $\lambda_{u,v}(-1)$ is represented by $\frac{1}{2}\lambda_{u,v}$. For $u=w_1$ and $v=w_0$ we see that $\Lambda_{u,v}=\emptyset$, and thus $\dot w_1$ is fixed by $f$. For $u=w_2$ and $v=w_0$ we have
\begin{eqnarray*}
\Lambda_{u,v}&=&\{e_i\pm e_j|i=i_a,j<i_{a+1}\}\\
&\cup&\{e_i-e_j|i=i_a,j\geq i_{a+1},j \notin B\}\\
&\cup&\{e_i-e_j|i=i_a',j \notin B\}\\
&\cup&\{e_i+e_j|j=i_a,i \notin B\}\\
&\cup&\{e_i+e_j|j=i_a',i\leq i_{a+1}',i \notin B\}.
\end{eqnarray*}
Thus $f\dot w_2f^{-1}=\lambda_{w_2,w_0}(-1)\dot w_2$ and we need to multiply $w_2$ by an element $t \in \hat T_\tx{sc}$ (or $t \in T_\tx{sc}$ respectively) such that $ftf^{-1}=\lambda_{w_2,w_0}(-1)t$. Since $qw_0-1$ is invertible on $\Q^{2n}$ we can form $\mu=\frac{1}{2}(qw_0-1)^{-1}\lambda_{w_2,w_0} \in \Q^{2n}$. All denominators of this vector are powers of the form $q^{l_i}+1$, where $l_i$ are the lengths of the cycles in $w_0$. Since $\Z^{2n}/Q \cong \Z/2\Z$  and $p \neq 2$ we see that the image $t \in (\Q^{2n}/Q)$ of $\mu$ has order prime to $p$. Then $t\dot w_2$ is fixed by $f$.

Now we have the lifts $\dot w_1,t\dot w_2$ of $w_1$ and $w_2$ in $\hat N_\tx{sc}^{+}$ (respectively $N(T_\tx{sc},G_\tx{sc})(\bar k)$) fixed by $f$. We now compute their commutator $[\dot w_1,t\dot w_2]=(t^{-1} \cdot {^{w_1}t}) \cdot [\dot w_1,\dot w_2]$. First consider $t^{-1}\cdot {^{w_1}}t$. It is the image in $(\Q^{2n}/Q)_{p'}$ of $(w_1-1)\mu = \frac{1}{2}(w_1-1)(qw_0-1)^{-1}\lambda_{w_2,w_0} \in \Q^{2n}$. To compute it, we decompose $\Q^{2n}=\Q \oplus \Q^{2n-2} \oplus \Q$. Both $w_1$ and $w_0$ respect this decomposition. We have $w_1=(-1,\tx{id},-1)$ and $w_0=(-1,*,-1)$, hence $(w_1-1)(qw_0-1)^{-1}=(\frac{2}{q+1},0,\frac{2}{q+1})$.

To evaluate $\frac{1}{2}(w_1-1)(qw_0-1)^{-1}\lambda_{w_2,w_0}$ we thus need to only compute the contributions of $e_1$ and $e_{2n}$ to $\lambda_{w_2,w_0}$. Using the tables above we see that it is $b(e_1+e_{2n})$, where $b=2n-|B|$, so $\frac{1}{2}(w_1-1)(w_0-1)^{-1}\lambda_{w_2,w_0}=\frac{b}{q+1}(e_1+e_{2n})$. We note that $2|b$. Then the computation $(qw_0-1)\frac{b}{q+1}e_1=-be_1 \in Q$ shows that the image of $\frac{b}{q+1}e_1$ in $(\Q^{2n}/Q)_{p'}$ is fixed by $f$. Moreover, $(1-w_2)\frac{b}{q+1}e_1=\frac{b}{q+1}(e_1+e_{2n})$. We see that $(t^{-1}\cdot {^{w_1}}t)$ belongs in the $w_2$-coinvariants of $\hat T_\tx{sc}^f$ (or $T_\tx{sc}(\bar k)^{f}$ respectively).

Next consider $[\dot w_1,\dot w_2]$. We have
$\Lambda_{w_1,w_2} = \{e_1-e_j|1<j<2n\} \cup \{e_i+e_{2n}|1<i<2n\}$ and hence $\frac{1}{2}\lambda_{w_1,w_2}=\frac{1}{2}(2n-2)(e_1+e_{2n}) \in Q$, thus $[\dot w_1,\dot w_2]=1$.
\end{proof}

\end{appendices}

\bibliographystyle{amsalpha}
\bibliography{/Users/kaletha/Dropbox/Work/TexMain/bibliography.bib}

\end{document}